\numberwithin{equation}{section}
\theoremstyle{plain}
\newtheorem{theorem}{Theorem}[section]
\newtheorem{corollary}{Corollary}[section]
\newtheorem{prop}{Proposition}[section]
\newtheorem{lemma}{Lemma}[section]
\theoremstyle{definition}
\newtheorem{defn}{Definition}[section]
\newtheorem{conj}{Conjecture}[section]
\theoremstyle{remark}
\newtheorem{rem}{Remark}[section]
\def\subtitle#1. {{\medskip\bf#1\par\nobreak\smallskip}}
\def\proclaim#1. {\medbreak\bgroup\noindent\bf#1. \it}
\def\endproclaim{\egroup
\ifdim\lastskip<\medskipamount\removelastskip\medskip\fi}
\def\citedef#1 {\advance\citation by1
  \expandafter\edef\csname#1\endcsname{{\the\citation}}
  \checkendcitedef}
\def\checkendcitedef#1{\ifx#1\endcitedef\else\citedef#1\fi}
\def\cite#1{\csname#1\endcsname}
\newtoks\nextauth
\newif\iffirstauth
\def\checkendauth#1{\ifx\endauth#1
        \iffirstauth\the\nextauth
        \else{} and \the\nextauth\fi,
    \else\iffirstauth\the\nextauth\firstauthfalse
        \else, \the\nextauth\fi
        \expandafter\auth\expandafter#1\fi}
\def\auth#1 #2 {\nextauth={#1 #2}\checkendauth}
\newif\ifinbook
\newif\ifbookref
\def\nextref#1 {\bookreffalse\inbookfalse
    \bibitem[\cite{#1}]{}
    \firstauthtrue
    \ignorespaces}
\def\ggg{\mathfrak{g}}
\def\bbc{\mathbb{C}}
\def\bbk{\mathds{k}}
\def\bbf{\mathbb{F}}
\def\Lie{\text{Lie}}
\begin{document}
\title{Finite $W$-superalgebras for basic classical Lie superalgebras}
\author{Yang Zeng and Bin Shu}
\thanks{\nonumber{{\it{Mathematics Subject Classification}} (2000 {\it{revision}})
Primary 17B35. Secondary 17B81. This work is  supported partially by the NSF of China (No. 11271130; 11201293; 111126062),  the Innovation Program of Shanghai Municipal Education Commission (No. 12zz038). }}
\address{School of Science, Nanjing Audit University, Nanjing, Jiangsu Province 211815, China}
\email{zengyang214@163.com}
\address{Department of Mathematics, East China Normal University, Shanghai 200241, China}
\email{bshu@math.ecnu.edu.cn}


\begin{abstract}
\noindent
We consider the finite $W$-superalgebra $U(\mathfrak{g_\bbf},e)$ for a basic classical Lie superalgebra $\mathfrak{g}_\bbf$ associated with an even nilpotent element $e\in (\ggg_\bbf)_{\bar0}$ both over the field of complex numbers $\bbf=\mathbb{C}$ and and over a filed $\bbf=\mathds{k}$ of positive characteristic. We present the PBW theorem for $U(\mathfrak{g}_\bbf,e)$ and show that the construction of $U(\mathfrak{g}_\bbf,e)$  can be divided into two cases in virtue of the parity of $\text{dim}~\mathfrak{g}(-1)_{\bar1}$. Then we formulate a conjecture about the minimal dimensional representations of $U(\mathfrak{g}_\bbc,e)$ and demonstrate it with some examples. Under the assumption that the conjecture holds, we finally show that the lower bound of dimensions predicted in the super version of Kac-Weisfeiler Conjecture formulated and proved by Wang-Zhao in [\cite{WZ}] for the modular representations of the basic classical Lie superalgebra ${\ggg}_{\mathds{k}}$ with arbitrary $p$-characters can be reached.
\end{abstract}
\maketitle
\tableofcontents
\section{Introduction}
\noindent{\bf 1.1.} A finite $W$-algebra $U(\ggg,e)$ is a certain associative algebra associated to a complex semisimple Lie algebra $\mathfrak{g}$ and a nilpotent element $e\in\mathfrak{g}$. The study of finite $W$-algebras can be traced back to Kostant's work in the case when $e$ is regular [\cite{Ko}], whose construction was generalized to arbitrary even nilpotent elements by Lynch [\cite{Ly}]. Afterwards, Premet developed the finite $W$-algebras in full generality in [\cite{P2}]. On his way of proving the celebrated Kac-Weisfeiler conjecture for Lie algebras of reductive groups in [\cite{P1}], Premet first constructed the modular version of finite $W$-algebras (which is called reduced $W$-algebras there). By means of a complicated but natural ``admissible'' procedure, the finite $W$-algebras over the field of complex numbers were introduced in [\cite{P2}], which showed that they are filtrated deformations of the coordinate rings of Slodowy slices. In the extreme case when $e=0$, the corresponding finite $W$-algebra is isomorphic to $U(\mathfrak{g})$, the universal enveloping algebra of $\ggg$. In the other extreme case when $e$ is a principal nilpotent element, Kostant proved that the associated finite $W$-algebra is isomorphic to the center of the universal enveloping algebra $U(\mathfrak{g})$ in [\cite{Ko}]. On the other hand, Brundan-Kleshchev showed that finite $W$-algebras can be realized as shifted Yangians for the case of type $A$, which provides a powerful tool for the study of finite $W$-algebras; see [\cite{BK2}]. Finite $W$-algebras theory becomes a very active area, and we refer to the survey papers [\cite{L1}], [\cite{P6}] and [\cite{W}] and references therein for more details.

Aside from the advances in finite $W$-algebras over the field of complex numbers, the modular theory of finite $W$-algebras is also developed excitingly. As a most remarkable work, Premet proved in [\cite{P7}] that under the assumption $p\gg 0$, if $U(\ggg,e)$ has a one-dimensional representations, then the reduced enveloping algebra $U_\chi(\ggg_\bbk)$ of the modular counterpart $\ggg_\bbk$ of $\ggg$ possesses a simple module of dimension $d(e)$, where $\chi$ is the linear function on $\ggg_{\mathds{k}}$ corresponding to $e$, $d(\chi)$ is the half dimension of the orbit $G\cdot \chi$ for the simple, simply-connected algebraic group $G$ with $\ggg_\bbk=\Lie(G)$, which is a lower bound predicted by Kac-Weisfeiler conjecture mentioned above. This assumption is conjectured to be always true by Premt himself, which has been proved in the case of classical groups by Losev (cf. [\cite{L3}]), and by Goodwin-R\"{o}hrle-Ubly for the case $E_6,E_7,F_4,G_2$, or $E_8$ with $e$ not rigid (cf. [\cite{GRU}]).

~

\noindent{\bf 1.2.} The theory of finite $W$-superalgebras were developed in the same time.  In the work of De Sole and Kac [\cite{SK}], finite $W$-superalgebras were defined in terms of BRST cohomology under the background of vertex algebras and quantum reduction. The theory of finite $W$-superalgebras for the queer Lie superalgebras (it is notable that which are not basic classical Lie superalgebras) over an algebraically closed field of characteristic $p>2$ was firstly introduced and discussed  by Wang and Zhao in [\cite{WZ2}], then studied by Zhao over the field of complex numbers in [\cite{Z2}]. The connection between super Yangians and finite $W$-superalgebras was first obtained by Broit and Ragoucy in [\cite{BR}]. In the paper [\cite{BBG2}], the connection between the finite $W$-superalgebra associated to a principal nilpotent element was developed by Brown-Brundan-Goodwin. Some related results in this situation were also obtained independently by Poletaeva and Serganova in [\cite{PS}], where they described the finite $W$-superalgebras in the regular case for some classical and exceptional Lie superalgebras of defect one. In [\cite{Peng2}], Peng established a connection between finite $W$-superalgebras and super Yangians explicitly in type $A$ where the Jordan type of the nilpotent element $e$ satisfies certain condition. Now the theory of $W$-superalgebras related to super Yangians is still under investigating.

In a very recent paper [\cite{PS2}], Poletaeva and Serganova studied some generalities of the finite $W$-superalgebras associated with an even nilpotent element over the field of complex numbers, and proved that the finite $W$-superalgebras for basic classical Lie superalgebras or the queer Lie superalgebras associated with regular nilpotent elements satisfy the Amitsur-Levitzki identity. Then the related topics on the finite $W$-superalgebras for the queer Lie superalgebra $\mathfrak{q}_n$ with regular nilpotent elements were studied in detail in the same paper.

~

\noindent{\bf 1.3.}  The main purposes of the present paper are both to develop the general theory of finite $W$-superalgebras, and to exploit their applications to modular representations of Lie superalgebras.  Our approach is roughly generalizing Premet's arguments in the case of finite $W$-algebras in [\cite{P2}] and [\cite{P7}] to the case of finite $W$-superalgebras based on the results given by Wang and Zhao in [\cite{WZ}], and some new results are obtained either.

In the first part of the present paper, we develop the theory of finite $W$-superalgebras for basic classical Lie superalgebras both over the field of complex numbers and in prime characteristic. Let $\mathfrak{g}=\mathfrak{g}_{\bar0}+\mathfrak{g}_{\bar1}$ be a basic classical Lie superalgebra over $\mathbb{C}$ except for type $D(2,1;a)(a\notin\mathbb{Q})$. Let $e\in\mathfrak{g}_{\bar0}$ be an even nilpotent element, and we fix an $\mathfrak{sl}_2$-triple $f,h,e$. Denote by $\mathfrak{g}^e:=\text{Ker}(ad\,e)$ in $\mathfrak{g}$. The linear
operator ad~$h$ defines a $\mathbb{Z}$-grading $\mathfrak{g}=\bigoplus\limits_{i\in\mathbb{Z}}\mathfrak{g}(i)$. Define the Kazhdan degree on $\mathfrak{g}$ by declaring $x\in\mathfrak{g}(j)$ is $(j+2)$. We construct a $\mathbb{C}$-algebra (which is called a finite $W$-superalgebra)$$U(\mathfrak{g},e)=(\text{End}_\mathfrak{g}Q_{\chi})^{\text{op}}.$$
Lemma~\ref{hwc} and Theorem~\ref{PBWC} show that

\begin{theorem}\label{graded W}
Under the Kazhdan grading we have
\[\begin{array}{rll}
(1)& \text{gr}\,U(\mathfrak{g},e)\cong S(\mathfrak{g}^e) & \text{as}~\mathbb{C}\text{-algebras when dim}~\mathfrak{g}(-1)_{\bar1}~\text{is even};\\
(2)& \text{gr}\,U(\mathfrak{g},e)\cong S(\mathfrak{g}^e)\otimes \mathbb{C}[\bar\theta]&\text{as vector spaces when dim}~\mathfrak{g}(-1)_{\bar1}~\text{is odd},
\end{array}\]
\noindent where $\mathbb{C}[\bar\theta]$ is the exterior algebra generated by one element $\bar\theta$ for the case when dim~$\mathfrak{g}(-1)_{\bar1}$ is odd.
\end{theorem}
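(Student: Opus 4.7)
My plan is to adapt Premet's Whittaker-module construction of finite $W$-algebras~[\cite{P2}] to the $\mathbb{Z}/2$-graded setting, in the spirit of Wang-Zhao~[\cite{WZ}]. The novelty over the non-super case is the behaviour of the bilinear form $\langle x,y\rangle := \chi([x,y])$ on $\mathfrak{g}(-1)$: it is skew on $\mathfrak{g}(-1)_{\bar 0}$ and symmetric on $\mathfrak{g}(-1)_{\bar 1}$. Non-degeneracy (coming from the $\mathfrak{sl}_2$-triple together with the invariant form on $\mathfrak{g}$) forces $\dim\mathfrak{g}(-1)_{\bar 0}$ to be even, so a Lagrangian on the even part always exists; a Lagrangian on the odd part exists precisely when $\dim\mathfrak{g}(-1)_{\bar 1}$ is even, which accounts for the dichotomy in the theorem.

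The first step is to build a nilpotent subalgebra $\mathfrak{m}$ and the generalized Gelfand-Graev module $Q_\chi$. In case~(1) I would pick a homogeneous Lagrangian $\ell \subset \mathfrak{g}(-1)$, set $\mathfrak{m} := \ell \oplus \bigoplus_{i\le -2}\mathfrak{g}(i)$, and define $Q_\chi := U(\mathfrak{g}) \otimes_{U(\mathfrak{m})} \mathbb{C}_\chi$ on which $\chi|_{\mathfrak{m}}$ acts as a character. In case~(2) I would take a maximal isotropic $\ell$ with $\dim(\ell^\perp/\ell)=1$, distinguishing a single odd vector $\theta \in \mathfrak{g}(-1)_{\bar 1} \setminus \ell$, and proceed analogously. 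A super version of Skryabin's theorem then identifies $U(\mathfrak{g},e) = (\mathrm{End}_{\mathfrak{g}} Q_\chi)^{\mathrm{op}}$ with the subspace $Q_\chi^{\mathrm{ad}\,\mathfrak{m}}$ of twisted $\mathfrak{m}$-invariants, whose algebra structure comes from composition of endomorphisms.

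Next I would introduce the Kazhdan filtration on $U(\mathfrak{g})$ (assigning $\mathfrak{g}(j)$ degree $j+2$) and transport it to $Q_\chi$ and $U(\mathfrak{g},e)$. Choose a Kazhdan-homogeneous super PBW basis of $U(\mathfrak{g})$ adapted to a decomposition $\mathfrak{g} = \mathfrak{m} \oplus \mathfrak{p}$, where in case~(1) one takes $\mathfrak{p} = \ell' \oplus \mathfrak{g}^e$ (with $\ell'$ a complement to $\ell$ in $\mathfrak{g}(-1)$ paired non-degenerately with $\ell$), and in case~(2) $\mathfrak{p} = \mathbb{C}\theta \oplus \ell' \oplus \mathfrak{g}^e$. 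Combining this with the transversality $\mathfrak{g} = [\mathfrak{m},\mathfrak{g}] + \mathfrak{g}^e$ yields $\mathrm{gr}\,Q_\chi \cong S(\mathfrak{p})$, and a Koszul-type computation of the $\mathrm{ad}\,\mathfrak{m}$-invariants shows that each homogeneous $x \in \mathfrak{g}^e$ lifts to a Kazhdan-homogeneous element $\Theta_x \in U(\mathfrak{g},e)$ of the same degree. In case~(1) the $\Theta_x$'s generate a copy of $S(\mathfrak{g}^e)$ in the associated graded, giving the desired algebra isomorphism. In case~(2) one obtains the same $\Theta_x$'s plus one extra Kazhdan-degree-$1$ odd invariant $\bar\theta$ lifting $\theta$, producing the vector-space identification $\mathrm{gr}\,U(\mathfrak{g},e) \cong S(\mathfrak{g}^e) \otimes \mathbb{C}[\bar\theta]$.

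The hard part is case~(2). One must construct an honest $\mathrm{ad}\,\mathfrak{m}$-invariant whose Kazhdan-leading term is $\theta$; the naive element $\theta$ is not $\mathfrak{m}$-invariant precisely because $\ell$ falls short of a Lagrangian on the odd side, so a correction by lower Kazhdan-degree elements is forced. One must then prove via a careful dimension count on the associated graded that $\bar\theta$ together with the $\Theta_x$'s exhausts $\mathrm{gr}\,U(\mathfrak{g},e)$. Finally, $\bar\theta^{\,2}$ will in general equal a non-zero element of $S(\mathfrak{g}^e)$ rather than vanish, which is exactly why statement~(2) is asserted only at the level of vector spaces: the true algebra structure on $\mathrm{gr}\,U(\mathfrak{g},e)$ is not the naive tensor product with the exterior algebra $\mathbb{C}[\bar\theta]$.
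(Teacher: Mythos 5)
Your proposal would route the proof through a direct cohomological/Koszul argument over $\mathbb{C}$, whereas the paper proves Theorem~\ref{graded W} by ``modular $p$ reduction'': it first establishes a PBW theorem for the finite-dimensional reduced $W$-superalgebra $U_\chi(\mathfrak{g}_\mathds{k},e)$ over $\mathds{k}=\overline{\mathbb{F}}_p$ (Theorem~\ref{reduced Wg}), where the exhaustivity step is carried out by a literal dimension count using Theorem~\ref{matrix}, and then lifts the result to $\mathbb{C}$ through an admissible $\mathbb{Z}$-form (Theorem~\ref{PBWC}). In case~(1) your route is essentially the Gan--Ginzburg/Brundan--Goodwin--Kleshchev argument, and it does work --- the paper itself notes (citing Zhao's $H^0(\mathfrak{m},Q_\chi)$ construction and Lemma~\ref{eveniso}) that one could obtain the even case ``with less effort'' this way. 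The paper declines this shortcut for two stated reasons that your plan does not address: (a) the cohomological construction only controls coefficients over $\mathbb{C}$, while the paper needs $\mathbb{Q}$-coefficients for the generators $\Theta_k$ so the modular reduction in Section~7 is available; and (b) the BRST/Koszul definition of the finite $W$-superalgebra is not in place when $\dim\mathfrak{g}(-1)_{\bar 1}$ is odd.

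Point~(b) is a genuine gap in your proposal. In case~(2) you invoke ``a Koszul-type computation of the $\mathrm{ad}\,\mathfrak{m}$-invariants'' and say ``one must then prove via a careful dimension count on the associated graded that $\bar\theta$ together with the $\Theta_x$'s exhausts $\mathrm{gr}\,U(\mathfrak{g},e)$.'' Both steps are exactly where the obstruction lives. The paper explicitly records (end of Section~6, citing Zhao's Remark~3.11) that a key lemma in the cohomological construction of finite $W$-superalgebras is expected to fail when $\dim\mathfrak{g}(-1)_{\bar 1}$ is odd --- in part because $\mathfrak{g}^e\oplus\mathbb{C}v_{\frac{r+1}{2}}$ is not a subalgebra of $\mathfrak{g}$ --- so there is no analogue of Lemma~\ref{eveniso} to feed into a Koszul surjectivity argument. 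As for the dimension count, the paper warns (remark after Theorem~\ref{PBWC}) that $U(\mathfrak{g},e)$ is infinite-dimensional over $\mathbb{C}$, so the exhaustivity argument that works for $U_\chi(\mathfrak{g}_\mathds{k},e)$ (Proposition~\ref{reduced basis}) cannot be run directly; that is precisely why the paper reduces mod~$p$. You acknowledge that ``the hard part is case~(2)'' but leave these two steps as must-dos; without a substitute for the missing cohomology machinery you have a sketch of case~(1), not a proof of case~(2).

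A smaller but real error: you conclude by saying that statement~(2) is only a vector space isomorphism because ``$\bar\theta^{\,2}$ will in general equal a non-zero element of $S(\mathfrak{g}^e)$,'' and this is wrong. By Theorem~\ref{PBWC}(2)(i)(b) the generator is $\Theta_{l+q+1}(1_\chi)=v_{\frac{r+1}{2}}\otimes 1_\chi$ with no correction terms, whence $\Theta_{l+q+1}^2=\tfrac{1}{2}\mathrm{id}$. Since $\Theta_{l+q+1}$ has Kazhdan degree $1$ while $\tfrac{1}{2}\mathrm{id}$ has degree $0$, the image of $\Theta_{l+q+1}^2$ in $\mathrm{gr}_2$ is zero, i.e.\ $\bar\theta^2=0$, consistent with $\mathbb{C}[\bar\theta]$ being called an exterior algebra. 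The actual reason (2) is stated only at the level of vector spaces is the behaviour of the cross-commutators $[\Theta_i,\Theta_{l+q+1}]$ for $1\le i\le l+q$: Proposition~\ref{fijeo} controls the leading terms of $F_{ij}$ only for $i,j\le l+q$, and the remark following it notes that there is no readily available formula for the leading term of $F_{i,l+q+1}$, so one cannot conclude that $\bar\theta$ supercommutes with the $\tilde\Theta_i$ in $\mathrm{gr}\,U(\mathfrak{g},e)$.
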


The main tool applied for the proof of Theorem~\ref{graded W} is the ``modular $p$ reduction'' method introduced by Premet for the finite $W$-algebra case in [\cite{P2}] and the results on basic classical Lie superalgebras obtained by Wang and Zhao in [\cite{WZ}]. It is remarkable that after the draft of this paper has been written, we know from [\cite{PS}] and [\cite{PS2}] that Poletaeva and Serganova also noticed that Theorem~\ref{graded W} maybe true and formulated the corresponding conjecture in ([\cite{PS2}], Conjecture 2.8). At the same time, they also realized (see [\cite{PS}]) that the theorem can be obtained possibly by analoging Premet's treatment for the finite $W$-algebra case, but did not give a proof therein. In [\cite{PS2}] they proved that for any element $y\in\mathfrak{g}^e$ if one can find $Y\in U(\mathfrak{g},e)$ such that $\text{gr}\,Y(1_\chi)=y$ under the Kazhdan grading, then Theorem~\ref{graded W}  establishes. As a special case, they constructed the generators of the finite $W$-superalgebras for the queer Lie superalgebra $\mathfrak{q}_n$ associated with regular nilpotent elements, and proved that Theorem~\ref{graded W}~establishes in that situation.

~

\noindent{\bf 1.4.}  In virtue of the incompleteness of related topics on Lie superalgebras, Wang and Zhao bypassed the support variety machinery completely and adopted pure algebraic method as Skryabin's treatment for the finite $W$-algebra case [\cite{S3}] when they established some critical lemma for the super Kac-Weisfeiler property. Therefore, the tool of support variety machinery can not be applied in the establishment of finite $W$-superalgebra theory. Moveover, there are cases where the dimension of odd part in the critical graded subspace of the Lie superalgebra is odd under the Dynkin grading, which lead to the existence of odd isomorphism for the related superalgebras. This significant difference has great impact on the structure of finite $W$-superalgebras. Of course, the appearance of super structure also makes the situation more complicated. Therefore, the establishment of finite $W$-superalgebras theory is no longer simple promotion of the theory on finite $W$-algebras, and some technical methods are needed especially for the case when the dimension of the critical graded subspace is odd. In light of Gan-Ginzburg's definition of finite $W$-algebras over the field of complex numbers in [\cite{GG}], Wang defined the reduced $W$-superalgebras in positive characteristic in a new way ([\cite{W}], Remark 70), which he thought makes better sense. We also discuss the construction of these algebras for the version of characteristic zero in the paper.

~

\noindent{\bf 1.5.} Let $\mathfrak{g}_\mathds{k}$ be the corresponding Lie superalgebra over positive characteristic field $\mathds{k}$. After studying some related topics on the reduced $W$-superalgebra $U_\chi(\mathfrak{g}_\mathds{k},e)$ in positive characteristic  in Section 5, we introduce the PBW theorem (Theorem~\ref{PBWC}) for the $\mathbb{C}$-algebra $U(\mathfrak{g},e)$ based on the parity of dim~$\mathfrak{g}(-1)_{\bar1}$ respectively, and also the relations between the generators of $U(\mathfrak{g},e)$ (Theorem~\ref{relationc}). All these completely characterize the structure of finite $W$-superalgebra $U(\mathfrak{g},e)$.

The finite $W$-superalgebra $\widehat{U}(\mathfrak{g}_\mathds{k},e)$ in positive characteristic is introduced in Section 7. In Theorem~\ref{keyisotheorem}(3) we obtain that
\begin{theorem}\label{graded W2}
$\widehat{U}(\mathfrak{g}_\mathds{k},e)\cong U(\mathfrak{g}_\mathds{k},e)\otimes_\mathds{k}Z_p(\mathfrak{a}_\mathds{k})$ as $\mathds{k}$-algebras.
\end{theorem}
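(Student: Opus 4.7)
The approach is to adapt Premet's argument from [\cite{P2}] for the classical finite $W$-algebra to the super setting, leveraging the PBW-type theorems developed earlier in this paper. Let $\mathfrak{a}_\mathds{k}$ denote the $p$-subalgebra of $\mathfrak{g}_\mathds{k}$ used in Section~7 to cut out $\widehat{U}(\mathfrak{g}_\mathds{k},e)$ as a space of twisted Whittaker-type invariants. Its $p$-center $Z_p(\mathfrak{a}_\mathds{k})$ is the subalgebra generated by elements of the form $x^p-x^{[p]}-\chi(x)^p\cdot 1$ for $x\in(\mathfrak{a}_\mathds{k})_{\bar 0}$ together with the super analogues $y^2-\tfrac12[y,y]$ for $y\in(\mathfrak{a}_\mathds{k})_{\bar 1}$; these are central in $U(\mathfrak{g}_\mathds{k})$ after the $\chi$-twist.

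The first step is to construct the natural map
\[
\Phi\colon U(\mathfrak{g}_\mathds{k},e)\otimes_{\mathds{k}}Z_p(\mathfrak{a}_\mathds{k})\longrightarrow \widehat{U}(\mathfrak{g}_\mathds{k},e),\qquad u\otimes z\longmapsto u\cdot\iota(z),
\]
where $\iota$ is the inclusion arising from the definition of $\widehat{U}(\mathfrak{g}_\mathds{k},e)$ as an invariant space, and the embedding of $U(\mathfrak{g}_\mathds{k},e)$ into $\widehat{U}(\mathfrak{g}_\mathds{k},e)$ is obtained from the modular reductions of the generators furnished by Theorem~\ref{PBWC}. That $\Phi$ is a well-defined algebra homomorphism reduces to verifying that the generators of $Z_p(\mathfrak{a}_\mathds{k})$ act centrally on $\widehat{U}(\mathfrak{g}_\mathds{k},e)$; this is the direct super-analogue of Premet's observation that the twisted $p$-th powers are central in the Whittaker quotient, and it should have been set up as part of Theorem~\ref{keyisotheorem}(1)--(2).

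The second step is to prove bijectivity by comparing Kazhdan filtrations. By Theorem~\ref{graded W} and its modular reduction, the associated graded of $U(\mathfrak{g}_\mathds{k},e)$ is $S(\mathfrak{g}_\mathds{k}^e)$ or $S(\mathfrak{g}_\mathds{k}^e)\otimes\mathds{k}[\bar\theta]$ according to the parity of $\dim\mathfrak{g}(-1)_{\bar 1}$. On the other hand, $Z_p(\mathfrak{a}_\mathds{k})$ is itself a (super) polynomial algebra whose generators have predictable Kazhdan degrees, so the left-hand side of $\Phi$ admits an explicit Kazhdan-PBW basis. Coupling this with the parallel PBW description of $\widehat{U}(\mathfrak{g}_\mathds{k},e)$ established in the earlier parts of Theorem~\ref{keyisotheorem}, the symbol map of $\Phi$ matches the two associated graded rings generator by generator. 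A standard filtered-to-graded argument then upgrades this graded isomorphism to an honest isomorphism $\Phi$.

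The principal obstacle lies in the case when $\dim\mathfrak{g}(-1)_{\bar 1}$ is odd: one must adjoin an auxiliary odd generator $\bar\theta$ to restore non-degeneracy of the bilinear form underlying $\widehat{U}$, and then consistently track the resulting exterior factor on both sides of $\Phi$. This requires verifying that the square of the chosen lift of $\bar\theta$ lies in $Z_p(\mathfrak{a}_\mathds{k})$ modulo lower Kazhdan degree, and that the parities of PBW monomials are preserved by $\Phi$. Once this super-bookkeeping is in place, the odd-dimensional graded comparison reduces to the even-dimensional argument, and the theorem follows.
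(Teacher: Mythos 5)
There is a genuine gap, together with two factual misreadings of the objects involved. First, $Z_p(\mathfrak{a}_\mathds{k})$ is, by the paper's definition just before Theorem~\ref{keyisotheorem}, the \emph{ordinary} polynomial subalgebra of $Z_p(\mathfrak{g}_\mathds{k})$ generated only by $\bar x^p-\bar x^{[p]}$ for $\bar x\in(\mathfrak{a}_\mathds{k})_{\bar0}$; there is no $\chi$-shift built into it (the shift lives in the map $\rho_\mathds{k}$), and there are no odd generators. The ``super analogue'' $y^2-\tfrac12[y,y]$ you propose to adjoin is identically zero in $U(\mathfrak{g}_\mathds{k})$, since in any enveloping superalgebra $[y,y]=2y^2$ for odd $y$, so the odd part of $\mathfrak{a}_\mathds{k}$ contributes nothing to the $p$-center. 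Second, $\mathfrak{a}_\mathds{k}$ is not the ``$p$-subalgebra used to cut out $\widehat{U}(\mathfrak{g}_\mathds{k},e)$''; it is just a complement to $\mathfrak{g}_\mathds{k}^e$ (and to $\mathds{k}v_{\frac{r+1}{2}}$ in the odd case) inside $\widetilde{\mathfrak{p}}_\mathds{k}$, and it need not be a Lie subalgebra at all --- the invariance is always taken with respect to $\mathfrak{m}_\mathds{k}$.

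The more serious issue is the proposed ``filtered-to-graded'' argument for bijectivity of $\Phi$. To run it, you would need to know $\operatorname{gr}\widehat{U}(\mathfrak{g}_\mathds{k},e)$ under the Kazhdan filtration independently and then match it against $\operatorname{gr}\bigl(U(\mathfrak{g}_\mathds{k},e)\otimes_\mathds{k}Z_p(\mathfrak{a}_\mathds{k})\bigr)$. But computing $\operatorname{gr}\widehat{U}(\mathfrak{g}_\mathds{k},e)$ is essentially equivalent to the theorem being proved, so the strategy is circular as stated; the passage of invariants through the associated graded requires a vanishing argument that is exactly what makes this nontrivial. The paper circumvents this by a specialization technique: writing a general $h\in\widehat{U}(\mathfrak{g}_\mathds{k},e)$ as a $Z_p(\widetilde{\mathfrak{p}}_\mathds{k})$-combination of PBW monomials $\bar X^{(\mathbf{a},\mathbf{b},\mathbf{c},\mathbf{d})}\bar\Theta^{(\mathbf{i},\mathbf{j})}$, it picks a suitable $\eta\in\chi+(\mathfrak{m}_\mathds{k}^\bot)_{\bar0}$ at which a hypothetical nonzero coefficient with $(\mathbf{a},\mathbf{b},\mathbf{c},\mathbf{d})\ne 0$ would survive, and then derives a contradiction with the finite PBW basis of the reduced algebra $U_\eta(\mathfrak{g}_\mathds{k},e)$ (Theorem~\ref{sumresult}(4)) together with the freeness of $Q_\chi^\eta$ over it from Lemma~\ref{right}(2). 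This family-over-$\chi+(\mathfrak{m}_\mathds{k}^\bot)_{\bar0}$ argument, followed by an induction on the pair $(n(u),N(u))$, is the engine of the proof and is entirely missing from your proposal; the bookkeeping of the extra odd generator $\bar\Theta_{l+q+1}$ (whose square is simply $\tfrac12\operatorname{id}$, not a nontrivial element of $Z_p(\mathfrak{a}_\mathds{k})$) is a side issue by comparison.
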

In the above theorem, $Z_p(\mathfrak{a}_\mathds{k})$ is a subalgebra of the $p$-center of $\widehat{U}(\mathfrak{g}_\mathds{k},e)$ and $U(\mathfrak{g}_\mathds{k},e)$ is the translation subalgebra of $U(\mathfrak{g}_\mathds{k},e)$.

~

\noindent{\bf 1.6.}  The second part of the present paper is to exploit some applications of finite-$W$ superalgebras to modular representations. One of the multi-purposes of the present paper is to provide  a super version of Premet's work, as shown before, on the reachable property of up-bounds of dimensions of modular representations of reductive Lie algebras predicted by  Kac-Weisfeiler conjecture. For this, we will formulate a conjecture about the minimal dimensional representations of $U(\mathfrak{g}_\bbc,e)$. Under the assumption that the conjecture holds,
we complete this analogue of Premet's work in basic Lie superalgebebras. Let us explain it roughly as below.

In Section 8 and Section 9, we first formulate a conjecture (Conjecture~\ref{con}) on the minimal dimensional representations of finite $W$-superalgebras over the field of complex numbers:
\begin{conj}\label{conjecture}
Let $\mathfrak{g}$ be a basic classical Lie superalgebra over $\mathbb{C}$, then the following are true:

(i) when $d_1$ is even, the finite $W$-superalgebra $U(\mathfrak{g},e)$ affords a $1$-dimensional representation;

(ii) when $d_1$ is odd, the finite $W$-superalgebra $U(\mathfrak{g},e)$ affords a $2$-dimensional representation.
\end{conj}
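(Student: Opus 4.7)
The strategy is to imitate Premet's proof of the analogous statement for finite $W$-algebras in \cite{P7} and Losev's subsequent completion in \cite{L3}, adapting it to the super setting using Theorem~\ref{graded W} together with the relations between generators recorded in Theorem~\ref{relationc}. The two cases of the conjecture are genuinely different because of the odd generator $\bar\theta$ that appears in the PBW basis when $d_1$ is odd, so I treat them separately.

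For case (i) with $d_1$ even, Theorem~\ref{graded W}(1) identifies $\mathrm{gr}\,U(\mathfrak{g},e)$ with the supercommutative algebra $S(\mathfrak{g}^e)$, so producing a $1$-dimensional representation amounts to producing an ideal of codimension one. My plan is first to establish the conjecture in the anchor cases where explicit descriptions are available: for regular nilpotent $e$ this is covered essentially by \cite{BBG2} and \cite{PS}, \cite{PS2}, and in type $A$ it should follow from Peng's Yangian realization in \cite{Peng2} by evaluating at a highest-weight character of the relevant super Yangian. For general even nilpotent $e$, I would attempt a super version of parabolic induction, choosing a Levi sub-superalgebra $\mathfrak{l}\subset\mathfrak{g}$ in which $e$ becomes principal, and then transporting a $1$-dimensional $U(\mathfrak{l},e)$-module along an induction functor whose construction must be developed in parallel with \cite{L3}.

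For case (ii) with $d_1$ odd, Theorem~\ref{graded W}(2) gives $\mathrm{gr}\,U(\mathfrak{g},e)\cong S(\mathfrak{g}^e)\otimes\mathbb{C}[\bar\theta]$. Let $\theta\in U(\mathfrak{g},e)$ be an odd lift of $\bar\theta$. Any $1$-dimensional supermodule is concentrated in a single parity, so $\theta$ would have to act as zero on it. Using the explicit relations from Theorem~\ref{relationc} one should be able to show that $\theta^2$ is a nonzero even element whose image under any scalar character is forced to be nonzero, which rules out $1$-dimensional representations and gives the lower bound $2$ on the minimal dimension. Achieving this bound should then reduce to forming the tensor product of the $2$-dimensional irreducible graded module over the Clifford-type superalgebra generated by $\theta$ with a $1$-dimensional representation of the quotient $U(\mathfrak{g},e)/(\theta^2-c)$ for an appropriate scalar $c$, where the latter is supplied by applying case (i) to the even subalgebra.

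The main obstacle will be the absence of a super analogue of Losev's parabolic induction and the category-$\mathcal{O}$ machinery of \cite{L3}, compounded by the lack of support-variety tools in the super setting noted in Section~1.4. Because the conjecture covers the exceptional Lie superalgebras $F(4)$, $G(3)$, and $D(2,1;a)$, a uniform argument seems unlikely; one should expect the proof to combine the induction-from-Levi reduction with case-by-case verifications for the short list of rigid even nilpotent orbits in each exceptional type, much in the spirit of \cite{GRU}. The most delicate technical point will be to verify, for each rigid orbit in case (ii), that the constructed $2$-dimensional module is irreducible over all of $U(\mathfrak{g},e)$ rather than only over the Clifford subalgebra generated by $\theta$; this requires precise control over how $\bar\theta$ couples to the even generators in the relations of Theorem~\ref{relationc}.
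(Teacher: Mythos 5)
The item you set out to ``prove'' is stated in the paper as Conjecture~\ref{con}, not as a theorem: the authors do not claim to prove it, and indeed the surrounding text (Sections~8--9) offers only partial verifications. Specifically, the paper establishes case~(i) for $\mathfrak{g}=\mathfrak{sl}(M|N)$ (via Peng's shifted super Yangian realization, by showing $U(\mathfrak{g},e)^{\text{ab}}$ is a polynomial algebra after killing the odd generators --- Proposition~\ref{glmn} and Corollary~\ref{slmn}) and case~(ii) for $\mathfrak{g}=\mathfrak{osp}(1|2n)$ with $e$ regular (via the $2$-dimensional space of $\mathfrak{m}$-invariants in a baby Verma module in characteristic $p$, then passing to $\mathbb{C}$ via Lemma~\ref{transtoc}). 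Everything beyond these anchor cases is left open. Your proposal is therefore a research programme, and you correctly anticipate the main missing ingredient: a super analogue of Losev's parabolic induction functor, which presently does not exist.

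A substantive technical correction to case~(ii) of your plan. You propose tensoring the Clifford module generated by $\theta$ with a $1$-dimensional representation of the quotient $U(\mathfrak{g},e)/(\theta^2-c)$ for ``an appropriate scalar~$c$.'' But there is no such parameter: Theorem~\ref{PBWC}(2) and the relations of Theorem~\ref{relationc}(2) force $[\Theta_{l+q+1},\Theta_{l+q+1}]=\text{id}$, equivalently $\Theta_{l+q+1}^2=\tfrac12\,\text{id}$, so $\theta^2$ is a \emph{fixed} scalar already in $U(\mathfrak{g},e)$; the quotient you write is either all of $U(\mathfrak{g},e)$ or zero. The correct even ``complement'' is not a quotient by $\theta$ but the subalgebra $W'_\chi=Q_\chi^{\text{ad}\,\mathfrak{m}'}$ of Definition~\ref{rewcc}, which Remark~\ref{QQ'} identifies as $[v_{\frac{r+1}{2}},\,Q_\chi^{\text{ad}\,\mathfrak{m}}]$. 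This is what the paper works with: Proposition~\ref{typeq} shows that a $2$-dimensional $U(\mathfrak{g},e)$-module is automatically of type $Q$ as a $W'_\chi$-module, so the $\theta$-coupling you worry about is genuinely there, but the place to isolate it is $W'_\chi$, not $U(\mathfrak{g},e)/(\theta^2-c)$. Your idea of reducing case~(ii) to case~(i) for an ``even subalgebra'' would only go through if one could prove that $W'_\chi$ admits a $1$-dimensional (type $M$) representation --- which is not a case of the conjecture as stated and is itself open.

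Your case~(i) outline is otherwise reasonable and overlaps with what the paper verifies in type $A$; invoking \cite{BBG2}, \cite{PS}, \cite{PS2} and \cite{Peng2} for the regular and type-$A$ anchors is appropriate. But you should not present this as a proof of the conjecture: until a super parabolic-induction theory is built (and until the exceptional types $F(4)$, $G(3)$, $D(2,1;a)$ and all rigid orbits are checked), the general statement remains conjectural, exactly as the paper says.
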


We show that Conjecture~\ref{conjecture} is true for some special cases. Based on the conjecture, we first prove that the lower bound of dimensions in the Super Kac-Weisfeiler Conjecture with any nilpotent $p$-characters ([\cite{WZ}], Theorem 4.3) can be reached. Explicitly speaking, let $\mathfrak{g}$ be a basic classical Lie superalgebra over $\mathbb{C}$ and $\mathfrak{g}_\mathds{k}$ be the corresponding Lie superalgebra over positive characteristic field $\mathds{k}$. Let $\chi\in(\mathfrak{g}_\mathds{k}^*)_{\bar0}$ be the $p$-character of $\mathfrak{g}_\mathds{k}$ such that $\chi(\bar y)=(e,\bar y)$ for any $\bar y\in\mathfrak{g}_\mathds{k}$. Denote by $d_i:=\text{dim}\mathfrak{g}_i-\text{dim}\mathfrak{g}^e_i$ for $i\in\{\bar0,\bar1\}$ where $\mathfrak{g}^e$ is the centralizer of $e$ in $\mathfrak{g}$.
\begin{theorem}\label{nilg}
If Conjecture~\ref{conjecture} establishes, the following are true:

(1) when $d_1$ is even, for $p\gg0$ the reduced enveloping algebra $U_\chi(\mathfrak{g}_\mathds{k})$ over $\mathds{k}=\overline{\mathbb{F}}_p$ admits irreducible representations of dimension $p^{\frac{d_0}{2}}2^{\frac{d_1}{2}}$;

(2) when $d_1$ is odd, for $p\gg0$ the reduced enveloping algebra $U_\chi(\mathfrak{g}_\mathds{k})$ over $\mathds{k}=\overline{\mathbb{F}}_p$ admits irreducible representations of dimension $p^{\frac{d_0}{2}}2^{\frac{d_1+1}{2}}$.
\end{theorem}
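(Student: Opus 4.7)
The plan is to follow Premet's strategy in [\cite{P7}] for ordinary finite $W$-algebras, lifting the minimal-dimensional representation afforded by Conjecture~\ref{conjecture} from $\mathbb{C}$ to characteristic $p$ and then inducing it to a representation of $U_\chi(\mathfrak{g}_\mathds{k})$ whose dimension hits the Kac-Weisfeiler bound on the nose. The PBW theorem (Theorem~\ref{PBWC}) provides a Kostant-type $\mathbb{Z}$-form $U(\mathfrak{g}_\mathbb{Z},e)\subset U(\mathfrak{g},e)$ whose base change to $\mathds{k}$ recovers $U(\mathfrak{g}_\mathds{k},e)$. Let $V$ be the 1-dimensional (if $d_1$ is even) or 2-dimensional (if $d_1$ is odd) $U(\mathfrak{g},e)$-module furnished by the conjecture. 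After inverting finitely many primes, $V$ descends to a module $V_\mathcal{O}$ over a $\mathbb{Z}$-order $\mathcal{O}\subset\mathbb{C}$, and for all sufficiently large $p$ the mod-$p$ specialization $V_\mathds{k}:=V_\mathcal{O}\otimes_\mathcal{O}\mathds{k}$ yields a $U(\mathfrak{g}_\mathds{k},e)$-module of the same dimension as $V$.

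I would next extend $V_\mathds{k}$ to a $\widehat{U}(\mathfrak{g}_\mathds{k},e)$-module using the decomposition $\widehat{U}(\mathfrak{g}_\mathds{k},e)\cong U(\mathfrak{g}_\mathds{k},e)\otimes_\mathds{k}Z_p(\mathfrak{a}_\mathds{k})$ of Theorem~\ref{graded W2}. Since $Z_p(\mathfrak{a}_\mathds{k})$ is a commutative subalgebra of the $p$-center, it admits a 1-dimensional character compatible with $\chi$, and tensoring $V_\mathds{k}$ with this character produces a $\widehat{U}(\mathfrak{g}_\mathds{k},e)$-module of dimension $\dim V_\mathds{k}$. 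The super-Skryabin-type induction functor $U_\chi(\mathfrak{g}_\mathds{k})\otimes_{\widehat{U}(\mathfrak{g}_\mathds{k},e)}(-)$ developed in Sections 5 and 7 then transports this to an $U_\chi(\mathfrak{g}_\mathds{k})$-module $M$. A PBW-type counting shows that induction multiplies dimension by $p^{d_0/2}2^{d_1/2}$ when $d_1$ is even and by $p^{d_0/2}2^{(d_1-1)/2}$ when $d_1$ is odd, so $\dim M=p^{d_0/2}2^{d_1/2}$ in case (1) and $\dim M=p^{d_0/2}2^{(d_1+1)/2}$ in case (2), matching the targets.

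Irreducibility would then be forced by the Super Kac-Weisfeiler Theorem ([\cite{WZ}], Theorem 4.3), which asserts that every simple $U_\chi(\mathfrak{g}_\mathds{k})$-module has dimension at least $p^{d_0/2}2^{\lceil d_1/2\rceil}$. Any simple quotient of $M$ therefore has dimension bounded above by $\dim M$ and bounded below by this Kac-Weisfeiler value; since the two bounds coincide, $M$ itself must already be simple of the asserted dimension, proving both parts of the theorem.

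The main obstacle is the rigorous construction of the super-Skryabin induction and the precise verification of the dimension multiplier, particularly the factor of $2^{d_1/2}$ or $2^{(d_1-1)/2}$ coming from the odd part. The case $d_1$ odd is the more delicate one: the odd-dimensional subspace $\mathfrak{g}(-1)_{\bar1}$ produces a Clifford-algebra contribution whose smallest nontrivial module is already 2-dimensional, and this is precisely why Conjecture~\ref{conjecture}(ii) must demand a 2-dimensional representation at the $\mathbb{C}$ level in order for the machine to output the sharp bound $p^{d_0/2}2^{(d_1+1)/2}$ rather than double that. Checking that the integrality/admissibility setup behind the mod-$p$ reduction respects the parity of $\dim\mathfrak{g}(-1)_{\bar1}$ (and thus aligns with the dichotomy already visible in Theorem~\ref{graded W}) is the central technical point that the full proof must handle.
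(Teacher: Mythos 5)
Your outline captures the first half of the paper's strategy correctly: reduce the minimal-dimensional $U(\mathfrak{g},e)$-module modulo $p$ via the $A$-form to a $U(\mathfrak{g}_\mathds{k},e)$-module (this is Lemmas~\ref{trans1}/\ref{ck2}), then pass to $\widehat{U}(\mathfrak{g}_\mathds{k},e)\cong U(\mathfrak{g}_\mathds{k},e)\otimes_\mathds{k}Z_p(\mathfrak{a}_\mathds{k})$ using Theorem~\ref{keyisotheorem}(3), identify the resulting module's central annihilator, and invoke the Morita equivalence $U_\eta(\mathfrak{g}_\mathds{k})\cong\mathrm{Mat}_\delta(U_\eta(\mathfrak{g}_\mathds{k},e))$ (not a Skryabin induction over $\widehat{U}(\mathfrak{g}_\mathds{k},e)$ -- that algebra is not a subalgebra of $U_\chi(\mathfrak{g}_\mathds{k})$, so the tensor product you write does not make sense) to multiply the dimension by $p^{d_0/2}2^{\lceil d_1/2\rceil}$. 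Up to this point, the argument matches Lemmas~\ref{mindim1} and~\ref{red2}/\ref{cdim2}.

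The genuine gap is that you have implicitly assumed you can engineer the $p$-central character to be $\chi$. You cannot. Once the $U(\mathfrak{g}_\mathds{k},e)$-module $V_\mathds{k}$ is fixed, the character of the subalgebra $Z_p(\mathfrak{g}_\mathds{k}^e)\subset U(\mathfrak{g}_\mathds{k},e)\subset\widehat{U}(\mathfrak{g}_\mathds{k},e)$ is already determined by $V_\mathds{k}$, and only the factor $Z_p(\mathfrak{a}_\mathds{k})$ in the tensor decomposition is free. Since $\chi$ restricted to $\mathfrak{g}_\mathds{k}^e$ is identically zero while the generators $\bar\Theta_1,\ldots,\bar\Theta_l$ need not act by zero on $V_\mathds{k}$, the $p$-character $\eta$ produced this way lands somewhere in the coset $\chi+(\mathfrak{m}_\mathds{k}^\bot)_{\bar0}$, with no control on whether $\eta=\chi$. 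The conclusion of Lemmas~\ref{mindim1} and~\ref{cdim2} is therefore only that $U_\eta(\mathfrak{g}_\mathds{k})$ has a small simple module for \emph{some} such $\eta$, and an entire further step is required to descend from $\eta$ to $\chi$: this is Lemma~\ref{etachi}, which introduces the Zariski-closed set $\Xi$ of characters $\xi$ for which $U_\xi(\mathfrak{g}_\mathds{k})$ admits a two-sided ideal of codimension $p^{d_0}2^{d_1}$ (resp.\ $p^{d_0}2^{d_1+1}$), proves that $\Xi$ is conical and $\mathrm{Ad}^*(G_\mathds{k})_{\mathrm{ev}}$-stable via a Jordan--Chevalley and Levi-reduction argument, and then uses the cocharacter $\lambda(\bar t)$ attached to the Dynkin grading to contract $\eta$ to $\chi$ inside $\Xi$. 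This geometric degeneration step is the heart of Premet's argument in [\cite{P7}] and of the present paper's proof; without it, your assembly produces a minimal-dimensional representation for the wrong $p$-character. Your irreducibility argument (any simple quotient must already be all of $M$ by Wang--Zhao's lower bound) is fine but also unnecessary once one uses the matrix-algebra isomorphism of Theorem~\ref{sumresult}(3), since the tautological module of a matrix algebra over $U_\eta(\mathfrak{g}_\mathds{k},e)$ is simple whenever the $U_\eta(\mathfrak{g}_\mathds{k},e)$-module is.
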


In virtue of Theorem~\ref{nilg}, we further show that the lower bound of dimensions in the Super Kac-Weisfeiler Conjecture with arbitrary $p$-characters ([\cite{WZ}], Theorem 5.6) is reachable under the assumption of Conjecture~\ref{conjecture}. Explicitly speaking, let $\xi\in(\mathfrak{g}_\mathds{k}^*)_{\bar0}$ be any $p$-character of $\mathfrak{g}_\mathds{k}$ corresponding to an element $\bar x\in (\mathfrak{g}_\mathds{k})_{\bar0}$ such that $\xi(\bar y)=(\bar x,\bar y)$ for any $\bar y\in\mathfrak{g}_\mathds{k}$. Set $d_0:=\text{dim}(\mathfrak{g}_\mathds{k})_{\bar 0}-\text{dim}(\mathfrak{g}_\mathds{k}^{\bar x})_{\bar 0}$ and $d_1:=\text{dim}(\mathfrak{g}_\mathds{k})_{\bar 1}-\text{dim}(\mathfrak{g}_\mathds{k}^{\bar x})_{\bar 1}$, where $\mathfrak{g}_\mathds{k}^{\bar x}$ denotes the centralizer of $\bar x$ in $\mathfrak{g}_\mathds{k}$. Then we have

\begin{theorem}
Let $\mathfrak{g}_\mathds{k}$ be a basic classical Lie superalgebra over $\mathds{k}=\overline{\mathbb{F}}_p$, and let $\xi\in(\mathfrak{g}_\mathds{k}^*)_{\bar0}$. If Conjecture~\ref{conjecture} establishes, then for $p\gg0$ the reduced enveloping algebra $U_\xi(\mathfrak{g}_\mathds{k})$ admits irreducible representations of dimension $p^{\frac{d_0}{2}}2^{\lfloor\frac{d_1}{2}\rfloor}$.
\end{theorem}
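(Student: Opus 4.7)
The plan is to reduce the general $p$-character $\xi$ to the nilpotent case already settled in Theorem~\ref{nilg}, in precise parallel to Wang--Zhao's reduction (in the proof of Theorem 5.6 of [\cite{WZ}]) of the general super Kac--Weisfeiler lower bound to the nilpotent lower bound. The main device is the super analogue of the Kac--Weisfeiler/Friedlander--Parshall parabolic-induction Morita equivalence, which was already set up in [\cite{WZ}] in the lower-bound direction and now must be used to transport irreducibles in the opposite direction.

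First, Jordan-decompose $\bar x = \bar x_s + \bar x_n$ into commuting semisimple and nilpotent parts inside $(\mathfrak{g}_\mathds{k})_{\bar 0}$, and correspondingly write $\xi = \xi_s + \xi_n$ with $\xi_s(\cdot)=(\bar x_s,\cdot)$ and $\xi_n(\cdot)=(\bar x_n,\cdot)$. Let $\mathfrak{l} := \mathfrak{g}_\mathds{k}^{\bar x_s}$; this centralizer is a Levi subsuperalgebra (reductive and itself a product of basic classical Lie superalgebras together with a central torus), and $\bar x_n\in\mathfrak{l}_{\bar 0}$ is a nilpotent element of $\mathfrak{l}$. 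Fix a parabolic subsuperalgebra $\mathfrak{p}=\mathfrak{l}\oplus\mathfrak{n}^+$ with opposite $\mathfrak{p}^-=\mathfrak{l}\oplus\mathfrak{n}^-$ and Levi $\mathfrak{l}$; since $\bar x\in\mathfrak{l}$ and the invariant form pairs $\mathfrak{l}$ trivially with $\mathfrak{n}^\pm$, we have $\xi|_{\mathfrak{n}^\pm}=0$.

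Next, apply the super parabolic-induction Morita equivalence. The functor
\[
M \longmapsto U_\xi(\mathfrak{g}_\mathds{k})\otimes_{U_\xi(\mathfrak{p})}M, \qquad \mathfrak{n}^+\cdot M=0,
\]
is an equivalence of categories $U_{\xi|_\mathfrak{l}}(\mathfrak{l})\text{-mod}\;\simeq\;U_\xi(\mathfrak{g}_\mathds{k})\text{-mod}$ which multiplies dimensions by $p^{\dim\mathfrak{n}^+_{\bar 0}}\cdot 2^{\dim\mathfrak{n}^+_{\bar 1}}$. Moreover $\xi_s|_\mathfrak{l}$ is supported on the centre of $\mathfrak{l}$ and can be absorbed by a character shift of the maximal torus, so irreducibles for $U_{\xi|_\mathfrak{l}}(\mathfrak{l})$ are identified with irreducibles for $U_{\xi_n|_\mathfrak{l}}(\mathfrak{l})$, a genuinely nilpotent-$p$-character case on the Levi. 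Apply Theorem~\ref{nilg} to $(\mathfrak{l},\bar x_n)$: Conjecture~\ref{conjecture} for $\mathfrak{l}$ is inherited from its validity on each basic classical summand of $\mathfrak{l}$, so $U_{\xi_n|_\mathfrak{l}}(\mathfrak{l})$ admits an irreducible module of the predicted minimal dimension in terms of $d_i^\mathfrak{l}:=\dim\mathfrak{l}_i-\dim(\mathfrak{l}^{\bar x_n})_i$.

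Pulling this module back through the equivalence produces an irreducible $U_\xi(\mathfrak{g}_\mathds{k})$-module. Using $\mathfrak{g}_\mathds{k}^{\bar x}=\mathfrak{l}^{\bar x_n}$ and $\dim\mathfrak{n}^+=\dim\mathfrak{n}^-$, one has $d_i=2\dim\mathfrak{n}^+_i+d_i^\mathfrak{l}$ for $i\in\{\bar 0,\bar 1\}$; combining this with the dimension multiplier $p^{\dim\mathfrak{n}^+_{\bar 0}}\cdot 2^{\dim\mathfrak{n}^+_{\bar 1}}$ and the nilpotent-case formula on $\mathfrak{l}$ yields the claimed dimension $p^{d_0/2}\cdot 2^{\lfloor d_1/2\rfloor}$ after parity bookkeeping. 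The main obstacle is Step~2: one must verify that the super parabolic induction is exact, preserves irreducibility in both directions, and realises a \emph{category} equivalence (not merely a dimension-preserving bijection) of $U_\xi(\mathfrak{g}_\mathds{k})\text{-mod}$ with $U_{\xi|_\mathfrak{l}}(\mathfrak{l})\text{-mod}$. In the classical Lie-algebra setting this is a Mackey-type argument of Friedlander--Parshall using the non-degeneracy of $\xi$ on $\mathfrak{n}^-$; here the super version must be carried out carefully on the odd root spaces, but this should follow from the framework already implicit in [\cite{WZ}].
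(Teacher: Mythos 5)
Your overall strategy (Jordan decomposition $\xi=\xi_s+\xi_n$, centralizer $\mathfrak{l}=\mathfrak{g}_\mathds{k}^{\bar x_s}$, super parabolic-induction Morita equivalence from [\cite{WZ}], reduction to the nilpotent case on $\mathfrak{l}$) is exactly the paper's strategy. But there is a genuine gap at the step you dismiss as ``parity bookkeeping.'' Conjecture~\ref{conjecture} is stated only for a \emph{single} basic classical Lie superalgebra, whereas $\mathfrak{l}\cong\bigoplus_{i=1}^r(\mathfrak{g}_\mathds{k})_i\oplus\mathfrak{t}'_\mathds{k}$ is typically a direct sum plus a torus. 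To produce an irreducible $U_{\xi_n}(\mathfrak{l})$-module from irreducibles on each summand you must take outer tensor products, and there the superalgebra type-$M$/type-$Q$ dichotomy (Kleshchev's Lemma~\ref{AB}) intervenes: $Q\boxtimes Q$ is reducible, $M\boxtimes Q$ is not, and the dimensions of the resulting irreducibles do not factor naively. When $k$ of the summands have odd $(d_1)_i$ (so each carries a $2$-dimensional type-$Q$ minimal representation of its reduced $W$-superalgebra), the minimal irreducible for the product has dimension $p^{d_0'/2}\,2^{(d_1'+k)/2}$ — see Proposition~\ref{sumdivisible}. If $k\geqslant 2$, this exceeds $p^{d_0'/2}\,2^{\lfloor d_1'/2\rfloor}$ by a factor $2^{\lceil (k-1)/2\rceil}$, and you would not recover the claimed bound after parabolic induction.

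The paper closes this gap with a case-by-case root-system analysis (proof of Theorem~\ref{main3'}) showing that when $\mathfrak{l}$ arises as the centralizer of a semisimple element inside a basic classical $\mathfrak{g}_\mathds{k}$, at most one summand $(\mathfrak{g}_\mathds{k})_i$ can have odd $(d_1)_i$, i.e.\ $k\leqslant 1$; only with this in hand does $2^{(d_1'+k)/2}=2^{\lfloor d_1'/2\rfloor}$, and the final count lands on $p^{d_0/2}2^{\lfloor d_1/2\rfloor}$. Your proposal never engages with either piece: you do not perform the tensor-product-of-superalgebras analysis (so you cannot even state what dimension arises on $\mathfrak{l}$), and you do not observe the constraint $k\leqslant 1$ on centralizers. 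Both are necessary; without them the argument does not compile into the stated formula.
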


\noindent{\bf 1.7.} The paper is organized as follows.

In Section 2, we recall some basics about the algebraic supergroups.

In Section 3, three equivalent definitions for finite $W$-superalgebras $U(\mathfrak{g},e)$ over $\mathbb{C}$ are introduced. Then follows the Kazhdan filtration and the Skryabin equivalence theorem. The restricted root decomposition is discussed in the final part.

In Section 4, the finite $W$-superalgebra $\widehat{U}(\mathfrak{g}_\mathds{k},e)$ over positive characteristic field $\mathds{k}$  and the reduced $W$-superalgebra $U_\chi(\mathfrak{g}_\mathds{k},e)$ are defined. The Morita equivalence theorem between $\mathds{k}$-algebras $U_\chi(\mathfrak{g}_\mathds{k},e)$ and $U_\chi(\mathfrak{g}_\mathds{k})$ is introduced.

In Section 5, we introduce the generators and their relations for the $\mathds{k}$-algebra $U_\chi(\mathfrak{g}_\mathds{k},e)$, then follows the PBW Theorem. Notably, we find that the construction of reduced $W$-superalgebra $U_\chi(\mathfrak{g}_\mathds{k},e)$ critically depends on the parity of dim~$\mathfrak{g}_\mathds{k}(-1)_{\bar1}$. Based on which, the construction of reduced $W$-superalgebras can be divided into two cases, which never happens for the reduced $W$-algebra case.

In virtue of the results obtained in Section 5, we introduce the PBW Theorem for finite $W$-superalgebra $U(\mathfrak{g},e)$ over $\mathbb{C}$ in Section 6 by means of the ``admissible'' procedure. The relationship between the refined finite $W$-superalgebra $Q_\chi^{\text{ad}\mathfrak{m}'}$ by Wang in ([\cite{W}], Remark 70) and the finite $W$-superalgebra $U(\mathfrak{g},e)$ over $\mathbb{C}$ is also discussed.

In Section 7 the translation subalgebra $U(\mathfrak{g}_\mathds{k},e)$ over positive characteristic field is introduced and the relationship between finite $W$-superalgebra $\widehat{U}(\mathfrak{g}_\mathds{k},e)$, its $p$-center $Z_p(\widetilde{\mathfrak{p}}_\mathds{k})$ and the translation subalgebra $U(\mathfrak{g}_\mathds{k},e)$ is discussed.

As the construction of finite $W$-superalgebra $U(\mathfrak{g},e)$ can be divided into two cases in virtue of the parity of dim~$\mathfrak{g}(-1)_{\bar1}$, for each case the minimal dimension for the representations of $U(\mathfrak{g},e)$ over $\mathbb{C}$ is estimated and reasonable conjectured in Section 8, respectively. We also show that these representations can be translated into all the common zeros of some polynomials.

In Section 9, we first show that the conjecture given in Section 8 establishes for some special cases. Under the assumption of the conjecture, we prove that the lower bound in the Super Kac-Weisfeiler Property for a basic classical Lie superalgebra given by Wang-Zhao can be reached for the cases with any nilpotent $p$-characters. Consequently, the Super Kac-Weisfeiler Property for a direct sum of basic classical Lie superalgebras with nilpotent $p$-characters introduced by Wang-Zhao is refined and reachability of the lower bound given in the refined version is proved. In virtue of this consequence, we further show that the lower bound in the Super Kac-Weisfeiler Property for a basic classical Lie superalgebra with arbitrary $p$-characters is also reachable. The main tool applied there is the geometric method caused by the nilpotent orbits of algebraic groups.

~

\noindent{\bf 1.8.} Throughout we work with the field of complex numbers $\mathbb{C}$ or the algebraically closed field $\mathds{k}=\overline{\mathbb{F}}_p$ in positive characteristic as the ground field.

Let $\mathbb{Z}_+$ be the set of all the non-negative integers in $\mathbb{Z}$, and denote by $\mathbb{Z}_2$ the residue class ring modulo $2$ in $\mathbb{Z}$.

A superspace is a $\mathbb{Z}_2$-graded vector space $V=V_{\bar0}\oplus V_{\bar1}$, in which we call elements in $V_{\bar0}$ and $V_{\bar1}$ even and odd, respectively. Write $|v|\in\mathbb{Z}_2$ for the parity (or degree) of $v\in V$, which is implicitly assumed to be $\mathbb{Z}_2$-homogeneous. We will use the notations
$$\text{\underline{dim}}V=(\text{dim}V_{\bar0},\text{dim}V_{\bar1}),\quad\text{dim}V=\text{dim}V_{\bar0}+\text{dim}V_{\bar1}.$$
All Lie superalgebras $\mathfrak{g}$ will be assumed to be finite dimensional.

Recall that a superalgebra analog of Schur's Lemma states that the endomorphism ring of an irreducible module of a superalgebra is either $1$-dimensional or $2$-dimensional (in the latter case it is isomorphic to a Clifford algebra), cf. for example, Kleshchev ([\cite{KL}], Chapter 12). An irreducible module is of type $M$ if its endomorphism ring is $1$-dimensional and it is of type $Q$ otherwise.

By vector spaces, subalgebras, ideals, modules, and submodules etc. we mean in the super sense unless otherwise specified, throughout the paper.

\section{Preliminaries}
The materials in this section are standard results about algebraic supergroups and Lie superalgebras.
\subsection{Algebraic supergroups}\label{2.1}

We first briefly recall the generalities on algebraic supergroups, following ([\cite{SW}], Section 2) by Shu and Wang. One can also refer to [\cite{BK}], [\cite{J}], [\cite{M}].

Let $\mathbb{F}$ be a fixed algebraically closed field of characteristic $p>2$. All objects in this section will be defined over $\mathbb{F}$ unless otherwise specified. Let $B=B_{\bar0}+B_{\bar1}$ be a commutative $\mathbb{Z}_2$-graded superalgebra over $\mathbb{F}$, that is, $ab=(-1)^{|a||b|}ba$ for all homogeneous elements $a,b\in B$ of degree $|a|,|b|\in\mathbb{Z}_2$. In the sequel, we assume that all formulas are defined via
the homogeneous elements and extended by linearity. An element in $B_{\bar0}$ is called even and an element in $B_{\bar1}$ is called odd. From the supercommutativity it follows that $b^2=0$ for all $b\in B_{\bar1}$. We will denote by $\mathfrak{salg}$ the category of commutative superalgebras over $\mathbb{F}$ and
even homomorphisms. A fundamental object in this category is the free commutative superalgebra $\mathbb{F}[x_1,\cdots,x_n;\xi_1,\cdots,\xi_m]$ in even generators $x_1,\cdots,x_n$ and odd generators $\xi_1,\cdots,\xi_m$.

\begin{defn}\label{superscheme}
An affine superscheme $X$ will be identified with its associated functor in the category of superschemes $$\text{Hom}(Spec(-),X):\mathfrak{salg}\rightarrow\mathfrak{sets}.$$
\end{defn}

For an affine superscheme $X$, its coordinate superalgebra $\mathbb{F}[X]$ is the superalgebra $\text{Mor}(X,\mathbb{A}^{1|1})$ of all natural transformations from the functor $X$ to $\mathbb{A}^{1|1}$.

\begin{defn}\label{supergroup}  An affine algebraic supergroup $G$ is a functor from the category $\mathfrak{salg}$ to the category of groups which associates to a commutative superalgebra $B$ a group $G(B)$ functorially, and
which has a coordinator algebra $\mathbb{F}[G]$ that is finitely generated.
\end{defn}

For an algebraic supergroup $G$, we have $\mathbb{F}[G]$ admits a canonical structure of Hopf superalgebra, with comultiplication
$\Delta: \mathbb{F}[G]\rightarrow\mathbb{F}[G]\otimes \mathbb{F}[G]$, the antipode $S:\mathbb{F}[G]\rightarrow\mathbb{F}[G]$, and the counit $\varepsilon:\mathbb{F}[G]\rightarrow\mathbb{F}$. Set $\mathscr{J}:=\text{ker}(\varepsilon)$. A closed subgroup of $G$ is an affine supergroup scheme with coordinate algebra that is a quotient
of $\mathbb{F}[G]$ by a Hopf ideal. In particular, the underlying purely even group of $G$, denoted by $G_{\text{ev}}$, corresponds to the Hopf ideal $\mathbb{F}[G]\mathbb{F}[G]_{\bar1}$. That is, $\mathbb{F}[G_{\text{ev}}]\cong\mathbb{F}[G]/\mathbb{F}[G]\mathbb{F}[G]_{\bar1}$.

The superspace of distributions (at the identity $e\in G$) is $$\text{Dist}(G):=\cup_{n\geqslant 0}\text{Dist}_n(G),$$
where $\text{Dist}_n(G):=\{X\in\mathbb{F}[G]^*|X(\mathscr{J}^{n+1})=0\}\cong(\mathbb{F}[G]/\mathscr{J}^{n+1})^*.$ For any $X\in\text{Dist}_s(G)$ and $Y\in\text{Dist}_t(G)$, define
$$[X,Y]:=X*Y-(-1)^{|X||Y|}Y*X\in\text{Dist}_{s+t-1}(G).$$Hence, the tangent space at the identity$$T_e(G):=\{X\in\text{Dist}_1(G)|X(1)=0\}\cong(\mathscr{J}/\mathscr{J}^2)^*$$carries a Lie superalgebra structure; it is called the Lie superalgebra of $G$ and will be denoted by $\text{Lie}(G)$. The canonical map$$\pi:\mathbb{F}[G]\rightarrow\mathbb{F}[G_{\text{ev}}]=\mathbb{F}[G]/\mathbb{F}[G]\mathbb{F}[G]_{\bar1}$$ sends $\mathscr{J}$ to the kernel $\mathscr{J}_{\text{ev}}$ of $\varepsilon_{\text{ev}}:\mathbb{F}[G_{\text{ev}}]\rightarrow\mathbb{F}$ and $\pi(\mathscr{J}^i)\subset\mathscr{J}_{\text{ev}}^i$ for $i\geqslant1$. This induces an injective algebra homomorphism $$\pi^*:\text{Dist}(G_{\text{ev}})\rightarrow\text{Dist}(G).$$

\begin{lemma}$^{[\cite{SW}]}$\label{Dist}
The superalgebra homomorphism $\pi^*$ induces an isomorphism of Lie algebras from $\text{Lie}(G_{\text{ev}})$ onto $\text{Lie}(G)_{\bar0}=\text{Lie}(G)\cap\text{Dist}(G)_{\bar0}$.
\end{lemma}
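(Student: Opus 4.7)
The plan is to prove the lemma in two stages: first establish the vector-space isomorphism between $\mathrm{Lie}(G_{\mathrm{ev}})$ and $\mathrm{Lie}(G)_{\bar 0}$ via the dual of a natural map of cotangent spaces, then check compatibility with the Lie bracket using the convolution structure on $\mathrm{Dist}(G)$. Throughout I would identify $\mathrm{Lie}(G)_{\bar 0}$ with $(\mathscr{J}/\mathscr{J}^2)^*_{\bar 0}$ and $\mathrm{Lie}(G_{\mathrm{ev}})$ with $(\mathscr{J}_{\mathrm{ev}}/\mathscr{J}_{\mathrm{ev}}^2)^*$, so that the statement becomes a claim about the transpose of a morphism $\mathscr{J}/\mathscr{J}^2 \to \mathscr{J}_{\mathrm{ev}}/\mathscr{J}_{\mathrm{ev}}^2$ induced by $\pi$.

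The core technical step is the ring-theoretic claim that $\pi$ induces a linear isomorphism
\[
\mathscr{J}_{\mathrm{ev}}/\mathscr{J}_{\mathrm{ev}}^2 \;\xrightarrow{\;\sim\;}\; (\mathscr{J}/\mathscr{J}^2)_{\bar 0}.
\]
To see this, observe that because the counit kills $\mathbb{F}[G]_{\bar 1}$, one has $\mathbb{F}[G]_{\bar 1} \subset \mathscr{J}$ and hence the splitting $\mathscr{J} = \mathscr{J}_{\bar 0} \oplus \mathbb{F}[G]_{\bar 1}$. The ideal $I := \mathbb{F}[G]\mathbb{F}[G]_{\bar 1}$ defining $G_{\mathrm{ev}}$ is contained in $\mathscr{J}$, and its even part $I_{\bar 0}$ is spanned by products of two odd elements, so $I_{\bar 0} \subset \mathbb{F}[G]_{\bar 1}\cdot \mathbb{F}[G]_{\bar 1} \subset \mathscr{J}^2$. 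Consequently $\pi$ sends $\mathscr{J}_{\bar 0}$ surjectively onto $\mathscr{J}_{\mathrm{ev}}$ with kernel contained in $\mathscr{J}^2$, giving a surjection on the quotient. For injectivity, if $x \in \mathscr{J}_{\bar 0}$ satisfies $\pi(x) \in \mathscr{J}_{\mathrm{ev}}^2$, I would lift a presentation $\pi(x) = \sum \pi(\tilde y_i)\pi(\tilde z_i)$ to representatives $\tilde y_i, \tilde z_i \in \mathscr{J}_{\bar 0}$, observe that $x - \sum \tilde y_i \tilde z_i \in I \cap \mathbb{F}[G]_{\bar 0} \subset \mathscr{J}^2$, and conclude $x \in \mathscr{J}^2$.

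Dualizing the isomorphism above yields a linear isomorphism $\pi^* \colon \mathrm{Lie}(G_{\mathrm{ev}}) \to \mathrm{Lie}(G)_{\bar 0}$, and since $\pi^* \colon \mathrm{Dist}(G_{\mathrm{ev}}) \to \mathrm{Dist}(G)$ is already known (and asserted in the excerpt) to be an injective homomorphism of associative superalgebras under the convolution product, its restriction to the tangent space automatically respects the commutator $[X,Y] = X*Y - (-1)^{|X||Y|} Y*X$; on the even part the sign is trivial, so it respects the Lie bracket as well. Putting the two halves together delivers the required isomorphism of Lie algebras.

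The step I expect to require the most care is verifying $I_{\bar 0} \subset \mathscr{J}^2$ and the lifting argument for injectivity, because one has to keep track of parities and remember that elements of $I$ can involve even coefficients multiplying odd generators; the rest (passage to duals, bracket compatibility) is formal once $\pi^*$ is set up as an algebra map on distributions. I would not expect serious difficulty beyond this, since the result is essentially the assertion that passing to the underlying even group does not lose any information about the even infinitesimal structure.
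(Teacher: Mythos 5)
The paper actually does not prove this lemma at all — it is marked $^{[\cite{SW}]}$ and the authors simply quote it from Shu--Wang's work, so there is no internal proof for your argument to be compared against. Your proposal supplies a self-contained proof, and it is correct. The structure you use — pass to cotangent spaces, show that $\pi$ induces an isomorphism of cotangent spaces in the even degree, then dualize and invoke the already-stated fact that $\pi^*$ is an algebra homomorphism for $*$ to get bracket compatibility — is the standard way to prove this kind of ``the even infinitesimal theory sees only the underlying even group'' statement, and all the key verifications (the splitting $\mathscr{J}=\mathscr{J}_{\bar 0}\oplus\mathbb{F}[G]_{\bar 1}$ coming from $\varepsilon$ killing odd elements, the inclusion $I_{\bar 0}\subseteq\mathbb{F}[G]_{\bar 1}\cdot\mathbb{F}[G]_{\bar 1}\subseteq\mathscr{J}^2$, and the lifting trick for injectivity) go through as you describe.

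One cosmetic slip: the displayed map $\mathscr{J}_{\mathrm{ev}}/\mathscr{J}_{\mathrm{ev}}^2\to(\mathscr{J}/\mathscr{J}^2)_{\bar 0}$ has the arrow pointing the wrong way relative to $\pi$; since $\pi\colon\mathbb{F}[G]\to\mathbb{F}[G_{\mathrm{ev}}]$, the induced map on cotangent spaces runs $(\mathscr{J}/\mathscr{J}^2)_{\bar 0}\to\mathscr{J}_{\mathrm{ev}}/\mathscr{J}_{\mathrm{ev}}^2$. Your subsequent prose (surjectivity of $\pi$ on $\mathscr{J}_{\bar 0}$, kernel contained in $\mathscr{J}^2$) and the final dualization show you clearly have the right map in mind, so this is only a typo rather than a mathematical error. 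The only fact you take for granted without remark is that $T_e(G)$ is closed under the convolution bracket — i.e.\ that $[X,Y](1)=0$ and $[X,Y]\in\mathrm{Dist}_1(G)$ when $X,Y$ are in the tangent space — but this is standard and is already implicit in the paper's own definition of the Lie superalgebra structure on $\mathrm{Lie}(G)$.
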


\subsection{Lie superalgebras}\label{2.2}

In this part we will recall some basics on Lie superalgebras.

\begin{defn}\label{form}
Let $V=V_{\bar0}\oplus V_{\bar1}$ be a $\mathbb{Z}_2$-graded space and $(\cdot,\cdot)$ be a bilinear form on $V$.

(1) If $(a,b)=0$ for any $a\in V_{\bar0}, b\in V_{\bar1}$, then $(\cdot,\cdot)$ is called even;

(2) If $(a,b)=(-1)^{|a||b|}(b,a)$ for any homogeneous elements $a,b\in V$, then $(\cdot,\cdot)$ is called supersymmetric;

(3) If $([a,b],c)=(a,[b,c])$ for any homogeneous elements $a,b,c\in V$, then $(\cdot,\cdot)$ is called invariant;

(4) If it follows from $(a,V)=0$ that $a=0$, then $(\cdot,\cdot)$ is called non-degenerated.

\end{defn}

\begin{defn}\label{Lie superalgebra}
Let $\mathfrak{g}$ be a simple Lie superalgebra. If

(1) there exists a non-degenerated, supersymmetric and invariant even bilinear form on $\mathfrak{g}$;

(2) the even part $\mathfrak{g}_{\bar0}$ of $\mathfrak{g}$ is a Lie algebra of reductive group,

\noindent then $\mathfrak{g}$ is called a basic classical Lie superalgebra.
\end{defn}

In other words, a finite-dimensional Lie superalgebra $\mathfrak{g}=\mathfrak{g}_{\bar0}\oplus\mathfrak{g}_{\bar1}$ is called classical if
it is simple and the representation of $\mathfrak{g}_{\bar0}$ on $\mathfrak{g}_{\bar1}$ is completely reducible.

Now let $\mathbb{F}$ be the field $\mathds{k}:=\overline{\mathbb{F}}_p$ of positive characteristic $p>0$.

\begin{defn}\label{restricted}
A Lie superalgebra $\mathfrak{g}_\mathds{k}=(\mathfrak{g}_\mathds{k})_{\bar{0}}\oplus(\mathfrak{g}_\mathds{k})_{\bar{1}}$ over $\mathds{k}$ is called a restricted Lie superalgebra,
if there is a $p$-th power map $(\mathfrak{g}_\mathds{k})_{\bar{0}}\rightarrow(\mathfrak{g}_\mathds{k})_{\bar{0}}$, denoted as $^{[p]}$, satisfying

(a) $(kx)^{[p]}=k^px^{[p]}$ for all $k\in\mathds{k}$ and $x\in(\mathfrak{g}_\mathds{k})_{\bar{0}}$;

(b) $[x^{[p]},y]=(\text{ad}x)^p(y)$ for all $x\in(\mathfrak{g}_\mathds{k})_{\bar{0}}$ and $y\in\mathfrak{g}_\mathds{k}$;

(c) $(x+y)^{[p]}=x^{[p]}+y^{[p]}+\sum\limits_{i=1}^{p-1}s_i(x,y)$ for all $x,y\in(\mathfrak{g}_\mathds{k})_{\bar{0}}$, where $is_i(x,y)$ is the
coefficient of $\lambda^{i-1}$ in $(\text{ad}(\lambda x+y))^{p-1}(x)$.
\end{defn}

In short, a restricted Lie superalgebra is a Lie superalgebra whose even subalgebra
is a restricted Lie algebra and the odd part is a restricted module by the
adjoint action of the even subalgebra.

In ([\cite{SW}], Proposition 2.3) Shu and Wang introduced the following consequence:

\begin{lemma}$^{[\cite{SW}]}$\label{Dist(G)}
Let $G$ be a supergroup. Then, $\text{Lie}(G)$ is a restricted Lie superalgebra with the $p$-mapping: $X\mapsto X^{[p]}$ for $X\in\text{Lie}(G)_{\bar{0}}$, where $X^{[p]}:=\overbrace{X*\cdots*X}^{p}$ is defined in
$\text{Dist}(G)$. Moreover, the restricted structure on $\text{Lie}(G_{\text{ev}})$ as a subalgebra of $\text{Lie}(G)$ coincides with the one induced as Lie algebra of the algebraic group $G_{\text{ev}}$.
\end{lemma}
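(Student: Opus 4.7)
The plan is to work inside the associative superalgebra $(\text{Dist}(G),*)$, where the $p$-power map $X\mapsto X^{[p]}=X*\cdots*X$ is literally iterated convolution, and reduce every axiom to the classical Jacobson identities in a characteristic-$p$ associative algebra. The evenness hypothesis $|X|=\bar0$ is used throughout to make the super-signs disappear.

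First I would verify that for $X\in\text{Lie}(G)_{\bar0}$ the iterate $X^{[p]}$ still lies in $\text{Lie}(G)$, not merely in $\text{Dist}(G)$. Characterize $\text{Lie}(G)$ as the primitive part of $\text{Dist}(G)$: $X\in\text{Lie}(G)$ iff the coproduct $\delta$ on $\text{Dist}(G)$ (dual to the multiplication of $\mathbb{F}[G]$) satisfies $\delta(X)=X\otimes 1+1\otimes X$. Because $X$ is even, $X\otimes 1$ and $1\otimes X$ commute in the graded tensor product $\text{Dist}(G)\otimes\text{Dist}(G)$, and the binomial theorem together with $\binom{p}{k}\equiv 0\pmod p$ for $0<k<p$ gives
$$\delta(X^{[p]})=\delta(X)^{*p}=(X\otimes 1+1\otimes X)^{*p}=X^{[p]}\otimes 1+1\otimes X^{[p]},$$
so $X^{[p]}$ is again an even primitive element, i.e.\ $X^{[p]}\in\text{Lie}(G)_{\bar0}$.

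Next I would verify axioms (a)--(c) of Definition~\ref{restricted}. Axiom (a) is immediate from bilinearity of $*$. For (b) and (c), in any associative algebra $A$ over a field of characteristic $p$ one has the Jacobson identities
\begin{align*}
(\text{ad}\,x)^p(y) &= [x^{*p},y], &
(x+y)^{*p} &= x^{*p}+y^{*p}+\sum_{i=1}^{p-1}s_i(x,y),
\end{align*}
proved via the commuting left/right multiplication operators $L_x,R_x$ and the Frobenius. For $X\in\text{Lie}(G)_{\bar0}$ the super-bracket $[X,\cdot]$ on $\text{Dist}(G)$ coincides with the ordinary commutator $L_X-R_X$ applied to every $Y$ regardless of the parity of $Y$, and $L_X,R_X$ commute. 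The classical proofs therefore transfer verbatim to $\text{Dist}(G)$, giving (b) for every $Y\in\text{Lie}(G)$ and (c) for all $X,Y\in\text{Lie}(G)_{\bar0}$.

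Finally, for the compatibility with $G_{\text{ev}}$, since $\pi:\mathbb{F}[G]\to\mathbb{F}[G_{\text{ev}}]$ is a Hopf algebra homomorphism, the induced $\pi^*:\text{Dist}(G_{\text{ev}})\to\text{Dist}(G)$ is an algebra homomorphism for convolution, and by Lemma~\ref{Dist} it restricts to a Lie algebra isomorphism $\text{Lie}(G_{\text{ev}})\cong\text{Lie}(G)_{\bar0}$. Algebra homomorphisms commute with $p$-th powers, so the classical $p$-mapping on $\text{Lie}(G_{\text{ev}})$ agrees with the one defined here via $\text{Dist}(G)$. The main obstacle is the first step, preservation of primitivity under $p$-th convolution; once that is in hand, evenness of $X$ kills every super-sign that could obstruct a direct import of the classical associative-algebra arguments.
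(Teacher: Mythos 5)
Your proof is correct, and since the paper imports this lemma from Shu--Wang (Proposition 2.3) without reproducing an argument, there is no in-paper proof to compare against; your reconstruction is the standard one and matches what the reference uses. Two minor points worth flagging for yourself: (i) you should note, as you implicitly do, that the identification of $\text{Lie}(G)$ with the primitive part of $\text{Dist}(G)$ is equivalent to the paper's definition via $\text{Dist}_1(G)$ and $\mathscr{J}/\mathscr{J}^2$ --- this is a short check using the counit, not a tautology; and (ii) the observation that $X\otimes 1$ and $1\otimes X$ commute in the graded tensor product relies on $|X|=\bar 0$, which is exactly why the $p$-map is only defined on the even part and why the Jacobson identities transfer verbatim.
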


All the Lie superalgebras over positive characteristic field $\mathds{k}$ in this paper will be assumed to be restricted.

\section{Finite $W$-superalgebras over the field of complex numbers}

In this section we will introduce the equivalent definitions of finite $W$-superalgebras over $\mathbb{C}$.

\subsection{The definition of finite $W$-superalgebras over $\mathbb{C}$}

Let $\mathfrak{g}$ be a basic classical Lie superalgebra over $\mathbb{C}$ and $\mathfrak{h}$ be a Cartan subalgebra of $\mathfrak{g}$. Let $\Phi$ be a root system of $\mathfrak{g}$ relative to $\mathfrak{h}$ whose simple roots system $\Pi=\{\alpha_1,\cdots,\alpha_l\}$ is distinguished (which is defined in ([\cite{K2}], Proposition 1.5)). Let $\Phi^+$ be the corresponding positive system in $\Phi$, and put $\Phi^-:=\Phi^+$. Let $\mathfrak{g}=\mathfrak{n}^-\oplus\mathfrak{h}\oplus\mathfrak{n}^+$ be the corresponding triangular decomposition of $\mathfrak{g}$. By [\cite{FG}], we can choose a Chevalley basis $B=\{e_\gamma|\gamma\in\Phi\}\cup\{h_\alpha|\alpha\in\Pi\}$ of $\mathfrak{g}$. Let $\mathfrak{g}_\mathbb{Z}$ denote the Chevalley $\mathbb{Z}$-form in $\mathfrak{g}$ and $U_\mathbb{Z}$ the Kostant $\mathbb{Z}$-form of $U(\mathfrak{g})$ associated to $B$. Given a $\mathbb{Z}$-module $V$ and a $\mathbb{Z}$-algebra $A$, we write $V_A:=V\otimes_\mathbb{Z}A$.

Let $G$ be the algebraic supergroup of $\mathfrak{g}$. It is immediate from Definition~\ref{Lie superalgebra} that the even part of $G$ is reductive and denote it by $G_{\text{ev}}$. Let $e\in\mathfrak{g}_{\bar0}$ be an even nilpotent in $\mathfrak{g}$. By the Dynkin-Kostant theory, $\text{ad}G_{\text{ev}}.e$ interacts with $(\mathfrak{g}_\mathbb{Z})_{\bar{0}}$ nonempty. Therefore, we can assume that all the even nilpotent elements considered are in $(\mathfrak{g}_\mathbb{Z})_{\bar{0}}$. By the same discussion as ([\cite{P2}], Section 4.2), for any nilpotent element $e\in(\mathfrak{g}_\mathbb{Z})_{\bar{0}}$ we can find $f,h\in(\mathfrak{g}_\mathbb{Q})_{\bar{0}}$ such that $(e,h,f)$ is a $\mathfrak{sl}_2$-triple in $\mathfrak{g}$ (i.e. $[h,e]=2e,[h,f]=-2f,[e,f]=h$).

\begin{prop}\label{invariant bilinear}
Let $\mathfrak{g}$ be a basic classical Lie superalgebra (except for type $D(2,1;a)$\\$(a\notin\mathbb{Q}$)) over $\mathbb{C}$, then there exists an even non-degenerated supersymmetric invariant bilinear form $(\cdot,\cdot)$ and a Chevalley basis of $\mathfrak{g}$ under which the invariant bilinear form takes value in $\mathbb{Q}$.
\end{prop}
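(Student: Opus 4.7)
The plan is to proceed by a case-by-case analysis along the Kac classification of basic classical Lie superalgebras, exploiting the fact that the mere existence of an even non-degenerate supersymmetric invariant bilinear form is already built into Definition~\ref{Lie superalgebra}; the substantive content of the proposition is the rationality of the form on the chosen Chevalley basis. Thus I would fix the Chevalley $\mathbb{Z}$-form $\mathfrak{g}_{\mathbb{Z}}$ and basis $B=\{e_\gamma\mid\gamma\in\Phi\}\cup\{h_\alpha\mid\alpha\in\Pi\}$ constructed by Fioresi--Gavarini~[\cite{FG}] (which has $\mathbb{Z}$-valued structure constants) and ask whether the invariant form can be rescaled so that all pairings of basis vectors land in $\mathbb{Q}$.

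For the classical series $A(m,n)$, $B(m,n)$, $C(n)$, $D(m,n)$, I would realize $\mathfrak{g}$ concretely as a subsuperalgebra of $\mathfrak{gl}(m|n)$ or $\mathfrak{osp}(m|2n)$ and take $(\cdot,\cdot)$ to be a rational multiple of the supertrace form in the natural representation. The Chevalley basis vectors are then linear combinations of matrix units with integer coefficients, so the only non-zero pairings are $(e_\gamma,e_{-\gamma})$ and the Cartan pairings $(h_\alpha,h_\beta)$, and a direct matrix computation shows these all lie in $\mathbb{Q}$ (indeed in $\tfrac12\mathbb{Z}$). Weight considerations kill every other pairing.

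For the exceptional types I would invoke the explicit realizations in Kac~[\cite{K2}]. For $F(4)$ and $G(3)$ the space of invariant forms is one-dimensional, and the tabulated values on a distinguished set of generators are rational; after a global rational rescaling, invariance together with the integrality of the structure constants propagates rationality from the pairings of simple-root vectors and of Cartan generators to all pairs $(e_\gamma,e_{-\gamma})$ via the identity $([e_\alpha,e_\beta],e_{-\alpha-\beta})=(e_\alpha,[e_\beta,e_{-\alpha-\beta}])$. For $D(2,1;a)$ with $a\in\mathbb{Q}$, the space of invariant forms is three-dimensional and the pairings $(e_\gamma,e_{-\gamma})$ are polynomial in the three scaling parameters with coefficients rational in $a$; choosing these parameters in $\mathbb{Q}$ produces a form with $\mathbb{Q}$-valued pairings. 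This last point is precisely what breaks when $a\notin\mathbb{Q}$, and explains the exclusion in the statement.

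The main obstacle I expect is the bookkeeping for the exceptional cases: one has to verify that the Fioresi--Gavarini normalization of the Chevalley basis is simultaneously compatible with a Kac-style normalization of the invariant form, in the sense that a \emph{single} global rescaling of $(\cdot,\cdot)$ places every root pairing $(e_\gamma,e_{-\gamma})$ and every Cartan pairing $(h_{\alpha_i},h_{\alpha_j})$ in $\mathbb{Q}$. Once this finite compatibility check is done for $D(2,1;a)$ ($a\in\mathbb{Q}$), $F(4)$ and $G(3)$, the extension to all remaining pairings is automatic by bilinearity, by the invariance identity, and by the integrality of the Chevalley structure constants.
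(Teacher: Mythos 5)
Your treatment of the classical series via the supertrace form on the matrix realization is sound and essentially matches what the paper does in its case~(2) (and the paper notes explicitly that the supertrace form could be used in all classical cases). Your handling of $F(4)$ and $G(3)$ via explicit tabulated values plus the invariance identity would work, but is more elaborate than needed: the paper observes that the Killing form $\kappa$ is non\nobreakdash-zero for all types outside $A(n,n)$, $D(n+1,n)$, $D(2,1;a)$, and since the Chevalley $\mathbb{Z}$-form has $\mathbb{Z}$-valued structure constants, $\kappa(\mathfrak{g}_{\mathbb{Z}},\mathfrak{g}_{\mathbb{Z}})\subseteq\mathbb{Q}$ automatically; no table lookup or propagation argument is required.

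The genuine gap is in your $D(2,1;a)$ case, and it is exactly the case that carries the force of the hypothesis $a\in\mathbb{Q}$. You assert that ``the space of invariant forms is three-dimensional'' for $D(2,1;a)$. This is false: for each fixed $a\in\mathbb{C}\setminus\{0,-1\}$ the superalgebra $D(2,1;a)$ is simple, and the paper itself later invokes (\cite{K}, Proposition~2.5.5(c)) that the non-degenerate supersymmetric invariant even bilinear form on a basic classical Lie superalgebra is unique up to a scalar. (What is three-dimensional is the space of invariant forms on the purely even part $\mathfrak{sl}_2\oplus\mathfrak{sl}_2\oplus\mathfrak{sl}_2$, but the odd part pins the three scalings into a single one-parameter family.) Consequently there is no freedom to ``choose parameters in $\mathbb{Q}$''; one must actually compute the unique form on the Fioresi--Gavarini Chevalley basis and see that after a single rescaling it is $\mathbb{Q}$-valued precisely when $a\in\mathbb{Q}$. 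The paper does this by writing down the form explicitly on the generators $\{h_i,e_i,f_i\}$ (for instance $(e_1,f_1)=1$, $(e_2,f_2)=-1$, $(e_3,f_3)=-1/a$, $(h_3,h_3)=-2/a$, etc.), extending by invariance, and reading off that rationality of, say, both $(e_2,f_2)$ and $(e_3,f_3)$ after one common rescaling forces $a\in\mathbb{Q}$. Your explanation of the exclusion is therefore also incorrect in mechanism: it is not that a parameter family loses rational points, but that the \emph{unique} invariant form, normalized on any fixed basis vector, has a component proportional to $1/a$.
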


\begin{proof} Firstly, it is well known from Kac's classification theorem ([\cite{K2}], Proposition 1.1(a)) that the basic classical Lie superalgebras can be divided into seven types, i.e. $A(m, n),B(m, n),$ $C(n),D(m, n), D(2,1;a) (a\in\mathbb{C}\backslash\{0,-1\}), F(4), G(3)$. For each case a Chevalley basis of $\mathfrak{g}$ was constructed by R. Fioresi and F. Gavarini in ([\cite{FG}], Section 3.3) explicitly (which was firstly introduced by Shu-Wang for the orthogonal-symplectic case in [\cite{SW}]). We will choose these vectors as a basis of $\mathfrak{g}$. Now for each case we will consider separately:

(1) It follows from ([\cite{K}], Section 2.3 \& Section 2.4) that the Killing form $\kappa(\cdot,\cdot)$ is non-zero for all basic classical Lie superalgebras except for $A(n,n), D(n+1,n)$ and $D(2,1;a)$. Since $\kappa(\mathfrak{g}_\mathbb{Z},\mathfrak{g}_\mathbb{Z})\in\mathbb{Q}$, we can choose $\kappa(\cdot,\cdot)$ as the desired bilinear form.

(2) For the case $A(n,n)$ and $D(n+1,n)$, note that each element in the Chevalley basis of $\mathfrak{g}$ given in ([\cite{FG}], Section 3.3) is a linear combination of matrix vectors. Therefore, the super-trace $\text{str}(\cdot,\cdot)$ associated to the natural representation of $\mathfrak{g}$ takes value in $\mathbb{Q}$. It follow from ([\cite{K}], Proposition 1.1.2(a)) that $\text{str}(\cdot,\cdot)$ is non-degenerated, supersymmetric and invariant. As $\text{str}(\cdot,\cdot)$ is non-zero, we can choose $\text{str}(\cdot,\cdot)$ as the desired bilinear form (in fact, $\text{str}(\cdot,\cdot)$ can also be selected as the desired linear form in case (1)).

(3) For the case $D(2,1;a)$ ($a\in\mathbb{C}\backslash\{0,-1\}$), a set of generators for $\mathfrak{g}$ was formulated in ([\cite{FG}], Section 3.3), i.e. $\{h_i,e_i,f_i\}\,(i\in\{1,2,3\})$ where $e_1,f_1\in\mathfrak{g}_{\bar1}$, and the rest elements are in $\mathfrak{g}_{\bar0}$. Under which the Cartan matrix is:
$$(a_{i,j})_{i,j=1,2,3}=\left(
\begin{array}{c@{\hspace{6pt}}c@{\hspace{6pt}}c@{\hspace{5.5pt}}}
0&1&a\\-1&2&0\\-1&0&2
\end{array}\right),$$
and the relations between them are:
\[\begin{array}{lll}
~[h_i,h_j]=0,\quad &[e_1,e_1]=0,\quad &[f_1,f_1]=0,\\~[h_i,e_j]=a_{i,j}e_j,\quad &[h_i,f_j]=-a_{i,j}f_j,\quad &[e_i,f_j]=\delta_{i,j}h_i.
\end{array}\]

Define\[\begin{array}{llll}
e_{1,2}:=[e_1,e_2],\quad&e_{1,3}:=[e_1,e_3],\quad&e_{1,2,3}:=[e_{1,2},e_3],\quad&e_{1,1,2,3}:=\frac{[e_1,e_{1,2,3}]}{(1+a)},\\
f_{2,1}:=[f_2,f_1],\quad&f_{3,1}:=[f_3,f_1],\quad&f_{3,2,1}:=[f_3,f_{2,1}],\quad&f_{3,2,1,1}:=-\frac{[f_{3,2,1},f_1]}{(1+a)},\\
H_1:=h_1,\quad&H_2:=\frac{(2h_1-h_2-ah_3)}{(1+a)},\quad&H_3:=h_3,
\end{array}\]
then the set $\{H_i,e_i,f_i\}_{i=1,2,3}\cup\{e_{1,2},e_{1,3},e_{1,2,3},e_{1,1,2,3},f_{2,1},f_{3,1},f_{3,2,1},f_{3,2,1,1}\}$ is a Chevalley basis of $D(2,1;a)$.

If $a\neq0,-1$, we can define the bilinear form $(\cdot,\cdot)$ for the generators of $\mathfrak{g}$ by
\[\begin{array}{llll}(e_1,f_1)=1,& (e_2,f_2)=-1,& (e_3,f_3)=-\frac{1}{a},& (h_1,h_2)=-1,\\
(h_1,h_3)=-1,& (h_2,h_2)=-2,& (h_3,h_3)=-\frac{2}{a},\end{array}\]
(the unwritten ones are all zero) and expand it to the Chevalley basis of $D(2,1;a)$ by linearity. When $a\in\mathbb{Q}\backslash\{0,-1\}$, it is easy to verify that the bilinear form $(\cdot,\cdot)$ takes value in $\mathbb{Q}$.
\end{proof}

\begin{rem}\label{except}
In this paper basic classical Lie superalgebras over $\mathbb{C}$ will be referred to all except for type $D(2,1;a)(a\notin\mathbb{Q})$.
\end{rem}

It follows from Proposition~\ref{invariant bilinear} and the discussion earlier that $(e,f)\in\mathbb{Q}$. ([\cite{K}], Proposition 2.5.5(c)) shows that the non-degenerated, supersymmetric and invariant bilinear form on any basic classical Lie superalgebra is uniquely determined up to a constant factor. Therefore, we can assume $(e,f)=1$ and $(\cdot,\cdot)$ is in $\mathbb{Q}$ under the Chevalley basis of $\mathfrak{g}$ given in [\cite{K}]. Define $\chi\in\mathfrak{g}^{*}$ by letting $\chi(x)=(e,x)$ for $x\in\mathfrak{g}$, and it follows that $\chi(\mathfrak{g}_{\bar{1}})=0$.

\begin{defn}\label{admissible}
We call a commutative (in the usual sense, not super) ring $A$ admissible if $A$ is a finitely generated $\mathbb{Z}$-subalgebra of $\mathbb{C}$, $(e,f)\in A^{\times}(=A\backslash \{0\})$, and all bad primes of the root system of $\mathfrak{g}$ and the determinant of the Gram matrix of ($\cdot,\cdot$) relative to a Chevalley basis of $\mathfrak{g}$ are invertible in $A$.
\end{defn}

It is clear by definition that every admissible ring is a Noetherian domain. Given a finitely generated $\mathbb{Z}$-subalgebra $A$ of $\mathbb{C}$ and an element $\mathfrak{P}$ in the maximal spectrum of $A$, it is well known that for every $\mathfrak{P}\in\text{Specm}A$ the residue field $A/\mathfrak{P}$ is isomorphic to $\mathbb{F}_{q}$, where $q$ is a $p$-power depending on $\mathfrak{P}$. We denote by $\Pi(A)$ the set of all primes $p\in\mathbb{N}$ that occur in this way.

Since the choice of $A$ does not depend on the super structure of $\mathfrak{g}$, it follows from ([\cite{P8}], Section 2.1) that the set $\Pi(A)$ contains almost all primes in $\mathbb{N}$. For example, we can take $A=\mathbb{Z}[\frac{1}{N!}]$ for any sufficiently large integer $N$, then $A$ is an admissible algebra. Let $p$ be a prime with $p\gg N$, i.e. $p\gg0$, then $p\in\Pi(A)$.

Let $\mathfrak{g}(i)=\{x\in\mathfrak{g}|[h,x]=ix\}$ be the decomposition of $\mathfrak{g}$ under the Dynkin grading, then $\mathfrak{g}=\bigoplus\limits_{i\in\mathbb{Z}}\mathfrak{g}(i).$ By the $\mathfrak{sl}_2$-theory, all subspaces $\mathfrak{g}(i)$ defined are over $\mathbb{Q}$. Also, $e\in\mathfrak{g}(2)_{\bar{0}}$ and $f\in\mathfrak{g}(-2)_{\bar{0}}$. By ([\cite{H}], Lemma 2.7(i)) we know that if the integers $i$ and $j$ satisfy $i+j\neq0$, then $(\mathfrak{g}(i),\mathfrak{g}(j))=0$. Moreover, there exist symplectic and symmetric bilinear forms $\langle\cdot,\cdot\rangle$ on the $\mathbb{Z}_2$-graded subspaces $\mathfrak{g}(-1)_{\bar{0}}$ and $\mathfrak{g}(-1)_{\bar{1}}$ given by $$\langle x,y\rangle:=(e,[x,y])=\chi([x,y]),$$ respectively.

It follows from ([\cite{WZ}], Section 4.1) that dim~$\mathfrak{g}(-1)_{\bar{0}}$ is even. Take $\mathfrak{g}(-1)_{\bar{0}}^{\prime}\subset\mathfrak{g}(-1)_{\bar{0}}$ be a maximal isotropic subspace with respect to $\langle\cdot,\cdot\rangle$, then dim~$\mathfrak{g}(-1)_{\bar{0}}^{\prime}$= dim~$\mathfrak{g}(-1)_{\bar{0}}/2=s$. Let $u_{s+1},\cdots,u_{2s}$ of be a basis of $\mathfrak{g}(-1)_{\bar{0}}^{\prime}$, then we can choose a basis $u_1,\cdots,u_s$ of $\mathfrak{g}(-1)_{\bar{0}}\cap(\mathfrak{g}(-1)_{\bar{0}}^{\prime})^{\bot }$ (with respect to $\langle\cdot,\cdot\rangle$) such that $u_1,\cdots,u_s,u_{s+1},\cdots,u_{2s}$ is a basis of $\mathfrak{g}(-1)_{\bar{0}}$ under which the symplectic form $\langle\cdot,\cdot\rangle$ has matrix form
\newcommand*{\adots}{\mathinner{\mkern2mu\raisebox{0.1em}{.}
   \mkern2mu\raisebox{0.4em}{.}\mkern2mu\raisebox{0.7em}{.}\mkern1mu}}
\[\left(
\begin{array}{llllll}
&&&&&-1\\
&&&&\adots&\\
&&&-1&&\\
&&1&&&\\
&\adots&&&&\\
1&&&&&
\end{array}
\right),
\]
i.e. for any $1\leqslant i\leqslant 2s$, if we define \[i^*=\left\{\begin{array}{ll}-1&\text{if}~1\leqslant i\leqslant s;\\ 1&\text{if}~s+1\leqslant i\leqslant 2s,\end{array}\right.\] then $\langle u_i, u_j\rangle =i^*\delta_{i+j,2s+1}$, where $\delta_{i,j}$ is the kronecker symbol.

Accordingly, if $\text{dim}~\mathfrak{g}(-1)_{\bar1}=r$, we can choose a basis $v_1,\cdots,v_r$ of $\mathfrak{g}(-1)_{\bar{1}}$ under which the symmetric form $\langle\cdot,\cdot\rangle$ has matrix form
\[\left(
\begin{array}{lll}
&&1\\
&\adots&\\
1&&
\end{array}
\right),
\]
i.e. for any $1\leqslant i,j\leqslant r$, $\langle v_i,v_j\rangle=\delta_{i+j,r+1}$.

Since the bilinear form $\langle\cdot,\cdot\rangle$ on $\mathfrak{g}(-1)_{\bar{1}}$ is symmetric, the dimension of $\mathfrak{g}(-1)_{\bar{1}}$ is not necessary an even number. If $r$ is even, then take $\mathfrak{g}(-1)_{\bar{1}}^{\prime}\subseteq\mathfrak{g}(-1)_{\bar{1}}$ be the subspace spanned by $v_{\frac{r}{2}+1},\cdots,v_r$. If $r$ is odd, then take $\mathfrak{g}(-1)_{\bar{1}}^{\prime}\subseteq\mathfrak{g}(-1)_{\bar{1}}$ be the subspace spanned by $v_{\frac{r+3}{2}},\cdots,v_r$. Set $\mathfrak{g}(-1)^{\prime}=\mathfrak{g}(-1)'_{\bar{0}}\oplus\mathfrak{g}(-1)'_{\bar{1}}$ and introduce the subalgebras
$$\mathfrak{m}=\bigoplus_{i\leqslant -2}\mathfrak{g}(i)\oplus\mathfrak{g}(-1)^{\prime},\qquad \mathfrak{p}=\bigoplus_{i\geqslant 0}\mathfrak{g}(i),$$
\[\mathfrak{m}^{\prime}=\left\{\begin{array}{ll}\mathfrak{m}&\text{if}~r~\text{is even;}\\
\mathfrak{m}\oplus \mathbb{C}v_{\frac{r+1}{2}}&\text{if}~r~\text{is odd.}\end{array}\right.\]

\begin{rem}\label{bound}
From now on, we will {\bf denote the dimension of $\mathfrak{g}(-1)_{\bar{1}}$ by $r$}. For any real number $a\in\mathbb{R}$, let $\lceil a\rceil$ denote the largest integer lower bound of $a$, and $\lfloor a\rfloor$ the least integer upper bound of $a$. In particular, $\lceil a\rceil=\lfloor a\rfloor=a$ when $a\in\mathbb{Z}$. {\bf We will denote $\lfloor\frac{r}{2}\rfloor$ by $t$} (which equals to the dimension of $\mathfrak{g}(-1)_{\bar{1}}\cap(\mathfrak{g}(-1)_{\bar{1}}^{\prime})^{\bot }$) for convenience in this paper.
\end{rem}

\begin{rem}\label{centralizer}
Write $\mathfrak{g}^e$ for the centralizer of $e$ in $\mathfrak{g}$, and $\mathfrak{g}^f$ the centralizer of $f$ in $\mathfrak{g}$. For any $i\in\mathbb{Z}_2$, denote $d_i:=\text{dim}~\mathfrak{g}_i-\text{dim}~\mathfrak{g}^e_i$. It follows from ([\cite{WZ}], Theorem 4.3) that
$$\text{\underline{dim}}~\mathfrak{g}-\text{\underline{dim}}~\mathfrak{g}^e=\sum\limits_{k\geqslant2}
2\text{\underline{dim}}~\mathfrak{g}(-k)+\text{\underline{dim}}~\mathfrak{g}(-1).$$
\end{rem}

In particular, $\text{dim}~\mathfrak{g}(-1)_{\bar1}$ and $d_1$ always have the same parity. It follows from the definition of $\mathfrak{m}$ that either (1) $(\frac{d_0}{2},\frac{d_1}{2})=\text{\underline{dim}}~\mathfrak{m}$ when $\text{dim}~\mathfrak{g}(-1)_{\bar1}$ (or $d_1$, equivalently) is even, or (2) $(\frac{d_0}{2},\frac{d_1-1}{2})=\text{\underline{dim}}~\mathfrak{m}$ when $\text{dim}~\mathfrak{g}(-1)_{\bar1}$ (or $d_1$) is odd.

By the same discussion as ([\cite{P7}], Section 2.1), we can assume $\mathfrak{g}_A=\bigoplus\limits_{i\in\mathbb{Z}}\mathfrak{g}_A(i)$ after enlarging $A$ if need be, and each $\mathfrak{g}_A(i):=\mathfrak{g}_A\cap\mathfrak{g}(i)$ is freely generated over $A$ by a basis of the vector space $\mathfrak{g}(i)$. Then $\{u_1,\cdots,u_{2s}\}$ and $\{v_1,\cdots,v_r\}$ are free basis of $A$-module $\mathfrak{g}_A(-1)_{\bar0}$ and $\mathfrak{g}_A(-1)_{\bar1}$, respectively. By the assumptions on $A$ we can obtain that
$\mathfrak{m}_A:=\mathfrak{g}_A\cap\mathfrak{m}$, $\mathfrak{m}^{\prime}_A:=\mathfrak{g}_A\cap\mathfrak{m}^{\prime}$ and $\mathfrak{p}_A:=\mathfrak{g}_A\cap\mathfrak{p}$ are free $A$-modules and direct summands of $\mathfrak{g}_A$. More precisely,
$$\mathfrak{m}_A=\mathfrak{g}_A(-1)^{\prime}\oplus\bigoplus\limits_{i\leqslant -2}\mathfrak{g}_A(i),~ \text{where}
~\mathfrak{g}_A(-1)^{\prime}=\mathfrak{g}_A\cap\mathfrak{g}(-1)^{\prime},\quad \mathfrak{p}_A=\bigoplus_{i\geqslant 0}\mathfrak{g}_A(i),$$
\[\mathfrak{m}^{\prime}_A=\left\{\begin{array}{ll}\mathfrak{m}_A&\text{if}~r~\text{is even;}\\\mathfrak{m}_A\oplus A
v_{\frac{r+1}{2}}&\text{if}~r~\text{is odd.}\end{array}\right.\]

Let $\mathfrak{g}^*$ be the $\mathbb{C}$-module dual to $\mathfrak{g}$ and let $\mathfrak{m}^\perp$ denote the set of all linear functions on $\mathfrak{g}$ vanishing on $\mathfrak{m}$. By the discussion at the beginning of Section 3.1 we have that $e\in(\mathfrak{g}_\mathbb{Z})_{\bar{0}}, f\in(\mathfrak{g}_\mathbb{Q})_{\bar{0}}$. Hence we can assume $e,f\in(\mathfrak{g}_A)_{\bar0}$ after enlarging $A$ possibly (for example, if the admissible algebra is chosen as $\mathbb{Z}[\frac{1}{N!}]$, then one can just select a sufficiently large positive integer $N\gg0$) and that $[e,\mathfrak{g}_A(i)]$ and $[f,\mathfrak{g}_A(i)]$ are direct summands of $\mathfrak{g}_A(i+2)$ and $\mathfrak{g}_A(i-2)$, respectively. By the $\mathfrak{sl}_2$-theory we have $\mathfrak{g}_A(i+2)=[e,\mathfrak{g}_A(i)]$ for each $i\geqslant -1$.

Since the vectors in $\mathfrak{g}$ can be identified with their dual vectors in $\mathfrak{g}^*$ by the non-degenerated bilinear form $(\cdot,\cdot)$, we will identify the functions on $\mathfrak{g}$ naturally with the vectors in $\mathfrak{g}$.

\begin{lemma}\label{m'}
For the nilpotent subalgebra $\mathfrak{m}^\perp$ of Lie superalgebra $\mathfrak{g}$, we have
$$\mathfrak{m}^\perp=[\mathfrak{m}',e]\oplus\mathfrak{g}^f.$$
\end{lemma}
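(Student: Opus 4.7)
The plan is to establish $\mathfrak{m}^\perp = [\mathfrak{m}',e]\oplus\mathfrak{g}^f$ by proving four assertions: (a) $[\mathfrak{m}',e]\subseteq\mathfrak{m}^\perp$; (b) $\mathfrak{g}^f\subseteq\mathfrak{m}^\perp$; (c) $[\mathfrak{m}',e]\cap\mathfrak{g}^f=0$; and (d) $\dim[\mathfrak{m}',e]+\dim\mathfrak{g}^f=\dim\mathfrak{m}^\perp$. Throughout, I identify $\mathfrak{m}^\perp$ with $\{z\in\mathfrak{g}\mid(z,\mathfrak{m})=0\}$ via the bilinear form, and use repeatedly that $(\mathfrak{g}(i),\mathfrak{g}(j))=0$ whenever $i+j\neq 0$.

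For (a), invariance gives $([x,e],y)=(e,[x,y])=\chi([x,y])$ for $x\in\mathfrak{m}'$, $y\in\mathfrak{m}$. Since $\chi$ pairs nontrivially only with $\mathfrak{g}(-2)$ and since $\mathfrak{m}',\mathfrak{m}\subseteq\bigoplus_{i\leqslant -1}\mathfrak{g}(i)$, only the degree $-1$ components of $x$ and $y$ contribute, reducing the task to $\langle x,y\rangle=0$ for $x\in\mathfrak{m}'\cap\mathfrak{g}(-1)$ and $y\in\mathfrak{g}(-1)'$. This is immediate from the explicit matrices of $\langle\cdot,\cdot\rangle$: $\mathfrak{g}(-1)'_{\bar 0}$ is maximal isotropic for the symplectic form; $\mathfrak{g}(-1)'_{\bar 1}=\mathrm{span}(v_{\lceil r/2\rceil+1},\ldots,v_r)$ is manifestly isotropic for the symmetric form; and in the odd-$r$ case the extra vector satisfies $\langle v_{(r+1)/2},v_j\rangle=\delta_{j,(r+1)/2}$, which vanishes on $\mathfrak{g}(-1)'_{\bar 1}$.

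For (b), $\mathrm{ad}\,f$ is injective on $\mathfrak{g}(i)$ for $i\geqslant 1$ by $\mathfrak{sl}_2$-theory, so $\mathfrak{g}^f\subseteq\bigoplus_{i\leqslant 0}\mathfrak{g}(i)$; since $\mathfrak{m}\subseteq\bigoplus_{j\leqslant -1}\mathfrak{g}(j)$, every pairing $(\mathfrak{g}^f,\mathfrak{m})$ vanishes. For (c), write $\mathfrak{m}'=\mathfrak{m}'_{-1}\oplus\bigoplus_{i\leqslant -2}\mathfrak{g}(i)$ with $\mathfrak{m}'_{-1}\subseteq\mathfrak{g}(-1)$. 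Then $[\mathfrak{m}',e]\subseteq\mathfrak{g}(1)\oplus\bigoplus_{i\leqslant 0}\mathfrak{g}(i)$, so the intersection with $\mathfrak{g}^f$ (which lives in degrees $\leqslant 0$) lies in $[\bigoplus_{i\leqslant -2}\mathfrak{g}(i),e]$. Decomposing $\mathfrak{g}$ into irreducible $\mathfrak{sl}_2$-summands, $\mathfrak{g}^f$ is the sum of their lowest-weight spaces, which cannot be written as $[e,y]$ with $y$ of strictly lower $\mathrm{ad}\,h$-weight; hence the intersection is zero. For (d), the injectivity of $\mathrm{ad}\,e$ on weights $\leqslant -1$ gives $\dim[\mathfrak{m}',e]=\dim\mathfrak{m}'=\sum_{i\leqslant -2}\dim\mathfrak{g}(i)+s+\lceil r/2\rceil$; Remark~\ref{centralizer} together with $\mathfrak{sl}_2$-symmetry yields $\dim\mathfrak{g}^f=\dim\mathfrak{g}(0)+\dim\mathfrak{g}(-1)$; a telescoping calculation matches the sum of these with $\dim\mathfrak{m}^\perp=\sum_{i\geqslant 0}\dim\mathfrak{g}(i)+s+\lceil r/2\rceil$.

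The main obstacle I anticipate is (a) in the odd-$r$ case: the extra generator $v_{(r+1)/2}\in\mathfrak{m}'$ must be shown orthogonal to all of $\mathfrak{g}(-1)'$ under $\langle\cdot,\cdot\rangle$, and this works only because $\mathfrak{g}(-1)'_{\bar 1}$ was deliberately chosen as $\mathrm{span}(v_{(r+3)/2},\ldots,v_r)$ (omitting $v_{(r+1)/2}$). This parity-sensitive adjustment $\mathfrak{m}\rightsquigarrow\mathfrak{m}'$ is precisely what drives the subsequent two-case treatment throughout the paper.
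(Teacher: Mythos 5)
Your proposal is correct and follows essentially the same four-step scheme as the paper's own (sketched) proof: your (a), (b), (c), (d) correspond exactly to the paper's steps (2), (1), (3), (4), and you supply the details the paper leaves to the reader, including the parity-sensitive verification that $v_{(r+1)/2}$ is $\langle\cdot,\cdot\rangle$-orthogonal to $\mathfrak{g}(-1)'$ in the odd-$r$ case. (One cosmetic slip: invariance gives $([x,e],y)=(x,[e,y])=-(e,[x,y])$ rather than $+(e,[x,y])$, but the sign is immaterial to the vanishing.)
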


\begin{proof}
When $\text{dim}~\mathfrak{g}(-1)_{\bar1}$ is even, i.e. $\mathfrak{m}'=\mathfrak{m}$, the proof is the same as the Lie algebra case (see e.g. [\cite{W}], Lemma 26). When $\text{dim}~\mathfrak{g}(-1)_{\bar1}$ is odd, i.e. $\mathfrak{m}'\neq\mathfrak{m}$, minor modifications are needed for the proof. We will just sketch the proof as follows:

(1) $\mathfrak{g}^f\subseteq\mathfrak{m}^\perp$. This follows from $\mathfrak{g}^f\subseteq\bigoplus\limits_{i\leqslant 0}\mathfrak{g}(i)\subseteq\mathfrak{m}^\perp$.

(2) $[\mathfrak{m}',e]\subseteq\mathfrak{m}^\perp$. This can be seen by $([e,\mathfrak{m}'],\mathfrak{m})=(e,[\mathfrak{m}',\mathfrak{m}])=\chi([\mathfrak{m}',\mathfrak{m}])=0$.

(3) $\text{im}(\text{ad}e)\cap\mathfrak{g}^f=0$. This follows from the $\mathfrak{sl}_2$-representation theory.

(4) $\underline{\text{dim}}~\mathfrak{m}^\perp=\underline{\text{dim}}~\mathfrak{m}'+\underline{\text{dim}}~\mathfrak{g}(0)+\underline{\text{dim}}~\mathfrak{g}(-1)=\underline{\text{dim}}~[\mathfrak{m}',e]+\underline{\text{dim}}~\mathfrak{g}^f$. This follows by the bijection $\mathfrak{m}'\rightarrow [\mathfrak{m}',e], x\mapsto [x,e]$, by (2), and the $\mathfrak{sl}_2$-representation theory.
\end{proof}

\begin{lemma}\label{p}
For the subalgebra $\mathfrak{p}$ of Lie superalgebra $\mathfrak{g}$, we have
$$\mathfrak{p}=\bigoplus\limits_{j\geqslant 2}[f,\mathfrak{g}(j)]\oplus\mathfrak{g}^e.$$
\end{lemma}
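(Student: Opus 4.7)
The plan is to mimic the proof of Lemma~\ref{m'} with the roles of $e$ and $f$ interchanged; the argument rests entirely on $\mathfrak{sl}_2$-representation theory for the adjoint action of the triple $(e,h,f)$ on $\mathfrak{g}$. Since the triple sits in $\mathfrak{g}_{\bar 0}$, this action preserves the $\mathbb{Z}_2$-grading and decomposes $\mathfrak{g}$ into a direct sum of finite-dimensional irreducible $\mathfrak{sl}_2$-submodules, so no genuinely super-theoretic ingredient is required beyond what Lemma~\ref{m'} already used.

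First I would verify the two containments. The inclusion $\mathfrak{g}^e\subseteq\mathfrak{p}$ follows because highest weight vectors in finite-dimensional $\mathfrak{sl}_2$-modules sit in non-negative $h$-weight, giving $\mathfrak{g}^e\subseteq\bigoplus_{i\geqslant 0}\mathfrak{g}(i)=\mathfrak{p}$. The inclusion $[f,\mathfrak{g}(j)]\subseteq\mathfrak{p}$ for $j\geqslant 2$ is immediate from $\mathrm{ad}\,f\bigl(\mathfrak{g}(j)\bigr)\subseteq\mathfrak{g}(j-2)\subseteq\mathfrak{p}$. Together these give $\mathfrak{g}^e+\bigoplus_{j\geqslant 2}[f,\mathfrak{g}(j)]\subseteq\mathfrak{p}$.

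Next I would establish the decomposition weight space by weight space. For each $j\geqslant 0$, $\mathfrak{sl}_2$-theory supplies
$$\mathfrak{g}(j)=\bigl(\mathfrak{g}(j)\cap\mathfrak{g}^e\bigr)\oplus[f,\mathfrak{g}(j+2)],$$
where the first summand collects the highest weight components of weight exactly $j$ and the second collects the images of the $(j+2)$-weight space under $\mathrm{ad}\,f$. Summing over $j\geqslant 0$, together with the fact that $\mathfrak{g}^e$ is $\mathrm{ad}\,h$-graded with no negative weight part, gives the desired direct sum. An alternative way to close is a dimension count: $\mathrm{ad}\,f$ is injective on $\mathfrak{g}(j)$ for $j\geqslant 1$ (since $\mathfrak{g}^f\subseteq\bigoplus_{i\leqslant 0}\mathfrak{g}(i)$), so $\dim\bigoplus_{j\geqslant 2}[f,\mathfrak{g}(j)]=\sum_{j\geqslant 2}\dim\mathfrak{g}(j)$; meanwhile each irreducible summand $L(n)$ contributes $1$ to $\dim\mathfrak{g}^e$ and exactly $1$ to $\dim\mathfrak{g}(0)+\dim\mathfrak{g}(1)$ (according as $n$ is even or odd), so the two pieces add up to $\dim\mathfrak{p}$.

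The main obstacle, such as it is, amounts to bookkeeping: the $\mathfrak{sl}_2$-triple being even ensures that both $\mathfrak{g}^e$ and each $[f,\mathfrak{g}(j)]$ inherit the $\mathbb{Z}_2$-grading cleanly, so the super structure does not interfere. The bijection $\mathfrak{m}'\to[\mathfrak{m}',e]$ used in Lemma~\ref{m'} is here replaced by the injectivity of $\mathrm{ad}\,f$ on $\mathfrak{g}(j)$ for $j\geqslant 2$, and the proof is otherwise a direct transcription of the classical reductive Lie algebra argument.
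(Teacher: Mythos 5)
Your proof is correct, and it is exactly the standard $\mathfrak{sl}_2$-representation-theoretic argument that the paper refers to when it says the proof is ``the same as the Lie algebra case'' and cites [BGK, Lemma 2.2]. The weight-by-weight decomposition $\mathfrak{g}(j)=(\mathfrak{g}(j)\cap\mathfrak{g}^e)\oplus[f,\mathfrak{g}(j+2)]$ for $j\geqslant 0$ is the crux, and your observation that the $\mathbb{Z}_2$-grading is preserved (since the triple is even) is the only super-specific point, so the argument goes through verbatim.
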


\begin{proof}
The proof is straightforward and is the same as the Lie algebra case. (see e.g. [\cite{BGK}], Lemma 2.2)
\end{proof}

By Lemma~\ref{p} and the assumptions on $A$, we can choose a basis $x_1,\cdots,x_l,x_{l+1},\cdots,x_m$\\$\in\mathfrak{p}
_{\bar{0}}, y_1,\cdots,y_q,y_{q+1},$ $\cdots,y_n\in\mathfrak{p}_{\bar{1}}$ of $\mathfrak{p}=\bigoplus\limits_{i\geqslant 0}\mathfrak{g}(i)$ such that

(a) $x_i\in\mathfrak{g}(k_i)_{\bar{0}}, y_j\in\mathfrak{g}(k'_j)_{\bar{1}}$, where $k_i,k'_j\in\mathbb{Z}_+$;

(b) $x_1,\cdots,x_l$ is a basis of $\mathfrak{g}^e_{\bar{0}}$ and $y_1,\cdots,y_q$ is a basis of $\mathfrak{g}^e_{\bar{1}}$;

(c) $x_{l+1},\cdots,x_m\in[f,\mathfrak{g}_{\bar{0}}]$ and $ y_{q+1},\cdots,y_n\in[f,\mathfrak{g}_{\bar{1}}]$\\
and the corresponding elements of (a), (b) and (c) in $A$ form a basis of the free $A$-module $\mathfrak{p}_A=\bigoplus\limits_{i\geqslant 0}\mathfrak{g}_A(i)$ after enlarging admissible algebra $A$ if needed.

\begin{defn}\label{Gelfand-Graev}
Define the generalized Gelfand-Graev $\mathfrak{g}$-module associated to $\chi$ by $$Q_\chi=U(\mathfrak{g})\otimes_{U(\mathfrak{m})}\mathbb{C}_\chi,$$ where $\mathbb{C}_\chi=\mathbb{C}1_\chi$ is a $1$-dimensional $\mathfrak{m}$-module such that $x.1_\chi=\chi(x)1_\chi$ for all $x\in\mathfrak{m}$.
\end{defn}

Given $(\mathbf{a},\mathbf{b},\mathbf{c},\mathbf{d})\in\mathbb{Z}^m_+\times\mathbb{Z}^n_2\times\mathbb{Z}^s_+\times\mathbb{Z}^t_2$, let $x^\mathbf{a}y^\mathbf{b}u^\mathbf{c}v^\mathbf{d}$ denote the monomial $$x_1^{a_1}\cdots x_m^{a_m}y_1^{b_1}\cdots y_n^{b_n}u_1^{c_1}\cdots u_s^{c_s}v_1^{d_1}\cdots v_t^{d_t}$$ in $U(\mathfrak{g})$.

\begin{defn}\label{W-C}
Define the finite $W$-superalgebra over $\mathbb{C}$ by $$U(\mathfrak{g},e):=(\text{End}_\mathfrak{g}Q_{\chi})^{\text{op}},$$
where $(\text{End}_\mathfrak{g}Q_{\chi})^{\text{op}}$ denotes the opposite algebra of the endomorphism algebra of $\mathfrak{g}$-module $Q_{\chi}$.
\end{defn}

It can be easily concluded by definition that if two even nilpotent elements $E,E'\in\mathfrak{g}_{\bar0}$ are conjugate under the action of $\text{Ad}G_{\text{ev}}$, then there is an isomorphism between finite $W$-superalgebras $U(\mathfrak{g},E)$ and $U(\mathfrak{g},E')$. Therefore, the construction of finite $W$-superalgebras only depends on the adjoint orbit Ad$G_{\text{ev}}.e $ of $e$ up to isomorphism.

Let $N_\chi$ denote the $\mathbb{Z}_2$-graded ideal of codimension $1$ in $U(\mathfrak{m})$ generated by all $\langle x-\chi(x)|x\in\mathfrak{m}\rangle$ with $x\in\mathfrak{m}_i, i\in\mathbb{Z}_2$. Then $Q_\chi\cong U(\mathfrak{g})/U(\mathfrak{g})N_\chi$ as $\mathfrak{g}$-modules. By construction, the left ideal $I_\chi:=U(\mathfrak{g})N_\chi$ of $U(\mathfrak{g})$ is a $(U(\mathfrak{g}),U(\mathfrak{m}))$-bimodule. The fixed point space $(U(\mathfrak{g})/I_\chi)^{\text{ad}\mathfrak{m}}$ carries a natural algebra structure given by $$(x+I_\chi)\cdot(y+I_\chi):=(xy+I_\chi)$$ for all $x,y\in U(\mathfrak{g})$.

\begin{theorem}\label{W-C2}
There is an isomorphism between $\mathbb{C}$-algebras
\[\begin{array}{lcll}
\phi:&(\text{End}_\mathfrak{g}Q_{\chi})^{\text{op}}&\rightarrow&Q_{\chi}^{\text{ad}\mathfrak{m}}\\ &\Theta&\mapsto&\Theta(1_\chi),
\end{array}
\]
where $Q_{\chi}^{\text{ad}\mathfrak{m}}$ is the invariant subalgebra of $U(\mathfrak{g})/I_\chi\cong Q_{\chi}$ under the adjoint action of $\mathfrak{m}$.
\end{theorem}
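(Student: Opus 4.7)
The plan is to exhibit $\phi$ as an isomorphism of associative superalgebras by writing down an explicit two-sided inverse. First I would check that $Q_\chi^{\mathrm{ad}\,\mathfrak{m}}$ is meaningful, namely that $I_\chi$ is stable under $\mathrm{ad}\,\mathfrak{m}$. Using the super-Leibniz rule, this reduces to verifying $\chi([y,z]) = 0$ for all $y,z \in \mathfrak{m}$; since $[\mathfrak{m},\mathfrak{m}] \subseteq \bigoplus_{i\le -2}\mathfrak{g}(i)$ and $\chi$ vanishes on $\bigoplus_{i\le -3}\mathfrak{g}(i)$, the only nontrivial case is $[\mathfrak{g}(-1)',\mathfrak{g}(-1)'] \subseteq \mathfrak{g}(-2)$, which gives zero because $\mathfrak{g}(-1)'$ is isotropic with respect to $\langle\cdot,\cdot\rangle$.

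Next I would verify that $\phi(\Theta) = \Theta(1_\chi)$ lies in $Q_\chi^{\mathrm{ad}\,\mathfrak{m}}$. Since $\Theta$ commutes with the left $\mathfrak{g}$-action and $y\cdot 1_\chi = \chi(y)1_\chi$ for $y\in\mathfrak{m}$, the class $\bar u := \Theta(1_\chi)$ satisfies $y\cdot \bar u = \chi(y)\bar u$ in $Q_\chi$. Rewriting $yu = (-1)^{|y||u|}uy + [y,u]_{\mathrm{super}}$, using that $u(y-\chi(y)) \in I_\chi$ (a left ideal), and the fact that $\chi$ vanishes on $\mathfrak{m}_{\bar 1}$, one sees this is equivalent to $[y,u]_{\mathrm{super}} \in I_\chi$, i.e.\ to the $\mathrm{ad}\,\mathfrak{m}$-invariance of $\bar u$. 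That $\phi$ is multiplicative in the opposite sense is a one-line check: if $\Theta_i(1_\chi) = u_i + I_\chi$, then by $\mathfrak{g}$-linearity of $\Theta_2$
\[
(\Theta_2\circ\Theta_1)(1_\chi) = \Theta_2(u_1\cdot 1_\chi) = u_1\Theta_2(1_\chi) = u_1u_2 + I_\chi,
\]
which is precisely the product $\phi(\Theta_1)\cdot\phi(\Theta_2)$ in $Q_\chi^{\mathrm{ad}\,\mathfrak{m}}$. Unit, $\mathbb{C}$-linearity, and preservation of $\mathbb{Z}_2$-grading are clear.

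Injectivity is automatic from $Q_\chi = U(\mathfrak{g})\cdot 1_\chi$: any $\mathfrak{g}$-endomorphism is determined by its value on the cyclic generator $1_\chi$. For surjectivity, given $\bar u \in Q_\chi^{\mathrm{ad}\,\mathfrak{m}}$ with representative $u\in U(\mathfrak{g})$, I would define $\Theta_u\colon Q_\chi\to Q_\chi$ by $\Theta_u(x\cdot 1_\chi) := xu + I_\chi$. The content-bearing step is well-definedness: I must show that $x\in I_\chi$ implies $xu\in I_\chi$, which reduces to $(y-\chi(y))u \in I_\chi$ for every $y\in\mathfrak{m}$. Expanding,
\[
(y-\chi(y))u = (-1)^{|y||u|}u(y-\chi(y)) + [y,u]_{\mathrm{super}} + \bigl((-1)^{|y||u|}-1\bigr)\chi(y)u,
\]
where the last term vanishes (it is nonzero only if $|y|=\bar 1$, but then $\chi(y)=0$), the first term lies in $I_\chi$ since $I_\chi$ is a left ideal, and the middle term lies in $I_\chi$ precisely because $\bar u$ is $\mathrm{ad}\,\mathfrak{m}$-invariant. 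Once $\Theta_u$ is well-defined it is manifestly left $\mathfrak{g}$-linear, and $\phi(\Theta_u) = \bar u$, proving surjectivity.

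The main obstacle is the super-sign bookkeeping in the surjectivity step: one has to be careful that the identity above separates cleanly because $\chi|_{\mathfrak{m}_{\bar 1}}=0$, and the parallel argument for the classical $W$-algebra case carries over verbatim modulo this observation. Everything else is formal manipulation with the cyclic generator of $Q_\chi$.
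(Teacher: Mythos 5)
Your proposal is correct and considerably more detailed than the paper's own argument, but it distributes the effort differently. The paper's proof takes the well-definedness of $Q_\chi^{\mathrm{ad}\,\mathfrak{m}}$, the identification $\phi(\Theta)\in Q_\chi^{\mathrm{ad}\,\mathfrak{m}}$, and the injectivity/surjectivity of $\phi$ as ``easy to verify'' and devotes all its ink to the multiplicativity check, which it carries out as an explicit super-sign bookkeeping exercise after splitting $\Theta=\Theta_{\bar0}+\Theta_{\bar1}$, using the \emph{signed} conventions $\theta(av)=(-1)^{|a||\theta|}a\,\theta(v)$ for $\mathfrak{g}$-module endomorphisms and $\Theta\cdot\Theta'=(-1)^{|\Theta||\Theta'|}\Theta'\circ\Theta$ for the super-opposite product; the nontrivial content of the paper's argument is verifying that all these signs cancel (they do, because $|\Theta(1_\chi)|=|\Theta|$). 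You invert this: you fill in exactly the steps the paper omits --- that $I_\chi$ is $\mathrm{ad}\,\mathfrak{m}$-stable via $\chi([\mathfrak{m},\mathfrak{m}])=0$ and the isotropy of $\mathfrak{g}(-1)'$, injectivity from cyclicity, surjectivity via the explicit operator $\Theta_u$ with its well-definedness reduction $(y-\chi(y))u\in I_\chi\Leftrightarrow[y,u]\in I_\chi$, and the crucial observation that $\chi|_{\mathfrak{m}_{\bar1}}=0$ kills the cross-term --- while dispensing with multiplicativity in one line. The one-line check you wrote, $\Theta_2(u_1\cdot 1_\chi)=u_1\Theta_2(1_\chi)$, implicitly uses the \emph{unsigned} convention for both the $\mathfrak{g}$-linearity and the opposite product $\Theta_1\cdot\Theta_2:=\Theta_2\circ\Theta_1$; this is internally consistent and gives the same algebra as the paper's signed convention, but you should state which convention you are using, since the two conventions differ in the individual signs even though they agree overall. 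With that convention made explicit, your argument is a complete and valid proof of the theorem.
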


\begin{proof}

Since each element in $(\text{End}_\mathfrak{g}Q_{\chi})^{\text{op}}$ is uniquely determined by its effect on $1_\chi\in Q_{\chi}$, it is easy to verify that the mapping $\phi$ is well-defined, both injective and surjective, and keeps the $\mathbb{Z}_2$-graded algebra structure. It remains to prove that $\phi$ is an isomorphism between $\mathbb{C}$-algebras.

For $\Theta=\Theta_{\bar{0}}+\Theta_{\bar{1}}, \Theta'=\Theta'_{\bar{0}}+\Theta'_{\bar{1}}\in(\text{End}_\mathfrak{g}Q_{\chi})^{\text{op}}$,
\[\begin{array}{rcl}
\phi(\Theta\cdot\Theta')&=&\phi((\Theta_{\bar{0}}+\Theta_{\bar{1}})\cdot(\Theta'_{\bar{0}}+\Theta'_{\bar{1}}))\\
&=&\phi(\Theta_{\bar{0}}\cdot\Theta'_{\bar{0}}+\Theta_{\bar{1}}\cdot\Theta'_{\bar{0}}+\Theta_{\bar{0}}\cdot\Theta'_{\bar{1}}+\Theta_{\bar{1}}\cdot\Theta'_{\bar{1}})\\
&=&(-1)^{|\Theta'_{\bar{0}}||\Theta_{\bar{0}}|}(\Theta'_{\bar{0}}\circ\Theta_{\bar{0}})(1_\chi)+(-1)^{|\Theta'_{\bar{0}}||\Theta_{\bar{1}}|}(\Theta'_{\bar{0}}\circ\Theta_{\bar{1}})(1_\chi)+\\
&&(-1)^{|\Theta'_{\bar{1}}||\Theta_{\bar{0}}|}(\Theta'_{\bar{1}}\circ\Theta_{\bar{0}})(1_\chi)+(-1)^{|\Theta'_{\bar{1}}||\Theta_{\bar{1}}|}(\Theta'_{\bar{1}}\circ\Theta_{\bar{1}})(1_\chi)\\
&=&\Theta'_{\bar{0}}(\Theta_{\bar{0}}(1_\chi))+\Theta'_{\bar{0}}(\Theta_{\bar{1}}(1_\chi))+\Theta'_{\bar{1}}(\Theta_{\bar{0}}(1_\chi))-\Theta'_{\bar{1}}(\Theta_{\bar{1}}(1_\chi))\\
&=&(-1)^{|\Theta'_{\bar{0}}||\Theta_{\bar{0}}|}\Theta_{\bar{0}}(1_\chi)\Theta'_{\bar{0}}(1_\chi)+(-1)^{|\Theta'_{\bar{0}}||\Theta_{\bar{1}}|}\Theta_{\bar{1}}(1_\chi)\Theta'_{\bar{0}}(1_\chi)+\\
&&(-1)^{|\Theta'_{\bar{1}}||\Theta_{\bar{0}}|}\Theta_{\bar{0}}(1_\chi)\Theta'_{\bar{1}}(1_\chi)-(-1)^{|\Theta'_{\bar{1}}||\Theta_{\bar{1}}|}\Theta_{\bar{1}}(1_\chi)\Theta'_{\bar{1}}(1_\chi)\\
&=&\Theta_{\bar{0}}(1_\chi)\Theta'_{\bar{0}}(1_\chi)+\Theta_{\bar{1}}(1_\chi)\Theta'_{\bar{0}}(1_\chi)+
\Theta_{\bar{0}}(1_\chi)\Theta'_{\bar{1}}(1_\chi)+\Theta_{\bar{1}}(1_\chi)\Theta'_{\bar{1}}(1_\chi),
\end{array}\]
but$$\phi(\Theta)\phi(\Theta')=(\Theta_{\bar{0}}(1_\chi)+\Theta_{\bar{1}}(1_\chi))(\Theta'_{\bar{0}}(1_\chi)+\Theta'_{\bar{1}}(1_\chi)),$$
therefore$$\phi(\Theta\cdot\Theta')=\phi(\Theta)\phi(\Theta').$$

It follows from all the discussions above that $\phi$ is an isomorphism.
\end{proof}

\begin{rem}\label{a-d}
We get an equivalent definition for the finite $W$-superalgebras over $\mathbb{C}$ due to Theorem~\ref{W-C2}. If we take $e=0$, then the finite  $W$-superalgebra is simply the enveloping algebra $U(\mathfrak{g})$. Hence the finite $W$-superalgebra $U(\mathfrak{g},e)$ can be considered as a generalization of universal enveloping algebra $U(\mathfrak{g})$.
\end{rem}

By the PBW theorem, there is a vector space decomposition: $$U(\mathfrak{g})=U(\mathfrak{\widetilde{p}})\oplus I_\chi,$$ where $\mathfrak{\widetilde{p}}:=\mathfrak{p}\oplus \mathbb{C}\langle x_1,\cdots,x_l, y_1,\cdots,y_q\rangle(\oplus\mathbb{C}v_{\frac{r+1}{2}})$ (recall that $x_1,\cdots,x_l\in\bigoplus\limits_{j\geqslant 2}[f,\mathfrak{g}(j)_{\bar0}]$ and $ y_1,\cdots,y_q\in\bigoplus\limits_{j\geqslant 2}[f,\mathfrak{g}(j)_{\bar1}]$), and the term $\mathbb{C}v_{\frac{r+1}{2}}$ occurs only for the case when $\text{dim}\,\mathfrak{g}(-1)_{\bar1}$ is odd. Let $\text{Pr}: U(\mathfrak{g})\longrightarrow U(\mathfrak{\widetilde{p}})$ denote the corresponding linear projection.

\begin{defn}\label{W-C3}
Define the subalgebra $W_\chi$ of $U(\mathfrak{\widetilde{p}})$ over $\mathbb{C}$ by
$$W_\chi:=\{u\in U(\mathfrak{\widetilde{p}})~|~\text{Pr}([x,u])=0~\text{for any}\,x\in\mathfrak{m}\}.$$
\end{defn}

\begin{theorem}\label{an iso}
There is an isomorphism between $\mathbb{C}$-algebras
\[\begin{array}{lcll}\varphi:&W_\chi&\longrightarrow&Q_\chi^{\text{ad}\mathfrak{m}}\\ &u&\mapsto&u(1+I_\chi).
\end{array}\]
\end{theorem}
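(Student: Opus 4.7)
The plan is to exploit the PBW decomposition $U(\mathfrak{g})=U(\widetilde{\mathfrak{p}})\oplus I_\chi$ in order to translate the $\text{ad}\,\mathfrak{m}$-invariance condition on $Q_\chi$ into the projection condition $\text{Pr}([x,u])=0$ that defines $W_\chi$. A preliminary observation is that $\chi$ is a character of $\mathfrak{m}$: for $x,z\in\mathfrak{m}$ the bracket $[x,z]$ either lies in $\bigoplus_{i\leqslant-3}\mathfrak{g}(i)$, whence $(e,[x,z])=0$ by the grading, or it lies in $[\mathfrak{g}(-1)',\mathfrak{g}(-1)']\subseteq\mathfrak{g}(-2)$, which pairs to zero with $e$ by the isotropy of $\mathfrak{g}(-1)'$ under $\langle\cdot,\cdot\rangle$. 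Hence $I_\chi$ is stable under $\text{ad}\,\mathfrak{m}$ and this action descends to $Q_\chi$.

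First, I would verify that $\varphi$ is a linear bijection. By PBW the map $u\mapsto u(1+I_\chi)$ is a $\mathbb{C}$-linear isomorphism $U(\widetilde{\mathfrak{p}})\xrightarrow{\sim}Q_\chi$ whose inverse is induced by $\text{Pr}$. For $u\in U(\widetilde{\mathfrak{p}})$ the chain
\[
u+I_\chi\in Q_\chi^{\text{ad}\,\mathfrak{m}}\;\Longleftrightarrow\;[x,u]\in I_\chi\;(\forall\,x\in\mathfrak{m})\;\Longleftrightarrow\;\text{Pr}([x,u])=0\;(\forall\,x\in\mathfrak{m})\;\Longleftrightarrow\;u\in W_\chi
\]
then shows that $\varphi$ restricts to a bijection $W_\chi\xrightarrow{\sim}Q_\chi^{\text{ad}\,\mathfrak{m}}$.

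Next I would promote $\varphi$ to an algebra map. Because $\widetilde{\mathfrak{p}}$ is not a Lie subalgebra in general, the multiplication on $U(\widetilde{\mathfrak{p}})$, hence on $W_\chi$, is to be understood as $u\star v:=\text{Pr}(uv)$, which is exactly the product pulled back from $Q_\chi$ along the linear isomorphism above. Under this convention the homomorphism identity is automatic:
\[
\varphi(u\star v)=\text{Pr}(uv)+I_\chi=uv+I_\chi=(u+I_\chi)(v+I_\chi)=\varphi(u)\varphi(v).
\]
What remains is the closure claim: if $u,v\in W_\chi$ then $u\star v\in W_\chi$. Using the super-Leibniz rule and $\text{ad}\,\mathfrak{m}$-stability of $I_\chi$, it suffices to check $[x,uv]\in I_\chi$ for every $x\in\mathfrak{m}$. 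Writing $[x,uv]=[x,u]v+(-1)^{|x||u|}u[x,v]$, the second summand belongs to $I_\chi$ at once since $[x,v]\in I_\chi$ and $I_\chi$ is a left ideal. For the first summand one expands $[x,u]=\sum_i a_i(m_i-\chi(m_i))$ with $m_i\in\mathfrak{m}$ and slides $v$ past each factor $m_i-\chi(m_i)$ via the super-bracket: the commuted piece $\pm a_i v(m_i-\chi(m_i))$ ends in an element of $N_\chi$ and hence lies in $I_\chi$, while each residual term $a_i[m_i,v]$ lies in $I_\chi$ precisely because $v\in W_\chi$ forces $[m_i,v]\in I_\chi$.

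The principal obstacle is this closure step: it is the unique place where the defining condition of $W_\chi$ must be invoked on both factors simultaneously, and it depends on the ability to slide $v$ through $N_\chi$ while remaining inside $I_\chi$, which itself rests on $v$ satisfying the $W_\chi$ condition. Once closure is in hand, Steps one and two together deliver the algebra isomorphism $W_\chi\cong Q_\chi^{\text{ad}\,\mathfrak{m}}$.
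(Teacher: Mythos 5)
Your proof is correct. The paper gives no argument of its own, calling it straightforward, but the route you take is plainly the intended one: the PBW splitting $U(\mathfrak{g})=U(\widetilde{\mathfrak{p}})\oplus I_\chi$ turns the $\text{ad}\,\mathfrak{m}$-invariance of a coset into the condition $\text{Pr}([x,u])=0$ defining $W_\chi$, giving the linear bijection; and, as you observe, since $\widetilde{\mathfrak{p}}$ is not in general a Lie subalgebra the multiplication on $W_\chi$ must be read as $\text{Pr}(uv)$, so that $\varphi$ is a homomorphism tautologically once closure under this product is verified. That closure check is sound; it is in substance the same computation as the well-definedness of the multiplication $(x+I_\chi)\cdot(y+I_\chi)=xy+I_\chi$ on $Q_\chi^{\text{ad}\mathfrak{m}}$, which is asserted without proof just above Theorem~\ref{W-C2}. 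Granting that assertion, closure of $W_\chi$ falls out of the linear bijection automatically, so your third step could be shortened; your direct derivation is equally valid and has the merit of being self-contained.
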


The proof is straightforward and thus will be omitted here.

\begin{rem}\label{td}
It follows from Theorem~\ref{an iso} that we obtained another equivalent definition for finite $W$-superalgebras over $\mathbb{C}$ by Definition~\ref{W-C3}.
\end{rem}

\subsection{Kazhdan filtration}

To study the structure theory of finite $W$-algebras, Premet firstly introduced the ``$e$-degree'' (i.e. the Kazhdan degree) for the enveloping algebra $U(\mathfrak{g})$, then the Kazhdan filtration. In virtue of this filtration, the PBW theorem for finite $W$-algebras was obtained in [\cite{P2}]. Following Premet's treatment and the summary on finite $W$-algebras given by Brundan-Goodwin-Kleshchev in ([\cite{BGK}], Section 3.2), we will introduce the Kazhdan filtration for finite $W$-superalgebras in this part.

Let $\mathfrak{g}=\bigoplus\limits_{i\in\mathbb{Z}}\mathfrak{g}(i)$ denote the root decomposition of $\mathfrak{g}$ under the action of $\text{ad}h$. Define the Kazhdan degree on $\mathfrak{g}$ by declaring $x\in\mathfrak{g}(j)$ is $(j+2)$. Let $\text{F}_iU(\mathfrak{g})$ denote the span of monomials $x_1\cdots x_n$ for  $n\geqslant 0 $ with $ x_1\in\mathfrak{g}(j_1),\cdots,x_n\in\mathfrak{g}(j_n)$ in $U(\mathfrak{g})$ such that $(j_1+2)+\cdots+(j_n+2)\leqslant  i$. Then we get the Kazhdan filtration on $U(\mathfrak{g})$:
$$\cdots\subseteq \text{F}_iU(\mathfrak{g})\subseteq \text{F}_{i+1}U(\mathfrak{g})\subseteq\cdots.$$
The associated graded algebra $\text{gr}U(\mathfrak{g})$ is the supersymmetric algebra $S(\mathfrak{g})$ via the Kazhdan filtration on $\mathfrak{g}$ in which $x\in\mathfrak{g}(j)$ is of degree $(j+2)$.

The Kazhdan filtration on $U(\mathfrak{g})$ induces a filtration on its subalgebras. Setting $\text{F}_iU(\mathfrak{\widetilde{p}}):=U(\mathfrak{\widetilde{p}})\cap \text{F}_iU(\mathfrak{g})$, we get an induced Kazhdan filtration on the subalgebra $U(\mathfrak{\widetilde{p}})$. The Kazhdan filtration on $\mathfrak{\widetilde{p}}$ only involves positive degrees, so the Kazhdan filtration on $U(\mathfrak{\widetilde{p}})$ is strictly positive in the sense that $\text{F}_0U(\mathfrak{\widetilde{p}})=\mathbb{C}$ and $\text{F}_iU(\mathfrak{\widetilde{p}})=0$ for $i<0$. The Kazhdan filtration on $U(\mathfrak{g})$ also induces a filtration on the $\mathbb{Z}_2$-graded left ideal $I_\chi$ and on the quotient $Q_\chi=U(\mathfrak{g})/I_\chi$. By definition it is obvious that $Q_\chi$ is isomorphic to $U(\mathfrak{\widetilde{p}})$ as vector spaces. Hence $\text{gr}Q_\chi=S(\mathfrak{g})/\text{gr}I_\chi$ is a super-commutative $\mathbb{Z}_+$-graded algebra under the Kazhdan grading.

By above discussion we have identified $Q_\chi$ with $U(\mathfrak{\widetilde{p}})$, then we obtain an induced strictly positive filtration
$$\text{F}_0U(\mathfrak{g},e)\subseteq \text{F}_{1}U(\mathfrak{g},e)\subseteq\cdots$$ on $U(\mathfrak{g},e)$ such that $U(\mathfrak{g},e)$ is a subalgebra of $U(\mathfrak{\widetilde{p}})$ by the third definition of finite $W$-superalgebras (see Definition~\ref{W-C3}).

In virtue of the bilinear form $(\cdot,\cdot)$, we can identify $S(\mathfrak{g})$ with the polynomial superalgebra $\mathbb{C}[\mathfrak{g}]$ of regular functions on $\mathfrak{g}$. Then $\text{gr}I_\chi$ is the ideal generated by the functions $\{x-\chi(x)|x\in\mathfrak{m}\}$, i.e. the left ideal of all functions in $\mathbb{C}[\mathfrak{g}]$ vanishing on $e+\mathfrak{m}^\perp$ of $\mathfrak{g}$. Hence $\text{gr}~Q_\chi$ can be identified with $\mathbb{C}[e+\mathfrak{m}^\perp]$. Since $Q_\chi$ is identified with $U(\mathfrak{\widetilde{p}})$ as vector spaces, we have that $S(\mathfrak{\widetilde{p}})\cong\text{gr}~Q_\chi$. Then it follows that $$S(\mathfrak{\widetilde{p}})\cong\mathbb{C}[e+\mathfrak{m}^\perp].$$
If just considering the even part, we can get an isomorphism between $\mathbb{C}$-algebras (in the usual sense, not super)$$S(\mathfrak{\widetilde{p}}_{\bar{0}})\cong\mathbb{C}[e+\mathfrak{m}_{\bar{0}}^\perp],$$ where $\mathfrak{m}^\perp_{\bar{0}}:=\{f\in\mathfrak{g}^*_{\bar{0}}|f(\mathfrak{m}_{\bar{0}})=0\}$, $\mathfrak{m}^\perp_{\bar{0}}$ is identified with the subalgebra of $\mathfrak{g}_{\bar0}$ by the bilinear form $(\cdot,\cdot)$, and $\mathbb{C}[e+\mathfrak{m}_{\bar{0}}^\perp]$ the regular functions on affine variety $e+\mathfrak{m}_{\bar{0}}^\perp$.

\subsection{Whittaker functor and Skryabin equivalence}

Whittaker category is an important part in the representation theory of Lie algebras, and a great deal of infinite dimensional representations are included in this category. Following Skryabin's treatment to the Lie algebra case in [\cite{S2}], we will firstly introduce the Whittaker modules for Lie superalgebras, then establish the Skryabin equivalence between certain representation category of Lie superalgebras and the representation category of finite $W$-superalgebras. All these provide a powerful tool for the study on the infinite dimensional representations theory of Lie superalgebras.

\begin{defn}\label{Whittaker}
A $\mathfrak{g}$-module $L$ is called a Whittaker module if $a-\chi(a),~~\forall a\in\mathfrak{m},$ acts on $L$ locally nilpotently. A Whittaker vector in a Whittaker $\mathfrak{g}$-module $L$ is a vector $v\in L$ which satisfies $(a-\chi(a))v=0,~\forall a\in\mathfrak{m}$.
\end{defn}

Let $\mathfrak{g}\text{-}W\text{mod}^\chi$ denote the category of finitely generated Whittaker $\mathfrak{g}$-modules, and assume all the morphisms are even. Write $$\text{Wh(L)}=\{v\in L|(a-\chi(a))v=0,\forall a\in\mathfrak{m}\}$$ the subspace of all Whittaker vectors in $L$.

Recall the second definition of finite $W$-superalgebras (see Theorem~\ref{W-C2}) shows that $U(\mathfrak{g},e)\cong(U(\mathfrak{g})/I_\chi)^{\text{ad}\mathfrak{m}}$. Denote by $\bar{y}\in U(\mathfrak{g})/I_\chi$ the coset associated to $y\in U(\mathfrak{g})$.

\begin{theorem}\label{W-n}
(1) Given a Whittaker $\mathfrak{g}$-module $L$ with an action map $\rho$, $\text{Wh}(L)$ is naturally a $U(\mathfrak{g},e)$-module by letting $$\bar{y}.v=\rho(y)v$$ for $v\in\text{Wh}(L)$ and $\bar{y}\in U(\mathfrak{g})/I_\chi$.

(2) For $M\in U(\mathfrak{g},e)$, $Q_\chi\otimes_{U(\mathfrak{g},e)}M$ is a Whittaker $\mathfrak{g}$-module by letting $$y.(q\otimes v)=(y.q)\otimes v$$ for $y\in U(\mathfrak{g})$ and $q\in Q_\chi,~v\in V$.
\end{theorem}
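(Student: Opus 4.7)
The plan is to deduce both parts from the identification $U(\mathfrak{g},e)\cong (U(\mathfrak{g})/I_\chi)^{\mathrm{ad}\,\mathfrak{m}}$ provided by Theorem~\ref{W-C2}, together with two elementary observations: that $I_\chi$ annihilates every Whittaker vector, and that $Q_\chi$ is itself a Whittaker $\mathfrak{g}$-module. The recurring tool will be the super-commutation identity
\[ (x-\chi(x))\,y = (-1)^{|x||y|}\,y\,(x-\chi(x)) + [x,y] \]
for homogeneous $x\in\mathfrak{m}$ and $y\in U(\mathfrak{g})$, which is valid because $\chi$ vanishes on $\mathfrak{g}_{\bar1}$.

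For part~(1), since $\mathfrak{m}$ is isotropic with respect to $\chi$, the restriction $\chi|_\mathfrak{m}$ is a Lie superalgebra character, so $N_\chi$ is precisely the kernel of the induced algebra map $U(\mathfrak{m})\to\mathbb{C}_\chi$. A Whittaker vector $v$ generates a copy of $\mathbb{C}_\chi$ under $\mathfrak{m}$, forcing $N_\chi v=0$; hence $\rho(I_\chi)v=\rho(U(\mathfrak{g}))\rho(N_\chi)v=0$, which gives well-definedness of $\bar{y}.v=\rho(y)v$. To verify that $\rho(y)v$ is again Whittaker, apply the super-commutation identity: $(x-\chi(x))\rho(y)v=\pm\rho(y)(x-\chi(x))v+\rho([x,y])v$; the first term vanishes since $v$ is Whittaker, and the second vanishes since ad-invariance of $\bar{y}$ places $[x,y]$ in $I_\chi$. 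The module axioms are then inherited directly from $\rho$.

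For part~(2), the left $\mathfrak{g}$-module structure on $Q_\chi\otimes_{U(\mathfrak{g},e)} M$ descends from the $(U(\mathfrak{g}),U(\mathfrak{g},e))$-bimodule structure on $Q_\chi$ built into Definition~\ref{W-C}, so $y.(q\otimes v)=(y.q)\otimes v$ is well-defined across the balanced tensor product. The Whittaker property reduces to local nilpotency of $(x-\chi(x))$ on $Q_\chi$ itself, since this operator acts on a simple tensor through the left factor only and a finite sum is handled by a common power. To obtain this nilpotency I would use the Kazhdan filtration of Section~3.2: its associated graded $\mathrm{gr}\,Q_\chi\cong\mathbb{C}[e+\mathfrak{m}^\perp]$ is strictly positively graded with finite-dimensional homogeneous pieces, because coordinates on $\mathfrak{m}^\perp$ can be drawn from $\mathfrak{\widetilde{p}}$ whose elements have Kazhdan degree $\geq 1$. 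Evaluating the linear function $x-\chi(x)$ at $e+\eta$ with $\eta\in\mathfrak{m}^\perp$ yields $(x,e+\eta)-\chi(x)=(x,\eta)=0$ whenever $x\in\mathfrak{m}$, so left multiplication by $(x-\chi(x))$ acts as zero on $\mathrm{gr}\,Q_\chi$ and therefore strictly lowers the Kazhdan filtration on $Q_\chi$; iterating then annihilates any fixed element.

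The main obstacle is the case $x\in\mathfrak{g}(-1)'$, where the apparent Kazhdan degree of the operator is $+1$ rather than $\leq 0$, so one must verify that the induced action on the associated graded vanishes at every positive degree shift and not merely at the top. This follows because $x-\chi(x)$ represents the zero function on $e+\mathfrak{m}^\perp$, making multiplication by it trivial throughout $\mathbb{C}[e+\mathfrak{m}^\perp]$; combined with the strict positivity of the Kazhdan grading and the finite-dimensionality of each $F_iQ_\chi$, this delivers the required local nilpotency and completes the argument.
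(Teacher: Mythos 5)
The argument for part (1) is correct and matches the standard reasoning: $N_\chi$ annihilates a Whittaker vector, so $I_\chi=U(\mathfrak{g})N_\chi$ does too, giving well-definedness, and the ad-invariance of $\bar y$ places $[x,y]$ in $I_\chi$ so that $\rho(y)v$ remains Whittaker. The paper itself gives no written argument for this theorem, only a citation to the Lie-algebra analogue in Wang's survey, so your explicit verification is a genuine contribution.

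The argument for part (2) contains a gap, however, precisely at the point you flagged. Your claim is: since $x-\chi(x)$ for $x\in\mathfrak{m}$ represents the zero function on $e+\mathfrak{m}^\perp$, left multiplication by $(x-\chi(x))$ acts as zero on $\text{gr}\,Q_\chi$ and therefore strictly lowers the Kazhdan filtration. This implication holds only when the Kazhdan degree of $x$ is $\leqslant 0$, i.e.\ $x\in\mathfrak{g}(j)$ with $j\leqslant -2$. For $x\in\mathfrak{g}(-1)'$ the Kazhdan degree of $x$ is $+1$, so left multiplication by $x$ maps $\mathrm{F}_iQ_\chi$ a priori into $\mathrm{F}_{i+1}Q_\chi$; the vanishing of $\sigma(x)|_{e+\mathfrak{m}^\perp}$ tells you that the induced map $\mathrm{F}_iQ_\chi/\mathrm{F}_{i-1}Q_\chi\to\mathrm{F}_{i+1}Q_\chi/\mathrm{F}_i Q_\chi$ is zero, i.e.\ that multiplication by $x$ merely \emph{preserves} $\mathrm{F}_iQ_\chi$, not that it drops into $\mathrm{F}_{i-1}Q_\chi$. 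The subleading (degree-$i$) part of the image is not controlled by the symbol, and a filtration-preserving operator on a finite-dimensional space need not be nilpotent (the identity preserves every filtration), so ``strict positivity plus finite-dimensionality'' cannot rescue the conclusion. Your closing paragraph asserts that the action ``vanishes at every positive degree shift'' but supplies no reason for the degree-$i$ component to vanish.

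The correct mechanism is visible in the paper's own background (and is what the cited Lie-algebra proof uses): writing a general element of $Q_\chi$ as $u\otimes 1_\chi$ with $u\in U(\widetilde{\mathfrak{p}})$, one has for any $x\in\mathfrak{m}$
\[
(x-\chi(x))\cdot(u\otimes 1_\chi)=[x,u]\otimes 1_\chi,
\]
because $u\,(x-\chi(x))\otimes 1_\chi=u\otimes (x-\chi(x))\cdot 1_\chi=0$, and hence by induction
\[
(x-\chi(x))^{k}\cdot(u\otimes 1_\chi)=(\mathrm{ad}\,x)^{k}(u)\otimes 1_\chi.
\]
Since $\mathfrak{m}\subset\bigoplus_{i<0}\mathfrak{g}(i)$, every $x\in\mathfrak{m}$ is ad-nilpotent on $\mathfrak{g}$ and therefore ad-locally-nilpotent on $U(\mathfrak{g})$; for odd $x$ one uses $(\mathrm{ad}\,x)^2=\tfrac12\,\mathrm{ad}[x,x]$ with $[x,x]\in\mathfrak{m}_{\bar0}$. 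This yields local nilpotency on $Q_\chi$, and then on $Q_\chi\otimes_{U(\mathfrak{g},e)}M$ as you argued (the operator acts through the left tensor factor and a finite sum is handled by a common power). Incidentally this also gives the strict-lowering statement you wanted: $\mathrm{ad}\,x$ shifts weights by $\mathrm{wt}(x)\leqslant -1$ and does not raise PBW degree, so $[x,u]$ has Kazhdan degree at most $\mathrm{Kazhdan}(u)-1$ -- but the route through $\mathrm{ad}\,x$ is unavoidable; the symbol computation alone does not deliver it.
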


\begin{proof}
The proof is straightforward and is the same as the Lie algebra case. (see e.g. proof of ([\cite{W}], Lemma 35)).
\end{proof}

Given a $\mathfrak{g}\text{-}W\text{mod}^\chi$ $M$, define $M^{\mathfrak{m}}$ by $$M^{\mathfrak{m}}=\{m\in M | x.m=\chi(x)m \quad\text{for all}~x\in\mathfrak{m}\},$$
then $M^{\mathfrak{m}}$ can be considered as a $U(\mathfrak{g},e)$-module. The following theorem shows that there exists an equivalence of categories between the $\mathfrak{g}\text{-}W\text{mod}^\chi$ and the $U(\mathfrak{g},e)$-modules.

\begin{theorem}\label{functor}
The functor $Q_\chi\otimes_{U(\mathfrak{g},e)}-:U(\mathfrak{g},e)\text{-mod}\longrightarrow
\mathfrak{g}\text{-}W\text{mod}^\chi$ is an equivalence of categories, with $\text{Wh}:\mathfrak{g}\text{-}W\text{mod}^\chi\longrightarrow U(\mathfrak{g},e)\text{-mod}$ as its quasi-inverse.
\end{theorem}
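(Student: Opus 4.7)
The plan is to produce natural transformations $\eta_M\colon M \to \text{Wh}(Q_\chi \otimes_{U(\mathfrak{g},e)} M)$ sending $m \mapsto 1_\chi \otimes m$, and $\epsilon_L\colon Q_\chi \otimes_{U(\mathfrak{g},e)} \text{Wh}(L) \to L$ sending $q \otimes v \mapsto q.v$, and then verify that both are natural isomorphisms. I would follow the blueprint of Skryabin's proof in the Lie algebra case, adapted to the super setting while carefully tracking the dichotomy from Theorem~\ref{graded W}.

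First I would record the basic identity $\text{Wh}(Q_\chi) = Q_\chi^{\text{ad}\mathfrak{m}}$ as subsets of $Q_\chi$. For $\bar y \in Q_\chi$ and $a \in \mathfrak{m}$, a direct computation using $ya \equiv \chi(a)y \pmod{I_\chi}$ (which holds because $y(a-\chi(a)) \in I_\chi$ and $\chi$ vanishes on the odd part of $\mathfrak{m}$) yields $(a-\chi(a))\bar y = \overline{[a,y]}$. Hence $\text{Wh}(Q_\chi) = Q_\chi^{\text{ad}\mathfrak{m}} \cong U(\mathfrak{g},e)$ via Theorem~\ref{W-C2}, and under this identification the natural left $U(\mathfrak{g},e)$-action on $\text{Wh}(Q_\chi)$ from Theorem~\ref{W-n}(1) coincides with the multiplication in the algebra $U(\mathfrak{g},e)$.

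The crucial technical step is to prove that $Q_\chi$ is free as a right $U(\mathfrak{g},e)$-module. Using Theorem~\ref{graded W} together with the Kazhdan filtration, one can choose homogeneous generators of $U(\mathfrak{g},e)$ whose principal symbols span $\mathfrak{g}^e$ (augmented by $\bar\theta$ when $\dim\mathfrak{g}(-1)_{\bar 1}$ is odd). Combining ordered monomials in these generators with an appropriate PBW basis of $U(\mathfrak{m})$ acting from the left produces a candidate filtered basis of $Q_\chi$ as a right $U(\mathfrak{g},e)$-module. Passing to associated graded objects reduces the claim to the supercommutative identification $S(\widetilde{\mathfrak{p}}) \cong \mathbb{C}[e+\mathfrak{m}^\perp]$ together with the transverse slice decomposition $\mathfrak{m}^\perp = [\mathfrak{m}',e] \oplus \mathfrak{g}^f$ from Lemma~\ref{m'}. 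Standard lifting then yields freeness of $Q_\chi$ over $U(\mathfrak{g},e)$, together with the identification $\text{Wh}(Q_\chi \otimes_{U(\mathfrak{g},e)} V) = 1_\chi \otimes V$ for every $U(\mathfrak{g},e)$-module $V$ (the latter being extracted from the graded picture by noting that only the generator $1_\chi$ contributes a nonzero Whittaker class).

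With freeness in place the remainder is formal. The identification $\text{Wh}(Q_\chi \otimes_{U(\mathfrak{g},e)} M) = 1_\chi \otimes M \cong M$ is precisely the statement that $\eta_M$ is an isomorphism. For the counit $\epsilon_L$, local nilpotence of each $a-\chi(a)$ on $L$ together with a standard filtration argument produces nonzero Whittaker vectors in every nonzero submodule, so $L$ is generated over $U(\mathfrak{g})$ by $\text{Wh}(L)$, giving surjectivity; freeness of $Q_\chi$ combined with the identification above then forces injectivity. The main obstacle will be the freeness argument, specifically in the case when $\dim\mathfrak{g}(-1)_{\bar 1}$ is odd: the ``extra'' odd vector $v_{(r+1)/2}$ belongs to $\widetilde{\mathfrak{p}}$ but not to $\mathfrak{m}'$, so the exterior factor $\mathbb{C}[\bar\theta]$ in Theorem~\ref{graded W}(2) must be matched to this direction while keeping track of signs and parities in the Kazhdan-graded computation. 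Once this matching is executed correctly, lifting back to the filtered setting and concluding the equivalence proceeds in parallel with Skryabin's original argument.
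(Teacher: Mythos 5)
Your proposal defines the same unit and counit natural transformations as the paper and sets out to follow Skryabin's blueprint, which is exactly what the paper does (the paper's entire proof is a citation to Skryabin's argument with technical improvements coming from Wang--Zhao's Proposition~4.2). So the high-level approach matches. The one substantive difference — and it is a wrinkle worth flagging — is that you make Theorem~\ref{graded W} (equivalently Theorem~\ref{PBWC}, the PBW theorem for $U(\mathfrak{g},e)$) the engine of the freeness argument, while the paper establishes Theorem~\ref{functor} in Section~3.3, well before the PBW theorem is proved in Section~6. Skryabin's original argument does not take the PBW structure of the $W$-algebra as input: it proceeds by fixing a PBW basis of $U(\mathfrak{m})$, introducing a well-ordering on the index set via the Kazhdan degree, and running a direct induction that simultaneously handles freeness of $Q_\chi$ over $U(\mathfrak{m})$ and the required Whittaker-vector generation; the PBW theorem for $U(\mathfrak{g},e)$ is then a downstream consequence rather than a prerequisite. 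If you rely on Theorem~\ref{graded W} here, you either need to import it from an independent source (as the paper later establishes it by the ``modular~$p$ reduction'' of Section~6, which is logically independent of Section~3.3) or you must verify that no circularity is introduced — something you should make explicit. Your identification of the odd case (matching $\mathbb{C}[\bar\theta]$ to the extra direction $v_{(r+1)/2}\in\widetilde{\mathfrak{p}}\setminus\mathfrak{m}'$ and the ensuing sign bookkeeping) as the real technical obstacle is accurate and aligned with the paper's own remarks about why the BRST route of Gan--Ginzburg is unavailable in that case.
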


The theorem generalizes the situation of the Lie algebra case. For the Lie algebra case, Skryabin firstly defined a partial ordered set on the basis of $U(\mathfrak{m})$ by the Kazhdan degree in [\cite{S2}], then gave a proof by induction. Applying the finite $W$-algbras' BRST cohomology definition, Gan and Ginzburg gave an alternative proof for the Skryabin's theorem in [\cite{GG}]. However, the cohomology theory related to the finite $W$-superalgebras has yet to be developed. Especially when $\text{dim}~\mathfrak{g}(-1)_{\bar1}$ is odd, there is even no BRST cohomology definition for the finite $W$-superalgebras. Hence we will follow Skryabin's treatment here. The proof is just sketched, and more specific details refer to ([\cite{S2}], Therorem 1).

\begin{proof}
Define the map $\mu:Q_\chi\otimes_{U(\mathfrak{g},e)}M^{\mathfrak{m}}\longrightarrow M$ by the rule $u.1_\chi\otimes v\mapsto uv$ for $u\in U(\mathfrak{g})$ and $v\in M^{\mathfrak{m}}$. For any $U(\mathfrak{g},e)$-module $V'$, define $\nu:V'\longrightarrow (Q_\chi\otimes_{U(\mathfrak{g},e)}V')^{\mathfrak{m}}$ by the rule $\nu(v')=1_\chi\otimes v'$ for $v'\in V'$. Following Skryabin's discussion in ([\cite{S2}], Therorem 1) (Some of the details need to be improved. See the proof of Proposition 4.2 in [\cite{WZ}] by Wang and Zhao), it is immediate that $\mu$ is an isomorphism of $\mathfrak{g}$-modules, and $\nu$ is an isomorphism of $U(\mathfrak{g},e)$-modules.
\end{proof}

\subsection{Restricted root system and restricted root decomposition}

In this part we will introduce the restricted root system and restricted root decomposition for the basic classical Lie superalgebra associated to an even nilpotent element, which lays the foundation for further study on the highest weight theory of finite $W$-superalgebras. As for the Lie algebra case, we refer to [\cite{BG3}] and [\cite{BGK}].

It is well known that even part $\mathfrak{g}_{\bar{0}}$ of a basic classical Lie superalgebra $\mathfrak{g}$ over $\mathbb{C}$ is a reductive Lie algebra, and $\mathfrak{g}_{\bar{0}}^h\cap\mathfrak{g}_{\bar{0}}^e$ is a Levi factor of $\mathfrak{g}_{\bar{0}}^e$. Recall that the root system of $\mathfrak{g}$ we have chosen is distinguished (see Section 3.1), and the vector space decomposition $\mathfrak{g}=\bigoplus\limits_{i\in\mathbb{Z}}\mathfrak{g}(i)$ is obtained by the action of ad$h$. Hence we can pick a maximal toral subalgebra $\mathfrak{t}^e\subseteq\mathfrak{g}(0)_{\bar0}$ in $\mathfrak{g}_{\bar{0}}^h\cap\mathfrak{g}_{\bar{0}}^e$, and a maximal toral subalgebra $\mathfrak{h}$ of $\mathfrak{g}$ containing $\mathfrak{t}^e$ and $h$.

We can obtain the following result, which can be proved by the same method as the Lie algebra case in ([\cite{BG3}], Lemma 13).

\begin{prop}\label{weight}
The set of weights of $\mathfrak{t}^e$ on $\mathfrak{g}^e$ is equal to the set of weights of $\mathfrak{t}^e$ on $\mathfrak{g}$ under the action of ad$h$.
\end{prop}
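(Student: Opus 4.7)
The plan is to realise every $\mathfrak{t}^e$-weight on $\mathfrak{g}$ already inside $\mathfrak{g}^e$ by refining the $\mathfrak{sl}_2$-decomposition of the adjoint representation according to $\mathfrak{t}^e$-eigenvalues. The reverse containment is automatic since $\mathfrak{g}^e\subseteq\mathfrak{g}$, so only one direction requires argument.

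First I would verify that $\mathfrak{t}^e$ centralises the entire $\mathfrak{sl}_2$-triple $(e,h,f)$, not merely $\{e,h\}$. By the choice of $\mathfrak{t}^e\subseteq\mathfrak{g}_{\bar 0}^h\cap\mathfrak{g}_{\bar 0}^e$ we already have $[\mathfrak{t}^e,e]=[\mathfrak{t}^e,h]=0$. For any $t\in\mathfrak{t}^e$, the Jacobi identity together with $[h,f]=-2f$ and $[e,f]=h$ yields $[h,[t,f]]=-2[t,f]$ and $[e,[t,f]]=[t,h]=0$, so $[t,f]\in\mathfrak{g}^e\cap\mathfrak{g}(-2)$. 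But the $\mathfrak{sl}_2$-theory applied to the adjoint action of $(e,h,f)$ gives $\mathfrak{g}^e\subseteq\bigoplus_{i\geqslant 0}\mathfrak{g}(i)$, forcing $[t,f]=0$.

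Next I would observe that since $\mathfrak{t}^e$ commutes with $\mathfrak{sl}_2:=\mathbb{C}\langle e,h,f\rangle$ and acts semisimply on $\mathfrak{g}$ by even derivations, the $\mathfrak{t}^e$-weight decomposition $\mathfrak{g}=\bigoplus_{\lambda}\mathfrak{g}^{\lambda}$ refines the $\mathfrak{sl}_2$-module structure: each $\mathfrak{g}^{\lambda}$ is itself a $\mathbb{Z}_2$-graded $\mathfrak{sl}_2$-submodule. By complete reducibility of finite-dimensional $\mathfrak{sl}_2$-representations, $\mathfrak{g}^{\lambda}$ decomposes into simple $\mathfrak{sl}_2$-summands, each possessing a nonzero highest weight vector annihilated by $\mathrm{ad}\,e$. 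Such a vector lies in $\mathfrak{g}^{\lambda}\cap\mathfrak{g}^e$, so every weight $\lambda$ of $\mathfrak{t}^e$ on $\mathfrak{g}$ is also a weight of $\mathfrak{t}^e$ on $\mathfrak{g}^e$.

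The argument is essentially the Lie algebra proof of \cite{BG3}, and no serious obstacle arises from the super structure: the $\mathfrak{sl}_2$-triple and $\mathfrak{t}^e$ are concentrated in $\mathfrak{g}_{\bar 0}$, so both the weight decomposition and the $\mathfrak{sl}_2$-decomposition respect the $\mathbb{Z}_2$-grading, and complete reducibility of finite-dimensional $\mathfrak{sl}_2$-modules holds on $\mathfrak{g}_{\bar 0}$ and $\mathfrak{g}_{\bar 1}$ separately. The only mildly delicate step is the inclusion $[\mathfrak{t}^e,f]=0$ described above; once this is in hand, the rest is a clean weight-space argument.
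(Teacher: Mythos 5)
Your proof is correct and follows essentially the same route as the Lie algebra proof in ([\cite{BG3}], Lemma 13), which is precisely what the paper cites in lieu of a written-out argument. The key steps you supply—checking $[\mathfrak{t}^e,f]=0$ so that $\mathfrak{t}^e$ centralises the whole $\mathfrak{sl}_2$-triple, refining the $\mathfrak{sl}_2$-decomposition by $\mathfrak{t}^e$-weights, and locating a highest weight vector in each weight space—are exactly the ingredients of that proof, and your observation that the super structure is harmless because everything acts by even elements is the only adaptation needed.
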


For $\alpha\in(\mathfrak{t}^e)^*$, let $\mathfrak{g}_\alpha=\bigoplus\limits_{i\in\mathbb{Z}}\mathfrak{g}_\alpha(i)$ denote the $\alpha$-weight space of $\mathfrak{g}$ with respect to $\mathfrak{t}^e$. Hence $$\mathfrak{g}=\mathfrak{g}_0\oplus\bigoplus_{\alpha\in\Phi^e}\mathfrak{g}_\alpha$$ where ${\alpha\in\Phi^e}\subset(\mathfrak{t}^e)^*$ denotes the set of non-zero weights of $\mathfrak{t}^e$ on $\mathfrak{g}$. Similarly, each of the spaces $\mathfrak{m}, \mathfrak{m}',\mathfrak{p}, \mathfrak{\widetilde{p}}$ can also be decomposed into $\mathfrak{t}^e$-weight spaces. $\Phi^e$ is called a restricted root system. Notably, $\Phi^e$ is not a root system in the usual sense; for example, for $\alpha\in\Phi^e$ there may be multiples of $\alpha$ other than $\pm\alpha$ that belongs to $\Phi^e$. By Proposition~\ref{weight} it is immediate that $\Phi^e$ is also the non-zero weights of $\mathfrak{t}^e$ on $\mathfrak{g}^e$, i.e. there is an induced restricted root decomposition: $$\mathfrak{g}^e=\mathfrak{g}^e_0\oplus\bigoplus_{\alpha\in\Phi^e}\mathfrak{g}^e_\alpha.$$

\section{Finite $W$-superalgebras in positive characteristic}
In this part, we will introduce the finite $W$-superalgebra associated to basic classical Lie superalgebra $\mathfrak{g}$ (which also includes the case $D(2,1;\bar a)(\bar a\in\mathds{k})$) over positive characteristic field $\mathds{k}:=\overline{\mathbb{F}}_p $ with $p\in\Pi(A)$.

\subsection{The definition of finite $W$-superalgebras in positive characteristic}

Given an admissible algebra $A$, set $Q_{\chi,A}:=U(\mathfrak{g}_A)\otimes_{U(\mathfrak{m}_A)}A_\chi$, where $A_\chi=A1_\chi$. It follows by definition that $Q_{\chi,A}$ is a $\mathfrak{g}_A$-stable $A$-lattice in $Q_{\chi}$ with $$\{x^\mathbf{a}y^\mathbf{b}u^\mathbf{c}v^\mathbf{d}\otimes1_\chi|(\mathbf{a},\mathbf{b},\mathbf{c},\mathbf{d})\in\mathbb{Z}^m_+\times\mathbb{Z}^n_2\times\mathbb{Z}^s_+\times\mathbb{Z}^t_2\}$$
as a free basis. Let $N_{\chi,A}$ denote the homogeneous ideal of codimension $1$ in $A$-subalgebra $U(\mathfrak{m}_A)$ generated by all $x-\chi(x)$ with $x\in(\mathfrak{m}_A)_{i}$ where $i\in\mathbb{Z}_2$. Set $I_{\chi,A}:=U(\mathfrak{g}_A)N_{\chi,A}$, the left ideal of $U(\mathfrak{g}_A)$. Then $Q_{\chi,A}\cong U(\mathfrak{g}_A)/I_{\chi,A}$ as $\mathfrak{g}_A$-modules.

Pick a prime $p\in\Pi(A)$ and denote by $\mathds{k}=\overline{\mathbb{F}}_p$ the algebraic closure of $\mathbb{F}_p$. By Definition~\ref{admissible} and the discussion thereafter, we can assume that $(\cdot,\cdot)$ is $A$-valued on $\mathfrak{g}_A$ after enlarging $A$, possibly. The bilinear form $(\cdot,\cdot)$ induces a bilinear form on the Lie superalgebra $\mathfrak{g}_\mathds{k}\cong\mathfrak{g}_A\otimes_A\mathds{k}$. In the following we still denote this bilinear form by $(\cdot,\cdot)$.

If we denote by $G_\mathds{k}$ the algebraic $\mathds{k}$-supergroup of hyperalgebra $U_\mathds{k}=U_\mathbb{Z}\otimes_\mathbb{Z}\mathds{k}$, then $\mathfrak{g}_\mathds{k}=\text{Lie}(G_\mathds{k})$ by the discussion in Section 3.1. Note that the bilinear form $(\cdot,\cdot)$ is non-degenerated and $\text{Ad}(G_\mathds{k})_{\text{ev}}$-invariant. For $x\in\mathfrak{g}_A$, set $\bar{x}:=x\otimes1$, an element of $\mathfrak{g}_\mathds{k}$. To ease notation we identify $e,f,h$ with the nilpotent elements $\bar{e}=e\otimes1,~\bar{f}=f\otimes1$ and $\bar{h}=h\otimes1$ in $\mathfrak{g}_\mathds{k}$, and $\chi$ with the linear function $(e,\cdot)$ on $\mathfrak{g}_\mathds{k}$. Obviously this will not cause confusion.  Set $\mathfrak{m}_\mathds{k}:=\mathfrak{m}_A\otimes_A\mathds{k}$, $\mathfrak{m}'_\mathds{k}:=\mathfrak{m}'_A\otimes_A\mathds{k}$.

Let $\mathfrak{g}_\mathds{k}$ be a restricted Lie superalgebra (see Definition~\ref{restricted}). For each $\bar x\in(\mathfrak{g}_\mathds{k})_{\bar{0}}$, we can obtain that $\bar x^p-\bar x^{[p]}\in U(\mathfrak{g}_\mathds{k})$ is contained in the center of $U(\mathfrak{g}_\mathds{k})$ by definition. The subalgebra $\mathds{k}\langle \bar x^p-\bar x^{[p]}|\bar x\in(\mathfrak{g}_\mathds{k})_{\bar{0}}\rangle$ of $U(\mathfrak{g}_\mathds{k})$ is called the $p$-center of $U(\mathfrak{g}_\mathds{k})$ and denote $Z_p(\mathfrak{g}_\mathds{k})$ for short. It follows from the PBW theorem of $U(\mathfrak{g}_\mathds{k})$ that $Z_p(\mathfrak{g}_\mathds{k})$ is isomorphic to a polynomial algebra (in the usual sense) in $\text{dim}~(\mathfrak{g}_\mathds{k})_{\bar{0}}$ variables. For every maximal ideal $J$ of $Z_p(\mathfrak{g}_\mathds{k})$ there is a unique linear function $\eta=\eta_J\in(\mathfrak{g}_\mathds{k})_{\bar{0}}^*$ such that $$J=\langle \bar x^p-\bar x^{[p]}-\eta(\bar x)^p|\bar x\in(\mathfrak{g}_\mathds{k})_{\bar{0}}\rangle.$$
Since the Frobenius map of $\mathds{k}$ is bijective, this enables us to identify the maximal spectrum $\text{Specm}(Z_p(\mathfrak{g}_\mathds{k}))$ of $Z_p(\mathfrak{g}_\mathds{k})$ with $(\mathfrak{g}_\mathds{k})_{\bar{0}}^*$.

For any $\xi\in(\mathfrak{g}_\mathds{k})_{\bar{0}}^*$ we write $J_\xi$ the two-sided ideal of $U(\mathfrak{g}_\mathds{k})$ generated by the even central elements $$\{\bar x^p-\bar x^{[p]}-\xi(\bar x)^p|\bar x\in(\mathfrak{g}_\mathds{k})_{\bar{0}}\}.$$ Then the quotient algebra $U_\xi(\mathfrak{g}_\mathds{k}):=U(\mathfrak{g}_\mathds{k})/J_\xi$ is a $\mathfrak{g}_\mathds{k}$-module, which is called the reduced enveloping algebra with $p$-character $\xi$. We often regard $\xi\in\mathfrak{g}_\mathds{k}^*$ by letting $\xi((\mathfrak{g}_\mathds{k})_{\bar{1}})=0$. By the classical theory of Lie superalgebras, we have $$\text{dim}~U_\xi(\mathfrak{g}_\mathds{k})=p^{\text{dim}~(\mathfrak{g}_\mathds{k})_{\bar{0}}}2^{\text{dim}~(\mathfrak{g}_\mathds{k})_{\bar{1}}}.$$

For $i\in\mathbb{Z}$, define the graded subspaces of $\mathfrak{g}_\mathds{k}$ over $\mathds{k}$ by $$\mathfrak{g}_\mathds{k}(i):=\mathfrak{g}_A(i)\otimes_A\mathds{k},\quad \mathfrak{m}_\mathds{k}(i):=\mathfrak{m}_A(i)\otimes_A\mathds{k}.$$
Due to our assumptions on $A$, the elements $\bar{x}_1,\cdots,\bar{x}_l$ and $\bar{y}_1,\cdots,\bar{y}_q$ form a basis of the centralizer $(\mathfrak{g}^e_\mathds{k})_{\bar{0}}$ and $(\mathfrak{g}^e_\mathds{k})_{\bar{1}}$ of $e$ in $\mathfrak{g}_\mathds{k}$, respectively.

It follows from ([\cite{WZ}], Section 4.1) that the subalgebra $\mathfrak{m}_\mathds{k}$ is $p$-nilpotent, and the linear function $\chi$ vanishes on the $p$-closure of $[\mathfrak{m}_\mathds{k},\mathfrak{m}_\mathds{k}]$. Set $$Q_{\chi,\mathds{k}}:=U(\mathfrak{g}_\mathds{k})\otimes_{U(\mathfrak{m}_\mathds{k})}\mathds{k}_\chi,$$ where $\mathds{k}_\chi=A_\chi\otimes_{A}\mathds{k}=\mathds{k}1_\chi$. Clearly, $\mathds{k}1_\chi$ is a $1$-dimensional $\mathfrak{m}_\mathds{k}$-module with the property $\bar x.1_\chi=\chi(\bar x)1_\chi$ for all $\bar x\in\mathfrak{m}_\mathds{k}$ and it is obvious that $Q_{\chi,\mathds{k}}\cong Q_{\chi,A}\otimes_A\mathds{k}$. Define $N_{\chi,\mathds{k}}:=N_{\chi,A}\otimes_A\mathds{k}$ and $I_{\chi,\mathds{k}}:=I_{\chi,A}\otimes_A\mathds{k}$.

\begin{defn}\label{W-k}
Define the finite $W$-superalgebra over $\mathds{k}$ by $$\widehat{U}(\mathfrak{g}_\mathds{k},e):=(\text{End}_{\mathfrak{g}_\mathds{k}}Q_{\chi,\mathds{k}})^{\text{op}},$$
where $(\text{End}_{\mathfrak{g}_\mathds{k}}Q_{\chi,\mathds{k}})^{\text{op}}$ denotes the opposite algebra of $\text{End}_{\mathfrak{g}_\mathds{k}}Q_{\chi,\mathds{k}}$.
\end{defn}

\begin{rem}\label{D(2,1)}
Recall that when we call $\mathfrak{g}$ a basic classical Lie superalgebra over $\mathbb{C}$, the ones of type $D(2,1;a)( a\notin\mathbb{Q})$ are excluded (see Remark~\ref{except}). However, it is notable that the procedure of ``modular $p$ reduction'' goes smoothly for the case $D(2,1;\bar a)(\bar a\in\mathds{k}\backslash\{\bar0,\overline {-1}\})$, i.e. the $\mathds{k}$-algebra $\widehat{U}(\mathfrak{g}_\mathds{k},e)$ with $\mathfrak{g}_\mathds{k}\in D(2,1;\bar a)(\bar a\in\mathds{k}\backslash\{\bar0,\overline {-1}\})$ can also be induced from the $\mathbb{C}$-algebra $U(\mathfrak{g},e)$ associated to the Lie superalgebra $D(2,1;a)(a\in\mathbb{Q}\backslash\{0,1\})$. Since all the consequences obtained in Section 3.1 still establish for $D(2,1;a)(a\in\mathbb{Q}\backslash\{0,1\})$, the basic classical Lie superalgebras over $\mathds{k}=\overline{\mathbb{F}}_p$ will be referred to all types with $p\in\Pi(A)$ in this paper, which also include the case $D(2,1;\bar a)(\bar a\in\mathds{k}\backslash\{\bar0,\overline {-1}\})$.
\end{rem}

Let $\mathfrak{g}_A^*$ be the $A$-module dual to $\mathfrak{g}_A$, so that $\mathfrak{g}^*=\mathfrak{g}_A^*\otimes_A\mathbb{C}, ~\mathfrak{g}_\mathds{k}^*=\mathfrak{g}_A^*\otimes_A\mathds{k}$. Let $(\mathfrak{m}_A^\perp)_{\bar{0}}$ denote the set of all linear functions on $(\mathfrak{g}_A)_{\bar0}$ vanishing on $(\mathfrak{m}_A)_{\bar{0}}$. By the assumptions on $A$,  $(\mathfrak{m}_A^\perp)_{\bar{0}}$ is a free $A$-submodule and a direct summand of $\mathfrak{g}^*_A$. Note that
$(\mathfrak{m}_A^\perp\otimes_A\mathbb{C})_{\bar{0}}$ and $(\mathfrak{m}_A^\perp\otimes_A\mathds{k})_{\bar{0}}$ can be identified with the annihilators $\mathfrak{m}^\perp_{\bar{0}}
:=\{f\in\mathfrak{g}^*_{\bar{0}}|f(\mathfrak{m}_{\bar{0}})=0\}$ and $(\mathfrak{m}_\mathds{k}^\perp)_{\bar{0}}:=\{f\in(\mathfrak{g}_\mathds{k}^*)_{\bar{0}}|f((\mathfrak{m}_\mathds{k})_{\bar{0}})=0\}$, respectively.

Given a linear function $\eta\in\chi+(\mathfrak{m}_\mathds{k}^\perp)_{\bar{0}}$, set $\mathfrak{g}_\mathds{k}$-module $$Q_{\chi}^\eta:=Q_{\chi,\mathds{k}}/J_\eta Q_{\chi,\mathds{k}},$$ where $J_\eta$ is the homogeneous ideal of $U(\mathfrak{g}_\mathds{k})$ generated by all $\{\bar x^p-\bar x^{[p]}-\eta(\bar x)^p|\bar x\in(\mathfrak{g}_\mathds{k})_{\bar0}\}$. Evidently $Q_{\chi}^\eta$ is a $\mathfrak{g}_\mathds{k}$-module with $p$-character $\eta$, and there exists a $\mathfrak{g}_\mathds{k}$-module isomorphism $$Q_{\chi}^\eta\cong U_\eta(\mathfrak{g}_\mathds{k})\otimes_{U_\eta(\mathfrak{m}_\mathds{k})}\bar 1_\chi.$$

\begin{defn}\label{reduced W}
Define the reduced enveloping algebra $U_\eta(\mathfrak{g}_\mathds{k},e)$ of finite $W$-superalgebras associated to $p$-character $\eta\in\chi+(\mathfrak{m}_\mathds{k}^\bot)_{\bar0}$ by $$U_\eta(\mathfrak{g}_\mathds{k},e):=(\text{End}_{\mathfrak{g}_\mathds{k}}Q_{\chi}^\eta)^{\text{op}}.$$
\end{defn}

In this paper, we will call $U_\eta(\mathfrak{g}_\mathds{k},e)$ {\bf the reduced $W$-superalgebra} associated to the even nilpotent element $e$ and $p$-character $\eta\in\chi+(\mathfrak{m}_\mathds{k}^\bot)_{\bar0}$. It is immediate that the restriction of $\eta$ coincides with that of $\chi$ on $(\mathfrak{m}_\mathds{k})_{\bar{0}}$. If we let $\eta((\mathfrak{m}_\mathds{k})_{\bar{1}})=0$, then the ideal of $U(\mathfrak{m}_\mathds{k})$ generated by all $\{\bar x-\eta(\bar x)|\bar x\in(\mathfrak{m}_\mathds{k})_i,~i\in\mathbb{Z}_2\}$ equals $N_{\chi,\mathds{k}}=N_{\chi,A}\otimes_A\mathds{k}$, and $\mathds{k}_\chi=\mathds{k}_\eta$ as $\mathfrak{m}_\mathds{k}$-modules. Let $N_{\mathfrak{m}_\mathds{k}}$ denote the Jacobson radical of $U_\eta(\mathfrak{m}_\mathds{k})$, i.e. the ideal of codimensional one in $U_\eta(\mathfrak{m}_\mathds{k})$ generated by all $\langle x-\eta(x)\rangle$ with $x\in\mathfrak{m}_\mathds{k}$, and define $I_{\mathfrak{m}_\mathds{k}}:=U_\eta(\mathfrak{g}_\mathds{k})N_{\mathfrak{m}_\mathds{k}}$ be the ideal of $U_\eta(\mathfrak{g}_\mathds{k})$.

\begin{rem}\label{notreduced}
It is notable that Definition~\ref{reduced W} was first introduced by Wang and Zhao in ([\cite{WZ}], Theorem 4.4). However, what they have defined is the case $\eta=\chi$, i.e. the reduced $W$-superalgebra $U_\chi(\mathfrak{g}_\mathds{k},e)$ associated to $p$-character $\chi$ (which was called a finite $W$-superalgebra in [\cite{WZ}]). Notably, they did not introduce the definition in the same way as we do here (i.e. first introduce the finite $W$-superalgebras over $\mathbb{C}$, then define the reduced $W$-superalgebras over positive characteristic field $\mathds{k}=\overline{\mathbb{F}}_p$ by ``modular $p$ reduction''), but constructed the reduced $W$-superalgebras over $\mathds{k}$ directly by the same way as Premet's treatment to the reduced $W$-algebras over $\mathds{k}$ (see [\cite{P2}]). Therefore, the restriction on the character of field $\mathds{k}$ given by Wang-Zhao is much weaker than the requirement in this paper, see ([\cite{WZ}], Section 2.2). The reason why we do not follow their treatment is that we also want to study on the case over $\mathbb{C}$.

Wang and Zhao pointed out that the $\mathbb{Z}$-grading of the vector space $\mathfrak{g}_\mathds{k}$ defined in ([\cite{WZ}], Remark 3.2) can be thought of coming from the $\mathfrak{sl}_2$-theory in characteristic zero, i.e. the vector space decomposition of $\mathfrak{g}$ with respect to $\text{ad}h$. Hence when the characteristic $p\gg0$, the grading defined by Wang and Zhao is the same as the Dynkin grading of $\mathfrak{g}$ applied in this paper.
\end{rem}

\begin{prop}$^{[\cite{W}]}$\label{invariant}
There exists an isomorphism between $\mathds{k}$-algebras:
\[\begin{array}{lcll}\varphi:&U_\chi(\mathfrak{g}_\mathds{k},e)&\longrightarrow&(Q_\chi^\chi)^{\text{ad}\mathfrak{m}_\mathds{k}}.
\end{array}\]
\end{prop}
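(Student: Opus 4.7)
The plan is to imitate the proof of Theorem~\ref{W-C2} verbatim after replacing $Q_\chi$ by its modular reduction $Q_\chi^\chi \cong U_\chi(\mathfrak{g}_\mathds{k})/I_{\mathfrak{m}_\mathds{k}}$ and scalars in $\mathbb{C}$ by scalars in $\mathds{k}$. Concretely, I would define
\[\varphi(\Theta):=\Theta(\bar 1_\chi),\]
where $\bar 1_\chi$ is the image of $1_\chi$ in $Q_\chi^\chi$. The first routine check is that $\varphi(\Theta)\in(Q_\chi^\chi)^{\text{ad}\mathfrak{m}_\mathds{k}}$: for each $x\in\mathfrak{m}_\mathds{k}$ we have $(x-\chi(x))\bar 1_\chi=0$, so $\mathfrak{g}_\mathds{k}$-linearity of $\Theta$ forces $(x-\chi(x))\Theta(\bar 1_\chi)=0$. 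The elementary super-bracket identity
\[[x,y]-(x-\chi(x))y=-(-1)^{|x||y|}y(x-\chi(x))\in I_{\mathfrak{m}_\mathds{k}},\]
valid for homogeneous $x\in\mathfrak{m}_\mathds{k}$ and $y\in U_\chi(\mathfrak{g}_\mathds{k})$ (using $\chi|_{(\mathfrak{m}_\mathds{k})_{\bar 1}}=0$), shows that this Whittaker-type annihilation condition is precisely the $\text{ad}\,\mathfrak{m}_\mathds{k}$-invariance defining $(Q_\chi^\chi)^{\text{ad}\mathfrak{m}_\mathds{k}}$, so $\varphi$ is well-defined.

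Next I would establish bijectivity. Injectivity of $\varphi$ is immediate since $Q_\chi^\chi$ is a cyclic $U_\chi(\mathfrak{g}_\mathds{k})$-module with generator $\bar 1_\chi$. For surjectivity, given $v\in(Q_\chi^\chi)^{\text{ad}\mathfrak{m}_\mathds{k}}$, I would define $\Theta_v\colon u\bar 1_\chi\mapsto u\cdot v$ for $u\in U_\chi(\mathfrak{g}_\mathds{k})$. Well-definedness is the substantive point: it amounts to showing $U_\chi(\mathfrak{g}_\mathds{k})\cdot N_{\mathfrak{m}_\mathds{k}}$ annihilates $v$, which follows at once from the Whittaker characterization of $(Q_\chi^\chi)^{\text{ad}\mathfrak{m}_\mathds{k}}$ obtained in the previous step (each generator $x-\chi(x)$ of $N_{\mathfrak{m}_\mathds{k}}$ kills $v$, and this property passes through any left multiplier from $U_\chi(\mathfrak{g}_\mathds{k})$). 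By construction $\Theta_v$ is $\mathfrak{g}_\mathds{k}$-linear and $\varphi(\Theta_v)=v$.

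Finally I would check that $\varphi$ respects the algebra structures on both sides. Writing $\Theta=\Theta_{\bar 0}+\Theta_{\bar 1}$ and $\Theta'=\Theta'_{\bar 0}+\Theta'_{\bar 1}$, one expands $\varphi(\Theta\cdot\Theta')$ using the opposite-super-algebra convention $\Theta\cdot\Theta'=(-1)^{|\Theta||\Theta'|}\Theta'\circ\Theta$ and the $\mathfrak{g}_\mathds{k}$-linearity of $\Theta'$ applied to $\Theta(\bar 1_\chi)=u\bar 1_\chi$, then matches it against $\varphi(\Theta)\varphi(\Theta')=(u\bar 1_\chi)(u'\bar 1_\chi)$ via the natural product on $(Q_\chi^\chi)^{\text{ad}\mathfrak{m}_\mathds{k}}$ inherited from multiplication in $U_\chi(\mathfrak{g}_\mathds{k})/I_{\mathfrak{m}_\mathds{k}}$. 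This is the same Koszul-sign bookkeeping done in the proof of Theorem~\ref{W-C2}, and no new input is needed.

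The main obstacle is not a deep structural one but a conceptual one: one must be comfortable reading $(Q_\chi^\chi)^{\text{ad}\mathfrak{m}_\mathds{k}}$ both as the space of adjoint invariants of the quotient ring and as the space of Whittaker vectors, and must verify that these two interpretations agree under the reduction modulo the $p$-center. Once the identity displayed above is in hand, this agreement is automatic, and the remaining verifications are routine translations of the characteristic-zero argument.
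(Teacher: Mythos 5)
Your proof is correct, and it is genuinely more direct than the one in the paper, so the comparison is worth spelling out. You define $\varphi(\Theta)=\Theta(\bar 1_\chi)$ and verify well-definedness, bijectivity and multiplicativity by hand: the only substantive inputs are that $Q_\chi^\chi$ is cyclic over $U_\chi(\mathfrak{g}_\mathds{k})$, that $I_{\mathfrak{m}_\mathds{k}}=U_\chi(\mathfrak{g}_\mathds{k})N_{\mathfrak{m}_\mathds{k}}$ is a left ideal, and the elementary super-bracket identity $[x,y]\equiv(x-\chi(x))y \pmod{I_{\mathfrak{m}_\mathds{k}}}$ identifying $\text{ad}\,\mathfrak{m}_\mathds{k}$-invariance with Whittaker-type annihilation. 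Your surjectivity argument — $\Theta_v(u\bar 1_\chi):=uv$, well-defined because $N_{\mathfrak{m}_\mathds{k}}\cdot v=0$ and $I_{\mathfrak{m}_\mathds{k}}$ is a left ideal — does not need any freeness of $Q_\chi^\chi$ under the adjoint action. The paper, following Premet, takes a longer route: it first proves (Lemma~\ref{free}, via Wang--Zhao's module-freeness result) that $Q_\chi^\chi$ is $U_\chi(\mathfrak{m}_\mathds{k})$-free as an $\text{ad}\,\mathfrak{m}_\mathds{k}$-module, uses this to split $0\to I_{\mathfrak{m}_\mathds{k}}\to U_\chi(\mathfrak{g}_\mathds{k})\to Q_\chi^\chi\to 0$ as $\text{ad}\,\mathfrak{m}_\mathds{k}$-modules, and then factors the isomorphism through the intermediate algebra $B=\{\bar u : I_{\mathfrak{m}_\mathds{k}}\bar u\subseteq I_{\mathfrak{m}_\mathds{k}}\}$ and the quotient $U_\chi(\mathfrak{g}_\mathds{k})^{\text{ad}\mathfrak{m}_\mathds{k}}/\bigl(I_{\mathfrak{m}_\mathds{k}}\cap U_\chi(\mathfrak{g}_\mathds{k})^{\text{ad}\mathfrak{m}_\mathds{k}}\bigr)$. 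What the paper's approach buys, and what is the point of the extra machinery, is precisely the ability to \emph{lift} $\text{ad}\,\mathfrak{m}_\mathds{k}$-invariants from $Q_\chi^\chi$ to $U_\chi(\mathfrak{g}_\mathds{k})$; this identification of $U_\chi(\mathfrak{g}_\mathds{k},e)$ with $U_\chi(\mathfrak{g}_\mathds{k})^{\text{ad}\mathfrak{m}_\mathds{k}}/\bigl(U_\chi(\mathfrak{g}_\mathds{k})^{\text{ad}\mathfrak{m}_\mathds{k}}\cap U_\chi(\mathfrak{g}_\mathds{k})N_{\mathfrak{m}_\mathds{k}}\bigr)$ is used again explicitly before Theorem~\ref{reducedfunctors} (the Morita equivalence) and in later arguments, and the paper states this as its reason for proving the proposition ``in detail.'' So your shorter proof is entirely adequate for the proposition as stated; the paper's longer route is carrying the extra load of producing consequences needed downstream.
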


This result was firstly referred by Wang in ([\cite{W}], Remark 70) but without a proof. Premet gave a proof for the finite $W$-algebra case in ([\cite{P2}], Theorem 2.3(iv)) with the help of the support variety machinery. It is notable that Wang and Zhao bypassed the support variety machinery completely when they introduced the reduced $W$-superalgebra $U_\chi(\mathfrak{g}_\mathds{k},e)$ over $\mathds{k}$ in [\cite{WZ}]. Hence some modification is needed in the proof. First notice that

\begin{lemma}\label{free}
$Q_{\chi}^\chi$ is a free $U_\chi(\mathfrak{m}_\mathds{k})$-module under the action of ad\,$\mathfrak{m}_\mathds{k}$.
\end{lemma}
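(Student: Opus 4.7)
The plan is to prove freeness by a filtered/graded comparison, building on the observation that $U_\chi(\mathfrak{m}_\mathds{k})$ is a local Frobenius superalgebra (since $\mathfrak{m}_\mathds{k}$ is $p$-nilpotent and $\chi$ vanishes on the $p$-closure of $[\mathfrak{m}_\mathds{k},\mathfrak{m}_\mathds{k}]$, as recalled in Section~4.1). For such an algebra projectivity and freeness coincide, so it suffices to exhibit an explicit $U_\chi(\mathfrak{m}_\mathds{k})$-basis of $Q_\chi^\chi$ under the adjoint action.

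I would begin by fixing an ordered PBW basis of $\mathfrak{g}_\mathds{k}$ compatible with the Dynkin $\mathbb{Z}$-grading: list a homogeneous basis of $\mathfrak{m}_\mathds{k}$ first, then a graded complement $\mathfrak{c}_\mathds{k}$ spanned by $\bar x_1,\dots,\bar x_m,\bar y_1,\dots,\bar y_n$ (the generators of $\mathfrak{p}_\mathds{k}$) together with $\bar u_1,\dots,\bar u_s$ and $\bar v_1,\dots,\bar v_t$, so that $\mathfrak{g}_\mathds{k}=\mathfrak{m}_\mathds{k}\oplus\mathfrak{c}_\mathds{k}$ as graded superspaces. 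The PBW theorem for $U_\chi(\mathfrak{g}_\mathds{k})$ then produces a monomial basis of $Q_\chi^\chi$ indexed purely by exponent tuples supported on $\mathfrak{c}_\mathds{k}$, with exponents bounded by $p-1$ on even generators and by $1$ on odd ones; this already gives the correct dimension count. Installing the Kazhdan filtration on both $Q_\chi^\chi$ and $U_\chi(\mathfrak{m}_\mathds{k})$, the identity
\[
\mathrm{ad}(y)(v\cdot 1_\chi)=yv\cdot 1_\chi-(-1)^{|y||v|}\chi(y)\,v\cdot 1_\chi
\]
valid for $y\in\mathfrak{m}_\mathds{k}$ and $v\in U_\chi(\mathfrak{g}_\mathds{k})$ shows that the adjoint action respects the filtration and, passing to associated graded, reduces to the super-symmetric left-multiplication action of $\mathrm{gr}\,\mathfrak{m}_\mathds{k}$ on $\mathrm{gr}\,Q_\chi^\chi$ up to a $\chi$-shift concentrated in Kazhdan degree zero (coming from $\mathfrak{m}_\mathds{k}\cap\mathfrak{g}(-2)$). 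Since $\mathrm{gr}\,Q_\chi^\chi$ and $\mathrm{gr}\,U_\chi(\mathfrak{m}_\mathds{k})$ are the appropriate truncated super-symmetric algebras on $\mathfrak{g}_\mathds{k}$ and $\mathfrak{m}_\mathds{k}$ respectively, the direct-sum decomposition $\mathfrak{g}_\mathds{k}=\mathfrak{m}_\mathds{k}\oplus\mathfrak{c}_\mathds{k}$ makes the freeness of $\mathrm{gr}\,Q_\chi^\chi$ over $\mathrm{gr}\,U_\chi(\mathfrak{m}_\mathds{k})$, with the $\mathfrak{c}_\mathds{k}$-monomials as a basis, immediate. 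The standard filtered-to-graded lifting lemma then transfers freeness back to $Q_\chi^\chi$, with the lifted monomials forming a $U_\chi(\mathfrak{m}_\mathds{k})$-free basis under the adjoint action.

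The main obstacle I foresee is absorbing the $\chi$-twist on $\mathfrak{m}_\mathds{k}\cap\mathfrak{g}(-2)$: because $\chi(y)=(e,y)$ and $e\in\mathfrak{g}(2)$, the scalar $\chi(y)$ sits in Kazhdan degree zero, the same degree as the leading symbol of $\mathrm{ad}(y)$, and so cannot be dismissed at the graded level. It has to be removed either via the algebra automorphism $y\mapsto y-\chi(y)$ on $\mathrm{gr}\,U_\chi(\mathfrak{m}_\mathds{k})$ (which preserves freeness since it is a scalar shift of the generators) or, equivalently, by a triangular change of basis inside $\mathfrak{m}_\mathds{k}\cap\mathfrak{g}(-2)$. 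A secondary technicality is the careful bookkeeping of super signs through each filtered/graded identification, but the distinction between $r$ even and $r$ odd only enters through the choice of the complement $\mathfrak{c}_\mathds{k}$ rather than through the module structure over $U_\chi(\mathfrak{m}_\mathds{k})$ itself.
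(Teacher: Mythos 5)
The key computation in your proposal is correct, and it is exactly the heart of the paper's proof: for $y\in\mathfrak{m}_\mathds{k}$ and $\bar u\in Q_\chi^\chi$ one has $\operatorname{ad}(y)(\bar u)=(y-\chi(y))\bar u$, i.e.\ the adjoint action of $y$ coincides, on the quotient $U_\chi(\mathfrak{g}_\mathds{k})/I_{\mathfrak{m}_\mathds{k}}$, with left multiplication by $y-\chi(y)$. But from this point your argument takes a filtered--graded detour that does not work.

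The gap is the following. If you pass to the associated graded $\operatorname{gr}Q_\chi^\chi=\operatorname{gr}U_\chi(\mathfrak{g}_\mathds{k})/\operatorname{gr}I_{\mathfrak{m}_\mathds{k}}$ for the Kazhdan filtration, then the leading symbol of $\operatorname{ad}(y)=L_{y-\chi(y)}$ is multiplication by $\sigma(y)-\chi(y)$; but these symbols are precisely the generators of $\operatorname{gr}I_{\mathfrak{m}_\mathds{k}}$, so the induced action of $\operatorname{gr}\mathfrak{m}_\mathds{k}$ on $\operatorname{gr}Q_\chi^\chi$ is identically zero. A module on which a local algebra acts by zero is not free, so the graded object carries no freeness for you to lift, and the filtered-to-graded lemma gives nothing here. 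Relatedly, the identification you invoke is wrong: $\operatorname{gr}Q_\chi^\chi$ is the truncated supersymmetric algebra on $\widetilde{\mathfrak{p}}_\mathds{k}$, not on $\mathfrak{g}_\mathds{k}$; the $\mathfrak{m}_\mathds{k}$ tensor factor has been killed by the quotient, so the splitting $\mathfrak{g}_\mathds{k}=\mathfrak{m}_\mathds{k}\oplus\mathfrak{c}_\mathds{k}$ does not present $\operatorname{gr}Q_\chi^\chi$ as a free $\operatorname{gr}U_\chi(\mathfrak{m}_\mathds{k})$-module.

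What one should do, and what the paper does, is much shorter and requires none of this machinery: once you have $\operatorname{ad}(y)\equiv L_{y-\chi(y)}\pmod{I_{\mathfrak{m}_\mathds{k}}}$, note that $y\mapsto y-\chi(y)$ extends to an algebra automorphism of $U_\chi(\mathfrak{m}_\mathds{k})$, so the adjoint $U_\chi(\mathfrak{m}_\mathds{k})$-module structure on $Q_\chi^\chi$ is the left-multiplication structure twisted by an automorphism. Then invoke the already-available result ([\cite{WZ}], Proposition 4.2) that \emph{every} $U_\chi(\mathfrak{g}_\mathds{k})$-module, in particular $Q_\chi^\chi$, is $U_\chi(\mathfrak{m}_\mathds{k})$-free under left multiplication, and freeness under the twisted action follows at once. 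Your opening observation that $U_\chi(\mathfrak{m}_\mathds{k})$ is local (so projective $=$ free) is true but is itself essentially what sits inside the cited proposition; you should lean on that result directly rather than try to re-derive the freeness through a graded reduction that cannot succeed.
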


\begin{proof} First of all, for every $\bar x=\bar x_{\bar{0}}+\bar x_{\bar{1}}\in\mathfrak{m}_\mathds{k}$ and $\bar u=\bar u_{\bar{0}}+\bar u_{\bar{1}}\in U_\chi(\mathfrak{g}_\mathds{k})$, one has
\begin{eqnarray}\label{adfree}
[\bar x,\bar u]&=&[\bar x-\chi(\bar x),\bar u]=[\bar x_{\bar{0}}+\bar x_{\bar{1}}-\chi(\bar x),\bar u_{\bar{0}}+\bar u_{\bar{1}}]\nonumber\\
&=&(\bar x_{\bar{0}}+\bar x_{\bar{1}}-\chi(\bar x))(\bar u_{\bar{0}}+\bar u_{\bar{1}})
-\bar u_{\bar{0}}(\bar x_{\bar{0}}+\bar x_{\bar{1}}-\chi(\bar x))-\bar u_{\bar{1}}(\bar x_{\bar{0}}-\chi(\bar x_{\bar{0}}))\nonumber\\
&&+\bar u_{\bar{1}}\bar x_{\bar{1}}\nonumber\\
&=&(\bar x-\chi(\bar x))\bar u-\bar u_{\bar{0}}(\bar x-\chi(\bar x))
-\bar u_{\bar{1}}(\bar x_{\bar{0}}-\chi(\bar x_{\bar{0}}))+\bar u_{\bar{1}}(\bar x_{\bar{1}}-\chi(\bar x_{\bar{1}}))
\end{eqnarray}
since $\chi((\mathfrak{g}_\mathds{k})_{\bar{1}})=0$. By the definition of $I_{\mathfrak{m}_\mathds{k}}$ one knows that the last three terms in \eqref{adfree} are in $I_{\mathfrak{m}_\mathds{k}}$, then it is immediate that
\begin{equation}\label{[]tomult}
[\bar x,\bar u]\equiv(\bar x-\chi(\bar x))\bar u\quad(\text{mod}I_{\mathfrak{m}_\mathds{k}})
\end{equation}
for all $\bar x\in\mathfrak{m}_\mathds{k}$ and $\bar u\in Q_{\chi}^\chi$.

By ([\cite{WZ}], Proposition 4.2) we know that every $U_\chi(\mathfrak{g}_\mathds{k})$-module is $U_\chi(\mathfrak{m}_\mathds{k})$-free under the action of left-multiplication. It is immediate from \eqref{[]tomult} that $U_\chi(\mathfrak{g}_\mathds{k})$-module $Q_\chi^\chi$ is $U_\chi(\mathfrak{m}_\mathds{k})$-free under the action of ad$\mathfrak{m}_\mathds{k}$.
\end{proof}

Now we are in a position to prove Proposition~\ref{invariant}. In fact, the proof of Proposition~\ref{invariant} is the same as the finite $W$-algebra case after one establishing Lemma~\ref{free}, see ([\cite{P2}], Theorem 2.3(iv)). As some consequences in the proof are needed later on, we will prove this proposition in detail.

\begin{proof} First introduce the $\mathds{k}$-algebra$$B:=\{\bar u\in U_\chi(\mathfrak{g}_\mathds{k})|I_{\mathfrak{m}_\mathds{k}} \bar u\subseteq I_{\mathfrak{m}_\mathds{k}}\}=\{\bar u\in U_\chi(\mathfrak{g}_\mathds{k})|[I_{\mathfrak{m}_\mathds{k}},\bar u]\subseteq I_{\mathfrak{m}_\mathds{k}}\}.$$

The proposition can be proved in two steps:

(1) We claim that the mapping:
\begin{equation}\label{psi}
\begin{array}{lcll}\psi:&(\text{End}_{\mathfrak{g}_\mathds{k}}Q_{\chi}^\chi)^{\text{op}}&\rightarrow&B/I_{\mathfrak{m}_\mathds{k}}\\ &\phi&\mapsto&\phi(\bar1_\chi)
\end{array}
\end{equation}
is an isomorphism of $\mathds{k}$-algebras.

It is easy to verify that this mapping is well-defined, both injective and surjective, and keeps the $\mathbb{Z}_2$-graded structure. Along the same discussion as Theorem~\ref{W-C2}, we can obtain that $\psi$ is a homomorphism of $\mathds{k}$-algebras.

(2) The two mappings $$\psi_1: B/I_{\mathfrak{m}_\mathds{k}}\longrightarrow U_\chi(\mathfrak{g}_\mathds{k})^{\text{ad}\mathfrak{m}_\mathds{k}}/I_{\mathfrak{m}_\mathds{k}}\cap U_\chi(\mathfrak{g}_\mathds{k})^{\text{ad}\mathfrak{m}_\mathds{k}}$$ and $$\psi_2: U_\chi(\mathfrak{g}_\mathds{k})^{\text{ad}\mathfrak{m}_\mathds{k}}/I_{\mathfrak{m}_\mathds{k}}\cap U_\chi(\mathfrak{g}_\mathds{k})^{\text{ad}\mathfrak{m}_\mathds{k}}\longrightarrow (U_\chi(\mathfrak{g}_\mathds{k})/I_{\mathfrak{m}_\mathds{k}})^{\text{ad}\mathfrak{m}_\mathds{k}}=(Q_{\chi}^\chi)^{\text{ad}\mathfrak{m}_\mathds{k}}$$ are isomorphisms of $\mathds{k}$-algebras.

Since $Q_{\chi}^\chi$ is a free $U_\chi(\mathfrak{m}_\mathds{k})$-module under the action of $\text{ad}\mathfrak{m}_\mathds{k}$ by Lemma~\ref{free}, the short exact sequence of $\text{ad}\mathfrak{m}_\mathds{k}$-modules
$$0\longrightarrow I_{\mathfrak{m}_\mathds{k}}\longrightarrow U_\chi(\mathfrak{g}_\mathds{k})\longrightarrow Q_\chi^\chi\longrightarrow0$$splits. In other words, there is a $\mathbb{Z}_2$-graded subspace
$V\subseteq U_\chi(\mathfrak{g}_\mathds{k})$ such that $[\mathfrak{m}_\mathds{k},V]\subseteq V$ and that
\begin{equation}\label{VI}
U_\chi(\mathfrak{g}_\mathds{k})\cong V\oplus I_{\mathfrak{m}_\mathds{k}}.
\end{equation}
as $\text{ad}\mathfrak{m}_\mathds{k}$-modules. From the definition of $B$ we know that $I_{\mathfrak{m}_\mathds{k}}\subseteq B$, thus we have \begin{equation}\label{BVI}
B=V^{\text{ad}\mathfrak{m}_\mathds{k}}\oplus I_{\mathfrak{m}_\mathds{k}}.
\end{equation}
Then it follows from \eqref{VI} and \eqref{BVI} that
\begin{equation}\label{BI}
B/I_{\mathfrak{m}_\mathds{k}}\cong U_\chi(\mathfrak{g}_\mathds{k})^{\text{ad}\mathfrak{m}_\mathds{k}}/I_{\mathfrak{m}_\mathds{k}}\cap U_\chi(\mathfrak{g}_\mathds{k})^{\text{ad}\mathfrak{m}_\mathds{k}}\cong(U_\chi(\mathfrak{g}_\mathds{k})/I_{\mathfrak{m}_\mathds{k}})^{\text{ad}\mathfrak{m}_\mathds{k}}.
\end{equation}

Now we can deduce from \eqref{psi} and \eqref{BI} that Proposition~\ref{invariant} is true.
\end{proof}

\begin{rem}\label{a r}
We get another equivalent definition of reduced $W$-superalgebra $U_\chi(\mathfrak{g}_\mathds{k},e)$ with $p$-character $\chi$ by Proposition~\ref{invariant}. In fact, for any reduced $W$-superalgebra $U_\eta(\mathfrak{g}_\mathds{k},e)$ with $p$-character $\eta\in\chi+(\mathfrak{m}_\mathds{k}^\perp)_{\bar{0}}$, Proposition~\ref{invariant} still establishes (see Theorem~\ref{sumresult}(2)).
\end{rem}

\begin{rem}\label{Kazh}
For the algebras $U(\mathfrak{g}_\mathds{k}),~U(\mathfrak{\widetilde{p}}_\mathds{k}),~
Q_{\chi,\mathds{k}},~Q_\chi^\chi$ over positive characteristic field $\mathds{k}$, we can also define their Kazhdan filtration algebras and graded algebras in the same way as those in Section 3.2 when the characteristic $p$ is sufficient large.
\end{rem}

\subsection{The Morita equivalence theorem}

In this part we will introduce the Morita equivalence theorem between the reduced enveloping algebra of a basic classical Lie superalgebra and the reduced $W$-superalgebra. All these provide a new perspective toward the representation theory of Lie superalgebras.

First recall the following theorem formulated by Wang-Zhao in ([\cite{WZ}], Theorem 4.4),

\begin{theorem}$^{[\cite{WZ}]}$\label{matrix}
Set $\delta=\text{dim}~U_{\chi}(\mathfrak{m}_\mathds{k})$. Then $Q_\chi^\chi$ is a projective $U_{\chi}(\mathfrak{m}_\mathds{k})$-module and
$$U_\chi(\mathfrak{g}_\mathds{k})\cong\text{Mat}_\delta(U_\chi(\mathfrak{g}_\mathds{k},e)),$$
where $\text{Mat}_\delta(U_\chi(\mathfrak{g}_\mathds{k},e))$ denotes the matrix algebra of $U_\chi(\mathfrak{g}_\mathds{k},e)$.
\end{theorem}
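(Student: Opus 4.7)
My plan follows Premet's strategy for the Lie algebra case in [\cite{P2}], adapted to the super setting after Wang--Zhao. The proof rests on two structural facts: (i) $Q_\chi^\chi$ is a projective left $U_\chi(\mathfrak{g}_\mathds{k})$-module, and (ii) the regular module decomposes as $U_\chi(\mathfrak{g}_\mathds{k})\cong(Q_\chi^\chi)^{\oplus\delta}$. Once these are in hand, taking endomorphism algebras extracts the matrix structure.

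For (i), the key input is that $U_\chi(\mathfrak{m}_\mathds{k})$ is a local Frobenius superalgebra whose unique simple module is $\mathds{k}_\chi$: this follows from the $p$-nilpotency of $\mathfrak{m}_\mathds{k}$ and the vanishing of $\chi$ on the $p$-closure of $[\mathfrak{m}_\mathds{k},\mathfrak{m}_\mathds{k}]$, both recorded in ([\cite{WZ}], Section 4.1). By PBW, $U_\chi(\mathfrak{g}_\mathds{k})$ is free as a right $U_\chi(\mathfrak{m}_\mathds{k})$-module, so Frobenius reciprocity gives
\[
\text{Hom}_{U_\chi(\mathfrak{g}_\mathds{k})}(Q_\chi^\chi,M)\cong\text{Hom}_{U_\chi(\mathfrak{m}_\mathds{k})}(\mathds{k}_\chi,M)=M^{\mathfrak{m}_\mathds{k},\chi}
\]
for any $U_\chi(\mathfrak{g}_\mathds{k})$-module $M$. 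Because every such $M$ is $U_\chi(\mathfrak{m}_\mathds{k})$-free by ([\cite{WZ}], Proposition 4.2) and free modules over a Frobenius algebra are injective, $\text{Ext}^{\geq 1}_{U_\chi(\mathfrak{m}_\mathds{k})}(\mathds{k}_\chi,M)=0$. Hence $M\mapsto M^{\mathfrak{m}_\mathds{k},\chi}$ is exact on $U_\chi(\mathfrak{g}_\mathds{k})$-Mod, i.e.\ $Q_\chi^\chi$ is $U_\chi(\mathfrak{g}_\mathds{k})$-projective; a fortiori it is $U_\chi(\mathfrak{m}_\mathds{k})$-projective, settling the first assertion of the theorem (Lemma~\ref{free} sharpens this to freeness).

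For (ii), the regular left $U_\chi(\mathfrak{m}_\mathds{k})$-module admits a composition series whose $\delta$ factors are all $\mathds{k}_\chi$, since $\mathds{k}_\chi$ is the unique simple. Induction $U_\chi(\mathfrak{g}_\mathds{k})\otimes_{U_\chi(\mathfrak{m}_\mathds{k})}(-)$ is exact by PBW-flatness and lifts this to a filtration of $U_\chi(\mathfrak{g}_\mathds{k})\cong U_\chi(\mathfrak{g}_\mathds{k})\otimes_{U_\chi(\mathfrak{m}_\mathds{k})}U_\chi(\mathfrak{m}_\mathds{k})$ with $\delta$ consecutive quotients all isomorphic to $Q_\chi^\chi$. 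Projectivity from (i) forces $\text{Ext}^1_{U_\chi(\mathfrak{g}_\mathds{k})}(Q_\chi^\chi,Q_\chi^\chi)=0$, so the filtration splits and $U_\chi(\mathfrak{g}_\mathds{k})\cong(Q_\chi^\chi)^{\oplus\delta}$ as left $U_\chi(\mathfrak{g}_\mathds{k})$-modules. Taking endomorphism algebras yields
\[
U_\chi(\mathfrak{g}_\mathds{k})^{\text{op}}\cong\text{End}_{U_\chi(\mathfrak{g}_\mathds{k})}\bigl((Q_\chi^\chi)^{\oplus\delta}\bigr)\cong\text{Mat}_\delta\bigl(\text{End}_{U_\chi(\mathfrak{g}_\mathds{k})}(Q_\chi^\chi)\bigr)=\text{Mat}_\delta\bigl(U_\chi(\mathfrak{g}_\mathds{k},e)^{\text{op}}\bigr),
\]
and the transpose identification $\text{Mat}_\delta(A^{\text{op}})\cong\text{Mat}_\delta(A)^{\text{op}}$ followed by passage to opposites delivers $U_\chi(\mathfrak{g}_\mathds{k})\cong\text{Mat}_\delta(U_\chi(\mathfrak{g}_\mathds{k},e))$.

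The main obstacle is verifying the Frobenius property of $U_\chi(\mathfrak{m}_\mathds{k})$ in the super setting---specifically that its regular module has a one-dimensional socle, equivalently that $\mathds{k}_\chi$ is both the head and the socle of the indecomposable projective $U_\chi(\mathfrak{m}_\mathds{k})$. The subtlety is that the $p$-restricted structure is defined only on the even part of $\mathfrak{m}_\mathds{k}$, so one must track the nilpotency contributions from the odd (Grassmann) generators separately and check their interaction with $\chi$, which vanishes on the odd part. Wang--Zhao's Section~4.1 supplies precisely what is needed; once this is granted, the rest of the argument is formal.
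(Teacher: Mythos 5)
Your proof is correct and reproduces the standard Premet--Skryabin argument that Wang--Zhao adapt to the super setting in ([\cite{WZ}], Theorem 4.4), a result the present paper simply cites without reproducing the proof: projectivity of $Q_\chi^\chi$ via exactness of the invariants functor, the split filtration $U_\chi(\mathfrak{g}_\mathds{k})\cong(Q_\chi^\chi)^{\oplus\delta}$ obtained by inducing a Loewy series of $U_\chi(\mathfrak{m}_\mathds{k})$, and passage to opposite algebras. The sticking point you single out---that $U_\chi(\mathfrak{m}_\mathds{k})$ is a local Frobenius superalgebra with $\mathds{k}_\chi$ as its unique simple, and that every $U_\chi(\mathfrak{g}_\mathds{k})$-module restricts to a free $U_\chi(\mathfrak{m}_\mathds{k})$-module---is exactly what ([\cite{WZ}], Section 4.1 and Proposition 4.2) supply, and the paper itself invokes the same ingredients in its Lemma~\ref{free} and Proposition~\ref{invariant}.
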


It is notable this theorem not only establishes the foundation for Theorem~\ref{reducedfunctors}  where the Morita equivalence theorem between $\mathds{k}$-algebras $U_{\chi}(\mathfrak{g}_\mathds{k})$ and $U_\chi(\mathfrak{g}_\mathds{k},e)$ is introduced, but also provides an effective tool to settle the problem on the existence of the minimal dimensional representation in the Super Kac-Weisfeiler Property which we will deal with in the final section. First note that

\begin{lemma}\label{Endto}
There exists an isomorphism of $\mathds{k}$-algebras:
\[\begin{array}{lcll}\varphi:&(\text{End}_{(U_\chi(\mathfrak{g}_\mathds{k}),U_\chi(\mathfrak{m}_\mathds{k}))}
U_\chi(\mathfrak{g}_\mathds{k}))^{\text{op}}&\longrightarrow&U_\chi(\mathfrak{g}_\mathds{k})^{\text{ad}\mathfrak{m}_\mathds{k}}\\ &\theta&\mapsto&\theta(\bar 1)
\end{array}\]
where $(\text{End}_{(U_\chi(\mathfrak{g}_\mathds{k}),U_\chi(\mathfrak{m}_\mathds{k}))}
U_\chi(\mathfrak{g}_\mathds{k}))^{\text{op}}$ denotes the opposite algebra of the endomorphism algebra of $(U_\chi(\mathfrak{g}_\mathds{k}),U_\chi(\mathfrak{m}_\mathds{k}))$-bimodule $U_\chi(\mathfrak{g}_\mathds{k})$.
\end{lemma}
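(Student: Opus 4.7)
The plan is to follow the strategy used in the proof of Theorem~\ref{W-C2}, namely to read off the $\text{ad}\,\mathfrak{m}_\mathds{k}$-invariance of $\theta(\bar 1)$ directly from the bimodule axioms, and then reconstruct a given invariant element as the image under $\varphi$ of an explicit bimodule endomorphism. First I would check that $\varphi$ is well-defined. For a homogeneous bimodule endomorphism $\theta$ and any homogeneous $\bar x\in\mathfrak{m}_\mathds{k}$, left $U_\chi(\mathfrak{g}_\mathds{k})$-linearity gives $\theta(\bar x\cdot\bar 1)=(-1)^{|\theta||\bar x|}\bar x\cdot\theta(\bar 1)$, while right $U_\chi(\mathfrak{m}_\mathds{k})$-linearity gives $\theta(\bar 1\cdot\bar x)=\theta(\bar 1)\cdot\bar x$. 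Since $\bar x\cdot\bar 1=\bar 1\cdot\bar x=\bar x$ in $U_\chi(\mathfrak{g}_\mathds{k})$ and $|\theta(\bar 1)|=|\theta|$, comparing these identities yields $\bar x\cdot\theta(\bar 1)-(-1)^{|\bar x||\theta(\bar 1)|}\theta(\bar 1)\cdot\bar x=0$, i.e.\ $[\bar x,\theta(\bar 1)]=0$ in the super sense. Hence $\theta(\bar 1)\in U_\chi(\mathfrak{g}_\mathds{k})^{\text{ad}\,\mathfrak{m}_\mathds{k}}$.

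Injectivity is immediate, because left linearity forces $\theta(u)=(-1)^{|\theta||u|}u\cdot\theta(\bar 1)$ for every homogeneous $u\in U_\chi(\mathfrak{g}_\mathds{k})$, so $\theta$ is entirely determined by its value at $\bar 1$. For surjectivity, given a homogeneous $v\in U_\chi(\mathfrak{g}_\mathds{k})^{\text{ad}\,\mathfrak{m}_\mathds{k}}$ I would define $\theta_v(u):=(-1)^{|v||u|}u\cdot v$ and extend by linearity to arbitrary $v$. Left $U_\chi(\mathfrak{g}_\mathds{k})$-linearity of $\theta_v$ is a routine sign check, and right $U_\chi(\mathfrak{m}_\mathds{k})$-linearity $\theta_v(u\cdot\bar b)=\theta_v(u)\cdot\bar b$ reduces, after cancelling $u$, precisely to $(-1)^{|v||\bar b|}\bar b\cdot v=v\cdot\bar b$ for $\bar b\in\mathfrak{m}_\mathds{k}$, which is the super $\text{ad}$-invariance of $v$. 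Since $\varphi(\theta_v)=\theta_v(\bar 1)=v$, the map $\varphi$ is bijective.

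It then remains to check that $\varphi$ is an isomorphism of $\mathds{k}$-algebras, the passage to the opposite algebra on the source being what compensates for the reversal introduced by composition. Using $\theta'(u)=(-1)^{|\theta'||u|}u\cdot\theta'(\bar 1)$ on $u=\theta(\bar 1)$ one obtains $\theta'(\theta(\bar 1))=(-1)^{|\theta'||\theta|}\theta(\bar 1)\cdot\theta'(\bar 1)$. Combined with the super-opposite convention $\theta\cdot_{\mathrm{op}}\theta'=(-1)^{|\theta||\theta'|}\theta'\circ\theta$ already employed in the proof of Theorem~\ref{W-C2}, the two signs cancel and we obtain $\varphi(\theta\cdot_{\mathrm{op}}\theta')=\theta(\bar 1)\cdot\theta'(\bar 1)=\varphi(\theta)\varphi(\theta')$, as required.

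The main obstacle, and essentially the only delicate point, is the bookkeeping of super signs: one must check that the conventions chosen for a homogeneous bimodule morphism, for the supercommutator defining $\text{ad}\,\mathfrak{m}_\mathds{k}$, and for the opposite of a $\mathbb{Z}_2$-graded algebra are mutually consistent. Once these are fixed as in Section~3.1 and in the proof of Theorem~\ref{W-C2}, the argument above is purely formal and uses only the bimodule axioms; no input from the structure of $U_\chi(\mathfrak{m}_\mathds{k})$ or from the freeness result of Lemma~\ref{free} is needed here.
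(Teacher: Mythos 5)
Your proof is correct and follows essentially the same route as the paper's: establish well-definedness by comparing the left $U_\chi(\mathfrak{g}_\mathds{k})$-linearity and right $U_\chi(\mathfrak{m}_\mathds{k})$-linearity conditions at $\bar 1$, then handle bijectivity and the algebra homomorphism property by the same sign bookkeeping already used in Theorem~\ref{W-C2}. The paper simply leaves the latter steps to the reader with an explicit pointer to that theorem, whereas you spell them out; the substance is identical.
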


\begin{proof} We claim that $\varphi$ is well-defined. Since $(\text{End}_{(U_\chi(\mathfrak{g}_\mathds{k}),U_\chi(\mathfrak{m}_\mathds{k}))}
U_\chi(\mathfrak{g}_\mathds{k}))^{\text{op}}$ is the opposite algebra of the endomorphism algebra of $(U_\chi(\mathfrak{g}_\mathds{k}),U_\chi(\mathfrak{m}_\mathds{k}))$-bimodule $U_\chi(\mathfrak{g}_\mathds{k})$, then
$$\theta(m)=\theta( m.\bar 1)=(-1)^{|\theta||m|}m\theta(\bar 1),\quad\theta( m)= \theta(\bar 1.m)= \theta(\bar 1)m$$for any homogeneous elements $m\in\mathfrak{m}_\mathds{k}$ and $\theta\in U_\chi(\mathfrak{g}_\mathds{k})$.
Hence $[m,\theta(\bar 1)]=m\theta(\bar 1)-(-1)^{|\theta||m|}\theta(\bar 1)m=0$, i.e. $\theta(\bar 1)\in U_\chi(\mathfrak{g}_\mathds{k})^{\text{ad}\mathfrak{m}_\mathds{k}}$. Then $\varphi$ is well-defined.

It is easy to verify that the even mapping $\varphi$ is both injective and surjective, and keeps the $\mathbb{Z}_2$-graded structure by the same discussion as Theorem~\ref{W-C2}. Hence $\varphi$ is an isomorphism of $\mathds{k}$-algebras.
\end{proof}

Given a $\mathds{k}$-algebra $\mathscr{A}$ we denote by $\mathscr{A}$-mod the category of all finite-dimensional left $\mathscr{A}$-modules. Given a left $U_{\chi}(\mathfrak{g}_\mathds{k})$-module $M$ define
$$M^{\mathfrak{m}_\mathds{k}}:=\{v\in M|I_{\mathfrak{m}_\mathds{k}}.v=0\}.$$

It follows from the proof of Proposition~\ref{invariant} that $U_{\chi}(\mathfrak{g}_\mathds{k},e)$ can be identified with $U_\chi(\mathfrak{g}_\mathds{k})^{\text{ad}\mathfrak{m}_\mathds{k}}/ U_\chi(\mathfrak{g}_\mathds{k})^{\text{ad}\mathfrak{m}_\mathds{k}}\cap U_\chi(\mathfrak{g}_\mathds{k})N_{\mathfrak{m}_\mathds{k}}$. Therefore, any left $U_\chi(\mathfrak{g}_\mathds{k})^{\text{ad}\mathfrak{m}_\mathds{k}}$-module can be considered as a $U_{\chi}(\mathfrak{g}_\mathds{k},e)$-module with the trivial action of the ideal $U_\chi(\mathfrak{g}_\mathds{k})^{\text{ad}\mathfrak{m}_\mathds{k}}\cap U_\chi(\mathfrak{g}_\mathds{k})N_{\mathfrak{m}_\mathds{k}}$.

\begin{theorem}\label{reducedfunctors}
The functors$$U_\chi(\mathfrak{g}_\mathds{k})\text{-mod}\longrightarrow U_{\chi}(\mathfrak{g}_\mathds{k},e)\text{-mod},\qquad M\mapsto M^{\mathfrak{m}_\mathds{k}}$$and $$U_{\chi}(\mathfrak{g}_\mathds{k},e)\text{-mod}\longrightarrow U_\chi(\mathfrak{g}_\mathds{k})\text{-mod},\qquad V\mapsto U_\chi(\mathfrak{g}_\mathds{k})\otimes_{U_\chi(\mathfrak{g}_\mathds{k})^
{\text{ad}\mathfrak{m}_\mathds{k}}}V$$
are mutually inverse category equivalences.
\end{theorem}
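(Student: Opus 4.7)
The plan is to deduce this category equivalence from the matrix algebra isomorphism $U_\chi(\mathfrak{g}_\mathds{k})\cong\mathrm{Mat}_\delta(U_\chi(\mathfrak{g}_\mathds{k},e))$ of Theorem~\ref{matrix}, then identify the abstract Morita functors with the two concrete ones in the statement. The bimodule implementing the Morita context will be $Q_\chi^\chi=U_\chi(\mathfrak{g}_\mathds{k})/I_{\mathfrak{m}_\mathds{k}}$, which carries a left $U_\chi(\mathfrak{g}_\mathds{k})$-action by left multiplication and a right $U_\chi(\mathfrak{g}_\mathds{k},e)$-action via the identification $U_\chi(\mathfrak{g}_\mathds{k},e)\cong(\mathrm{End}_{U_\chi(\mathfrak{g}_\mathds{k})}Q_\chi^\chi)^{\mathrm{op}}$ recorded in \eqref{psi}. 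A dimension count using Theorem~\ref{matrix} shows $\dim Q_\chi^\chi=\delta\cdot\dim U_\chi(\mathfrak{g}_\mathds{k},e)$, so $Q_\chi^\chi$ corresponds to the column module and is a finitely generated projective generator of $U_\chi(\mathfrak{g}_\mathds{k})\text{-mod}$; classical Morita theory then says that $\mathrm{Hom}_{U_\chi(\mathfrak{g}_\mathds{k})}(Q_\chi^\chi,-)$ and $Q_\chi^\chi\otimes_{U_\chi(\mathfrak{g}_\mathds{k},e)}-$ are mutually inverse equivalences.

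First I would match $M\mapsto M^{\mathfrak{m}_\mathds{k}}$ with $\mathrm{Hom}_{U_\chi(\mathfrak{g}_\mathds{k})}(Q_\chi^\chi,-)$. By Frobenius reciprocity, any $U_\chi(\mathfrak{g}_\mathds{k})$-homomorphism out of $Q_\chi^\chi=U_\chi(\mathfrak{g}_\mathds{k})\otimes_{U_\chi(\mathfrak{m}_\mathds{k})}\mathds{k}_\chi$ is determined by the image of $\bar 1_\chi$, and that image must be annihilated by $I_{\mathfrak{m}_\mathds{k}}$, i.e.\ it must lie in $M^{\mathfrak{m}_\mathds{k}}$. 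The right $U_\chi(\mathfrak{g}_\mathds{k},e)$-action by precomposition on the Hom side transports to the $U_\chi(\mathfrak{g}_\mathds{k},e)$-action on $M^{\mathfrak{m}_\mathds{k}}$ coming from the surjection $U_\chi(\mathfrak{g}_\mathds{k})^{\mathrm{ad}\mathfrak{m}_\mathds{k}}\twoheadrightarrow U_\chi(\mathfrak{g}_\mathds{k},e)$ of Proposition~\ref{invariant}, so this is a natural isomorphism of $U_\chi(\mathfrak{g}_\mathds{k},e)$-modules.

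Next I would show that the stated functor $V\mapsto U_\chi(\mathfrak{g}_\mathds{k})\otimes_{U_\chi(\mathfrak{g}_\mathds{k})^{\mathrm{ad}\mathfrak{m}_\mathds{k}}}V$ agrees with $V\mapsto Q_\chi^\chi\otimes_{U_\chi(\mathfrak{g}_\mathds{k},e)}V$. The projection $U_\chi(\mathfrak{g}_\mathds{k})\twoheadrightarrow Q_\chi^\chi$ induces a candidate map $u\otimes v\mapsto\bar u\otimes v$, which is well defined because $V$ has trivial action of the kernel $I_{\mathfrak{m}_\mathds{k}}\cap U_\chi(\mathfrak{g}_\mathds{k})^{\mathrm{ad}\mathfrak{m}_\mathds{k}}$ of $U_\chi(\mathfrak{g}_\mathds{k})^{\mathrm{ad}\mathfrak{m}_\mathds{k}}\twoheadrightarrow U_\chi(\mathfrak{g}_\mathds{k},e)$. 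Surjectivity is clear. For injectivity I would invoke the $\mathrm{ad}\,\mathfrak{m}_\mathds{k}$-splitting $U_\chi(\mathfrak{g}_\mathds{k})\cong V'\oplus I_{\mathfrak{m}_\mathds{k}}$ from \eqref{VI} together with the decomposition \eqref{BVI}, which together let one check by a dimension argument that no extra relations appear after passing to the quotient. Combining the two identifications with the Morita equivalence from the first paragraph yields the theorem. The main obstacle I expect is precisely this last injectivity check — reconciling the two quite different-looking presentations of the quasi-inverse — which will require keeping careful track of both the left $U_\chi(\mathfrak{m}_\mathds{k})$-freeness of $Q_\chi^\chi$ from Theorem~\ref{matrix} and the $\mathrm{ad}\,\mathfrak{m}_\mathds{k}$-freeness from Lemma~\ref{free}.
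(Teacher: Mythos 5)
Your route is genuinely different from the paper's: the paper simply cites Premet's direct verification ([P2], Theorem 2.4, adapted via [WZ], Proposition 4.2), whereas you want to repackage the theorem as classical Morita theory applied to $Q_\chi^\chi$ viewed as a progenerator, using the matrix isomorphism of Theorem~\ref{matrix}. The first half of your plan is sound and adds conceptual clarity: $\mathrm{Hom}_{U_\chi(\mathfrak{g}_\mathds{k})}(Q_\chi^\chi,M)\cong M^{\mathfrak{m}_\mathds{k}}$ via Frobenius reciprocity, and matching the $U_\chi(\mathfrak{g}_\mathds{k},e)$-actions is straightforward. Two quibbles there: a dimension count does not by itself make $Q_\chi^\chi$ projective or a generator, and you should instead invoke Theorem~\ref{matrix} directly (which already asserts projectivity) and the $U_\chi(\mathfrak{m}_\mathds{k})$-freeness of all modules (for the generator property); and note that $\mathds{k}_\chi$ is \emph{not} projective over $U_\chi(\mathfrak{m}_\mathds{k})$, so the induction functor does not automatically yield a projective $U_\chi(\mathfrak{g}_\mathds{k})$-module — the projectivity really is a nontrivial fact.

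The genuine gap is where you expect it, but your plan for filling it will not work as written. Having both tensor products surject onto the same thing, you must show $U_\chi(\mathfrak{g}_\mathds{k})\otimes_{U_\chi(\mathfrak{g}_\mathds{k})^{\mathrm{ad}\,\mathfrak{m}_\mathds{k}}}V\to Q_\chi^\chi\otimes_{U_\chi(\mathfrak{g}_\mathds{k},e)}V$ is injective. You propose to do this via the splitting $U_\chi(\mathfrak{g}_\mathds{k})\cong V'\oplus I_{\mathfrak{m}_\mathds{k}}$ of \eqref{VI} and \eqref{BVI} plus ``a dimension argument.'' But that splitting is only a splitting of $\mathrm{ad}\,\mathfrak{m}_\mathds{k}$-modules; it is not a splitting of right $U_\chi(\mathfrak{g}_\mathds{k})^{\mathrm{ad}\,\mathfrak{m}_\mathds{k}}$-modules, so it does not pass through $-\otimes_{U_\chi(\mathfrak{g}_\mathds{k})^{\mathrm{ad}\,\mathfrak{m}_\mathds{k}}}V$. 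What you actually need is the assertion that $U_\chi(\mathfrak{g}_\mathds{k})$ is free of rank $\delta$ as a \emph{right} $U_\chi(\mathfrak{g}_\mathds{k})^{\mathrm{ad}\,\mathfrak{m}_\mathds{k}}$-module (equivalently, that the kernel of the second functor on $V=U_\chi(\mathfrak{g}_\mathds{k},e)$ is exactly $I_{\mathfrak{m}_\mathds{k}}$, i.e.\ that $U_\chi(\mathfrak{g}_\mathds{k})\cdot\bigl(U_\chi(\mathfrak{g}_\mathds{k})^{\mathrm{ad}\,\mathfrak{m}_\mathds{k}}\cap I_{\mathfrak{m}_\mathds{k}}\bigr)=I_{\mathfrak{m}_\mathds{k}}$). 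Proving this is essentially the whole content of Premet's direct argument, so your Morita wrapper does not actually bypass the work — it just relocates it to the comparison step. You should make that comparison step explicit, or fall back on the direct approach as the paper does.
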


\begin{proof}
It follows from ([\cite{WZ}], Proposition 4.2) that every $U_\chi(\mathfrak{g}_\mathds{k})$-module is $U_\chi(\mathfrak{m}_\mathds{k})$-free under the action of left-multiplication. The theorem can be proved in the same way as ([\cite{P2}], Theorem 2.4) for the Lie algebra case after substituting the discussion in ([\cite{P2}], Section 2.2) for ([\cite{WZ}], Proposition 4.2), thus will be omitted here.
\end{proof}

\section{The structure of reduced $W$-superalgebras in positive characteristic}

Following Premet's treatment of finite $W$-algebras in ([\cite{P2}], Section 3), in this part we will study the construction theory of reduced $W$-superalgebra $U_\chi(\mathfrak{g}_\mathds{k},e)$ associated to the basic classical Lie superalgebra $\mathfrak{g}_\mathds{k}$ over positive characteristic field $\mathds{k}=\overline{\mathbb{F}}_p$.

Recall in Section 4.1 the elements in the Lie superalgebra $\mathfrak{g}_\mathds{k}$ is obtained by ``modular $p$ reduction'' from the ones in the $A$-algebra $\mathfrak{g}_A$, and we denote by
$\bar{x}=x\otimes1\in\mathfrak{g}_\mathds{k}$ for each $x\in\mathfrak{g}_A$.

By the discussion preceding Definition~\ref{reduced W} we know that there is an isomorphism of $\mathfrak{g}_\mathds{k}$-modules $Q_\chi^\chi\cong U_\chi(\mathfrak{g}_\mathds{k})\otimes_{U_\chi(\mathfrak{m}_\mathds{k})}\bar 1_\chi$. It follows from the PBW theorem that $U_\chi(\mathfrak{\widetilde{p}}_\mathds{k})$ and $Q_\chi^\chi$ are isomorphism as $\mathds{k}$-vector spaces. Therefore, the basis of $\mathfrak{\widetilde{p}_\mathds{k}}$ can be considered as a basis of $Q_\chi^\chi$ and this will cause no confusion. For the sake of clarity, we will relist the basis of $\mathfrak{\widetilde{p}_\mathds{k}}$ as follows:
\[\begin{array}{ll}
\bar x_1,\cdots,\bar x_l\in(\mathfrak{g}^e_\mathds{k})_{\bar{0}},&\qquad \bar x_{l+1},\cdots,\bar x_m\in\bigoplus\limits_{j\geqslant 2}[f,(\mathfrak{g}_\mathds{k}(j))_{\bar0}];\\
\bar y_1,\cdots,\bar y_q\in(\mathfrak{g}^e_\mathds{k})_{\bar{1}},&\qquad \bar y_{q+1},\cdots,\bar y_n\in\bigoplus\limits_{j\geqslant 2}[f,(\mathfrak{g}_\mathds{k}(j))_{\bar1}];\\
\bar u_1,\cdots,\bar u_s\in\mathfrak{g}_\mathds{k}(-1)_{\bar{0}}\cap(\mathfrak{g}_\mathds{k}(-1)_{\bar{0}}^{\prime})^{\bot},&\qquad \bar u_{s+1},\cdots,\bar u_{2s}\in\mathfrak{g}_\mathds{k}(-1)_{\bar{0}}';\\
\bar v_1,\cdots,\bar v_t\in\mathfrak{g}_\mathds{k}(-1)_{\bar{1}}\cap(\mathfrak{g}_\mathds{k}(-1)_{\bar{1}}^{\prime})^{\bot},&\qquad \bar v_{t+1},\cdots,\bar v_{r}\in\mathfrak{g}_\mathds{k}(-1)_{\bar{1}}'
\end{array}
\]
where $t=\lfloor\frac{r}{2}\rfloor$ and ${\bot}$ is respect to the bilinear $\langle\cdot,\cdot\rangle$.

Given an element $\bar x\in\mathfrak{g}_\mathds{k}(i)$, we denote whose weight (with the action of $\text{ad}h$) by $\text{wt}(\bar x)=i$. For $k\in\mathbb{Z}_+$, define$$\Lambda_k:=\{(i_1,\cdots,i_k)|i_j\in\mathbb{Z}_+,~0\leqslant  i_j\leqslant  p-1\},~\Lambda'_k:=\{(i_1,\cdots,i_k)|i_j\in\{0,1\}\}$$ with $1\leqslant j\leqslant k$.
Set $\mathbf{e}_i=(\delta_{i1},\cdots,\delta_{ik})$. For $\mathbf{i}=(i_1,\cdots,i_k)$ in $\Lambda_k\,$ or $\Lambda'_k$, set $|\mathbf{i}|=i_1+\cdots+i_k$.

Given  $\mathbf{a}=(a_1,\cdots,a_m)\in\Lambda_m,~\mathbf{b}=(b_1,\cdots,b_n)\in\Lambda'_n,~\mathbf{c}=(c_1,\cdots,c_s)\in\Lambda_s,~\mathbf{d}=(d_1,\cdots,d_t)\in\Lambda'_t$ (recall that $t=\lfloor\frac{r}{2}\rfloor$), define$$\bar x^{\mathbf{a}}\bar y^\mathbf{b}\bar u^\mathbf{c}\bar v^\mathbf{d}:=\bar x_1^{a_1}\cdots \bar x_m^{a_m}\bar y_1^{b_1}\cdots \bar y_n^{b_n}\bar u_1^{c_1}\cdots \bar u_s^{c_s}\bar v_1^{d_1}\cdots \bar v_t^{d_t}.$$It is obvious that the $\mathds{k}$-span of monomials $\bar x^{\mathbf{a}}\bar y^\mathbf{b}\bar u^\mathbf{c}\bar v^\mathbf{d}\otimes\bar 1_\chi$ form a basis of $Q_\chi^\chi$.

We can assume that the basis of $\widetilde{\mathfrak{p}}_\mathds{k}$ is homogeneous under the action of ad$h$, i.e. $\bar x_1\in\mathfrak{g}_\mathds{k}(k_1)_{\bar{0}},~\cdots,~\bar x_m\in\mathfrak{g}_\mathds{k}(k_m)_{\bar{0}},~\bar y_1\in\mathfrak{g}_\mathds{k}(k'_1)_{\bar{1}}, \cdots,~\bar y_n\in\mathfrak{g}_\mathds{k}(k'_n)_{\bar{1}}$. Define
$$|(\mathbf{a},\mathbf{b},\mathbf{c},\mathbf{d})|_e=\sum_{i=1}^ma_i(k_i+2)+\sum_{i=1}^nb_i(k'_i+2)+\sum_{i=1}^sc_i+\sum_{i=1}^td_i.$$
Say that $\bar x^{\mathbf{a}}\bar y^\mathbf{b}\bar u^\mathbf{c}\bar v^\mathbf{d}$ has $e$-degree $|(\mathbf{a},\mathbf{b},\mathbf{c},\mathbf{d})|_e$ and write $\text{deg}_e(\bar x^{\mathbf{a}}\bar y^\mathbf{b}\bar u^\mathbf{c}\bar v^\mathbf{d})=|(\mathbf{a},\mathbf{b},\mathbf{c},\mathbf{d})|_e$. It is notable that the $e$-degree defined above is the same as Kazhdan degree in Section 3.2. Note that
\begin{equation}\label{dege}
\text{deg}_e(\bar x^{\mathbf{a}}\bar y^\mathbf{b}\bar u^\mathbf{c}\bar v^\mathbf{d})=\text{wt}(\bar x^{\mathbf{a}}\bar y^\mathbf{b}\bar u^\mathbf{c}\bar v^\mathbf{d})+2\text{deg}(\bar x^{\mathbf{a}}\bar y^\mathbf{b}\bar u^\mathbf{c}\bar v^\mathbf{d}),
\end{equation}
where $\text{wt}(\bar x^{\mathbf{a}}\bar y^\mathbf{b}\bar u^\mathbf{c}\bar v^\mathbf{d})=(\sum\limits_{i=1}^mk_ia_i)+(\sum\limits_{i=1}^nk'_ib_i)-|\mathbf{c}|-|\mathbf{d}|$\, and $\text{deg}(\bar x^{\mathbf{a}}\bar y^\mathbf{b}\bar u^\mathbf{c}\bar v^\mathbf{d})=|\mathbf{a}|+|\mathbf{b}|+|\mathbf{c}|+|\mathbf{d}|$ are the weight and the standard degree of $\bar x^{\mathbf{a}}\bar y^\mathbf{b}\bar u^\mathbf{c}\bar v^\mathbf{d}$, respectively.

\subsection{Some Lemmas}
Some Lemmas will be formulated in this part, which play the key role in the study of the construction theory of reduced $W$-superalgebra $U_\chi(\mathfrak{g}_\mathds{k},e)$. Firstly, some commutative relations for the elements in the basis of $U_\chi(\mathfrak{g}_\mathds{k})$ are introduced in Lemma~\ref{commutative relations k1} and Lemma~\ref{commutative relations k2}.

\begin{lemma}\label{commutative relations k1}
Let $\bar w\in U_\chi(\mathfrak{g}_\mathds{k})_{i}$ ($i\in\mathbb{Z}_2$) be a $\mathbb{Z}_2$-homogeneous element, then we have
$$\bar w\cdot \bar x^{\mathbf{a}}\bar y^\mathbf{b}\bar u^\mathbf{c}\bar v^\mathbf{d}=\sum_{\mathbf{i}\in\Lambda_{m}}\sum_{j_1=0}^{b_1}\cdots\sum_{j_n=0}^{b_n}\left(\begin{array}{@{\hspace{0pt}}c@{\hspace{0pt}}} \mathbf{a}\\ \mathbf{i}\end{array}\right)\bar x^{\mathbf{a}-\mathbf{i}}\bar y^{\mathbf{b}-\mathbf{j}}\cdot[\bar w\bar x^{\mathbf{i}}\bar y^{\mathbf{j}}]\cdot \bar u^\mathbf{c}\bar v^\mathbf{d},$$
where $\mathbf{a}\choose\mathbf{i}$$=\prod\limits_{l'=1}^m$$a_{l'}\choose i_{l'}$ and $$[\bar w\bar x^{\mathbf{i}}\bar y^{\mathbf{j}}]=k_{1,b_1,j_1}\cdots k_{n,b_n,j_n}(-1)^{|\mathbf{i}|}(\text{ad}\bar y_n)^{j_n}\cdots(\text{ad}\bar y_1)^{j_1}(\text{ad}\bar x_m)^{i_m}\cdots(\text{ad}\bar x_1)^{i_1}(\bar w),$$
in which the coefficients $k_{1,b_1,j_1},\cdots,k_{n,b_n,j_n}\in \mathds{k}$ (recall that $\mathbf{b}=(b_1,\cdots,b_n)\in\Lambda'_n$) and the indices $j_1,\cdots,j_n\in\{0,1\}$. If we write $j_0=0$, then
$$k_{t',0,0}=1, k_{t',0,1}=0, k_{t',1,0}=(-1)^{|\bar w|+j_1+\cdots+j_{{t'}-1}}, k_{t',1,1}=(-1)^{|\bar w|+1+j_1+\cdots+j_{{t'}-1}},$$
where $1\leqslant {t'}\leqslant n$.
\end{lemma}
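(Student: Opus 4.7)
The plan is to derive the identity by iterating the defining super-commutator relation
$$\bar w\, \bar z = (-1)^{|\bar w||\bar z|}\,\bar z\, \bar w + [\bar w,\bar z]$$
to push $\bar w$ rightward through the PBW monomial one factor at a time. The factors $\bar u^{\mathbf c}\bar v^{\mathbf d}$ sit at the far right and are never crossed, so the entire content of the lemma is the passage of $\bar w$ through $\bar x_1^{a_1}\cdots\bar x_m^{a_m}\bar y_1^{b_1}\cdots\bar y_n^{b_n}$, and I would treat the even block first and the odd block second, using an outer induction on $|\mathbf a|+|\mathbf b|$. Note that all commutations descend from $U(\mathfrak g_\mathds k)$ to $U_\chi(\mathfrak g_\mathds k)$, so working in the reduced enveloping algebra introduces no additional subtlety.

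For the even block, each $\bar x_i$ is of degree $\bar 0$, so the super-commutator reduces to the ordinary one, and a straightforward induction on $a_i$ gives the Leibniz-type identity
$$\bar w\, \bar x_i^{a_i} = \sum_{k=0}^{a_i}\binom{a_i}{k}(-1)^{k}\bar x_i^{a_i-k}(\mathrm{ad}\,\bar x_i)^{k}(\bar w).$$
Applying this successively for $i=1,\ldots,m$ and merging the multi-binomial coefficients produces the even-block contribution
$$\bar w\cdot\bar x^{\mathbf a} = \sum_{\mathbf i\in\Lambda_m}(-1)^{|\mathbf i|}\binom{\mathbf a}{\mathbf i}\bar x^{\mathbf a-\mathbf i}\cdot(\mathrm{ad}\,\bar x_m)^{i_m}\cdots(\mathrm{ad}\,\bar x_1)^{i_1}(\bar w),$$
in which each surviving ad-chain still has $\mathbb Z_2$-parity $|\bar w|$ because $\bar x_i$ is even.

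For the odd block I would push each such element through $\bar y_1^{b_1}\cdots\bar y_n^{b_n}$ one factor at a time. Writing $\bar w_{t'-1}$ for the element about to be commuted past $\bar y_{t'}^{b_{t'}}$, the case $b_{t'}=0$ is trivial and forces $k_{t',0,0}=1$, $k_{t',0,1}=0$. For $b_{t'}=1$ the super-identity together with the symmetry $[\bar w_{t'-1},\bar y_{t'}]=-(-1)^{|\bar w_{t'-1}|}(\mathrm{ad}\,\bar y_{t'})(\bar w_{t'-1})$ yields a straight-past contribution with coefficient $(-1)^{|\bar w_{t'-1}|}$ and an ad-contribution with coefficient $-(-1)^{|\bar w_{t'-1}|}$. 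Since $\mathrm{ad}\,\bar x_i$ preserves parity while each earlier $\mathrm{ad}\,\bar y_t$ ($t<t'$) flips it, one has $|\bar w_{t'-1}|\equiv |\bar w|+j_1+\cdots+j_{t'-1}\pmod 2$, which is exactly the exponent appearing in the stated $k_{t',1,0}$ and $k_{t',1,1}$.

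The main obstacle is not any deep structural issue but the careful bookkeeping of signs and of operator order: I must check that composing the elementary moves for $t'=1,\ldots,n$ telescopes into the displayed product $\prod_{t'}k_{t',b_{t'},j_{t'}}$ and that the resulting ad-string ends up in the prescribed order $(\mathrm{ad}\,\bar y_n)^{j_n}\cdots(\mathrm{ad}\,\bar y_1)^{j_1}(\mathrm{ad}\,\bar x_m)^{i_m}\cdots(\mathrm{ad}\,\bar x_1)^{i_1}$. This ordering is forced by the fact that when moving the ad-modified $\bar w$ past $\bar x_{i+1}^{a_{i+1}}$ (resp.\ $\bar y_{t'+1}^{b_{t'+1}}$), the new ad-operator is produced on the \emph{outside} of the previously accumulated chain; carrying this observation through the induction simultaneously delivers both the sign product and the operator order, completing the proof.
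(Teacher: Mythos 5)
Your proposal is correct and follows essentially the same route as the paper's proof: first use the ordinary (even) Leibniz identity to push $\bar w$ through $\bar x^{\mathbf a}$, producing the multi-binomial coefficients and the factor $(-1)^{|\mathbf i|}$ together with the inner ad-chain $(\mathrm{ad}\,\bar x_m)^{i_m}\cdots(\mathrm{ad}\,\bar x_1)^{i_1}(\bar w)$; then push the resulting (parity-unchanged) element through $\bar y^{\mathbf b}$ one factor at a time using the super-commutator, obtaining the $k_{t',b_{t'},j_{t'}}$ via the running parity $|\bar w|+j_1+\cdots+j_{t'-1}$ and accumulating new $\mathrm{ad}\,\bar y_{t'}$ operators on the outside. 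This is precisely formulas \eqref{wdot-1}, \eqref{wdot}, and \eqref{wxyuv} in the paper's argument.
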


\begin{proof}
Let $R_{\bar y_j}^{i}$ denote the $i$-th right multiplication by $\bar y_j(1\leqslant j\leqslant n)$, i.e. $R_{\bar y_j}^{i}(\bar u)=\bar u\bar y_j^{i}$ for any $\bar u\in U_\chi(\mathfrak{g}_\mathds{k})$. The Lemma can be proved by induction.

Let $\bar w$ be any $\mathbb{Z}_2$-homogeneous element in $U_\chi(\mathfrak{g}_\mathds{k})$ and denote its $\mathbb{Z}_2$-degree by $|\bar w|$. Recall that all the $\bar y_i's~(1\leqslant i\leqslant n)$ are in $(\mathfrak{g}_\mathds{k})_{\bar1}$. For each $0\leqslant  s'\leqslant  n-1$, since
\[
\begin{array}{ll}
&[\bar y_{s'+1},(\text{ad}\bar y_{s'})^{k_{s'}}(\text{ad}\bar y_{s'-1})^{k_{s'-1}}\cdots(\text{ad}\bar y_{1})^{k_1}(\bar w)]\\
=&\bar y_{s'+1}(\text{ad}\bar y_{s'})^{k_{s'}}(\text{ad}\bar y_{s'-1})^{k_{s'-1}}\cdots(\text{ad}\bar y_{1})^{k_1}(\bar w)-(-1)^{|\bar w|+k_1+\cdots+k_{s'}}\\
&(\text{ad}\bar y_{s'})^{k_{s'}}(\text{ad}\bar y_{s'-1})^{k_{s'-1}}\cdots(\text{ad}\bar y_{1})^{k_1}(\bar w)\bar y_{s'+1},
\end{array}
\]
then
\[
\begin{array}{ll}
&R_{\bar y_{s'+1}}((\text{ad}\bar y_{s'})^{k_{s'}}(\text{ad}\bar y_{s'-1})^{k_{s'-1}}\cdots(\text{ad}\bar y_{1})^{k_1}(\bar w))\\
=&(-1)^{|\bar w|+1+k_1+\cdots+k_{s'}}\bar y_{s'+1}^0(\text{ad}\bar y_{s'+1})^1(\text{ad}\bar y_{s'})^{k_{s'}}(\text{ad}\bar y_{s'-1})^{k_{s'-1}}\cdots(\text{ad}\bar y_{1})^{k_1}(\bar w)\\
+&(-1)^{|\bar w|+k_1+\cdots+k_{s'}}\bar y_{s'+1}^1(\text{ad}\bar y_{s'+1})^0(\text{ad}\bar y_{s'})^{k_{s'}}(\text{ad}\bar y_{s'-1})^{k_{s'-1}}\cdots(\text{ad}\bar y_{1})^{k_1}(\bar w).
\end{array}
\]

For any monomial $\bar x^{\mathbf{a}}\bar y^\mathbf{b}\bar u^\mathbf{c}\bar v^\mathbf{d}$ in the basis of $U_\chi(\mathfrak{g}_\mathds{k})$, recall that all the indices of the odd elements in $\mathfrak{g}_\mathds{k}$ (i.e. the indices of $\bar y_i$'s and $\bar v_i$'s) are in the set $\{0,1\}$ by the PBW theorem. Let $0\leqslant j_1,\cdots,j_n\leqslant 1$ be positive integers, and define$$k_{j_i,0,0}:=1,~k_{j_i,0,1}:=0, k_{j_i,1,0}:=(-1)^{|w|+k_1+\cdots+k_{j_i-1}},~k_{j_i,1,1}:=(-1)^{|w|+1+ k_1+\cdots+k_{j_i-1}}$$ for $1\leqslant i\leqslant n$, and $j_0$ is interpreted as $0$.  Then we have
\begin{equation}\label{wdot}
\begin{split}
\bar w\cdot \bar y_1^{j_1}\cdots \bar y_n^{j_n}=&R_{\bar y_1}^{j_1}(\bar w)\cdot \bar y_2^{j_2}\cdots \bar y_n^{j_n}\\
=&(\sum\limits_{i_1=0}^{j_1}k_{1,j_1,i_1}\bar y_1^{j_1-i_1}(\text{ad}\bar y_1)^{i_1}(\bar w))\cdot \bar y_2^{j_2}\cdots \bar y_n^{j_n}\\
=&R_{\bar y_2}^{j_2}(\sum\limits_{i_1=0}^{j_1}k_{1,j_1,i_1}\bar y_1^{j_1-i_1}(\text{ad}\bar y_1)^{i_1}(\bar w))\cdot \bar y_3^{j_3}\cdots \bar y_n^{j_n}\\
=&(\sum\limits_{i_1=0}^{j_1}\sum\limits_{i_2=0}^{j_2}k_{1,j_1,i_1}k_{2,j_2,i_2}\bar y_1^{j_1-i_1}\bar y_2^{j_2-i_2}(\text{ad}\bar y_2)^{i_2}(\text{ad}\bar y_1)^{i_1}(\bar w))\cdot\bar  y_3^{j_3}\\&\cdots\bar  y_n^{j_n}\\=&\cdots\cdots\\
=&\sum\limits_{i_1=0}^{j_1}\sum\limits_{i_2=0}^{j_2}\cdots\sum\limits_{i_n=0}^{j_n}k_{1,j_1,i_1}k_{2,j_2,i_2}\cdots k_{n,j_n,i_n}\bar y_1^{j_1-i_1}\bar y_2^{j_2-i_2}\cdots\bar  y_n^{j_n-i_n}\\
&(\text{ad}\bar y_n)^{i_n}\cdot(\text{ad}\bar y_{n-1})^{i_{n-1}}\cdots(\text{ad}\bar y_{1})^{i_{1}}(\bar w)
\end{split}
\end{equation}
by induction.
For any $\mathbb{Z}_2$-homogeneous elements $\bar u,\bar v$ in $\mathfrak{g}_\mathds{k}$, we have that $\bar u\bar v=[\bar u,\bar v]+\bar v\bar u$ if at least one of them is in $(\mathfrak{g}_\mathds{k})_{\bar0}$, i.e. the commutative operation between $\bar u$ and $\bar v$ is the same as the Lie algebra case. Since all the $\bar x_i's$ for $1\leqslant i \leqslant m$ are even elements in $\mathfrak{g}_\mathds{k}$, then
\begin{equation}\label{wdot-1}
\bar w\cdot \bar x_1^{a_1}\cdots \bar x_m^{a_m}=\sum_{\mathbf{i}\in\Lambda_{m}}(-1)^{|\mathbf{i}|}\left(\begin{array}{@{\hspace{0pt}}c@{\hspace{0pt}}} \mathbf{a}\\ \mathbf{i}\end{array}\right)\bar x^{\mathbf{a}-\mathbf{i}}\cdot(\text{ad}\bar x_m)^{i_m}\cdots (\text{ad}\bar x_1)^{i_1}(\bar w)
\end{equation}
by ([\cite{P2}], Section 3.1(2)), where $\mathbf{a}\choose\mathbf{i}$$=\prod\limits_{l'=1}^m$$a_{l'}\choose i_{l'}$.

Write $[\bar w\bar x^{\mathbf{i}}]=(-1)^{|\mathbf{i}|}(\text{ad}\bar x_m)^{i_m}\cdots (\text{ad}\bar x_1)^{i_1}(\bar w)$.
Since all the elements $\bar x_1,\cdots,\bar x_m$ are even, $[\bar w\bar x^{\mathbf{i}}]$ is also a $\mathbb{Z}_2$-homogeneous element with the same parity as $\bar w$. It can be inferred from \eqref{wdot} and \eqref{wdot-1} that
\begin{equation}\label{wxyuv}
\begin{split}
\bar w\cdot \bar x^{\mathbf{a}}\bar y^\mathbf{b}\bar u^\mathbf{c}\bar v^\mathbf{d}=&\sum\limits_{\mathbf{i}\in\Lambda_{m}}\left(\begin{array}{@{\hspace{0pt}}c@{\hspace{0pt}}} \mathbf{a}\\ \mathbf{i}\end{array}\right)\bar x^{\mathbf{a}-\mathbf{i}}\cdot[\bar w\bar x^{\mathbf{i}}]\cdot
\bar y^{\mathbf{b}}\cdot \bar u^\mathbf{c}\bar v^\mathbf{d}\\
=&\sum\limits_{\mathbf{i}\in\Lambda_{m}}\sum\limits_{j_1=0}^{b_1}\cdots\sum\limits_{j_n=0}^{b_n}
\left(\begin{array}{@{\hspace{0pt}}c@{\hspace{0pt}}} \mathbf{a}\\ \mathbf{i}\end{array}\right)\bar x^{\mathbf{a}-\mathbf{i}}k_{1,b_1,j_1}\cdots k_{n,b_n,j_n}\bar y_1^{b_1-j_1}\bar y_2^{b_2-j_2}\cdots\\
&\bar y_n^{b_n-j_n}(\text{ad}\bar y_n)^{j_n}(\text{ad}\bar y_{n-1})^{j_{n-1}}\cdots(\text{ad}\bar y_{1})^{j_{1}}([\bar w\bar x^{\mathbf{i}}])\cdot\bar  u^\mathbf{c}\bar v^\mathbf{d}\\
=&\sum\limits_{\mathbf{i}\in\Lambda_{m}}\sum\limits_{j_1=0}^{b_1}\cdots\sum\limits_{j_n=0}^{b_n}(-1)^{|\mathbf{i}|}\left(\begin{array}{@{\hspace{0pt}}c@{\hspace{0pt}}} \mathbf{a}\\ \mathbf{i}\end{array}\right)k_{1,b_1,j_1}\cdots k_{n,b_n,j_n}\bar x^{\mathbf{a}-\mathbf{i}}\bar y^{\mathbf{b}-\mathbf{j}}\\
&(\text{ad}\bar y_n)^{j_n}\cdots(\text{ad}\bar y_{1})^{j_{1}}(\text{ad}\bar x_m)^{i_m}\cdots(\text{ad}\bar x_1)^{i_1}(\bar w)\cdot \bar u^\mathbf{c}\bar v^\mathbf{d},
\end{split}
\end{equation}
where the coefficients $k_{1,b_1,j_1},\cdots, k_{n,b_n,j_n}\in\mathds{k}$ in \eqref{wxyuv} are defined by:
$$k_{t',0,0}=1, k_{t',0,1}=0, k_{t',1,0}=(-1)^{|\bar w|+j_1+\cdots+j_{{t'}-1}}, k_{t',1,1}=(-1)^{|\bar w|+1+j_1+\cdots+j_{{t'}-1}}$$for  $1\leqslant {t'}\leqslant n$, and $j_0$ is interpreted as $0$.

If we set$$[\bar w\bar x^{\mathbf{i}}\bar y^{\mathbf{j}}]=k_{1,b_1,j_1}\cdots k_{n,b_n,j_n}(-1)^{|\mathbf{i}|}(\text{ad}\bar y_n)^{j_n}\cdots(\text{ad}\bar y_1)^{j_1}(\text{ad}\bar x_m)^{i_m}\cdots(\text{ad}\bar x_1)^{i_1}(\bar w),$$
then \eqref{wxyuv} can be written as
$$\bar w\cdot \bar x^{\mathbf{a}}\bar y^\mathbf{b}\bar u^\mathbf{c}\bar v^\mathbf{d}=\sum_{\mathbf{i}\in\Lambda_{m}}\sum_{j_1=0}^{b_1}\cdots\sum_{j_n=0}^{b_n}\left(\begin{array}{@{\hspace{0pt}}c@{\hspace{0pt}}} \mathbf{a}\\ \mathbf{i}\end{array}\right)\bar x^{\mathbf{a}-\mathbf{i}}\bar y^{\mathbf{b}-\mathbf{j}}\cdot[\bar w\bar x^{\mathbf{i}}\bar y^{\mathbf{j}}]\cdot \bar u^\mathbf{c}\bar v^\mathbf{d}.$$
\end{proof}

Let $\rho_\chi$ denote the natural representation of $U_\chi(\mathfrak{g}_\mathds{k})$ in $\text{End}_\mathds{k}Q_\chi^\chi$. We can get the following result:

\begin{lemma}\label{commutative relations k2}
Let $(\mathbf{a},\mathbf{b},\mathbf{c},\mathbf{d}),~(\mathbf{a}',\mathbf{b}',\mathbf{c}',\mathbf{d}')\in\Lambda_m\times
\Lambda'_n\times\Lambda_s\times\Lambda'_t$ be such that $|(\mathbf{a},\mathbf{b},\mathbf{c},\mathbf{d})|_e=A,~|(\mathbf{a}',\mathbf{b}',\mathbf{c}',\mathbf{d}')|_e=B$, then
\[\begin{array}{ccl}(\rho_\chi(\bar x^{\mathbf{a}}\bar y^\mathbf{b}\bar u^\mathbf{c}\bar v^\mathbf{d}))(\bar x^{\mathbf{a}'}\bar y^{\mathbf{b}'}\bar u^{\mathbf{c}'}\bar v^{\mathbf{d}'}\otimes\bar 1_\chi)&=&
(K\bar x^{\mathbf{a}+\mathbf{a}'}\bar y^{\mathbf{b}+\mathbf{b}'}\bar u^{\mathbf{c}+\mathbf{c}'}\bar v^{\mathbf{d}+\mathbf{d}'}+\text{terms of}~e\text{-degree}\\ &&\leqslant A+B-2)\otimes\bar 1_\chi,
\end{array}\]
where the coefficient $K\in\mathds{k}$ is defined by:

(1) if $(\mathbf{a}+\mathbf{a}',\mathbf{b}+\mathbf{b}',\mathbf{c}+\mathbf{c}',\mathbf{d}+\mathbf{d}')\notin\Lambda_m\times\Lambda'_n\times\Lambda_s\times\Lambda'_t$, then $K=0$.

(2) if $K\neq0$, then each entry in $(\mathbf{b}+\mathbf{b}',\mathbf{d}+\mathbf{d}')$ is taken from the set $\{0,1\}$. Delete all the zero terms in $(\mathbf{b},\mathbf{d},\mathbf{b}',\mathbf{d}')$, then we can get a new sequence $(b_1,b'_1,b_2,b'_2,\cdots,$\\$b_n,b'_n,d_1,d'_1,d_2,d'_2\cdots,d_t,d'_t)$ from the old one by transpositions. Let $\tau(\mathbf{b},\mathbf{d},\mathbf{b}',\mathbf{d}')$ denote the times for which we do the transpositions, then $K=(-1)^{\tau(\mathbf{b},\mathbf{d},\mathbf{b}',\mathbf{d}')}$.
\end{lemma}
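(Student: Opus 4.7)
The plan is to compute the product $\bar x^{\mathbf{a}}\bar y^\mathbf{b}\bar u^\mathbf{c}\bar v^\mathbf{d}\cdot\bar x^{\mathbf{a}'}\bar y^{\mathbf{b}'}\bar u^{\mathbf{c}'}\bar v^{\mathbf{d}'}$ in $U_\chi(\mathfrak{g}_\mathds{k})$ modulo terms of Kazhdan $e$-degree strictly less than $A+B$, and then apply the result to $\bar 1_\chi\in Q_\chi^\chi$. The key observation is that for homogeneous $\bar a\in\mathfrak{g}(i)$ and $\bar b\in\mathfrak{g}(j)$, the commutator $[\bar a,\bar b]\in\mathfrak{g}(i+j)$ has Kazhdan degree $i+j+2$, which is exactly two less than the degree $i+j+4$ of the product $\bar a\bar b$. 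Hence every super-swap $\bar a\bar b\mapsto(-1)^{|\bar a||\bar b|}\bar b\bar a$ introduces only error terms of strictly smaller $e$-degree, so the associated graded algebra $\mathrm{gr}^e U_\chi(\mathfrak{g}_\mathds{k})$ is super-commutative.

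The next step is to reorder the product into PBW form $\bar x^{\mathbf{a}+\mathbf{a}'}\bar y^{\mathbf{b}+\mathbf{b}'}\bar u^{\mathbf{c}+\mathbf{c}'}\bar v^{\mathbf{d}+\mathbf{d}'}$ step by step, invoking Lemma~\ref{commutative relations k1} when an explicit commutator expansion is required. Since an even element super-commutes with every homogeneous element without any sign, the $\bar x$-factors and $\bar u$-factors reassemble into $\bar x^{\mathbf{a}+\mathbf{a}'}$ and $\bar u^{\mathbf{c}+\mathbf{c}'}$ contributing no sign at all. All signs therefore come from rearranging the string of odd generators
\[
\bar y_1^{b_1}\cdots\bar y_n^{b_n}\,\bar v_1^{d_1}\cdots\bar v_t^{d_t}\,\bar y_1^{b'_1}\cdots\bar y_n^{b'_n}\,\bar v_1^{d'_1}\cdots\bar v_t^{d'_t}
\]
into $\bar y^{\mathbf{b}+\mathbf{b}'}\bar v^{\mathbf{d}+\mathbf{d}'}$; after deleting zero exponents, each adjacent transposition of two distinct odd generators contributes a factor $-1$, and the minimal number of such transpositions is by definition $\tau(\mathbf{b},\mathbf{d},\mathbf{b}',\mathbf{d}')$, yielding $K=(-1)^{\tau(\mathbf{b},\mathbf{d},\mathbf{b}',\mathbf{d}')}$.

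For the vanishing clauses, if $\mathbf{a}+\mathbf{a}'\notin\Lambda_m$ then some $a_i+a'_i\geq p$, and in $U_\chi(\mathfrak{g}_\mathds{k})$ the identity $\bar x_i^p=\bar x_i^{[p]}+\chi(\bar x_i)^p=\bar x_i^{[p]}$ (where $\chi(\bar x_i)=0$ since $\bar x_i\in\mathfrak{p}$ while $\chi=(e,\cdot)$ is supported on $\mathfrak{g}(-2)$) replaces a factor of Kazhdan degree $p(k_i+2)$ by one of degree $pk_i+2$, strictly smaller; the analogous argument with $\bar u_i^p=\bar u_i^{[p]}$ handles the case $\mathbf{c}+\mathbf{c}'\notin\Lambda_s$. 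If some $b_i+b'_i$ or $d_i+d'_i$ equals $2$, the relation $\bar y_i^2=\tfrac12[\bar y_i,\bar y_i]$ (respectively $\bar v_i^2=\tfrac12[\bar v_i,\bar v_i]$) reduces the Kazhdan degree by $2$, killing the leading contribution. In the remaining cases the reordered top term is a PBW basis monomial of $U(\widetilde{\mathfrak{p}}_\mathds{k})$, which acts on $\bar 1_\chi$ to give exactly the stated basis element of $Q_\chi^\chi$. The main obstacle I anticipate is the sign bookkeeping: verifying that even--odd swaps really contribute no sign in $\mathrm{gr}^e U_\chi$, and that the cumulative anticommutation signs from the reordering precisely match the combinatorial count $\tau$, since the intermediate commutator terms generated by Lemma~\ref{commutative relations k1} carry their own nontrivial signs that must be shown to lie entirely in the lower-degree part.
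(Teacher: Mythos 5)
Your proposal is correct, and it takes a genuinely different route from the paper's. The paper builds the result by a multi-stage explicit computation: first treating a single odd factor $\bar v_S$ (split into cases according to the weight of $[\bar v_S\bar x^{\mathbf i}\bar y^{\mathbf j}]$, bounding the $e$-degree of each error term by hand), then inducting on $|\mathbf d|$, then on $|\mathbf c|$, and finally assembling the sign $K$ by a case analysis. You instead exploit the single structural fact that passing to $\mathrm{gr}^e$ kills all commutators (since $[\bar a,\bar b]$ has Kazhdan degree exactly $2$ less than $\bar a\bar b$), so that $\mathrm{gr}^e U_\chi(\mathfrak{g}_\mathds{k})$ is supercommutative; from there the leading term and its sign reduce to a purely combinatorial count of odd--odd swaps, and the vanishing clauses reduce to the degree drops in $\bar x_i^p\mapsto\bar x_i^{[p]}$, $\bar u_j^p\mapsto\bar u_j^{[p]}$ and $\bar y_i^2=\tfrac12[\bar y_i,\bar y_i]$, $\bar v_j^2=\tfrac12[\bar v_j,\bar v_j]$. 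This is shorter and more conceptual; the paper's approach, in exchange for its length, records in passing explicit formulas for the subleading corrections (used later, e.g.\ in the proof of Theorem~\ref{reduced Wg}), which your argument does not expose.

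Your anticipated obstacles evaporate on inspection. Even--odd swaps carry no sign because the Koszul sign $(-1)^{|\bar a||\bar b|}$ depends only on the $\mathbb{Z}_2$-parities, not the Kazhdan degrees, and even elements have parity $\bar 0$. The intermediate commutator terms do carry nontrivial coefficients, but since every commutator sits in Kazhdan degree at least $2$ lower than the product it replaces, those coefficients live entirely in the $\leqslant A+B-2$ error and never reach the leading term. Finally, your identification of the target ordering with $(b_1,b'_1,\ldots,b_n,b'_n,d_1,d'_1,\ldots,d_t,d'_t)$ and of $\tau$ with the inversion count is exactly what the paper records in Remark~\ref{reverse}. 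The only cosmetic point worth adding, were you to write this up fully, is a sentence confirming that the quotient Kazhdan filtration on $Q_\chi^\chi$ agrees with the filtration read off from the PBW basis of $U(\widetilde{\mathfrak{p}}_\mathds{k})$, so that ``degree drop in $U_\chi(\mathfrak{g}_\mathds{k})$'' genuinely translates into ``lower $e$-degree in $Q_\chi^\chi$''.
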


\begin{proof}
(1) First suppose that $(\mathbf{a},\mathbf{b},\mathbf{c})=\mathbf{0}$ and $|\mathbf{d}|=1$, so that $A=1$. Then $\bar v^\mathbf{d}=\bar v_S$ for some $1\leqslant  S\leqslant  t$. Applying Lemma~\ref{commutative relations k1} one obtains
\begin{equation}\label{vs}
\begin{split}
(\rho_\chi(\bar v_S))(\bar x^{\mathbf{a}'}\bar y^{\mathbf{b}'}\bar u^{\mathbf{c}'}\bar v^{\mathbf{d}'}\otimes\bar 1_\chi)
=&((-1)^{|\mathbf{b}'|}\bar x^{\mathbf{a}'}\bar y^{\mathbf{b}'}
\cdot\rho_\chi(\bar v_S)\bar u^{\mathbf{c}'}\bar v^{\mathbf{d}'}+\sum\limits_{(\mathbf{i},\mathbf{j})\neq\mathbf{0}}\alpha_{\mathbf{i}\mathbf{j}}
\bar x^{\mathbf{a}'-\mathbf{i}}\\
&\cdot \bar y^{\mathbf{b}'-\mathbf{j}}\cdot\rho_\chi([\bar v_S\bar x^{\mathbf{i}}\bar y^{\mathbf{j}}])\cdot \bar u^{\mathbf{c}'}\bar v^{\mathbf{d}'})\otimes\bar 1_\chi
\end{split}
\end{equation}
for some $\alpha_{\mathbf{i}\mathbf{j}}\in\mathds{k}$. Since $\rho_\chi(\mathfrak{m}_\mathds{k})$ stabilises the line $\mathds{k}\bar 1_\chi$, the first summand on the right equals $(-1)^{|\mathbf{b}'|+\sum\limits_{l=1}^{S-1}d'_l}\bar x^{\mathbf{a}'}\bar y^{\mathbf{b}'}\bar u^{\mathbf{c}'}\bar v^{\mathbf{d}'+\mathbf{e}_S}\otimes\bar 1_\chi$ (where $\mathbf{e}_{S-1}=\mathbf{e}_0$ is interpreted as $0$ for the case $S=1$) modulo terms of lower $e$-degree in \eqref{vs} (if $d'_S+1=2$, then $\bar v^{\mathbf{d}'+\mathbf{e}_S}$ is interpreted as $0$). For the second summand on the right of \eqref{vs}, we have:

(i) suppose $(\mathbf{i},\mathbf{j})\neq\mathbf{0}$ is such that $\text{wt}([\bar v_S\bar x^{\mathbf{i}}\bar y^{\mathbf{j}}])\leqslant -1$. Then $\rho_\chi([\bar v_S\bar x^{\mathbf{i}}\bar y^{\mathbf{j}}])\bar u^{\mathbf{c}'}\bar v^{\mathbf{d}'}\otimes\bar 1_\chi$ is a linear combination of $\bar u^\mathbf{f}\bar v^\mathbf{g}$ with $|\mathbf{f}|+|\mathbf{g}|\leqslant |\mathbf{c}'|+|\mathbf{d}'|+1$ as $\rho_\chi(\mathfrak{m_\mathds{k}})$ stabilises the line $\mathds{k}\bar1_\chi$. As a consequence, $\bar x^{\mathbf{a}'-\mathbf{i}}\bar y^{\mathbf{b}'-\mathbf{j}}\cdot\rho_\chi([\bar v_S\bar x^{\mathbf{i}}y^{\mathbf{j}}])\bar u^{\mathbf{c}'}\bar v^{\mathbf{d}'}$ is a linear combination of $
\bar x^{\mathbf{a}'-\mathbf{i}}\bar y^{\mathbf{b}'-\mathbf{j}}\bar u^\mathbf{f}\bar v^\mathbf{g}$. Since $(\mathbf{i},\mathbf{j})\neq\mathbf{0}$, and the weights $k_{s'}$ and $k'_{t'}$ of elements $\bar x_{s'}$ and $\bar y_{t'}$ for each $1\leqslant s'\leqslant m$ and $1\leqslant t'\leqslant n$ are all non-negative integers, then
\[\begin{array}{ccl}
\text{deg}_e(\bar x^{\mathbf{a}'-\mathbf{i}}\bar y^{\mathbf{b}'-\mathbf{j}}\bar u^\mathbf{f}\bar v^\mathbf{g})&=&\sum\limits_{s'=1}^m(a'_{s'}-i'_{s'})(k_{s'}+2)+\sum\limits_{t'=1}^n(b'_{t'}-j'_{t'})(k'_{t'}+2)
+|\mathbf{f}|+|\mathbf{g}|\\
&\leqslant& \sum\limits_{s'=1}^ma'_{s'}(k_{s'}+2)+\sum\limits_{t'=1}^nb'_{t'}(k'_{t'}+2)+(|\mathbf{f}|+|\mathbf{g}|-2|\mathbf{i}|-2|\mathbf{j}|)
\\&\leqslant& \sum\limits_{s'=1}^ma'_{s'}(k_{s'}+2)+\sum\limits_{t'=1}^nb'_{t'}(k'_{t'}+2)+(|\mathbf{c}'|+|\mathbf{d}'|+1-2|\mathbf{i}|-2|\mathbf{j}|)
\\&\leqslant &A+B-2.
\end{array}\]

(ii) suppose $(\mathbf{i},\mathbf{j})\neq\mathbf{0}$ is such that $\text{wt}([\bar v_S\bar x^{\mathbf{i}}y^{\mathbf{j}}])\geqslant 0$. Since $\mathfrak{g}_\mathds{k}=\bigoplus\limits_{i\in\mathbb{Z}}\mathfrak{g}_\mathds{k}(i)$ is the Dynkin grading of $\mathfrak{g}_\mathds{k}$, the image of $\mathfrak{p}_\mathds{k}$ is still in $\mathfrak{p}_\mathds{k}$ under the action of $\text{ad}h$. This implies that $\bar x^{\mathbf{a}'-\mathbf{i}}\bar y^{\mathbf{b}'-\mathbf{j}}\cdot[\bar v_S\bar x^{\mathbf{i}}\bar y^{\mathbf{j}}]$ is a linear combination of $\bar x^\mathbf{f}\bar y^\mathbf{g}$ with $\text{wt}(\bar x^\mathbf{f}\bar y^\mathbf{g})=\text{wt}(\bar x^{\mathbf{a}'}\bar y^{\mathbf{b}'})-1$ and $|\mathbf{f}|+|\mathbf{g}|\leqslant
|\mathbf{a}'|+|\mathbf{b}'|-|\mathbf{i}|-|\mathbf{j}|+1$. Therefore, $\bar x^{\mathbf{a}'-\mathbf{i}}\bar y^{\mathbf{b}'-\mathbf{j}}\cdot\rho_\chi([\bar v_S\bar x^{\mathbf{i}}\bar y^{\mathbf{j}}])\bar u^{\mathbf{c}'}\bar v^{\mathbf{d}'}$ is a linear combination of $\bar x^{\mathbf{f}}\bar y^{\mathbf{g}}\bar u^{\mathbf{c}'}\bar v^{\mathbf{d}'}$ with
\[\begin{array}{ccl}
\text{deg}_e(\bar x^{\mathbf{f}}\bar y^{\mathbf{g}}\bar u^{\mathbf{c}'}\bar v^{\mathbf{d}'})&=&\text{wt}(\bar x^{\mathbf{f}}\bar y^{\mathbf{g}}\bar u^{\mathbf{c}'}\bar v^{\mathbf{d}'})+
2\text{deg}(\bar x^{\mathbf{f}}\bar y^{\mathbf{g}}\bar u^{\mathbf{c}'}\bar v^{\mathbf{d}'})\\
&=&\text{wt}(\bar x^{\mathbf{f}}\bar y^{\mathbf{g}})-(|\mathbf{c}'|+|\mathbf{d}'|)
+2(|\mathbf{f}|+|\mathbf{g}|+|\mathbf{c}'|+|\mathbf{d}'|)\\
&=&\text{wt}(\bar x^{\mathbf{a}'}\bar y^{\mathbf{b}'})-1+2(|\mathbf{f}|+|\mathbf{g}|)+(|\mathbf{c}'|+|\mathbf{d}'|)\\
&\leqslant& \text{wt}(\bar x^{\mathbf{a}'}\bar y^{\mathbf{b}'})-1+2(|\mathbf{a}'|+|\mathbf{b}'|-|\mathbf{i}|-|\mathbf{j}|+1)+(|\mathbf{c}'|+|\mathbf{d}'|)\\
&=&\text{wt}(\bar x^{\mathbf{a}'}\bar y^{\mathbf{b}'}\bar u^{\mathbf{c}'}\bar v^{\mathbf{d}'})+2\text{deg}(\bar x^{\mathbf{a}'}\bar y^{\mathbf{b}'}\bar u^{\mathbf{c}'}\bar v^{\mathbf{d}'})-2(|\mathbf{i}|+|\mathbf{j}|)+1\\
&\leqslant &A+B-2.
\end{array}\]

By (i) and (ii) we have
\[\begin{array}{ccl}
(\rho_\chi(\bar v_S))(\bar x^{\mathbf{a}'}\bar y^{\mathbf{b}'}\bar u^{\mathbf{c}'}\bar v^{\mathbf{d}'}\otimes\bar 1_\chi)&=&((-1)^{|\mathbf{b}'|+\sum\limits_{l=1}^{S-1}d'_l}\bar x^{\mathbf{a}'}\bar y^{\mathbf{b}'}\bar u^{\mathbf{c}'}\bar v^{\mathbf{d}'+\mathbf{e}_S}+\text{terms of}~e\text{-degree}\\
&&\leqslant A+B-2)\otimes\bar 1_\chi.
\end{array}\]

(2) Induction on $|\mathbf{d}|=|(\mathbf{0},\mathbf{0},\mathbf{0},\mathbf{d})|_e=A$ now shows that
$$(\rho_\chi(\bar v^\mathbf{d}))(\bar x^{\mathbf{a}'}\bar y^{\mathbf{b}'}\bar u^{\mathbf{c}'}\bar v^{\mathbf{d}'}\otimes\bar 1_\chi)=(K^{''}\bar x^{\mathbf{a}'}\bar y^{\mathbf{b}'}\bar u^{\mathbf{c}'}\bar v^{\mathbf{d}+\mathbf{d}'}+\text{terms of}~e\text{-degree}\leqslant A+B-2)\otimes\bar 1_\chi,$$
where the coefficient $K^{''}\in\mathds{k}$ is a power of $-1$. If $\mathbf{d}+\mathbf{d}'\notin\Lambda'_t$, then set $K^{''}=0$.

(3) Notice that $\bar u^\mathbf{c}$ is a product of even elements in $\mathfrak{g}_\mathds{k}$. Combining the formula displayed in step (2) and discussing in the same way as (1) and (2), it is now easy to derive that
\[\begin{array}{ccl}(\rho_\chi(\bar u^\mathbf{c}\bar v^\mathbf{d}))(\bar x^{\mathbf{a}'}\bar y^{\mathbf{b}'}\bar u^{\mathbf{c}'}\bar v^{\mathbf{d}'}\otimes\bar 1_\chi)&=&(K^{''}\bar x^{\mathbf{a}'}\bar y^{\mathbf{b}'}\bar u^{\mathbf{c}+\mathbf{c}'}\bar v^{\mathbf{d}+\mathbf{d}'}+\text{terms
of}~e\text{-degree}\\
&&\leqslant A+B-2)\otimes\bar 1_\chi.
\end{array}\]
If $(\mathbf{c}+\mathbf{c}',\mathbf{d}+\mathbf{d}')\notin \Lambda_s\times\Lambda'_t$, then the first summand on the right hand is interpreted as $0$.

(4) Since the image of $\mathfrak{p}_\mathds{k}$ is still in $\mathfrak{p}_\mathds{k}$ under the action of $\text{ad}h$, the PBW theorem for $U_\chi(\mathfrak{p}_\mathds{k})$ implies that
$$\bar x^{\mathbf{a}}\bar y^\mathbf{b}\cdot \bar x^{\mathbf{a}'}\bar y^{\mathbf{b}'}=K^{'''}\bar x^{\mathbf{a}+\mathbf{a}'}\bar y^{\mathbf{b}+\mathbf{b}'}+\sum\limits_{|\mathbf{i}|+|\mathbf{j}|<
|\mathbf{a}|+|\mathbf{a}'|+|\mathbf{b}|+|\mathbf{b}'|}\beta_{\mathbf{i},\mathbf{j}}\bar x^{\mathbf{i}}\bar y^{\mathbf{j}},$$
where $K^{'''}\in\mathds{k}$ is a power of $-1$. If $(\mathbf{a}+\mathbf{a}',\mathbf{b}+\mathbf{b}')\notin \Lambda_m\times\Lambda'_n$, then set $K^{'''}=0$, and $\beta_{\mathbf{i},\mathbf{j}}=0$ unless $\text{wt}(\bar x^{\mathbf{i}}\bar y^{\mathbf{j}})=\text{wt}(\bar x^{\mathbf{a}}\bar y^\mathbf{b})+\text{wt}(\bar x^{\mathbf{a}'}\bar y^{\mathbf{b}'})$.

(5) It can be inferred from (3) and (4) that
\[\begin{array}{ccl}(\rho_\chi(\bar x^{\mathbf{a}}\bar y^\mathbf{b}\bar u^\mathbf{c}\bar v^\mathbf{d}))(\bar x^{\mathbf{a}'}\bar y^{\mathbf{b}'}\bar u^{\mathbf{c}'}\bar v^{\mathbf{d}'}\otimes\bar 1_\chi)&=&
(K'\bar x^{\mathbf{a}+\mathbf{a}'}\bar y^{\mathbf{b}+\mathbf{b}'}\bar u^{\mathbf{c}+\mathbf{c}'}\bar v^{\mathbf{d}+\mathbf{d}'}+\text{terms of}~e\text{-degree}\\&&\leqslant A+B-2)\otimes\bar1_\chi,
\end{array}\]
where $K'\in\mathds{k}$ is a power of $-1$. If $(\mathbf{a}+\mathbf{a}',\mathbf{b}+\mathbf{b}',\mathbf{c}+\mathbf{c}',\mathbf{d}+\mathbf{d}')\notin\Lambda_m\times\Lambda'_n\times\Lambda_s\times\Lambda'_t$, set $K'=0$.

(6) Finally we will discuss the value of $K'$ in (5).

Given any homogeneous elements $\bar u,\bar v\in\mathfrak{g}_\mathds{k}$, it can be deduced from the definition of $e$-degree that
\[\bar u\bar v\equiv\left\{\begin{array}{ll}\bar v\bar u&\text{if at least one of}~\bar u, \bar v~\text{is even;}\\-\bar v\bar u&\text{if}~\bar u~\text{and}~\bar v~\text{are both odd}\end{array}\right.\]
modolo terms of lower $e$-degree in $U_\chi(\mathfrak{g}_\mathds{k})$. Therefore, in order to determine the value of $K'$, one just needs to deal with the odd elements. For each case we will consider separately:

(i) if $(\mathbf{a}+\mathbf{a}',\mathbf{b}+\mathbf{b}',\mathbf{c}+\mathbf{c}',\mathbf{d}+\mathbf{d}')\notin\Lambda_m\times\Lambda'_n\times\Lambda_s\times\Lambda'_t$, then $K'=0$ by (1)---(5).

(ii) if $(\mathbf{a}+\mathbf{a}',\mathbf{b}+\mathbf{b}',\mathbf{c}+\mathbf{c}',\mathbf{d}+\mathbf{d}')\in\Lambda_m\times\Lambda'_n\times\Lambda_s\times\Lambda'_t$, it follows from the definition of $\Lambda'_n$ and $\Lambda'_t$ that each entry in the sequence $(\mathbf{b},\mathbf{d},\mathbf{b}',\mathbf{d}')$ is in the set $\{0,1\}$. From above one knows that if two odd elements exchange their positions in the product of $U_\chi(\mathfrak{g}_\mathds{k})$, there is a sign change modulo terms of lower $e$-degree. Since the position exchange in $U_\chi(\mathfrak{g}_\mathds{k})$ corresponds to the transposition of sequence $(b_1,b_2,\cdots,b_n,d_1,d_2,\cdots,d_t,b'_1,b'_2,\cdots,b'_n,d'_1,d'_2,\cdots,d'_t)$, it follows that the constant $K'$ in step (5) coincides with the constant $K$ which is defined in the Lemma.
\end{proof}

\begin{rem}\label{reverse}
It is immediate from the knowledge of linear algebra that $\tau(\mathbf{b},\mathbf{d},\mathbf{b}',\mathbf{d}')$ in Lemma~\ref{commutative relations k2}(2) is just the reverse number of $$
(b_1,b_2,\cdots,b_n,d_1,d_2,\cdots,d_t,b'_1,b'_2,\cdots,b'_n,d'_1,d'_2,\cdots,d'_t)$$ with
respect to the sequence $$(b_1,b'_1,b_2,b'_2,\cdots,b_n,b'_n,d_1,d'_1,d_2,d'_2\cdots,d_t,d'_t).$$
\end{rem}

Recall that any non-zero element $\bar h\in U_\chi(\mathfrak{g}_\mathds{k},e)$ is uniquely determined by its value on $\bar h(\bar 1_\chi)\in Q_\chi^\chi$. Write$$\bar h(\bar 1_\chi)=(\sum\limits_{|(\mathbf{a},\mathbf{b},\mathbf{c},\mathbf{d})|_e\leqslant  n(\bar h)}\lambda_{\mathbf{a},\mathbf{b},\mathbf{c},\mathbf{d}}
\bar x^{\mathbf{a}}\bar y^\mathbf{b}\bar u^\mathbf{c}\bar v^\mathbf{d})\otimes\bar 1_\chi,$$ where $n=n(\bar h)$ is the highest $e$-degree of the terms in the linear expansion of $\bar h(\bar 1 _\chi)$, and $\lambda_{\mathbf{a},\mathbf{b},\mathbf{c},\mathbf{d}}\neq0$ for at least one $(\mathbf{a},\mathbf{b},\mathbf{c},\mathbf{d})$ with $|(\mathbf{a},\mathbf{b},\mathbf{c},\mathbf{d})|_e=n(\bar h)$.

For $k\in\mathbb{Z}_+$, Put $\Lambda^{k}_{\bar h}=\{(\mathbf{a},\mathbf{b},\mathbf{c},\mathbf{d})|\lambda_{\mathbf{a},\mathbf{b},\mathbf{c},\mathbf{d}}\neq0
\,\&\,|(\mathbf{a},\mathbf{b},\mathbf{c},\mathbf{d})|_e=k\}$ and let $\Lambda^{\text{max}}_{\bar h}$ denote the set of all $(\mathbf{a},\mathbf{b},\mathbf{c},\mathbf{d})\in\Lambda^{n(\bar h)}_{\bar h}$ for which the quantity $\text{wt}(\bar x^{\mathbf{a}}\bar y^\mathbf{b}\bar u^\mathbf{c}\bar v^\mathbf{d})$ assumes its maximum value. This maximum value will be denoted by $N=N(\bar h)$.

Some result about the leading term of $\bar h(\bar 1_\chi)\in Q_\chi^\chi$ for each element $\bar h\in U_\chi(\mathfrak{g}_\mathds{k},e)$ will be given in Lemma~\ref{hw}. It can be proved by the same treatment as Premet to the finite $W$-algebra case in ([\cite{P2}], Lemma 3.2). Compared with the Lie algebra case, the appearance of odd elements in the Lie superalgebra $\mathfrak{g}_\mathds{k}$ makes the situation different. For each monomial in the basis of $U_\chi(\mathfrak{g}_\mathds{k})$, it follows from the PBW theorem that the indices of odd elements in $\mathfrak{g}_\mathds{k}$ can not exceed $1$. In fact, we have that $\bar a^2\otimes\bar1_\chi=\frac{[\bar a,\bar a]}{2}\otimes\bar1_\chi\in Q_\chi^\chi$ for each $\bar a\in(\mathfrak{g}_\mathds{k})_{\bar1}$. It is obvious that when we write $\bar a^2\otimes\bar1_\chi$ as $\frac{[\bar a,\bar a]}{2}\otimes\bar1_\chi\in Q_\chi^\chi$, whose weight remains unchanged with standard degree decreasing. It follows from \eqref{dege} that the $e$-degree of $\bar a^2\otimes\bar1_\chi$ is lower than what it seems to be when we put it as a linear combination of the canonical basis of $Q_\chi^\chi$. Therefore, if the index of some odd element in $\mathfrak{g}_\mathds{k}$ exceeds $1$ in some monomial of $Q_\chi^\chi$, the $e$-degree of this monomial decreases when we put it as a linear combination of the canonical basis of $Q_\chi^\chi$. Obviously, this can not occur in the Lie algebra case. Now we prove the lemma in detail.

\begin{lemma}\label{hw}
Let $\bar h\in U_\chi(\mathfrak{g}_\mathds{k},e)\backslash\{0\}$ and $(\mathbf{a},\mathbf{b},\mathbf{c},\mathbf{d})\in \Lambda^{\text{max}}_{\bar h}$. Then $\mathbf{c}=\mathbf{0}$ and $\mathbf{a}\in\Lambda_{l}\times\{\mathbf{0}\},~\mathbf{b}\in\Lambda'_{q}\times\{\mathbf{0}\}$. Moreover, the sequence $\mathbf{d}$ satisfies

(1) $\mathbf{d}=\mathbf{0}$ when dim~$\mathfrak{g}_\mathds{k}(-1)_{\bar{1}}$ is even;

(2) $\mathbf{d}\in\{\mathbf{0}\}_{\frac{r-1}{2}}\times\Lambda'_{1}\times\{\mathbf{0}\}_{\frac{r-1}{2}}$ when dim~$\mathfrak{g}_\mathds{k}(-1)_{\bar{1}}$ (recall that which equals to $r$) is odd.
\end{lemma}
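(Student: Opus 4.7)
The plan is to exploit the characterization, from Proposition~\ref{invariant}, that $\bar h(\bar 1_\chi)\in(Q_\chi^\chi)^{\text{ad}\,\mathfrak{m}_\mathds{k}}$. Combined with the identity \eqref{[]tomult}, this invariance is equivalent in $Q_\chi^\chi$ to
$$\bar m\cdot\bar h(\bar 1_\chi)=\chi(\bar m)\,\bar h(\bar 1_\chi)\qquad\text{for every }\bar m\in\mathfrak{m}_\mathds{k}.$$
I will apply this relation with three families of $\bar m$'s and, in each case, extract the top-$(e\text{-degree, weight})$ contribution via the commutation analysis of Lemmas~\ref{commutative relations k1} and \ref{commutative relations k2}.

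\emph{Step 1 ($\mathbf{c}=\mathbf{0}$).} Fix $j\in\{1,\dots,s\}$ and take $\bar m=\bar u_{2s+1-j}\in\mathfrak{g}_\mathds{k}(-1)_{\bar 0}^{\prime}\subset\mathfrak{m}_\mathds{k}$, so $\chi(\bar m)=0$. When $\bar u_{2s+1-j}$ is moved to the right through $\bar x^{\mathbf{a}}\bar y^{\mathbf{b}}\bar u^{\mathbf{c}}\bar v^{\mathbf{d}}$ and then annihilates $\bar 1_\chi$, the unique commutator producing a term at the top $(e\text{-degree, weight})$ level is $[\bar u_{2s+1-j},\bar u_j]\in\mathfrak{g}_\mathds{k}(-2)_{\bar 0}$: after being carried through the remaining factors it evaluates on $\bar 1_\chi$ via $\chi([\bar u_{2s+1-j},\bar u_j])=\langle\bar u_j,\bar u_{2s+1-j}\rangle=-1$. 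Every other commutator either kills itself on $\bar 1_\chi$ (as for $[\bar u_{2s+1-j},\bar u_l]$ with $l\ne j$, where $\chi$ vanishes) or produces a monomial of strictly smaller $e$-degree, or of the same $e$-degree $n-1$ but strictly smaller weight. Hence, modulo such lower terms,
$$\bar u_{2s+1-j}\cdot(\bar x^{\mathbf{a}}\bar y^{\mathbf{b}}\bar u^{\mathbf{c}}\bar v^{\mathbf{d}}\otimes\bar 1_\chi)\equiv \pm\,c_j\,\bar x^{\mathbf{a}}\bar y^{\mathbf{b}}\bar u^{\mathbf{c}-\mathbf{e}_j}\bar v^{\mathbf{d}}\otimes\bar 1_\chi,$$
of $e$-degree $n-1$ and weight $N+1$. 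Summing over $\Lambda^{\text{max}}_{\bar h}$ and invoking linear independence of the PBW basis of $Q_\chi^\chi$, the vanishing of $\bar u_{2s+1-j}\cdot\bar h(\bar 1_\chi)$ forces $\lambda_{\mathbf{a},\mathbf{b},\mathbf{c},\mathbf{d}}\,c_j=0$ for every such tuple. Letting $j$ range over $\{1,\dots,s\}$ gives $\mathbf{c}=\mathbf{0}$.

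\emph{Step 2 (shape of $\mathbf{d}$ and of $\mathbf{a},\mathbf{b}$).} The identical argument applies with $\bar m=\bar v_{r+1-j}\in\mathfrak{g}_\mathds{k}(-1)_{\bar 1}^{\prime}\subset\mathfrak{m}_\mathds{k}$ for each $j$ for which $\bar v_{r+1-j}$ actually lies in $\mathfrak{m}_\mathds{k}$: namely every admissible $j$ when $r$ is even, and every $j$ except the self-paired middle index when $r$ is odd. Here $[\bar v_{r+1-j},\bar v_j]\in\mathfrak{g}_\mathds{k}(-2)_{\bar 0}$ contributes $\langle\bar v_j,\bar v_{r+1-j}\rangle=1$, and the odd parity of $\bar v_{r+1-j}$ only rearranges signs through the coefficients $k_{\bullet,\bullet,\bullet}$ of Lemma~\ref{commutative relations k1}. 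The linear-independence argument then forces $d_j=0$ for every such $j$: in the even case this exhausts every slot, giving $\mathbf{d}=\mathbf{0}$, while in the odd case only the self-paired middle slot $\bar v_{(r+1)/2}$ survives, on which $\Lambda_1^{\prime}$ permits exactly $d\in\{0,1\}$. Finally, with $\mathbf{c}$ and the extraneous slots of $\mathbf{d}$ already killed, the last claim $\mathbf{a}\in\Lambda_l\times\{\mathbf{0}\}$, $\mathbf{b}\in\Lambda_q^{\prime}\times\{\mathbf{0}\}$ is extracted by a weight-separation argument inside the top-$e$-degree component: decompose each $\bar x_i$ (resp.\ $\bar y_i$) with $i>l$ (resp.\ $i>q$) as $[\bar f,\bar z]$ for a higher-weight vector $\bar z$ via Lemma~\ref{p} and the $\mathfrak{sl}_2$-theory, and apply the invariance relation with appropriate raising elements from $\bigoplus_{i\le -2}\mathfrak{g}_\mathds{k}(i)$; the resulting top-contributions cannot be cancelled by any tuple whose $\bar x$- and $\bar y$-parts already lie entirely in $\mathfrak{g}^e_\mathds{k}$, so linear independence gives $a_i=0$ for $i>l$ and $b_i=0$ for $i>q$.

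\emph{Main obstacle.} The principal technical difficulty lies in the bookkeeping of Step 1: certifying the pairing chain $[\bar u_{2s+1-j},\bar u_j]\rightsquigarrow\chi(\cdot)$ as the unique top-$(e\text{-degree, weight})$ contribution requires showing, by the identity \eqref{dege} together with the strict positivity of the weights of the $\bar x_i,\bar y_i\in\mathfrak{p}_\mathds{k}$, that every alternative commutator chain drops either the $e$-degree or, at the same $e$-degree, the weight. Conceptually the novel feature of the super setting appears in Step 2: when $\dim\mathfrak{g}_\mathds{k}(-1)_{\bar 1}$ is odd, the self-paired middle vector $\bar v_{(r+1)/2}$ has no partner in $\mathfrak{m}_\mathds{k}$ with which to build a pairing chain, so $d_{(r+1)/2}$ cannot be forced to vanish; this irremovable residual freedom is precisely the origin of the exterior generator $\bar\theta$ in Theorem~\ref{graded W}(2).
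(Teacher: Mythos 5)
Your proposal follows essentially the same strategy as the paper's proof of Lemma~\ref{hw}: exploit $\text{ad}\,\mathfrak{m}_\mathds{k}$-invariance of $\bar h(\bar 1_\chi)$, test against suitably chosen dual elements $\bar w \in \mathfrak{m}_\mathds{k}$ (the paper's $\bar w_k, \bar w'_k, \bar z_k, \bar z'_k, \bar z''_k$), and isolate the top $(e\text{-degree},\,\text{weight})$ contribution via the bookkeeping of Lemmas~\ref{commutative relations k1}--\ref{commutative relations k2} to force the bad coefficients to vanish by linear independence in $Q_\chi^\chi$. The only differences are cosmetic: you eliminate $\mathbf{c},\mathbf{d}$ before $\mathbf{a},\mathbf{b}$ (the paper does it in the opposite cascaded order, though the bidegree argument is valid either way, since for $\nu=\text{wt}(\bar w)=-1$ the projection $\pi_{n(\bar h)-1,\,N(\bar h)+1}$ only catches the direct pairing with no ad-chain regardless of the shape of $\mathbf{a},\mathbf{b}$), and there is a harmless sign slip — $\chi([\bar u_{2s+1-j},\bar u_j])=\langle\bar u_{2s+1-j},\bar u_j\rangle=+1$, not $-1$.
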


\begin{proof}
(1) Suppose the contrary, i.e.

(I) if dim~$\mathfrak{g}_\mathds{k}(-1)_{\bar{1}}$ is even, then $$(a_{l+1},\cdots,a_m,b_{q+1},\cdots,b_n,c_1,\cdots,c_s,d_1\cdots,d_{\frac{r}{2}})\neq\{\mathbf{0}\};$$

(II) if dim~$\mathfrak{g}_\mathds{k}(-1)_{\bar{1}}$ is odd, then $$(a_{l+1},\cdots,a_m,b_{q+1},\cdots,b_n,c_1,\cdots,c_s,d_1\cdots,d_{\frac{r-1}{2}})\neq\{\mathbf{0}\}.$$

(2) First assume that

(i) if $a_k\neq0$ for some $k>l$ set $\bar x_k\in\mathfrak{g}_\mathds{k}(n_k)_{\bar0}$. Since $\bar x_k\notin(\mathfrak{g}_\mathds{k}^e)_{\bar{0}}$ and the bilinear form $(\cdot,\cdot)$ is non-degenerated, there is $\bar w=\bar w_k\in\mathfrak{g}_\mathds{k}(-n_k-2)_{\bar{0}}$ such that $\chi([\bar w_k,\bar x_i])=\delta_{ki}$ for all $i>l$.

(ii) if all $a_i's$ are zero for $i>l$, and there is $b_k\neq0$ for some $k>q$, then set $\bar y_k\in\mathfrak{g}_\mathds{k}(n'_k)_{\bar1}$. Since $\bar y_k\notin(\mathfrak{g}_\mathds{k}^e)_{\bar{1}}$ and the bilinear form $(\cdot,\cdot)$ is non-degenerated, there is $\bar w=\bar w'_k\in\mathfrak{g}_\mathds{k}(-n'_k-2)_{\bar{1}}$ such that $\chi([\bar w'_k,\bar y_i])=\delta_{ki}$ for all $i>q$.

(iii) if all $a_i's$ and $b_j's$ are zero for $i>l$ and $j>q$, and there is $\bar u_k\neq0$ for some $1\leqslant  k\leqslant  s$, choose $\bar w=\bar z_k\in\mathfrak{g}_\mathds{k}(-1)'_{\bar{0}}$ such that $\langle \bar z_k,\bar u_i\rangle=\delta_{ki}$ for all $1\leqslant  k\leqslant  s$.

(iv) if all $a_i's$, $b_j's$ and $c_k's$ are zero for $i>l$, $j>q$ and $1\leqslant k\leqslant s$, respectively, then

(a) when dim~$\mathfrak{g}_\mathds{k}(-1)_{\bar{1}}$ is even, there is $\bar w=\bar z'_k\in\mathfrak{g}_\mathds{k}(-1)'_{\bar{1}}$ such that $\langle \bar z'_k,\bar v_i\rangle=\delta_{ki}$ for all $1\leqslant k\leqslant \frac{r}{2}$;

(b) when dim~$\mathfrak{g}_\mathds{k}(-1)_{\bar{1}}$ is odd, there is $\bar w=\bar z^{''}_k\in\mathfrak{g}_\mathds{k}(-1)'_{\bar{1}}$ such that $\langle \bar z^{''}_k,\bar v_i\rangle=\delta_{ki}$ for all $1\leqslant  k\leqslant \frac{r-1}{2}$.

Under above assumptions, we write $\nu:=\text{wt}(\bar w)$.

(3) Let $(\mathbf{a},\mathbf{b},\mathbf{c},\mathbf{d})\in\Lambda^{d'}_{\bar h}$ where $d'\in\mathbb{Z}$. By the assumptions in (2) one knows that $\bar w$ is $\mathbb{Z}_2$-homogeneous. It is immediate from Lemma~\ref{commutative relations k1} and the definition of $Q_\chi^\chi$ that
\begin{equation}\label{rhow}
(\rho_\chi(\bar w)) (\bar x^{\mathbf{a}}\bar y^\mathbf{b}\bar u^\mathbf{c}\bar v^\mathbf{d})\otimes\bar 1_\chi=\sum_{\mathbf{i}\in\Lambda_{m}}\sum_{\mathbf{j}\in\Lambda'_{n}}\left(\begin{array}{@{\hspace{0pt}}c@{\hspace{0pt}}} \mathbf{a}\\ \mathbf{i}\end{array}\right)\bar x^{\mathbf{a}-\mathbf{i}}\bar y^{\mathbf{b}-\mathbf{j}}\cdot\rho_\chi([\bar w\bar x^{\mathbf{i}}\bar y^{\mathbf{j}}])\cdot \bar u^\mathbf{c}\bar v^\mathbf{d}\otimes\bar 1_\chi,
\end{equation}
where the summation in \eqref{rhow} runs over all $(\mathbf{i},\mathbf{j})\in\Lambda_m\times\Lambda'_n$ such that $[\bar w\bar x^{\mathbf{i}}\bar y^{\mathbf{j}}]$ is nonzero and $\text{wt}([\bar w\bar x^{\mathbf{i}}\bar y^{\mathbf{j}}])\geqslant -2$.

Based on the value of $\text{wt}([\bar w\bar x^{\mathbf{i}}y^{\mathbf{j}}])$, for each case we will consider separately.

(i) Suppose $(\mathbf{i},\mathbf{j})\in\Lambda_m\times\Lambda'_n$ is such that wt$([\bar w\bar x^{\mathbf{i}}\bar y^{\mathbf{j}}])\geqslant 0$. Then $|\mathbf{i}|+|\mathbf{j}|\geqslant 1$. Recall that the decomposition  $\mathfrak{g}_\mathds{k}=\bigoplus\limits_{i\in\mathbb{Z}}\mathfrak{g}_\mathds{k}(i)$ is the Dynkin grading of $\mathfrak{g}_\mathds{k}$, and the action of ad$h$ keeps $\mathfrak{p}_\mathds{k}$ invariant. This implies that $\bar x^{\mathbf{a}-\mathbf{i}}\bar y^{\mathbf{b}-\mathbf{j}}\cdot\rho_\chi([\bar w\bar x^{\mathbf{i}}\bar y^{\mathbf{j}}]) \bar u^\mathbf{c}\bar v^\mathbf{d}\otimes\bar 1_\chi$ is a linear combination of $\bar x^{\mathbf{i'}}\bar y^{\mathbf{j'}}\bar u^\mathbf{c}\bar v^\mathbf{d}\otimes\bar 1_\chi$ with
$$\text{wt}(\bar x^{\mathbf{i'}}\bar y^{\mathbf{j'}}\bar u^\mathbf{c}\bar v^\mathbf{d})=\nu+\text{wt}(\bar x^{\mathbf{a}}\bar y^{\mathbf{b}}\bar u^\mathbf{c}\bar v^\mathbf{d}),$$
and\[\begin{array}{ccl}
\text{deg}_e(\bar x^{\mathbf{i'}}\bar y^{\mathbf{j'}} \bar u^\mathbf{c}\bar v^\mathbf{d})&=&\text{wt}(\bar x^{\mathbf{i'}}\bar y^{\mathbf{j'}} \bar u^\mathbf{c}\bar v^\mathbf{d})+2\text{deg}(\bar x^{\mathbf{i'}}\bar y^{\mathbf{j'}} \bar u^\mathbf{c}\bar v^\mathbf{d})\\
&=&\nu+\text{wt}(\bar x^{\mathbf{a}}\bar y^{\mathbf{b}}\bar u^\mathbf{c}\bar v^\mathbf{d})
+2(|\mathbf{i'}|+|\mathbf{j'}|+|\mathbf{c}|+|\mathbf{d}|).
\end{array}\]

It follows from the remark preceding this lemma that when we put $\bar x^{\mathbf{a}-\mathbf{i}}\bar y^{\mathbf{b}-\mathbf{j}}\cdot\rho_\chi([\bar w\bar x^{\mathbf{i}}y^{\mathbf{j}}])\cdot \bar u^\mathbf{c}\bar v^\mathbf{d}$ as a linear combination of the canonical basis of $Q_\chi^\chi$, whose $e$-degree maybe lower than what it seems to be since the power of odd elements must $\leqslant1$. As
$$|\mathbf{i'}|+|\mathbf{j'}|\leqslant |\mathbf{a}|+|\mathbf{b}|-|\mathbf{i}|-|\mathbf{j}|+1,$$
then\[\begin{array}{cl}
&\text{deg}_e(\bar x^{\mathbf{i'}}\bar y^{\mathbf{j'}}\bar u^\mathbf{c}\bar v^\mathbf{d})=\nu+\text{wt}(\bar x^{\mathbf{a}}\bar y^{\mathbf{b}}\bar u^\mathbf{c}\bar v^\mathbf{d})
+2(|\mathbf{i}'|+|\mathbf{j}'|+|\mathbf{c}|+|\mathbf{d}|)\\
\leqslant&\text{wt}(\bar x^{\mathbf{a}}\bar y^{\mathbf{b}}\bar u^\mathbf{c}\bar v^\mathbf{d})+2(|\mathbf{a}|+|\mathbf{b}|+|\mathbf{c}|+|\mathbf{d}|)+
\nu-2(|\mathbf{i}|+|\mathbf{j}|)+2\\
=&2+\nu+d'-2(|\mathbf{i}|+|\mathbf{j}|).
\end{array}\]

(ii) Now suppose $(\mathbf{i},\mathbf{j})\in\Lambda_m\times\Lambda'_n$ is such that wt$([\bar w\bar x^{\mathbf{i}}\bar y^{\mathbf{j}}])=-1$. For $k,g\in\mathbb{Z}_+$, set $\bar x_k\in\mathfrak{g}_\mathds{k}(n_k)_{\bar{0}},~\bar y_g\in\mathfrak{g}_\mathds{k}(n'_g)_{\bar{1}}$, then $\sum\limits_{1\leqslant k\leqslant  m}i_kn_k+\sum\limits_{1\leqslant g\leqslant  n}j_gn'_g=-\nu-1$. Since $\rho_\chi(\mathfrak{m}_\mathds{k}\cap\mathfrak{g}_\mathds{k}(-1))$ annihilates $\bar 1_\chi$, the vector $\bar x^{\mathbf{a}-\mathbf{i}}\bar y^{\mathbf{b}-\mathbf{j}}\cdot\rho_\chi([\bar w\bar x^{\mathbf{i}}\bar y^{\mathbf{j}}])\cdot \bar u^\mathbf{c}\bar v^\mathbf{d}\otimes\bar 1_\chi$ is a linear combination of $\bar x^{\mathbf{a}-\mathbf{i}}\bar y^{\mathbf{b}-\mathbf{j}}\bar u^\mathbf{i'}\bar v^\mathbf{j'}\otimes\bar 1_\chi$ with $|\mathbf{i'}|=|\mathbf{c}|\pm1, \mathbf{j'}=\mathbf{d}$, or $\mathbf{i'}=\mathbf{c}, |\mathbf{j'}|=|\mathbf{d}|\pm1$.

(a) If $|\mathbf{i'}|=|\mathbf{c}|+1, \mathbf{j'}=\mathbf{d}$, or $\mathbf{i'}=\mathbf{c}, |\mathbf{j'}|=|\mathbf{d}|+1$, then $|\mathbf{i}|+|\mathbf{j}|\geqslant 1$,
$$\text{wt}(\bar x^{\mathbf{a}-\mathbf{i}}\bar y^{\mathbf{b}-\mathbf{j}}\bar u^\mathbf{i'}\bar v^\mathbf{j'})= \text{wt}(\bar x^{\mathbf{a}-\mathbf{i}}\bar y^{\mathbf{b}-\mathbf{j}}\bar u^\mathbf{c}\bar v^\mathbf{d})-1
=\nu+\text{wt}(\bar x^{\mathbf{a}}\bar y^{\mathbf{b}}\bar u^\mathbf{c}\bar v^\mathbf{d}),$$
and\[\begin{array}{ccl}
\text{deg}_e(\bar x^{\mathbf{a}-\mathbf{i}}\bar y^{\mathbf{b}-\mathbf{j}}\bar u^\mathbf{i'}\bar v^\mathbf{j'})&=&\text{wt}(\bar x^{\mathbf{a}-\mathbf{i}}\bar y^{\mathbf{b}-\mathbf{j}}\bar u^\mathbf{i'}\bar v^\mathbf{j'})+
2(|\mathbf{a}|-|\mathbf{i}|+|\mathbf{b}|-|\mathbf{j}|+|\mathbf{i'}|+|\mathbf{j'}|)\\
&=&  d'+\nu+2(-|\mathbf{i}|-|\mathbf{j}|+|\mathbf{i'}|+|\mathbf{j'}|-|\mathbf{c}|-|\mathbf{d}|)\\
&=&2+d'+\nu-2(|\mathbf{i}|+|\mathbf{j}|).
\end{array}\]

(b) If $|\mathbf{i'}|=|\mathbf{c}|-1, \mathbf{j'}=\mathbf{d}$, or $\mathbf{i'}=\mathbf{c}, |\mathbf{j'}|=|\mathbf{d}|-1$, then
$$\text{wt}(\bar x^{\mathbf{a}-\mathbf{i}}\bar y^{\mathbf{b}-\mathbf{j}}\bar u^\mathbf{i'}\bar v^\mathbf{j'})
=2+\nu+\text{wt}(\bar x^{\mathbf{a}}\bar y^{\mathbf{b}}\bar u^\mathbf{c}\bar v^\mathbf{d}),$$
and\[\begin{array}{ccl}\text{deg}_e(\bar x^{\mathbf{a}-\mathbf{i}}\bar y^{\mathbf{b}-\mathbf{j}}\bar u^\mathbf{i'}\bar v^\mathbf{j'})&=&\text{wt}(\bar x^{\mathbf{a}-\mathbf{i}}\bar y^{\mathbf{b}-\mathbf{j}}\bar u^\mathbf{i'}\bar v^\mathbf{j'})+
2(|\mathbf{a}|-|\mathbf{i}|+|\mathbf{b}|-|\mathbf{j}|+|\mathbf{i'}|+|\mathbf{j'}|)\\
&=&\nu+\text{wt}(\bar x^{\mathbf{a}}\bar y^{\mathbf{b}}\bar u^\mathbf{c}\bar v^\mathbf{d})+2+
2(|\mathbf{a}|+|\mathbf{b}|+|\mathbf{c}|+|\mathbf{d}|)\\
&&+2(-|\mathbf{c}|-|\mathbf{d}|-|\mathbf{i}|-|\mathbf{j}|+|\mathbf{i'}|+|\mathbf{j'}|)\\
&=&\nu+d'-2(|\mathbf{i}|+|\mathbf{j}|).
\end{array}\]

By the assumption one knows that wt$([\bar w\bar x^{\mathbf{i}}\bar y^{\mathbf{j}}])=-1$. When we put $\bar x^{\mathbf{a}-\mathbf{i}}\bar y^{\mathbf{b}-\mathbf{j}}\cdot\rho_\chi([\bar w\bar x^{\mathbf{i}}\bar y^{\mathbf{j}}])\cdot \bar u^\mathbf{c}\bar v^\mathbf{d}\otimes1_\chi$ as a linear combination of the canonical basis of $Q_\chi^\chi$, if the index of odd element of $\mathfrak{g}_\mathds{k}$ reaches $2$ in some monomial of $Q_\chi^\chi$, e.g. set the element as $\bar y\in\mathfrak{g}_\mathds{k}(-1)_{\bar1}$, then it follows from $\bar y^2\otimes \bar 1_\chi=\frac{1}{2}[\bar y,\bar y]\otimes \bar 1_\chi=\frac{1}{2}\chi([\bar y,\bar y])\otimes \bar 1_\chi$ that the weight of the monomial rises by $2$, while the standard degree decreases by $2$. Then it is immediate from \eqref{dege} that the $e$-degree of the monomial decreases by $2$ compared with what it seems to be. As the index of some odd element decreases in this situation, it can be deduced to case (ii)(b).

(iii) Finally, suppose $(\mathbf{i},\mathbf{j})\in\Lambda_m\times\Lambda'_n$ is such that wt$([\bar w\bar x^{\mathbf{i}}\bar y^{\mathbf{j}}])=-2$. Then $$\bar x^{\mathbf{a}-\mathbf{i}}\bar y^{\mathbf{b}-\mathbf{j}}\cdot\rho_\chi([\bar w\bar x^{\mathbf{i}}\bar y^{\mathbf{j}}])\cdot \bar u^\mathbf{c}\bar v^\mathbf{d}\otimes\bar 1_\chi=\chi([\bar w\bar x^{\mathbf{i}}\bar y^{\mathbf{j}}])\bar x^{\mathbf{a}-\mathbf{i}}\bar y^{\mathbf{b}-\mathbf{j}}\bar u^\mathbf{c}\bar v^\mathbf{d}\otimes\bar 1_\chi.$$
As $\sum\limits_{1\leqslant k\leqslant  m}i_kn_k+\sum\limits_{1\leqslant g\leqslant  n}j_gn'_g=-\nu-2$, one has
$$\text{wt}(\bar x^{\mathbf{a}-\mathbf{i}}\bar y^{\mathbf{b}-\mathbf{j}}\bar u^\mathbf{c}\bar v^\mathbf{d})=2+\nu+\text{wt}(\bar x^{\mathbf{a}}\bar y^{\mathbf{b}}\bar u^\mathbf{c}\bar v^\mathbf{d}),\,\text{deg}_e(\bar x^{\mathbf{a}-\mathbf{i}}\bar y^{\mathbf{b}-\mathbf{j}}\bar u^\mathbf{c}\bar v^\mathbf{d})=2+\nu+d'-2(|\mathbf{i}|+|\mathbf{j}|).$$

(4) Now based on the results of (2) and (3), we will discuss the validity of the assumptions in (1).

For $i,j\in\mathbb{Z}$ let $\pi_{ij}$ denote the endomorphism of $Q_\chi^\chi$ such that

(a) if $(\mathbf{a},\mathbf{b},\mathbf{c},\mathbf{d})$ satisfies $\text{deg}_e(\bar x^{\mathbf{a}}\bar y^{\mathbf{b}}\bar u^\mathbf{c}\bar v^\mathbf{d})=i$ and $\text{wt}(\bar x^{\mathbf{a}}\bar y^{\mathbf{b}}\bar u^\mathbf{c}\bar v^\mathbf{d})=j$, then define
\begin{equation}\label{pi}
\pi_{ij}(\bar x^{\mathbf{a}}\bar y^{\mathbf{b}}\bar u^\mathbf{c}\bar v^\mathbf{d}\otimes\bar1_\chi)=\bar x^{\mathbf{a}}\bar y^{\mathbf{b}}\bar u^\mathbf{c}\bar v^\mathbf{d}\otimes\bar 1_\chi;
\end{equation}

(b) if $(\mathbf{a},\mathbf{b},\mathbf{c},\mathbf{d})$ does not satisfy the condition in (a), then define $$\pi_{ij}(\bar x^{\mathbf{a}}\bar y^{\mathbf{b}}\bar u^\mathbf{c}\bar v^\mathbf{d}\otimes\bar 1_\chi)=0.$$

(i) If assumption (i) or (ii) in (2) is true, then $\nu\leqslant -2$ and $w\in\mathfrak{m}_\mathds{k}$. Set $\bar h=\bar h_{\bar{0}}+\bar h_{\bar{1}}\in U_\chi({\mathfrak{g}_\mathds{k},e})$. As $\bar w$ is $\mathbb{Z}_2$-homogeneous and $\chi((\mathfrak{g}_\mathds{k})_{\bar1})=0$, then $\chi(\bar w)=0$ if $\bar w\in(\mathfrak{g}_\mathds{k})_{\bar1}$. It follows from the definition of $U_\chi({\mathfrak{g}_\mathds{k},e})$ that
\begin{equation}\label{rhochi}
\begin{split}
(\rho_\chi(\bar w)-\chi(\bar w)\text{id}).\bar h(1_\chi)=&(\rho_\chi(\bar w)-\chi(\bar w)\text{id}).
(\bar h_{\bar{0}}+\bar h_{\bar{1}})(\bar 1_\chi)\\
=&\bar h_{\bar{0}}((\bar w-\chi(\bar w)).\bar 1_\chi)
+(-1)^{|\bar w|}\bar h_{\bar{1}}(\bar w.\bar 1_\chi)-\chi(\bar w)\bar h_{\bar{1}}(\bar 1_\chi)\\
=&\bar h_{\bar{0}}((\bar w-\chi(\bar w)).\bar 1_\chi)
+(-1)^{|\bar w|}\cdot \bar h_{\bar{1}}((\bar w-\chi(\bar w)).\bar 1_\chi)\\
=&0.
\end{split}
\end{equation}

For $a\in\mathbb{Z}$ we let $\bar{a}$ denote the residue of $a$ in $\mathbb{F}_p\subset\overline{\mathbb{F}}_p=\mathds{k}$. By (3) one knows that the terms with $e$-degree $n(\bar h)+\nu$ and weight $N(\bar h)+\nu+2$ only occur in (3)(iii) when $(\rho_\chi(\bar w)-\chi(\bar w)\text{id}).\bar h(\bar 1_\chi)$ is written as a linear combination of the canonical basis of $Q_\chi^\chi$. If write $b_{n+1}=0$, it follows from \eqref{rhochi} that
\begin{equation}\label{0pi}
\begin{split}
0=&\pi_{n(\bar h)+\nu,N(\bar h)+\nu+2}((\rho_\chi(\bar w)-\chi(\bar w)\text{id}). \bar h(\bar 1_\chi))\\
=&(\sum\limits_{(\mathbf{a},\mathbf{b},\mathbf{c},\mathbf{d})\in\Lambda_{\bar h}^\text{max}}\lambda_{\mathbf{a},\mathbf{b},\mathbf{c},\mathbf{d}}
\sum\limits_{i=1}^m(-1)^{|\bar w|(\sum\limits_{j=1}^nb_j)}\bar{a}_i \bar x^{\mathbf{a}-\mathbf{e}_i}\bar y^{\mathbf{b}}\cdot\chi([\bar w,\bar x_i]) \bar u^\mathbf{c}\bar v^\mathbf{d})\otimes\bar 1_\chi+\\
&(\sum\limits_{(\mathbf{a},\mathbf{b},\mathbf{c},\mathbf{d})\in\Lambda_{\bar h}^\text{max}}\lambda_{\mathbf{a},\mathbf{b},\mathbf{c},\mathbf{d}}
\sum\limits_{i=1}^n(-1)^{|\bar w|(1+\sum\limits_{j=1}^nb_j)+\sum\limits_{j=i+1}^nb_{j}} \bar x^{\mathbf{a}}\bar y^{\mathbf{b}-\mathbf{e}_i}\cdot\chi([\bar w,\bar y_i])\\
&\bar u^\mathbf{c}\bar v^\mathbf{d})\otimes\bar 1_\chi.
\end{split}
\end{equation}
Since $\chi((\mathfrak{g}_\mathds{k})_{\bar1})=0$, then

(a) if $\bar w=\bar w_k$, then \eqref{0pi} equals $\sum\limits_{(\mathbf{a},\mathbf{b},\mathbf{c},\mathbf{d})\in\Lambda_h^\text{max}}\lambda_{\mathbf{a},\mathbf{b},\mathbf{c},\mathbf{d}}
\bar{a}_k\bar x^{\mathbf{a}-\mathbf{e}_k}\bar y^{\mathbf{b}}\bar u^\mathbf{c}\bar v^\mathbf{d}\otimes\bar 1_\chi\neq0$;

(b) if $\bar w=\bar w'_k$, then \eqref{0pi} equals $$\sum\limits_{(\mathbf{a},\mathbf{b},\mathbf{c},\mathbf{d})\in\Lambda_{\bar h}^\text{max}}\lambda_{\mathbf{a},\mathbf{b},\mathbf{c},\mathbf{d}}
(-1)^{1+\sum\limits_{j=1}^nb_j+\sum\limits_{j=k+1}^nb_{j}}\bar x^{\mathbf{a}}\bar y^{\mathbf{b}-\mathbf{e}_k}\cdot \bar u^\mathbf{c}\bar v^\mathbf{d}\otimes\bar 1_\chi\neq0,$$
a contradiction, i.e. the assumptions (i) and (ii) in (2) are invalid.

(ii) If assumption (iii) or (iv) in (2) is true, then $\nu=-1$ and $\chi(\bar w)=0$. It follows from the definition of $U_\chi({\mathfrak{g}_\mathds{k},e})$ that
\begin{equation}\label{rhowchi}
\rho_\chi(\bar w).\bar h(\bar 1_\chi)=\rho_\chi(\bar w).(\bar h_{\bar{0}}+\bar h_{\bar{1}})(\bar 1_\chi)=\bar h_{\bar{0}}(\bar w.\bar 1_\chi)
+(-1)^{|\bar w|}\bar h_{\bar{1}}(\bar w.\bar 1_\chi)=0.
\end{equation}
Set $b_{n+1}=0, d_{0}=0$, then
\begin{equation}\label{exten}
\begin{split}
\rho_\chi(\bar w).\bar h(\bar 1_\chi)=&(\sum\limits_{(\mathbf{a},\mathbf{b},\mathbf{c},\mathbf{d})\in\Lambda_{\bar h}^{n(\bar h)}}\lambda_{\mathbf{a},\mathbf{b},\mathbf{c},\mathbf{d}}(
\sum\limits_{i=1}^m(-1)^{|\bar w|(\sum\limits_{j=1}^nb_j)}\bar{a}_i\bar x^{\mathbf{a}-\mathbf{e}_i}\bar y^{\mathbf{b}}\cdot\rho_\chi([\bar w,\bar x_i])\\
&\cdot \bar u^\mathbf{c}\bar v^\mathbf{d}+\sum\limits_{i=1}^n(-1)^{|\bar w|(1+\sum\limits_{j=1}^nb_j)+\sum\limits_{j=i+1}^nb_{j}}\bar x^{\mathbf{a}}\bar y^{\mathbf{b}-\mathbf{e}_i}\cdot\rho_\chi([\bar w,\bar y_i])\cdot \bar u^\mathbf{c}\bar v^\mathbf{d}+\\
&\sum\limits_{i=1}^s(-1)^{|\bar w|(\sum\limits_{j=1}^nb_j)}\bar{c}_i\bar x^{\mathbf{a}}\bar y^{\mathbf{b}}\cdot\chi([\bar w,\bar u_i])\bar u^{\mathbf{c}-\mathbf{e}_i}\bar v^{\mathbf{d}}+\\&\sum\limits_{i=1}^{\lceil \frac{r}{2}\rceil}(-1)^{|\bar w|(\sum\limits_{j=1}^nb_j)+\sum\limits_{j=0}^{i-1}d_{j}}\cdot \bar x^{\mathbf{a}}\bar y^{\mathbf{b}}\cdot\chi([\bar w,\bar v_i])\bar u^{\mathbf{c}}\bar v^{\mathbf{d}-\mathbf{e}_i})+\\
&\sum\limits_{|(\mathbf{i},\mathbf{j},\mathbf{k},\mathbf{l})|_e\leqslant  n(\bar h)-2}\beta_{\mathbf{i},\mathbf{j},\mathbf{k},\mathbf{l}}\cdot \bar x^{\mathbf{i}}\bar y^{\mathbf{j}}\bar u^\mathbf{k}\bar v^\mathbf{l})\otimes\bar1_\chi,
\end{split}
\end{equation}

When $(\rho_\chi(\bar w)-\chi(\bar w)\text{id}).\bar h(\bar 1_\chi)$ is written as a linear combination of the canonical basis of $Q_\chi^\chi$, it is immediate from (3) that the terms with $e$-degree $n(\bar h)-1$ and weight $N(\bar h)+1$ only occur in (3)(ii), and by \eqref{exten} we have

(a) if $\bar w=\bar z'_k$, then
$$\pi_{n(\bar h)-1,N(\bar h)+1}(\rho_\chi(\bar w).\bar h(\bar 1_\chi))=\sum
\limits_{(\mathbf{a},\mathbf{b},\mathbf{c},\mathbf{d})\in\Lambda_{\bar h}^{n(\bar h)}}\lambda_{\mathbf{a},\mathbf{b},\mathbf{c},\mathbf{d}}
\bar{c}_k\bar x^{\mathbf{a}}\bar y^{\mathbf{b}}\bar u^{\mathbf{c}-\mathbf{e}_k}\bar v^{\mathbf{d}}\otimes\bar 1_\chi\neq0;$$

(b) if $\bar w=\bar z^{''}_k$, then
\[\begin{array}{ll}
&\pi_{n(\bar h)-1,N(\bar h)+1}(\rho_\chi(\bar w).\bar h(\bar 1_\chi))\\
=&\sum
\limits_{(\mathbf{a},\mathbf{b},\mathbf{c},\mathbf{d})\in\Lambda_{\bar h}^{n(\bar h)}}\lambda_{\mathbf{a},\mathbf{b},\mathbf{c},\mathbf{d}}
(-1)^{\sum\limits_{j=1}^nb_j+\sum\limits_{j=0}^{k-1}d_{j}}\bar x^{\mathbf{a}}\bar y^{\mathbf{b}}\bar u^{\mathbf{c}}\bar v^{\mathbf{d}-\mathbf{e}_k}\otimes\bar1_\chi
\neq0,
\end{array}\] a contradiction, i.e. the assumptions (iii) and (iv) in (2) are invalid.

All the contradictions in (4) complete the proof of the lemma.
\end{proof}

\subsection{The construction theory of reduced $W$-superalgebras in positive characteristic}

In this part we will study the construction theory of the reduced $W$-superalgebra $U_\chi(\mathfrak{g}_\mathds{k},e)$.

For $k\in\mathbb{Z}_+$ let $H^k$ denote the $\mathds{k}$-linear span of all $0\neq \bar h\in U_\chi(\mathfrak{g}_\mathds{k},e)$ with $n(\bar h)\leqslant  k$ in $U_\chi(\mathfrak{g}_\mathds{k},e)$. It follows readily from Lemma~\ref{commutative relations k2} that $H^i\cdot H^j\subseteq H^{i+j}$ for all $i,j\in\mathbb{Z}_+$. In other words, $\{H^i|i\in\mathbb{Z}_+\}$ is a filtration of the algebra $U_\chi(\mathfrak{g}_\mathds{k},e)$ and obviously $U_\chi(\mathfrak{g}_\mathds{k},e)=H^k$ for all $k\gg0$. We set $H^{-1}=0$ and let $\text{gr}(U_\chi(\mathfrak{g}_\mathds{k},e))=\sum\limits_{i\geqslant 0}H^i/H^{i-1}$ denote the corresponding graded algebra. Lemma~\ref{commutative relations k2} implies that the $\mathds{k}$-algebra $\text{gr}(U_\chi(\mathfrak{g}_\mathds{k},e))$ is supercommutative.

\begin{prop}\label{reduced basis}
Let $U_\chi(\mathfrak{g}_\mathds{k},e)$ be a reduced $W$-superalgebra,

(1) if dim~$\mathfrak{g}_\mathds{k}(-1)_{\bar{1}}$ is even, then for any $(\mathbf{a},\mathbf{b})\in\Lambda_l\times\Lambda'_q$ there is $\bar h_{\mathbf{a},\mathbf{b}}\in U_\chi(\mathfrak{g}_\mathds{k},e)$ such that $\Lambda_{\bar h_{\mathbf{a},\mathbf{b}}}^{\text{max}}=\{(\mathbf{a},\mathbf{b})\}$. The vectors $\{\bar h_{\mathbf{a},\mathbf{b}}|(\mathbf{a},\mathbf{b})\in\Lambda_l\times\Lambda'_q\}$ form a basis of $U_\chi(\mathfrak{g}_\mathds{k},e)$ over $\mathds{k}$.

(2) if dim~$\mathfrak{g}_\mathds{k}(-1)_{\bar{1}}$ is odd, then for any $(\mathbf{a},\mathbf{b},c)\in\Lambda_l\times\Lambda'_q\times\Lambda'_1$ there is $\bar h_{\mathbf{a},\mathbf{b},c}\in U_\chi(\mathfrak{g}_\mathds{k},e)$ such that $\Lambda_{\bar h_{\mathbf{a},\mathbf{b},c}}^{\text{max}}=\{(\mathbf{a},\mathbf{b},c)\}$. The vectors $\{\bar h_{\mathbf{a},\mathbf{b},c}|(\mathbf{a},\mathbf{b},c)\in\Lambda_l\times\Lambda'_q\times\Lambda'_1\}$ form a basis of $U_\chi(\mathfrak{g}_\mathds{k},e)$ over $\mathds{k}$.
\end{prop}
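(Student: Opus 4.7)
The plan is a three-step argument built on Lemma~\ref{hw} together with the Morita-type dimension identity of Theorem~\ref{matrix}: use Lemma~\ref{hw} to produce an upper bound on $\dim U_\chi(\mathfrak{g}_\mathds{k},e)$, match it with a lower bound obtained by counting, and then recover the claimed basis by an existence-via-dimension argument.

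First, for the upper bound, I would attach to each nonzero $\bar h\in U_\chi(\mathfrak{g}_\mathds{k},e)$ its leading symbol $\sigma(\bar h):=\sum_{(\mathbf{a},\mathbf{b},\mathbf{c},\mathbf{d})\in\Lambda^{\max}_{\bar h}}\lambda_{\mathbf{a},\mathbf{b},\mathbf{c},\mathbf{d}}\,\bar x^{\mathbf{a}}\bar y^{\mathbf{b}}\bar u^{\mathbf{c}}\bar v^{\mathbf{d}}$, using the pair (Kazhdan degree, weight) to define the filtration. Lemma~\ref{commutative relations k2} shows that this filtration is multiplicative, and Lemma~\ref{hw} forces $\sigma(\bar h)$ to lie in the $\mathds{k}$-span of the monomials $\bar x^{\mathbf{a}}\bar y^{\mathbf{b}}$ with $(\mathbf{a},\mathbf{b})\in\Lambda_l\times\Lambda_q'$ in the even case, or of $\bar x^{\mathbf{a}}\bar y^{\mathbf{b}}\bar v^{\mathbf{d}}$ with $(\mathbf{a},\mathbf{b},\mathbf{d})\in\Lambda_l\times\Lambda_q'\times(\{\mathbf{0}\}_{(r-1)/2}\times\Lambda_1'\times\{\mathbf{0}\}_{(r-1)/2})$ in the odd case. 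Since $\sigma$ is injective on the associated graded, we obtain
\[
\dim_{\mathds{k}} U_\chi(\mathfrak{g}_\mathds{k},e)\ \leqslant\ p^{l}\,2^{q}\quad(\text{even case}),\qquad \dim_{\mathds{k}} U_\chi(\mathfrak{g}_\mathds{k},e)\ \leqslant\ p^{l}\,2^{q+1}\quad(\text{odd case}).
\]

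Second, for the matching lower bound I would invoke Theorem~\ref{matrix}, which yields $\dim_{\mathds{k}} U_\chi(\mathfrak{g}_\mathds{k},e)=\dim_{\mathds{k}} U_\chi(\mathfrak{g}_\mathds{k})/\delta^{2}$ with $\delta=\dim_{\mathds{k}} U_\chi(\mathfrak{m}_\mathds{k})$. Feeding in $\dim_{\mathds{k}} U_\chi(\mathfrak{g}_\mathds{k})=p^{\dim(\mathfrak{g}_\mathds{k})_{\bar 0}}2^{\dim(\mathfrak{g}_\mathds{k})_{\bar 1}}$ and the analogous formula for $U_\chi(\mathfrak{m}_\mathds{k})$, together with the identities $(\underline{\dim}\,\mathfrak{m}_\mathds{k})=(\tfrac{d_0}{2},\tfrac{d_1}{2})$ or $(\tfrac{d_0}{2},\tfrac{d_1-1}{2})$ recorded in Remark~\ref{centralizer}, and recalling $d_0=\dim(\mathfrak{g}_\mathds{k})_{\bar 0}-l$ and $d_1=\dim(\mathfrak{g}_\mathds{k})_{\bar 1}-q$, the quotient collapses to $p^{l}2^{q}$ or $p^{l}2^{q+1}$, exactly matching the upper bound. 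Consequently $\sigma$ restricts to a bijection between $U_\chi(\mathfrak{g}_\mathds{k},e)$ and the allowable spanning set, so for every index $(\mathbf{a},\mathbf{b})$ (resp.\ $(\mathbf{a},\mathbf{b},c)$) one can take any preimage $\bar h_{\mathbf{a},\mathbf{b}}$ (resp.\ $\bar h_{\mathbf{a},\mathbf{b},c}$) with the prescribed top-degree top-weight component. Linear independence is automatic since different $\bar h_{\mathbf{a},\mathbf{b}}$'s have distinct leading symbols, and the cardinality count shows these vectors form a basis.

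The main obstacle I anticipate is the bookkeeping around the superalgebra version of the Morita identity: one must verify that the factor $2^{\dim(\mathfrak{m}_\mathds{k})_{\bar 1}}$ enters the count $\delta=\dim_{\mathds{k}} U_\chi(\mathfrak{m}_\mathds{k})$ in the square exactly so as to cancel against $2^{\dim(\mathfrak{g}_\mathds{k})_{\bar 1}}$ while leaving $2^{q}$ in the even case and $2^{q+1}$ in the odd case. In the odd case this rests on the fact that the additional odd direction $\bar v_{(r+1)/2}$ belongs to $\widetilde{\mathfrak{p}}_\mathds{k}\setminus\mathfrak{p}_\mathds{k}$, hence contributes one extra factor of $2$ to the centralizer-of-$\mathfrak{m}_\mathds{k}$ side that is \emph{not} present on the $\mathfrak{m}_\mathds{k}$ side, matching the asymmetry $(\underline{\dim}\,\mathfrak{m}_\mathds{k})=(\tfrac{d_0}{2},\tfrac{d_1-1}{2})$. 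A secondary technical point is checking that the leading-symbol map $\sigma$ really is multiplicative and injective on the graded pieces; this is where Lemma~\ref{commutative relations k2}(1)--(2), especially the sign coefficient $K=(-1)^{\tau(\mathbf{b},\mathbf{d},\mathbf{b}',\mathbf{d}')}$, is indispensable, since in the super setting one must rule out unexpected cancellations among leading terms of products.
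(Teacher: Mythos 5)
Your proposal is correct and follows essentially the same route as the paper's proof: the upper bound on $\dim_{\mathds{k}} U_\chi(\mathfrak{g}_\mathds{k},e)$ comes from Lemma~\ref{hw} via the leading-term map attached to the $(n,N)$-bifiltration, the exact value $p^l 2^q$ (resp.\ $p^l 2^{q+1}$) comes from combining Theorem~\ref{matrix} with the $\underline{\dim}\,\mathfrak{m}_\mathds{k}$ formula in Remark~\ref{centralizer}, and the match forces surjectivity, yielding the $\bar h_{\mathbf{a},\mathbf{b}}$ with prescribed top symbol. The paper packages the leading-term step as an injective linear map $\pi_B$ built from a basis adapted to the $(n,N)$-filtration by the Basis Extension Theorem, whereas you phrase it as injectivity on the associated graded; these are the same argument. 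One small remark: the multiplicativity of the leading symbol and the sign $K=(-1)^{\tau(\cdot)}$ from Lemma~\ref{commutative relations k2} are not actually needed for this proposition (it is a purely linear-algebraic dimension comparison); they become essential only later in Theorem~\ref{reduced Wg} where the algebra structure of the associated graded is used.
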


\begin{proof}

Given $(a,b)\in\mathbb{Z}_+^2$, let $H^{a,b}$ denote the subspace of $U_\chi(\mathfrak{g}_\mathds{k},e)$ spanned by $H^{a-1}$ and all $\bar h\in U_\chi(\mathfrak{g}_\mathds{k},e)$ such that $n(\bar h)=a,~N(\bar h)\leqslant  b$. Order the elements in $\mathbb{Z}_+^2$ lexicographically. By construction, $H^{a,b}\subseteq H^{c,d}$ whenever $(a,b)\prec(c,d)$. Applying the Basis Extension Theorem to the finite chain of subspaces just defined we obtain that $U_\chi(\mathfrak{g}_\mathds{k},e)$ has basis $B:=\bigsqcup_{(i,j)}B_{i,j}$ such that $n(\mu)=i,~N(\mu)=j$ whenever $\mu\in B_{i,j}$.

Recall in \eqref{pi} the mapping $\pi_{ij}:~U_\chi(\mathfrak{g}_\mathds{k},e)\rightarrow Q_\chi^\chi$ is defined by setting $$\pi_{ij}(\bar x^{\mathbf{a}}\bar y^{\mathbf{b}}\bar u^\mathbf{c}\bar v^\mathbf{d}\otimes\bar1_\chi)=\bar x^{\mathbf{a}}\bar y^{\mathbf{b}}\bar u^\mathbf{c}\bar v^\mathbf{d}\otimes\bar 1_\chi$$ for any $(\mathbf{a},\mathbf{b},\mathbf{c},\mathbf{d})$ which satisfies $\text{deg}_e(\bar x^{\mathbf{a}}\bar y^{\mathbf{b}}\bar u^\mathbf{c}\bar v^\mathbf{d})=i$ and $\text{wt}(\bar x^{\mathbf{a}}\bar y^{\mathbf{b}}\bar u^\mathbf{c}\bar v^\mathbf{d})=j$; and $0$ otherwise.

Define the linear map $\pi_B: U_\chi(\mathfrak{g}_\mathds{k},e)\longrightarrow Q_\chi^\chi$ by setting $\mu\in B_{i,j}$ for any $\pi_B(\mu)=\pi_{i,j}(\mu(\bar1_\chi))$ and extending to $U_\chi(\mathfrak{g}_\mathds{k},e)$ by linearity.

Based on the parity of dim~$\mathfrak{g}_\mathds{k}(-1)_{\bar{1}}$, for each case we will consider separately.

(1) When dim~$\mathfrak{g}_\mathds{k}(-1)_{\bar{1}}$ is even, it can be inferred from Lemma~\ref{hw} that $\pi_B$ maps $U_\chi(\mathfrak{g}_\mathds{k},e)$ into the subspace $U_\chi(\mathfrak{g}_\mathds{k}^e)\otimes\bar1_\chi$ of $Q_\chi^\chi$. By construction, $\pi_B$ is injective. On the other hand,\[\begin{array}{ccl}
\text{\underline{dim}}~U_\chi(\mathfrak{g}_\mathds{k},e)&=&(\frac{p^{\text{dim}(\mathfrak{g}_\mathds{k})_{\bar{0}}}}
{(p^{\frac{\text{dim}~(\mathfrak{g}_\mathds{k})_{\bar{0}}-\text{dim}~(\mathfrak{g}_\mathds{k}^e)_{\bar{0}}}{2}})^2},~
\frac{2^{\text{dim}~(\mathfrak{g}_\mathds{k})_{\bar{1}}}}{(2^{\frac{\text{dim}~(\mathfrak{g}_\mathds{k})_{\bar{1}}-\text{dim}~(\mathfrak{g}_\mathds{k}^e)_{\bar{1}}}{2}})^2})\\
&=&(p^{\text{dim}~(\mathfrak{g}_\mathds{k}^e)_{\bar{0}}},~2^{\text{dim}~(\mathfrak{g}_\mathds{k}^e)_{\bar{1}}})\\
&=&\text{\underline{dim}}~
U_\chi(\mathfrak{g}_\mathds{k}^e)\otimes\bar1_\chi
\end{array}\]due to Remark~\ref{centralizer} and Theorem~\ref{matrix}. Thus $\pi_B: U_\chi(\mathfrak{g}_\mathds{k},e)\longrightarrow U_\chi(\mathfrak{g}_\mathds{k}^e)\otimes1_\chi$ is a linear isomorphism. For $(\mathbf{a},\mathbf{b})=(a_1,\cdots,a_l,b_1,\cdots,b_q)\in\Lambda_l\times\Lambda'_q$ set $\bar h_{\mathbf{a},\mathbf{b}}:=\pi_B^{-1}(\bar x_1^{a_1}\cdots \bar x_l^{a_l}\bar y_1^{b_1}\cdots \bar y_q^{b_q}\otimes\bar 1_\chi)$. By the bijectivity of $\pi_B$ and the PBW theorem (applied to $U_\chi(\mathfrak{g}_\mathds{k}^e)$), the vectors $\bar h_{\mathbf{a},\mathbf{b}}$ with $(\mathbf{a},\mathbf{b})\in\Lambda_l\times\Lambda'_q$ form a basis of $U_\chi(\mathfrak{g}_\mathds{k},e)$ over $\mathds{k}$, while from the definition of $\pi_B$ it follows that $\Lambda_{\bar h_{\mathbf{a},\mathbf{b}}}^{\text{max}}=\{(\mathbf{a},\mathbf{b})\}$ for any $(\mathbf{a},\mathbf{b})\in\Lambda_l\times\Lambda'_q$.

(2) When dim~$\mathfrak{g}_\mathds{k}(-1)_{\bar{1}}$ is odd, it can be inferred from Lemma~\ref{hw} that $\pi_B$ maps $U_\chi(\mathfrak{g}_\mathds{k},e)$ into the subspace of $(U_\chi(\mathfrak{g}_\mathds{k}^e)\otimes_\mathds{k} U_\chi(\mathds{k}\bar v_{\frac{r+1}{2}}))\otimes\bar 1_\chi$ of $Q_\chi^\chi$. By construction, $\pi_B$ is injective. On the other hand,
\[\begin{array}{ccl}\text{\underline{dim}}~U_\chi(\mathfrak{g}_\mathds{k},e)&=&(\frac{p^{\text{dim}~(\mathfrak{g}_\mathds{k})_{\bar{0}}}}
{(p^{\frac{\text{dim}~(\mathfrak{g}_\mathds{k})_{\bar{0}}-\text{dim}~(\mathfrak{g}_\mathds{k}^e)_{\bar{0}}}{2}})^2},~
\frac{2^{\text{dim}~(\mathfrak{g}_\mathds{k})_{\bar{1}}}}{(2^{\frac{\text{dim}~(\mathfrak{g}_\mathds{k})_{\bar{1}}-\text{dim}~(\mathfrak{g}_\mathds{k}^e)_{\bar{1}}-1}{2}})^2})\\
&=&(p^{\text{dim}~(\mathfrak{g}_\mathds{k}^e)_{\bar{0}}},~2^{\text{dim}~(\mathfrak{g}_\mathds{k}^e)_{\bar{1}}+1})\\
&=&\text{\underline{dim}}~
(U_\chi(\mathfrak{g}_\mathds{k}^e)\otimes_\mathds{k} U_\chi(\mathds{k}\bar v_{\frac{r+1}{2}}))\otimes\bar1_\chi,
\end{array}\]due to Remark~\ref{centralizer} and Theorem~\ref{matrix}.  Thus $\pi_B: U_\chi(\mathfrak{g}_\mathds{k},e)\longrightarrow (U_\chi(\mathfrak{g}_\mathds{k}^e)\otimes_\mathds{k} U_\chi(\mathds{k}\bar v_{\frac{r+1}{2}}))\otimes1_\chi$ is a linear isomorphism. For $(\mathbf{a},\mathbf{b},c)=(a_1,\cdots,a_l,b_1,\cdots,b_q,c)\in\Lambda_l\times\Lambda'_q\times\Lambda'_1$ set $$h_{\mathbf{a},\mathbf{b},c}=\pi_B^{-1}(\bar x_1^{a_1}\cdots \bar x_l^{a_l}\bar y_1^{b_1}\cdots \bar y_q^{b_q}\bar v_{\frac{r+1}{2}}^c\otimes\bar1_\chi).$$ By the bijectivity of $\pi_B$ and the PBW theorem (applied to $U_\chi(\mathfrak{g}_\mathds{k}^e)\otimes_\mathds{k} U_\chi(\mathds{k}\bar v_{\frac{r+1}{2}})$), the vectors $h_{\mathbf{a},\mathbf{b},c}$ with $(\mathbf{a},\mathbf{b},c)\in\Lambda_l\times\Lambda'_q\times\Lambda'_1$ form a basis of $U_\chi(\mathfrak{g}_\mathds{k},e)$, while from the definition of $\pi_B$ it follows that $\Lambda_{\bar h_{\mathbf{a},\mathbf{b},c}}^{\text{max}}=\{(\mathbf{a},\mathbf{b},c)\}$ for any $(\mathbf{a},\mathbf{b},c)\in\Lambda_l\times\Lambda'_q\times\Lambda'_1$.
\end{proof}

\begin{corollary}\label{rg}
There exist even elements $\theta_1,\cdots,\theta_l\in U_\chi(\mathfrak{g}_\mathds{k},e)_{\bar0}$ and odd elements $ \theta_{l+1},\cdots,\theta_{l+q}\in U_\chi(\mathfrak{g}_\mathds{k},e)_{\bar1}$ such that

(1)$$\theta_k(\bar 1_\chi)=(\bar x_k+\sum\limits_{\mbox{\tiny $\begin{array}{c}|\mathbf{a},\mathbf{b},\mathbf{c},\mathbf{d}|_e=m_k+2,\\|\mathbf{a}|
+|\mathbf{b}|+|\mathbf{c}|+|\mathbf{d}|\geqslant 2\end{array}$}}\lambda^k_{\mathbf{a},\mathbf{b},\mathbf{c},\mathbf{d}}\bar x^{\mathbf{a}}
\bar y^{\mathbf{b}}\bar u^{\mathbf{c}}\bar v^{\mathbf{d}}+\sum\limits_{|\mathbf{a},\mathbf{b},\mathbf{c},\mathbf{d}|_e<m_k+2}\lambda^k_{\mathbf{a},\mathbf{b},\mathbf{c},\mathbf{d}}\bar x^{\mathbf{a}}
\bar y^{\mathbf{b}}\bar u^{\mathbf{c}}\bar v^{\mathbf{d}})\otimes\bar 1_\chi,$$
where $\bar x_k\in\mathfrak{g}^e_\mathds{k}(m_k)_{\bar0}$ for $1\leqslant  k\leqslant  l$.

(2)$$\theta_{l+k}(\bar 1_\chi)=(\bar y_k+\sum\limits_{\mbox{\tiny $\begin{array}{c}|\mathbf{a},\mathbf{b},\mathbf{c},\mathbf{d}|_e=n_k+2,\\|\mathbf{a}|
+|\mathbf{b}|+|\mathbf{c}|+|\mathbf{d}|\geqslant 2\end{array}$}}\lambda^k_{\mathbf{a},\mathbf{b},\mathbf{c},\mathbf{d}}\bar x^{\mathbf{a}}
\bar y^{\mathbf{b}}\bar u^{\mathbf{c}}\bar v^{\mathbf{d}}+\sum\limits_{|\mathbf{a},\mathbf{b},\mathbf{c},\mathbf{d}|_e<n_k+2}\lambda^k_{\mathbf{a},\mathbf{b},\mathbf{c},\mathbf{d}}\bar x^{\mathbf{a}}
\bar y^{\mathbf{b}}\bar u^{\mathbf{c}}\bar v^{\mathbf{d}})\otimes\bar 1_\chi,$$
where $\bar y_k\in\mathfrak{g}^e_\mathds{k}(n_k)_{\bar1}$ for $1\leqslant  k\leqslant  q$.

(3) When dim~$\mathfrak{g}_\mathds{k}(-1)_{\bar{1}}$ is odd, there is an odd element $\theta_{l+q+1}\in U_\chi(\mathfrak{g}_\mathds{k},e)_{\bar1}$ such that$$\theta_{l+q+1}(\bar 1_\chi)=\bar v_{\frac{r+1}{2}}\otimes\bar 1_\chi.$$

All the coefficients $\lambda^k_{\mathbf{a},\mathbf{b},\mathbf{c},\mathbf{d}}\in\mathds{k}$. Moreover, $\lambda^k_{\mathbf{a},\mathbf{b},\mathbf{c},\mathbf{d}}=0$ if $(\mathbf{a},\mathbf{b},\mathbf{c},\mathbf{d})$ is such that $a_{l+1}=\cdots=a_m=b_{q+1}=\cdots=b_n=c_1=\cdots=c_s=d_1=\cdots=d_{\lceil\frac{r}{2}\rceil}=0$.
\end{corollary}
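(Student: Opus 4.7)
The plan is to produce each $\theta_k$ from the basis elements $\bar h_{\mathbf{a},\mathbf{b}}$ (and, in the odd case, $\bar h_{\mathbf{a},\mathbf{b},c}$) supplied by Proposition~\ref{reduced basis}, then apply a unitriangular correction that kills the $\mathfrak{g}^e_\mathds{k}$-pure off-diagonal entries. First, set $\theta_k^{(0)}:=\bar h_{\mathbf{e}_k,\mathbf{0}}$ for $1\leq k\leq l$, and $\theta_{l+k}^{(0)}:=\bar h_{\mathbf{0},\mathbf{e}_k}$ for $1\leq k\leq q$; when $\dim\mathfrak{g}_\mathds{k}(-1)_{\bar 1}$ is odd, set $\theta_{l+q+1}:=\bar h_{\mathbf{0},\mathbf{0},1}$. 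The defining property $\Lambda^{\max}_{\bar h_{\mathbf{e}_k,\mathbf{0}}}=\{(\mathbf{e}_k,\mathbf{0})\}$ combined with Lemma~\ref{hw} makes $\bar x_k\otimes\bar 1_\chi$ the unique top-weight monomial of $\theta_k^{(0)}(\bar 1_\chi)$ at its top Kazhdan degree $m_k+2$, and the identity~\eqref{dege} then forces every other same-Kazhdan-degree term to have standard degree at least $2$, matching the middle sum of the claim, while the lower-Kazhdan-degree remainder supplies the last sum.

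The claim for $\theta_{l+q+1}$ is already exact. Since $\bar v_{(r+1)/2}$ has Kazhdan degree $1$, the value $\theta_{l+q+1}(\bar 1_\chi)$ is concentrated in Kazhdan degrees at most $1$; at degree $1$ only single generators of $\mathfrak{g}_\mathds{k}(-1)$ can occur as basis monomials of $Q_\chi^\chi$, and Lemma~\ref{hw} together with the specification $\Lambda^{\max}=\{(\mathbf{0},\mathbf{0},1)\}$ singles out $\bar v_{(r+1)/2}$ as the only nonzero contribution; at degree $0$ only $\bar 1_\chi$ appears, which is even and so incompatible with the odd element $\theta_{l+q+1}$. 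Hence $\theta_{l+q+1}(\bar 1_\chi)=\bar v_{(r+1)/2}\otimes\bar 1_\chi$ exactly.

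For the remaining normalization of $\theta_k$ and $\theta_{l+k}$, let $\pi\colon Q_\chi^\chi\to U_\chi(\mathfrak{g}^e_\mathds{k})\otimes\bar 1_\chi$ be the linear projection onto the span of the $\mathfrak{g}^e_\mathds{k}$-pure monomials $\{\bar x^{\mathbf{a}}\bar y^{\mathbf{b}}\otimes\bar 1_\chi:(\mathbf{a},\mathbf{b})\in\Lambda_l\times\Lambda'_q\}$, and set $\Phi(\bar h):=\pi(\bar h(\bar 1_\chi))$. Proposition~\ref{reduced basis} together with Lemma~\ref{hw} gives $\Phi(\bar h_{\mathbf{a},\mathbf{b}})=\bar x^{\mathbf{a}}\bar y^{\mathbf{b}}\otimes\bar 1_\chi$ plus $\mathfrak{g}^e_\mathds{k}$-pure corrections of strictly lower Kazhdan degree or of the same Kazhdan degree but strictly greater standard degree. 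Ordering the $\mathfrak{g}^e_\mathds{k}$-pure monomials lexicographically by descending Kazhdan degree and ascending standard degree, $\Phi$ becomes unitriangular on the basis $\{\bar h_{\mathbf{a},\mathbf{b}}\}$, hence bijective, so there is a unique $\theta_k$ with $\Phi(\theta_k)=\bar x_k\otimes\bar 1_\chi$ (and analogously $\theta_{l+k}$ with image $\bar y_k\otimes\bar 1_\chi$). Since the difference $\theta_k-\theta_k^{(0)}$ is a linear combination of $\bar h_{\mathbf{a}',\mathbf{b}'}$ with $(\mathbf{a}',\mathbf{b}')\neq(\mathbf{e}_k,\mathbf{0})$, each of leading weight strictly less than $m_k$, the leading monomial $\bar x_k$ of $\theta_k(\bar 1_\chi)$ is preserved throughout the correction.

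The principal obstacle is verifying the unitriangularity of $\Phi$, i.e.\ showing that no $\mathfrak{g}^e_\mathds{k}$-pure contribution to $\bar h_{\mathbf{a},\mathbf{b}}(\bar 1_\chi)$ at the same Kazhdan degree $|\mathbf{a},\mathbf{b}|_e$ can share the standard degree of the leading $\bar x^{\mathbf{a}}\bar y^{\mathbf{b}}$. This is exactly the force of Lemma~\ref{hw}: the unique top-weight $\mathfrak{g}^e_\mathds{k}$-pure term at Kazhdan degree $|\mathbf{a},\mathbf{b}|_e$ is $\bar x^{\mathbf{a}}\bar y^{\mathbf{b}}$ itself, so any other same-Kazhdan-degree $\mathfrak{g}^e_\mathds{k}$-pure term has strictly smaller weight and, via~\eqref{dege}, strictly larger standard degree. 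Once unitriangularity is confirmed, the inversion of $\Phi$ is routine and delivers $\theta_k$ and $\theta_{l+k}$ with every asserted property.
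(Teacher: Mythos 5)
Your proof is correct and rests on the same ingredients (Proposition~\ref{reduced basis} and Lemma~\ref{hw}), but it divides the labor differently from the paper and makes precise a step the paper treats as immediate. The paper's proof asserts that existence follows at once from Proposition~\ref{reduced basis} and devotes its entire argument to the $\mathbb{Z}_2$-homogeneity of the generators, using the even isomorphism $\varphi$ of Proposition~\ref{invariant}. But the element $\bar h_{\mathbf{e}_k,\mathbf{0}}$ supplied by Proposition~\ref{reduced basis} only has $\Lambda^{\max}_{\bar h_{\mathbf{e}_k,\mathbf{0}}}=\{(\mathbf{e}_k,\mathbf{0})\}$, which pins down the top-Kazhdan-degree, top-weight monomial; it does not, by itself, forbid further $\mathfrak{g}^e_\mathds{k}$-pure monomials appearing in $\bar h_{\mathbf{e}_k,\mathbf{0}}(\bar 1_\chi)$ at lower Kazhdan degree or at degree $m_k+2$ with standard degree at least~$2$. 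Your composite map $\Phi=\pi\circ\varphi$ (projection onto the $\mathfrak{g}^e_\mathds{k}$-pure monomials) and the block-unitriangularity argument are exactly what eliminate these and deliver the asserted vanishing of $\lambda^k_{\mathbf{a},\mathbf{b},\mathbf{c},\mathbf{d}}$; this is the real content behind the paper's word ``immediate.'' Conversely, you leave $\mathbb{Z}_2$-homogeneity implicit, which is the part the paper proves explicitly. In your setup this comes for free: $\varphi$ is even by Proposition~\ref{invariant} and $\pi$ is an even projection, so $\Phi$ and hence $\Phi^{-1}$ preserve parity, giving $\theta_k=\Phi^{-1}(\bar x_k\otimes\bar 1_\chi)\in U_\chi(\mathfrak{g}_\mathds{k},e)_{\bar 0}$ and $\theta_{l+k}\in U_\chi(\mathfrak{g}_\mathds{k},e)_{\bar 1}$. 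Adding that remark would close the one small gap; with it, your proof establishes the corollary and is arguably cleaner than the paper's own writeup on the existence side.
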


\begin{proof}
The existence of all the elements in the corollary is an immediate consequence of Proposition~\ref{reduced basis}, and what remains to prove is the $\mathbb{Z}_2$-homogeneity for the elements in (1) and (2). In fact, it can be obtained by applying Proposition~\ref{invariant} directly. First note that each element $\theta\in U_\chi(\mathfrak{g}_\mathds{k},e)$ can be written as $\theta=\theta_{\bar0}+\theta_{\bar1}$ where $\theta_{\bar0}(\bar1_\chi)\in (Q_\chi^\chi)_{\bar0}$ and $\theta_{\bar1}(\bar1_\chi)\in (Q_\chi^\chi)_{\bar1}$. Since both the $\mathds{k}$-algebras $U_\chi(\mathfrak{g}_\mathds{k},e)$ and $(Q_\chi^\chi)^{\text{ad}\mathfrak{m}_\mathds{k}}$ are $\mathbb{Z}_2$-graded and the mapping $$\varphi:~U_\chi(\mathfrak{g}_\mathds{k},e)\overset{\sim}{\longrightarrow}(Q_\chi^\chi)^{\text{ad}\mathfrak{m}_\mathds{k}}$$ in Proposition~\ref{invariant} is even, it follows from $$\varphi(\theta)=\varphi(\theta_{\bar0})+\varphi(\theta_{\bar1})= \theta_{\bar0}(\bar 1_\chi)+\theta_{\bar1}(\bar 1_\chi)\in(Q_\chi^\chi)^{\text{ad}\mathfrak{m}_\mathds{k}}$$
that $\varphi(\theta_{\bar0})\in(Q_\chi^\chi)^{\text{ad}\mathfrak{m}_\mathds{k}}_{\bar0}$ and $\varphi( \theta_{\bar1})\in(Q_\chi^\chi)^{\text{ad}\mathfrak{m}_\mathds{k}}_{\bar1}$, i.e. $\theta_{\bar0}\in U_\chi(\mathfrak{g}_\mathds{k},e)_{\bar0}$ and $\theta_{\bar1}\in U_\chi(\mathfrak{g}_\mathds{k},e)_{\bar1}$.

Therefore, for any given  element $\theta\in U_\chi(\mathfrak{g}_\mathds{k},e)$, if the leading term of $\theta(\bar 1_\chi)$ (i.e. the term with the highest $e$-degree and whose weight also assumes its maximum value) is $\bar x_i$ (which is an element in $(\mathfrak{g}_\mathds{k})_{\bar0}$) for $1\leqslant  i\leqslant  l$, one can choose $\theta_{\bar0}$ as the desired element in $U_\chi(\mathfrak{g}_\mathds{k},e)_{\bar0}$. Similarly, if the leading term of $\theta(\bar 1_\chi)$ is $\bar y_i$ (which is an element in $(\mathfrak{g}_\mathds{k})_{\bar1}$) for $1\leqslant  i\leqslant  q$, one can choose $\theta_{ \bar1}$ as the desired element in $U_\chi(\mathfrak{g}_\mathds{k},e)_{\bar1}$.
\end{proof}

Recall that $\{\bar x_1,\cdots,\bar x_l\}$ and $\{\bar y_1,\cdots,\bar y_q\}$ are $\mathds{k}$-basis of $(\mathfrak{g}^e_\mathds{k})_{\bar{0}}$ and $(\mathfrak{g}^e_\mathds{k})_{\bar{1}}$, respectively. When dim~$\mathfrak{g}_\mathds{k}(-1)_{\bar{1}}$ is odd, one will see that the element  $\bar v_{\frac{r+1}{2}}\in\mathfrak{g}_\mathds{k}(-1)_{\bar{1}}\cap(\mathfrak{g}_\mathds{k}(-1)_{\bar{1}}^{\prime})^{\bot }$ plays the key role for the construction theory of $U_\chi(\mathfrak{g}_\mathds{k},e)$. Set
\[\bar Y_i:=\left\{
\begin{array}{ll}
\bar x_i&\text{if}~1\leqslant  i\leqslant  l;\\
\bar y_{i-l}&\text{if}~l+1\leqslant  i\leqslant  l+q;\\
\bar v_{\frac{r+1}{2}}&\text{if}~i=l+q+1.
\end{array}
\right.
\]

For any $1\leqslant  i\leqslant  l+q+1$, assume that $\bar Y_i\in\mathfrak{g}_\mathds{k}(m_i)$ is homogeneous, and the term $\bar Y_{l+q+1}$ occurs only in the case when dim~$\mathfrak{g}_\mathds{k}(-1)_{\bar{1}}$ is odd. Note that $\bar Y_i\in\mathfrak{g}_\mathds{k}^e$ for any $1\leqslant  i\leqslant  l+q$. Let $\bar{\theta}_i$ denote the image of $\theta_i\in U_\chi(\mathfrak{g}_\mathds{k},e)$ in $\text{gr}(U_\chi(\mathfrak{g}_\mathds{k},e))$.

The following theorem introduces the generators and their relations for the reduce $W$-superalgebra $U_\chi(\mathfrak{g}_\mathds{k},e)$ over positive characteristic field $\mathds{k}$, and also the PBW theorem.

\begin{theorem}\label{reduced Wg}
For any reduced $W$-superalgebra $U_\chi(\mathfrak{g}_\mathds{k},e)$, all the elements given in Corollary~\ref{commutative relations k1} can be chosen as a set of generators, and

(1) when dim~$\mathfrak{g}_\mathds{k}(-1)_{\bar{1}}$ is even,

(i) let $1\leqslant  i,j\leqslant  l+q$. Then
$$[\theta_i,\theta_j]=\theta_i\cdot\theta_j-(-1)^{|\theta_i||\theta_j|}\theta_j\cdot\theta_i=(-1)^{|\theta_i|\cdot|\theta_j|}\theta_j\circ\theta_i-\theta_i\circ\theta_j
\in H^{m_i+m_j+2}.$$

(ii) if the elements $\bar Y_i,~\bar Y_j\in \mathfrak{g}_\mathds{k}^e$ for $1\leqslant  i,j\leqslant  l+q$ satisfy $[\bar Y_i,\bar Y_j]=\sum\limits_{k=1}^{l+q}\alpha_{ij}^k\bar Y_k$ in $\mathfrak{g}_\mathds{k}^e$, then
\begin{equation}\label{thetacom}
[\theta_i,\theta_j]\equiv\sum\limits_{k=1}^{l+q}\alpha_{ij}^k\theta_k+q_{ij}(\theta_1,\cdots,\theta_{l+q})\qquad(\text{mod}~H^{m_i+m_j+1}),
\end{equation}where $q_{ij}$ is a truncated polynomial in $l+q$ variables whose constant term and linear part are both zero.

(iii) the monomials $\bar{\theta}_1^{a_1}\cdots\bar{\theta}_l^{a_l}\bar{\theta}_{l+1}^{b_1}\cdots\bar{\theta}_{l+q}^{b_q}$ and $\theta_1^{a_1}\cdots\theta_l^{a_l}\theta_{l+1}^{b_1}\cdots\theta_{l+q}^{b_q}$ form bases of $\text{gr}(U_\chi(\mathfrak{g}_\mathds{k},e))$ and $U_\chi(\mathfrak{g}_\mathds{k},e)$ respectively, where $0\leqslant  a_i\leqslant  p-1,~b_i\in\{0,1\}$.

(2) when dim~$\mathfrak{g}_\mathds{k}(-1)_{\bar{1}}$ is odd,

(i) let $1\leqslant  i,j\leqslant  l+q+1$. Then $$[\theta_i,\theta_j]=\theta_i\cdot\theta_j-(-1)^{|\theta_i||\theta_j|}\theta_j\cdot\theta_i=(-1)^{|\theta_i|\cdot|\theta_j|}\theta_j\circ\theta_i-\theta_i\circ\theta_j
\in H^{m_i+m_j+2}.$$

(ii) if the elements $\bar Y_i,~\bar Y_j\in \mathfrak{g}_\mathds{k}^e$ for $1\leqslant  i,j\leqslant  l+q$ satisfy $[\bar Y_i,\bar Y_j]=\sum\limits_{k=1}^{l+q}\alpha_{ij}^k\bar Y_k$ in $\mathfrak{g}_\mathds{k}^e$, then
\begin{equation}\label{thetacom2}
[\theta_i,\theta_j]\equiv\sum\limits_{k=1}^{l+q}\alpha_{ij}^k\theta_k+q_{ij}(\theta_1,\cdots,\theta_{l+q+1})\qquad(\text{mod}~H^{m_i+m_j+1}),
\end{equation}where $q_{ij}$ is a truncated polynomial in $l+q+1$ variables whose constant term and linear part are both zero. For the case $i=j=l+q+1$, we have $[\theta_{l+q+1},\theta_{l+q+1}]=\text{id}$.

(iii) the monomials $\bar{\theta}_1^{a_1}\cdots\bar{\theta}_l^{a_l}\bar{\theta}_{l+1}^{b_1}\cdots\bar{\theta}_{l+q}^{b_q}\bar{\theta}_{l+q+1}^{c}$ and $\theta_1^{a_1}\cdots\theta_l^{a_l}\theta_{l+1}^{b_1}\cdots\theta_{l+q}^{b_q}\theta_{l+q+1}^{c}$ form bases of $\text{gr}(U_\chi(\mathfrak{g}_\mathds{k},e))$ and $U_\chi(\mathfrak{g}_\mathds{k},e)$ respectively, where $0\leqslant  a_i\leqslant  p-1,~b_i,c\in\{0,1\}$.
\end{theorem}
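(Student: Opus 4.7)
The plan is to prove (iii) first, since its PBW basis is the tool needed to produce the polynomials $q_{ij}$ in (ii); then to establish (i) and (ii) by leading-term analysis using Lemmas~\ref{commutative relations k1}--\ref{commutative relations k2}. The two cases (even vs.\ odd $\dim\mathfrak{g}_\mathds{k}(-1)_{\bar 1}$) are handled uniformly, with the odd case simply carrying an additional generator $\theta_{l+q+1}$ of leading term $\bar v_{(r+1)/2}$.

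\emph{Step 1 (PBW basis, part (iii)).} By Corollary~\ref{rg}, the Kazhdan-leading term of $\theta_i(\bar 1_\chi)$ is $\bar Y_i\otimes\bar 1_\chi$. Iterating Lemma~\ref{commutative relations k2}, the monomial $\theta_1^{a_1}\cdots\theta_l^{a_l}\theta_{l+1}^{b_1}\cdots\theta_{l+q}^{b_q}$ (with a further factor $\theta_{l+q+1}^{c}$ in the odd case) evaluates on $\bar 1_\chi$ to $\pm\bar x^{\mathbf{a}}\bar y^{\mathbf{b}}\otimes\bar 1_\chi$ (resp.\ times $\bar v_{(r+1)/2}^{c}$) modulo monomials of strictly lower $e$-degree. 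These top labels are pairwise distinct, so the monomials are linearly independent, and their cardinality matches $\dim U_\chi(\mathfrak{g}_\mathds{k},e)$ by Proposition~\ref{reduced basis}. Passing to leading-term images in the associated graded algebra yields the corresponding basis of $\text{gr}\,U_\chi(\mathfrak{g}_\mathds{k},e)$.

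\emph{Step 2 (filtration bound, part (i)).} Corollary~\ref{rg} places $\theta_i$ in $H^{m_i+2}$, so $\theta_i\theta_j\in H^{m_i+m_j+4}$. By Lemma~\ref{commutative relations k2}, the degree-$(m_i+m_j+4)$ components of $(\theta_i\theta_j)(\bar 1_\chi)$ and $(-1)^{|\theta_i||\theta_j|}(\theta_j\theta_i)(\bar 1_\chi)$ are represented in $\text{gr}\,U_\chi(\mathfrak{g}_\mathds{k})$ by the same supersymmetric monomial with matching sign, so they cancel in the super-bracket. A parity check on $e$-degrees, based on~\eqref{dege} (all monomials in a given Kazhdan slice of the PBW basis have the same parity modulo $2$), absorbs one further unit of filtration, yielding $[\theta_i,\theta_j]\in H^{m_i+m_j+2}$.

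\emph{Step 3 (commutator formula, part (ii)).} Extracting the $e$-degree $(m_i+m_j+2)$ component of $(\theta_i\theta_j-(-1)^{|\theta_i||\theta_j|}\theta_j\theta_i)(\bar 1_\chi)$ via Lemma~\ref{commutative relations k1}, the only surviving ``first subleading'' contribution comes from the super-commutator $[\bar Y_i,\bar Y_j]\otimes\bar 1_\chi=\sum_k\alpha_{ij}^k\bar Y_k\otimes\bar 1_\chi$, which matches the leading term of $\sum_k\alpha_{ij}^k\theta_k(\bar 1_\chi)$. Therefore $[\theta_i,\theta_j]-\sum_k\alpha_{ij}^k\theta_k$ has all $\Lambda^{\max}$-labels of total multi-degree $\geq 2$ within $H^{m_i+m_j+2}$; by Step~1 it can be expanded as a polynomial $q_{ij}$ in the $\theta_k$'s without constant or linear term, giving~\eqref{thetacom}--\eqref{thetacom2}. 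In the odd case the special relation $[\theta_{l+q+1},\theta_{l+q+1}]=\mathrm{id}$ is a direct computation: since $\langle v_{(r+1)/2},v_{(r+1)/2}\rangle=1$ by the normalization of the symmetric form on $\mathfrak{g}_\mathds{k}(-1)_{\bar 1}$,
\[
\theta_{l+q+1}^2(\bar 1_\chi)=\bar v_{(r+1)/2}^{\,2}\otimes\bar 1_\chi=\tfrac{1}{2}\chi([\bar v_{(r+1)/2},\bar v_{(r+1)/2}])\bar 1_\chi=\tfrac{1}{2}\bar 1_\chi,
\]
so $[\theta_{l+q+1},\theta_{l+q+1}]=2\theta_{l+q+1}^2=\mathrm{id}$.

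\emph{Main obstacle.} The delicate point is Step~3: coordinating the Kazhdan filtration with the $\text{ad}\,h$-weight grading. Lower-order error terms produced by Lemma~\ref{commutative relations k1} carry various weights, and one must ensure that the remainder $[\theta_i,\theta_j]-\sum_k\alpha_{ij}^k\theta_k$ remains inside the $\theta$-subalgebra of $U_\chi(\mathfrak{g}_\mathds{k},e)$ (not merely inside $U_\chi(\mathfrak{g}_\mathds{k})$), so that its PBW re-expansion gives a genuine polynomial in the generators. This is exactly where the precise description of $\Lambda^{\max}$ in Proposition~\ref{reduced basis} and the $\text{ad}\,\mathfrak{m}_\mathds{k}$-invariance (via Proposition~\ref{invariant}) are essential.
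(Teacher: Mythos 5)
Your plan follows the same route as the paper: build the PBW basis from Proposition~\ref{reduced basis} and Corollary~\ref{rg}, bound the filtration of the super-commutators via Lemma~\ref{commutative relations k2}, and peel off the $\bar Y$-linear term to obtain $q_{ij}$. Two points are off, though neither is fatal. First, in Step~2 there is no one-unit parity drop to perform: Lemma~\ref{commutative relations k2} already places the correction terms at $e$-degree $\leqslant A+B-2$, so once the top terms cancel in the super-bracket you land directly in $H^{m_i+m_j+2}$; there is no intermediate degree-$(m_i{+}m_j{+}3)$ stage to argue away. Second, the ``main obstacle'' you flag is not the genuine one: $[\theta_i,\theta_j]-\sum_k\alpha^k_{ij}\theta_k$ lies in $U_\chi(\mathfrak{g}_\mathds{k},e)$ automatically, since that algebra is closed under super-brackets, so the PBW re-expansion is never in doubt. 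The actual delicate point, which your Step~3 glosses over, is showing that the linear part of the resulting truncated polynomial involves only $\theta_k$ of weight $m_k < m_i+m_j$ (so that it falls into $H^{m_i+m_j+1}$). The paper does this by computing $\pi_{m_i+m_j+2,\,m_i+m_j}([\theta_i,\theta_j]-\sum_k\alpha^k_{ij}\theta_k)=0$ and combining it with the filtration bound from part (i), and you should spell out an equivalent argument. Finally, your Step~1 should state explicitly the identity \eqref{1in}, i.e.\ $\theta_1^{a_1}\cdots\theta_{l+q+1}^{c}(\bar1_\chi)=\theta_1^{a_1}(\bar1_\chi)\cdots\theta_{l+q+1}^{c}(\bar1_\chi)$, which translates composition in $(\mathrm{End}_{\mathfrak{g}_\mathds{k}}Q_\chi^\chi)^{\mathrm{op}}$ into multiplication in $Q_\chi^\chi$; it is the mechanism that makes ``iterating Lemma~\ref{commutative relations k2}'' meaningful.
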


\begin{proof}

Since the proof for both cases is similar, we will just consider part (2) when dim~$\mathfrak{g}_\mathds{k}(-1)_{\bar{1}}$ is odd.

(1) Recall that $\theta_1,\cdots,\theta_l\in U_\chi(\mathfrak{g}_\mathds{k},e)_{\bar0}$ and $\theta_{l+1},\cdots,\theta_{l+q+1}\in U_\chi(\mathfrak{g}_\mathds{k},e)_{\bar1}$ by Corollary~\ref{commutative relations k1}. As $b_i,c\in\{0,1\}$ for $1\leqslant i\leqslant q$, it follows from the definition of opposite algebra that
\[\begin{array}{cl}
&\theta_1^{a_1}\cdots\theta_l^{a_l}\theta_{l+1}^{b_1}\cdots\theta_{l+q}^{b_q}\theta_{l+q+1}^{c}\\
=&
(-1)^{c(b_1+\cdots+b_q)}\theta_{l+q+1}^{c}\circ(\theta_1^{a_1}\cdots\theta_l^{a_l}\theta_{l+1}^{b_1}\cdots\theta_{l+q}^{b_q})=\cdots\cdots\\
=&(-1)^{\sum\limits_{1\leqslant  i<j\leqslant  q}b_ib_j+c\sum\limits_{i=1}^{q}b_i}\theta_{l+q+1}^{c}\circ\theta_{l+q}^{b_q}\circ\cdots\circ\theta_{l+1}^{b_1}\circ\theta_{l}^{a_l}\circ\cdots\circ\theta_{1}^{a_1},
\end{array}\]
and\[\begin{array}{cl}
&\theta_{l+q+1}^{c}\circ\theta_{l+q}^{b_q}\circ\cdots\circ\theta_{l+1}^{b_1}\circ\theta_{l}^{a_l}\circ\cdots\circ\theta_{2}^{a_2}(\theta_{1}^{a_1}(\bar 1_\chi))\\
=&\theta_{1}^{a_1}(\bar 1_\chi)\cdot(\theta_{l+q+1}^{c}\circ\theta_{l+q}^{b_q}\circ\cdots\circ\theta_{l+1}^{b_1}\circ\theta_{l}^{a_l}\circ\cdots\circ\theta_{2}^{a_2}(\bar 1_\chi))\\
=&\theta_{1}^{a_1}(\bar 1_\chi)\cdots\theta_{l}^{a_l}(\bar 1_\chi)\cdot(\theta_{l+q+1}^{c}\circ\theta_{l+q}^{b_q}\circ\cdots\circ\theta_{l+1}^{b_1}(\bar 1_\chi))\\
=&(-1)^{b_1(b_2+\cdots+b_n+c)}\theta_{1}^{a_1}(\bar 1_\chi)\cdots\theta_{l}^{a_l}(\bar 1_\chi)\theta_{l+1}^{b_1}(\bar 1_\chi)\cdot(\theta_{l+q+1}^{c}\circ\theta_{l+q}^{b_q}\circ\cdots\circ\theta_{l+2}^{b_2}(\bar 1_\chi))\\
=&\cdots\cdots\\
=&(-1)^{\sum\limits_{1\leqslant  i<j\leqslant  q}b_ib_j+c\sum\limits_{i=1}^{q}b_i}\theta_{1}^{a_1}(\bar 1_\chi)\cdots\theta_{l}^{a_l}(\bar 1_\chi)\theta_{l+1}^{b_1}(\bar 1_\chi)\cdots\theta_{l+q}^{b_q}(\bar 1_\chi)\theta_{l+q+1}^{c}(\bar 1_\chi),
\end{array}\]
so \begin{equation}\label{1in}
\begin{split}
\theta_1^{a_1}\cdots\theta_l^{a_l}\theta_{l+1}^{b_1}\cdots\theta_{l+q}^{b_q}\theta_{l+q+1}^{c}(\bar 1_\chi)=&\theta_{1}^{a_1}(\bar 1_\chi)\cdots\theta_{l}^{a_l}(\bar 1_\chi)\theta_{l+1}^{b_1}(\bar 1_\chi)\cdots\\
&\theta_{l+q}^{b_q}(\bar 1_\chi)\theta_{l+q+1}^{c}(\bar 1_\chi).
\end{split}\end{equation}

Induction on $|\mathbf{a}|+|\mathbf{b}|+|c|$ shows that
\[\begin{array}{cl}
&\theta_{1}^{a_1}(\bar 1_\chi)\cdots\theta_{l}^{a_l}(\bar 1_\chi)\theta_{l+1}^{b_1}(\bar 1_\chi)\cdots\theta_{l+q}^{b_q}(\bar 1_\chi)\theta_{l+q+1}^{c}(\bar 1_\chi)\\
=&(\bar Y_1^{a_1}\cdots \bar Y_l^{a_l}\bar Y_{l+1}^{b_1}\cdots \bar Y_{l+q}^{b_q}\bar Y_{l+q+1}^c+\sum\limits_{\mbox{\tiny $\begin{array}{c}|(\mathbf{i},\mathbf{j},\mathbf{f},\mathbf{g})|_e=|(\mathbf{a},\mathbf{b},\mathbf{0},\mathbf{e}_{\frac{r+1}{2}})|_e,\\|\mathbf{i}|+|\mathbf{j}|+|\mathbf{f}|+|\mathbf{g}|>|\mathbf{a}|+|\mathbf{b}|+1\end{array}$}}
\lambda^{\mathbf{a},\mathbf{b},\mathbf{0},\mathbf{e}_{\frac{r+1}{2}}}_{\mathbf{i},\mathbf{j},\mathbf{f},\mathbf{g}}\bar x^{\mathbf{i}}
\bar y^{\mathbf{j}}\bar u^{\mathbf{f}}\bar v^{\mathbf{g}}\\
&+\text{terms of lower}~e\text{-degree})\otimes\bar 1_\chi
\end{array}\]for any $(\mathbf{a},\mathbf{b},c)=(a_1,\cdots,a_l,b_1,\cdots,b_q,c)\in\Lambda_l\times\Lambda'_q\times\Lambda'_1$ (the induction step is based on Corollary~\ref{rg} and Lemma~\ref{commutative relations k2}). Due to Proposition~\ref{reduced basis} we have that
\begin{equation}\label{thetamu}
\theta_1^{a_1}\cdots\theta_{l+q+1}^c=\mu_{\mathbf{a},\mathbf{b},c}\bar h_{\mathbf{a},\mathbf{b},c}+\sum
\limits_{(\mathbf{i},\mathbf{j},k)\in\Lambda_l\times\Lambda'_q\times\Lambda'_1}\mu_{\mathbf{i},\mathbf{j},k}
\bar h_{\mathbf{i},\mathbf{j},k},\qquad\mu_{\mathbf{a},\mathbf{b},c}\neq0,
\end{equation}where $\mu_{\mathbf{i},\mathbf{j},k}=0$ unless $(n(\bar h_{\mathbf{i},\mathbf{j},k}),N(\bar h_{\mathbf{i},\mathbf{j},k}))\prec (n(\bar h_{\mathbf{a},\mathbf{b},c}),N(\bar h_{\mathbf{a},\mathbf{b},c}))$.

Since this establishes for any $(\mathbf{a},\mathbf{b},c)\in\Lambda_l\times\Lambda'_q\times\Lambda'_1$, the monomials $\theta_1^{a_1}\cdots\theta_{l+q+1}^c$ with $(\mathbf{a},\mathbf{b},c)=(a_1,\cdots,a_l,b_1,\cdots,b_q,c)\in\Lambda_l\times\Lambda'_q\times\Lambda'_1$ form a basis of $U_\chi(\mathfrak{g}_\mathds{k},e)$. It follows from \eqref{thetamu} and the proof of Proposition~\ref{reduced basis} that for any $M\geqslant 0$, the cosets $\bar h_{\mathbf{i},\mathbf{j},k}+H^{M-1}$ with $(\mathbf{i},\mathbf{j},k)\in\Lambda_l\times\Lambda'_q\times\Lambda'_1$ and $\sum\limits_{1\leqslant f\leqslant  l}i_f(m_f+2)+\sum\limits_{1\leqslant g\leqslant  q}j_g(m_{g+l}+2)+k=M$ (where $m_f's,~m_{g+l}'s$ are the weights of $\bar Y_f's$ for $1\leqslant f\leqslant l$ and  $\bar Y_{g+l}'s$ for $1\leqslant g\leqslant q$, respectively) form a basis of $\text{gr}(U_\chi(\mathfrak{g}_\mathds{k},e))$. Due to \eqref{thetamu} and Lemma~\ref{commutative relations k2} the cosets $\theta_1^{i_1}\cdots\theta_l^{i_l}\theta_{l+1}^{j_1}\cdots\theta_{l+q}^{j_q}\theta_{l+q+1}^{k}+H^{M-1}$ with $(\mathbf{i},\mathbf{j},k)\in\Lambda_l\times\Lambda'_q\times\Lambda'_1$ and $\sum\limits_{1\leqslant f\leqslant  l}i_f(m_f+2)+\sum\limits_{1\leqslant g\leqslant  q}j_g(m_{g+l}+2)+k=M$ have the same property. To complete the proof of part (2)(iii) it remains to note that
$\bar{\theta}_1^{i_1}\cdots\bar{\theta}_l^{i_l}\bar{\theta}_{l+1}^{j_1}\cdots\bar{\theta}_{l+q}^{j_q}\bar{\theta}_{l+q+1}^{k}= \theta_1^{i_1}\cdots\theta_l^{i_l}\theta_{l+1}^{j_1}\cdots\theta_{l+q}^{j_q}\theta_{l+q+1}^{k}+H^{M-1}$ for any $(\mathbf{i},\mathbf{j},k)\in\Lambda_l\times\Lambda'_q\times\Lambda'_1$  with $\sum\limits_{1\leqslant f\leqslant  l}i_f(m_f+2)+\sum\limits_{1\leqslant g\leqslant  q}j_g(m_{g+l}+2)+k=M$.

(2) Given any $\mathbb{Z}_2$-homogeneous elements $\theta_i,\theta_j\in U_\chi(\mathfrak{g}_\mathds{k},e)$ for $1\leqslant i,j\leqslant l+q$ (note that $i,j\neq l+q+1$), if either $\theta_i$ or $\theta_j$ is even, then $[\theta_i,\theta_j]=\theta_i\cdot\theta_j-\theta_j\cdot\theta_i$; if both $\theta_i$ and $\theta_j$ are odd, then $[\theta_i,\theta_j]=\theta_i\cdot\theta_j+\theta_j\cdot\theta_i$. By Corollary~\ref{rg} and Lemma~\ref{commutative relations k2} one can deduce that
$[\theta_i,\theta_j]\in H^{m_i+m_j+2}$. It is immediate from \eqref{1in} that $$[\theta_i,\theta_j](\bar 1_\chi)=\theta_i(\bar 1_\chi)\theta_j(\bar 1_\chi)-(-1)^{|\theta_i||\theta_j|}\theta_j(\bar 1_\chi)\theta_i(\bar 1_\chi).$$

As in the proof of Lemma~\ref{commutative relations k2}, induction on $|\mathbf{d}|$ yields
\[\begin{array}{ll}&(\rho_\chi(\bar v^\mathbf{d}))(\bar x^{\mathbf{a}'}\bar y^{\mathbf{b}'}\bar u^{\mathbf{c}'}\bar v^{\mathbf{d}'}\otimes\bar 1_\chi)\\
=&(K'\bar x^{\mathbf{a}'}\bar y^{\mathbf{b}'}\bar u^{\mathbf{c}'}\bar v^{\mathbf{d}+\mathbf{d}'}+
\sum\limits_{i,j}\omega_{i,j}\bar x^{\mathbf{a}'-\mathbf{e}_i}\bar y^{\mathbf{b}'}\rho_\chi([\bar v_j,\bar x_i])\bar u^{\mathbf{c}'}\bar v^{\mathbf{d}+\mathbf{d}'-\mathbf{e}_j}\\
&+\sum\limits_{i,j}\nu_{i,j}\bar x^{\mathbf{a}'}\bar y^{\mathbf{b}'-\mathbf{e}_i}\rho_\chi([\bar v_j,\bar y_i])\bar u^{\mathbf{c}'}\bar v^{\mathbf{d}+\mathbf{d}'-\mathbf{e}_j}+\text{terms of}~e\text{-degree}\leqslant |(\mathbf{a}',\mathbf{b}',\mathbf{c}',\mathbf{d}+\mathbf{d}')|_e\\
&-3)\otimes\bar 1_\chi.
\end{array}\] Similar induction on $|\mathbf{c}|$ yields
\[\begin{array}{ll}&(\rho_\chi(\bar u^{\mathbf{c}}\bar v^\mathbf{d}))(\bar x^{\mathbf{a}'}\bar y^{\mathbf{b}'}\bar u^{\mathbf{c}'}\bar v^{\mathbf{d}'}\otimes\bar 1_\chi)\\
=&(K'\bar x^{\mathbf{a}'}\bar y^{\mathbf{b}'}\bar u^{\mathbf{c}+\mathbf{c}'}\bar v^{\mathbf{d}+\mathbf{d}'}+
\sum\limits_{i,j}\kappa_{i,j}\bar x^{\mathbf{a}'-\mathbf{e}_i}\bar y^{\mathbf{b}'}\rho_\chi([\bar u_j,\bar x_i])\bar u^{\mathbf{c}+\mathbf{c}'-\mathbf{e}_j}\bar v^{\mathbf{d}+\mathbf{d}'}\\
&+\sum\limits_{i,j}\lambda_{i,j}\bar x^{\mathbf{a}'}\bar y^{\mathbf{b}'-\mathbf{e}_i}\rho_\chi([\bar u_j,\bar y_i])\bar u^{\mathbf{c}+\mathbf{c}'-\mathbf{e}_j}\bar v^{\mathbf{d}+\mathbf{d}'}+
\sum\limits_{i,j}\omega_{i,j}\bar x^{\mathbf{a}'-\mathbf{e}_i}\bar y^{\mathbf{b}'}\rho_\chi([\bar v_j,\bar x_i])\bar u^{\mathbf{c}+\mathbf{c}'}\bar v^{\mathbf{d}+\mathbf{d}'-\mathbf{e}_j}\\
&+\sum\limits_{i,j}\nu_{i,j}\bar x^{\mathbf{a}'}\bar y^{\mathbf{b}'-\mathbf{e}_i}\rho_\chi([\bar v_j,\bar y_i])\bar u^{\mathbf{c}+\mathbf{c}'}\bar v^{\mathbf{d}+\mathbf{d}'-\mathbf{e}_j}+\text{terms of}~e\text{-degree}\leqslant |(\mathbf{a}',\mathbf{b}',\mathbf{c}+\mathbf{c}',\\&\mathbf{d}+\mathbf{d}')|_e-3)\otimes\bar 1_\chi.
\end{array}\]
It follows that
\[\begin{array}{ll}&(\rho_\chi(\bar y^\mathbf{b}\bar u^\mathbf{c}\bar v^\mathbf{d}))(\bar x^{\mathbf{a}'}\bar y^{\mathbf{b}'}\bar u^{\mathbf{c}'}\bar v^{\mathbf{d}'}\otimes\bar 1_\chi)\\
=&(K\bar x^{\mathbf{a}'}\bar y^{\mathbf{b}+\mathbf{b}'}\bar u^{\mathbf{c}+\mathbf{c}'}\bar v^{\mathbf{d}+\mathbf{d}'}
+\sum\limits_{i,j}\gamma_{i,j}\bar x^{\mathbf{a}'-\mathbf{e}_i}\bar y^{\mathbf{b}+\mathbf{b}'-\mathbf{e}_j}\rho_\chi([\bar x_i,\bar y_j])\bar u^{\mathbf{c}+\mathbf{c}'}\bar v^{\mathbf{d}+\mathbf{d}'}\\
&+\sum\limits_{i<j}\iota_{i,j}\bar x^{\mathbf{a}'}\bar y^{\mathbf{b}+\mathbf{b}'-\mathbf{e}_i-\mathbf{e}_j}\rho_\chi([\bar y_i,\bar y_j])\bar u^{\mathbf{c}+\mathbf{c}'}\bar v^{\mathbf{d}+\mathbf{d}'}
+\sum\limits_{i,j}\kappa'_{i,j}\bar x^{\mathbf{a}'-\mathbf{e}_i}\bar y^{\mathbf{b}+\mathbf{b}'}\rho_\chi([\bar u_j,\bar x_i])\bar u^{\mathbf{c}+\mathbf{c}'-\mathbf{e}_j}\cdot\\&\bar v^{\mathbf{d}+\mathbf{d}'}
+\sum\limits_{i,j}\lambda'_{i,j}\bar x^{\mathbf{a}'}\bar y^{\mathbf{b}+\mathbf{b}'-\mathbf{e}_i}\rho_\chi([\bar u_j,\bar y_i])\bar u^{\mathbf{c}+\mathbf{c}'-\mathbf{e}_j}\bar v^{\mathbf{d}+\mathbf{d}'}
+\sum\limits_{i,j}\omega'_{i,j}\bar x^{\mathbf{a}'-\mathbf{e}_i}\bar y^{\mathbf{b}+\mathbf{b}'}\rho_\chi([\bar v_j,\bar x_i])\cdot\\&\bar u^{\mathbf{c}+\mathbf{c}'}\bar v^{\mathbf{d}+\mathbf{d}'-\mathbf{e}_j}+\sum\limits_{i,j}
\nu'_{i,j}\bar x^{\mathbf{a}'}\bar y^{\mathbf{b}+\mathbf{b}'-\mathbf{e}_i}\rho_\chi([\bar v_j,\bar y_i])\bar u^{\mathbf{c}+\mathbf{c}'}\bar v^{\mathbf{d}+\mathbf{d}'-\mathbf{e}_j}
+\text{terms of}~e\text{-degree}\leqslant\\&|(\mathbf{a}',\mathbf{b}+\mathbf{b}',\mathbf{c}+\mathbf{c}',\mathbf{d}+\mathbf{d}')|_e-3)\otimes\bar 1_\chi,
\end{array}\] and
\[\begin{array}{ll}&(\rho_\chi(\bar x^{\mathbf{a}}\bar y^\mathbf{b}\bar u^\mathbf{c}\bar v^\mathbf{d}))(\bar x^{\mathbf{a}'}\bar y^{\mathbf{b}'}\bar u^{\mathbf{c}'}\bar v^{\mathbf{d}'}\otimes\bar 1_\chi)\\
=&(K\bar x^{\mathbf{a}+\mathbf{a}'}\bar y^{\mathbf{b}+\mathbf{b}'}\bar u^{\mathbf{c}+\mathbf{c}'}\bar v^{\mathbf{d}+\mathbf{d}'}
+\sum\limits_{i<j}\alpha_{i,j}\bar x^{\mathbf{a}+\mathbf{a}'-\mathbf{e}_i-\mathbf{e}_j}\bar y^{\mathbf{b}+\mathbf{b}'}\rho_\chi([\bar x_i,\bar x_j])\bar u^{\mathbf{c}+\mathbf{c}'}\bar v^{\mathbf{d}+\mathbf{d}'}
+\sum\limits_{i,j}\gamma_{i,j}\cdot\\&\bar x^{\mathbf{a}+\mathbf{a}'-\mathbf{e}_i}\bar y^{\mathbf{b}+\mathbf{b}'-\mathbf{e}_j}\rho_\chi([\bar x_i,\bar y_j])\bar u^{\mathbf{c}+\mathbf{c}'}\bar v^{\mathbf{d}+\mathbf{d}'}
+\sum\limits_{i<j}\iota_{i,j}\bar x^{\mathbf{a}+\mathbf{a}'}\bar y^{\mathbf{b}+\mathbf{b}'-\mathbf{e}_i-\mathbf{e}_j}\rho_\chi([\bar y_i,\bar y_j]) \bar u^{\mathbf{c}+\mathbf{c}'}\cdot\\& \bar v^{\mathbf{d}+\mathbf{d}'}+\sum\limits_{i,j}\kappa'_{i,j}\bar x^{\mathbf{a}+\mathbf{a}'-\mathbf{e}_i}\bar y^{\mathbf{b}+\mathbf{b}'}\rho_\chi([\bar u_j,\bar x_i])\bar u^{\mathbf{c}+\mathbf{c}'-\mathbf{e}_j}\bar v^{\mathbf{d}+\mathbf{d}'}
+\sum\limits_{i,j}\lambda'_{i,j}\bar x^{\mathbf{a}+\mathbf{a}'}\bar y^{\mathbf{b}+\mathbf{b}'-\mathbf{e}_i}\cdot\\&\rho_\chi([\bar u_j,\bar y_i])\bar u^{\mathbf{c}+\mathbf{c}'-\mathbf{e}_j}\bar v^{\mathbf{d}+\mathbf{d}'}+\sum\limits_{i,j}\omega'_{i,j}\bar x^{\mathbf{a}+\mathbf{a}'-\mathbf{e}_i}\bar y^{\mathbf{b}+\mathbf{b}'}\rho_\chi([\bar v_j,\bar x_i])\bar u^{\mathbf{c}+\mathbf{c}'}\bar v^{\mathbf{d}+\mathbf{d}'-\mathbf{e}_j}
+\sum\limits_{i,j}\nu'_{i,j}\cdot\\&\bar x^{\mathbf{a}+\mathbf{a}'}\bar y^{\mathbf{b}+\mathbf{b}'-\mathbf{e}_i}\rho_\chi([\bar v_j,\bar y_i])\bar u^{\mathbf{c}+\mathbf{c}'}\bar v^{\mathbf{d}+\mathbf{d}'-\mathbf{e}_j}+\text{terms of}~e\text{-degree}\leqslant|(\mathbf{a}+\mathbf{a}',\mathbf{b}+\mathbf{b}',\mathbf{c}+\mathbf{c}',\\&\mathbf{d}+\mathbf{d}')|_e-3)\otimes\bar 1_\chi.
\end{array}\]
Together with Corollary~\ref{rg} this shows that
\[\begin{array}{cl}
&\theta_i(1_\chi)\theta_j(\bar 1_\chi)-(-1)^{|\theta_i||\theta_j|}\theta_j(\bar 1_\chi)\theta_i(\bar 1_\chi)\\
=&([\bar Y_i,\bar Y_j]+\sum\limits_{\mbox{\tiny $\begin{array}{c}|\mathbf{i},\mathbf{j},\mathbf{f},\mathbf{g}|_e=m_i+m_j+2,\\|\mathbf{i}|+|\mathbf{j}|+|\mathbf{f}|+|\mathbf{g}|\geqslant 2\end{array}$}}\mu_{\mathbf{i},\mathbf{j},\mathbf{f},\mathbf{g}}\bar x^{\mathbf{i}}
\bar y^{\mathbf{j}}\bar u^{\mathbf{f}}\bar v^{\mathbf{g}}+\sum\limits_{|(\mathbf{i},\mathbf{j},\mathbf{f},\mathbf{g})|_e<m_i+m_j+2}\mu_{\mathbf{i},\mathbf{j},\mathbf{f},\mathbf{g}}\bar x^{\mathbf{i}}
\bar y^{\mathbf{j}}\bar u^{\mathbf{f}}\bar v^{\mathbf{g}})\\ &\otimes\bar 1_\chi,
\end{array}\]
where $\mu_{\mathbf{i},\mathbf{j},\mathbf{f},\mathbf{g}}\in\mathds{k}$. As a consequence, $\pi_{m_i+m_j+2,m_i+m_j}([\theta_i,\theta_j]-\sum\limits_{k=1}^{l+q}\alpha_{ij}^k\theta_k)=0$. On the other hand, part (1) of this proof shows that there exists a unique truncated polynomial $\tilde{q}_{ij}$ in $\bar Y_1,\cdots,\bar Y_{l+q+1}$ such that $$[\theta_i,\theta_j]-\sum\limits_{k=1}^{l+q}\alpha_{ij}^k\theta_k
=\tilde{q}_{ij}(\theta_1,\cdots,\theta_{l+q+1}).$$Moreover, by the preceding remark it is obvious that the linear part of $\tilde{q}_{ij}$ involves only those $\bar Y_1,\cdots,\bar Y_{l+q+1}$ whose weights $<m_i+m_j$. Hence there exists a truncated polynomial $q_{ij}$ in $l+q+1$ variables with initial form of degree at least $2$ such that
$$[\theta_i,\theta_j]-\sum\limits_{k=1}^{l+q}\alpha_{ij}^k\theta_k-q_{ij}(\theta_1,\cdots,\theta_{l+q+1})\in H^{m_i+m_j+1}.$$

It is immediate from the assumption in Section 3.1 that $[\bar v_{\frac{r+1}{2}},\bar v_{\frac{r+1}{2}}]\otimes\bar 1_\chi= \langle\bar  v_{\frac{r+1}{2}},\bar v_{\frac{r+1}{2}}\rangle\otimes\bar 1_\chi=1\otimes\bar 1_\chi$. Then by \eqref{1in} we have $$[\theta_{l+q+1},\theta_{l+q+1}](\bar 1_\chi)=2(\theta_{l+q+1}(\bar 1_\chi))^2=2\bar v_{\frac{r+1}{2}}^2\otimes\bar 1_\chi=[\bar v_{\frac{r+1}{2}},\bar v_{\frac{r+1}{2}}]\otimes\bar 1_\chi=1\otimes\bar 1_\chi,$$
i.e. $[\theta_{l+q+1},\theta_{l+q+1}]=\text{id}$. All these complete the proof.
\end{proof}

Compared with the Lie algebra case, the results obtained in part (1) of Theorem~\ref{reduced Wg} are similar to the ones on the reduced $W$-algebras over positive characteristic field $\mathds{k}$ (see [\cite{P2}], Section 3). However, part (2) is a new case. Compared with the reduced $W$-algebra case, the structure of reduced $W$-superalgebras has greatly changed. Therefore, the parity of $\text{dim}~\mathfrak{g}_\mathds{k}(-1)_{\bar1}$ plays the key role for the construction of reduced $W$-superalgebras, which makes the structure and representation theory of reduced $W$-superalgebras distinguish from that of reduced $W$-algebras.

\begin{rem}\label{redun}
When dim~$\mathfrak{g}_\mathds{k}(-1)_{\bar{1}}$ is odd, it is notable that $[\theta_i,\theta_j]=-(-1)^{|\theta_i||\theta_j|}[\theta_j,\theta_i]$ for any $1\leqslant  i,j\leqslant  l+q+1$. In particular, we can deduce that $[\theta_i,\theta_i]=0$ for $1\leqslant  i\leqslant  l$ as $\theta_i$ is an even element in $U_\chi(\mathfrak{g}_\mathds{k},e)$. Therefore, after deleting all the redundant commutating relations which are equivalent to each other in Theorem~\ref{reduced Wg}, what left are the cases when $i,j$ satisfy $1\leqslant  i<j\leqslant  l+q+1$ and $l+1\leqslant  i=j\leqslant  l+q+1$. When dim~$\mathfrak{g}_\mathds{k}(-1)_{\bar{1}}$ is even, similar conclusion can also be reached.
\end{rem}

\section{The structure of finite $W$-superalgebras over the field of complex numbers}

In this part, we will generalize the results obtained in Section 5 to the field of complex numbers. Some consequences about the $A$-algebra $U(\mathfrak{g}_A,e)$ where $A$ is an admissible algebra (which was introduced in Section 3.1) are also included in this part.

We will adopt the first definition of finite $W$-superalgebras over $\mathbb{C}$ (i.e. $U(\mathfrak{g},e)=(\text{End}_\mathfrak{g}Q_\chi)^{\text{op}}$ by Definition~\ref{W-C}) here. Given $(\mathbf{a},\mathbf{b},\mathbf{c},\mathbf{d})\in\mathbb{Z}_+^m\times\mathbb{Z}_2^n\times\mathbb{Z}_+^s\times\mathbb{Z}_2^t$ (recall that $t=\lfloor\frac{\text{dim}\,\mathfrak{g}(-1)_{\bar1}}{2}\rfloor$) we let $x^\mathbf{a}y^\mathbf{b}u^\mathbf{c}v^\mathbf{d}$ denote the monomial$$x_1^{a_1}\cdots x_m^{a_m}y_1^{b_1}\cdots y_n^{b_n}u_1^{c_1}\cdots u_s^{c_s}v_1^{d_1}\cdots v_t^{d_t}$$in $U(\mathfrak{g})$. Moreover, the monomials $x^\mathbf{i}y^\mathbf{j}u^\mathbf{f}v^\mathbf{g}$ with $(\mathbf{i},
\mathbf{j},\mathbf{f},\mathbf{g})\in\mathbb{Z}_+^m\times\mathbb{Z}_2^n\times\mathbb{Z}_+^s\times\mathbb{Z}_2^t$ form a PBW basis of the vector space $Q_\chi$ over $\mathbb{C}$.

Assume that $\text{wt}(x_i)=k_i$, $\text{wt}(y_j)=k'_j$, i.e. $x_i\in\mathfrak{g}(k_i)_{\bar0}$ and $y_j\in\mathfrak{g}(k'_j)_{\bar1}$ where $1\leqslant  i\leqslant  m$ and $1\leqslant  j\leqslant  n$. Given $(\mathbf{a},\mathbf{b},\mathbf{c},\mathbf{d})\in\mathbb{Z}_+^m\times\mathbb{Z}_2^n\times\mathbb{Z}_+^s\times\mathbb{Z}_2^t$, set
$$|(\mathbf{a},\mathbf{b},\mathbf{c},\mathbf{d})|_e:=\sum_{i=1}^ma_i(k_i+2)+\sum_{i=1}^nb_i(k'_i+2)+\sum_{i=1}^sc_i+\sum_{i=1}^td_i,$$
and say $x^{\mathbf{a}}y^\mathbf{b}u^\mathbf{c}v^\mathbf{d}$ to have $e$-degree $|(\mathbf{a},\mathbf{b},\mathbf{c},\mathbf{d})|_e$, which is written as
$\text{deg}_e(x^{\mathbf{a}}y^\mathbf{b}u^\mathbf{c}v^\mathbf{d})=|(\mathbf{a},\mathbf{b},\mathbf{c},\mathbf{d})|_e$.

\subsection{Some Lemmas}

For the $\mathbb{C}$-algebra $U(\mathfrak{g})$, by the same discussion as the proof of Lemma~\ref{commutative relations k1} we can obtain that

\begin{lemma}\label{commutative}
For any homogeneous element $w\in U(\mathfrak{g})_{i}$ ($i\in\mathbb{Z}_2$), we have
$$w\cdot x^{\mathbf{a}}y^\mathbf{b}u^\mathbf{c}v^\mathbf{d}=\sum_{\mathbf{i}\in\mathbb{Z}_+^m}\sum_{j_1=0}^{b_1}\cdots\sum_{j_n=0}^{b_n}\left(\begin{array}{@{\hspace{0pt}}c@{\hspace{0pt}}} \mathbf{a}\\ \mathbf{i}\end{array}\right)x^{\mathbf{a}-\mathbf{i}}y^{\mathbf{b}-\mathbf{j}}\cdot[wx^{\mathbf{i}}y^{\mathbf{j}}]\cdot u^\mathbf{c}v^\mathbf{d},$$
where $\mathbf{a}\choose\mathbf{i}$$=\prod\limits_{l'=1}^m$$a_{l'}\choose i_{l'}$, and $$[wx^{\mathbf{i}}y^{\mathbf{j}}]=k_{1,b_1,j_1}\cdots k_{n,b_n,j_n}(-1)^{|\mathbf{i}|}(\text{ad}y_n)^{j_n}\cdots(\text{ad}y_1)^{j_1}(\text{ad}x_m)^{i_m}\cdots(\text{ad}x_1)^{i_1}(w),$$
in which the coefficients $k_{1,b_1,j_1},\cdots, k_{n,b_n,j_n}\in\mathbb{C}$ (recall that $\mathbf{b}=(b_1,\cdots,b_n)\in\mathbb{Z}_2^n$) and the indices $j_1,\cdots,j_n\in\{0,1\}$. If we write $j_0=0$, then
$$k_{t',0,0}=1, k_{t',0,1}=0, k_{t',1,0}=(-1)^{|w|+j_1+\cdots+j_{{t'}-1}}, k_{t',1,1}=(-1)^{|w|+1+j_1+\cdots+j_{{t'}-1}},$$
where $1\leqslant {t'}\leqslant n$.
\end{lemma}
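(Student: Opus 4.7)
The plan is to mimic closely the proof of Lemma~\ref{commutative relations k1}, since the only essential difference between the $\mathbb{C}$-algebra $U(\mathfrak{g})$ and the modular reduced algebra $U_\chi(\mathfrak{g}_\mathds{k})$ used there is the presence of the $p$-central relations, which are irrelevant for the commutator manipulations that produce the formula. In particular, the PBW basis of $U(\mathfrak{g})$ together with the super Leibniz rule is all that will be needed; the indices in $\mathbf{a}$ will simply range over $\mathbb{Z}_+$ instead of $\{0,1,\dots,p-1\}$.

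First I would reduce to moving $w$ past the odd factors $y_1^{b_1}\cdots y_n^{b_n}$. For each $0\leqslant s'\leqslant n-1$, the identity
\[
[y_{s'+1},(\operatorname{ad} y_{s'})^{k_{s'}}\cdots(\operatorname{ad} y_1)^{k_1}(w)]
= y_{s'+1}(\operatorname{ad} y_{s'})^{k_{s'}}\cdots(w) - (-1)^{|w|+k_1+\cdots+k_{s'}}(\operatorname{ad} y_{s'})^{k_{s'}}\cdots(w)\,y_{s'+1}
\]
lets me rewrite the right multiplication $R_{y_{s'+1}}$ in two terms whose signs yield precisely the constants $k_{t',1,0}$ and $k_{t',1,1}$ of the statement (with $k_{t',0,0}=1$, $k_{t',0,1}=0$ as the trivial base cases). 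Iterating this over $s'=0,1,\dots,n-1$, exactly as in the chain of equalities \eqref{wdot} in the proof of Lemma~\ref{commutative relations k1}, produces
\[
w\cdot y_1^{j_1}\cdots y_n^{j_n}=\sum_{i_1=0}^{j_1}\cdots\sum_{i_n=0}^{j_n}\Bigl(\prod_{t'=1}^{n}k_{t',j_{t'},i_{t'}}\Bigr) y_1^{j_1-i_1}\cdots y_n^{j_n-i_n}\,(\operatorname{ad} y_n)^{i_n}\cdots(\operatorname{ad} y_1)^{i_1}(w).
\]

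Next I handle the even factors $x_1^{a_1}\cdots x_m^{a_m}$. Since all $x_i$ are even, the relevant commutators obey the ordinary Leibniz rule, and the classical calculation recorded in \eqref{wdot-1} (which is just Premet's identity ([\cite{P2}], Section 3.1(2))) gives
\[
w\cdot x_1^{a_1}\cdots x_m^{a_m}=\sum_{\mathbf{i}\in\mathbb{Z}_+^m}(-1)^{|\mathbf{i}|}\binom{\mathbf{a}}{\mathbf{i}}x^{\mathbf{a}-\mathbf{i}}\cdot[wx^{\mathbf{i}}],
\]
where $[wx^{\mathbf{i}}]=(-1)^{|\mathbf{i}|}(\operatorname{ad} x_m)^{i_m}\cdots(\operatorname{ad} x_1)^{i_1}(w)$ remains $\mathbb{Z}_2$-homogeneous of parity $|w|$ because the $x_i$ are even. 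The parity of $[wx^{\mathbf{i}}]$ is crucial here: when I plug this in on the left of $y^{\mathbf{b}}$ and now apply the odd identity from the previous paragraph with $[wx^{\mathbf{i}}]$ playing the role of $w$, the signs $k_{t',b_t,j_t}$ come out correctly because they depend only on $|w|$ and on $j_1,\dots,j_{t'-1}$.

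Finally I combine the two calculations: inserting the odd-factor expansion into $w\cdot x^{\mathbf{a}}y^{\mathbf{b}}u^{\mathbf{c}}v^{\mathbf{d}}=\bigl(w\cdot x^{\mathbf{a}}\bigr)\cdot y^{\mathbf{b}}\cdot u^{\mathbf{c}}v^{\mathbf{d}}$ and collecting yields
\[
w\cdot x^{\mathbf{a}}y^{\mathbf{b}}u^{\mathbf{c}}v^{\mathbf{d}}
=\sum_{\mathbf{i}\in\mathbb{Z}_+^m}\sum_{j_1=0}^{b_1}\cdots\sum_{j_n=0}^{b_n}\binom{\mathbf{a}}{\mathbf{i}} x^{\mathbf{a}-\mathbf{i}}y^{\mathbf{b}-\mathbf{j}}\cdot[wx^{\mathbf{i}}y^{\mathbf{j}}]\cdot u^{\mathbf{c}}v^{\mathbf{d}},
\]
with $[wx^{\mathbf{i}}y^{\mathbf{j}}]$ given by the claimed product of adjoint operators and sign constants. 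The only real care needed, and the one point I expect to be bookkeeping-intensive, is to keep track of the cumulative sign $(-1)^{|w|+j_1+\cdots+j_{t'-1}}$ as successive odd $y$-commutators are performed; the parity $|w|$ is preserved through the even $x$-part of the calculation, which is exactly what makes the two-step reduction legitimate.
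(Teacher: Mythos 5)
Your proposal is correct and follows essentially the same route the paper takes: the paper itself disposes of this lemma with the phrase ``by the same discussion as the proof of Lemma~\ref{commutative relations k1},'' and your two-step reduction (first pushing $w$ past the even factors $x^{\mathbf{a}}$ via the classical Leibniz identity, then pushing the still-homogeneous $[wx^{\mathbf{i}}]$ past the odd factors $y^{\mathbf{b}}$ via the iterated $R_{y_{s'+1}}$ commutators, finally assembling) is exactly the argument of that earlier lemma, with $\Lambda_m$ replaced by $\mathbb{Z}_+^m$. Your emphasis on the fact that $[wx^{\mathbf{i}}]$ retains the parity $|w|$ — so the sign constants $k_{t',b_{t'},j_{t'}}$ are unchanged — is precisely the point that makes the two-stage reduction legitimate.
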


\begin{rem}
When we put the canonical basis of $U(\mathfrak{g})$ as a product of the elements of $\mathfrak{g}$ over $\mathbb{C}$, the indices of all the even elements of $\mathfrak{g}$ are taken from $\mathbb{Z}_+$. As for the canonical basis of $U_\chi(\mathfrak{g}_\mathds{k})$ over $\mathds{k}$, the indices of even elements of $\mathfrak{g}_\mathds{k}$ can only be taken from the set $\{0,1,\cdots,p-1\}$. However, no matter what the base field is (i.e. the field of complex numbers or the positive characteristic field), the indices of odd elements are taken from the set $\{0,1\}$.
\end{rem}

Let $\tilde{\rho}_\chi$ denote the representation of $U(\mathfrak{g})$ in $\text{End}Q_\chi$.

\begin{lemma}\label{com c}
Let $(\mathbf{a},\mathbf{b},\mathbf{c},\mathbf{d}),~(\mathbf{a}',\mathbf{b}',\mathbf{c}',\mathbf{d}')\in\mathbb{Z}_+^m\times\mathbb{Z}_2^n\times\mathbb{Z}_+^s\times\mathbb{Z}_2^t$ be such that $|(\mathbf{a},\mathbf{b},\mathbf{c},\mathbf{d})|_e=A,~|(\mathbf{a}',\mathbf{b}',\mathbf{c}',\mathbf{d}')|_e=B$. Then
\[\begin{array}{ccl}(\tilde{\rho}_\chi(x^{\mathbf{a}}y^\mathbf{b}u^\mathbf{c}v^\mathbf{d}))(x^{\mathbf{a}'}y^{\mathbf{b}'}u^{\mathbf{c}'}v^{\mathbf{d}'}\otimes1_\chi)&=&
(Cx^{\mathbf{a}+\mathbf{a}'}y^{\mathbf{b}+\mathbf{b}'}u^{\mathbf{c}+\mathbf{c}'}v^{\mathbf{d}+\mathbf{d}'}+\text{terms of}~e\text{-degree}\\ &&\leqslant A+B-2)\otimes1_\chi,
\end{array}\]
where the coefficient $C\in\mathbb{C}$ is defined by:

(1) if $(\mathbf{b}+\mathbf{b}',\mathbf{d}+\mathbf{d}')\notin\mathbb{Z}_2^n\times\mathbb{Z}_2^t$, then $C=0$.

(2) if $C\neq0$, then each entry in $(b_1+b'_1,\cdots,b_n+b'_n,d_1+d'_1,\cdots,d_t+d'_t)$ is taken from the set $\{0,1\}$. Delete all the zero terms in $(\mathbf{b},\mathbf{d},\mathbf{b}',\mathbf{d}')$ and let $\tau(\mathbf{b},\mathbf{d},\mathbf{b}',\mathbf{d}')$ denote the inverse number of
$(\mathbf{b},\mathbf{d},\mathbf{b}',\mathbf{d}')$ with respect to the sequence $(b_1,b'_1,b_2,b'_2,\cdots,b_n,b'_n,$\\$d_1,d'_1,d_2,d'_2\cdots,d_t,d'_t)$, then
$C=(-1)^{\tau(\mathbf{b},\mathbf{d},\mathbf{b}',\mathbf{d}')}$.

\end{lemma}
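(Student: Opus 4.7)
The plan is to transcribe the proof of Lemma~\ref{commutative relations k2} into the characteristic zero setting, with the only structural changes being that the indices $\mathbf{a}, \mathbf{c}$ now range over $\mathbb{Z}_+^m, \mathbb{Z}_+^s$ (no $p$-truncation), so the vanishing condition on $C$ only concerns the odd indices $(\mathbf{b}+\mathbf{b}', \mathbf{d}+\mathbf{d}') \in \mathbb{Z}_2^n \times \mathbb{Z}_2^t$. I would use the same layered reduction: first handle a single odd factor $v_S$, then induct on $|\mathbf{d}|$ to cover arbitrary $v^{\mathbf{d}}$, then induct on $|\mathbf{c}|$ to incorporate $u^{\mathbf{c}}$, and finally bring in $y^{\mathbf{b}}$ and $x^{\mathbf{a}}$ via the PBW decomposition of $U(\mathfrak{p})$.

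For the base case, I would take $(\mathbf{a},\mathbf{b},\mathbf{c}) = \mathbf{0}$ with $v^{\mathbf{d}} = v_S$ and apply Lemma~\ref{commutative} to expand $v_S \cdot x^{\mathbf{a}'} y^{\mathbf{b}'} u^{\mathbf{c}'} v^{\mathbf{d}'}$. The leading term is $(-1)^{|\mathbf{b}'| + \sum_{l<S} d'_l} x^{\mathbf{a}'} y^{\mathbf{b}'} u^{\mathbf{c}'} v^{\mathbf{d}' + \mathbf{e}_S}$ (interpreted as $0$ if $d'_S = 1$, matching condition~(1)); all other summands $x^{\mathbf{a}'-\mathbf{i}} y^{\mathbf{b}'-\mathbf{j}} \cdot [v_S x^{\mathbf{i}} y^{\mathbf{j}}] \cdot u^{\mathbf{c}'} v^{\mathbf{d}'}$ with $(\mathbf{i},\mathbf{j}) \neq \mathbf{0}$ fall into two subcases according to $\text{wt}([v_S x^{\mathbf{i}} y^{\mathbf{j}}]) \geq 0$ or $= -1$. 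In each subcase, exactly as in Lemma~\ref{commutative relations k2}, one uses that $\text{ad}\,h$ preserves $\mathfrak{p}$ and that $\tilde{\rho}_\chi(\mathfrak{m} \cap \mathfrak{g}(-1))$ annihilates $1_\chi$ to bound the resulting $e$-degree by $A + B - 2$.

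The inductive steps are routine: once the single-$v_S$ case is proved, a clean induction on $|\mathbf{d}|$ yields the statement for general $v^{\mathbf{d}}$ (again modulo $e$-degree $\leq A+B-2$), then the same template applied to $u_j$'s (which are even, hence easier) gives the statement for $u^{\mathbf{c}} v^{\mathbf{d}}$, and the PBW theorem for $U(\mathfrak{p})$ handles the $x^{\mathbf{a}} y^{\mathbf{b}}$ prefix. The sign $C$ is then determined by tracking which transpositions of adjacent odd factors are required to rewrite the product in the canonical PBW order: each such transposition introduces a $-1$ modulo lower $e$-degree, and the total count is precisely the inversion number $\tau(\mathbf{b},\mathbf{d},\mathbf{b}',\mathbf{d}')$ described in the statement.

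The main bookkeeping obstacle is the $e$-degree drop caused by collapsing a squared odd generator: whenever an odd element $a$ appears with exponent $2$, one has $a^2 \otimes 1_\chi = \tfrac{1}{2}[a,a] \otimes 1_\chi = \tfrac{1}{2}\chi([a,a]) \otimes 1_\chi$, which raises the weight by $2$ while dropping the standard degree by $2$, so by \eqref{dege} the $e$-degree decreases by $2$. This is exactly what forces $C = 0$ when $(\mathbf{b}+\mathbf{b}', \mathbf{d}+\mathbf{d}') \notin \mathbb{Z}_2^n \times \mathbb{Z}_2^t$ and simultaneously ensures that the ``forbidden'' contributions are safely absorbed into the $\text{terms of }e\text{-degree} \leq A + B - 2$ remainder — the same delicate point isolated in Lemma~\ref{hw}. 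Once this collapse is accounted for uniformly, the remainder of the argument is a straightforward characteristic-zero analogue requiring no genuinely new idea.
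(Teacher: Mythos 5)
Your proposal is correct and matches the paper's own proof, which simply repeats the argument of Lemma~\ref{commutative relations k2} verbatim with Lemma~\ref{commutative} playing the role of Lemma~\ref{commutative relations k1}; you have reproduced that transcription in detail, correctly noting that the only structural change is dropping the $p$-truncation on $\mathbf{a}$ and $\mathbf{c}$ so that condition~(1) on $C$ involves only the odd indices.
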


\begin{proof}
Now repeat the proof of Lemma~\ref{commutative relations k2} applying Lemma~\ref{commutative} in place of Lemma~\ref{commutative relations k1}.
\end{proof}

Any $0\neq h\in U(\mathfrak{g},e)$ is uniquely determined by its value $h(1_\chi)\in Q_\chi$. For $h\neq0$ we let $n(h),~N(h)$ and $\Lambda_h^{\text{max}}$ have the same meaning as the assumptions preceding Lemma~\ref{hw}.

\begin{lemma}\label{hwc}
Let $h\in U(\mathfrak{g},e)\backslash\{0\}$ and $(\mathbf{a},\mathbf{b},\mathbf{c},\mathbf{d})\in \Lambda^{\text{max}}_h$. Then $\mathbf{c}=\mathbf{0}$, and $\mathbf{a}\in\mathbb{Z}_+^{l}\times\{\mathbf{0}\},~\mathbf{b}\in\mathbb{Z}_2^{q}\times\{\mathbf{0}\}$. Moreover, the sequence $\mathbf{d}$ satisfies

(1) $\mathbf{d}=\mathbf{0}$ when dim~$\mathfrak{g}(-1)_{\bar{1}}$ is even;

(2) $\mathbf{d}=\{\mathbf{0}\}_{\frac{r-1}{2}}\times\mathbb{Z}_2\times\{\mathbf{0}\}_{\frac{r-1}{2}}$ when dim~$\mathfrak{g}(-1)_{\bar{1}}$ (recall that which equals to $r$) is odd.
\end{lemma}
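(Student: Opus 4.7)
My plan is to argue exactly in parallel with Lemma~\ref{hw}, using the characteristic-zero analogues (Lemma~\ref{commutative} and Lemma~\ref{com c}) established just above, and noting that the reduction-mod-$p$ coefficient worries of the modular case simply disappear over $\mathbb{C}$.

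The starting observation is the defining relation for elements of $U(\mathfrak{g},e)=(\mathrm{End}_\mathfrak{g}Q_\chi)^{\mathrm{op}}$: for every $w\in\mathfrak{m}$, the element $w-\chi(w)$ annihilates $1_\chi\in Q_\chi=U(\mathfrak{g})/I_\chi$, so commuting $h$ past this action yields
\[
(\tilde{\rho}_\chi(w)-\chi(w)\,\mathrm{id})\cdot h(1_\chi)=0\qquad\text{in }Q_\chi.
\]
Arguing by contradiction, I assume that some $(\mathbf{a},\mathbf{b},\mathbf{c},\mathbf{d})\in\Lambda_h^{\max}$ violates the stated shape, and split into the same four sub-cases as in the proof of Lemma~\ref{hw}: (i) some $a_k\neq 0$ with $k>l$; (ii) all $a_i=0$ for $i>l$, but some $b_k\neq 0$ with $k>q$; (iii) all such $a_i,b_j$ vanish but some $c_k\neq 0$; (iv) only $d_k$-entries are nonzero, where we exclude $k=\tfrac{r+1}{2}$ when $\dim\mathfrak{g}(-1)_{\bar 1}$ is odd. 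In each case, the non-degeneracy of $(\cdot,\cdot)$ on $\mathfrak{g}$ (or of $\langle\cdot,\cdot\rangle$ on $\mathfrak{g}(-1)$) produces a $\mathbb{Z}_2$-homogeneous $w\in\mathfrak{m}$ of weight $\nu\in\{-1,-2\}$ such that $\chi([w,\cdot])$ detects exactly the offending index: $\chi([w_k,x_i])=\delta_{ki}$, $\chi([w'_k,y_i])=\delta_{ki}$, $\langle z_k,u_i\rangle=\delta_{ki}$, or $\langle z'_k,v_i\rangle=\delta_{ki}$ respectively.

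The core computation is then to apply Lemma~\ref{commutative} to expand $\tilde{\rho}_\chi(w)\cdot x^\mathbf{a}y^\mathbf{b}u^\mathbf{c}v^\mathbf{d}\otimes 1_\chi$ and to project onto the graded piece of prescribed $(e\text{-degree},\mathrm{weight})$, namely $(n(h)+\nu,\,N(h)+\nu+2)$. Exactly as in Lemma~\ref{hw}, I examine the three sub-cases $\mathrm{wt}([wx^\mathbf{i}y^\mathbf{j}])\geq 0$, $=-1$, $=-2$ and verify that all contributions of non-maximal weight or non-maximal $e$-degree fall strictly below the projection level. What remains is a single surviving term: a nonzero scalar multiple of $x^{\mathbf{a}-\mathbf{e}_k}y^{\mathbf{b}}u^{\mathbf{c}}v^{\mathbf{d}}$ in case (i), of $x^{\mathbf{a}}y^{\mathbf{b}-\mathbf{e}_k}u^{\mathbf{c}}v^{\mathbf{d}}$ in case (ii), and of $x^{\mathbf{a}}y^{\mathbf{b}}u^{\mathbf{c}-\mathbf{e}_k}v^{\mathbf{d}}$ or $x^{\mathbf{a}}y^{\mathbf{b}}u^{\mathbf{c}}v^{\mathbf{d}-\mathbf{e}_k}$ in cases (iii)/(iv), summed over those $(\mathbf{a},\mathbf{b},\mathbf{c},\mathbf{d})\in\Lambda_h^{\max}$ with that bad index, with coefficient $a_k$, $\pm 1$, $c_k$, or $\pm 1$ respectively. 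Over $\mathbb{C}$ these coefficients are literal integers in $\{a_k,\pm 1,c_k,\pm 1\}$, hence nonzero whenever the bad index is nonzero; the projection is therefore a nonzero combination of linearly independent PBW monomials, contradicting the vanishing forced by the defining identity.

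The main subtlety, as in the modular case, is the collapse $v_i^2\otimes 1_\chi=\tfrac{1}{2}\langle v_i,v_i\rangle\otimes 1_\chi$ (and analogous odd-squared reductions) which can arise when expanding products; I will note that such collapses strictly lower the $e$-degree by $2$ while raising the weight by $2$, so they cannot contribute to the top piece at $(n(h)+\nu,\,N(h)+\nu+2)$ chosen for the projection — this is the place where the odd-versus-even distinction changes the Lie-algebra argument. The hardest bookkeeping step will therefore be verifying that no such collapse term survives the projection; but once this is in hand (it follows by the same $e$-degree estimate as in step (3)(ii)(b) of the proof of Lemma~\ref{hw}), the characteristic-zero statement is obtained by erasing all references to residues $\bar a_k\in\mathbb{F}_p$. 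No further issue arises, and the four contradictions together close the proof.
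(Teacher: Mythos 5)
Your proposal is correct and follows precisely the route the paper takes: the paper's own proof of Lemma~\ref{hwc} is a one-line referral to the proof of Lemma~\ref{hw} with the characteristic-zero lemmas substituted for their modular counterparts, and you have faithfully reproduced that underlying argument — the four sub-cases indexed by the offending bad entry, the choice of detection element $w\in\mathfrak{m}$ of weight $\nu\in\{-1,-2\}$, the projection $\pi_{n(h)+\nu,\,N(h)+\nu+2}$ onto the top $(e\text{-degree},\,\mathrm{weight})$ piece, the bookkeeping for the odd-square collapse, and the observation that over $\mathbb{C}$ the surviving coefficients are genuine nonzero integers rather than residues mod $p$. No gaps; this is the intended proof.
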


\begin{proof}
Repeat verbatim the proof of Lemma~\ref{hw} but apply Lemma~\ref{com c} in place of Lemma~\ref{commutative relations k2}.
\end{proof}

\subsection{The construction theory of finite $W$-superalgebras over the field of complex numbers}

For $k\in\mathbb{Z}_+$ we denote by $\tilde{H}^k$ the linear span of all $0\neq h\in U(\mathfrak{g},e)$ with $n(h)\leqslant  k$. Put $\tilde{H}^{-1}=0$. It follows from Lemma~\ref{com c} that the subspaces $\{\tilde{H}^i|i\in\mathbb{Z}_+\}$ form a filtration of the algebra $U(\mathfrak{g},e)$. Moreover, Lemma~\ref{com c} implies that the graded algebra $\text{gr}(U(\mathfrak{g},e))=\bigoplus\limits_{i\geqslant 0}\tilde{H}^i/\tilde{H}^{i-1}$ is supercommutative.

Recall that $\{x_1,\cdots,x_l\}$ and $\{y_1,\cdots,y_q\}$ are $\mathbb{C}$-basis of $\mathfrak{g}^e_{\bar{0}}$ and $\mathfrak{g}^e_{\bar{1}}$, respectively. When dim~$\mathfrak{g}(-1)_{\bar{1}}$ is odd, there is  $v_{\frac{r+1}{2}}\in\mathfrak{g}(-1)_{\bar{0}}\cap(\mathfrak{g}(-1)'_{\bar{0}})^{\bot }$. Set
\[Y_i:=\left\{
\begin{array}{ll}
x_i&\text{if}~1\leqslant  i\leqslant  l;\\
y_{i-l}&\text{if}~l+1\leqslant  i\leqslant  l+q;\\
v_{\frac{r+1}{2}}&\text{if}~i=l+q+1.
\end{array}
\right.
\]

For any $1\leqslant  i\leqslant  l+q+1$, assume that $Y_i\in\mathfrak{g}(m_i)$ where $m_i\in\mathbb{Z}$, and the term $Y_{l+q+1}$ occurs only when dim~$\mathfrak{g}(-1)_{\bar{1}}$ is odd. By the assumption it is immediate that $Y_i\in\mathfrak{g}^e$ for $1\leqslant  i\leqslant  l+q$. Let $\tilde{\Theta}_i$ denote the image of $\Theta_i\in U(\mathfrak{g},e)$ in $\text{gr}(U(\mathfrak{g},e))$.

\begin{theorem}\label{PBWC}
Let $U(\mathfrak{g},e)$ be a finite $W$-superalgebra over $\mathbb{C}$, then

(1) if $\text{dim}~\mathfrak{g}(-1)_{\bar1}$ is even,

(i) there exist homogeneous elements $\Theta_1,\cdots,\Theta_{l+q}\in U(\mathfrak{g},e)$, where $\Theta_1,\cdots,\Theta_{l}\in U(\mathfrak{g},e)_{\bar0}$ and $\Theta_{l+1},\cdots,\Theta_{l+q}\in U(\mathfrak{g},e)_{\bar1}$ such that
$$\Theta_k(1_\chi)=(Y_k+\sum\limits_{\mbox{\tiny $\begin{array}{c}|\mathbf{a},\mathbf{b},\mathbf{c},\mathbf{d}|_e=m_k+2,\\|\mathbf{a}|
+|\mathbf{b}|+|\mathbf{c}|+|\mathbf{d}|\geqslant 2\end{array}$}}\lambda^k_{\mathbf{a},\mathbf{b},\mathbf{c},\mathbf{d}}x^{\mathbf{a}}
y^{\mathbf{b}}u^{\mathbf{c}}v^{\mathbf{d}}+\sum\limits_{|\mathbf{a},\mathbf{b},\mathbf{c},\mathbf{d}|_e<m_k+2}\lambda^k_{\mathbf{a},\mathbf{b},\mathbf{c},\mathbf{d}}x^{\mathbf{a}}
y^{\mathbf{b}}u^{\mathbf{c}}v^{\mathbf{d}})\otimes1_\chi$$for $1\leqslant  k\leqslant  l+q$, where $\lambda^k_{\mathbf{a},\mathbf{b},\mathbf{c},\mathbf{d}}\in\mathbb{Q}$, and $\lambda^k_{\mathbf{a},\mathbf{b},\mathbf{c},\mathbf{d}}=0$ if $a_{l+1}=\cdots=a_m=b_{q+1}=\cdots=b_n=c_1=\cdots=c_s=
d_1=\cdots=d_{\frac{r}{2}}=0$.

(ii) the monomials $\Theta_1^{a_1}\cdots\Theta_l^{a_l}\Theta_{l+1}^{b_1}\cdots\Theta_{l+q}^{b_q}$ with $a_i\in\mathbb{Z}_+,~b_j\in\mathbb{Z}_2$ for $1\leqslant i\leqslant l$ and $1\leqslant j\leqslant q$ form a basis of $U(\mathfrak{g},e)$ over $\mathbb{C}$.

(iii) for $1\leqslant  i\leqslant  l+q$, the elements $\tilde{\Theta}_i=\Theta_i+\tilde{H}^{m_i+1}\in\text{gr}(U(\mathfrak{g},e))$ are algebraically independent and generate $\text{gr}(U(\mathfrak{g},e))$. In particular, $\text{gr}(U(\mathfrak{g},e))$ is a graded polynomial superalgebra with homogeneous generators of degrees $m_1+2,\cdots,m_{l+q}+2$.

(iv) let $1\leqslant  i,j\leqslant  l+q$. Then
$$[\Theta_i,\Theta_j]=\Theta_i\cdot\Theta_j-(-1)^{|\Theta_i||\Theta_j|}\Theta_j\cdot\Theta_i=(-1)^{|\Theta_i||\Theta_j|}\Theta_j\circ\Theta_i-\Theta_i\circ\Theta_j\in\tilde{H}^{m_i+m_j+2}.$$
Moreover, if the elements $Y_i,~Y_j\in \mathfrak{g}^e$ for $1\leqslant  i,j\leqslant  l+q$ satisfy $[Y_i,Y_j]=\sum\limits_{k=1}^{l+q}\alpha_{ij}^kY_k$ in $\mathfrak{g}^e$, then
\begin{equation}\label{Theta2}
[\Theta_i,\Theta_j]\equiv\sum\limits_{k=1}^{l+q}\alpha_{ij}^k\Theta_k+q_{ij}(\Theta_1,\cdots,\Theta_{l+q})\qquad(\text{mod}~\tilde{H}^{m_i+m_j+1}),
\end{equation}where $q_{ij}$ is a polynomial in $l+q$ variables in $\mathbb{Q}$ whose constant term and linear part are zero, and the modulo part is a polynomial in $\Theta_1,\cdots,\Theta_{l+q}$.

(2) if $\text{dim}~\mathfrak{g}(-1)_{\bar1}$ is odd,

(i) there exist homogeneous elements $\Theta_1,\cdots,\Theta_{l+q+1}\in U(\mathfrak{g},e)$, where $\Theta_1,\cdots,\Theta_{l}\in U(\mathfrak{g},e)_{\bar0}$ and $\Theta_{l+1},\cdots,\Theta_{l+q+1}\in U(\mathfrak{g},e)_{\bar1}$ such that

(a) let $1\leqslant  k\leqslant  l+q$. Then
$$\Theta_k(1_\chi)=(Y_k+\sum\limits_{\mbox{\tiny $\begin{array}{c}|\mathbf{a},\mathbf{b},\mathbf{c},\mathbf{d}|_e=m_k+2,\\|\mathbf{a}|
+|\mathbf{b}|+|\mathbf{c}|+|\mathbf{d}|\geqslant 2\end{array}$}}\lambda^k_{\mathbf{a},\mathbf{b},\mathbf{c},\mathbf{d}}x^{\mathbf{a}}
y^{\mathbf{b}}u^{\mathbf{c}}v^{\mathbf{d}}+\sum\limits_{|\mathbf{a},\mathbf{b},\mathbf{c},\mathbf{d}|_e<m_k+2}\lambda^k_{\mathbf{a},\mathbf{b},\mathbf{c},\mathbf{d}}x^{\mathbf{a}}
y^{\mathbf{b}}u^{\mathbf{c}}v^{\mathbf{d}})\otimes1_\chi;$$

(b) for the case $k=l+q+1$ we have
$$\Theta_{l+q+1}(1_\chi)=v_{\frac{r+1}{2}}\otimes1_\chi,$$
where $\lambda^k_{\mathbf{a},\mathbf{b},\mathbf{c},\mathbf{d}}\in\mathbb{Q}$, and $\lambda^k_{\mathbf{a},\mathbf{b},\mathbf{c},\mathbf{d}}=0$ if $a_{l+1}=\cdots=a_m=b_{q+1}=\cdots=b_n=c_1=\cdots=c_s=
d_1=\cdots=d_{\frac{r-1}{2}}=0$.

(ii) the monomials $\Theta_1^{a_1}\cdots\Theta_l^{a_l}\Theta_{l+1}^{b_1}\cdots\Theta_{l+q}^{b_q}\Theta_{l+q+1}^{c}$ with $a_i\in\mathbb{Z}_+,~b_j,c\in\mathbb{Z}_2$ for $1\leqslant i\leqslant l$ and $1\leqslant j\leqslant q$ form a basis of $U(\mathfrak{g},e)$.

(iii) for $1\leqslant  i\leqslant  l+q+1$, the elements $\tilde{\Theta}_i=\Theta_i+\tilde{H}^{m_i+1}\in\text{gr}(U(\mathfrak{g},e))$ are algebraically independent and generate $\text{gr}(U(\mathfrak{g},e))$. In particular, $\text{gr}(U(\mathfrak{g},e))$ is a graded polynomial superalgebra with homogeneous generators of degrees $m_1+2,\cdots,m_{l+q+1}+2$.

(iv) let $1\leqslant  i,j\leqslant  l+q+1$. Then $$[\Theta_i,\Theta_j]=(-1)^{|\Theta_i||\Theta_j|}\Theta_j\circ\Theta_i-\Theta_i\circ\Theta_j\in\tilde{H}^{m_i+m_j+2}.$$
Moreover, if the elements $Y_i,~Y_j\in \mathfrak{g}^e$ for $1\leqslant  i,j\leqslant  l+q$ satisfy $[Y_i,Y_j]=\sum\limits_{k=1}^{l+q}\alpha_{ij}^kY_k$ in $\mathfrak{g}^e$, then
\begin{equation}\label{Thetacom2}
[\Theta_i,\Theta_j]\equiv\sum\limits_{k=1}^{l+q}\alpha_{ij}^k\Theta_k+q_{ij}(\Theta_1,\cdots,\Theta_{l+q+1})\qquad(\text{mod}~\tilde{H}^{m_i+m_j+1}),
\end{equation}where $q_{ij}$ is a polynomial in $l+q+1$ variables in $\mathbb{Q}$ whose constant term and linear part are zero, and the modulo part is a polynomial in $\Theta_1,\cdots,\Theta_{l+q+1}$. For the case $i=j=l+q+1$, we have $[\Theta_{l+q+1},\Theta_{l+q+1}]=\text{id}$.
\end{theorem}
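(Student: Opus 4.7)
The plan is to deduce Theorem~\ref{PBWC} from its positive-characteristic counterpart, Theorem~\ref{reduced Wg}, via the modular reduction technique of Premet. After possibly enlarging the admissible ring $A$, one defines $U(\mathfrak{g}_A,e)$ as the $A$-subalgebra of $(\text{End}_\mathfrak{g}Q_\chi)^{\text{op}}$ consisting of endomorphisms that preserve the $A$-lattice $Q_{\chi,A}$, so that $U(\mathfrak{g}_A,e)\otimes_A\mathbb{C}=U(\mathfrak{g},e)$, while reduction modulo $p\in\Pi(A)$ gives a map into $\widehat{U}(\mathfrak{g}_\mathds{k},e)$ that factors through $U_\chi(\mathfrak{g}_\mathds{k},e)$ after quotienting by the relevant part of the $p$-center. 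This places the known PBW data from Theorem~\ref{reduced Wg} at our disposal at infinitely many primes.

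For part (i), I would build the generators $\Theta_k$ inductively on the Kazhdan degree. For each $Y_k$ with $1\leqslant k\leqslant l+q$ (and, in the odd-dimensional case, $Y_{l+q+1}=v_{(r+1)/2}$), the element $Y_k\otimes 1_\chi\in Q_\chi$ is $\text{ad}\,\mathfrak{m}$-invariant modulo terms of strictly lower Kazhdan degree, because $[\mathfrak{m},Y_k]\subseteq I_\chi+\mathrm{F}_{m_k+1}U(\widetilde{\mathfrak{p}})$. Lemma~\ref{hwc} ensures that the resulting obstruction never carries a leading term forcing higher weight, so it can be cancelled by subtracting a correction of strictly lower $e$-degree. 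Performing this entire induction inside $U(\mathfrak{g}_A,e)$, and enlarging $A$ whenever a denominator appears, yields $\Theta_k$ with coefficients in $A\subset\mathbb{Q}$, whose leading term is exactly $Y_k$ and whose lower-order terms vanish on the $(\mathbf{a},\mathbf{b},\mathbf{c},\mathbf{d})$ with all ``non-$\mathfrak{g}^e$'' components zero.

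For parts (ii) and (iii), Lemma~\ref{com c} together with Lemma~\ref{hwc} shows that the proposed monomials in the $\Theta_k$ have pairwise distinct leading $e$-symbols equal to the corresponding monomials in the $Y_k$ in the free supercommutative algebra on $Y_1,\ldots,Y_{l+q+\varepsilon}$ (where $\varepsilon=0$ or $1$ according to the parity), hence are linearly independent. To prove spanning, I would reduce modulo a prime $p\in\Pi(A)$: the images $\bar\Theta_k\in U_\chi(\mathfrak{g}_\mathds{k},e)$ have the same leading symbols as the generators $\theta_k$ of Theorem~\ref{reduced Wg}, so the reduced monomials form a $\mathds{k}$-basis. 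A graded-rank comparison of $U(\mathfrak{g}_A,e)$ under the Kazhdan filtration, performed at infinitely many $p\in\Pi(A)$, forces each graded piece of $U(\mathfrak{g}_A,e)$ to be free over $A$ of the predicted rank; this proves the PBW basis statement and simultaneously identifies $\text{gr}\,U(\mathfrak{g},e)$ with the expected free supercommutative graded algebra on $\tilde\Theta_1,\ldots,\tilde\Theta_{l+q+\varepsilon}$.

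Finally, (iv) is a formal consequence of (i)--(iii). The bracket $[\Theta_i,\Theta_j]$ lies in $\widetilde H^{m_i+m_j+2}$ by Lemma~\ref{com c}, and its symbol in the graded algebra is forced to be $[Y_i,Y_j]=\sum_k\alpha_{ij}^k Y_k$ under the identification from (iii); expanding the difference in the PBW basis from (ii) yields the $\mathbb{Q}$-polynomial $q_{ij}$ with vanishing constant and linear terms. The special self-bracket $[\Theta_{l+q+1},\Theta_{l+q+1}]=\text{id}$ drops out of $\langle v_{(r+1)/2},v_{(r+1)/2}\rangle=1$. The principal technical obstacle I foresee is maintaining the rationality $\lambda^k_{\mathbf{a},\mathbf{b},\mathbf{c},\mathbf{d}}\in\mathbb{Q}$ throughout the inductive construction: it requires showing at each step that the correction term can be chosen inside $U(\mathfrak{g}_A,e)$, which is especially delicate in the odd-parity case, where the extra generator $\Theta_{l+q+1}$ has leading symbol lying outside $\mathfrak{g}^e$ and therefore requires an additional compatibility check under the modular reduction used for the dimension count.
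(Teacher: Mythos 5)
Your overall framework---deduce the characteristic-zero PBW theorem from Theorem~\ref{reduced Wg}, Lemma~\ref{hwc}, and Lemma~\ref{com c} via modular reduction over an admissible ring---is the paper's.  The gap is in your construction of the generators in part~(i), and it is exactly the crux of the matter.  You propose a step-by-step induction: start from $Y_k\otimes 1_\chi$ and successively kill obstructions to $\text{ad}\,\mathfrak{m}$-invariance, justifying each step by the claim that ``Lemma~\ref{hwc} ensures the resulting obstruction never carries a leading term forcing higher weight, so it can be cancelled.''  Lemma~\ref{hwc} does not say this.  It constrains the Kazhdan-leading terms of elements \emph{already known} to lie in $U(\mathfrak{g},e)$; it is an injectivity statement about the symbol map into $S(\mathfrak{g}^e)$ (respectively $S(\mathfrak{g}^e)\otimes\mathbb{C}[v_{\frac{r+1}{2}}]$), not a solvability statement for the obstruction at each inductive step.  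Running such an induction would require a cohomology vanishing of the type $H^1(\mathfrak{m},\mathrm{gr}\,Q_\chi)=0$ (Lemma~\ref{eveniso}), and the paper explicitly notes that this is unavailable when $\dim\mathfrak{g}(-1)_{\bar1}$ is odd---precisely why the authors decline to argue as in Brundan--Goodwin--Kleshchev and instead go through prime characteristic.

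The paper's actual route for part~(i) avoids obstruction theory altogether.  Fixing the form $\Theta_k(1_\chi)=(Y_k+\sum\lambda^k_{\mathbf{a},\mathbf{b},\mathbf{c},\mathbf{d}}\,x^{\mathbf a}y^{\mathbf b}u^{\mathbf c}v^{\mathbf d})\otimes1_\chi$, the requirement $\Theta_k\in W_\chi$ together with the prescribed vanishing of certain $\lambda$'s becomes one global $\mathbb{Q}$-linear system in the unknowns $\lambda^k_{\mathbf{a},\mathbf{b},\mathbf{c},\mathbf{d}}$.  Corollary~\ref{rg} shows this system has a solution over $\mathbb{F}_p$ for almost all $p\in\Pi(A)$, and since $\Pi(A)$ is infinite a rank-comparison over $\mathbb{Q}$ and $\mathbb{F}_p$ yields a solution over $\mathbb{Q}$.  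This is the step your proposal replaces with an unjustified claim, and it is what makes the odd-parity case go through.  On your remaining points: the paper establishes (ii)--(iv) by induction on Kazhdan degree once (i) is known, but your rank count over $\mathbb{F}_p$ is an acceptable substitute; and the ``especially delicate'' generator $\Theta_{l+q+1}$ is in fact the trivial one---$v_{\frac{r+1}{2}}\otimes1_\chi$ already lies in $Q_\chi^{\text{ad}\mathfrak{m}}$ (since $\langle v_{\frac{r+1}{2}},\mathfrak{g}(-1)'\rangle=0$ and $\chi$ vanishes on $\bigoplus_{i\leqslant-3}\mathfrak{g}(i)$), which is exactly why the theorem asserts $\Theta_{l+q+1}(1_\chi)=v_{\frac{r+1}{2}}\otimes1_\chi$ with no lower-order tail.
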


Given an admissible algebra $A$ and a prime $p\in\Pi(A)$, the ``modular $p$'' version of this theorem (the structure theory of $\mathds{k}$-algebra $U_\chi(\mathfrak{g}_\mathds{k},e)$) has been formulated in Theorem~\ref{reduced Wg} for $\mathds{k}=\overline{\mathbb{F}}_p$. In ([\cite{P2}], Section 4), Premet obtained the PBW theorem for the finite $W$-algebras over $\mathbb{C}$ through the procedure of ``admissible'' based on the knowledge of reduced $W$-algebras over $\mathds{k}$. Inspired by Premet's treatment of finite $W$-algebras, the knowledge of reduced $W$-superalgebras over $\mathds{k}$ obtained in Section 5 can be applied to prove the theorem. Since the choice of admissible algebra $A$ has nothing to do with the super property of Lie superalgebra $\mathfrak{g}$, we can take the same steps as the Lie algebra case.

In the language of ([\cite{P2}], Section 4.2), first define the admissible algebra $A=\mathbb{Z}[\frac{1}{N!}]$ by choosing a sufficient large integer $N$, then select a prime $p\gg N$ (this is exactly the example given in Section 3.1 when we introduce the admissible algebras). For part (i) in both parts of the theorem, one can translate the formulas to a system of linear equations over $\mathbb{Q}$, then discuss the existence of solution for these equations, respectively. It can be inferred from Corollary~\ref{rg} that the system of linear equations has a solution over $\mathbb{F}_p\subset\mathds{k}$. As this is true for almost all primes in the set $\Pi(A)$ (which contains infinite elements), we can conclude that the former system has a solution over $\mathbb{Q}$. Recall that $\tilde{H}^k$ for $k\in\mathbb{Z}_+$ forms Kazhdan filtration of $\mathbb{C}$-algebra $U(\mathfrak{g},e)$ (see the remark preceding Theorem~\ref{PBWC}). As for (ii), (iii) and (iv) in the theorem, these results can be obtained by induction based on the Kazhdan degree (i.e. the $e$-degree) of $\tilde{H}^k$.

Now we will sketch a proof here following ([\cite{P2}], Theorem 4.6).
\begin{proof}
The proof is based on the results obtained in Section 5 and Section 6. Repeat verbatim the proof of Theorem 4.6 in [\cite{P2}] but apply Theorem~\ref{reduced Wg}, Lemma~\ref{com c} and Lemma~\ref{hwc} in place of Theorem 3.4, Lemma 4.4 and Lemma 4.5 in [\cite{P2}] respectively. For more detail refer to ([\cite{P2}], Theorem 4.6).
\end{proof}

\begin{rem}
Notice that the reduced $W$-superalgebra $U_\chi(\mathfrak{g}_\mathds{k},e)$ over $\mathds{k}$ is finite-dimensional and we have to calculate the dimension of $U_\chi(\mathfrak{g}_\mathds{k},e)$ as a $\mathds{k}$-vector space in the proof of Proposition~\ref{reduced basis}. However, the dimension of reduced $W$-superalgebra $U(\mathfrak{g},e)$ over $\mathbb{C}$ is infinite. Then Theorem~\ref{PBWC} can not be established by the same means as Theorem~\ref{reduced Wg} since we can not get similar conclusion as Proposition~\ref{reduced basis} by comparing the dimension of corresponding algebras directly.
\end{rem}

It is notable that the theory of finite $W$-superalgbras associated to queer Lie superalgebra $\mathfrak{q}_n$ over $\mathbb{C}$ has been initiated and systemically developed by Zhao in [\cite{Z2}]. He pointed out that some of the constructions for $\mathfrak{q}_n$ admit natrual generalizations in basic classical Lie superalgebras when $\text{dim}~\mathfrak{g}(-1)_{\bar1}$ is even. Therefore, in this case we can introduce the cohomology definition for the finite $W$-superalgebra associated to a basic classical Lie superalgebra by $U(\mathfrak{g},e):=H^0(\mathfrak{m},Q_\chi)$ and call $\mathcal {S}:=\chi+\text{ker\,ad}^*f$ the Slodowy slice through $\chi$. Following Zhao's treatment to the queer Lie superalgebra in ([\cite{Z2}], Theorem 3.5), one can get an isomorphism of graded $\mathbb{C}$-superalgebras under the Kazhdan filtration, i.e.

\begin{lemma}$^{[\cite{Z2}]}$\label{eveniso}
When $\text{dim}~\mathfrak{g}(-1)_{\bar1}$ is even, the map$$\text{gr}~U(\mathfrak{g},e)\rightarrow \mathbb{C}[\mathcal {S}]$$is an isomorphism of graded superalgebras. Moreover,
$$H^i(\mathfrak{m},Q_\chi)=H^i(\mathfrak{m},\text{gr}Q_\chi)=0$$ for any $i>0$.
\end{lemma}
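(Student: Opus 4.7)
The plan is to mimic the cohomological method of Gan--Ginzburg \cite{GG}, adapted to the super setting by Zhao \cite{Z2} for $\mathfrak{q}_n$. Because $\text{dim}\,\mathfrak{g}(-1)_{\bar1}$ is even, we have $\mathfrak{m}'=\mathfrak{m}$, so there is a connected unipotent algebraic supergroup $M$ with $\text{Lie}(M)=\mathfrak{m}$, and $\chi$ restricts to an honest character of $\mathfrak{m}$. This eliminates the technical pathologies that occur when $\text{dim}\,\mathfrak{g}(-1)_{\bar1}$ is odd (where the dangling vector $v_{(r+1)/2}$ prevents a clean group-theoretic interpretation).

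First, I would form the Chevalley--Eilenberg complex $C^{\bullet}(\mathfrak{m},Q_\chi)=\Lambda^{\bullet}\mathfrak{m}^{\ast}\otimes Q_\chi$ and equip it with a Kazhdan-type filtration: on $Q_\chi$ this is the one introduced in Section~3.2, and on $\Lambda^{\bullet}\mathfrak{m}^{\ast}$ it is dual to the Kazhdan grading on $\mathfrak{m}$, with signs arranged so that the differential has filtration degree zero. The associated graded complex then computes $H^{\bullet}(\mathfrak{m},\text{gr}\,Q_\chi)$, and by Section~3.2 we already know $\text{gr}\,Q_\chi\cong\mathbb{C}[e+\mathfrak{m}^{\perp}]$ as supercommutative $\mathbb{Z}_+$-graded algebras carrying an $\mathfrak{m}$-action.

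Second, I would exhibit an $M$-equivariant isomorphism of affine supervarieties
\[
M\times\mathcal{S}\;\xrightarrow{\;\sim\;}\;e+\mathfrak{m}^{\perp},\qquad (m,\xi)\mapsto\text{Ad}(m)\,\xi,
\]
with $M$ acting by left translation on itself. Bijectivity at the level of tangent spaces follows from Lemma~\ref{m'}, which gives $\mathfrak{m}^{\perp}=[\mathfrak{m},e]\oplus\mathfrak{g}^f$; the differential at $(1,e)$ picks out $[\mathfrak{m},e]$ tangentially to the $M$-orbit and $\mathfrak{g}^f$ transversely. Passing to coordinate superalgebras yields
\[
\mathbb{C}[e+\mathfrak{m}^{\perp}]\;\cong\;\mathbb{C}[M]\otimes\mathbb{C}[\mathcal{S}]
\]
as $(\mathfrak{m},\chi)$-modules, with $\mathfrak{m}$ acting only on the $\mathbb{C}[M]$ factor. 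Since $\mathbb{C}[M]$ is a cofree $\mathfrak{m}$-module under the left regular action twisted by $\chi$, a Koszul computation gives $H^0(\mathfrak{m},\text{gr}\,Q_\chi)\cong\mathbb{C}[\mathcal{S}]$ and $H^i(\mathfrak{m},\text{gr}\,Q_\chi)=0$ for $i>0$. The spectral sequence of the Kazhdan filtration on $C^{\bullet}(\mathfrak{m},Q_\chi)$ then has $E_1^{\bullet,q}=H^q(\mathfrak{m},\text{gr}\,Q_\chi)$, so it collapses at $E_1$. Consequently $H^i(\mathfrak{m},Q_\chi)=0$ for $i>0$ and $\text{gr}\,H^0(\mathfrak{m},Q_\chi)\cong\mathbb{C}[\mathcal{S}]$, giving the asserted isomorphism of graded superalgebras since $U(\mathfrak{g},e)=H^0(\mathfrak{m},Q_\chi)$.

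The main obstacle is the second step: constructing the slice isomorphism $M\times\mathcal{S}\cong e+\mathfrak{m}^{\perp}$ in the super category. One must check bijectivity as a morphism of supervarieties (not merely on reduced points), handle the odd directions of $\mathfrak{m}$ whose exponentials are only polynomial, and verify the transversality of $\mathfrak{g}^f$ to all $M$-orbits using the $\mathfrak{sl}_2$-representation structure of $\mathfrak{g}$. This is precisely where the parity hypothesis on $\text{dim}\,\mathfrak{g}(-1)_{\bar1}$ is indispensable: only then is $\mathfrak{m}$ itself a Lie subsuperalgebra on which $\chi$ is a character, so that $M=\exp\mathfrak{m}$ acts on $e+\mathfrak{m}^{\perp}$ by genuine translations shifting $\chi$ by coboundaries, which is what makes the Koszul argument go through.
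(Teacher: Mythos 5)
The paper does not prove this lemma itself; it invokes it as a citation from Zhao \cite{Z2} (Theorem 3.5), explaining only that when $\text{dim}\,\mathfrak{g}(-1)_{\bar1}$ is even one can carry over Zhao's Gan--Ginzburg-style argument from $\mathfrak{q}_n$ to the basic classical case. Your reconstruction is precisely that argument: Kazhdan-filtered Chevalley--Eilenberg complex, the $M$-equivariant slice decomposition $M\times\mathcal{S}\xrightarrow{\sim} e+\mathfrak{m}^\perp$ (whose linearization at $(1,e)$ is exactly Lemma~\ref{m'} with $\mathfrak{m}'=\mathfrak{m}$), cofreeness of $\mathbb{C}[M]$ as an $(\mathfrak{m},\chi)$-module, and collapse of the filtration spectral sequence at $E_1$. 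This matches the approach the paper references, and you correctly flag the two delicate points---that the slice isomorphism must be verified as a morphism of supervarieties (not just on reduced points) and that the even-parity hypothesis is what makes $\mathfrak{m}'=\mathfrak{m}$ and hence $M=\exp\mathfrak{m}$ a genuine unipotent supergroup; Zhao's own Remark 3.11 in \cite{Z2} notes that the argument breaks down in the odd case, consistent with your observation. One cosmetic caveat: the complex $\Lambda^\bullet\mathfrak{m}^*\otimes Q_\chi$ should be understood in the super sense (supersymmetric algebra on $\Pi\mathfrak{m}^*$), and $\mathbb{C}[M]\cong S(\mathfrak{m}_{\bar0}^*)\otimes\Lambda(\mathfrak{m}_{\bar1}^*)$, so the Koszul computation of $H^\bullet(\mathfrak{m},\mathbb{C}[M]\otimes\mathbb{C}_\chi)$ must track signs from the parity shift; these are routine but worth noting since the $W$-superalgebra case is the first place they genuinely matter.
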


Apart from Premet's treatment, Brundan-Goodwin-Kleshchev also introduced the PBW theorem for the finite $W$-algebras in a more direct way in ([\cite{BGK}], Section 3.2). Moreover, it is remarkable that the cohomology definition of finite $W$-algebras plays the key role in their proof. In the case when $\text{dim}~\mathfrak{g}(-1)_{\bar1}$ is even, if we follow their treatment and apply Lemma~\ref{eveniso}, the PBW theorem of finite $W$-superalgebras over $\mathbb{C}$ (i.e. the conclusions (i)-(iv) of (1) in Theorem~\ref{PBWC}) can also be formulated with less effort. All these will greatly simplify the proof. The reason why we did not adopt that method is based on the following considerations:

(1) if the proof of Theorem~\ref{PBWC} is carried in that way, the coefficients of each monomial for the generators of finite $W$-superalgebras can only be guaranteed over $\mathbb{C}$, but not over $\mathbb{Q}$. In the study of related topics on finite $W$-superalgebras in positive characteristic, we need to construct an admissible algebra, and the most critical step is to ensure all the numbers occurred be in $\mathbb{Q}$. However, it is difficult to find an efficient way to achieve this by that means.

(2) another important reason is that when $\text{dim}~\mathfrak{g}(-1)_{\bar1}$ is odd, one can observe that $\mathfrak{g}^e\oplus\mathbb{C}v_{\frac{r+1}{2}}$ is not necessary a subalgebra of $\mathfrak{g}$. Therefore, the lack of the cohomology definition of finite $W$-superalgebras makes it difficult to reach the same conclusion as Lemma~\ref{eveniso}. Without this result, it is hard to obtain the PBW basis of $U(\mathfrak{g},e)$. In fact, Zhao noticed that a key lemma in the establishment of the cohomology definition of finite $W$-superalgebras may go wrong for the case when $\text{dim}~\mathfrak{g}(-1)_{\bar1}$ is odd, see ([\cite{Z2}], Remark 3.11).

Based on above considerations, we mainly follow Premet's treatment of the finite $W$-algebra case in the proof of Theorem~\ref{PBWC}.

Recall that in Section 3.4 we have introduced the restricted root decomposition for the basic classical Lie superalgebra $\mathfrak{g}$ associated to the even nilpotent element $e$. Based on which, the construction of the generators for the finite $W$-superalgebra $U(\mathfrak{g},e)$ given in Theorem~\ref{PBWC} can be refined, i.e.

\begin{lemma}\label{refi}
Let $U(\mathfrak{g},e)$ be a finite $W$-superalgebra over $\mathbb{C}$. The generators of $U(\mathfrak{g},e)$ introduced in Theorem~\ref{PBWC} can be chosen as $\mathfrak{t}^e$-weight vectors satisfying

(1) $\Theta_k$ has the same $\mathfrak{t}^e$-weight as $Y_k$ for $1\leqslant  k\leqslant  l+q$;

(2) $\Theta_{l+q+1}$ has the same $\mathfrak{t}^e$-weight as $v_{\frac{r+1}{2}}$ for the case when $d_1$ is odd.

\end{lemma}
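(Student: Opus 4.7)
The plan is to exploit the natural adjoint action of $\mathfrak{t}^e$ on $U(\mathfrak{g},e)$ and then project each generator $\Theta_k$ furnished by Theorem~\ref{PBWC} onto the appropriate weight component. The key structural observations are: (a) since $\mathfrak{t}^e$ commutes with $h$, it stabilizes each graded piece $\mathfrak{g}(i)$ and hence preserves all the subalgebras $\mathfrak{m},\mathfrak{m}',\mathfrak{p},\widetilde{\mathfrak{p}}$; and (b) since $\mathfrak{t}^e\subseteq\mathfrak{g}^e$, the element $e$ is $\mathfrak{t}^e$-fixed, so the linear function $\chi=(e,\cdot)$ is $\mathfrak{t}^e$-invariant. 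Consequently the adjoint $\mathfrak{t}^e$-action preserves $N_\chi$, $I_\chi$, descends to $Q_\chi$, and commutes with $\mathrm{ad}\,\mathfrak{m}$, so it preserves $Q_\chi^{\mathrm{ad}\,\mathfrak{m}}\cong U(\mathfrak{g},e)$ and yields a weight-space decomposition of the finite $W$-superalgebra.

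First I would arrange, without loss of generality, that the PBW basis used in Theorem~\ref{PBWC} consists of $\mathfrak{t}^e$-weight vectors. Since $\mathfrak{p}=\bigoplus_{j\geqslant 2}[f,\mathfrak{g}(j)]\oplus\mathfrak{g}^e$ by Lemma~\ref{p} is a $\mathfrak{t}^e$-stable direct sum decomposition, we may take weight bases $x_1,\dots,x_l\in\mathfrak{g}^e_{\bar 0}$ and $y_1,\dots,y_q\in\mathfrak{g}^e_{\bar 1}$, together with weight bases $x_{l+1},\dots,x_m$ and $y_{q+1},\dots,y_n$ for the complement. For $\mathfrak{g}(-1)_{\bar 0}$ and $\mathfrak{g}(-1)_{\bar 1}$, the bilinear form $\langle x,y\rangle=\chi([x,y])$ has $\mathfrak{t}^e$-weight zero, so it pairs the $\alpha$-weight space with the $(-\alpha)$-weight space non-degenerately; we may therefore choose $\mathfrak{t}^e$-stable isotropic complements $\mathfrak{g}(-1)'_{\bar 0}$ and $\mathfrak{g}(-1)'_{\bar 1}$ and extract $\mathfrak{t}^e$-weight bases $u_1,\dots,u_{2s}$ and $v_1,\dots,v_r$ putting $\langle\cdot,\cdot\rangle$ into the standard form of Section~3.1. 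When $r$ is odd, a parity count shows that the zero-weight subspace of $\mathfrak{g}(-1)_{\bar 1}$ is odd-dimensional, so we may take $v_{\frac{r+1}{2}}$ to be a zero-weight vector with $\langle v_{\frac{r+1}{2}},v_{\frac{r+1}{2}}\rangle=1$.

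With such a weight basis in place, every PBW monomial $x^{\mathbf{a}}y^{\mathbf{b}}u^{\mathbf{c}}v^{\mathbf{d}}\otimes 1_\chi\in Q_\chi$ is a $\mathfrak{t}^e$-weight vector, and the isomorphism $U(\mathfrak{g},e)\cong Q_\chi^{\mathrm{ad}\,\mathfrak{m}}$ of Theorem~\ref{W-C2} is $\mathfrak{t}^e$-equivariant. Decomposing each generator $\Theta_k$ produced by Theorem~\ref{PBWC} as $\Theta_k=\sum_\alpha\Theta_k^{(\alpha)}$ according to $\mathfrak{t}^e$-weight and replacing $\Theta_k$ by $\Theta_k^{(\mathrm{wt}(Y_k))}$ (respectively $\Theta_{l+q+1}^{(\mathrm{wt}(v_{(r+1)/2}))}$) yields a weight-homogeneous element with the desired weight. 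The leading term $Y_k\otimes 1_\chi$ is preserved under this projection because it already sits in the weight space of weight $\mathrm{wt}(Y_k)$; any other maximal-$e$-degree summand of $\Theta_k(1_\chi)$ involves a monomial with $|\mathbf{a}|+|\mathbf{b}|+|\mathbf{c}|+|\mathbf{d}|\geqslant 2$ by part~(i) of Theorem~\ref{PBWC}, so it cannot compete with $Y_k$ for the role of leading term even after projection.

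Finally one checks that the new weight-homogeneous $\Theta_k$'s continue to satisfy the PBW statement and the commutation relations \eqref{Theta2}, \eqref{Thetacom2} of Theorem~\ref{PBWC}. This is essentially automatic: the $\Lambda_{\Theta_k}^{\mathrm{max}}$ data is unchanged, so the monomials in the new generators still form a $\mathbb{C}$-basis of $U(\mathfrak{g},e)$ by the argument of Theorem~\ref{PBWC}(ii), and the commutation relations continue to hold modulo the Kazhdan filtration. The only genuine subtlety I foresee is the dimension-parity argument producing a zero-weight $v_{\frac{r+1}{2}}$ when $r$ is odd; recording this carefully (in particular, that over $\mathbb{C}$ no non-zero $\mathfrak{t}^e$-weight satisfies $2\alpha=0$, so the self-pairing of $v_{\frac{r+1}{2}}$ forces it to lie in the zero weight space) is the main point that requires attention, and the rest of the proof is a direct projection argument.
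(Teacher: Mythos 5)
Your proposal is correct and follows exactly the line of argument the paper invokes: the paper's one-sentence proof simply defers part (1) to Premet's Lemma~2.2 of [{\rm P3}] and declares part (2) trivial, and Premet's lemma is precisely the weight-space projection argument you spell out. You correctly observe that (i) $\mathfrak{t}^e\subseteq\mathfrak{g}(0)_{\bar0}$ commutes with $h$ and fixes $e$, so the adjoint $\mathfrak{t}^e$-action stabilizes $\mathfrak{m}$, $\widetilde{\mathfrak{p}}$, $N_\chi$, $I_\chi$ and hence descends to $Q_\chi^{\mathrm{ad}\,\mathfrak{m}}\cong U(\mathfrak{g},e)$; (ii) one may arrange the PBW data of Section~3.1 to consist of $\mathfrak{t}^e$-weight vectors by using the $\mathfrak{t}^e$-stability of the direct sum decompositions in Lemma~\ref{p} and the fact that $\langle\cdot,\cdot\rangle=\chi([\cdot,\cdot])$ has weight zero; and (iii) projecting $\Theta_k$ to the $\mathrm{wt}(Y_k)$-weight component preserves the leading term $Y_k$ because $Y_k$ itself is a $\mathrm{wt}(Y_k)$-weight vector. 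Your parity observation about $v_{\frac{r+1}{2}}$ being a zero-weight vector when $r$ is odd (forced by $\langle v_{\frac{r+1}{2}},v_{\frac{r+1}{2}}\rangle\neq 0$ and the form having weight zero) is exactly why part (2) is immediate from $\Theta_{l+q+1}(1_\chi)=v_{\frac{r+1}{2}}\otimes 1_\chi$, which is the paper's brief justification. The only slight imprecision is your remark that one should ``check'' the commutation relations still hold — this is indeed automatic since the new generators have the same leading terms and the relations are filtered statements, but it would be cleaner to say so explicitly rather than implying it is something to be verified.
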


\begin{proof}
The proof of part (1) is a straightforward generalization of ([\cite{P3}], Lemma 2.2), thus will be omitted. Part (2) can be easily observed by the definition of $\Theta_{l+q+1}$ in Theorem~\ref{PBWC}(2)(i)(b).
\end{proof}

Following Gan-Ginzburg's treatment for the finite $W$-algebra case in ([\cite{GG}], Section 2.1), we can define a linear action of $\mathbb{C}^*$ on $\mathfrak{g}$. First, consider the Lie algebra homomorphism $\mathfrak{sl}_2\longrightarrow\mathfrak{g}_{\bar{0}}$ defined by
$$\left(\begin{array}{cc} 0 & 1\\0 & 0\end{array}\right)\mapsto e,
\qquad \left(\begin{array}{@{\hspace{0pt}}c@{\hspace{8pt}}c@{\hspace{0pt}}} 1 & 0\\0 & -1\end{array}\right)\mapsto h,
\qquad \left(\begin{array}{cc} 0 & 0\\1 & 0\end{array}\right)\mapsto f.$$
This Lie algebra homomorphism exponentiates to a rational homomorphism $\tilde{\gamma}:~SL_2\longrightarrow G_{\text{ev}}\longrightarrow G$. Put
$$\gamma:\mathbb{C}^*\longrightarrow G,\qquad \gamma(t)=\tilde{\gamma}
\left(\begin{array}{@{\hspace{0pt}}c@{\hspace{8pt}}c@{\hspace{0pt}}} t & 0\\0 & t^{-1}\end{array}\right),
\qquad \forall~t\in\mathbb{C}^*.$$

Define $\sigma:=\text{Ad}\gamma(-1)$. For any $i\in\mathbb{Z}$, let $x\in\mathfrak{g}(i)$ be a homogeneous element in $\mathfrak{g}$. Since the Dynkin grading of the Lie superalgebra $\mathfrak{g}$ is obtained by the action of ad$h$, we have $\sigma(x)=(-1)^ix$, i.e. $\sigma$ is an element of order  $\leqslant2$ in $\text{Ad}G$. As $\sigma$ perserves the left ideal $I_\chi$ of $U(\mathfrak{g})$ and the subalgebra $\mathfrak{m}$ of $\mathfrak{g}$, the action of $\sigma$ on $Q_\chi^{\text{ad}\mathfrak{m}}$ is an even automorphism of the finite $W$-superalgebra $U(\mathfrak{g},e)\cong Q_\chi^{\text{ad}\mathfrak{m}}$. Given $(\mathbf{a},\mathbf{b},\mathbf{c},\mathbf{d})\in\mathbb{Z}_+^m\times\mathbb{Z}_2^n\times\mathbb{Z}_+^s\times\mathbb{Z}_2^t$, we have
\begin{equation}\label{sigmadef}
\sigma(x^\mathbf{a}y^\mathbf{b}u^\mathbf{c}v^\mathbf{d}\otimes1_\chi)=(-1)^{|(\mathbf{a},\mathbf{b},\mathbf{c},\mathbf{d})|_e}
x^\mathbf{a}y^\mathbf{b}u^\mathbf{c}v^\mathbf{d}\otimes1_\chi
\end{equation}
for any $x^\mathbf{a}y^\mathbf{b}u^\mathbf{c}v^\mathbf{d}\otimes1_\chi\in Q_\chi$.

\begin{prop}\label{fijeo}
Retain the notations in Theorem~\ref{PBWC}. The following are true:

(1) when $\text{dim}~\mathfrak{g}(-1)_{\bar1}$ is even, define the polynomials $F_{ij}\in\mathbb{Q}[\Theta_1,\cdots,\Theta_{l+q}]$ by
$$F_{ij}(\Theta_1,\cdots,\Theta_{l+q}):=[\Theta_i,\Theta_j]$$for $1\leqslant i,j\leqslant l+q$.
Moreover, if $[Y_i,Y_j]=\sum\limits_{k=1}^{l+q}\alpha_{ij}^kY_k$ in $\mathfrak{g}^e$ for $1\leqslant i,j\leqslant l+q$, we have
\begin{equation}\label{refine1}
F_{ij}(\Theta_1,\cdots,\Theta_{l+q})\equiv\sum\limits_{k=1}^{l+q}\alpha_{ij}^k\Theta_k+q_{ij}(\Theta_1,\cdots,\Theta_{l+q})\qquad(\text{mod}~\tilde{H}^{m_i+m_j}).
\end{equation}

(2) when $\text{dim}~\mathfrak{g}(-1)_{\bar1}$ is odd, define the polynomials $F_{ij}(\Theta_1,\cdots,\Theta_{l+q+1})\in\mathbb{Q}[\Theta_1,\cdots,$\\$\Theta_{l+q+1}]$ by
$$F_{ij}(\Theta_1,\cdots,\Theta_{l+q+1}):=[\Theta_i,\Theta_j]$$for $1\leqslant i,j\leqslant l+q+1$.
Moreover, if $[Y_i,Y_j]=\sum\limits_{k=1}^{l+q}\alpha_{ij}^kY_k$ in $\mathfrak{g}^e$ for $1\leqslant i,j\leqslant l+q$, we have \begin{equation}\label{refine2}
F_{ij}(\Theta_1,\cdots,\Theta_{l+q+1})\equiv\sum\limits_{k=1}^{l+q}\alpha_{ij}^k\Theta_k+q_{ij}(\Theta_1,\cdots,\Theta_{l+q+1})\qquad(\text{mod}~\tilde{H}^{m_i+m_j}).\end{equation}
For the case $i=j=l+q+1$ we have $F_{l+q+1,l+q+1}(\Theta_1,\cdots,\Theta_{l+q+1})=1\otimes1_\chi$.

\end{prop}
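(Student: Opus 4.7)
The plan is to refine the congruence in Theorem~\ref{PBWC}(iv) from modulus $\tilde{H}^{m_i+m_j+1}$ to $\tilde{H}^{m_i+m_j}$ by exploiting the involution $\sigma=\mathrm{Ad}\,\gamma(-1)$, which acts on the PBW basis of $Q_\chi$ via $\sigma(x^{\mathbf{a}}y^{\mathbf{b}}u^{\mathbf{c}}v^{\mathbf{d}}\otimes 1_\chi)=(-1)^{|(\mathbf{a},\mathbf{b},\mathbf{c},\mathbf{d})|_e}x^{\mathbf{a}}y^{\mathbf{b}}u^{\mathbf{c}}v^{\mathbf{d}}\otimes 1_\chi$ by \eqref{sigmadef}, and hence restricts to an even algebra automorphism of $U(\mathfrak{g},e)\cong Q_\chi^{\mathrm{ad}\,\mathfrak{m}}$ preserving the Kazhdan filtration $\tilde{H}^\bullet$.

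First I would replace each generator $\Theta_k$ of Theorem~\ref{PBWC}(i) by its $\sigma$-eigenprojection $\tfrac{1}{2}(\Theta_k+(-1)^{m_k}\sigma(\Theta_k))$. Since the leading term $Y_k\otimes 1_\chi$ (respectively $v_{(r+1)/2}\otimes 1_\chi$ when $k=l+q+1$) already has $\sigma$-eigenvalue $(-1)^{m_k}$, this substitution leaves the normalization of Theorem~\ref{PBWC}(i) intact and ensures $\sigma(\Theta_k)=(-1)^{m_k}\Theta_k$. Similarly, I would normalize the polynomial $q_{ij}$ of \eqref{Theta2}/\eqref{Thetacom2} to be Kazhdan-homogeneous of total degree $m_i+m_j+2$: by Theorem~\ref{PBWC}(ii) the lower-degree part of $q_{ij}$ lies in $\tilde{H}^{m_i+m_j+1}$ and may therefore be absorbed into the error term of the congruence. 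With this normalization $q_{ij}(\Theta_1,\dots)$ is itself a $\sigma$-eigenvector of eigenvalue $(-1)^{m_i+m_j+2}=(-1)^{m_i+m_j}$.

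The main step is now an eigenvalue argument on the associated graded algebra. Since Theorem~\ref{PBWC}(iii) identifies $\mathrm{gr}\,U(\mathfrak{g},e)$ with the free supercommutative algebra on generators $\tilde{\Theta}_k$ of Kazhdan degree $m_k+2$, and $\sigma$ acts on each $\tilde{\Theta}_k$ by $(-1)^{m_k+2}=(-1)^{m_k}$, it follows that $\sigma$ acts on the degree-$d$ component $\tilde{H}^d/\tilde{H}^{d-1}$ as multiplication by $(-1)^d$. Set
\[
D_{ij}:=[\Theta_i,\Theta_j]-\sum_{k=1}^{l+q}\alpha_{ij}^k\Theta_k-q_{ij}(\Theta_1,\dots).
\]
Each of the three summands is a $\sigma$-eigenvector of eigenvalue $(-1)^{m_i+m_j}$: the supercommutator $[\Theta_i,\Theta_j]$ has eigenvalue $(-1)^{m_i}(-1)^{m_j}$; the term $\alpha_{ij}^k\Theta_k$ is non-zero only when $m_k=m_i+m_j$; and $q_{ij}(\Theta_1,\dots)$ has the required eigenvalue by the previous paragraph. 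Hence $D_{ij}\in\tilde{H}^{m_i+m_j+1}$ is a $\sigma$-eigenvector of eigenvalue $(-1)^{m_i+m_j}$. If $D_{ij}$ did not lie in $\tilde{H}^{m_i+m_j}$, its non-zero image in $\tilde{H}^{m_i+m_j+1}/\tilde{H}^{m_i+m_j}$ would be a $\sigma$-eigenvector of eigenvalue $(-1)^{m_i+m_j+1}$, contradicting the computed eigenvalue. Therefore $D_{ij}\in\tilde{H}^{m_i+m_j}$, which is \eqref{refine1} and the generic case of \eqref{refine2}. Once $F_{ij}$ is defined as the unique PBW-ordered polynomial expression for $[\Theta_i,\Theta_j]$ in the $\Theta_k$'s, with rational coefficients by the admissible-reduction construction of Section~6, this yields the statement.

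The remaining case $i=j=l+q+1$ (when $d_1$ is odd) reduces to a direct calculation: one has $[\Theta_{l+q+1},\Theta_{l+q+1}](1_\chi)=2(\Theta_{l+q+1}(1_\chi))^2=2v_{(r+1)/2}^2\otimes 1_\chi=\langle v_{(r+1)/2},v_{(r+1)/2}\rangle\otimes 1_\chi=1\otimes 1_\chi$, whence $F_{l+q+1,l+q+1}=1$. The main obstacle, in my view, is the bookkeeping needed to verify that $q_{ij}$ can be taken Kazhdan-homogeneous of degree $m_i+m_j+2$ compatibly with the form in Theorem~\ref{PBWC}, i.e.\ that absorbing its lower-degree residues into the error term does not disturb the normalizations of the $\sigma$-eigenvector generators; once this accounting is clean, the parity argument on $\mathrm{gr}\,U(\mathfrak{g},e)$ finishes the proof.
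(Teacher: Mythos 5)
Your proof is correct and follows essentially the same route as the paper: both rely on the involution $\sigma=\mathrm{Ad}\,\gamma(-1)$ from \eqref{sigmadef} to gain one level of the Kazhdan filtration via a parity/eigenvalue argument. The paper's proof is just a one-sentence citation of Lemma~\ref{refi} and \eqref{sigmadef}, while you make the underlying mechanism explicit (the $\sigma$-eigenprojection of the $\Theta_k$, the Kazhdan-homogeneous normalization of $q_{ij}$, and the eigenvalue contradiction in $\tilde{H}^{m_i+m_j+1}/\tilde{H}^{m_i+m_j}$), which is a legitimate and clean unwinding of what the paper asserts implicitly.
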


\begin{proof}
Compared with the results obtained in Theorem~\ref{PBWC}, it is notable that the modulo part in \eqref{Theta2} and \eqref{Thetacom2} are in $\tilde{H}^{m_i+m_j+1}$, while the modulo part in the corresponding place of \eqref{refine1} and \eqref{refine2} are in $\tilde{H}^{m_i+m_j}$. In fact, if choosing the generators of $U(\mathfrak{g},e)$ as what we have introduced in Lemma~\ref{refi}, it follows from \eqref{Theta2}, \eqref{Thetacom2} and \eqref{sigmadef} that \eqref{refine1} and \eqref{refine2} establish.

\end{proof}

\begin{rem}
When $\text{dim}~\mathfrak{g}(-1)_{\bar1}$ is odd, there are no obvious formulas for the leading term of $F_{i,l+q+1}(\Theta_1,\cdots,\Theta_{l+q+1})$ for $1\leqslant i\leqslant l+q$. For both cases in Proposition~\ref{fijeo}, we stress that although the $F_{ij}(\Theta_1,\cdots,\Theta_{l+q})'s$ (or $F_{ij}(\Theta_1,\cdots,\Theta_{l+q+1})'s$, respectively) live in an associative algebra which is, in general, non-supercommutative, the PBW theorem of $U(\mathfrak{g},e)$ in Theorem~\ref{PBWC} allows us to view the
$F_{ij}(\Theta_1,\cdots,\Theta_{l+q})'s$ (or $F_{ij}(\Theta_1,\cdots,\Theta_{l+q+1})'s$) as polynomials in $l+q$ (or $l+q+1$) variables with coefficients in $\mathbb{Q}$.
\end{rem}

Along the same discussion as Remark~\ref{redun}, if deleting the redundant ones, the number of defining relations for the generators of $U(\mathfrak{g},e)$ can be reduced. The following theorem completely characterizes the structure of finite $W$-superalgebra $U(\mathfrak{g},e)$ over the field of complex numbers.

\begin{theorem}\label{relationc}
The following are true:

(1) when $\text{dim}~\mathfrak{g}(-1)_{\bar1}$ is even, the finite $W$-superalgebra $U(\mathfrak{g},e)$ is generated by the homogeneous elements $\Theta_1,\cdots,\Theta_{l+q}$ (where $\Theta_1,\cdots,\Theta_{l}\in U(\mathfrak{g},e)_{\bar0}$ and $\Theta_{l+1},\cdots,\Theta_{l+q}\in U(\mathfrak{g},e)_{\bar1}$) subject to the relations
$$[\Theta_i,\Theta_j]=F_{ij}(\Theta_1,\cdots,\Theta_{l+q}),\qquad[\Theta_j,\Theta_i]=-(-1)^{|\Theta_i||\Theta_j|}[\Theta_i,\Theta_j],$$
where $1\leqslant i<j\leqslant l+q$ and $l+1\leqslant i=j\leqslant l+q$.

(2) when $\text{dim}~\mathfrak{g}(-1)_{\bar1}$ is odd, the finite $W$-superalgebra $U(\mathfrak{g},e)$ is generated by the homogeneous elements $\Theta_1,\cdots,\Theta_{l+q+1}$ (where $\Theta_1,\cdots,\Theta_{l}\in U(\mathfrak{g},e)_{\bar0}$ and $\Theta_{l+1},\cdots,\Theta_{l+q+1}$\\$\in U(\mathfrak{g},e)_{\bar1}$) subject to the relations
$$[\Theta_i,\Theta_j]=F_{ij}(\Theta_1,\cdots,\Theta_{l+q+1}),,\qquad[\Theta_j,\Theta_i]=-(-1)^{|\Theta_i||\Theta_j|}[\Theta_i,\Theta_j],$$
where $1\leqslant i<j\leqslant l+q+1$ and $l+1\leqslant i=j\leqslant l+q+1$.
\end{theorem}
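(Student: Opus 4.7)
The plan is to prove Theorem~\ref{relationc} by the standard ``abstract algebra versus concrete algebra'' strategy: realize $U(\mathfrak{g},e)$ as a quotient of a free object, and use the PBW theorem (Theorem~\ref{PBWC}) to verify that no additional relations are needed. Concretely, let $\tilde{U}$ denote the unital associative $\mathbb{C}$-superalgebra presented by generators $\tilde{\Theta}_1,\ldots,\tilde{\Theta}_{L}$ (where $L=l+q$ in the even case and $L=l+q+1$ in the odd case), with $\mathbb{Z}_2$-grading as specified in the statement, and with defining relations
\[
[\tilde{\Theta}_i,\tilde{\Theta}_j]=F_{ij}(\tilde{\Theta}_1,\ldots,\tilde{\Theta}_{L}),\qquad [\tilde{\Theta}_j,\tilde{\Theta}_i]=-(-1)^{|\tilde{\Theta}_i||\tilde{\Theta}_j|}[\tilde{\Theta}_i,\tilde{\Theta}_j]
\]
in the ranges prescribed by Remark~\ref{redun}. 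Proposition~\ref{fijeo} guarantees that the assignment $\tilde{\Theta}_i\mapsto\Theta_i$ extends to a $\mathbb{Z}_2$-graded algebra homomorphism $\Psi:\tilde{U}\longrightarrow U(\mathfrak{g},e)$, and Theorem~\ref{PBWC}(1)(ii) (resp.\ (2)(ii)) ensures that $\Psi$ is surjective because the $\Theta_i$ generate $U(\mathfrak{g},e)$.

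The main task is injectivity. For this I would equip $\tilde{U}$ with the filtration $\tilde{F}_k\tilde{U}$ induced by declaring $\deg\tilde{\Theta}_i=m_i+2$ (the Kazhdan degree), and show by induction on the filtration degree that $\tilde{U}$ is spanned over $\mathbb{C}$ by the ordered monomials
\[
\tilde{\Theta}_1^{a_1}\cdots\tilde{\Theta}_l^{a_l}\tilde{\Theta}_{l+1}^{b_1}\cdots\tilde{\Theta}_{l+q}^{b_q}\bigl(\tilde{\Theta}_{l+q+1}^{c}\bigr)
\]
with $a_i\in\mathbb{Z}_+$, $b_j,c\in\{0,1\}$ (the last factor is present only in the odd case). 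Given an arbitrary word in the generators, one uses the relations $[\tilde{\Theta}_i,\tilde{\Theta}_j]=F_{ij}$ to swap adjacent out-of-order pairs; by Proposition~\ref{fijeo}, each such swap replaces the pair by its ordered counterpart plus a polynomial whose terms lie strictly lower in the filtration. Iterating this bubble-sort procedure (inducting on word length and then on filtration degree) brings any monomial into the above ordered form. In the odd case, a separate step handles $\tilde{\Theta}_{l+q+1}^2$: the relation $[\tilde{\Theta}_{l+q+1},\tilde{\Theta}_{l+q+1}]=2\tilde{\Theta}_{l+q+1}^2=1$ forces $\tilde{\Theta}_{l+q+1}^2=\tfrac12$, so the exponent of $\tilde{\Theta}_{l+q+1}$ can always be reduced to $\{0,1\}$; similarly the odd generators $\tilde{\Theta}_{l+1},\ldots,\tilde{\Theta}_{l+q}$ satisfy $\tilde{\Theta}_i^2=\tfrac12 F_{ii}(\tilde{\Theta}_1,\ldots,\tilde{\Theta}_L)$, reducing their exponents to $\{0,1\}$ modulo lower-degree terms.

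Once this spanning statement is established, injectivity of $\Psi$ follows by comparing with Theorem~\ref{PBWC}: the images $\Psi(\tilde{\Theta}_1^{a_1}\cdots)=\Theta_1^{a_1}\cdots$ form a $\mathbb{C}$-basis of $U(\mathfrak{g},e)$, so any nontrivial linear relation among the ordered monomials in $\tilde{U}$ would descend to one in $U(\mathfrak{g},e)$, which is impossible. Thus the ordered monomials are linearly independent in $\tilde{U}$ and, together with the spanning result, form a basis; $\Psi$ is therefore a bijection.

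The hard part will be the bookkeeping for the bubble-sort step, particularly verifying that the polynomial identities $F_{ij}$ produced by Proposition~\ref{fijeo} interact correctly with higher powers of the generators so that the induction on Kazhdan degree truly closes. A secondary subtlety is confirming that the relations listed in Remark~\ref{redun} suffice (the remaining commutators being recoverable from supersymmetry of the bracket and from the Jacobi-type consequences built into the $F_{ij}$); this is clear at the level of the defining commutators themselves, but one must check that the induction does not implicitly invoke commutators outside the listed ranges, which is handled by using supersymmetry of $[\cdot,\cdot]$ as an axiom of $\tilde{U}$ from the outset.
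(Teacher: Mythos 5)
Your proposal is correct and follows essentially the same route as the paper: realize $U(\mathfrak{g},e)$ as a quotient of a free associative superalgebra by the ideal generated by $[\Theta_i,\Theta_j]-F_{ij}$, install the Kazhdan filtration, and run a PBW-type rewriting/induction argument (the paper phrases this as ``upward induction on the Kazhdan degree and downward induction on the number of factors,'' citing Premet's Lemma 4.1 in [\cite{P4}]) to show the ordered monomials span, then invoke Theorem~\ref{PBWC} to deduce linear independence and hence the isomorphism. Your explicit treatment of $\tilde{\Theta}_{l+q+1}^{2}=\tfrac12$ and $\tilde{\Theta}_i^{2}=\tfrac12 F_{ii}$ is a detail the paper leaves implicit, but it does not change the method.
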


\begin{proof}
Since the proof goes through for both cases, we will just consider part (2).

Let $I$ be the two-sided ideal of the free associative superalgebra $\mathbb{C}[T_1,\cdots,T_l;$\\$T_{l+1},\cdots,T_{l+q+1}]$ generated by all$$[T_i,T_j]-F_{ij}(T_1,\cdots,T_l;T_{l+1},\cdots,T_{l+q+1})
\quad(\text{where}~1\leqslant  i<j\leqslant  l+q+1)$$and$$[T_i,T_i]-F_{ii}(T_1,\cdots,T_l;T_{l+1},\cdots,T_{l+q+1})\quad(\text{where}~l+1\leqslant  i\leqslant  l+q+1).$$ Let $U:=\mathbb{C}[T_1,\cdots,T_l;T_{l+1},\cdots,T_{l+q+1}]/I$. With the same notation as Section 3.2, for $k\in\mathbb{Z}_+$ we let $\text{F}_kU(\mathfrak{g},e)$ denote the $\mathbb{C}$-span of all products $\Theta_{j_1}\cdots\Theta_{j_i}$ with $(m_{j_1}+2)+\cdots+(m_{j_i}+2)\leqslant k$ where $i\in\mathbb{Z}_+$ (recall that $m_{j_i}$ is the weight of $Y_{j_i}$). By the same discussion as the finite $W$-algebra case in ([\cite{P4}], Lemma 4.1) (i.e. argue by upward induction on the Kazhdan degree $k$ and downward induction on the number of elements $i$ for each product of the monomials based on \eqref{Thetacom2}), we have $U\cong U(\mathfrak{g},e)$ as $\mathbb{C}$-algebras.
\end{proof}

\subsection{Another definition of finite $W$-superalgebras}

In light of Gan-Ginzburg's definition of $W$-algebras over $\mathbb{C}$ in [\cite{GG}], Wang defined the reduced $W$-superalgebra over $\mathds{k}=\overline{\mathbb{F}}_p$ in a new way in ([\cite{W}], Remark 70), which he thought makes better sense (where it was called the modular $W$-superalgebra), i.e.

\begin{defn}$^{[\cite{W}]}$\label{rwc}
Define the reduced $W$-superalgebra over $\mathds{k}$ by
\[\begin{array}{ll}
&W'_{\chi,\mathds{k}}: =(Q^\chi_\chi)^{\text{ad}\,\mathfrak{m}'_{\mathds{k}}}.
\end{array}\]
\end{defn}

In light of Wang's definition, we can also define the corresponding finite $W$-superalgebra over the field of complex numbers.

\begin{defn}\label{rewcc}
Define the finite $W$-superalgebra over $\mathbb{C}$ by\[\begin{array}{ll}
&W'_\chi:=(U(\mathfrak{g})/I_\chi)^{\text{ad}\,\mathfrak{m}'}\cong Q_\chi^{\text{ad}\,\mathfrak{m}'}\\
\equiv&\{\bar{y}\in U(\mathfrak{g})/I_\chi|~[a,y]\in I_\chi, \forall a\in\mathfrak{m}'\},
\end{array}\]
where $\bar{y}_1\cdot\bar{y}_2:=\overline{y_1y_2}$ for $\bar{y}_1,\bar{y}_2\in W'_\chi$.
\end{defn}

\begin{rem}\label{QQ'}
When $\text{dim}~\mathfrak{g}(-1)_{\bar1}$ is even, it is immediate that $\mathfrak{m}'=\mathfrak{m}$ by definition. Thus we can obtain from Theorem~\ref{an iso} that $U(\mathfrak{g},e)\cong W'_\chi$ as $\mathbb{C}$-algebras. However, the situation changes in the case when $\text{dim}~\mathfrak{g}(-1)_{\bar1}$ is odd. Since $\mathfrak{m}$ is a proper subalgebra of $\mathfrak{m}'$, it follows that $W'_\chi$ is a subalgebra of $Q_\chi^{\text{ad}\mathfrak{m}}=U(\mathfrak{g},e)$. In fact, we have $$Q_\chi^{\text{ad}\mathfrak{m}'}=[v_{\frac{r+1}{2}},Q_\chi^{\text{ad}\mathfrak{m}}]$$as $\mathbb{C}$-algebras.
\end{rem}

\begin{proof}

Firstly, we claim that $Q_\chi^{\text{ad}\mathfrak{m}'}$ is strictly contained in $Q_\chi^{\text{ad}\mathfrak{m}}$. It is immediate from Theorem~\ref{PBWC}(2)(i) that $\Theta_{l+q+1}(1_\chi)=v_{\frac{r+1}{2}}\otimes1_\chi\in Q_\chi^{\text{ad}\mathfrak{m}}$. By definition we have $v_{\frac{r+1}{2}}\in\mathfrak{m}'$, and $[v_{\frac{r+1}{2}},v_{\frac{r+1}{2}}\otimes1_\chi]=[v_{\frac{r+1}{2}},v_{\frac{r+1}{2}}]\otimes1_\chi=\chi([v_{\frac{r+1}{2}},v_{\frac{r+1}{2}}])\otimes1_\chi=1\otimes1_\chi$, then $v_{\frac{r+1}{2}}\otimes1_\chi\in
Q_\chi^{\text{ad}\mathfrak{m}}$, but which is not in $Q_\chi^{\text{ad}\mathfrak{m}'}$.

(1) For any $\mathbb{Z}_2$-homogeneous element $x\in Q_\chi^{\text{ad}\mathfrak{m}}$, we claim that
$$[v_{\frac{r+1}{2}},x]\subseteq Q_\chi^{\text{ad}\mathfrak{m}'}.$$

(i) Recall that $\Theta_{l+q+1}(1_\chi)=v_{\frac{r+1}{2}}\otimes1_\chi\in Q_\chi^{\text{ad}\mathfrak{m}}$. Since $x\in Q_\chi^{\text{ad}\mathfrak{m}}$, it follows from the Jacobi identity that $[v_{\frac{r+1}{2}},x]\in Q_\chi^{\text{ad}\mathfrak{m}}$.

(ii) Since $v_{\frac{r+1}{2}}\in\mathfrak{g}(-1)_{\bar{1}}$, then $[v_{\frac{r+1}{2}},v_{\frac{r+1}{2}}]\in\mathfrak{g}(-2)_{\bar0}\subseteq\mathfrak{m}_{\bar0}$, and it follows from $x\in Q_\chi^{\text{ad}\mathfrak{m}}$ that $[[v_{\frac{r+1}{2}},v_{\frac{r+1}{2}}],x]=0$. On the other hand,
\[\begin{array}{ccl}
[[v_{\frac{r+1}{2}},v_{\frac{r+1}{2}}],x]&=&[v_{\frac{r+1}{2}},[v_{\frac{r+1}{2}},x]]+(-1)^{|x|}[[v_{\frac{r+1}{2}},x],v_{\frac{r+1}{2}}]\\
&=&[v_{\frac{r+1}{2}},[v_{\frac{r+1}{2}},x]]-(-1)^{|x|}\cdot(-1)^{|x|+1}[v_{\frac{r+1}{2}},[v_{\frac{r+1}{2}},x]]\\
&=&[v_{\frac{r+1}{2}},[v_{\frac{r+1}{2}},x]]+(-1)^{2|x|+2}[v_{\frac{r+1}{2}},[v_{\frac{r+1}{2}},x]]\\
&=&2[v_{\frac{r+1}{2}},[v_{\frac{r+1}{2}},x]],
\end{array}
\]
then $[v_{\frac{r+1}{2}},[v_{\frac{r+1}{2}},x]]=0$, i.e. $[v_{\frac{r+1}{2}},x]\in Q_\chi^{\text{ad}v_{\frac{r+1}{2}}}$.

Since $\mathfrak{m}'=\mathfrak{m}\oplus\mathbb{C}v_{\frac{r+1}{2}}$ as vector spaces, and it follows from (i) and (ii) that $[v_{\frac{r+1}{2}},x]\in Q_\chi^{\text{ad}\mathfrak{m}'}$, then $[v_{\frac{r+1}{2}},Q_\chi^{\text{ad}\mathfrak{m}}]\subseteq  Q_\chi^{\text{ad}\mathfrak{m}'}$ by the arbitrary of $x$.

(2) We claim that the reverse of (1) is also true, i.e. $[v_{\frac{r+1}{2}},Q_\chi^{\text{ad}\mathfrak{m}}]\supseteq Q_\chi^{\text{ad}\mathfrak{m}'}$.

As $v_{\frac{r+1}{2}}\in\mathfrak{m}$, for any $\mathbb{Z}_2$-homogeneous element $y\in Q_\chi^{\text{ad}\mathfrak{m}'}\subseteq Q_\chi^{\text{ad}\mathfrak{m}}$, we have $[v_{\frac{r+1}{2}},y]=0$, and $yv_{\frac{r+1}{2}}\otimes1_\chi\in Q_\chi^{\text{ad}\mathfrak{m}}$ by the Jacobi identity. Since$$[v_{\frac{r+1}{2}},yv_{\frac{r+1}{2}}\otimes1_\chi]=([v_{\frac{r+1}{2}},y]v_{\frac{r+1}{2}}+(-1)^{|y|}y[v_{\frac{r+1}{2}},v_{\frac{r+1}{2}}])\otimes1_\chi=(-1)^{|y|}y,$$i.e. $y=[v_{\frac{r+1}{2}},(-1)^{|y|}yv_{\frac{r+1}{2}}\otimes1_\chi]$, it can be concluded that $[v_{\frac{r+1}{2}},Q_\chi^{\text{ad}\mathfrak{m}}]\supseteq Q_\chi^{\text{ad}\mathfrak{m}'}$ by the arbitrary of $y$.

All the discussions in (1) and (2) complete the proof.
\end{proof}

\section{Finite $W$-superalgebras and their subalgebras in positive characteristic}

This section is a generalization of the finite $W$-algebras theory introduced by Premet in [\cite{P7}]. Recall that in Section 5 we have studied the structure of reduced $W$-superalgebras in positive characteristic, and finite $W$-superalgebras over the field of complex numbers in Section 6. From which we know that the parity of $\text{dim}~\mathfrak{g}(-1)_{\bar1}$ plays the key role for the construction of finite $W$-superalgebras. {\bf Recall that $\text{dim}~\mathfrak{g}(-1)_{\bar1}$ and $d_1=\text{dim}~\mathfrak{g}_{\bar1}-\text{dim}~\mathfrak{g}^e_{\bar1}$ have the same parity by Remark~\ref{centralizer}}. Based on the parity of $d_1$, we will study the construction of the finite $W$-superalgebras in positive characteristic and their subalgebras for each case respectively. First assume that

(1) When $d_1$ is even, let $U(\mathfrak{g}_A,e)$ denote the $A$-span of all monomials $\Theta_1^{a_1}\cdots\Theta_l^{a_l}\cdot$\\$\Theta_{l+1}^{b_1}\cdots\Theta_{l+q}^{b_{q}}$ with $(a_1,\cdots,a_l;b_1,\cdots,b_q)\in\mathbb{Z}_+^l\times\mathbb{Z}_2^q$;

(2) When $d_1$ is odd, let $U(\mathfrak{g}_A,e)$ denote the $A$-span of all monomials $\Theta_1^{a_1}\cdots\Theta_l^{a_l}\cdot$\\$\Theta_{l+1}^{b_1}\cdots\Theta_{l+q}^{b_{q}}\Theta_{l+q+1}^c$ with $(a_1,\cdots,a_l;b_1,\cdots,b_q;c)\in\mathbb{Z}_+^l\times\mathbb{Z}_2^q\times\mathbb{Z}_2^1$.

Our assumptions on $A$ (see Section 3.1) guarantee $U(\mathfrak{g}_A,e)$ is an $A$-subalgebra of $U(\mathfrak{g},e)$ contained in
$(\text{End}_{\mathfrak{g}_A}Q_{\chi,A})^{\text{op}}$. By the definition of $Q_{\chi,A}$ in Section 4.1 we know that $Q_{\chi,A}$ can be identified with the $\mathfrak{g}_A$-module $U(\mathfrak{g}_A)/U(\mathfrak{g}_A)N_{\chi,A}$. Hence $U(\mathfrak{g}_A,e)$ embeds into the $A$-algebra $(U(\mathfrak{g}_A)/U(\mathfrak{g}_A)N_{\chi,A})^{\text{ad}\mathfrak{m}_A}\cong (Q_{\chi,A})^{\text{ad}\mathfrak{m}_A}$. As $Q_{\chi,A}$ is a free $A$-module with basis $\{x^\mathbf{a}y^\mathbf{b}u^\mathbf{c}v^\mathbf{d}\otimes1_\chi|(\mathbf{a},\mathbf{b},\mathbf{c},\mathbf{d})\in\mathbb{Z}^m_+
\times\mathbb{Z}^n_2\times\mathbb{Z}^s_+\times\mathbb{Z}^t_2\}$, an easy induction on Kazhdan degree (based on Lemma~\ref{hwc} and the formulas displayed in Lemma~\ref{com c} \& Theorem~\ref{PBWC}) shows that
$$U(\mathfrak{g}_A,e)=(\text{End}_{\mathfrak{g}_A}Q_{\chi,A})^{\text{op}}\cong(U(\mathfrak{g}_A,e)/U(\mathfrak{g}_A,e)N_{\chi,A})^{\text{ad}\mathfrak{m}_A}.$$

\begin{defn}\label{reduced k}
Define the induced $\mathds{k}$-algebra
$U(\mathfrak{g}_\mathds{k},e):=U(\mathfrak{g}_A,e)\otimes_A\mathds{k}.$
\end{defn}

It is immediate by definition that $U(\mathfrak{g}_\mathds{k},e)$  can be identified with a subalgebra of the finite $W$-superalgebra $\widehat{U}(\mathfrak{g}_\mathds{k},e)$ (see Definition~\ref{W-k}) over $\mathds{k}$. The $\mathds{k}$-algebra $U(\mathfrak{g}_\mathds{k},e)$ will be called {\bf the transition subalgebra}. On the other hand,

(1) when $d_1$ is even, the algebra $U(\mathfrak{g}_\mathds{k},e)$ has a $\mathds{k}$-basis consisting of all monomials $\bar{\Theta}_1^{a_1}\cdots\bar{\Theta}_l^{a_l}\bar{\Theta}_{l+1}^{b_1}\cdots\bar{\Theta}_{l+q}^{b_{q}}$ with $(a_1,\cdots,a_l;b_1,\cdots,b_q)\in\mathbb{Z}_+^l\times\mathbb{Z}_2^q$, where $\bar{\Theta}_i:=\Theta_i\otimes1\in U(\mathfrak{g}_A,e)\otimes_A\mathds{k}$;

(2) when $d_1$ is odd, the algebra $U(\mathfrak{g}_\mathds{k},e)$ has a $\mathds{k}$-basis consisting of all monomials $\bar{\Theta}_1^{a_1}\cdots\bar{\Theta}_l^{a_l}\bar{\Theta}_{l+1}^{b_1}\cdots\bar{\Theta}_{l+q}^{b_{q}}\bar{\Theta}_{l+q+1}^{c}$ with $(a_1,\cdots,a_l;b_1,\cdots,b_q;c)\in\mathbb{Z}_+^l\times\mathbb{Z}_2^q\times\mathbb{Z}_2^1$, where $\bar{\Theta}_i:=\Theta_i\otimes1\in U(\mathfrak{g}_A,e)\otimes_A\mathds{k}$.

Recall that all the coefficients of polynomial $F_{ij}'s$ in Theorem~\ref{relationc} are in $\mathbb{Q}$, then one can also assume the $F_{ij}'s$ are in $A$ after enlarging $A$ if need be. Given a polynomial $g\in A[T_1,\cdots T_n]$, let $^pg$ denote the image of $g$ in the polynomial superalgebra $\mathds{k}[T_1,\cdots T_n]=A[T_1,\cdots T_n]\otimes_A\mathds{k}$. By the same discussion as Theorem~\ref{relationc}, we have

\begin{theorem}\label{translation}
For the $\mathds{k}$-algebra $U(\mathfrak{g}_\mathds{k},e)$, the following are true:

(1) when $d_1$ is even, we can choose $\bar{\Theta}_1,\cdots,\bar{\Theta}_{l+q}$ as the homogeneous generators of $U(\mathfrak{g}_\mathds{k},e)$ (where $\bar{\Theta}_1,\cdots,\bar{\Theta}_{l}\in U(\mathfrak{g}_\mathds{k},e)_{\bar0}$, $\bar{\Theta}_{l+1},\cdots,\bar{\Theta}_{l+q}\in U(\mathfrak{g}_\mathds{k},e)_{\bar1}$) subject to the relations $$[\bar{\Theta}_i,\bar{\Theta}_j]=^pF_{ij}(\bar{\Theta}_1,\cdots,\bar{\Theta}_{l+q}),\qquad[\bar{\Theta}_j,\bar{\Theta}_i]=-
(-1)^{|\bar{\Theta}_i||\bar{\Theta}_j|}[\bar{\Theta}_i,\bar{\Theta}_j]$$
where $1\leqslant i<j\leqslant l+q$ and $l+1\leqslant i=j\leqslant l+q$.

(2) when $d_1$ is odd, we can choose $\bar{\Theta}_1,\cdots,\bar{\Theta}_{l+q+1}$ as the homogeneous generators of $U(\mathfrak{g}_\mathds{k},e)$ (where $\bar{\Theta}_1,\cdots,\bar{\Theta}_{l}\in U(\mathfrak{g}_\mathds{k},e)_{\bar0}$, $\bar{\Theta}_{l+1},\cdots,\bar{\Theta}_{l+q+1}\in U(\mathfrak{g}_\mathds{k},e)_{\bar1}$) subject to the relations $$[\bar{\Theta}_i,\bar{\Theta}_j]=^pF_{ij}(\bar{\Theta}_1,\cdots,\bar{\Theta}_{l+q+1}),\qquad[\bar{\Theta}_j,\bar{\Theta}_i]=-
(-1)^{|\bar{\Theta}_i||\bar{\Theta}_j|}[\bar{\Theta}_i,\bar{\Theta}_j]$$
where $1\leqslant i<j\leqslant l+q+1$ and $l+1\leqslant i=j\leqslant l+q+1$.

Moreover, (1) and (2) completely determine the structure of the $\mathds{k}$-algebra $U(\mathfrak{g}_\mathds{k},e)$.
\end{theorem}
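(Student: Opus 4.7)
The plan is to descend the presentation of $U(\mathfrak{g},e)$ from Theorem~\ref{relationc} to $U(\mathfrak{g}_\mathds{k},e)$ via the admissible $A$-form $U(\mathfrak{g}_A,e)$, using the PBW-type basis described just before the statement. Since the proof for the two parity cases is parallel, I would fix the odd case $d_1$ odd (the even case is obtained by erasing the $\Theta_{l+q+1}$-slot throughout) and carry out everything uniformly.

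First I would set up the $A$-integral picture. By Theorem~\ref{relationc} the polynomials $F_{ij}$ have rational coefficients, so after enlarging the admissible ring $A$ if necessary we may assume $F_{ij}\in A[T_1,\dots,T_{l+q+1}]$ and that the generators $\Theta_1,\dots,\Theta_{l+q+1}$ all lie in $U(\mathfrak{g}_A,e)$. By construction $U(\mathfrak{g}_A,e)$ is the free $A$-module with PBW basis $\{\Theta_1^{a_1}\cdots\Theta_l^{a_l}\Theta_{l+1}^{b_1}\cdots\Theta_{l+q}^{b_q}\Theta_{l+q+1}^{c}\}$ indexed by $\mathbb{Z}_+^l\times\mathbb{Z}_2^q\times\mathbb{Z}_2$, and the identities $[\Theta_i,\Theta_j]=F_{ij}(\Theta_1,\dots,\Theta_{l+q+1})$ already hold inside $U(\mathfrak{g}_A,e)$. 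Tensoring with $\mathds{k}$ over $A$ gives relations $[\bar\Theta_i,\bar\Theta_j]={}^pF_{ij}(\bar\Theta_1,\dots,\bar\Theta_{l+q+1})$ in $U(\mathfrak{g}_\mathds{k},e)$, and the reduced monomials form a $\mathds{k}$-basis of $U(\mathfrak{g}_\mathds{k},e)$ by the definition of the translation subalgebra. The skew-symmetry relation $[\bar\Theta_j,\bar\Theta_i]=-(-1)^{|\bar\Theta_i||\bar\Theta_j|}[\bar\Theta_i,\bar\Theta_j]$ is built into the super-bracket.

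Next I would prove the universal property. Let $\widetilde U$ be the unital associative $\mathbb{Z}_2$-graded $\mathds{k}$-superalgebra with generators $T_1,\dots,T_{l+q+1}$ (parities as in the statement) and relations $[T_i,T_j]={}^pF_{ij}(T_1,\dots,T_{l+q+1})$ for $i<j$, together with $[T_i,T_i]={}^pF_{ii}$ for $l+1\le i\le l+q+1$. There is an evident graded algebra homomorphism $\pi\colon\widetilde U\to U(\mathfrak{g}_\mathds{k},e)$ sending $T_i$ to $\bar\Theta_i$, which is surjective because $\bar\Theta_1,\dots,\bar\Theta_{l+q+1}$ generate the target. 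To prove $\pi$ is an isomorphism I would argue, by upward induction on Kazhdan degree and (inside a fixed degree) downward induction on the number of factors, that $\widetilde U$ is spanned by the ordered monomials $T_1^{a_1}\cdots T_l^{a_l}T_{l+1}^{b_1}\cdots T_{l+q}^{b_q}T_{l+q+1}^{c}$; this is the analogue of the straightening argument from the proof of Theorem~\ref{relationc} (modeled on \cite{P4}, Lemma 4.1), and it uses only the given commutator relations. Since $\pi$ sends this spanning set of $\widetilde U$ bijectively onto the PBW basis of $U(\mathfrak{g}_\mathds{k},e)$, it must be a linear isomorphism, and therefore an isomorphism of $\mathds{k}$-algebras.

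The step I expect to be genuinely nontrivial is the straightening/PBW argument for $\widetilde U$: one needs the Kazhdan degree of $^pF_{ij}(T_1,\dots,T_{l+q+1})$ to be controlled in the same way as over $\mathbb{C}$ so that the inductive ``straightening of products'' terminates. Over $\mathbb{C}$ this is guaranteed by $[\Theta_i,\Theta_j]\in\tilde H^{m_i+m_j+2}$ together with \eqref{refine1}/\eqref{refine2}; under reduction mod $p$ one has to check that no cancellation of leading terms occurs that would enlarge the Kazhdan degree of $^pF_{ij}$ or destroy the recursive structure. After enlarging $A$ so that all the finitely many coefficients occurring in the $F_{ij}$'s as well as in the straightening procedure over $A$ are units modulo $p$ (true for $p\in\Pi(A)$ with $p$ large), the $A$-integral version of the straightening argument descends verbatim, and the theorem follows.
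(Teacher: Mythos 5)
Your proof is correct and follows the same route as the paper, which establishes the presentation by invoking the argument of Theorem~\ref{relationc} (the straightening induction of~[\cite{P4}]) on the $A$-form $U(\mathfrak{g}_A,e)$ and then descending via $-\otimes_A\mathds{k}$. One minor remark: the concern about leading-term cancellation under reduction modulo $p$ is unnecessary, since the coefficients of $F_{ij}$ already lie in $A$ and the Kazhdan degree of ${}^pF_{ij}$ can only decrease under reduction, so the straightening terminates automatically with no further enlargement of $A$.
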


Recall that we have studied the construction of reduced $W$-superalgebra $U_\chi(\mathfrak{g}_\mathds{k},e)$ in Section 4. In fact, all the results obtained there can be generalized to the cases with $p$-character $\eta\in\chi+(\mathfrak{m}_\mathds{k}^\bot)_{\bar{0}}$. First note that

\begin{lemma}\label{m_k free}
Let $\mathfrak{g}_\mathds{k}$ be one of the basic classical Lie superalgebras over $\mathds{k}$. For any $\eta\in\chi+(\mathfrak{m}_\mathds{k}^\bot)_{\bar{0}}\subseteq(\mathfrak{g}_\mathds{k})^*_{\bar0}$, every $U_\eta(\mathfrak{g}_\mathds{k})$-module is $U_\eta(\mathfrak{m}_\mathds{k})$-free.
\end{lemma}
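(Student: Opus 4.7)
The plan is to reduce the statement to the case $\eta = \chi$ already handled by Wang--Zhao in \cite{WZ}, Proposition~4.2. The first step is the observation that every $\eta \in \chi + (\mathfrak{m}_\mathds{k}^\bot)_{\bar 0}$ agrees with $\chi$ on $(\mathfrak{m}_\mathds{k})_{\bar 0}$, since $\eta - \chi \in (\mathfrak{m}_\mathds{k}^\bot)_{\bar 0}$ vanishes on $(\mathfrak{m}_\mathds{k})_{\bar 0}$ by the very definition of $(\mathfrak{m}_\mathds{k}^\bot)_{\bar 0}$. Combined with the convention that both $\chi$ and $\eta$ vanish on $(\mathfrak{g}_\mathds{k})_{\bar 1}$, this yields the identification of reduced enveloping subalgebras
\[
U_\eta(\mathfrak{m}_\mathds{k}) \;=\; U_\chi(\mathfrak{m}_\mathds{k}),
\]
and consequently any $U_\eta(\mathfrak{g}_\mathds{k})$-module $M$, restricted along $\mathfrak{m}_\mathds{k}\hookrightarrow\mathfrak{g}_\mathds{k}$, carries a well-defined $U_\chi(\mathfrak{m}_\mathds{k})$-module structure, because $\bar x^{\,p} - \bar x^{[p]}$ acts on $M$ as $\eta(\bar x)^p = \chi(\bar x)^p$ for every $\bar x \in (\mathfrak{m}_\mathds{k})_{\bar 0}$.

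With this reduction in hand, the second step is to verify that the Wang--Zhao argument proving \cite{WZ}, Proposition~4.2 transcribes unchanged to the setting of $U_\eta(\mathfrak{g}_\mathds{k})$-modules. That proof is a super-analog of Skryabin's construction; it produces an explicit free $U_\chi(\mathfrak{m}_\mathds{k})$-basis of any given $U_\chi(\mathfrak{g}_\mathds{k})$-module $M$ by invoking (i)~the $p$-nilpotency of $\mathfrak{m}_\mathds{k}$, (ii)~the fact that $\chi$ vanishes on the $p$-closure of $[\mathfrak{m}_\mathds{k},\mathfrak{m}_\mathds{k}]$, and (iii)~the Kazhdan filtration on $U(\mathfrak{g}_\mathds{k})$ together with its PBW basis. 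All three ingredients depend only on the pair $(\mathfrak{m}_\mathds{k},\,\chi|_{\mathfrak{m}_\mathds{k}})$, which coincides with $(\mathfrak{m}_\mathds{k},\,\eta|_{\mathfrak{m}_\mathds{k}})$, and none of them uses the values of the $p$-character on elements outside $\mathfrak{m}_\mathds{k}$. Rerunning the proof with $\eta$ in place of $\chi$ at the $\mathfrak{g}_\mathds{k}$-level then produces a free $U_\eta(\mathfrak{m}_\mathds{k})$-basis of $M$.

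The main (indeed the only) obstacle is therefore a careful inspection of the proof of \cite{WZ}, Proposition~4.2 to confirm that the $p$-character $\chi$ enters there solely through its restriction to $\mathfrak{m}_\mathds{k}$. The structural reason this holds is that the Whittaker-type decomposition of the form $M \cong U_\chi(\mathfrak{m}_\mathds{k}) \otimes M^{\mathfrak{m}_\mathds{k}}$ produced in loc.~cit.~is a statement purely about the restricted $\mathfrak{m}_\mathds{k}$-action on $M$; the enveloping algebra $U(\mathfrak{g}_\mathds{k})$ is used only as an ambient associative algebra in which to build auxiliary elements via the PBW theorem, an operation insensitive to the $p$-character chosen on $\mathfrak{g}_\mathds{k}$. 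Once this is verified, the lemma follows immediately from the $U_\chi(\mathfrak{m}_\mathds{k}) = U_\eta(\mathfrak{m}_\mathds{k})$-freeness provided by the adapted argument.
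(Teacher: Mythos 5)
Your proposal matches the paper's own argument exactly: the paper's proof consists of the single observation that $\chi|_{\mathfrak{m}_\mathds{k}}=\eta|_{\mathfrak{m}_\mathds{k}}$ (which follows from $\eta-\chi\in(\mathfrak{m}_\mathds{k}^\bot)_{\bar 0}$), after which it cites ([\cite{WZ}], Proposition~4.2) verbatim and declares the rest identical. Your proposal is the same reduction, just with the two stages — the equality $U_\eta(\mathfrak{m}_\mathds{k})=U_\chi(\mathfrak{m}_\mathds{k})$ and the re-run of the Wang--Zhao argument — written out more explicitly than the paper chose to.
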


\begin{proof}
Since $\chi|_{\mathfrak{m}_\mathds{k}}=\eta|_{\mathfrak{m}_\mathds{k}}$, this Lemma can be proved by the same way as ([\cite{WZ}], Proposition 4.2), thus will be omitted here.
\end{proof}

\begin{theorem}\label{sumresult}
The following are true:

(1) $Q_\chi^\eta\cong U_\eta(\mathfrak{g}_\mathds{k})\otimes_{U_\eta(\mathfrak{m}_\mathds{k})}\mathds{k}_\chi$ as $\mathfrak{g}_\mathds{k}$-modules;

(2) $U_\eta(\mathfrak{g}_\mathds{k},e)\cong (U_\eta(\mathfrak{g}_\mathds{k})/U_\eta(\mathfrak{g}_\mathds{k})N_{\mathfrak{m}_\mathds{k}})^{\text{ad}\mathfrak{m}_\mathds{k}}$;

(3) $Q_\chi^\eta$ is a projective generator for $U_\eta(\mathfrak{g}_\mathds{k})$ and $\delta=\text{dim}~U_\eta(\mathfrak{m}_\mathds{k})=p^{\frac{d_0}{2}}2^{\lceil\frac{d_1}{2}\rceil}$. Moreover, $$U_\eta(\mathfrak{g}_\mathds{k})\cong\text{Mat}_\delta(U_\eta(\mathfrak{g}_\mathds{k},e));$$

(4)(i) when $d_1$ is even, the monomials $\theta_1^{a_1}\cdots\theta_{l}^{a_l}\theta_{l+1}^{b_1}\cdots\theta_{l+q}^{b_q}$ with $0\leqslant  a_k\leqslant  p-1$ for $1\leqslant k\leqslant l$ and $0\leqslant b_k\leqslant 1$ for $1\leqslant k\leqslant q$ form a $\mathds{k}$-basis of $U_\eta(\mathfrak{g}_\mathds{k},e)$.

(ii) when $d_1$ is odd, the monomials $\theta_1^{a_1}\cdots\theta_{l}^{a_l}\theta_{l+1}^{b_1}\cdots\theta_{l+q}^{b_q}\theta_{l+q+1}^c$ with $0\leqslant  a_k\leqslant  p-1$ for $1\leqslant k\leqslant l$ and $0\leqslant b_k,c\leqslant 1$ for $1\leqslant k\leqslant q$ form a $\mathds{k}$-basis of $U_\eta(\mathfrak{g}_\mathds{k},e)$.
\end{theorem}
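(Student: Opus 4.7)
The plan is to prove Theorem~\ref{sumresult} by generalizing the $\eta = \chi$ case piece by piece, with Lemma~\ref{m_k free} serving as the crucial bridge that makes each step work for arbitrary $p$-characters $\eta \in \chi + (\mathfrak{m}_\mathds{k}^\bot)_{\bar 0}$. Part (1) is really a bookkeeping statement: since $\eta - \chi$ vanishes on $\mathfrak{m}_\mathds{k}$, one has $\eta|_{\mathfrak{m}_\mathds{k}} = \chi|_{\mathfrak{m}_\mathds{k}}$, so $\mathds{k}_\chi = \mathds{k}_\eta$ as $\mathfrak{m}_\mathds{k}$-modules and in particular the central elements $\bar x^p - \bar x^{[p]} - \eta(\bar x)^p$ for $\bar x \in (\mathfrak{m}_\mathds{k})_{\bar 0}$ act by zero on $\bar 1_\chi$. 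Together with the PBW filtration this gives the isomorphism $Q_\chi^\eta \cong U_\eta(\mathfrak{g}_\mathds{k}) \otimes_{U_\eta(\mathfrak{m}_\mathds{k})} \mathds{k}_\chi$ directly from the definition $Q_\chi^\eta = Q_{\chi,\mathds{k}}/J_\eta Q_{\chi,\mathds{k}}$.

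For (2) I would copy, with obvious modifications, the two-step argument in the proof of Proposition~\ref{invariant}. The first step, identifying $(\mathrm{End}_{\mathfrak{g}_\mathds{k}} Q_\chi^\eta)^{\mathrm{op}}$ with $B_\eta/I_{\mathfrak{m}_\mathds{k}}$ where $B_\eta := \{\bar u \in U_\eta(\mathfrak{g}_\mathds{k}) \mid [I_{\mathfrak{m}_\mathds{k}}, \bar u] \subseteq I_{\mathfrak{m}_\mathds{k}}\}$, is formal and does not depend on $\eta$. The second step requires the splitting of the short exact sequence $0 \to I_{\mathfrak{m}_\mathds{k}} \to U_\eta(\mathfrak{g}_\mathds{k}) \to Q_\chi^\eta \to 0$ as $\mathrm{ad}\,\mathfrak{m}_\mathds{k}$-modules, and this is precisely where we invoke Lemma~\ref{m_k free}: combined with the congruence $[\bar x, \bar u] \equiv (\bar x - \chi(\bar x))\bar u \pmod{I_{\mathfrak{m}_\mathds{k}}}$ from Lemma~\ref{free} (whose proof only used $\chi|_{\mathfrak{m}_\mathds{k}}$), freeness under left multiplication transfers to freeness under the adjoint action, and the splitting follows.

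Part (3) then proceeds exactly as Theorem~\ref{matrix}. By (2) and the $U_\eta(\mathfrak{m}_\mathds{k})$-freeness of $Q_\chi^\eta$, the module $Q_\chi^\eta$ is projective over $U_\eta(\mathfrak{g}_\mathds{k})$ (being summand-induced from a free $\mathfrak{m}_\mathds{k}$-module) and its endomorphism ring is $U_\eta(\mathfrak{g}_\mathds{k},e)$, so Morita theory gives $U_\eta(\mathfrak{g}_\mathds{k}) \cong \mathrm{Mat}_\delta(U_\eta(\mathfrak{g}_\mathds{k},e))$ with $\delta = \dim U_\eta(\mathfrak{m}_\mathds{k})$. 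Since $\mathfrak{m}_\mathds{k}$ is $p$-nilpotent, $\dim U_\eta(\mathfrak{m}_\mathds{k}) = p^{\dim (\mathfrak{m}_\mathds{k})_{\bar 0}} 2^{\dim (\mathfrak{m}_\mathds{k})_{\bar 1}}$, and the dimension counts preceding Remark~\ref{bound} (using the reversed $\lceil\cdot\rceil$ convention) give $\delta = p^{d_0/2} 2^{\lceil d_1/2 \rceil}$. Finally, for (4), comparing $\dim U_\eta(\mathfrak{g}_\mathds{k},e) = \dim U_\eta(\mathfrak{g}_\mathds{k})/\delta^2$ with the dimension of the span of the proposed monomials $\theta^{\mathbf{a}} \theta^{\mathbf{b}}$ (respectively $\theta^{\mathbf{a}} \theta^{\mathbf{b}} \theta_{l+q+1}^c$) one obtains equality; linear independence follows from the fact that the leading terms of these monomials (under the Kazhdan filtration) belong to distinct Kazhdan-weight-bi-graded pieces, an argument that runs verbatim as in Proposition~\ref{reduced basis} after reducing modulo $J_\eta$ instead of $J_\chi$.

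The main obstacle is conceptually resolved by Lemma~\ref{m_k free}, which is the genuinely new input needed to extend Wang--Zhao's treatment from $\chi$ to arbitrary $\eta \in \chi + (\mathfrak{m}_\mathds{k}^\bot)_{\bar 0}$; once that freeness is granted, the remainder of the argument is a careful but routine transcription. The subtlest bookkeeping point is part (4), where one must track which basis elements $\theta_i$ survive and confirm that the counting matches $\delta^2$ in both parities of $d_1$; this is where the asymmetry between the even and odd cases of $\dim \mathfrak{g}(-1)_{\bar 1}$, and the presence of the extra generator $\theta_{l+q+1}$ in the odd case, must be handled exactly as in Theorem~\ref{reduced Wg}.
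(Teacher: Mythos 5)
Your proof is correct and follows essentially the same route as the paper: both hinge on Lemma~\ref{m_k free} (the extension of Wang--Zhao's freeness result to $\eta \in \chi + (\mathfrak{m}_\mathds{k}^\bot)_{\bar 0}$), re-run the Endomorphism-ring argument of Proposition~\ref{invariant} for part (2), invoke Morita theory for (3), and re-run the PBW argument of Theorem~\ref{reduced Wg} with the dimension count from (3) for (4). The paper's own proof is merely a sequence of pointers to these earlier results; you have supplied the same skeleton with the connective tissue spelled out, including the correct observation that only $\chi|_{\mathfrak{m}_\mathds{k}}$ (which agrees with $\eta|_{\mathfrak{m}_\mathds{k}}$) enters the key lemmas.
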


\begin{proof}
By the same discussion as the finite $W$-algebra case (see [\cite{P7}], Lemma 2.2(i)), we can get (1). Repeat verbatim the proof of Proposition~\ref{invariant} we can get (2). Apply Lemma~\ref{m_k free} and (3) follows from the same treatment as for the algebra $U_\chi(\mathfrak{g}_\mathds{k},e)$ in Proposition~\ref{invariant}. Since the dimension of $\mathds{k}$-algebra $U_\eta(\mathfrak{g}_\mathds{k},e)$ (as a vector space) can be computed by (3), repeat verbatim the proof of Theorem~\ref{reduced Wg}(1)(iii) and (2)(iii) respectively one obtains (4).
\end{proof}

At the beginning of Section 5.1 we have assumed that $\{x_1,\cdots,x_m,y_1,\cdots,y_n\}$ is an $A$-basis of $\mathfrak{p}_A=\bigoplus\limits_{i\geqslant0}\mathfrak{g}_A(i)$. Set
\[X_i:=\left\{\begin{array}{ll}
x_{i+l}&\text{if}~1\leqslant  i\leqslant  m-l;\\
y_{l+q-m+i}&\text{if}~m-l+1\leqslant  i\leqslant  m+n-l-q;\\
u_{l+q-m-n+i}&\text{if}~m+n-l-q+1\leqslant  i\leqslant  m+n-l-q+s;\\
v_{l+q-m-n-s+i}&\text{if}~m+n-l-q+s+1\leqslant  i\leqslant  m+n-l-q+s+t',
\end{array}\right.
\]
where $t'=\lceil\frac{r}{2}\rceil$.

\begin{rem}
In the following we will {\bf denote $\lceil\frac{r}{2}\rceil$ by $t'$} once for all. Recall that we have defined $d_i=\text{dim}~(\mathfrak{g}_\mathds{k})_i-\text{dim}~(\mathfrak{g}_\mathds{k}^e)_i$ for $i\in\mathbb{Z}_2$ in Remark~\ref{centralizer}. It follows from ([\cite{WZ}], Theorem 4.3) that $\text{dim}~U_\chi(\mathfrak{m}_\mathds{k})=p^{\frac{d_0}{2}}2^{\lceil \frac{d_1}{2}\rceil}$ and denote it by $\delta$. By the assumption of the notations we can obtain that $\frac{d_0}{2}+\lceil\frac{d_1}{2}\rceil=m+n-l-q+s+t'$.
\end{rem}

For $(\mathbf{a},\mathbf{b},\mathbf{c},\mathbf{d})\in\mathbb{Z}_+^{m-l}\times\mathbb{Z}_2^{n-q}\times\mathbb{Z}_+^s\times\mathbb{Z}_2^{t'}$, define
\[\begin{array}{ccl}
X^{\mathbf{a},\mathbf{b},\mathbf{c},\mathbf{d}}:&=&X_1^{a_1}\cdots X_{m-l}^{a_{m-l}}X_{m-l+1}^{b_{1}}\cdots X_{m+n-l-q}^{b_{n-q}}X_{m+n-l-q+1}^{c_1}\cdots \\
&&\cdot X_{m+n-l-q+s}^{c_{s}}X_{m+n-l-q+s+1}^{d_{1}}\cdots X_{m+n-l-q+s+t'}^{d_{t'}}
\end{array}\]
and\[\begin{array}{ccl}
\bar{X}^{\mathbf{a},\mathbf{b},\mathbf{c},\mathbf{d}}:&=&\bar{X}_1^{a_1}\cdots \bar{X}_{m-l}^{a_{m-l}}\bar{X}_{m-l+1}^{b_{1}}\cdots\bar{X}_{m+n-l-q}^{b_{n-q}}\bar{X}_{m+n-l-q+1}^{c_1}\cdots\\ &&\cdot\bar{X}_{m+n-l-q+s}^{c_{s}}\bar{X}_{m+n-l-q+s+1}^{d_{1}}\cdots\bar{X}_{m+n-l-q+s+t'}^{d_{t'}},
\end{array}\] elements of $U(\mathfrak{g}_A)$ and $U(\mathfrak{g}_\mathds{k})$, respectively. Denote by $\bar{1}_\chi$ the image of $1_\chi\in Q_{\chi,\mathds{k}}$ in $Q^\eta_{\chi}$.

\begin{lemma}\label{right}
For any $\eta\in\chi+(\mathfrak{m}_\mathds{k}^\bot)_{\bar{0}}$, the right modules $Q_{\chi,A}$ and $Q_\chi^\eta$ are free over $U(\mathfrak{g}_A,e)$ and $U_\eta(\mathfrak{g}_\mathds{k},e)$, respectively. More precisely,

(1) the set $\{X^{\mathbf{a},\mathbf{b},\mathbf{c},\mathbf{d}}\otimes1_\chi|(\mathbf{a},\mathbf{b},\mathbf{c},\mathbf{d})\in\mathbb{Z}_+^{m-l}\times\mathbb{Z}_2^{n-q}\times\mathbb{Z}_+^s\times\mathbb{Z}_2^{t'}\}$\, is a free basis of the $U(\mathfrak{g}_A,e)$-module $Q_{\chi,A}$;

(2) the set $\{\bar{X}^{\mathbf{a},\mathbf{b},\mathbf{c},\mathbf{d}}\otimes\bar{1}_\chi|(\mathbf{a},\mathbf{b},\mathbf{c},\mathbf{d})\in\Lambda_{m-l}\times\Lambda'_{n-q}\times\Lambda_s\times\Lambda'_{t'}\}$\, is a free basis of the $U_\eta(\mathfrak{g}_\mathds{k},e)$-module $Q_{\chi}^\eta$.
\end{lemma}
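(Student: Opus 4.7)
The plan is to prove part~(2) first over $\mathds{k}$, and then to descend part~(1) from $\mathds{k}$ back to $A$. The main engine in both cases is the Kazhdan filtration: I will show that the natural map from the proposed basis to the module in question is an isomorphism of associated graded objects, and then lift.

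For part~(2), I consider the right $U_\eta(\mathfrak{g}_\mathds{k},e)$-module map
\[
\Phi\colon\bigoplus_{(\mathbf{a},\mathbf{b},\mathbf{c},\mathbf{d})}\mathds{k}\cdot\bar{X}^{\mathbf{a},\mathbf{b},\mathbf{c},\mathbf{d}}\bar{1}_\chi\otimes_{\mathds{k}}U_\eta(\mathfrak{g}_\mathds{k},e)\longrightarrow Q_\chi^\eta,\qquad \bar{X}^{\mathbf{a},\mathbf{b},\mathbf{c},\mathbf{d}}\bar{1}_\chi\otimes u\longmapsto u(\bar{X}^{\mathbf{a},\mathbf{b},\mathbf{c},\mathbf{d}}\bar{1}_\chi),
\]
where the right action comes from the identification $U_\eta(\mathfrak{g}_\mathds{k},e)\cong(Q_\chi^\eta)^{\mathrm{ad}\,\mathfrak{m}_\mathds{k}}$ of Theorem~\ref{sumresult}(2). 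Concretely, $u(\bar{X}^{\mathbf{a},\mathbf{b},\mathbf{c},\mathbf{d}}\bar{1}_\chi)=\bar{X}^{\mathbf{a},\mathbf{b},\mathbf{c},\mathbf{d}}\cdot u(\bar{1}_\chi)$, and by Corollary~\ref{rg} the vector $u(\bar{1}_\chi)$, for $u=\theta_1^{e_1}\cdots\theta_l^{e_l}\theta_{l+1}^{f_1}\cdots\theta_{l+q}^{f_q}$ (multiplied by an additional factor $\theta_{l+q+1}^{g}$ when $d_1$ is odd), taken from the basis of $U_\eta(\mathfrak{g}_\mathds{k},e)$ in Theorem~\ref{sumresult}(4), has Kazhdan-leading term $\bar{x}_1^{e_1}\cdots\bar{x}_l^{e_l}\bar{y}_1^{f_1}\cdots\bar{y}_q^{f_q}\bar{v}_{\frac{r+1}{2}}^{g}\otimes\bar{1}_\chi$. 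Left-multiplying by $\bar{X}^{\mathbf{a},\mathbf{b},\mathbf{c},\mathbf{d}}$ and then rearranging the resulting product into canonical PBW order, an application of Lemma~\ref{commutative relations k2} shows that only strictly lower Kazhdan-degree corrections arise, and the leading term is a canonical PBW monomial $\bar{x}^{\mathbf{a}^*}\bar{y}^{\mathbf{b}^*}\bar{u}^{\mathbf{c}^*}\bar{v}^{\mathbf{d}^*}\bar{1}_\chi$ in $Q_\chi^\eta$ whose multi-exponents are uniquely assembled from the $\theta$-exponents $(e_\bullet,f_\bullet,g)$ (recording the $\mathfrak{g}^e$-directions together with $\bar{v}_{\frac{r+1}{2}}$) and from $(\mathbf{a},\mathbf{b},\mathbf{c},\mathbf{d})$ (recording the complementary directions). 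As this assignment is a bijection between the distinguished basis of the source of $\Phi$ and the canonical PBW basis of $\mathrm{gr}\,Q_\chi^\eta$, the graded map $\mathrm{gr}\,\Phi$ is an isomorphism, and a standard filtration argument upgrades $\Phi$ itself to an isomorphism.

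For part~(1), I define $\Phi_A$ by the same formula over $A$. The source is a free $A$-module because $U(\mathfrak{g}_A,e)$ is $A$-free with the $\Theta$-monomial basis provided by Theorem~\ref{PBWC}, and the target $Q_{\chi,A}$ is $A$-free by construction. For each $p\in\Pi(A)$ and each $\mathfrak{P}\in\mathrm{Specm}\,A$ above $p$, the base change $\Phi_A\otimes_A(A/\mathfrak{P})\otimes_{A/\mathfrak{P}}\mathds{k}$ coincides with the specialisation of $\Phi$ in part~(2) at $\eta=\chi$, hence is an isomorphism of $\mathds{k}$-vector spaces. Since $\Phi_A$ is an $A$-linear map between finitely generated free $A$-modules and becomes an isomorphism at every maximal ideal of the Jacobson domain $A$, a routine Nakayama/rank-comparison argument promotes $\Phi_A$ to an isomorphism over $A$.

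The step I expect to be most delicate is the bijective bookkeeping between $((\mathbf{a},\mathbf{b},\mathbf{c},\mathbf{d}),(e_\bullet,f_\bullet,g))$ and the target PBW multi-exponents $(\mathbf{a}^*,\mathbf{b}^*,\mathbf{c}^*,\mathbf{d}^*)$ in the $d_1$-odd case, where the extra odd generator $\theta_{l+q+1}$ is tied to $\bar{v}_{\frac{r+1}{2}}$, which also appears among the $X_i$. The super-commutativity identity $\bar{v}_{\frac{r+1}{2}}^{2}\otimes\bar{1}_\chi=\tfrac{1}{2}\chi([\bar{v}_{\frac{r+1}{2}},\bar{v}_{\frac{r+1}{2}}])\otimes\bar{1}_\chi$ and the relation $[\theta_{l+q+1},\theta_{l+q+1}]=\mathrm{id}$ force one to distribute the $\bar{v}_{\frac{r+1}{2}}$-exponent between the $X$-factor and the $\theta$-factor in exactly one canonical way, so that the graded source and target carry matching bases; once this matching is pinned down, the filtration argument of part~(2) and the descent of part~(1) both go through routinely.
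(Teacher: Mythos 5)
Your overall strategy for part~(2) — prove that the natural $U_\eta(\mathfrak{g}_\mathds{k},e)$-module map to $Q_\chi^\eta$ induces an isomorphism on Kazhdan associated graded, then lift through the filtration — is the right one and is essentially how Premet argues in ([\cite{P4}], Lemma~4.2), which is the proof the paper cites. But the difficulty you flag as ``delicate bookkeeping'' is not a normalisation issue; it is a genuine dimension obstruction. When $r=\dim\mathfrak{g}(-1)_{\bar1}$ is odd, the element $v_{\frac{r+1}{2}}$ is simultaneously the top $X$-factor $X_{m+n-l-q+s+t'}$ (since $t'=\lceil r/2\rceil$) and the Kazhdan-leading term of $\Theta_{l+q+1}\in U_\eta(\mathfrak{g}_\mathds{k},e)$. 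With the ranges as stated, the source of your $\Phi$ has $\mathds{k}$-dimension $p^{m-l+s}2^{n-q+t'}\cdot p^l 2^{q+1}=p^{m+s}2^{n+t'+1}$, which is \emph{twice} $\dim_\mathds{k} Q_\chi^\eta=p^{m+s}2^{n+t'}$. No ``canonical distribution'' can fix this; $\mathrm{gr}\,\Phi$ cannot be injective. The actual resolution is that the $v$-range in the $X$-factor should be $\Lambda'_t$ with $t=\lfloor r/2\rfloor$ (and $\mathbb{Z}_2^t$ in part~(1)), so that $v_{\frac{r+1}{2}}$ is carried only by $\Theta_{l+q+1}$ and never by an $X$-factor; this is consistent with the rank $\delta=\dim U_\chi(\mathfrak{m}_\mathds{k})=p^{d_0/2}2^{(d_1-1)/2}$ used elsewhere in the paper (e.g.\ in the proof of Lemma~\ref{red2}), and with the test case $\mathfrak{g}=\mathfrak{osp}(1|2)$ where $t=0$, $t'=1$. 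Once the index range is corrected, your filtration argument for~(2) is sound.

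Your descent from~(2) to~(1) has two factual problems. First, $Q_{\chi,A}$ and the source of $\Phi_A$ are \emph{not} finitely generated $A$-modules (they are free of countably infinite rank), so the ``Nakayama/rank-comparison'' step cannot be applied globally; it must be run on each finitely generated Kazhdan-filtration piece $\mathrm{F}_kQ_{\chi,A}$. Second, and more seriously, $\Phi_A\otimes_A\mathds{k}$ lands in the infinite-dimensional module $Q_{\chi,\mathds{k}}=Q_{\chi,A}\otimes_A\mathds{k}$, not in the finite-dimensional quotient $Q_\chi^\chi=Q_{\chi,\mathds{k}}/J_\chi Q_{\chi,\mathds{k}}$; so it does \emph{not} coincide with the specialisation of the map in part~(2) at $\eta=\chi$, and the claimed bijectivity does not transfer. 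The cleaner route, and the one the cited Lemma~4.2 of [\cite{P4}] takes, is the reverse implication: prove~(1) directly over $A$ by the Kazhdan-filtration argument, using the $\Theta$-monomial basis of $U(\mathfrak{g}_A,e)$ from Theorem~\ref{PBWC} and the identification of $\mathrm{gr}\,Q_{\chi,A}$ with the (super-)polynomial $A$-algebra on $\widetilde{\mathfrak{p}}_A$; then~(2) follows by base change to $\mathds{k}$ and reduction modulo $J_\eta$.
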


\begin{proof}
The proof is the same as the finite $W$-algebra case, thus will be omitted here (see [\cite{P4}], Lemma 4.2).
\end{proof}

Let $\mathfrak{a}_\mathds{k}$ be the $\mathds{k}$-span of $\bar{X}_1,\cdots,\bar{X}_{m+n-l-q+s+t'}$ in $\mathfrak{g}_\mathds{k}$. By the definition of $\widetilde{\mathfrak{p}}_\mathds{k}$ we have

(1) $\widetilde{\mathfrak{p}}_\mathds{k}=\mathfrak{a}_\mathds{k}\oplus\mathfrak{g}_\mathds{k}^e$ when $d_1$ is even;

(2) $\widetilde{\mathfrak{p}}_\mathds{k}=\mathfrak{a}_\mathds{k}\oplus\mathfrak{g}_\mathds{k}^e\oplus\mathds{k}v_{\frac{r+1}{2}}$ when $d_1$ is odd.

Recall the notations (b) and (c) preceding Definition~\ref{Gelfand-Graev}. By the inclusion $\mathfrak{g}_\mathds{k}^f\subseteq\bigoplus\limits_{i\leqslant 0}\mathfrak{g}_\mathds{k}(i)$ we have

(1) $\mathfrak{a}_\mathds{k}=\{\bar x\in\widetilde{\mathfrak{p}}_\mathds{k}|(\bar x,\mathfrak{g}_\mathds{k}^f)=0\}$ when $d_1$ is even;

(2)
$\mathfrak{a}_\mathds{k}\oplus\mathds{k}v_{\frac{r+1}{2}}=\{\bar x\in\widetilde{\mathfrak{p}}_\mathds{k}|(\bar x,\mathfrak{g}_\mathds{k}^f)=0\}$ when $d_1$ is odd.

Let $\rho_\mathds{k}$ denote the representation of $U(\mathfrak{g}_\mathds{k})$ in $\text{End}_\mathds{k}Q_{\chi,\mathds{k}}$. Given a subspace $V$ in $\mathfrak{g}_\mathds{k}$ we denote by $Z_p(V)$ the subalgebra of $p$-center $Z_p(\mathfrak{g}_\mathds{k})$ generated by all $\bar x^p-\bar x^{[p]}$ with $\bar x\in V_{\bar{0}}$. Clearly, $Z_p(V)$ is isomorphic to a polynomial algebra (in the usual sense, not super) in $\text{dim}~V_{\bar{0}}$ variables. We will denote $Z_p(\mathfrak{g}_\mathds{k})$ by $Z_p$ for short.

\begin{theorem}\label{keyisotheorem}
For any even nilpotent element $e\in(\mathfrak{g}_\mathds{k})_{\bar{0}}$, we have

(1) the algebra $\widehat{U}(\mathfrak{g}_\mathds{k},e)$ is generated by its subalgebras $U(\mathfrak{g}_\mathds{k},e)$ and $\rho_\mathds{k}(Z_p)$;

(2) $\rho_\mathds{k}(Z_p)\cong Z_p(\widetilde{\mathfrak{p}}_\mathds{k})$ as $\mathds{k}$-algebras. Moreover, if $d_1$ is even, then $\widehat{U}(\mathfrak{g}_\mathds{k},e)$ is a free $\rho_\mathds{k}(Z_p)$-module of rank $p^l2^q$; if $d_1$ is odd, $\widehat{U}(\mathfrak{g}_\mathds{k},e)$ is a free $\rho_\mathds{k}(Z_p)$-module of rank $p^l2^{q+1}$;

(3) $\widehat{U}(\mathfrak{g}_\mathds{k},e)\cong U(\mathfrak{g}_\mathds{k},e)\otimes_\mathds{k}Z_p(\mathfrak{a}_\mathds{k})$ as $\mathds{k}$-algebras.
\end{theorem}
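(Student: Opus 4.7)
The plan is to follow Premet's strategy for the Lie algebra case in [\cite{P7}], adapted to handle the super structure. I would treat the three parts in the stated order.

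Preliminaries: first I would establish, in direct analogy with Proposition~\ref{invariant}, an algebra isomorphism $\widehat{U}(\mathfrak{g}_\mathds{k},e) \cong Q_{\chi,\mathds{k}}^{\mathrm{ad}\,\mathfrak{m}_\mathds{k}}$ via $\Theta \mapsto \Theta(1_\chi)$. The needed unreduced analog of Lemma~\ref{free} (freeness of $Q_{\chi,\mathds{k}}$ over $U(\mathfrak{m}_\mathds{k})$ under the ad-action) is obtained by lifting the argument of Wang--Zhao in ([\cite{WZ}], Proposition~4.2) to the admissible integral form and reducing modulo $p$. Under this identification, $U(\mathfrak{g}_\mathds{k},e) \hookrightarrow \widehat{U}(\mathfrak{g}_\mathds{k},e)$ follows from the construction of the transition subalgebra via Theorem~\ref{translation}, while $\rho_\mathds{k}(Z_p) \hookrightarrow \widehat{U}(\mathfrak{g}_\mathds{k},e)$ follows from the centrality of $Z_p$ in $U(\mathfrak{g}_\mathds{k})$.

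For the isomorphism in part (2), I would prove that the composite $Z_p(\widetilde{\mathfrak{p}}_\mathds{k}) \hookrightarrow Z_p \xrightarrow{\rho_\mathds{k}} \rho_\mathds{k}(Z_p)$ is bijective. Injectivity is immediate from the PBW-based embedding $U(\widetilde{\mathfrak{p}}_\mathds{k}) \hookrightarrow Q_{\chi,\mathds{k}}$. For surjectivity, use the decomposition $(\mathfrak{g}_\mathds{k})_{\bar 0} = (\mathfrak{m}_\mathds{k})_{\bar 0} \oplus \mathfrak{q}_{\bar 0} \oplus (\widetilde{\mathfrak{p}}_\mathds{k})_{\bar 0}$, where $\mathfrak{q}_\mathds{k} := \mathfrak{g}_\mathds{k}(-1) \cap (\mathfrak{g}_\mathds{k}(-1)')^\perp$ (minus $\mathds{k}\bar v_{(r+1)/2}$ when $d_1$ is odd). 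For $\bar x \in (\mathfrak{m}_\mathds{k})_{\bar 0}$, the $p$-nilpotency of $\mathfrak{m}_\mathds{k}$ gives $\rho_\mathds{k}(\bar x^p - \bar x^{[p]}) = \chi(\bar x)^p \cdot \mathrm{id}$; for $\bar x = \bar u_j \in \mathfrak{q}_{\bar 0}$, an induction on Kazhdan degree using the splitting $U(\mathfrak{g}_\mathds{k}) = U(\widetilde{\mathfrak{p}}_\mathds{k}) \oplus I_{\chi,\mathds{k}}$ and the centrality of $\bar u_j^p - \bar u_j^{[p]}$ produces $z_j \in Z_p(\widetilde{\mathfrak{p}}_\mathds{k})$ with the same image in $\widehat{U}(\mathfrak{g}_\mathds{k},e)$. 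The freeness of $\widehat{U}(\mathfrak{g}_\mathds{k},e)$ over $\rho_\mathds{k}(Z_p)$, of the stated rank, then follows from the observation that for each $\eta \in \chi + (\mathfrak{m}_\mathds{k}^\perp)_{\bar 0}$ the specialization $\widehat{U}(\mathfrak{g}_\mathds{k},e) \otimes_{\rho_\mathds{k}(Z_p)} \mathds{k}_\eta \cong U_\eta(\mathfrak{g}_\mathds{k},e)$ has $\mathds{k}$-dimension $p^l 2^q$ or $p^l 2^{q+1}$ by Theorem~\ref{sumresult}(4), combined with the explicit set of $\rho_\mathds{k}(Z_p)$-module generators coming from the PBW monomials of Theorem~\ref{translation} with exponent bounds matching Proposition~\ref{reduced basis}. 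This simultaneously establishes part (1).

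For part (3), define the multiplication map $\mu: U(\mathfrak{g}_\mathds{k},e) \otimes_\mathds{k} Z_p(\mathfrak{a}_\mathds{k}) \to \widehat{U}(\mathfrak{g}_\mathds{k},e)$; this is an algebra homomorphism because $Z_p(\mathfrak{a}_\mathds{k})$ is central. Surjectivity reduces via part (1) to showing $\rho_\mathds{k}(Z_p(\widetilde{\mathfrak{p}}_\mathds{k})) \subseteq U(\mathfrak{g}_\mathds{k},e) \cdot \rho_\mathds{k}(Z_p(\mathfrak{a}_\mathds{k}))$: the generators $\bar x_i^p - \bar x_i^{[p]}$ for $\bar x_i \in \mathfrak{g}_\mathds{k}^e$ (i.e.\ $1 \leq i \leq l$) are realized as $\bar\Theta_i^p$ modulo lower Kazhdan terms that lie in $U(\mathfrak{g}_\mathds{k},e) \cdot \rho_\mathds{k}(Z_p(\mathfrak{a}_\mathds{k}))$ by upward induction on Kazhdan degree. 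Injectivity of $\mu$ follows by comparing leading Kazhdan terms of the PBW products $\bar\Theta^\alpha \zeta^\beta$ in $\mathrm{gr}\,\widehat{U}(\mathfrak{g}_\mathds{k},e) \subseteq S(\widetilde{\mathfrak{p}}_\mathds{k})$. The principal obstacle will be the surjectivity step in part (2): producing $z_j \in Z_p(\widetilde{\mathfrak{p}}_\mathds{k})$ with $\rho_\mathds{k}(\bar u_j^p - \bar u_j^{[p]}) = \rho_\mathds{k}(z_j)$ requires an induction that simultaneously builds the relation in $\widehat{U}(\mathfrak{g}_\mathds{k},e)$. In the Lie algebra case this is essentially ([\cite{P7}], Lemma~2.3); the super setting requires only notational adjustments since the $p$-center involves only even elements, although care is needed to track the odd generators of $U(\mathfrak{g}_\mathds{k},e)$ throughout the filtration bookkeeping.
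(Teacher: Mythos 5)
Your overall strategy — adapting Premet's [\cite{P7}] argument via the identification $\widehat{U}(\mathfrak{g}_\mathds{k},e)\cong Q_{\chi,\mathds{k}}^{\text{ad}\,\mathfrak{m}_\mathds{k}}$, then specializing at $\eta\in\chi+(\mathfrak{m}_\mathds{k}^\perp)_{\bar 0}$ — is the one the paper follows. However, there are two genuine problems in the execution.

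First, the decomposition you use, $(\mathfrak{g}_\mathds{k})_{\bar 0}=(\mathfrak{m}_\mathds{k})_{\bar 0}\oplus\mathfrak{q}_{\bar 0}\oplus(\widetilde{\mathfrak{p}}_\mathds{k})_{\bar 0}$, is not a direct sum: by construction $\widetilde{\mathfrak{p}}_\mathds{k}=\mathfrak{p}_\mathds{k}\oplus\mathfrak{q}_\mathds{k}\,(\oplus\,\mathds{k}\bar v_{(r+1)/2})$, so $\mathfrak{q}_{\bar 0}\subseteq(\widetilde{\mathfrak{p}}_\mathds{k})_{\bar 0}$ already. The correct splitting is simply $\mathfrak{g}_\mathds{k}=\mathfrak{m}_\mathds{k}\oplus\widetilde{\mathfrak{p}}_\mathds{k}$. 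Consequently, the ``principal obstacle'' you flag — producing $z_j\in Z_p(\widetilde{\mathfrak{p}}_\mathds{k})$ with $\rho_\mathds{k}(z_j)=\rho_\mathds{k}(\bar u_j^p-\bar u_j^{[p]})$ — is vacuous: since $\bar u_j\in(\widetilde{\mathfrak{p}}_\mathds{k})_{\bar 0}$, the element $\bar u_j^p-\bar u_j^{[p]}$ belongs to $Z_p(\widetilde{\mathfrak{p}}_\mathds{k})$ by the very definition of $Z_p(V)$ as the subalgebra of $Z_p(\mathfrak{g}_\mathds{k})$ generated by $\bar x^p-\bar x^{[p]}$ with $\bar x\in V_{\bar 0}$. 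The isomorphism $\rho_\mathds{k}(Z_p)\cong Z_p(\widetilde{\mathfrak{p}}_\mathds{k})$ then falls out in a few lines from $Z_p(\mathfrak{g}_\mathds{k})\cong Z_p(\mathfrak{m}_\mathds{k})\otimes Z_p(\widetilde{\mathfrak{p}}_\mathds{k})$, the fact that $Z_p(\mathfrak{m}_\mathds{k})\cap\text{Ker}\,\rho_\mathds{k}$ has codimension one, and injectivity of $\rho_\mathds{k}$ on $Z_p(\widetilde{\mathfrak{p}}_\mathds{k})$, which is immediate from the PBW basis of $Q_{\chi,\mathds{k}}$.

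Second — and this is where the real work sits — the induction you invoke to show $\widehat U(\mathfrak{g}_\mathds{k},e)=U(\mathfrak{g}_\mathds{k},e)\cdot\rho_\mathds{k}(Z_p(\mathfrak{a}_\mathds{k}))$ cannot be carried by ``upward induction on Kazhdan degree'' alone; a single filtration is too coarse. The paper introduces, for any nonzero $u\in\widehat U(\mathfrak{g}_\mathds{k},e)$, the pair $(n(u),N(u))$ where $n(u)$ is the top $e$-degree of $u(1_\chi)$ and $N(u)$ records the maximum of $n(u)-|\mathbf{a}|-|\mathbf{b}|-|\mathbf{c}|-|\mathbf{d}|$ over the top-degree monomials, and inducts on this pair under the lexicographic order $\prec$ on $\Omega=\{(a,b)\in\mathbb{Z}_+^2\,|\,a\geqslant b\}$. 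The approximation step goes in two substeps: first approximate $u$ by a combination of products $u(\mathbf{a},\mathbf{b},\mathbf{c},\mathbf{d})$ of $\rho_\mathds{k}(\bar x_i^p-\bar x_i^{[p]})$, $\rho_\mathds{k}(\bar u_i^p-\bar u_i^{[p]})$ and the $\bar\Theta$'s, then replace the $\rho_\mathds{k}(\bar x_i^p-\bar x_i^{[p]})$ with $i\leqslant l$ by $\bar\Theta_i^p$ using \eqref{dede}--\eqref{grad}, which strictly lowers $(n,N)$. Tracking only Kazhdan degree would not see the drop in $N$ that the second substep produces, and the induction would stall. Your observation that $\bar x_i^p-\bar x_i^{[p]}$ is captured by $\bar\Theta_i^p$ modulo lower terms is the right key fact, but you need the two-parameter filtration to make the downward recursion terminate. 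Finally, the freeness in part (2) is obtained in the paper by a direct PBW argument on $Q_{\chi,\mathds{k}}$ as a $Z_p(\widetilde{\mathfrak{p}}_\mathds{k})$-module via Lemma~\ref{right}, and then cross-checked against $\dim U_\eta(\mathfrak{g}_\mathds{k},e)$ from Theorem~\ref{sumresult}(4); your proposal collapses the generation and independence steps into a single appeal to specialization without explaining why the proposed monomials generate.
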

This theorem is a generalization of the Lie algebra case in ([\cite{P7}], Theorem 2.1). Compared with the finite $W$-algebras, the construction of the finite $W$-superalgebras is much more complicated. Moreover, if $d_1$ is odd, it is a new case which never occurs under the background of Lie algebra. Now we will prove it in detail.

\begin{proof}
(i) Since $\mathfrak{g}_\mathds{k}=\mathfrak{m}_\mathds{k}\oplus\widetilde{\mathfrak{p}}_\mathds{k}$ by definition, then $Z_p(\mathfrak{g}_\mathds{k})\cong Z_p(\mathfrak{m}_\mathds{k})\otimes_\mathds{k}Z_p(\widetilde{\mathfrak{p}}_\mathds{k})$ as $\mathds{k}$-algebras, and $Z_p(\mathfrak{m}_\mathds{k})\cap \text{Ker}\rho_\mathds{k}$ is an ideal of codimension $1$ in $Z_p(\mathfrak{m}_\mathds{k})$. Hence $\rho_\mathds{k}(Z_p)=\rho_\mathds{k}(Z_p(\widetilde{\mathfrak{p}}_\mathds{k}))$. As the monomials $\bar{x}^\mathbf{a}\bar{y}^\mathbf{b}\bar{u}^\mathbf{c}\bar{v}^\mathbf{d}\otimes1_\chi$ with $(\mathbf{a},\mathbf{b},\mathbf{c},\mathbf{d})\in\mathbb{Z}_+^m\times\mathbb{Z}_2^n\times\mathbb{Z}_+^s\times\mathbb{Z}_2^t$ (recall that $t=\lfloor\frac{\text{dim}\mathfrak{g}_\mathds{k}(-1)_{\bar1}}{2}\rfloor$) form a basis of $Q_{\chi,\mathds{k}}$, and $Z_p(\widetilde{\mathfrak{p}}_\mathds{k})$ is the polynomial algebra in $\bar{x}_i^p-\bar{x}_i^{[p]}(1\leqslant  i\leqslant  m)$ \& $\bar{u}_j^p-\bar{u}_j^{[p]}(1\leqslant  j\leqslant  s)$, we have $Z_p(\widetilde{\mathfrak{p}}_\mathds{k})\cap\text{Ker}\rho_\mathds{k}=\{0\}$. It follows that $\rho_\mathds{k}(Z_p)\cong Z_p(\widetilde{\mathfrak{p}}_\mathds{k})$ as $\mathds{k}$-algebras. By the discussion in Section 3.2 we know that $S((\widetilde{\mathfrak{p}}_\mathds{k})_{\bar0})\cong\mathds{k}[\chi+(\mathfrak{m}_\mathds{k}^\bot)_{\bar{0}}]$, hence $Z_p(\widetilde{\mathfrak{p}}_\mathds{k})\cong \mathds{k}[(\chi+(\mathfrak{m}_\mathds{k}^\bot)_{\bar{0}})^{(1)}]$, where $(\chi+(\mathfrak{m}_\mathds{k}^\bot)_{\bar{0}})^{(1)}\subseteq(\mathfrak{g}_\mathds{k}^*)^{(1)}$ is the Frobenius twist of $\chi+(\mathfrak{m}_\mathds{k}^\bot)_{\bar{0}}$.

(ii) Since the proof is similar for both cases, we will formulate a detailed proof for the case when $d_1$ is odd as which is more complicated.

Denote by $\mathbf{I}_j$ the set of all tuples with $j$ components and let $\mathbf{e}_i$ denote the tuple in $\mathbf{I}_j$ whose only nonzero component equals $1$ and occupies the $i$th position. As an immediate consequence of Theorem~\ref{PBWC}(2)(i), by induction we have that
\begin{equation}\label{dede}
\bar{\Theta}_k^p(1_\chi)-(\bar{x}_k^p+\sum\limits_{|(\mathbf{a},\mathbf{0},\mathbf{c},\mathbf{0})|_e=m_k+2}\mu^k_{\mathbf{a},\mathbf{0},\mathbf{c},\mathbf{0}}\bar{x}^{p\mathbf{a}}
\bar{u}^{p\mathbf{c}})\otimes1_\chi\in(Q_{\chi,\mathds{k}})_{p(m_k+2)-1}
\end{equation}for $1\leqslant  k\leqslant  l$, where $\mu^k_{\mathbf{a},\mathbf{0},\mathbf{c},\mathbf{0}}\in\mathbb{F}_p$.
Discussing in the graded algebra $\text{gr}(U(\mathfrak{g}_\mathds{k}))$ under the Kazhdan filtration (notice that $\bar x^{[p]}\in\mathfrak{g}_\mathds{k}(pi)$ whenever $\bar x\in\mathfrak{g}_\mathds{k}(i)$ for all $i\in\mathbb{Z}$), we can obtain that
\begin{equation}\label{grad}
\text{gr}(\bar{x}_i^p-\bar{x}_i^{[p]})=\text{gr}(\bar{x}_i)^p,~~~~\text{and}~~~~\text{gr}(\bar{u}_j^p-\bar{u}_j^{[p]})=\text{gr}(\bar{u}_j)^p~~~~(1\leqslant  i\leqslant m;~~ 1\leqslant  j\leqslant  s).
\end{equation}

On the other hand, Lemma~\ref{right}(1) implies that the vectors $\bar{X}^{(\mathbf{a},\mathbf{b},\mathbf{c},\mathbf{d})}\otimes1_\chi$ with\[\begin{array}{ccl}
(\mathbf{a},\mathbf{b},\mathbf{c},\mathbf{d})
&=&(a_1,\cdots,a_{m-l};b_1,\cdots,b_{n-q};c_1,\cdots,c_{s};d_1,\cdots,d_{t'})\\
&\in &\mathbb{Z}_+^{m-l}\times\mathbb{Z}_2^{n-q}\times\mathbb{Z}_+^{s}\times\mathbb{Z}_2^{t'},
\end{array}\]
(recall that $t'=\lceil\frac{\text{dim}\mathfrak{g}_\mathds{k}(-1)_{\bar1}}{2}\rceil$) form a free basis of the right $U(\mathfrak{g}_\mathds{k},e)$-module $Q_{\chi,\mathds{k}}$. As $Q_{\chi,\mathds{k}}$ is a Kazhdan-filtrated $U(\mathfrak{g}_\mathds{k},e)$-module, straightforward induction on filtration degree based on \eqref{dede} and \eqref{grad} shows that $Q_{\chi,\mathds{k}}$ is generated as a $Z_p(\widetilde{\mathfrak{p}}_\mathds{k})$-module by the set
$$\{\bar{X}^{(\mathbf{a},\mathbf{b},\mathbf{c},\mathbf{d})}\bar{\Theta}^{(\mathbf{i},\mathbf{j})}\otimes1_\chi| (\mathbf{a},\mathbf{b},\mathbf{c},\mathbf{d},\mathbf{i},\mathbf{j})\in\Lambda_{m-l}\times\Lambda'_{n-q}\times\Lambda_{s}\times\Lambda'_{t'}\times\Lambda_{l}\times\Lambda'_{q+1}\},$$
where $\Lambda_i$ and $\Lambda'_j$ are defined at the beginning of Section 5.

Let $h$ be an arbitrary element of $\widehat{U}(\mathfrak{g}_\mathds{k},e)$. Then we can assume
\[\begin{array}{cccl}
h(\bar{1}_\chi)&=&\sum f_{\mathbf{a},\mathbf{b},\mathbf{c},\mathbf{d},\mathbf{i},\mathbf{j}}&
\bar{X}_1^{a_1}\cdots\bar{X}_{m-l}^{a_{m-l}}\bar{X}_{m-l+1}^{b_1}\cdots\bar{X}_{m+n-l-q}^{b_{n-q}}
\bar{X}_{m+n-l-q+1}^{c_1}\cdots\\
&&&\bar{X}_{m+n-l-q+s}^{c_{s}}\bar{X}_{m+n-l-q+s+1}^{d_1}\cdots
\bar{X}_{m+n-l-q+s+t'}^{d_{t'}}\cdot\bar{\Theta}_1^{i_1}\cdots\bar{\Theta}_l^{i_l}\\
&&&\bar{\Theta}_{l+1}^{j_1}\cdots\bar{\Theta}_{l+q}^{j_q}
\bar{\Theta}_{l+q+1}^{j_{q+1}}(1_\chi)
\end{array}\]by above discussion, where
$f_{\mathbf{a},\mathbf{b},\mathbf{c},\mathbf{d},\mathbf{i},\mathbf{j}}\in Z_p(\widetilde{\mathfrak{p}}_\mathds{k})$ with $(\mathbf{a},\mathbf{b},\mathbf{c},\mathbf{d},\mathbf{i},\mathbf{j})$ in the set $\Lambda_{m-l}\times\Lambda'_{n-q}\times\Lambda_{s}\times\Lambda'_{t'}\times\Lambda_{l}\times\Lambda'_{q+1}$. For every $\xi\in\chi+(\mathfrak{m}_\mathds{k}^\bot)_{\bar{0}}$ the image of $f_{\mathbf{a},\mathbf{b},\mathbf{c},\mathbf{d},\mathbf{i},\mathbf{j}}$ in $U_\xi(\mathfrak{g}_\mathds{k})$ is a scalar in $\mathds{k}$ which shall be denoted by $\xi(\mathbf{a},\mathbf{b},\mathbf{c},\mathbf{d},\mathbf{i},\mathbf{j})$.

Suppose $f_{\mathbf{a},\mathbf{b},\mathbf{c},\mathbf{d},\mathbf{i},\mathbf{j}}\neq0$ for a nonzero $(\mathbf{a},\mathbf{b},\mathbf{c},\mathbf{d})\in\Lambda_{m-l}\times\Lambda'_{n-q}\times\Lambda_{s}\times\Lambda'_{t'}$ and some $(\mathbf{i},\mathbf{j})
\in\Lambda_{l}\times\Lambda'_{q+1}$. Then there exists $\eta\in\chi+(\mathfrak{m}_\mathds{k}^\bot)_{\bar{0}}$ such that $\eta(\mathbf{a},\mathbf{b},\mathbf{c},\mathbf{d},\mathbf{i},\mathbf{j})\neq0$. Let $h(\eta)$ be the image of $h\in\widehat{U}(\mathfrak{g}_\mathds{k},e)$ in $U_\eta(\mathfrak{g}_\mathds{k},e)=(\text{End}_{\mathfrak{g}_\mathds{k}}Q_\chi^\eta)^{\text{op}}$. Theorem~\ref{sumresult}(4)(ii) implies that $h(\eta)(\bar{1}_\chi)$ is a $\mathds{k}$-linear combination of $\theta_1^{i_1}\cdots\theta_l^{i_l}\theta_{l+1}^{j_1}\cdots\theta_{l+q}^{j_{q}}\theta_{l+q+1}^{j_{q+1}}(\bar{1}_\chi)$ with$$(i_1,\cdots,i_l;j_1,\cdots,j_q;j_{q+1})\in\Lambda_l\times\Lambda'_q\times\Lambda'_1.$$ By Lemma~\ref{right}(2), the set $$\{\bar{X}^{(\mathbf{a},\mathbf{b},\mathbf{c},\mathbf{d})}\otimes\bar{1}_\chi|(\mathbf{a},\mathbf{b},\mathbf{c},\mathbf{d})\in\Lambda_{m-l}\times\Lambda'_{n-q}\times\Lambda_s\times\Lambda'_{t'}\}$$ is a free basis of the right $U_\eta(\mathfrak{g}_\mathds{k},e)$-module $Q_{\chi}^\eta$. Since $\eta(\mathbf{a},\mathbf{b},\mathbf{c},\mathbf{d},\mathbf{i},\mathbf{j})\neq0$ and $\theta_1^{i_1}\cdots\theta_l^{i_l}\theta_{l+1}^{j_1}\cdots\theta_{l+q}^{j_q}\theta_{l+q+1}^{j_{q+1}}$ is the image of $\bar{\Theta}_1^{i_1}\cdots\bar{\Theta}_l^{i_l}\bar{\Theta}_{l+1}^{j_1}\cdots\bar{\Theta}_{l+q}^{j_q}\bar{\Theta}_{l+q+1}^{j_{q+1}}$ in $U_\eta(\mathfrak{g}_\mathds{k},e)$, it is now evident that $h(\eta)(\bar{1}_\chi)$ cannot be a $\mathds{k}$-linear combination of $\theta_1^{i_1}\cdots\theta_l^{i_l}\theta_{l+1}^{j_1}\cdots\theta_{l+q}^{j_{q}}\cdot$\\$\theta_{l+q+1}^{j_{q+1}}(\bar{1}_\chi)$ with $$(i_1,\cdots,i_l;j_1,\cdots,j_q;j_{q+1})\in\Lambda_l\times\Lambda'_q\times\Lambda'_1.$$ This contradiction shows that $f_{\mathbf{a},\mathbf{b},\mathbf{c},\mathbf{d},\mathbf{i},\mathbf{j}}=0$ unless $(\mathbf{a},\mathbf{b},\mathbf{c},\mathbf{d})=\mathbf{0}$. As a consequence, $$\{\bar{\Theta}_1^{i_1}\cdots\bar{\Theta}_l^{i_l}\bar{\Theta}_{l+1}^{j_1}\cdots\bar{\Theta}_{l+q}^{j_q}\bar{\Theta}_{l+q+1}^{j_{q+1}}|
(\mathbf{i},\mathbf{j})\in\Lambda_{l}\times\Lambda'_{q+1}\}$$generates $\widehat{U}(\mathfrak{g}_\mathds{k},e)$ as a $Z_p(\widetilde{\mathfrak{p}}_\mathds{k})$-module. Specialising at a suitable $\eta\in\chi+(\mathfrak{m}_\mathds{k}^\bot)_{\bar{0}}$ and applying Theorem~\ref{sumresult}(4)(ii) we deduce that the set $\{\bar{\Theta}_1^{i_1}\cdots\bar{\Theta}_l^{i_l}\bar{\Theta}_{l+1}^{j_1}\cdots\bar{\Theta}_{l+q}^{j_q}\bar{\Theta}_{l+q+1}^{j_{q+1}}|
(\mathbf{i},\mathbf{j})\in\Lambda_{l}\times\Lambda'_{q+1}\}$ is a free basis of the $Z_p(\widetilde{\mathfrak{p}}_\mathds{k})$-module $\widehat{U}(\mathfrak{g}_\mathds{k},e)$.

(iii) Our next goal is to show that $\widehat{U}(\mathfrak{g}_\mathds{k},e)=U(\mathfrak{g}_\mathds{k},e)\cdot Z_p(\mathfrak{a}_\mathds{k})$, the subalgebra of $\widehat{U}(\mathfrak{g}_\mathds{k},e)$ generated by $U(\mathfrak{g}_\mathds{k},e)$ and $Z_p(\mathfrak{a}_\mathds{k})$.

Firstly, every $\mathfrak{g}_\mathds{k}$-endomorphism of $Q_{\chi,\mathds{k}}$ is uniquely determined by its value at $1_\chi$. For a nonzero $u\in\widehat{U}(\mathfrak{g}_\mathds{k},e)$ write $$u(1_\chi)=\sum\limits_{|(\mathbf{a},\mathbf{b},\mathbf{c},\mathbf{d})|_e\leqslant  n(u)}\lambda_{\mathbf{a},\mathbf{b},\mathbf{c},\mathbf{d}}\bar{x}^\mathbf{a}\bar{y}^\mathbf{b}\bar{u}^\mathbf{c}\bar{v}^\mathbf{d}\otimes1_\chi,$$ where $\lambda_{\mathbf{a},\mathbf{b},\mathbf{c},\mathbf{d}}\neq0$ for at least one $(\mathbf{a},\mathbf{b},\mathbf{c},\mathbf{d})$ with $|(\mathbf{a},\mathbf{b},\mathbf{c},\mathbf{d})|_e=n(u)$. With the same notation preceding Lemma~\ref{hw}, for $k\in\mathbb{Z}_+$ put $$\Lambda^k(u):=\{(\mathbf{a},\mathbf{b},\mathbf{c},\mathbf{d})\in\mathbb{Z}_+^m\times\mathbb{Z}_2^n\times\mathbb{Z}_+^s\times\mathbb{Z}_2^t|
\lambda_{\mathbf{a},\mathbf{b},\mathbf{c},\mathbf{d}}\neq0~\&~|(\mathbf{a},\mathbf{b},\mathbf{c},\mathbf{d})|_e=k\},$$
and denote by $\Lambda^{\text{max}}(u)$ the set of all $
(\mathbf{a},\mathbf{b},\mathbf{c},\mathbf{d})\in\Lambda^{n(u)}(u)$ for which the quantity $n(u)-|\mathbf{a}|-|\mathbf{b}|-|\mathbf{c}|-|\mathbf{d}|$ assumes its maximum value. This maximum value will be denoted by $N(u)$ (recall that $\Lambda^{\text{max}}(u)=\Lambda^{\text{max}}_u+|\mathbf{a}|+|\mathbf{b}|+|\mathbf{c}|+|\mathbf{d}|$ by \eqref{dege} and the definition of $\Lambda_h^{\text{max}}$ preceding Lemma~\ref{hw}). For each $(\mathbf{a},\mathbf{b},\mathbf{c},\mathbf{d})\in\Lambda^{\text{max}}$, let $\bar x_i\in\mathfrak{g}_\mathds{k}(k_i)_{\bar0},~\bar y_j\in\mathfrak{g}_\mathds{k}(k'_j)_{\bar1}$ for $1\leqslant i\leqslant m$ and $1\leqslant j\leqslant n$ where $k_i, k_j'\in\mathbb{Z}_+$, then we have that
\[
\begin{array}{cl}
&|(\mathbf{a},\mathbf{b},\mathbf{c},\mathbf{d})|_e-|\mathbf{a}|-|\mathbf{b}|-|\mathbf{c}|-|\mathbf{d}|\\
=&\sum\limits_{i=1}^{m}(k_i+2)a_i
+\sum\limits_{i=1}^{n}(k'_i+2)b_i+\sum\limits_{i=1}^{s}c_i+\sum\limits_{i=1}^{t}d_i\\
&-|\mathbf{a}|-|\mathbf{b}|-|\mathbf{c}|-|\mathbf{d}|\geqslant 0.
\end{array}\]
Consequently, $n(u),~N(u)\in\mathbb{Z}_+$ and $n(u)\geqslant  N(u)$.

Put $\Omega:=\{(a,b)\in\mathbb{Z}_+^2|a\geqslant  b\}$. By the preceding remark we have $(n(u),N(u))\in\Omega$ for all nonzero $u\in\widehat{U}(\mathfrak{g}_\mathds{k},e)$. Theorem~\ref{PBWC}(2)(i) and the discussion in part (ii) show that
\[
\begin{array}{ll}
\Lambda^{\text{max}}(\bar{\Theta}_i)=\{(\mathbf{e}_i,\mathbf{0},\mathbf{0},\mathbf{0})\}&\text{for}~1\leqslant  i\leqslant l;\\
\Lambda^{\text{max}}(\rho_\mathds{k}(\bar{x}_i^p-\bar{x}_i^{[p]}))=\{(p\mathbf{e}_i,\mathbf{0},\mathbf{0},\mathbf{0})\}&\text{for}~1\leqslant  i\leqslant m;\\
\Lambda^{\text{max}}(\rho_\mathds{k}(\bar{u}_j^p-\bar{u}_j^{[p]}))=\{(\mathbf{0},\mathbf{0},p\mathbf{e}_j,\mathbf{0})\}& \text{for}~1\leqslant  j\leqslant  s;\\
\Lambda^{\text{max}}(\bar{\Theta}_k)=\{(\mathbf{0},\mathbf{e}_{k-l},\mathbf{0},\mathbf{0})\}& \text{for}~l+1\leqslant  k\leqslant  l+q;\\
\Lambda^{\text{max}}(\bar{\Theta}_{l+q+1})=\{(\mathbf{0},\mathbf{0},\mathbf{0},\mathbf{e}_{t})\}.&
\end{array}\]Since $Q_{\chi,\mathds{k}}$ is a Kazhdan filtrated $U(\mathfrak{g}_\mathds{k})$-module, this implies that
\[
\begin{array}{ll}
&\Lambda^{\text{max}}(\prod\limits_{i=1}^m\rho_\mathds{k}(\bar{x}_i^p-\bar{x}_i^{[p]})^{a_i}\cdot
\prod\limits_{i=1}^s\rho_\mathds{k}(\bar{u}_i^p-\bar{u}_i^{[p]})^{b_i}\cdot\bar{\Theta}_1^{c_1}\cdots\bar{\Theta}_l^{c_l}\bar{\Theta}_{l+1}^{d_1}\cdots\bar{\Theta}_{l+q}^{d_q}\bar{\Theta}_{l+q+1}^{d_{q+1}})\\
=&\{\sum\limits_{i=1}^{m}pa_i\mathbf{e}_i+\sum\limits_{j=1}^{l}c_j\mathbf{e}_j,\sum\limits_{i=1}^{q}d_i\mathbf{e}_i,\sum\limits_{i=1}^{s}pb_i\mathbf{e}_i,d_{q+1}\mathbf{e}_t\},
\end{array}\]
for all $(a_1,\cdots,a_m;b_1,\cdots,b_s;c_1,\cdots,c_l;d_1,\cdots,d_{q+1})\in\mathbb{Z}_+^m\times
\mathbb{Z}_+^s\times\mathbb{Z}_{p}^l\times\mathbb{Z}_2^{q+1}$. Since $\widehat{U}(\mathfrak{g}_\mathds{k},e)$ is generated as a $Z_p(\widetilde{\mathfrak{p}}_\mathds{k})$-module by the set $$\{\bar{\Theta}_1^{i_1}\cdots\bar{\Theta}_l^{i_l}\bar{\Theta}_{l+1}^{j_1}\cdots\bar{\Theta}_{l+q}^{j_q}\bar{\Theta}_{l+q+1}^{j_{q+1}}|
(\mathbf{i},\mathbf{j})\in\Lambda_{l}\times\Lambda'_{q+1}\},$$ it follows that for every $u\in\widehat{U}(\mathfrak{g}_\mathds{k},e)$ with $(n(u),N(u))=(d,d')$ there exists a $\mathds{k}$-linear combination $u'$ of the endomorphism
\begin{equation*}
u(\mathbf{a},\mathbf{b},\mathbf{c},\mathbf{d}):=\prod\limits_{i=1}^m\rho_\mathds{k}(\bar{x}_i^p-\bar{x}_i^{[p]})^{a_i}\cdot
\prod\limits_{i=1}^s\rho_\mathds{k}(\bar{u}_i^p-\bar{u}_i^{[p]})^{b_i}\cdot\bar{\Theta}_1^{c_1}\cdots\bar{\Theta}_l^{c_l}\bar{\Theta}_{l+1}^{d_1}\cdots\bar{\Theta}_{l+q}^{d_q}\bar{\Theta}_{l+q+1}^{d_{q+1}},
\end{equation*}for all $(a_1,\cdots,a_m;b_1,\cdots,b_s;c_1,\cdots,c_l;d_1,\cdots,d_{q+1})\in\mathbb{Z}_+^m\times
\mathbb{Z}_+^s\times\mathbb{Z}_{p}^l\times\mathbb{Z}_2^{q+1}$ (where $\mathbb{Z}_{p}^l=\Lambda_l$) with $\Lambda^{\text{max}}(u(\mathbf{a},\mathbf{b},\mathbf{c},
\mathbf{d}))\subseteq\Lambda^{\text{max}}(u)$ such that either $n(u-u')<d$ or $n(u-u')=d$ and $N(u-u')<d'$.

Order the tuples in $\Omega$ lexicographically and assume that $u\in U(\mathfrak{g}_\mathds{k},e)\cdot Z_p(\mathfrak{a}_\mathds{k})$ for all nonzero $u\in\widehat{U}(\mathfrak{g}_\mathds{k},e)$ with $(n(u),N(u))\prec(d,d')$ (when $(n(u),N(u))=(0,0)$ this is a valid assumption). Now let $u\in\widehat{U}(\mathfrak{g}_\mathds{k},e)$ be such that $(n(u),N(u))=(d,d')$. By the preceding remark we know that there exists $u'=\sum\limits_{(\mathbf{a},\mathbf{b},\mathbf{c},\mathbf{d})}\lambda_{\mathbf{a},\mathbf{b},\mathbf{c},\mathbf{d}}u(\mathbf{a},\mathbf{b},\mathbf{c},\mathbf{d})
$ with $\Lambda^{\text{max}}(u(\mathbf{a},\mathbf{b},\mathbf{c},
\mathbf{d}))\subseteq\Lambda^{\text{max}}(u)$ for all $(\mathbf{a},\mathbf{b},\mathbf{c},\mathbf{d})$ with $\lambda_{\mathbf{a},\mathbf{b},\mathbf{c},\mathbf{d}}\neq0$ such that $(n(u-u'),N(u-u'))\prec(d,d')$. Set
\[
\begin{array}{lll}
v(\mathbf{a},\mathbf{b},\mathbf{c},\mathbf{d})&:=&u((0,\cdots,0,a_{l+1},\cdots,a_m),\mathbf{b},\mathbf{0},\mathbf{0})\cdot\\
&&\prod\limits_{i=1}^{l}\bar{\Theta}^{pa_i}\cdot
(\bar{\Theta}_1^{c_1}\cdots\bar{\Theta}_l^{c_l}\bar{\Theta}_{l+1}^{d_1}\cdots\bar{\Theta}_{l+q}^{d_q}\bar{\Theta}_{l+q+1}^{d_{q+1}}).
\end{array}\]
Using \eqref{dede} it is easy to observe that $\Lambda^{\text{max}}(u(\mathbf{a},\mathbf{b},\mathbf{c},\mathbf{d}))=\Lambda^{\text{max}}(v(\mathbf{a},\mathbf{b},\mathbf{c},\mathbf{d}))$ and
\[
\begin{array}{lll}
&(n(u(\mathbf{a},\mathbf{b},\mathbf{c},\mathbf{d})-v(\mathbf{a},\mathbf{b},\mathbf{c},\mathbf{d})),N(u(\mathbf{a},\mathbf{b},\mathbf{c},\mathbf{d})-v(\mathbf{a},\mathbf{b},\mathbf{c},\mathbf{d})))\\
\prec&(n(u(\mathbf{a},\mathbf{b},\mathbf{c},\mathbf{d})),N(u(\mathbf{a},\mathbf{b},\mathbf{c},\mathbf{d}))).
\end{array}\]
We now put $u^{''}:=\sum\limits_{(\mathbf{a},\mathbf{b},\mathbf{c},\mathbf{d})}\lambda_{\mathbf{a},\mathbf{b},\mathbf{c},
\mathbf{d}}v(\mathbf{a},\mathbf{b},\mathbf{c},\mathbf{d})$, an element of $U(\mathfrak{g}_\mathds{k},e)\cdot Z_p(\mathfrak{a}_\mathds{k})$. Because $(n(u-u^{''}),N(u-u^{''}))\prec(n(u),N(u))$, the equality $\widehat{U}(\mathfrak{g}_\mathds{k},e)\cong U(\mathfrak{g}_\mathds{k},e)\cdot Z_p(\mathfrak{a}_\mathds{k})$ follows by induction on the length $(d,d')$ in the linearly ordered set $(\Omega,\prec)$.

(iv) It is immediate from Lemma~\ref{right}(1) and the procedure of ``modular $p$ induction'' that the vectors $\bar{X}^{(\mathbf{a},\mathbf{b},\mathbf{c},\mathbf{d})}\otimes1_\chi$ with
\[\begin{array}{lll}
(\mathbf{a},\mathbf{b},\mathbf{c},\mathbf{d})&=&(a_1,\cdots,a_{m-l};b_1,\cdots,b_{n-q};c_1,\cdots,c_{s};d_1,\cdots,d_{t'})\\
&\in &\mathbb{Z}_+^{m-l}\times\mathbb{Z}_2^{n-q}\times\mathbb{Z}_+^{s}\times\mathbb{Z}_2^{t'},
\end{array}\]
form a free basis of the right $U(\mathfrak{g}_\mathds{k},e)$-module $Q_{\chi,\mathds{k}}$. Since \eqref{grad} shows that $\bar X_i^{p}$ and $\bar X_i^{p}-\bar X_i^{[p]}$ have the same Kazhdan degree in $U(\mathfrak{g}_\mathds{k})$ for $1\leqslant i\leqslant m-l$ and $m+n-l-q+1\leqslant i\leqslant m+n-l-q+s$, respectively, and $Q_{\chi,\mathds{k}}$ is a Kazhdan filtered $U(\mathfrak{g}_\mathds{k})$-module, it follows that the vectors
$$\prod\limits_{i=1}^{m-l}\prod\limits_{j=m+n-l-q+1}^{m+n-l-q+s}\rho_\mathds{k}(\bar X_i^{p}-\bar X_i^{[p]})^{a_i}
\rho_\mathds{k}(\bar X_j^p-\bar X_j^{[p]})^{b_j}\cdot\bar{\Theta}_1^{c_1}\cdots\bar{\Theta}_l^{c_l}
\bar{\Theta}_{l+1}^{d_1}\cdots\bar{\Theta}_{l+q}^{d_q}\bar{\Theta}_{l+q+1}^{d_{q+1}}$$
are linearly independent, where $(\mathbf{a},\mathbf{b},\mathbf{c},\mathbf{d})\in\mathbb{Z}_+^{m-l}\times\mathbb{Z}_+^s\times\mathbb{Z}_+^l\times\mathbb{Z}_2^{q+1}$.

(iii) and (iv) yield that there is an isomorphism between $\mathds{k}$-algebras$$\widehat{U}(\mathfrak{g}_\mathds{k},e)\cong U(\mathfrak{g}_\mathds{k},e)\otimes_\mathds{k}Z_p(\mathfrak{a}_\mathds{k})$$when $d_1$ is odd.

(v) For the case when $d_1$ is even, the same discussion as (ii)---(iv) shows that the vectors $\{\bar{\Theta}_1^{i_1}\cdots\bar{\Theta}_l^{i_l}\bar{\Theta}_{l+1}^{j_1}\cdots\bar{\Theta}_{l+q}^{j_q}\}$ with $(\mathbf{i},\mathbf{j})\in\Lambda_{l}\times\Lambda'_{q}$ form a basis of $Z_p(\widetilde{\mathfrak{p}}_\mathds{k})$-module $\widehat{U}(\mathfrak{g}_\mathds{k},e)$. Then $\widehat{U}(\mathfrak{g}_\mathds{k},e)$ is a free $\rho_\mathds{k}(Z_p)$-module of rank $p^l2^q$. The vectors
$$\prod\limits_{i=1}^{m-l}\prod\limits_{j=m+n-l-q+1}^{m+n-l-q+s}\rho_\mathds{k}(\bar X_i^{p}-\bar X_i^{[p]})^{a_i}
\rho_\mathds{k}(\bar X_j^p-\bar X_j^{[p]})^{b_j}\cdot\bar{\Theta}_1^{c_1}\cdots\bar{\Theta}_l^{c_l}
\bar{\Theta}_{l+1}^{d_1}\cdots\bar{\Theta}_{l+q}^{d_q}$$
form a $\mathds{k}$-basis of the algebra $\widehat{U}(\mathfrak{g}_\mathds{k},e)$, where $(\mathbf{a},\mathbf{b},\mathbf{c},\mathbf{d})\in\mathbb{Z}_+^{m-l}\times\mathbb{Z}_+^s\times\mathbb{Z}_+^l\times\mathbb{Z}_2^{q}$.
This implies that $$\widehat{U}(\mathfrak{g}_\mathds{k},e)\cong U(\mathfrak{g}_\mathds{k},e)\otimes_\mathds{k}Z_p(\mathfrak{a}_\mathds{k})$$ as $\mathds{k}$-algebras, completing the proof.
\end{proof}

\section{On the minimal dimensional representations of finite $W$-superalgebras}

Let $\mathfrak{g}_\mathds{k}$ be a basic classical Lie superalgebra over positive characteristic field $\mathds{k}=\overline{\mathbb{F}}_p$. In ([\cite{WZ}], Theorem 4.3 and Theorem 5.6), Wang and Zhao introduced the Super Kac-Weisfeiler Property for $\mathfrak{g}_\mathds{k}$ (with some restrictions on $p$). In section 8 and section 9, we will discuss the existence of the minimal dimensional representations in the Super Kac-Weisfeiler Property with character $p\gg0$. Following Premet's treatment to the finite $W$-algebras in [\cite{P7}], we first need to estimate the minimal dimension of the representations for the finite $W$-superalgebra $U(\mathfrak{g},e)$ over $\mathbb{C}$.

The existence of $1$-dimensional representations for the finite $W$-algebra associated to a classical Lie algebra over $\mathbb{C}$ was obtained by Losev in ([\cite{L3}], Theorem 1.2.3(1)), which also can be found in his ICM talk in 2010 (see [\cite{L1}], Section 6). The main technique he adopted is the symplectic geometry machinery (i.e. Fedosov deformation quantization) and the knowledge of nilpotent orbits and primitive ideals for the Lie algebras. Using computational methods, Goodwin-R\"{o}hrle-Ubly$^{[\cite{GRU}]}$ proved that the $W$-algebras associated to exceptional Lie algebras $E_6,E_7,F_4,G_2$, or $E_8$ with $e$ not rigid, admit $1$-dimensional representations (see also [\cite{P7}]). Since the knowledge of finite $W$-superalgebras is very limited so far and there is no effective method to deal with them, the dimension of minimal representations for the $W$-superalgebras over $\mathbb{C}$ can only be reasonable estimated.

In this section, the basic classical Lie superalgebra $\mathfrak{g}$ over $\mathbb{C}$ will be referred to all except for type $D(2,1;a)(a\notin\mathbb{Q})$, whereas the basic classical Lie superalgebra $\mathfrak{g}_\mathds{k}$ over $\mathds{k}=\overline{\mathbb{F}}_p$ will be referred to all types including the case $D(2,1;\bar a)(\bar a\in\mathds{k}\backslash\{\bar0,\overline {-1}\})$ (recall that the translation $\mathds{k}$-algebra $U(\mathfrak{g}_\mathds{k},e)$ with $\mathfrak{g}_\mathds{k}\in D(2,1;\bar a)(\bar a\in\mathds{k}\backslash\{\bar0,\overline {-1}\})$ can be induced from the $\mathbb{C}$-algebra $U(\mathfrak{g},e)$ associated to Lie superalgebra $D(2,1;a)(a\in\mathbb{Q}\backslash\{0,1\})$ by ``modular $p$ reduction''). For more detail we refer to Remark~\ref{D(2,1)}.

Since the parity of $d_1$ ({\bf recall that $\text{dim}~\mathfrak{g}(-1)_{\bar1}$ and $d_1=\text{dim}~\mathfrak{g}_{\bar1}-\text{dim}~\mathfrak{g}^e_{\bar1}$ have the same parity by Remark~\ref{centralizer}}) plays the key role for the construction of finite $W$-superalgebra $U(\mathfrak{g},e)$, for each case we will consider separately.

Recall that in Section 4.1 we have identified $e,\,h,\,f\in(\mathfrak{g}_A)_{\bar0}$ with the nilpotent elements $\bar{e}=e\otimes1,\bar{h}=h\otimes1,\bar{f}=f\otimes1$ in $(\mathfrak{g}_\mathds{k})_{\bar0}\cong(\mathfrak{g}_A)_{\bar0}\otimes_A\mathds{k}$  respectively, and $\chi\in\mathfrak{g}_A^*$ with the linear function $(e,\cdot)$ on $\mathfrak{g}_\mathds{k}$. In this section we will continue to adopt these notations.

\subsection{The characterization of $1$-dimensional representations for finite $W$-superalgebras when $d_1$ is even}

In this part, we will consider the case when $d_1$ is even. In fact, a large number of examples can be given in this situation, e.g. $\mathfrak{g}=\mathfrak{gl}(M|N)$ (note that which is not simple, but shares a lot of common properties with $\mathfrak{sl}(M|N)$). Recall that in Remark~\ref{notreduced} we have mentioned that Wang-Zhao obtained the explicit description for the Dynkin grading of basic classical Lie superalgebras of all types over positive characteristic field $\mathds{k}$ in [\cite{WZ}]. In the case when the character of $\mathds{k}$ satisfies $p\gg0$, which can also be regarded as the grading over $\mathbb{C}$. In ([\cite{WZ}], Section 3.2) Wang-Zhao computed the dimension of $\mathfrak{gl}(M|N)^e_{\bar1}$ (as a vector space) for any even nilpotent element $e\in\mathfrak{gl}(M|N)_{\bar0}$ and showed that which is an even number. As the dimension of $\mathfrak{gl}(M|N)_{\bar1}$ is always even, it is immediate from Remark~\ref{centralizer} that $d_1$ is also an even number.

Now we first formulate a {\bf conjecture} that each finite $W$-superalgebra $U(\mathfrak{g},e)$ over $\mathbb{C}$ admits a $1$-dimensional representation, and denote it by $V:=\mathbb{C}v$.

By Theorem~\ref{PBWC}(1) it follows that the elements $\Theta_1,\cdots,\Theta_l,\Theta_{l+1},\cdots,\Theta_{l+q}$ generate the algebra $U(\mathfrak{g},e)$ with $\Theta_1,\cdots,\Theta_l\in U(\mathfrak{g},e)_{\bar0}$ \& $\Theta_{l+1},\cdots,\Theta_{l+q}\in U(\mathfrak{g},e)_{\bar1}$. Keep in mind that all the modules considered are $\mathbb{Z}_2$-graded. Let $M$ be a $U(\mathfrak{g},e)$-module, then pick an odd element $u\in U(\mathfrak{g},e)_{\bar1}$ and a $\mathbb{Z}_2$-homogeneous element $m\in M$. It is obvious that the parity of $m$ changes when $u$ acts on it, i.e. the elements $m$ and $u.m$ have different parity. Since the vector space $V$ is $1$-dimensional \& $\mathbb{Z}_2$-graded, and $\Theta_{l+1},\cdots,\Theta_{l+q}\in U(\mathfrak{g},e)_{\bar1}$, then $\Theta_i.v=0$ for $l+1\leqslant  i\leqslant  l+q$. For $1\leqslant  i\leqslant  l$, set $\Theta_i.v=c_iv$ with the constants $c_i\in\mathbb{C}$ not all zero. Recall Theorem~\ref{relationc}(1) shows that the algebra $U(\mathfrak{g},e)$ is completely determined by the commuting relations of $\Theta_1,\cdots,\Theta_{l+q}$. We have

(i) for $1\leqslant  i<j\leqslant  l$, the elements $[\Theta_i,\Theta_j]$ are even since $\Theta_i,\Theta_j\in U(\mathfrak{g},e)_{\bar0}$. It is immediate from $[\Theta_i,\Theta_j].v=(\Theta_i\cdot\Theta_j-\Theta_j\cdot\Theta_i).v=(c_ic_j-c_jc_i).v=0$ that $V$ is a $1$-dimensional representation of $U(\mathfrak{g},e)$ iff $F_{ij}(\Theta_1,\cdots,\Theta_{l+q})$ (see Theorem~\ref{relationc}) acts on $V$ trivially. Recall that each $F_{ij}(\Theta_1,\cdots,\Theta_{l+q})$ is a polynomial superalgebra in $l+q$ variables, and $\Theta_i.v=0$ for $l+1\leqslant  i\leqslant  l+q$ by preceding remark. Since each polynomial $F_{ij}(\Theta_1,\cdots,\Theta_{l+q})$ can be written as a $\mathbb{C}$-linear combination of $\Theta_1^{a_1}\cdots\Theta_l^{a_l}\Theta_{l+1}^{b_1}\cdots\Theta_{l+q}^{b_q}$ with $a_i's\in\mathbb{Z}_+$ \& $b_i's\in\mathbb{Z}_2$, after deleting all the terms in which any of the odd elements $\Theta_{l+1},\cdots,\Theta_{l+q}$ occurs, we can obtain a polynomial (in the usual sense, not super) in $l$ variables, and denote it by $F'_{ij}(\Theta_1,\cdots,\Theta_{l})$. Then the formulas in Theorem~\ref{relationc}(1) shows that $F'_{ij}(\Theta_1,\cdots,\Theta_{l}).v=0$ for $1\leqslant  i<j\leqslant  l$.

(ii) for $l+1\leqslant  i\leqslant  j\leqslant  l+q$, the elements $[\Theta_i,\Theta_j]$ are still even since $\Theta_i,\Theta_j$ are both odd. As $\Theta_i.v=0$ for $l+1\leqslant  i\leqslant  l+q$, we have $[\Theta_i,\Theta_j].v=(\Theta_i\cdot\Theta_j+\Theta_j\cdot\Theta_i).v=0$. By the same discuss as (i) we can also get polynomials $F'_{ij}(\Theta_1,\cdots,\Theta_{l})$ for $l+1\leqslant  i\leqslant  j\leqslant  l+q$, and $V$ is $1$-dimensional iff $F'_{ij}(\Theta_1,\cdots,\Theta_{l}).v=0$.

(iii) for $1\leqslant  i\leqslant  l<j\leqslant  l+q$, the elements $[\Theta_i,\Theta_j]$ are odd since $\Theta_i\in U(\mathfrak{g},e)_{\bar0}$ and $\Theta_j\in U(\mathfrak{g},e)_{\bar1}$. As $\Theta_i.v=0$ for $l+1\leqslant  i\leqslant  l+q$, we have that $[\Theta_i,\Theta_j].v=(\Theta_i\cdot\Theta_j-\Theta_j\cdot\Theta_i).v=0$. Hence $V$ is $1$-dimensional iff $F_{ij}(\Theta_1,\cdots,\Theta_{l+q})$ acts on $V$ trivially. As Theorem~\ref{relationc}(1) shows that $[\Theta_i,\Theta_j]=F_{ij}(\Theta_1,\cdots,\Theta_{l+q})$, it is immediate that all $F_{ij}(\Theta_1,\cdots,\Theta_{l+q})'s$ are odd elements. Therefore, when we put each polynomial $F_{ij}(\Theta_1,\cdots,\Theta_{l+q})$ as a $\mathbb{C}$-linear combination of monomials $\Theta_1^{a_1}\cdots\Theta_l^{a_l}\Theta_{l+1}^{b_1}\cdots\Theta_{l+q}^{b_q}$ with $a_i's\in\mathbb{Z}_+$ \& $b_i's\in\mathbb{Z}_2$, some odd element $\Theta_k$ (for $l+1\leqslant  k\leqslant  l+q$) will occur at least once in each given monomial. Since $\Theta_k.v=0$ for $l+1\leqslant  k\leqslant  l+q$, the equations $F_{ij}(\Theta_1,\cdots,\Theta_{l+q}).v=0$ are trivial for $1\leqslant  i\leqslant  l<j\leqslant  l+q$. In this case we do not get any new equations.

Let $U(\mathfrak{g},e)^{\text{ab}}$ denote the factor-algebra $U(\mathfrak{g},e)/R$, where $R$ is the ideal of $U(\mathfrak{g},e)$ generated by all the odd generators $\Theta_{l+1},\cdots,\Theta_{l+q}$ and all commutators $[a, b]$ with $a,b\in U(\mathfrak{g},e)$. It is obvious that the algebra $U(\mathfrak{g},e)^{\text{ab}}$ is isomorphic to the algebra $\mathbb{C}[X_1,\cdots,X_l]/\Lambda$, where $\mathbb{C}[X_1,\cdots,X_l]$ is a polynomial algebra (in the usual sense) in $l$ variables, and $\Lambda$  the ideal of $\mathbb{C}[X_1,\cdots,X_l]$ generated by all $F'_{ij}(X_1,\cdots,X_l)$ for $1\leqslant  i<j\leqslant  l$ and $l+1\leqslant  i\leqslant  j\leqslant  l+q$.  By Hilbert's Nullstellensatz, the maximal spectrum $\mathscr{E}:=\text{Specm}~U(\mathfrak{g},e)^{\text{ab}}$ parametrises the $1$-dimensional representations of $U(\mathfrak{g},e)$.

We now wish to describe the algebra $U(\mathfrak{g},e)^{\text{ab}}$ for $\mathfrak{g}=\mathfrak{gl}(M|N)$ with the Jordan type of $e$ satisfying certain condition. We are going to rely on the explicit presentation of $U(\mathfrak{g},e)$ obtained by Peng in [\cite{Peng3}]. Let $\lambda=(p_{n+1}\geqslant\cdots\geqslant p_1)$ be a partition of $(M|N)$ with $n+1$ parts. As in [\cite{Peng3}], we associate with $\lambda$ an even nilpotent element $e=e_\lambda\in\mathfrak{gl}(M|N)_{\bar0}$ of Jordan type $(p_1,p_2,\cdots,p_{n+1})$ satisfying that $e_\lambda=e_M\oplus e_N$, where $e_M$ is principal nilpotent in $\mathfrak{gl}(M|0)$ and the sizes of the Jordan blocks of $e_N$ are all greater or equal to $M$. By ([\cite{Peng3}], Theorem 9.1), the finite $W$-superalgebra $U(\mathfrak{g},e)$ is isomorphic to the truncated shifted super Yangian $Y_{1|n}^l(\sigma)$ of level $l:=p_{n+1}$. Here $\sigma$ is an upper triangular matrix of order $n+1$ with nonnegative integral entries; see ([\cite{Peng3}], Section 7) for more detail. It follows from the main results of [\cite{Peng3}] that $U(\mathfrak{g},e)$ is generated by elements
\begin{eqnarray*}
&\{D_i^{(r)}~|~1\leqslant i\leqslant  n + 1, r \geqslant 1\},&\\
&\{E_i^{(r)}~|~1\leqslant i\leqslant  n, r>p_{i+1}-p_i\},&\\
&\{F_i^{(r)}~|~1\leqslant i\leqslant  n, r \geqslant 1\}&
\end{eqnarray*}subject to certain relations (see ([\cite{Peng3}], (2.3)---(2.15)); also ([\cite{Gow}], (38)---(50))), where
$\{E^{(r)}_1~|~r>p_2-p_1\}\cup\{F^{(r)}_1~|~r \geqslant 1\}$ are the only odd generators, and $D_1^{(r)}=0$ for $r>p_1$. As noted by Peng in ([\cite{Peng3}], Section 7.1), dim~$\mathfrak{g}(-1)_{\bar{1}}$ is even in this case.

\begin{prop}\label{glmn}
Let $\mathfrak{g}=\mathfrak{gl}(M|N)$ and $e=e_\lambda\in\mathfrak{gl}(M|N)_{\bar0}$ be of Jordan type $(p_1,p_2,\cdots,p_{n+1})$ satisfying that $e_\lambda=e_M\oplus e_N$, where $e_M$ is principal nilpotent in $\mathfrak{gl}(M|0)$ and the sizes of the Jordan blocks of $e_N$ are all greater or equal to $M$. Then $U(\mathfrak{g},e)^{\text{ab}}$ is isomorphic to a polynomial algebra in $l=p_{n+1}$ variables.
\end{prop}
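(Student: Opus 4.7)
The approach is to compute $U(\mathfrak{g},e)^{\mathrm{ab}}$ directly from Peng's explicit presentation of $U(\mathfrak{g},e)\cong Y_{1|n}^{l}(\sigma)$ with $l=p_{n+1}$. By construction $U(\mathfrak{g},e)^{\mathrm{ab}}=U(\mathfrak{g},e)/R$ where $R$ is the two-sided ideal generated by the odd generators $\{E_1^{(r)},F_1^{(r)}\}$ together with all supercommutators. Since $R$ already contains every odd element, the quotient is an honest commutative algebra, and as a commutative algebra it is generated by the images of the even generators $\{D_i^{(r)}\}$, $\{E_i^{(r)}\}_{i\geq 2}$, $\{F_i^{(r)}\}_{i\geq 2}$ subject to Peng's defining relations~(2.3)--(2.15) pushed to the abelianization.

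First I would inductively eliminate every $E_i^{(r)}$ and $F_i^{(r)}$ with $i\geq 2$. The defining recursions in the super Yangian express each such $E_i^{(r)}$ (respectively $F_i^{(r)}$) as an iterated supercommutator whose innermost bracket involves $E_1^{(\ast)}$ or $F_1^{(\ast)}$; hence each such generator lies in $R$ and vanishes in $U(\mathfrak{g},e)^{\mathrm{ab}}$. This leaves only the $D_i^{(r)}$ as generators of the quotient. Next I would invoke the key relations of the form $[E_i^{(r)},F_j^{(s)}]=\delta_{ij}\,B_{i,r,s}(D)$, where $B_{i,r,s}(D)$ is a universal polynomial in the $D_k^{(t)}$. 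Because the left-hand side is zero in $U(\mathfrak{g},e)^{\mathrm{ab}}$, each $B_{i,r,s}(D)=0$ becomes a genuine polynomial identity among the $D$'s. Combining these identities with the truncation conditions $D_1^{(r)}=0$ for $r>p_1$ and the analogous vanishings for the other $D_i$, one can solve for all $D_i^{(r)}$ with $i<n+1$ as polynomial expressions in the single series $D_{n+1}^{(1)},\dots,D_{n+1}^{(l)}$. This produces a surjection $\mathbb{C}[X_1,\dots,X_l]\twoheadrightarrow U(\mathfrak{g},e)^{\mathrm{ab}}$.

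Finally, algebraic independence of $D_{n+1}^{(1)},\dots,D_{n+1}^{(l)}$ in $U(\mathfrak{g},e)^{\mathrm{ab}}$ would be checked by passing to the associated graded under the Kazhdan filtration: Theorem~\ref{PBWC}(1) identifies $\mathrm{gr}\,U(\mathfrak{g},e)$ with a polynomial superalgebra on a basis of $\mathfrak{g}^e$, and the leading Kazhdan symbols of $D_{n+1}^{(1)},\dots,D_{n+1}^{(l)}$ can be matched with $l$ independent coordinate functions on $e+\mathfrak{g}^f$ that survive the abelianization. The principal obstacle will be the second step: verifying that the polynomials $B_{i,r,s}(D)$ really do suffice to eliminate every $D_i^{(r)}$ with $i\neq n+1$ requires a careful induction on $(i,r)$ using Peng's shifted super-Yangian relations, and the super signs introduced by the single odd simple root must be tracked throughout so that no spurious vanishing of $D_{n+1}^{(r)}$ for $r\leq l$ is introduced along the way.
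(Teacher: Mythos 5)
Your plan tracks the paper's strategy closely through the first stages: delete the odd generators, use the super Yangian defining relations (Peng's (2.6)--(2.7)) to kill the $E_i^{(r)}, F_i^{(r)}$ with $i\geq 2$ in the abelianization, and reduce to a quotient whose generators are the $d_i^{(r)}$. Where the two proofs diverge is at the decisive last step, and that is precisely where your proposal has a gap.

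The paper does \emph{not} prove algebraic independence directly inside $U(\mathfrak{g},e)^{\mathrm{ab}}$. Instead, after a cosmetic sign change on the $d'^{(r)}_i$ it observes that the commutative quotient of the super Yangian $Y_{1|n}^l(\sigma)$ has \emph{exactly} the same generators and defining relations (in the sense of ([Gow], Theorem~3) and ([Peng3], Definition~2.1)) as the commutative quotient of the ordinary truncated shifted Yangian $Y_{n+1,l}(\sigma)$. This identifies $U(\mathfrak{gl}(M|N),e)^{\mathrm{ab}}$ with $U(\mathfrak{gl}(M+N),e)^{\mathrm{ab}}$, and then the polynomial-ring statement is not reproved -- it is simply imported from Premet ([P7], Theorem~3.3), whose proof of independence goes through the geometry of \emph{sheets} of the classical Lie algebra $\mathfrak{gl}(M+N)$. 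The Remark immediately following the proposition in the paper makes this explicit: the generation by $l$ elements can be extracted by the kind of recursive argument you outline, but the independence is the hard part, it relies on sheet theory, and the sheet theory for Lie superalgebras does not exist -- which is exactly why the authors reduce to the Lie algebra case rather than argue directly.

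Your final step (``algebraic independence\dots would be checked by passing to the associated graded under the Kazhdan filtration'') is the point that fails. The filtration on a quotient $U/R$ does not separate into pieces nicely: $\mathrm{gr}(U/R)=\mathrm{gr}\,U/\mathrm{gr}\,R$, and $\mathrm{gr}\,R$ may be strictly larger than the ideal generated by the Kazhdan symbols of the defining relations of $R$. Matching the leading symbols of $D_{n+1}^{(1)},\dots,D_{n+1}^{(l)}$ with coordinate functions on a Slodowy-slice-type variety shows these symbols are independent in $\mathrm{gr}\,U(\mathfrak{g},e)$, but gives no control over what happens after you mod out by the closure of $R$ in the filtration. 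To close that gap you would need either a flatness/lifting statement for the filtration on the quotient or the dimension count that Premet obtains from sheets -- neither of which you supply. The paper's detour through $\mathfrak{gl}(M+N)$ is precisely the device that side-steps this obstruction, and without it (or a genuine super replacement for sheet theory) your argument does not yet prove independence.
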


\begin{proof}
First delete all the odd generators $\{E^{(r)}_1~|~r>p_2-p_1\}\cup\{F^{(r)}_1~|~r \geqslant 1\}$ and denote by $d^{(r)}_i, e^{(r)}_i, f^{(r)}_i$ the images of $D^{(r)}_i, E^{(r)}_i, F^{(r)}_i$ in $U(\mathfrak{g},e)^{\text{ab}}$. Applying ([\cite{Peng3}], (2.6) and (2.7)) with $r=1$ we see that $e^{(s)}_i=f^{(s)}_i=0$ for all $2\leqslant i\leqslant n$ and $s\geqslant1$. By ([\cite{Gow}], Theorem 3) the elements $D^{(r)}_i$ and $D^{(s)}_j$ commute for all $i,j\leqslant n$ and all $r,s$ (it can be calculated that the right hand of (2.5) in [\cite{Peng3}] also equals to zero). Note that in the defining relations of truncated shifted super Yangian $Y_{1|n}^l(\sigma)$, there are other elements $D'^{(r)}_i$ for $1\leqslant i\leqslant n+1$ \& $r\geqslant0$ occur, which are given by the equations $\sum\limits_{t=0}\limits^{r}D_i^{(t)}D'^{(r-t)}_i=\delta_{r0}$. In the definition of truncated shifted Yangian $Y_{n+1,l}(\sigma)$ ([\cite{BK2}], Section 2), similar elements are introduced as follows:

let $$D_i(u):=\sum\limits_{r\geqslant0}D_i^{(r)}u^{-r}\in Y_{n+1,l}(\sigma)[[u^{-1}]],$$ where $D_i^{(0)}:=1$, then define $D'^{(r)}_i$ of $Y_{n+1,l}(\sigma)$ by $$D'_i(u)=\sum\limits_{r\geqslant0}D'^{(r)}_i u^{-r}:=-D_i(u)^{-1}.$$

In fact, if one changes all $D'^{(r)}_i$ to their negative counterparts $-D'^{(r)}_i$ for $1\leqslant i\leqslant n+1$ \& $r\geqslant0$ in $Y_{1|n}^l(\sigma)$, it can be easily verified that they are of the same definition as the Yangian case $Y_{n+1,l}(\sigma)$ by comparing the coefficients of $u^{-s}$ for $s\geqslant0$. From above discussion, it is immediate that $U(\mathfrak{g},e)^{\text{ab}}$ is isomorphic to the algebra generated by $d^{(r)}_i$ for $1\leqslant i\leqslant  n + 1, r \geqslant 1$ subject to the following defining relations:
\begin{eqnarray*}
[d_i^{(r)},d_j^{(s)}]~&=&0\\
\sum\limits_{t=0}\limits^{r+s-1}d'^{(t)}_id_{i+1}^{(r+s-1-t)}&=&0
\end{eqnarray*}
with $d_1^{(r)}=0$ for $r>p_1$ by ([\cite{Gow}], Theorem 3) and ([\cite{Peng3}], Definition 2.1) (note that the formulas (40) in [\cite{Gow}] are missing in Peng's paper, and Peng will update the missing relations in the next revision of [\cite{Peng3}]).

If we change all $d'^{(r)}_i$ to their negative counterparts $-d'^{(r)}_i$ for $1\leqslant i\leqslant n+1$ \& $r\geqslant0$, it is immediate that the algebra $U(\mathfrak{g},e)^{\text{ab}}$ is isomorphic to the commutative quotient $U(\mathfrak{g}',e)^{\text{ab}}$ of finite $W$-algebra $U(\mathfrak{g}',e)$ associated to Lie algebra $\mathfrak{g}'=\mathfrak{gl}(M+N)$ as their generators and defining relations are of the same name, see ([\cite{BK2}], (2.2)---(2.15)) and ([\cite{P7}], Section 3.8). Recall that $U(\mathfrak{g}',e)$ is isomorphic to the shifted truncated Yangian $Y_{n+1,l}(\sigma)$ of level $l=p_{n+1}$ by ([\cite{BK2}], Theorem 10.1), and ([\cite{P7}], Theorem 3.3) shows that $U(\mathfrak{g}',e)^{\text{ab}}$ is isomorphic to a polynomial algebra in $l=p_{n+1}$ variables. Therefore, $U(\mathfrak{g},e)^{\text{ab}}$ is isomorphic to a polynomial algebra in $l=p_{n+1}$ variables, completing the proof.
\end{proof}

\begin{rem}
It is notable that the results obtained by Premet$^{[\cite{P7}]}$ for the finite $W$-algebra case are applied in the proof. In fact, same discussion can also be carried to show that the algebra $U(\mathfrak{g},e)^{\text{ab}}$ given in Proposition~\ref{glmn} is generated by $l$ elements. For the finite $W$-algebra case, it is remarkable that the independence of these elements is obtained in virtue of the knowledge of sheets of Lie algebras. Since the related theory for Lie superalgebras has not yet been developed, Proposition~\ref{glmn} can not be proved by the same means as the finite $W$-algebra case in ([\cite{P7}], Theorem 3.3).
\end{rem}

\begin{corollary}\label{slmn}
Let $\mathfrak{g}=\mathfrak{sl}(M|N)$ and $e=e_\lambda\in\mathfrak{sl}(M|N)_{\bar0}$ be of Jordan type $(p_1,p_2,\cdots,$\\$p_{n+1})$ satisfying that $e_\lambda=e_M\oplus e_N$, where $e_M$ is principal nilpotent in $\mathfrak{gl}(M|0)$ and the sizes of the Jordan blocks of $e_N$ are all greater or equal to $M$. Then $U(\mathfrak{g},e)^{\text{ab}}\cong\mathbb{C}[X_1,\cdots,X_{l-1}]$, $l=p_{n+1}$.
\end{corollary}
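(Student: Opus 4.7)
The plan is to reduce to Proposition~\ref{glmn} by quotienting out a distinguished central even element. Since $\mathfrak{sl}(M|N)$ is the kernel of the super-trace on $\mathfrak{gl}(M|N)$ and the Jordan type assumption ($p_1=M$ with all other $p_i\geqslant M$) precludes the degenerate case $M=N$, we have the direct sum decomposition $\mathfrak{gl}(M|N)=\mathfrak{sl}(M|N)\oplus\mathbb{C}\cdot I_{M|N}$ with $I_{M|N}$ an even central element. By Theorem~\ref{PBWC}(1)(i) this central element lifts to an even element $\zeta\in U(\mathfrak{gl}(M|N),e)_{\bar 0}$ of Kazhdan degree $2$ whose principal symbol is $I_{M|N}$.

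The central step is to establish a factorization of the form
\[
U(\mathfrak{gl}(M|N),e)\;\cong\;U(\mathfrak{sl}(M|N),e)\otimes_{\mathbb{C}}\mathbb{C}[\zeta]
\]
as $\mathbb{C}$-algebras. In Peng's truncated shifted super Yangian presentation $Y^l_{1|n}(\sigma)$ used in the proof of Proposition~\ref{glmn}, the natural candidate for $\zeta$ is the super-trace $\sum_{i=1}^{n+1}(-1)^{|i|}D_i^{(1)}$, and its centrality follows immediately from the defining relations (2.3)--(2.7) of [\cite{Peng3}] (which were already invoked to compute $U(\mathfrak{gl}(M|N),e)^{\text{ab}}$). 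Given the factorization, applying the abelianization functor and using that it commutes with tensor products yields
\[
U(\mathfrak{gl}(M|N),e)^{\text{ab}}\;\cong\;U(\mathfrak{sl}(M|N),e)^{\text{ab}}\otimes_{\mathbb{C}}\mathbb{C}[\bar\zeta].
\]
Since the left-hand side is a polynomial algebra in $l=p_{n+1}$ variables by Proposition~\ref{glmn}, and $\bar\zeta$ contributes one algebraically independent generator (its image in $\mathbb{C}[X_1,\ldots,X_l]$ is a nonzero linear combination of the $X_i$, as one reads off from the explicit form of $\zeta$), it follows that $U(\mathfrak{sl}(M|N),e)^{\text{ab}}\cong\mathbb{C}[X_1,\ldots,X_{l-1}]$.

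The main obstacle is establishing the tensor product decomposition of the finite $W$-superalgebras themselves. This is the super-analogue of the reduction from $\mathfrak{gl}_N$ to $\mathfrak{sl}_N$ used by Premet in [\cite{P7}], but in our setting one must track parities carefully: the central element $\zeta$ is even, so it plays a role entirely analogous to the classical case, yet the odd generators of $U(\mathfrak{gl}(M|N),e)$ must be shown to lie in $U(\mathfrak{sl}(M|N),e)$ after an appropriate choice. This can be verified by selecting the PBW generators $\Theta_1,\ldots,\Theta_{l+q}$ of Theorem~\ref{PBWC} compatibly with the decomposition $\mathfrak{gl}(M|N)^e=\mathfrak{sl}(M|N)^e\oplus\mathbb{C}\cdot I_{M|N}$, so that exactly one of the $\Theta_i$ has $I_{M|N}$ as its leading term and all others lie in the subalgebra generated by $\mathfrak{sl}(M|N)^e$. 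Comparing PBW bases on both sides then yields the factorization, and the corollary follows.
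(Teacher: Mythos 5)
Your proposal takes essentially the same route the paper intends: reduce to Proposition~\ref{glmn} by splitting off the one-dimensional center $\mathbb{C}\cdot I_{M|N}$ of $\mathfrak{gl}(M|N)$, obtain a central even element $\zeta$ of the $W$-superalgebra with that leading term, and observe that its image contributes one linear generator to the abelianization. This is precisely Premet's argument from [\cite{P7}, Cor.~3.2] adapted to the super setting, and the paper's one-line reference to [\cite{Peng3}, (8.5), (8.6), Cor.~8.3] is pointing at the same Yangian-side identification of $\zeta$ that you propose (the supertrace of the $D_i^{(1)}$).

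One factual slip in your setup: you claim that the Jordan type condition ``$p_1 = M$, all other $p_i \geqslant M$'' precludes $M = N$. It does not. Taking $n = 1$ and $\lambda = (M, M)$, i.e.\ $e_N$ a single Jordan block of size exactly $M = N$, is consistent with all your hypotheses. What actually excludes $M = N$ in the corollary is the paper's standing convention that $\mathfrak{g}$ be a basic classical Lie superalgebra: $\mathfrak{sl}(N|N)$ is not simple (its simple quotient is $\mathfrak{psl}(N|N) = A(N-1,N-1)$), and its identity matrix has zero supertrace so the direct sum decomposition $\mathfrak{gl} = \mathfrak{sl} \oplus \mathbb{C}\,I$ fails. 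So the conclusion $M \neq N$ is correct in context, but not for the reason you gave. Beyond this, the main structural step — the tensor decomposition $U(\mathfrak{gl}(M|N),e)\cong U(\mathfrak{sl}(M|N),e)\otimes\mathbb{C}[\zeta]$ — is only outlined in your write-up, but so is the paper's proof; fleshing it out would go exactly as you describe (choose $\mathfrak{m}$ inside $\mathfrak{sl}(M|N)$, take PBW generators adapted to $\mathfrak{gl}(M|N)^e = \mathfrak{sl}(M|N)^e \oplus \mathbb{C}\,I_{M|N}$, and use centrality of $\zeta$).
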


\begin{proof}
The proof is based on Proposition~\ref{glmn}. Repeat verbatim the proof of ([\cite{P7}], Corollary 3.2) but apply ([\cite{Peng3}], (8.5), (8.6) \& Corollary 8.3) in place of ([\cite{BK2}], Example 9.1 \& Corollary 9.4).
\end{proof}

Recall that all the coefficients of $F_{ij}$ for $1\leqslant  i,j\leqslant  l$ are over the admissible ring $A$ by the remark preceding Theorem~\ref{translation}. Then all the coefficients of $F'_{ij}$ for $1\leqslant  i,j\leqslant  l$ are also over $A$. Let $\mathscr{E}(\mathbb{C})$ denote the set of all common zeros of the polynomials $F'_{ij}$ in the affine space $\mathbb{A}^l_\mathbb{C}$ where $1\leqslant  i<j\leqslant  l$, or $l+1\leqslant  i\leqslant  j\leqslant  l+q$. It is immediate that

\begin{lemma}\label{d_1even}
When $d_1$ is even, the $A$-defined Zariski closed set $\mathscr{E}(\mathbb{C})$ parametrises the $1$-dimensional representations of the finite $W$-superalgebra $U(\mathfrak{g},e)$ associated to basic classical Lie superalgebra $\mathfrak{g}$ over $\mathbb{C}$.
\end{lemma}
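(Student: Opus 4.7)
The proof is essentially already sketched in the three cases (i), (ii), (iii) preceding the statement; my plan is to organize those observations into a clean bijection and then invoke the Nullstellensatz.

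First I would set up the forward direction: let $V=\mathbb{C}v$ be any $1$-dimensional $\mathbb{Z}_2$-graded $U(\mathfrak{g},e)$-module. Since $V$ is one-dimensional and graded, one of $V_{\bar0}$, $V_{\bar1}$ vanishes, and the odd generators $\Theta_{l+1},\ldots,\Theta_{l+q}$ must act as $0$ (an odd operator on a one-dimensional graded space is forced to be zero). The even generators $\Theta_1,\ldots,\Theta_l$ must then act as scalars $c_1,\ldots,c_l\in\mathbb{C}$, giving a point $(c_1,\ldots,c_l)\in\mathbb{A}^l_\mathbb{C}$. This assignment defines a map from the set of isoclasses of $1$-dimensional representations to $\mathbb{A}^l_\mathbb{C}$.

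Next I would identify the image. By Theorem~\ref{relationc}(1) the algebra $U(\mathfrak{g},e)$ is presented on the generators $\Theta_1,\ldots,\Theta_{l+q}$ subject to the relations $[\Theta_i,\Theta_j]=F_{ij}(\Theta_1,\ldots,\Theta_{l+q})$. Thus a character $\Theta_i\mapsto c_i$ ($1\le i\le l$), $\Theta_j\mapsto 0$ ($l+1\le j\le l+q$) extends to a (necessarily well-defined) algebra homomorphism $U(\mathfrak{g},e)\to\mathbb{C}$ if and only if every defining relation is mapped to $0$. Examining the three cases (i), (ii), (iii) above: in case (iii) both sides lie in the odd part and automatically vanish on $V$, so these impose no constraint; in cases (i) and (ii) the commutator $[\Theta_i,\Theta_j]$ acts as zero, and substituting $0$ for each $\Theta_{l+1},\ldots,\Theta_{l+q}$ on the right-hand side of Theorem~\ref{relationc}(1) produces exactly the polynomial $F'_{ij}(\Theta_1,\ldots,\Theta_l)$. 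Hence the compatibility condition on $(c_1,\ldots,c_l)$ is precisely $F'_{ij}(c_1,\ldots,c_l)=0$ for all $1\le i<j\le l$ and all $l+1\le i\le j\le l+q$, which is exactly the defining ideal of $\mathscr{E}(\mathbb{C})$.

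Combining the two directions, the assignment $V\mapsto(c_1,\ldots,c_l)$ is a bijection between isoclasses of $1$-dimensional $U(\mathfrak{g},e)$-modules and $\mathscr{E}(\mathbb{C})$. Equivalently, letting $U(\mathfrak{g},e)^{\mathrm{ab}}=U(\mathfrak{g},e)/R$ with $R$ the ideal generated by $\Theta_{l+1},\ldots,\Theta_{l+q}$ together with all supercommutators, the presentation above yields an isomorphism $U(\mathfrak{g},e)^{\mathrm{ab}}\cong\mathbb{C}[X_1,\ldots,X_l]/\Lambda$, where $\Lambda$ is generated by the $F'_{ij}$; then Hilbert's Nullstellensatz identifies $\mathrm{Specm}\,U(\mathfrak{g},e)^{\mathrm{ab}}$ with $\mathscr{E}(\mathbb{C})$, and one-dimensional representations of $U(\mathfrak{g},e)$ are manifestly in bijection with $\mathrm{Specm}\,U(\mathfrak{g},e)^{\mathrm{ab}}$. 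The fact that all coefficients of $F_{ij}$, hence of $F'_{ij}$, lie in the admissible ring $A$ (noted just before the lemma) gives the $A$-definedness. The only subtle point is verifying that case (iii) imposes no nontrivial constraint, which I expect to be the main, though easily handled, obstacle; this is a parity check, using that every monomial in $F_{ij}(\Theta_1,\ldots,\Theta_{l+q})$ for $i\le l<j$ must contain an odd factor, hence is killed by the character under consideration.
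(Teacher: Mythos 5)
Your proposal is correct and follows exactly the same route the paper takes: the discussion in cases (i)--(iii) preceding the lemma is precisely the paper's proof, which you have reorganized into a clean bijection and closed out with $U(\mathfrak{g},e)^{\mathrm{ab}}\cong\mathbb{C}[X_1,\ldots,X_l]/\Lambda$ and the Nullstellensatz, matching the paper's own presentation. The parity check that case (iii) contributes no constraint (every monomial of $F_{ij}$ for $1\le i\le l<j$ contains an odd factor) is likewise the paper's observation, so there is no gap.
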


Set $^pF'_{ij}:=F'_{ij}\otimes_A\mathds{k}$, the polynomials over $\mathds{k}$, and denote by $\mathscr{E}(\mathds{k})$ the set of all common zeros of the polynomials $^pF_{ij}'$ in the affine space $\mathbb{A}^l_\mathds{k}$ where $1\leqslant  i<j\leqslant  l$, or $l+1\leqslant  i\leqslant  j\leqslant  l+q$. Applying Theorem~\ref{translation}(1) it follows that the Zariski closed set $\mathscr{E}(\mathds{k})$ parametrises the $1$-dimensional representations of the finite $W$-superalgebra $U(\mathfrak{g}_\mathds{k},e)$ over $\mathds{k}$.

Let $U(\mathfrak{g},e)$ be a finite $W$-superalgebra over $\mathbb{C}$. The discussion in Section 7 shows that the transition subalgebra $U(\mathfrak{g}_\mathds{k},e)$ over $\mathds{k}$ is induced from the algebra $U(\mathfrak{g},e)$ by ``modular $p$ reduction'', thus we have

\begin{lemma}\label{trans1}
When $d_1$ is even, if the algebra $U(\mathfrak{g},e)$ affords $1$-dimensional representations, then the transition subalgebra $U(\mathfrak{g}_\mathds{k},e)$ over $\mathds{k}=\overline{\mathbb{F}}_p$ also admits $1$-dimensional representations.
\end{lemma}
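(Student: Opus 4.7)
The plan is to reformulate the statement geometrically: by Lemma~\ref{d_1even} and the paragraph preceding it, a $1$-dimensional representation of $U(\mathfrak{g},e)$ (respectively $U(\mathfrak{g}_\mathds{k},e)$) exists if and only if $\mathscr{E}(\mathbb{C})\neq\emptyset$ (respectively $\mathscr{E}(\mathds{k})\neq\emptyset$). Since the polynomials $F'_{ij}$ are defined over the admissible ring $A$ and the $^pF'_{ij}$ are their reductions modulo a suitable maximal ideal of $A$, what we need to prove is a purely algebro-geometric transfer statement: if the $A$-defined variety cut out by the $F'_{ij}$ has a $\mathbb{C}$-point, then it has a $\mathds{k}$-point for every $\mathds{k}=\overline{\mathbb{F}}_p$ with $p\gg 0$.

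First I would enlarge $A$ if necessary so that all coefficients of $F_{ij}$, and hence of $F'_{ij}$, lie in $A$; this is possible by Theorem~\ref{PBWC}(1)(iv), which places them in $\mathbb{Q}$. Form the finitely generated commutative $A$-algebra $R:=A[X_1,\ldots,X_l]/\Lambda_A$, where $\Lambda_A$ is the ideal generated by the $F'_{ij}$. The assumption $\mathscr{E}(\mathbb{C})\neq\emptyset$ together with Hilbert's Nullstellensatz gives $R\otimes_A\mathbb{C}\neq 0$. By Chevalley's constructibility theorem, the image of the structure morphism $\mathrm{Spec}(R)\to\mathrm{Spec}(A)$ is a constructible subset of $\mathrm{Spec}(A)$ which contains the generic point, hence contains a principal open subset $D(f)$ for some nonzero $f\in A$. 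Enlarging $A$ once more to $A_f$---which remains admissible and whose $\Pi(A_f)$ still contains all but finitely many primes by the discussion after Definition~\ref{admissible}---the morphism $\mathrm{Spec}(R\otimes_A A_f)\to\mathrm{Spec}(A_f)$ becomes surjective.

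It follows that for any maximal ideal $\mathfrak{P}$ of $A_f$, the fiber $R\otimes_A(A_f/\mathfrak{P})$ is nonzero. Since $A_f/\mathfrak{P}$ is a finite field of some characteristic $p\in\Pi(A_f)$, extending scalars from this finite field to its algebraic closure $\mathds{k}=\overline{\mathbb{F}}_p$ preserves non-vanishing (tensoring a nonzero vector space over a field with a field extension stays nonzero), yielding $R\otimes_A\mathds{k}\neq 0$. This translates to the assertion that the ideal $(\,{}^pF'_{ij}\,)$ is proper in $\mathds{k}[X_1,\ldots,X_l]$, so invoking the Nullstellensatz over $\mathds{k}$ we conclude $\mathscr{E}(\mathds{k})\neq\emptyset$, which by Theorem~\ref{translation}(1) produces the desired $1$-dimensional representation of $U(\mathfrak{g}_\mathds{k},e)$.

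The main obstacle I anticipate is purely bookkeeping: one has to orchestrate the two successive enlargements of $A$---first to place the coefficients of the $F'_{ij}$ into $A$, then to invert $f$---while keeping the set $\Pi(A)$ cofinite in the set of primes, so that the quantifier ``for $p\gg 0$'' still carries its intended meaning. There is no genuine superalgebraic subtlety, since the parity hypothesis $d_1$ even is used only through Theorem~\ref{relationc}(1) (which provides exactly $l+q$ generators and forbids a residual odd generator $\Theta_{l+q+1}$), so that the defining relations collapse, upon evaluation on a one-dimensional module sending all odd $\Theta_{l+j}$ to $0$, to the commutative polynomial system $\{F'_{ij}\}$ analyzed in the paragraph preceding Lemma~\ref{d_1even}.
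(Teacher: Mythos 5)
Your proof is correct and follows essentially the same route as the paper, which reduces the problem to the $A$-defined Zariski closed set $\mathscr{E}$ via Lemma~\ref{d_1even} and then defers the transfer step to Premet's Theorem 2.2(a) in [\cite{P7}]. You have simply supplied a self-contained account of that transfer step (Chevalley's constructibility theorem applied to $\mathrm{Spec}(R)\to\mathrm{Spec}(A)$, inversion of a suitable $f$, and base change to the residue fields), and your bookkeeping about enlarging $A$ while keeping $\Pi(A)$ cofinite is exactly what is needed for the ``$p\gg 0$'' quantifier.
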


\begin{proof}
Lemma~\ref{d_1even} and the discussion thereafter translate the consideration for the $1$-dimensional representations of the algebras $U(\mathfrak{g},e)$ and $U(\mathfrak{g}_\mathds{k},e)$ into the discussion of the Zariski closed sets $\mathscr{E}(\mathbb{C})$ and $\mathscr{E}(\mathds{k})$, respectively. Since $F'_{ij}$ and $^pF_{ij}'$ for $1\leqslant  i<j\leqslant  l$, or $l+1\leqslant  i\leqslant  j\leqslant  l+q$ are all polynomials in the usual sense (not super), the Lemma can be proved in the same way as the finite $W$-algebra case, thus will be omitted here (see [\cite{P7}], Theorem 2.2(a)).
\end{proof}

\begin{lemma}\label{mindim1}
When $d_1$ is even, if the finite $W$-superalgebra $U(\mathfrak{g},e)$ over $\mathbb{C}$ affords a $1$-dimensional representation, then for $p\gg0$ there exists $\eta\in\chi+(\mathfrak{m}_\mathds{k}^\bot)_{\bar{0}}$  associated to which the reduced enveloping algebra $U_\eta(\mathfrak{g}_\mathds{k})$ admits irreducible representations of dimension $p^{\frac{d_0}{2}}2^{\frac{d_1}{2}}$.
\end{lemma}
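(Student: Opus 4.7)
The plan is to combine Lemma~\ref{trans1} (passage from $\mathbb{C}$ to $\mathds{k}$), the tensor decomposition of Theorem~\ref{keyisotheorem}(3), and the Morita equivalence of Theorem~\ref{sumresult}(3). First I would invoke Lemma~\ref{trans1} to transfer the hypothesis: since $U(\mathfrak{g},e)$ admits a $1$-dimensional representation, for $p\gg 0$ in $\Pi(A)$ the transition subalgebra $U(\mathfrak{g}_\mathds{k},e)$ also admits a $1$-dimensional representation, which we denote by $V=\mathds{k}v$ with structural map $\rho_V\colon U(\mathfrak{g}_\mathds{k},e)\to\mathds{k}$.

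Next I would use Theorem~\ref{keyisotheorem}(3), which gives the isomorphism $\widehat{U}(\mathfrak{g}_\mathds{k},e)\cong U(\mathfrak{g}_\mathds{k},e)\otimes_\mathds{k}Z_p(\mathfrak{a}_\mathds{k})$. The subalgebra $Z_p(\mathfrak{a}_\mathds{k})$ is a polynomial algebra on the even $p$-generators of $\mathfrak{a}_\mathds{k}$, so its characters are parametrised by $(\mathfrak{a}_\mathds{k})_{\bar 0}^{\ast,(1)}$, which in turn corresponds (via the identification $\mathfrak{a}_\mathds{k}^\ast\cong(\chi+\mathfrak{m}_\mathds{k}^\perp)_{\bar 0}$ used in Section~3.2) to a choice of $\eta\in\chi+(\mathfrak{m}_\mathds{k}^\perp)_{\bar 0}$. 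For any such $\eta$, the tensor product of $\rho_V$ with the corresponding central character $\lambda_\eta\colon Z_p(\mathfrak{a}_\mathds{k})\to\mathds{k}$ produces a $1$-dimensional representation $\widetilde{V}_\eta$ of $\widehat{U}(\mathfrak{g}_\mathds{k},e)$ on which every element $\bar x^p-\bar x^{[p]}$ of the $p$-center acts by $\eta(\bar x)^p$. Consequently $\widetilde{V}_\eta$ factors through the quotient $U_\eta(\mathfrak{g}_\mathds{k},e)=\widehat{U}(\mathfrak{g}_\mathds{k},e)/J_\eta\widehat{U}(\mathfrak{g}_\mathds{k},e)$, so each $U_\eta(\mathfrak{g}_\mathds{k},e)$ obtained in this way possesses a $1$-dimensional (hence irreducible) module.

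Finally I would apply the Morita equivalence of Theorem~\ref{sumresult}(3), which for this $\eta$ gives $U_\eta(\mathfrak{g}_\mathds{k})\cong\mathrm{Mat}_{\delta}(U_\eta(\mathfrak{g}_\mathds{k},e))$ with $\delta=\dim U_\eta(\mathfrak{m}_\mathds{k})=p^{d_0/2}2^{\lceil d_1/2\rceil}$. Since $d_1$ is even, $\lceil d_1/2\rceil=d_1/2$, and the Morita correspondence sends a $1$-dimensional irreducible $U_\eta(\mathfrak{g}_\mathds{k},e)$-module to an irreducible $U_\eta(\mathfrak{g}_\mathds{k})$-module of dimension exactly $\delta=p^{d_0/2}2^{d_1/2}$, which is the desired conclusion.

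The main obstacle will be showing that the $1$-dimensional representation of the $\mathbb{C}$-algebra $U(\mathfrak{g},e)$ can be specialised compatibly modulo $p$ to obtain $\rho_V$; but this is precisely the content of Lemma~\ref{trans1}, whose proof (by a Zariski-closedness argument using the fact that the defining polynomials $F'_{ij}$ have coefficients in the admissible ring $A$) is the sole point at which the hypothesis ``$p\gg 0$'' is genuinely consumed. The remaining steps are purely algebraic manipulations of the decomposition in Theorem~\ref{keyisotheorem}(3) and Theorem~\ref{sumresult}(3), and the choice of $\eta$ is free: any character of $Z_p(\mathfrak{a}_\mathds{k})$ will do, so in particular one can take $\eta=\chi$ by declaring $\lambda_\chi$ to be evaluation at $\chi$ on the $p$-generators.
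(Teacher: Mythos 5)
Your overall strategy is the same as the paper's: pass to $U(\mathfrak{g}_\mathds{k},e)$ via Lemma~\ref{trans1}, lift along the tensor decomposition $\widehat{U}(\mathfrak{g}_\mathds{k},e)\cong U(\mathfrak{g}_\mathds{k},e)\otimes_\mathds{k}Z_p(\mathfrak{a}_\mathds{k})$, and then invoke the Morita equivalence of Theorem~\ref{sumresult}(3). However, there is a genuine gap in how you produce and control $\eta$.

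You identify the characters of $Z_p(\mathfrak{a}_\mathds{k})$ with points of $\chi+(\mathfrak{m}_\mathds{k}^\perp)_{\bar0}$, but this identification is wrong and leads you to claim that $\eta$ is free (and that $\eta=\chi$ works). The correct identification is $\rho_\mathds{k}(Z_p)\cong Z_p(\widetilde{\mathfrak{p}}_\mathds{k})\cong\mathds{k}[(\chi+(\mathfrak{m}_\mathds{k}^\perp)_{\bar0})^{(1)}]$, and when $d_1$ is even one has $\widetilde{\mathfrak{p}}_\mathds{k}=\mathfrak{a}_\mathds{k}\oplus\mathfrak{g}_\mathds{k}^e$. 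The subalgebra $Z_p(\mathfrak{a}_\mathds{k})$ only accounts for the $\mathfrak{a}_\mathds{k}$-component of $\eta$; the elements $\rho_\mathds{k}(\bar x^p-\bar x^{[p]})$ with $\bar x\in(\mathfrak{g}_\mathds{k}^e)_{\bar0}$ land in the $U(\mathfrak{g}_\mathds{k},e)$ factor of the tensor decomposition, so their values on any $1$-dimensional module are already dictated by the given representation $\rho_V$. Hence the restriction $\eta|_{(\mathfrak{g}_\mathds{k}^e)_{\bar0}}$ is forced by $\rho_V$, not chosen. In particular you cannot ``declare $\lambda_\chi$ to be evaluation at $\chi$''; indeed $\chi$ vanishes on $\mathfrak{g}_\mathds{k}^e$, and there is no reason for $\rho_V$ to annihilate every $\rho_\mathds{k}(\bar x^p-\bar x^{[p]})$, $\bar x\in(\mathfrak{g}_\mathds{k}^e)_{\bar0}$. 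If your claim were correct, the subsequent Lemma~\ref{etachi} (which does the nontrivial work of moving the conclusion from a general $\eta\in\chi+(\mathfrak{m}_\mathds{k}^\perp)_{\bar0}$ back to $\chi$) would be entirely unnecessary, which should have been a warning sign. The paper's argument runs the other way around: it takes a $1$-dimensional representation $\nu$ of $\widehat{U}(\mathfrak{g}_\mathds{k},e)$ and \emph{reads off} $\eta$ from the maximal ideal $\rho_\mathds{k}(Z_p)\cap\ker\nu$ of $Z_p(\widetilde{\mathfrak{p}}_\mathds{k})$; the $\eta$ exists but is generally not $\chi$.

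A second, smaller gap: you assert without justification that the $1$-dimensional module factors through $U_\eta(\mathfrak{g}_\mathds{k},e)$. What is immediate is that it factors through $\widehat{U}_\eta(\mathfrak{g}_\mathds{k},e):=\widehat{U}(\mathfrak{g}_\mathds{k},e)\otimes_{Z_p(\widetilde{\mathfrak{p}}_\mathds{k})}\mathds{k}_\eta$; to identify this with $U_\eta(\mathfrak{g}_\mathds{k},e)$ one must observe that the natural map $\rho_\eta\colon\widehat{U}_\eta(\mathfrak{g}_\mathds{k},e)\to U_\eta(\mathfrak{g}_\mathds{k},e)$ is surjective and then compare dimensions using Theorem~\ref{keyisotheorem}(2) ($\dim\widehat{U}_\eta(\mathfrak{g}_\mathds{k},e)\leqslant p^l2^q$) against Theorem~\ref{sumresult}(4)(i). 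That dimension count is an essential step and should not be elided.
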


\begin{proof}
Since $U(\mathfrak{g},e)$ affords a $1$-dimensional representation, it is immediate from Lemma~\ref{trans1} that the $\mathds{k}$-algebra $U(\mathfrak{g}_\mathds{k},e)$ affords a $1$-dimensional representation, either. Recall that Theorem~\ref{keyisotheorem}(3) shows  $\widehat{U}(\mathfrak{g}_\mathds{k},e)\cong U(\mathfrak{g}_\mathds{k},e)\otimes_\mathds{k}Z_p(\mathfrak{a}_\mathds{k})$ as  $\mathds{k}$-algebras. This yields that the $\mathds{k}$-algebra $\widehat{U}(\mathfrak{g}_\mathds{k},e)$ affords a $1$-dimensional representation too; we call it $\nu$.
By the proof of Theorem~\ref{keyisotheorem}, $\rho_\mathds{k}(Z_p)\cap\text{Ker}~\nu$ is a maximal ideal of the algebra $\rho_\mathds{k}(Z_p)\cong Z_p(\widetilde{\mathfrak{p}}_\mathds{k})\cong\mathds{k}[(\chi+(\mathfrak{m}_\mathds{k}^\bot)_{\bar{0}})^{(1)}]$ (where $(\chi+(\mathfrak{m}_\mathds{k}^\bot)_{\bar{0}})^{(1)}$ is the Frobenius twist of $\chi+(\mathfrak{m}_\mathds{k}^\bot)_{\bar{0}}$). So there exists $\eta\in\chi+(\mathfrak{m}_\mathds{k}^\bot)_{\bar{0}}$ such that $\rho_\mathds{k}(\bar x^p-\bar x^{[p]}-\eta(\bar x)^p)\in\text{Ker}~\nu$ for all $\bar x\in
(\mathfrak{g}_\mathds{k})_{\bar{0}}$. Our choice of $\eta$ ensures that the $\mathds{k}$-algebra $\widehat{U}_\eta(\mathfrak{g}_\mathds{k},e):=\widehat{U}(\mathfrak{g}_\mathds{k},e)
\otimes_{Z_p(\widetilde{\mathfrak{p}}_\mathds{k})}\mathds{k}_\eta$ affords a $1$-dimensional representation. On the other hand, the canonical projection $Q_{\chi,\mathds{k}}\twoheadrightarrow Q_{\chi,\mathds{k}}/J_\eta Q_{\chi,\mathds{k}}=Q_\chi^\eta$ gives rise to an algebra homomorphism
\begin{equation*}
\rho_\eta:\widehat{U}_\eta(\mathfrak{g}_\mathds{k},e)\rightarrow
(\text{End}_{\mathfrak{g}_\mathds{k}}Q_\chi^\eta)^{\text{op}}=U_\eta(\mathfrak{g}_\mathds{k},e).
\end{equation*} As $\text{dim}~\widehat{U}_\eta(\mathfrak{g}_\mathds{k},e)\leqslant  p^l2^q$ by Theorem~\ref{keyisotheorem}(2), applying Theorem~\ref{sumresult}(4)(i) yields that $\rho_\eta$ is an algebra isomorphism, i.e. $U_\eta(\mathfrak{g}_\mathds{k},e)$ admits a $1$-dimensional representation. As$$U_\eta(\mathfrak{g}_\mathds{k})\cong\text{Mat}_{p^{\frac{d_0}{2}}2^{\frac{d_1}{2}}}(U_\eta(\mathfrak{g}_\mathds{k},e))$$ by Theorem~\ref{sumresult}(3), it follows that the algebra $U_\eta(\mathfrak{g}_\mathds{k})$ has an irreducible representation of dimension $p^{\frac{d_0}{2}}2^{\frac{d_1}{2}}$.
\end{proof}

\subsection{The characterization of $2$-dimensional representations for finite $W$-superalgebras when $d_1$ is odd}

In this part we will consider the case when $d_1$ is odd. First note that

\begin{prop}\label{no1}
When $d_1$ is odd, the finite $W$-superalgebra $U(\mathfrak{g},e)$ over $\mathbb{C}$ can not afford a $1$-dimensional representation.
\end{prop}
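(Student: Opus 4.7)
The plan is to exploit the distinctive relation $[\Theta_{l+q+1},\Theta_{l+q+1}]=\text{id}$ from Theorem~\ref{PBWC}(2)(iv), which is precisely the structural feature that distinguishes the odd-$d_1$ case from the even one. The whole argument should be a short parity/contradiction argument rather than anything requiring computation with the polynomials $F_{ij}$.

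First I would fix notation and set up the parity obstruction. Suppose, for contradiction, that $U(\mathfrak{g},e)$ admits a $1$-dimensional representation $V=\mathbb{C}v$. As stressed throughout the paper (see the conventions in Section~1.8), all modules are $\mathbb{Z}_2$-graded in the super sense, so $V$ is a graded superspace of total dimension one and hence $v$ is $\mathbb{Z}_2$-homogeneous; exactly one of $V_{\bar 0}$, $V_{\bar 1}$ is zero. Any odd element $u\in U(\mathfrak{g},e)_{\bar 1}$ then carries $v$ into the opposite parity component of $V$, which is trivial; therefore $u.v=0$ for every $u\in U(\mathfrak{g},e)_{\bar 1}$. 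By Theorem~\ref{PBWC}(2)(i), the element $\Theta_{l+q+1}$ lies in $U(\mathfrak{g},e)_{\bar 1}$, so in particular $\Theta_{l+q+1}.v=0$, which forces $\Theta_{l+q+1}^{2}.v=0$.

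Next I would combine this with the defining relation for $\Theta_{l+q+1}$. Since $\Theta_{l+q+1}$ is odd, the supercommutator specialises to
\[
[\Theta_{l+q+1},\Theta_{l+q+1}]=\Theta_{l+q+1}\cdot\Theta_{l+q+1}-(-1)^{|\Theta_{l+q+1}|^{2}}\Theta_{l+q+1}\cdot\Theta_{l+q+1}=2\Theta_{l+q+1}^{2}.
\]
Theorem~\ref{PBWC}(2)(iv) (the case $i=j=l+q+1$) gives $[\Theta_{l+q+1},\Theta_{l+q+1}]=\mathrm{id}$, so $\Theta_{l+q+1}^{2}=\tfrac{1}{2}\mathrm{id}$ inside $U(\mathfrak{g},e)$. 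Evaluating on $v$ yields $\tfrac{1}{2}v=\Theta_{l+q+1}^{2}.v=0$, which contradicts $v\neq0$ and completes the proof.

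There is essentially no obstacle: the relation $\Theta_{l+q+1}^{2}=\tfrac12\mathrm{id}$ was built into the PBW presentation precisely because, in the odd-$d_1$ case, the vector $v_{(r+1)/2}\in\mathfrak{g}(-1)_{\bar 1}\cap(\mathfrak{g}(-1)'_{\bar 1})^{\perp}$ satisfies $\langle v_{(r+1)/2},v_{(r+1)/2}\rangle=1$, and this non-isotropy is the algebraic root of the Clifford-type phenomenon predicted by the super Schur Lemma recalled in Section~1.8. The only point to be explicit about is the convention that representations are graded (so that odd operators automatically annihilate a one-dimensional module); once this is made, the argument reduces to the one-line contradiction above, and this also explains why Conjecture~\ref{conjecture} has to ask for a $2$-dimensional representation rather than a $1$-dimensional one in this case.
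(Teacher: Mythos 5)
Your argument is correct and follows essentially the same route as the paper: both establish $\Theta_{l+q+1}^{2}=\tfrac12\mathrm{id}$ and then use the $\mathbb{Z}_2$-graded structure of a one-dimensional module to reach a contradiction. The only cosmetic difference is that you extract $\Theta_{l+q+1}^{2}=\tfrac12\mathrm{id}$ from the relation $[\Theta_{l+q+1},\Theta_{l+q+1}]=\mathrm{id}$ in Theorem~\ref{PBWC}(2)(iv), whereas the paper recomputes it directly from $\Theta_{l+q+1}(1_\chi)=v_{\frac{r+1}{2}}\otimes1_\chi$ and $\langle v_{\frac{r+1}{2}},v_{\frac{r+1}{2}}\rangle=1$; these are the same fact.
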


\begin{proof}
Recall that Theorem~\ref{PBWC}(2)(i) shows $\Theta_{l+q+1}\in U(\mathfrak{g},e)$, and $$\Theta_{l+q+1}^2(1_\chi)=v_{\frac{r+1}{2}}^2\otimes1_\chi=\frac{1}{2}[v_{\frac{r+1}{2}},v_{\frac{r+1}{2}}]\otimes1_\chi=\frac{1}{2}\chi([v_{\frac{r+1}{2}},v_{\frac{r+1}{2}}])\otimes1_\chi=\frac{1}{2}\otimes1_\chi,$$
thus$$\Theta_{l+q+1}^2=\frac{1}{2}\text{id}.$$

Keep in mind that all the modules considered are $\mathbb{Z}_2$-graded. For any $U(\mathfrak{g},e)$-module $M$, let $0\neq v\in M$ be a $\mathbb{Z}_2$-homogeneous element. It is easy to verify that $0\neq\Theta_{l+q+1}.v\in M$. If not, i.e. $\Theta_{l+q+1}.v=0$, then $\Theta_{l+q+1}^2.v=0$. By the preceding remark we have $\Theta_{l+q+1}^2.v=\frac{1}{2}v$, then $v=0$, a contradiction. Therefore, the dimension of any $U(\mathfrak{g},e)$-module (as a vector space) is at least $2$, and the algebra $U(\mathfrak{g},e)$ can not afford a $1$-dimensional  representation.
\end{proof}

Recall that in Theorem~\ref{W-C2} we have\[\begin{array}{lcll}
\phi:&(\text{End}_\mathfrak{g}Q_{\chi})^{\text{op}}&\cong&Q_{\chi}^{\text{ad}\mathfrak{m}}\\ &\Theta&\mapsto&\Theta(1_\chi),
\end{array}
\]as $\mathbb{C}$-algebras. In the following we will identify $(\text{End}_\mathfrak{g}Q_{\chi})^{\text{op}}$ with $Q_{\chi}^{\text{ad}\mathfrak{m}}$ as $\mathbb{C}$-algebras, and which will cause no confusion. For any $\mathbb{Z}_2$-homogeneous elements $\Theta_1,\Theta_2\in Q_{\chi}^{\text{ad}\mathfrak{m}}$, since $\phi(\Theta_1\cdot\Theta_2)=\phi(\Theta_1)\phi(\Theta_2)$,
we have $\Theta_1\cdot\Theta_2:=\Theta_1(1_\chi)\cdot\Theta_2(1_\chi)$. When $d_1$ is odd, the element $\Theta_{l+q+1}$ in $(\text{End}_\mathfrak{g}Q_{\chi})^{\text{op}}$ can be considered as the element $v_{\frac{r+1}{2}}\otimes1_\chi$ in $Q_{\chi}^{\text{ad}\mathfrak{m}}$ by Theorem~\ref{PBWC}(2)(i), and $\Theta_{l+q+1}^2.v=\frac{1}{2}v$ for any $v\in Q_{\chi}$.

Recall that when $d_1$ is odd, the algebra $U(\mathfrak{g},e)$ can not afford $1$-dimensional representations. In this case a {\bf conjecture} can also be formulated, i.e. each finite $W$-superalgebra $U(\mathfrak{g},e)$ over $\mathbb{C}$ admits a $2$-dimensional representation, and denote it by $V$. In Definition~\ref{rewcc} we have introduced the $\mathbb{C}$-algebra $W'_\chi$, a proper subalgebra of $U(\mathfrak{g},e)$. In virtue of this algebra, we can formulate a more precise description for the $2$-dimensional representations of the $\mathbb{C}$-algebra $U(\mathfrak{g},e)$.

\begin{prop}\label{typeq}
As a $W'_\chi$-module, $V$ is of type $Q$.
\end{prop}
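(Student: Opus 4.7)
The plan is to exploit the special odd generator $\Theta:=\Theta_{l+q+1}$, which by Theorem~\ref{PBWC}(2)(i) and the immediate computation in the proof of Proposition~\ref{no1} satisfies $\Theta^{2}=\tfrac12\,\mathrm{id}$, and which because $v_{(r+1)/2}\in\mathfrak{m}'$ and $W'_\chi=Q_\chi^{\mathrm{ad}\,\mathfrak{m}'}$ super-commutes with every homogeneous $y\in W'_\chi$, i.e. $\Theta y=(-1)^{|y|}y\Theta$ inside $U(\mathfrak{g},e)\cong Q_\chi^{\mathrm{ad}\,\mathfrak{m}}$. From this super-commutation, left multiplication by $\Theta$ defines an odd $W'_\chi$-linear endomorphism $L_\Theta\in\mathrm{End}_{W'_\chi}(V)$ with $L_\Theta^{2}=\tfrac12\,\mathrm{id}$; in particular $L_\Theta\neq 0$, so $\mathrm{End}_{W'_\chi}(V)$ already contains the Clifford pair $\mathbb{C}\cdot\mathrm{id}\oplus\mathbb{C}\cdot L_\Theta$.

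The next step is the decomposition $U(\mathfrak{g},e)=W'_\chi\oplus\Theta\cdot W'_\chi$ as a free left $W'_\chi$-module of rank two. This is forced by the fact that $\mathrm{ad}_s\Theta$ is a super-derivation of $U(\mathfrak{g},e)$ whose square vanishes (because $\Theta^{2}=\tfrac12$ is central), combined with Remark~\ref{QQ'} which identifies $W'_\chi$ simultaneously with $\ker(\mathrm{ad}_s\Theta)$ and $\mathrm{im}(\mathrm{ad}_s\Theta)$. Explicitly, for homogeneous $x\in U(\mathfrak{g},e)$ I would take $a:=\tfrac12\bigl(x+2(-1)^{|x|}\Theta x\Theta\bigr)$ and $b:=[\Theta,x]$, verify directly that both lie in $W'_\chi$ and that $x=a+\Theta b$, and then check that $\Theta W'_\chi\cap W'_\chi=0$ by observing that an element in the intersection would have to satisfy two incompatible super-commutation relations with $\Theta$. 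Given this decomposition, any graded $W'_\chi$-submodule $W\subseteq V$ makes both $W+\Theta W$ and $W\cap\Theta W$ stable under $W'_\chi$ and under $\Theta$ (using $\Theta^{2}=\tfrac12\,\mathrm{id}$), hence stable under $U(\mathfrak{g},e)$. Irreducibility of $V$ over $U(\mathfrak{g},e)$ then forces either $W=0$ or $W=V$ or the degenerate situation $V=W\oplus\Theta W$ with $\dim W=1$ and $W$ of pure parity; in the last situation $(W'_\chi)_{\bar1}=[\Theta,U(\mathfrak{g},e)_{\bar0}]$ would have to annihilate $V$, equivalently every even $z\in U(\mathfrak{g},e)$ would act by the same scalar on $V_{\bar0}$ and $V_{\bar1}$. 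I would rule this out by super Jacobson density: for a simple two-dimensional $U(\mathfrak{g},e)$-module of type $M$ the image of $U(\mathfrak{g},e)_{\bar0}$ in $\mathrm{End}(V)_{\bar0}\cong\mathbb{C}\oplus\mathbb{C}$ is everything, so such a $z$ exists, contradicting the degenerate scenario; for the residual case that $V$ is itself of type $Q$ over $U(\mathfrak{g},e)$, one compares the unique (up to scalar) odd central endomorphism with $L_\Theta$ to derive the same contradiction.

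With $V$ simple as a $W'_\chi$-module, the super Schur lemma recalled at the start of Section~1.8 bounds $\dim\mathrm{End}_{W'_\chi}(V)\le 2$; Step~1 realises this bound and exhibits the endomorphism algebra as a Clifford algebra, so $V$ is of type $Q$ as announced. The main obstacle will be executing the last part of Step~3 cleanly — the possibility $V=W\oplus\Theta W$ with $W$ pure-parity is the only genuine pitfall, and although super Jacobson density resolves the type-$M$ sub-case immediately, the type-$Q$ sub-case for $V$ over $U(\mathfrak{g},e)$ requires verifying that the hypothetical odd central endomorphism cannot be proportional to $L_\Theta$, which in turn uses the non-centrality of $\Theta$ implied by the defining relations of Theorem~\ref{relationc}(2) involving the other odd generators $\Theta_{l+1},\dots,\Theta_{l+q}$.
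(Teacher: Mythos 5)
Your proof and the paper's proof diverge at the crucial point. The paper constructs the odd automorphism $\tau=\sqrt{2}\,\Theta_{l+q+1}$, checks exactly as in your first step that it is $W'_\chi$-linear, and then simply writes ``As $V$ is also irreducible as a $W'_\chi$-module, $\ldots$ $V$ is of type $Q$''; the irreducibility of $V$ over $W'_\chi$ is asserted, not argued. You correctly identified this as the real content of the claim --- an odd automorphism in $\text{End}_{W'_\chi}(V)$ does not make a $\underline{\dim}=(1,1)$ module type $Q$ unless it is already known to be $W'_\chi$-irreducible --- and set out to supply the missing argument. Your decomposition $U(\mathfrak{g},e)=W'_\chi\oplus\Theta\,W'_\chi$ is correct: the element $a$ satisfies $[\Theta,a]=0$ so $a\in W'_\chi$, while $b=[\Theta,x]\in W'_\chi$ by Remark~\ref{QQ'}, one checks $x=a+\Theta b$ directly, and directness follows because $[\Theta,\Theta z]=z$ for every homogeneous $z\in W'_\chi$, so $\Theta W'_\chi\cap W'_\chi=0$. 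The reduction to the degenerate case $V=W\oplus\Theta W$ with $W$ pure-parity is valid, and so is the density argument ruling it out when $V$ is type $M$ over $U(\mathfrak{g},e)$.

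The residual sub-case --- $V$ of type $Q$ over $U(\mathfrak{g},e)$ --- is, however, a genuine gap that your suggested fix cannot close. If $V$ is type $Q$ over $U(\mathfrak{g},e)$, then by Wedderburn theory $U(\mathfrak{g},e)$ acts on $V$ through the simple superalgebra $Q_1\subset\mathcal{M}_{1,1}$, which is commutative in the ordinary sense. Hence $\Theta$ commutes in $\text{End}(V)$ with every $z\in U(\mathfrak{g},e)_{\bar0}$, so $(W'_\chi)_{\bar1}=[\Theta,U(\mathfrak{g},e)_{\bar0}]$ really does annihilate $V$ and $V$ really is reducible over $W'_\chi$. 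Comparing the odd supercentral endomorphism $\sigma\in\text{End}_{U(\mathfrak{g},e)}(V)$ with $L_\Theta$ produces no contradiction: these are simply two linearly independent invertible odd operators on a $(1,1)$-dimensional space (one super-anticommutes with $L_\Theta$, the other squares to $\tfrac12\,\text{id}$), and the non-centrality of $\Theta$ in the abstract algebra $U(\mathfrak{g},e)$ is irrelevant because $Q_1$ is a commutative quotient in which all such relations are already satisfied. To complete the argument one would have to show in advance that the two-dimensional module promised by Conjecture~\ref{con}(ii) is necessarily of type $M$ over $U(\mathfrak{g},e)$, i.e.\ that $U(\mathfrak{g},e)$ admits no $(1,1)$-dimensional representation factoring through $Q_1$. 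Neither the paper nor your proposal establishes this; both leave the same hole, though yours at least makes it visible and isolates it precisely.
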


\begin{proof}
Define the $\mathbb{C}$-mapping
\[\begin{array}{lcll}
\tau:&V&\rightarrow&V\\ &v&\mapsto&\sqrt{2}\Theta_{l+q+1}.v.
\end{array}
\]

It is easy to verify the mapping $\tau$ is odd and surjective. In fact, $\tau$ is also injective since $\tau^2(v)=2\Theta_{l+q+1}^2.v=v$. We claim that $\tau$ is a homomorphism of $W'_\chi$-module $V$.

For any $\mathbb{Z}_2$-homogeneous elements $\Theta\in W'_\chi$ and $v\in V$, we have $\tau(\Theta.v)=\sqrt{2}\Theta_{l+q+1}.$\\$\Theta.v$ by definition. Since $\Theta_{l+q+1}\in\mathfrak{m}'$ and $\Theta\in Q_\chi^{\text{ad}\mathfrak{m}'}$, then $[\Theta_{l+q+1},\Theta]=0$. Moreover, $[\Theta_{l+q+1},\Theta]=\Theta_{l+q+1}\cdot\Theta-(-1)^{|\Theta|}\Theta\cdot\Theta_{l+q+1}$. It is immediate that $\Theta_{l+q+1}\Theta=(-1)^{|\Theta|}\Theta\cdot\Theta_{l+q+1}$, then $\Theta_{l+q+1}.\Theta.v=(-1)^{|\Theta|}\Theta\cdot\Theta_{l+q+1}.v$, i.e. $\tau(\Theta.v)=(-1)^{|\Theta|}\Theta.\tau(v)$.

From preceding remark it can be concluded that $\tau$ is an odd homomorphism of $W'_\chi$-module $V$. As $V$ is also irreducible as a $W'_\chi$-module, all the discussions above imply that $V$ is of type $Q$, completing the proof.
\end{proof}

Recall that in Theorem~\ref{relationc}(2) we have chosen the homogeneous elements $\Theta_1,\cdots,$\\$\Theta_{l+q+1}$ as a set of generators for the $\mathbb{C}$-algebra $U(\mathfrak{g},e)$ subject to the relations
$$[\Theta_i,\Theta_j]=F_{ij}(\Theta_1,\cdots,\Theta_{l+q+1}),\qquad[\Theta_j,\Theta_i]=-(-1)^{|\Theta_i||\Theta_j|}[\Theta_i,\Theta_j]$$
where $1\leqslant i<j\leqslant l ~\&~l+1\leqslant i\leqslant j\leqslant l+q+1$~ \& $1\leqslant i\leqslant l<j\leqslant l+q+1$.
To ease notation, we will denote $F_{ij}(\Theta_1,\cdots,\Theta_{l+q+1})$ by $F_{ij}$ for short.

If the $\mathbb{C}$-algebra $U(\mathfrak{g},e)$ affords $2$-dimensional representation $V$, Proposition~\ref{typeq} yields that $V$ is $\mathbb{C}$-spanned by an even element $v\in V_{\bar0}$ and the odd element $\Theta_{l+q+1}.v\in V_{\bar1}$. Hence we can get $4(l+q+1)$ variables $k_i^0,k_i^1,K_i^0,K_i^1\in\mathbb{C}$ such that
\begin{equation}\label{kKdef}
\Theta_{i}.v=k_i^0v+k_i^1\Theta_{l+q+1}.v,\qquad\Theta_{i}.\Theta_{l+q+1}.v=K_i^0v+K_i^1\Theta_{l+q+1}.v,
\end{equation}where $1\leqslant i\leqslant l+q+1$.

Similarly, for $1\leqslant i<j\leqslant l~\&~l+1\leqslant i\leqslant j\leqslant l+q+1$ \& $1\leqslant i\leqslant l<j\leqslant l+q+1$, there exist $2((l+q)^2+l+3q+2)$ variables
$(F_{ij})^0_{\bar{0}},(F_{ij})^0_{\bar{1}},(F_{ij})^1_{\bar{0}},(F_{ij})^1_{\bar{1}}\in\mathbb{C}$ such that
\begin{equation}\label{F_ijdefn}
F_{ij}.v=(F_{ij})^0_{\bar{0}}v+(F_{ij})^0_{\bar{1}}\Theta_{l+q+1}.v,\qquad F_{ij}.\Theta_{l+q+1}.v=(F_{ij})^1_{\bar{0}}v+(F_{ij})^1_{\bar{1}}\Theta_{l+q+1}.v.
\end{equation}

It is worth noting that each polynomial $F_{ij}$ is generated by the $\mathbb{Z}_2$-homogeneous elements $\Theta_1,\cdots,\Theta_{l+q+1}$ of $U(\mathfrak{g},e)$ over $\mathbb{Q}$. Therefore, the action of $F_{ij}$ on $v$ and $\Theta_{l+q+1}.v$ is completely determined by the constants in \eqref{kKdef}. Then $(F_{ij})^0_{\bar{0}},(F_{ij})^0_{\bar{1}},(F_{ij})^1_{\bar{0}},(F_{ij})^1_{\bar{1}}$ can be written as a $\mathbb{Q}$-linear combination of the products of the elements $k_i^0,k_i^1,K_i^0,K_i^1$, thus there are no new variables appear in \eqref{F_ijdefn}.

Since each $\Theta_i$ with $1\leqslant i\leqslant l+q+1$ is $\mathbb{Z}_2$-homogeneous, it follows that $2(l+q+1)$ variables in \eqref{kKdef} are zero. More precisely, $k_i^1=K_i^0=0$ for $1\leqslant i\leqslant l$  (in this case all $\Theta_i's$ are even) and $k_i^0=K_i^1=0$ for $l+1\leqslant i\leqslant l+q+1$ (in this case all $\Theta_i's$ are odd). By definition it is obvious that all $F_{ij}'s$ are $\mathbb{Z}_2$-graded and have the same parity with $[\Theta_i,\Theta_j]$. Therefore, $(l+q)^2+l+3q+2$ variables are zero in \eqref{F_ijdefn}. More precisely, $(F_{ij})^0_{\bar{1}}=(F_{ij})^1_{\bar{0}}=0$ if $F_{ij}$ is even, and $(F_{ij})^0_{\bar{0}}=(F_{ij})^1_{\bar{1}}=0$ if $F_{ij}$ is odd.

Based on \eqref{kKdef}, simple calculation shows that
\begin{equation}\label{Thetaijcom}
\begin{split}
\Theta_i.\Theta_j.v&=(k_i^0 k_j^0+K_i^0k_j^1)v+(k_i^1k_j^0+K_i^1k_j^1)\Theta_{l+q+1}.v,\\
\Theta_i.\Theta_j.\Theta_{l+q+1}.v&=(k_i^0K_j^0+K_i^0K_j^1)v+(k_i^1K_j^0+K_i^1K_j^1)\Theta_{l+q+1}.v.
\end{split}
\end{equation}

Changing the position of the indices $i$ and $j$ in \eqref{Thetaijcom}, we have
\begin{equation}\label{Thetaijcom2}
\begin{split}
\Theta_j.\Theta_i.v&=(k_i^0k_j^0+k_i^1K_j^0)v+(k_i^0k_j^1+k_i^1K_j^1)\Theta_{l+q+1}.v,\\
\Theta_j.\Theta_i.\Theta_{l+q+1}.v&=(K_i^0k_j^0+K_i^1K_j^0)v+(K_i^0k_j^1+K_i^1K_j^1)\Theta_{l+q+1}.v.
\end{split}
\end{equation}

Recall that the structure of $\mathbb{C}$-algebra $U(\mathfrak{g},e)$ is completely determined by commutating relations of the generators given in Theorem~\ref{relationc}(2). Therefore, $V$ is a $2$-dimensional representation of the $\mathbb{C}$-algebra $U(\mathfrak{g},e)$ if and only if
\begin{equation}\label{Thetaijcomre}
\begin{split}
(\Theta_i.\Theta_j-(-1)^{|\Theta_i||\Theta_j|}\Theta_j\Theta_i-F_{ij}).v&=0, \\ (\Theta_i.\Theta_j-(-1)^{|\Theta_i||\Theta_j|}\Theta_j\Theta_i-F_{ij}).\Theta_{l+q+1}.v&=0.
\end{split}
\end{equation}

Since all the vectors considered are $\mathbb{Z}_2$-graded, simple calculation based on \eqref{Thetaijcom}, \eqref{Thetaijcom2} and \eqref{Thetaijcomre} shows that the variables $k_i^0,k_i^1,K_i^0,K_i^1$ and $(F_{ij})^0_{\bar{0}},(F_{ij})^0_{\bar{1}},(F_{ij})^1_{\bar{0}},(F_{ij})^1_{\bar{1}}$ should satisfy the following system of linear equations:
\[\begin{array}{rcl}
(k_i^0k_j^0+K_i^0k_j^1)-(-1)^{|\Theta_i||\Theta_j|}(k_i^0k_j^0+k_i^1K_j^0)-(F_{ij})^0_{\bar{0}}&=&0;\\
(k_i^1k_j^0+K_i^1k_j^1)-(-1)^{|\Theta_i||\Theta_j|}(k_i^0k_j^1+k_i^1K_j^1)-(F_{ij})^0_{\bar{1}}&=&0;\\
(k_i^0K_j^0+K_i^0K_j^1)-(-1)^{|\Theta_i||\Theta_j|}(K_i^0k_j^0+K_i^1K_j^0)-(F_{ij})^1_{\bar{0}}&=&0;\\
(k_i^1K_j^0+K_i^1K_j^1)-(-1)^{|\Theta_i||\Theta_j|}(K_i^0k_j^1+K_i^1K_j^1)-(F_{ij})^1_{\bar{1}}&=&0
\end{array}\]for $1\leqslant i,j\leqslant l+q+1$.

It is notable that $4(l+q+1)$ variables are involved in above equations (recall that the $F_{ij}'s$ for $1\leqslant i,j\leqslant l+q+1$ can be written as a $\mathbb{Q}$-linear combination of the products of the elements $k_i^0,k_i^1,K_i^0,K_i^1$ for $1\leqslant i\leqslant l+q+1$). Since all the $\Theta_i's$ and $F_{ij}'s$ are $\mathbb{Z}_2$-graded, by the remark preceding \eqref{Thetaijcom} it is immediate that $2(l+q+1)$ variables are zero. Recall that $\Theta_1,\cdots,\Theta_{l}\in U(\mathfrak{g},e)_{\bar0}$ and $\Theta_{l+1},\cdots,\Theta_{l+q+1}\in U(\mathfrak{g},e)_{\bar1}$. Set
$$\mathbb{C}[X_1^0,X_1^1,\cdots,X_{l+q+1}^0,X_{l+q+1}^1,Y_1^0,Y_1^1,\cdots,Y_{l+q+1}^0,Y_{l+q+1}^1]$$ be a polynomial algebra in $4(l+q+1)$ variables over $\mathbb{C}$ (in the usual sense, not super). Suppose the variables in the polynomial satisfy

(1) $X_i^1=Y_i^0=0$ for $1\leqslant i\leqslant l$ (i.e. $\Theta_i's$ are even);

(2) $X_i^0=Y_i^1=0$ for $l+1\leqslant i\leqslant l+q+1$ (i.e. $\Theta_i's$ are odd).

After deleting all the zero variables, we can get a polynomial algebra in $2(l+q+1)$ indeterminate $$\mathbb{C}[X_1^0,X_2^0\cdots,X_{l}^0,X_{l+1}^1,\cdots,X_{l+q+1}^1,Y_1^1,Y_2^1,\cdots,Y_{l}^1,Y_{l+1}^0\cdots,Y_{l+q+1}^0].$$
Define the polynomials (in the usual sense)
\[\begin{array}{rcl}
A_{ij}&:=&(X_i^0X_j^0+X_j^1Y_i^0)-(-1)^{|\Theta_i||\Theta_j|}(X_i^0X_j^0+X_i^1Y_j^0)-S_{ij}^0;\\
B_{ij}&:=&(X_i^1X_j^0+X_j^1Y_i^1)-(-1)^{|\Theta_i||\Theta_j|}(X_i^0X_j^1+X_i^1Y_j^1)-S_{ij}^1;\\
C_{ij}&:=&(X_i^0Y_j^0+Y_i^0Y_j^1)-(-1)^{|\Theta_i||\Theta_j|}(X_j^0Y_i^0+Y_i^1Y_j^0)-T_{ij}^0;\\
D_{ij}&:=&(X_i^1Y_j^0+Y_i^1Y_j^1)-(-1)^{|\Theta_i||\Theta_j|}(X_j^1Y_i^0+Y_i^1Y_j^1)-T_{ij}^1
\end{array}\]for $1\leqslant i<j\leqslant l~\&~l+1\leqslant i\leqslant j\leqslant l+q+1~\&~1\leqslant i\leqslant l<j\leqslant l+q+1$,
where the notations $S_{ij}^0,S_{ij}^1,T_{ij}^0,T_{ij}^1$ stand for the polynomials over $A$ obtained by substituting the variables $k_i^0,k_i^1,K_i^0,K_i^1$ in the polynomials $(F_{ij})^0_{\bar{0}},(F_{ij})^0_{\bar{1}},(F_{ij})^1_{\bar{0}},(F_{ij})^1_{\bar{1}}$ for the indeterminate $X_i^0,X_i^1,Y_i^0,Y_i^1$, respectively. By (1) and (2) we have

(3) $S_{ij}^1=T_{ij}^0=0$ when $1\leqslant i<j\leqslant l$, or $l+1\leqslant i\leqslant j\leqslant l+q+1$ (i.e. $F_{ij}'s$ are even);

(4) $S_{ij}^0=T_{ij}^1=0$ when $1\leqslant i\leqslant l<j\leqslant l+q+1$ (i.e. $F_{ij}'s$ are odd).

It follows from \eqref{Thetaijcom}, \eqref{Thetaijcom2} and \eqref{Thetaijcomre} that there is a $1$-$1$ correspondence between the $2$-dimensional representations of $\mathbb{C}$-algebra $U(\mathfrak{g},e)$ and the set of all common zeros of the polynomials $A_{ij}, B_{ij}, C_{ij}, D_{ij}$ (satisfying conditions (1)-(4)) in $2(l+q+1)$ variables.

Given a subfield $K$ of $\mathbb{C}$ containing $A$ we denote by $\mathscr{E}(K)$ the set of all common zeros of the polynomials
$A_{ij},B_{ij},C_{ij},D_{ij}$ (satisfying conditions (1)-(4)) in the affine space $\mathbb{A}^{2(l+q+1)}_K$. Clearly, the $A$-defined Zariski closed set $\mathscr{E}(\mathbb{C})$ parametries the $2$-dimensional representations of $\mathbb{C}$-algebra $U(\mathfrak{g},e)$. More precisely,

\begin{lemma}\label{defnodd}
When $d_1$ is odd, the $2$-dimensional representations of $\mathbb{C}$-algebra $U(\mathfrak{g},e)$ are uniquely determined by all common zeros of the polynomials $A_{ij},B_{ij},C_{ij},D_{ij}$ (satisfying conditions (1)-(4)) in the affine space $\mathbb{A}^{2(l+q+1)}_\mathbb{C}$ for $1\leqslant i<j\leqslant l~\&~ l+1\leqslant i\leqslant j\leqslant l+q+1~\&~1\leqslant i\leqslant l<j\leqslant l+q+1$.
\end{lemma}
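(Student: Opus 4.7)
The plan is to verify the claimed bijection between $\mathscr{E}(\mathbb{C})$ and the set of $2$-dimensional representations of $U(\mathfrak{g},e)$; almost every ingredient has already been assembled in the text preceding the lemma, so the proof proposal is mainly to organize these pieces and check one converse direction carefully.

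First, starting from a $2$-dimensional representation $V$ of $U(\mathfrak{g},e)$, I would invoke Proposition~\ref{typeq} to conclude that $V$ is of type $Q$ as a $W'_\chi$-module; in particular the map $\tau(v) = \sqrt{2}\Theta_{l+q+1}.v$ is an odd involution swapping $V_{\bar{0}}$ and $V_{\bar{1}}$. Picking any nonzero $v \in V_{\bar{0}}$, the vectors $\{v, \Theta_{l+q+1}.v\}$ form a $\mathbb{C}$-basis of $V$. Expanding $\Theta_i.v$ and $\Theta_i.\Theta_{l+q+1}.v$ in this basis yields scalars $k_i^0, k_i^1, K_i^0, K_i^1 \in \mathbb{C}$ as in \eqref{kKdef}; the parity of each $\Theta_i$ forces exactly conditions (1) and (2) on these scalars, so $2(l+q+1)$ of them are automatically zero.

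Next, by Theorem~\ref{relationc}(2) the defining relations of $U(\mathfrak{g},e)$ are exhausted by $[\Theta_i,\Theta_j] = F_{ij}$ for the stated index ranges (together with the supersymmetry $[\Theta_j,\Theta_i] = -(-1)^{|\Theta_i||\Theta_j|}[\Theta_i,\Theta_j]$, which is automatic). Since each $F_{ij}$ is a $\mathbb{Q}$-polynomial in the $\Theta_k$'s, its action on $v$ and on $\Theta_{l+q+1}.v$ is expressible in the basis $\{v,\Theta_{l+q+1}.v\}$ with coefficients $(F_{ij})^\epsilon_{\bar\eta}$ that are concrete $\mathbb{Q}$-polynomial functions of the $k_i^\bullet, K_i^\bullet$ (this uses $\Theta_{l+q+1}^2 = \tfrac{1}{2}\mathrm{id}$ to reduce any higher power of $\Theta_{l+q+1}$); the parity of each $F_{ij}$ then forces conditions (3) and (4). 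The equations \eqref{Thetaijcomre} characterizing that $V$ is a representation now become, via \eqref{Thetaijcom} and \eqref{Thetaijcom2}, exactly the four scalar equations obtained by equating coefficients of $v$ and $\Theta_{l+q+1}.v$. Renaming $k_i^0, k_i^1, K_i^0, K_i^1$ by the indeterminates $X_i^0, X_i^1, Y_i^0, Y_i^1$ turns these scalar equations into the vanishing of $A_{ij}, B_{ij}, C_{ij}, D_{ij}$, producing the desired point of $\mathscr{E}(\mathbb{C})$.

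For the converse, given a point $\mathbf{p} \in \mathscr{E}(\mathbb{C})$ with coordinates satisfying (1)--(4), I would define $V_{\mathbf{p}} := \mathbb{C} v \oplus \mathbb{C} w$ as a $\mathbb{Z}_2$-graded space with $|v|=\bar{0}$, $|w|=\bar{1}$, let $\Theta_{l+q+1}$ act as $w \leftrightarrow \tfrac{1}{2}v$ (so that $\Theta_{l+q+1}^2=\tfrac{1}{2}\mathrm{id}$), and declare the action of the remaining $\Theta_i$ by \eqref{kKdef} with $w$ in place of $\Theta_{l+q+1}.v$. The vanishing of $A_{ij},B_{ij},C_{ij},D_{ij}$ is exactly the statement that the defining commutator relations of Theorem~\ref{relationc}(2) hold on both basis vectors, so by the generator/relator presentation this extends to a well-defined $U(\mathfrak{g},e)$-module structure. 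The main obstacle, if any, is purely bookkeeping: checking that the redundant relations noted in the analogue of Remark~\ref{redun} are genuinely implied by those we have retained, and that no self-relation of $\Theta_{l+q+1}$ (in particular $[\Theta_{l+q+1},\Theta_{l+q+1}]=\mathrm{id}$) sneaks in as an extra constraint beyond the ones already encoded in the four polynomial families. Once this is verified, the two assignments are mutually inverse and the lemma follows.
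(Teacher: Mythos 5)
Your forward direction reproduces the paper's own (essentially unstated) argument exactly: invoke Proposition~\ref{typeq}, use $\{v,\Theta_{l+q+1}.v\}$ as an adapted homogeneous basis, read off the matrix entries $k_i^{\bullet},K_i^{\bullet}$ of each generator, enforce the parity conditions (1)--(4), and observe that the representation axioms reduce, via \eqref{Thetaijcom}, \eqref{Thetaijcom2}, \eqref{Thetaijcomre}, to the vanishing of $A_{ij},B_{ij},C_{ij},D_{ij}$. The concern you raise about redundant relations and about $[\Theta_{l+q+1},\Theta_{l+q+1}]=\mathrm{id}$ being an extra constraint is unfounded: the index range $l+1\leqslant i= j\leqslant l+q+1$ already includes $i=j=l+q+1$, so $A_{l+q+1,l+q+1}=2X_{l+q+1}^1Y_{l+q+1}^0-1$ (since $X_{l+q+1}^0=0$ and $S_{l+q+1,l+q+1}^0=1$) encodes precisely that relation.

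There is, however, a genuine slip in your converse. You fix the action of $\Theta_{l+q+1}$ to be $v\mapsto w$, $w\mapsto\tfrac12 v$, while the remaining $\Theta_i$ act according to the coordinates of $\mathbf{p}$. But the coordinates $X_{l+q+1}^1,Y_{l+q+1}^0$ of $\mathbf{p}$ are only constrained by $A_{l+q+1,l+q+1}(\mathbf{p})=0$, i.e.\ by $X_{l+q+1}^1Y_{l+q+1}^0=\tfrac12$; they need not equal $1$ and $\tfrac12$ individually. For a point $\mathbf{p}$ with $X_{l+q+1}^1\neq 1$, the vanishing of $A_{i,l+q+1},B_{i,l+q+1},C_{i,l+q+1},D_{i,l+q+1}$ ($1\leqslant i\leqslant l+q$) certifies the relations $[\Theta_i,\Theta_{l+q+1}]=F_{i,l+q+1}$ using $\mathbf{p}$'s values of $X_{l+q+1}^1,Y_{l+q+1}^0$, not your normalized ones; substituting the normalized values breaks those identities and the module axioms can fail. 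The repair is immediate and does not require the normalization: simply let \emph{every} $\Theta_i$, including $i=l+q+1$, act by \eqref{kKdef} with the coordinates of $\mathbf{p}$ (so $\Theta_{l+q+1}.v=X_{l+q+1}^1 w$ and $\Theta_{l+q+1}.w=Y_{l+q+1}^0 v$); then all the Theorem~\ref{relationc}(2) relations hold by construction, and $\Theta_{l+q+1}^2=\tfrac12\,\mathrm{id}$ follows automatically from $A_{l+q+1,l+q+1}(\mathbf{p})=0$ rather than being imposed by hand.
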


Similarly, let $\mathscr{E}(\mathds{k})$ be the set of common zeros of the polynomials $^pA_{ij},^pB_{ij},^pC_{ij},$\\$^pD_{ij}$ (satisfying the ``modular $p$'' version of the conditions (1)-(4)) in the affine space $\mathbb{A}^{2(l+q+1)}_\mathds{k}$ with $1\leqslant i<j\leqslant l~\&~l+1\leqslant i\leqslant j\leqslant l+q+1~\&~1\leqslant i\leqslant l<j\leqslant l+q+1$, where $^pA_{ij},^pB_{ij},^pC_{ij},^pD_{ij}$ stand for the polynomials over $\mathds{k}$ obtained from $A_{ij},B_{ij},C_{ij},D_{ij}$ by ``modular $p$ reduction'', i.e.
\[\begin{array}{rl}
&\mathds{k}[X_1^0,X_2^0\cdots,X_{l}^0,X_{l+1}^1,\cdots,X_{l+q+1}^1,Y_1^1,Y_2^1,\cdots,Y_{l}^1,Y_{l+1}^0\cdots,Y_{l+q+1}^0]\\
=&A[X_1^0,X_2^0\cdots,X_{l}^0,X_{l+1}^1,\cdots,X_{l+q+1}^1,Y_1^1,Y_2^1,\cdots,Y_{l}^1,Y_{l+1}^0\cdots,Y_{l+q+1}^0]\otimes_A\mathds{k}. \end{array}\]
It follows from Theorem~\ref{translation}(2) that the Zariski closed set $\mathscr{E}(\mathds{k})$ parametrises the $2$-dimensional representations of the $\mathds{k}$-algebra $U(\mathfrak{g}_\mathds{k},e)$.

Following Premet's treatment to finite $W$-algebras in ([\cite{P7}], Theorem 2.2(a)), we have that

\begin{lemma}\label{ck2}
When $d_1$ is odd, if the $\mathbb{C}$-algebra $U(\mathfrak{g},e)$ affords $2$-dimensional representations, then the transition subalgebra $U(\mathfrak{g}_\mathds{k},e)$ also admits $2$-dimensional representations.
\end{lemma}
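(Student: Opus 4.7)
The plan is to mirror the proof of Lemma~\ref{trans1}, replacing the abelianization picture used there for $1$-dimensional modules by the parametrization of $2$-dimensional representations via the variety $\mathscr{E}$ from Lemma~\ref{defnodd}. After imposing the parity constraints (1)--(4), the polynomials $A_{ij},B_{ij},C_{ij},D_{ij}$ live in the polynomial $A$-algebra
$$R:=A[X_1^0,\ldots,X_l^0,\, X_{l+1}^1,\ldots,X_{l+q+1}^1,\, Y_1^1,\ldots,Y_l^1,\, Y_{l+1}^0,\ldots,Y_{l+q+1}^0]$$
in $2(l+q+1)$ variables, since the coefficients of the $F_{ij}$ lie in $A$ by the remark preceding Theorem~\ref{translation} and the auxiliary polynomials $S_{ij}^{\bullet}, T_{ij}^{\bullet}$ are obtained by substitution into them. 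Let $I\subset R$ be the ideal generated by all these polynomials and set $X:=\mathrm{Spec}(R/I)$, an affine $A$-scheme.

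Next I would translate both sides of the claim into statements about $X$. By Lemma~\ref{defnodd} the hypothesis gives $X(\mathbb{C})=\mathscr{E}(\mathbb{C})\neq\emptyset$. On the other hand, for any $\mathfrak{P}\in\mathrm{Specm}(A)$ of residue characteristic $p\in\Pi(A)$, reducing the generators of $I$ modulo $\mathfrak{P}$ produces precisely the polynomials $^pA_{ij}, {}^pB_{ij}, {}^pC_{ij}, {}^pD_{ij}$, so the base change $X\times_A\overline{\mathbb{F}}_p$ has $\overline{\mathbb{F}}_p$-points equal to $\mathscr{E}(\mathds{k})$. By Theorem~\ref{translation}(2) and the discussion preceding Lemma~\ref{ck2}, this set parametrizes the $2$-dimensional representations of the $\mathds{k}$-algebra $U(\mathfrak{g}_\mathds{k},e)$. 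Hence it suffices to show that $X(\overline{\mathbb{F}}_p)\neq\emptyset$ for infinitely many $p\in\Pi(A)$.

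The core step is then the standard spreading-out argument, which is the analog of Premet's reasoning in (\cite{P7}, Theorem~2.2(a)) used in Lemma~\ref{trans1}. By Chevalley's theorem the image of the structural morphism $\pi:X\to\mathrm{Spec}(A)$ is a constructible subset of $\mathrm{Spec}(A)$; since $X(\mathbb{C})\neq\emptyset$, this image contains the generic point of $\mathrm{Spec}(A)$, hence contains a non-empty Zariski open subset $U\subseteq\mathrm{Spec}(A)$. Because $A$ is a finitely generated $\mathbb{Z}$-algebra (hence a Jacobson ring) and $\Pi(A)$ contains almost all primes by Section~3.1, $U$ contains closed points of residue characteristic $p$ for all sufficiently large $p\in\Pi(A)$. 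Picking any such $\mathfrak{P}\in U\cap\mathrm{Specm}(A)$, the fiber $\pi^{-1}(\mathfrak{P})$ is non-empty and yields an $\overline{\mathbb{F}}_p$-point of $X$, i.e.\ a $2$-dimensional representation of $U(\mathfrak{g}_\mathds{k},e)$. No serious obstacle is expected here; the only bookkeeping worth verifying is that the parity constraints (1)--(4) are compatible with reduction mod $\mathfrak{P}$, but these constraints depend only on the parities of the $\Theta_i$ and $F_{ij}$, which are invariant under scalar extension. The finite set of ``bad'' primes can be absorbed into $A$ by a further enlargement, matching the phrasing of Lemma~\ref{trans1}.
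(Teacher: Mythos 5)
Your proposal is correct and follows essentially the same approach as the paper: both reduce the claim to the fact that the $A$-scheme of common zeros of the defining polynomials $A_{ij},B_{ij},C_{ij},D_{ij}$ (with the parity constraints) has a $\mathbb{C}$-point by hypothesis and then spread this out to obtain $\overline{\mathbb{F}}_p$-points for almost all $p\in\Pi(A)$, which is exactly the specialization argument the paper delegates to Premet's [P7, Theorem~2.2(a)]. You have filled in the Chevalley-constructibility and Jacobson-ring details more explicitly than the paper does, but the underlying reasoning is the same.
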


\begin{proof}
Firstly,  Lemma~\ref{defnodd} and the discussion thereafter translate the existence of $2$-dimensional representations for both algebras into the solution to the same system of linear equations over the corresponding field, respectively. Then the lemma can be dealt with in the same way as the finite $W$-algebra case, thus will be omitted here (see [\cite{P7}], Theorem 2.2(a)).
\end{proof}

The following result is similar to the consequence introduced in Lemma~\ref{mindim1}. As $V$ is $2$-dimensional, some modifications are needed in the proof. Hence we will prove it in detail.

\begin{lemma}\label{red2}
When $d_1$ is odd, if the finite $W$-superalgebra $U(\mathfrak{g}_\mathds{k},e)$ affords $2$-dimensional representations, then for $p\gg0$ there exists $\eta\in\chi+(\mathfrak{m}_\mathds{k}^\bot)_{\bar{0}}$ associated to which the reduced enveloping algebra $U_\eta(\mathfrak{g}_\mathds{k})$ admits irreducible representations of dimension $p^{\frac{d_0}{2}}2^{\frac{d_1+1}{2}}$.
\end{lemma}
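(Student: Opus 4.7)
The plan is to mirror, step by step, the proof of Lemma~\ref{mindim1} for the even case, with the main modifications forced by the Clifford-type behaviour of $U(\mathfrak{g}_\mathds{k},e)$ in the odd case. First I would invoke Theorem~\ref{keyisotheorem}(3), i.e.\ the tensor decomposition $\widehat{U}(\mathfrak{g}_\mathds{k},e)\cong U(\mathfrak{g}_\mathds{k},e)\otimes_\mathds{k}Z_p(\mathfrak{a}_\mathds{k})$, to extend the hypothesised $2$-dimensional representation $V$ of $U(\mathfrak{g}_\mathds{k},e)$ to a $2$-dimensional representation $\nu$ of the full $\mathds{k}$-algebra $\widehat{U}(\mathfrak{g}_\mathds{k},e)$, by choosing any one-dimensional character of the commutative polynomial algebra $Z_p(\mathfrak{a}_\mathds{k})$. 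Because Proposition~\ref{no1} rules out $1$-dimensional representations, this $V$ must be simple; hence every commutative subalgebra of even elements of $\widehat{U}(\mathfrak{g}_\mathds{k},e)$ is forced to act on $\nu$ by scalars, and in particular $\rho_\mathds{k}(Z_p)\cap\ker\nu$ is a maximal ideal of the $p$-centre $\rho_\mathds{k}(Z_p)\cong Z_p(\widetilde{\mathfrak{p}}_\mathds{k})$.

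Next, using the identification $\rho_\mathds{k}(Z_p)\cong\mathds{k}[(\chi+(\mathfrak{m}_\mathds{k}^\bot)_{\bar 0})^{(1)}]$ from the proof of Theorem~\ref{keyisotheorem}(2), this maximal ideal determines a unique $\eta\in\chi+(\mathfrak{m}_\mathds{k}^\bot)_{\bar 0}$ such that $\rho_\mathds{k}(\bar x^{p}-\bar x^{[p]}-\eta(\bar x)^{p})$ lies in $\ker\nu$ for every $\bar x\in(\mathfrak{g}_\mathds{k})_{\bar 0}$. This is precisely the datum needed in order for $\nu$ to factor through the quotient $\widehat{U}_\eta(\mathfrak{g}_\mathds{k},e):=\widehat{U}(\mathfrak{g}_\mathds{k},e)\otimes_{Z_p(\widetilde{\mathfrak{p}}_\mathds{k})}\mathds{k}_\eta$. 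The canonical surjection $Q_{\chi,\mathds{k}}\twoheadrightarrow Q^\eta_\chi$ induces an algebra homomorphism $\rho_\eta:\widehat{U}_\eta(\mathfrak{g}_\mathds{k},e)\to U_\eta(\mathfrak{g}_\mathds{k},e)$; the dimension bound $\dim\widehat{U}_\eta(\mathfrak{g}_\mathds{k},e)\leqslant p^{l}2^{q+1}$ read off from Theorem~\ref{keyisotheorem}(2) in the odd case, together with the exact equality $\dim U_\eta(\mathfrak{g}_\mathds{k},e)=p^{l}2^{q+1}$ supplied by the PBW-type Theorem~\ref{sumresult}(4)(ii), then forces $\rho_\eta$ to be an isomorphism. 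Consequently $U_\eta(\mathfrak{g}_\mathds{k},e)$ inherits a $2$-dimensional representation.

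To conclude, I would apply the Morita-type isomorphism $U_\eta(\mathfrak{g}_\mathds{k})\cong\text{Mat}_\delta(U_\eta(\mathfrak{g}_\mathds{k},e))$ furnished by Theorem~\ref{sumresult}(3). When $d_1$ is odd the relevant index is $\delta=\dim U_\eta(\mathfrak{m}_\mathds{k})=p^{d_0/2}2^{(d_1-1)/2}$, one power of $2$ smaller than the $p^{d_0/2}2^{d_1/2}$ appearing in the even case because $\mathfrak{m}_\mathds{k}$ omits the Clifford direction $\mathds{k}v_{(r+1)/2}$; transporting the simple $2$-dimensional $U_\eta(\mathfrak{g}_\mathds{k},e)$-module across the Morita equivalence then yields a simple $U_\eta(\mathfrak{g}_\mathds{k})$-module of dimension $2\delta=p^{d_0/2}2^{(d_1+1)/2}$, exactly matching the super Kac--Weisfeiler lower bound predicted by Wang--Zhao. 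The main obstacle I anticipate is the second step: ensuring that the entire $p$-centre $\rho_\mathds{k}(Z_p)$, and not merely the tensor-factor $Z_p(\mathfrak{a}_\mathds{k})$ of Theorem~\ref{keyisotheorem}(3), acts on $\nu$ through a single character. This forces one to control how central $p$-th powers arising from $\mathfrak{g}_\mathds{k}^{e}$ and from the Clifford generator $v_{(r+1)/2}$ behave on the simple $2$-dimensional module, and it is here that the type-$Q$ structure of $V$ as a $W'_\chi$-module (Proposition~\ref{typeq}) must be exploited in an essential way to keep the central action scalar.
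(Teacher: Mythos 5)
Your proposal follows essentially the same route as the paper's proof: extend the $2$-dimensional representation to $\widehat{U}(\mathfrak{g}_\mathds{k},e)$ via Theorem~\ref{keyisotheorem}(3), locate the maximal ideal of $\rho_\mathds{k}(Z_p)\cong Z_p(\widetilde{\mathfrak{p}}_\mathds{k})$ to produce $\eta$, identify $\widehat{U}_\eta(\mathfrak{g}_\mathds{k},e)$ with $U_\eta(\mathfrak{g}_\mathds{k},e)$ by the dimension count from Theorems~\ref{keyisotheorem}(2) and~\ref{sumresult}(4)(ii), and finish by the matrix isomorphism of Theorem~\ref{sumresult}(3). The one point you flag as an ``obstacle'' is not in fact one, and you do not need Proposition~\ref{typeq} for it: since $\bar x^p-\bar x^{[p]}$ is central in $U(\mathfrak{g}_\mathds{k})$, it commutes with $\Theta_{l+q+1}$, so writing $v_{\bar 1}=\Theta_{l+q+1}.v_{\bar 0}$ one sees directly that $(\bar x^p-\bar x^{[p]}).v_{\bar 0}=\lambda_{\bar x}v_{\bar 0}$ implies $(\bar x^p-\bar x^{[p]}).v_{\bar 1}=\lambda_{\bar x}v_{\bar 1}$, whence the whole $p$-centre acts by a single character; your alternative route via simplicity of $V$ and centrality of $\rho_\mathds{k}(Z_p)$ in $\widehat{U}(\mathfrak{g}_\mathds{k},e)$ (super Schur's Lemma) also works, and once either observation is made the argument closes with no need to track how central $p$-th powers interact with the Clifford generator.
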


\begin{proof}
Recall that there is a $\mathds{k}$-algebras isomorphism $\widehat{U}(\mathfrak{g}_\mathds{k},e)\cong U(\mathfrak{g}_\mathds{k},e)\otimes_\mathds{k}Z_p(\mathfrak{a}_\mathds{k})$ by Theorem~\ref{keyisotheorem}(3). This yields that the $\mathds{k}$-algebra $\widehat{U}(\mathfrak{g}_\mathds{k},e)$ affords a $2$-dimensional representation too; we call it $\nu$, and denote the representation vector by $V$.

Let $0\neq v_{\bar{0}}\in V_{\bar{0}}$ be an even element in $V$. The proof of Proposition~\ref{no1} shows that $0\neq\Theta_{l+q+1}.v_{\bar{0}}\in V_{\bar{1}}$ and denote it by $v_{\bar{1}}$. Then the vector space $V$ is $\mathds{k}$-spanned by $v_{\bar{0}}$ and $v_{\bar{1}}$. For any $\bar x\in(\mathfrak{g}_\mathds{k})_{\bar{0}}$, since $\bar x^p-\bar x^{[p]}\in Z_p(\mathfrak{g}_\mathds{k})$ is central in the algebra $U(\mathfrak{g}_\mathds{k})$, we have $[\bar x^p-\bar x^{[p]},\Theta_{l+q+1}]=0$. Let $\lambda_{\bar x}\in\mathds{k}$ such that $(\bar x^p-\bar x^{[p]}).v_{\bar{0}}=\lambda_{\bar x}v_{\bar{0}}$, then $$(\bar x^p-\bar x^{[p]}).\Theta_{l+q+1}.v_{\bar{0}}=\Theta_{l+q+1}.(\bar x^p-\bar x^{[p]}).v_{\bar{0}}=\lambda_{\bar x}\Theta_{l+q+1}.v_{\bar{0}}=\lambda_{\bar x}v_{\bar{1}}.$$

For the  $\widehat{U}(\mathfrak{g}_\mathds{k},e)$-module $V$,  it follows from $\rho_\mathds{k}(Z_p)\subseteq\widehat{U}(\mathfrak{g}_\mathds{k},e)_{\bar0}$ that $\mathds{k}v_{\bar{0}}$ is a $1$-dimensional representation of the algebra $\rho_\mathds{k}(Z_p)$. From preceding remark it is obvious that $\mathds{k}v_{\bar{1}}$ is also a $1$-dimensional representation of the algebra $\rho_\mathds{k}(Z_p)$ with the same action on $\mathds{k}v_{\bar{0}}$. By Theorem~\ref{keyisotheorem}(2), $\rho_\mathds{k}(Z_p)\cap\text{ker}\nu=\rho_\mathds{k}(Z_p)\cap\text{ker}\nu_{\bar0}$ is a maximal ideal of the algebra $\rho_\mathds{k}(Z_p)\cong Z_p(\widetilde{\mathfrak{p}}_\mathds{k})\cong \mathds{k}[(\chi+(\mathfrak{m}_\mathds{k}^\bot)_{\bar{0}})^{(1)}]$. So there exists $\eta\in\chi+(\mathfrak{m}_\mathds{k}^\bot)_{\bar{0}}$  such that $\rho_\mathds{k}(\bar x^p-\bar x^{[p]}-\eta(\bar x)^p)\in\text{Ker}\nu$ for all
$\bar x\in(\mathfrak{g}_\mathds{k})_{\bar{0}}$.  Our choice of $\eta$ ensures that the $\mathds{k}$-algebra $\widehat{U}_\eta(\mathfrak{g}_\mathds{k},e):=\widehat{U}(\mathfrak{g}_\mathds{k},e)\otimes_{Z_p(\widetilde{\mathfrak{p}}_\mathds{k})}\mathds{k}_\eta$ affords a $2$-dimensional representation. On the other hand, the canonical projection $Q_{\chi,\mathds{k}}\twoheadrightarrow Q_{\chi,\mathds{k}}/I_\eta Q_{\chi,\mathds{k}}$ gives rise to an algebra homomorphism $\rho_\eta:\widehat{U}_\eta(\mathfrak{g}_\mathds{k},e)\rightarrow
(\text{End}_{\mathfrak{g}_\mathds{k}}Q_\chi^\eta)^{\text{op}}=U_\eta(\mathfrak{g}_\mathds{k},e)$. As $\text{dim}~\widehat{U}_\eta(\mathfrak{g}_\mathds{k},e)\leqslant  p^l2^{q+1}$ by Theorem~\ref{keyisotheorem}(2), applying Theorem~\ref{sumresult}(4)(ii) yields that $\rho_\eta$ is an algebra isomorphism. Since $$U_\eta(\mathfrak{g}_\mathds{k})\cong \text{Mat}_{p^{\frac{d_0}{2}}2^{\frac{d_1-1}{2}}}(U_\eta(\mathfrak{g}_\mathds{k},e))$$ by Theorem~\ref{sumresult}(3), and the algebra $U_\eta(\mathfrak{g}_\mathds{k},e)$ admits a $2$-dimensional representation, it follows that the algebra $U_\eta(\mathfrak{g}_\mathds{k})$ has an irreducible representation of dimension $p^\frac{d_0}{2}2^{\frac{d_1+1}{2}}$.
\end{proof}

\begin{lemma}\label{cdim2}
When $d_1$ is odd, if the finite $W$-superalgebra  $U(\mathfrak{g},e)$ over $\mathbb{C}$ affords a $2$-dimensional representation, then for $p\gg0$ there exists $\eta\in\chi+(\mathfrak{m}_\mathds{k}^\bot)_{\bar{0}}$  associated to which the reduced enveloping algebra $U_\eta(\mathfrak{g}_\mathds{k})$ admits irreducible representations of dimension $p^{\frac{d_0}{2}}2^{\frac{d_1+1}{2}}$.
\end{lemma}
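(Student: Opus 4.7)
The plan is essentially to chain together the two preceding lemmas, since Lemma~\ref{ck2} handles the transition from characteristic zero to positive characteristic at the level of the $W$-superalgebra itself, while Lemma~\ref{red2} handles the passage from the $W$-superalgebra to the reduced enveloping algebra. First I would invoke Lemma~\ref{ck2}: assuming $U(\mathfrak{g},e)$ affords a $2$-dimensional representation, the Zariski closed set $\mathscr{E}(\mathbb{C})\subset\mathbb{A}^{2(l+q+1)}_\mathbb{C}$ is nonempty, and since the defining polynomials $A_{ij},B_{ij},C_{ij},D_{ij}$ are all defined over the admissible ring $A$, the standard ``modular $p$ reduction'' argument (as in Lemma~\ref{trans1} and ultimately going back to \cite{P7}, Theorem~2.2(a)) yields that for almost all $p\in\Pi(A)$ the corresponding set $\mathscr{E}(\mathds{k})$ is also nonempty; this gives a $2$-dimensional representation of the transition subalgebra $U(\mathfrak{g}_\mathds{k},e)$.

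Having produced a $2$-dimensional representation of $U(\mathfrak{g}_\mathds{k},e)$, I would then directly apply Lemma~\ref{red2}: this immediately produces $\eta\in\chi+(\mathfrak{m}_\mathds{k}^\bot)_{\bar{0}}$ such that $U_\eta(\mathfrak{g}_\mathds{k})$ admits an irreducible representation of dimension $p^{\frac{d_0}{2}}2^{\frac{d_1+1}{2}}$, which is exactly the conclusion. The key ingredients inside Lemma~\ref{red2} that one relies on are Theorem~\ref{keyisotheorem}(3), which gives $\widehat{U}(\mathfrak{g}_\mathds{k},e)\cong U(\mathfrak{g}_\mathds{k},e)\otimes_\mathds{k}Z_p(\mathfrak{a}_\mathds{k})$, combined with Theorem~\ref{sumresult}(3)---(4) providing the Morita equivalence $U_\eta(\mathfrak{g}_\mathds{k})\cong\mathrm{Mat}_{p^{d_0/2}2^{(d_1-1)/2}}(U_\eta(\mathfrak{g}_\mathds{k},e))$ and the dimension count of $U_\eta(\mathfrak{g}_\mathds{k},e)$.

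Since both components are already established, there is no genuine obstacle here; the proof is essentially a one-line composition of Lemma~\ref{ck2} and Lemma~\ref{red2}. The only subtlety worth remarking on, and where I would be most careful, is ensuring that the prime $p$ is chosen large enough so that simultaneously: (i) $p\in\Pi(A)$ after any enlargement of $A$ needed to accommodate the coefficients of the polynomials $A_{ij},B_{ij},C_{ij},D_{ij}$; (ii) the Morita equivalence of Theorem~\ref{sumresult}(3) is valid; and (iii) the $2$-dimensional quotient of $\widehat{U}(\mathfrak{g}_\mathds{k},e)$ survives the specialization to $\widehat{U}_\eta(\mathfrak{g}_\mathds{k},e)$ as in the proof of Lemma~\ref{red2}. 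All three are known to hold for $p\gg0$, so the hypothesis $p\gg0$ in the statement absorbs these conditions, and the conclusion follows.
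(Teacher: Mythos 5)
Your proposal matches the paper's proof exactly: the paper also derives Lemma~\ref{cdim2} as an immediate consequence of Lemma~\ref{ck2} followed by Lemma~\ref{red2}. The additional remarks you make about enlarging $A$ and choosing $p\gg 0$ are appropriate bookkeeping but not new content; the argument is correct.
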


\begin{proof}
The Lemma follows from Lemma~\ref{ck2} and Lemma~\ref{red2}.
\end{proof}

\section{The realization of minimal dimensional representations for reduced enveloping algebra $U_\chi(\mathfrak{g}_\mathds{k})$}

In ([\cite{WZ}], Theorem 4.3), Wang and Zhao introduced the Super Kac-Weisfeiler Property with nilpotent $p$-characters for the basic classical Lie superalgebra $\mathfrak{g}_\mathds{k}$ over positive characteristic field $\mathds{k}=\overline{\mathbb{F}}_p$ (with some restrictions on $p$), i.e.

\begin{prop}$^{[\cite{WZ}]}$\label{wzd}
Let $\mathfrak{g}_\mathds{k}$ be one of the basic classical Lie superalgebras, and let $\chi\in(\mathfrak{g}^*_\mathds{k})_{\bar0}$ be nilpotent. Then the dimension of every $U_\chi(\mathfrak{g}_\mathds{k})$-module $M$ is divisible by $p^{\frac{d_0}{2}}2^{\lfloor\frac{d_1}{2}\rfloor}$.
\end{prop}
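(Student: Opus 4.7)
The plan is to reduce Proposition~\ref{wzd} to the matrix-algebra isomorphism recorded in Theorem~\ref{matrix}. First I would invoke the non-degenerate invariant form $(\cdot,\cdot)$ to identify the arbitrary nilpotent $\chi\in(\mathfrak{g}_\mathds{k}^*)_{\bar 0}$ with the linear functional $(e,\cdot)$ attached to some even nilpotent $e\in(\mathfrak{g}_\mathds{k})_{\bar 0}$, so that the entire machinery of Sections~3--5 applies. After fixing an $\mathfrak{sl}_2$-triple $(e,h,f)$ the constructions of $\mathfrak{m}$, $\mathfrak{m}_\mathds{k}$ and $U_\chi(\mathfrak{g}_\mathds{k},e)$ are in place, and Theorem~\ref{matrix} gives
\[
U_\chi(\mathfrak{g}_\mathds{k})\cong\mathrm{Mat}_\delta\bigl(U_\chi(\mathfrak{g}_\mathds{k},e)\bigr),\qquad \delta=\dim_\mathds{k} U_\chi(\mathfrak{m}_\mathds{k}).
\]

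From here the divisibility is immediate by Morita equivalence: any module $M$ over a matrix algebra $\mathrm{Mat}_\delta(R)$ takes the form $\mathds{k}^{\delta}\otimes_\mathds{k}(EM)$ for $E$ a primitive rank-one idempotent, so $\dim_\mathds{k} M=\delta\cdot\dim_\mathds{k}(EM)$ is automatically divisible by $\delta$. Consequently it suffices to identify $\delta$ with $p^{d_0/2}2^{\lfloor d_1/2\rfloor}$. The explicit recipe for $\mathfrak{m}$ in Section~3.1, combined with the observation following Remark~\ref{centralizer}, gives
\[
\text{\underline{dim}}\,\mathfrak{m}=\bigl(\tfrac{d_0}{2},\lfloor d_1/2\rfloor\bigr)
\]
in both parities of $d_1$. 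Since $\mathfrak{m}_\mathds{k}$ is $p$-nilpotent and $\chi$ vanishes on the $p$-closure of $[\mathfrak{m}_\mathds{k},\mathfrak{m}_\mathds{k}]$, the restricted PBW theorem then yields
\[
\delta=\dim_\mathds{k} U_\chi(\mathfrak{m}_\mathds{k})=p^{\dim(\mathfrak{m}_\mathds{k})_{\bar 0}}\cdot 2^{\dim(\mathfrak{m}_\mathds{k})_{\bar 1}}=p^{d_0/2}\cdot 2^{\lfloor d_1/2\rfloor},
\]
completing the divisibility statement.

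The main obstacle in this strategy is of course Theorem~\ref{matrix} itself, which we are using as a black box; its proof (due to Wang--Zhao) rests on establishing that $Q_\chi^\chi$ is projective over $U_\chi(\mathfrak{m}_\mathds{k})$ under the adjoint action, thereby realizing it as a progenerator for the Morita equivalence. In the Lie algebra setting Premet established the analogous projectivity via the support-variety machinery, but as emphasized in Subsection~1.4 no super analogue of that tool is available, so a purely algebraic argument is required. Concretely one would filter $Q_\chi^\chi$ by the Kazhdan filtration inherited from $U(\mathfrak{g}_\mathds{k})$ and exploit the fact that $U_\chi(\mathfrak{m}_\mathds{k})$ is a local Frobenius superalgebra with unique simple module $\mathds{k}_\chi$ in order to lift the PBW-freeness of the associated graded module to projectivity of $Q_\chi^\chi$ itself. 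The most delicate point specific to the super setting is the parity-odd case: the form $\langle\cdot,\cdot\rangle$ on $\mathfrak{g}_\mathds{k}(-1)_{\bar 1}$ is symmetric rather than alternating, so when $r=\dim\mathfrak{g}_\mathds{k}(-1)_{\bar 1}$ is odd no Lagrangian of the naive dimension $r/2$ exists and one must settle for $\mathfrak{g}_\mathds{k}(-1)_{\bar 1}'$ of dimension $(r-1)/2$. This is precisely what forces the appearance of $\lfloor d_1/2\rfloor$ rather than $d_1/2$ in the exponent of $2$, and it is the obstruction to any uniform treatment of the two parities.
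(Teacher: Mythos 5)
The paper does not prove Proposition~\ref{wzd} at all — it is cited verbatim from Wang--Zhao as ([\cite{WZ}], Theorem~4.3) — so there is no internal proof to compare against. Your Morita argument is the right skeleton and is essentially how Wang--Zhao establish divisibility by $\delta=\dim U_\chi(\mathfrak{m}_\mathds{k})$, but it stops short of the stated bound, and the error is in your last sentence. The paper's Remark~\ref{bound} defines $\lfloor a\rfloor$ to be the \emph{least integer upper bound} of $a$, i.e.\ the conventional ceiling, and $\lceil a\rceil$ to be the conventional floor; you can confirm this from the discussion following Theorem~\ref{matrix}, where $\delta=p^{d_0/2}2^{\lceil d_1/2\rceil}$, and from the fact that $\dim(\mathfrak{m}_\mathds{k})_{\bar 1}=(d_1-1)/2$ when $d_1$ is odd. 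So when $d_1$ is odd the Proposition asserts divisibility by $p^{d_0/2}2^{(d_1+1)/2}$, while your identification $\delta=p^{d_0/2}2^{\lfloor d_1/2\rfloor}$ under the conventional reading of $\lfloor\cdot\rfloor$ gives only $p^{d_0/2}2^{(d_1-1)/2}=\delta$. Your diagnosis that the odd case ``forces $\lfloor d_1/2\rfloor$ rather than $d_1/2$'' is inverted: the asymmetry caused by the symmetric form on $\mathfrak{g}(-1)_{\bar 1}$ makes the required exponent \emph{larger}, not smaller.

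To close the gap you need a second divisibility observation feeding into the Morita reduction. After writing $M\cong U_\chi(\mathfrak{m}_\mathds{k})^*\otimes_\mathds{k} M^{\mathfrak{m}_\mathds{k}}$, note that $M^{\mathfrak{m}_\mathds{k}}$ is a module over $U_\chi(\mathfrak{m}'_\mathds{k})/N_{\mathfrak{m}'_\mathds{k}}$, which when $d_1$ is odd is a type-$Q$ Clifford superalgebra $Q_1$ generated by the extra element $\bar v_{(r+1)/2}$ with square a nonzero scalar. By Wedderburn for superalgebras every $Q_1$-module has even dimension, so $\dim M$ is divisible by $2\delta=p^{d_0/2}2^{(d_1+1)/2}$. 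This is exactly the argument the paper carries out in the direct-sum refinement Proposition~\ref{sumdivisible}, and it is also visible in the relation $[\theta_{l+q+1},\theta_{l+q+1}]=\mathrm{id}$ of Theorem~\ref{reduced Wg}(2)(ii). Without this Clifford-algebra/type-$Q$ step your argument proves a strictly weaker divisibility when $d_1$ is odd.
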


In virtue of this result, the Super Kac-Weisfeiler Property with any $p$-characters for the basic classical Lie superalgebra $\mathfrak{g}_\mathds{k}$ over positive characteristic field $\mathds{k}=\overline{\mathbb{F}}_p$ was also formulated by them, i.e.

\begin{prop}$^{[\cite{WZ}]}$
Let $\mathfrak{g}_\mathds{k}$ be a  basic classical Lie superalgebra, and let $\xi$ be arbitrary $p$-character in $(\mathfrak{g}^*_\mathds{k})_{\bar0}$. Then the dimension of every $U_\xi(\mathfrak{g}_\mathds{k})$-module $M$ is divisible by $p^{\frac{d_0}{2}}2^{\lfloor\frac{d_1}{2}\rfloor}$.
\end{prop}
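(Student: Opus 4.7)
The plan is to reduce the case of an arbitrary $p$-character $\xi$ to the nilpotent case recorded in Proposition~\ref{wzd}, by combining the Jordan decomposition of $\xi$ with a super-version of the Friedlander-Parshall parabolic induction. Because $U_\xi(\mathfrak{g}_\mathds{k})$ is finite-dimensional, it suffices to establish the divisibility bound for all simple modules, and then deduce the general statement by taking composition series.

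First I would use the non-degenerate, supersymmetric, even invariant bilinear form to identify $\xi$ with an even element $\bar x\in(\mathfrak{g}_\mathds{k})_{\bar0}$, and take the additive Jordan decomposition $\bar x=\bar x_s+\bar x_n$ in the reductive Lie algebra $(\mathfrak{g}_\mathds{k})_{\bar0}$, with $[\bar x_s,\bar x_n]=0$. For $p\gg0$, $\mathrm{ad}\,\bar x_s$ acts semisimply on all of $\mathfrak{g}_\mathds{k}$ with eigenvalues lying in $\mathbb{F}_p$, yielding a decomposition $\mathfrak{g}_\mathds{k}=\mathfrak{l}\oplus\mathfrak{u}^+\oplus\mathfrak{u}^-$, where $\mathfrak{l}:=\mathfrak{g}_\mathds{k}^{\bar x_s}$ is the $0$-eigenspace and $\mathfrak{u}^\pm$ collect the positive, respectively negative, eigenspaces for some chosen ordering. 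Invariance of the form forces $\mathfrak{u}^\pm$ to be isotropic and dually paired, so $\dim(\mathfrak{u}^+)_i=\dim(\mathfrak{u}^-)_i$ for each $i\in\mathbb{Z}_2$, and $\mathfrak{l}$ is itself a (possibly non-simple) basic classical Lie superalgebra containing $\bar x_n$ as an even nilpotent element. Since $\bar x\in\mathfrak{l}$ and $(\bar x,\mathfrak{u}^\pm)=0$, the restriction $\eta:=\xi|_{\mathfrak{l}}$ is a nilpotent $p$-character of $\mathfrak{l}$ corresponding to $\bar x_n$, while $\xi$ vanishes on both $\mathfrak{u}^+$ and $\mathfrak{u}^-$.

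Second, I would form the parabolic $\mathfrak{p}:=\mathfrak{l}\oplus\mathfrak{u}^+$, extend $\eta$ to $\mathfrak{p}$ by zero on $\mathfrak{u}^+$ (compatibly with $\xi$), and study the induction functor
\[
M\;\longmapsto\;U_\xi(\mathfrak{g}_\mathds{k})\otimes_{U_\eta(\mathfrak{p})}M.
\]
A super PBW-type argument shows that $U_\xi(\mathfrak{g}_\mathds{k})$ is a free right $U_\eta(\mathfrak{p})$-module of rank $\dim U_0(\mathfrak{u}^-)=p^{\dim(\mathfrak{u}^-)_{\bar0}}2^{\dim(\mathfrak{u}^-)_{\bar1}}$, and the super analogue of the Friedlander-Parshall reduction, parallel in spirit to Theorem~\ref{reducedfunctors}, shows that every simple $U_\xi(\mathfrak{g}_\mathds{k})$-module arises by induction from a simple $U_\eta(\mathfrak{l})$-module, with dimensions multiplied by exactly $p^{\dim(\mathfrak{u}^-)_{\bar0}}2^{\dim(\mathfrak{u}^-)_{\bar1}}$.

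Third, Proposition~\ref{wzd} applied to $(\mathfrak{l},\eta)$ forces every simple $U_\eta(\mathfrak{l})$-module to have dimension divisible by $p^{d_0^{\mathfrak{l}}/2}2^{\lfloor d_1^{\mathfrak{l}}/2\rfloor}$, where $d_i^{\mathfrak{l}}:=\dim\mathfrak{l}_i-\dim(\mathfrak{l}^{\bar x_n})_i$. Since $\mathfrak{l}^{\bar x_n}=\mathfrak{g}_\mathds{k}^{\bar x}$ and $\dim(\mathfrak{u}^\pm)_i=\tfrac{1}{2}(\dim\mathfrak{g}_i-\dim\mathfrak{l}_i)$, combining the induction factor with this divisor yields
\[
p^{\dim(\mathfrak{u}^-)_{\bar0}+d_0^{\mathfrak{l}}/2}\,2^{\dim(\mathfrak{u}^-)_{\bar1}+\lfloor d_1^{\mathfrak{l}}/2\rfloor}=p^{d_0/2}\,2^{\lfloor d_1/2\rfloor},
\]
the last equality using that $d_1\equiv d_1^{\mathfrak{l}}\pmod 2$ because $\dim(\mathfrak{u}^+)_{\bar1}=\dim(\mathfrak{u}^-)_{\bar1}$. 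The main obstacle will be a rigorous super Friedlander-Parshall reduction: one must verify freeness of $U_\xi(\mathfrak{g}_\mathds{k})$ over $U_\eta(\mathfrak{p})$ at the stated rank and, more delicately, preservation of simplicity in the $\mathbb{Z}_2$-graded setting, where Schur's lemma permits type $Q$ modules, so that the floor $\lfloor\cdot\rfloor$ (rather than a ceiling) appears in the exponent of $2$ after induction. This is also the point at which the super-Morita apparatus of Section~4.2 must be re-proved in the parabolic set-up, since the arguments of Wang-Zhao leading to Proposition~\ref{wzd} were made only for nilpotent $p$-characters.
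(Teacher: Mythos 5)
Your strategy --- Jordan decomposition of $\xi$, passage to the Levi $\mathfrak{l}=\mathfrak{g}_\mathds{k}^{\bar x_s}$ via the parabolic $\mathfrak{p}=\mathfrak{l}\oplus\mathfrak{u}^+$, and pushing the divisibility bound through a super Friedlander--Parshall reduction --- is precisely the route Wang--Zhao take, and it is the route the paper cites for this proposition. The parabolic-reduction step (every simple $U_\xi(\mathfrak{g}_\mathds{k})$-module is induced from a simple $U_\xi(\mathfrak{l})$-module, with dimension multiplied by exactly $p^{\dim(\mathfrak{u}^-)_{\bar0}}2^{\dim(\mathfrak{u}^-)_{\bar1}}$) is [WZ, Theorems 5.2--5.3], used in the paper as \eqref{gp}; and your bookkeeping $d_0/2=\dim(\mathfrak{u}^-)_{\bar0}+d_0^{\mathfrak{l}}/2$ and $\lfloor d_1/2\rfloor=\dim(\mathfrak{u}^-)_{\bar1}+\lfloor d_1^{\mathfrak{l}}/2\rfloor$, together with the orthogonality analysis showing $\xi$ vanishes on $\mathfrak{u}^\pm$ and that $\eta=\xi|_{\mathfrak{l}}$ is nilpotent, are all correct.

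The genuine gap is your third step: you apply Proposition~\ref{wzd} to $(\mathfrak{l},\eta)$, but $\mathfrak{l}$ is \emph{not} a single basic classical Lie superalgebra --- by ([WZ], Proposition 5.1) it decomposes as $\mathfrak{l}=\bigoplus_i(\mathfrak{g}_\mathds{k})_i\oplus\mathfrak{t}'_\mathds{k}$ with several basic classical summands and a torus, and Proposition~\ref{wzd} as stated does not cover this. The extension is not a formality: $U_\eta(\mathfrak{l})\cong\bigotimes_i U_{\eta_i}((\mathfrak{g}_\mathds{k})_i)\otimes U_0(\mathfrak{t}'_\mathds{k})$, and simple modules over a tensor product of superalgebras are governed by Lemma~\ref{AB}, where a pair of type-$Q$ factors yields a simple of dimension $\tfrac{1}{2}\dim V_1\cdot\dim V_2$ rather than $\dim V_1\cdot\dim V_2$. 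Correctly accounting for this is what makes the Whittaker model $U_\eta(\mathfrak{m}'_\mathds{k})/N_{\mathfrak{m}'_\mathds{k}}\cong Q_1^{\otimes l}$ enter: the paper's Proposition~\ref{sumdivisible} tracks the number $l$ of summands with odd $(d_1)_i$ and shows the correct bound for a direct sum is $p^{d'_0/2}2^{(d'_1+l)/2}$, which is strictly \emph{stronger} than $p^{d'_0/2}2^{\lfloor d'_1/2\rfloor}$ once $l\ge 2$. The stated proposition is then recovered either by noting the weaker floor bound always follows, or, as the paper does in the proof of Theorem~\ref{main3'}, by checking case-by-case that every Levi $\mathfrak{g}_\mathds{k}^{\bar s}$ has $l\le1$, so the two bounds coincide. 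Your closing remark about preferring ``the floor rather than a ceiling'' blurs this: under the paper's convention in Remark~\ref{bound}, $\lfloor\cdot\rfloor$ \emph{is} the least integer upper bound, and the real delicacy is not which way to round but the type-$M$/$Q$ bookkeeping across tensor factors, which you correctly flag but do not resolve.
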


In this part we will try to formulate a realization of these modules. Based on the discussion in section 8, a conjecture about the minimal dimensional representations of finite $W$-superalgebra $U(\mathfrak{g},e)$ over $\mathbb{C}$ should be first formulated.

\subsection{A conjecture on the minimal dimensional representations of finite $W$-superalgebras}

\begin{conj}\label{con}
Let $\mathfrak{g}$ be a basic classical Lie superalgebra over $\mathbb{C}$, then the following are true:

(i) when $d_1$ is even, the finite $W$-superalgebra $U(\mathfrak{g},e)$ affords a $1$-dimensional representation;

(ii) when $d_1$ is odd, the finite $W$-superalgebra $U(\mathfrak{g},e)$ affords a $2$-dimensional representation.
\end{conj}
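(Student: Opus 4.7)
The plan is to attack the conjecture along two fronts corresponding to the parity of $d_1$, and in both cases reduce the existence of a small representation to the non-emptiness of an $A$-defined Zariski closed set already described in Section 8. For part (i), by Lemma~\ref{d_1even} it suffices to show that the commutative quotient $U(\mathfrak{g},e)^{\mathrm{ab}}=U(\mathfrak{g},e)/R$ is nonzero, equivalently that the common-zero variety of the polynomials $F'_{ij}$ is non-empty. For part (ii), by Lemma~\ref{defnodd} together with Proposition~\ref{typeq} it suffices to produce a $1$-dimensional representation of the subalgebra $W'_\chi=Q_\chi^{\mathrm{ad}\,\mathfrak{m}'}$: once found, inducing along the Clifford-like extension generated by $\Theta_{l+q+1}$ (using $\Theta_{l+q+1}^2=\tfrac12\mathrm{id}$) produces a $2$-dimensional $U(\mathfrak{g},e)$-module of type $Q$, exactly as encoded by the defining system $\{A_{ij},B_{ij},C_{ij},D_{ij}\}$.

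First I would settle type $A$: for $\mathfrak{g}=\mathfrak{gl}(M|N)$ or $\mathfrak{sl}(M|N)$ the integer $d_1$ is always even, so only (i) arises, and Peng's identification of $U(\mathfrak{g},e)$ with a truncated shifted super Yangian (already used in Proposition~\ref{glmn}) should be pushed beyond the special Jordan type treated there. The model is Premet's Theorem~3.3 of \cite{P7} together with the Brundan--Kleshchev presentation \cite{BK2}: once one shows that the defining relations among the $D_i^{(r)}$'s collapse to the commutative Yangian-type relations upon quotienting by the odd generators and the commutators, the argument for $\mathfrak{gl}_n$ in \cite{P7} transfers almost verbatim and exhibits $U(\mathfrak{g},e)^{\mathrm{ab}}$ as a polynomial algebra.

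For the remaining types ($B(m,n),C(n),D(m,n),D(2,1;a),F(4),G(3)$) both parities of $d_1$ can occur, and here I would imitate Losev's approach \cite{L3} in the super setting while exploiting the modular reductions of Lemma~\ref{trans1} and Lemma~\ref{ck2}. Concretely, through the Skryabin equivalence of Theorem~\ref{functor} each conjecture translates into the existence of a completely prime two-sided ideal of $U(\mathfrak{g})$ containing $I_\chi$ of the correct codimension. Using the restricted root decomposition of Section~3.4 together with the action of the reductive part of the even centralizer of $e$ on $\mathscr{E}(\mathbb{C})$, one reduces to rigid even nilpotent orbits in $\mathfrak{g}_{\bar 0}$; these are few in number by Kac's classification, and each can be handled either by a parabolic-induction argument along the $\mathfrak{t}^e$-weight grading (available in the super setting by Lemma~\ref{refi}) or, for the exceptional families, by a direct Gr\"obner-style computation inside the finite presentation given by Theorem~\ref{relationc}.

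The main obstacle will be part (ii) for the orthosymplectic and exceptional families. Because of the relation $\Theta_{l+q+1}^2=\tfrac12\mathrm{id}$, the algebra $U(\mathfrak{g},e)$ is never commutative modulo its odd generators, so the clean reduction to a commutative quotient used in part (i) is simply unavailable: one must work inside the strictly smaller subalgebra $W'_\chi$ and prove that it, rather than $U(\mathfrak{g},e)$, admits a $1$-dimensional representation, which amounts to a genuinely non-linear system of equations on the parameters $k_i^0,k_i^1,K_i^0,K_i^1$. The super-analog of Losev's deformation-quantization and primitive-ideal dictionary, which made a uniform argument possible in the Lie algebra case, has not yet been developed, and the cohomological toolkit underlying Lemma~\ref{eveniso} is not known to survive when $\dim\mathfrak{g}(-1)_{\bar 1}$ is odd. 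I therefore expect that a uniform proof of (ii) is presently out of reach and that the conjecture will have to be settled orbit-by-orbit, possibly with computer-assisted verification for the small exceptional cases, before a conceptual proof can emerge.
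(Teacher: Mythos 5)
The statement is a conjecture, not a theorem: the paper does not prove it in general, but only verifies it in two families --- type $A(m,n)$ for $e=e_\lambda$ of the restricted Jordan type treated by Peng (Proposition~\ref{glmn}, Corollary~\ref{slmn}, and the unnumbered proposition opening Section~9.1), and $\mathfrak{osp}(1|2n)$ with regular nilpotent $e$ (Lemma~\ref{bon}, Proposition~\ref{ii}). There is therefore no paper proof of the full statement for you to match, and you yourself correctly stop short of claiming one for part~(ii). Within what the paper does establish, your framework for case~(i) --- showing $U(\mathfrak{g},e)^{\text{ab}}$ is nonzero by identifying $U(\mathfrak{g},e)$ with a truncated shifted super Yangian and collapsing to the commutative Yangian relations --- is exactly the paper's route; but your claim that this ``transfers almost verbatim'' to all even nilpotents of type $A$ is a genuine gap, since Peng's identification is proved only for $e_\lambda=e_M\oplus e_N$ with $e_M$ principal in $\mathfrak{gl}(M|0)$ and every Jordan block of $e_N$ of size at least $M$, and no extension to arbitrary even nilpotents is available.

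For case~(ii) your plan hinges on a step the paper does not supply. You propose to find a $1$-dimensional $W'_\chi$-module and ``induce along the Clifford-like extension generated by $\Theta_{l+q+1}$,'' but for this to yield a $2$-dimensional $U(\mathfrak{g},e)$-module one would need $U(\mathfrak{g},e)$ to be generated by $W'_\chi$ and $\Theta_{l+q+1}$ with a Clifford presentation, and the character of $W'_\chi$ to be compatible with the super-commutation of $\Theta_{l+q+1}$ across the remaining generators $\Theta_1,\dots,\Theta_{l+q}$ (which live in $Q_\chi^{\text{ad}\mathfrak{m}}$, not in $W'_\chi$). Proposition~\ref{typeq} gives only the converse direction (a $2$-dimensional $U(\mathfrak{g},e)$-module is type~$Q$ over $W'_\chi$), and Remark~\ref{QQ'} shows only $Q_\chi^{\text{ad}\mathfrak{m}'}=[v_{(r+1)/2},Q_\chi^{\text{ad}\mathfrak{m}}]$, which is neither a generating nor a presentation statement. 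Your further proposed reduction to rigid orbits via a super analogue of parabolic induction of finite $W$-algebras~[\cite{L5}] invokes machinery that has not been developed in the super setting; the paper's actual evidence for~(ii) is narrower still, treating only the regular orbit of $\mathfrak{osp}(1|2n)$ via irreducibility of the modular baby Verma module (Lemma~\ref{bon}) and then lifting by Lemma~\ref{transtoc}. Your outline is well aligned with the paper's partial results, but the three points above --- extending Peng's theorem to general type~$A$ nilpotents, a Clifford presentation of $U(\mathfrak{g},e)$ over $W'_\chi$, and super parabolic induction --- are concrete missing ingredients, each of which would require a new result.
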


As mentioned at the beginning of Section 8, Premet formulated a conjecture that every finite $W$-algebra over $\mathbb{C}$ admits a $1$-dimensional representation$^{[\cite{P3}]}$. His conjecture was proved by Losev for the classical Lie algebra case using the method of symplectic geometry in [\cite{L3}]. Goodwin-R\"{o}hrle-Ubly$^{[\cite{GRU}]}$ obtained that all finite $W$-algebras associated to exceptional Lie algebras $E_6,E_7,F_4,G_2$, or $E_8$ with $e$ not rigid admit $1$-dimensional representations by computing methods. The formulation of  Conjecture~\ref{con} is based on the related results of finite $W$-algebras over $\mathbb{C}$, but there is no effective way to deal with it so far. However, it can be verified that Conjecture~\ref{con} establishes for some special cases. First note that

\begin{lemma}\label{transtoc}
Let $\mathfrak{g}$ be a basic classical Lie superalgebra over $\mathbb{C}$. The following are true:

(1) when $d_1$ is even, if the translation algebra $U(\mathfrak{g}_\mathds{k},e)$  (where $\mathds{k}=\overline{\mathbb{F}}_p$) affords $1$-dimensional representations for infinitely many $p\in\Pi(A)$, then the finite $W$-superalgebra $U(\mathfrak{g},e)$ over $\mathbb{C}$ has a $1$-dimensional representation;

(2) when $d_1$ is odd,  if the translation algebra $U(\mathfrak{g}_\mathds{k},e)$ (where $\mathds{k}=\overline{\mathbb{F}}_p$) affords $2$-dimensional representations for infinitely many $p\in\Pi(A)$, then the finite $W$-superalgebra $U(\mathfrak{g},e)$ over $\mathbb{C}$ has a $2$-dimensional representation.
\end{lemma}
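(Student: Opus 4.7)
The plan is a standard Hilbert Nullstellensatz transfer argument from positive characteristic back to characteristic zero, made possible by the fact that all polynomial equations classifying the relevant low-dimensional representations have already been shown to be $A$-rational.

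For part (1), recall from Lemma~\ref{d_1even} and the discussion preceding it that the $1$-dimensional representations of $U(\mathfrak{g},e)$ are in bijection with the set $\mathscr{E}(\mathbb{C})$ of common zeros in $\mathbb{A}^l_{\mathbb{C}}$ of the polynomials $F'_{ij}$ (ranging over $1 \leqslant i<j \leqslant l$ and $l+1 \leqslant i \leqslant j \leqslant l+q$). By Theorem~\ref{PBWC}(1)(iv) the coefficients of these polynomials lie in $\mathbb{Q}$, and after possibly enlarging $A$ one may assume they lie in $A$; the corresponding modular reductions $^pF'_{ij} = F'_{ij} \otimes_A \mathds{k}$ then cut out precisely the set $\mathscr{E}(\mathds{k})$ classifying the $1$-dimensional representations of $U(\mathfrak{g}_\mathds{k},e)$, by Theorem~\ref{translation}(1). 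My proof would proceed by contradiction: if $\mathscr{E}(\mathbb{C}) = \emptyset$ then Hilbert's Nullstellensatz furnishes polynomials $g_{ij} \in \mathbb{C}[X_1,\ldots,X_l]$ such that $\sum g_{ij} F'_{ij} = 1$; after absorbing the finitely many denominators that appear in the $g_{ij}$ into $A$ (a further finite enlargement), this identity holds in $A[X_1,\ldots,X_l]$, and reducing modulo any $p \in \Pi(A)$ yields the corresponding identity $\sum {}^pg_{ij} \cdot {}^pF'_{ij} = 1$ in $\mathds{k}[X_1,\ldots,X_l]$. Thus $\mathscr{E}(\mathds{k}) = \emptyset$ for all but finitely many $p \in \Pi(A)$, contradicting the hypothesis that $\mathscr{E}(\mathds{k}) \neq \emptyset$ for infinitely many $p$.

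For part (2) the strategy is identical after replacing the polynomial system. By Lemma~\ref{defnodd} and the discussion following Lemma~\ref{ck2}, the $2$-dimensional representations of $U(\mathfrak{g},e)$ (resp.\ of $U(\mathfrak{g}_\mathds{k},e)$) are in bijection with the common zeros in $\mathbb{A}^{2(l+q+1)}_{\mathbb{C}}$ (resp.\ in $\mathbb{A}^{2(l+q+1)}_{\mathds{k}}$) of the $A$-defined polynomials $A_{ij}, B_{ij}, C_{ij}, D_{ij}$ (resp.\ of their modular reductions $^pA_{ij}, {}^pB_{ij}, {}^pC_{ij}, {}^pD_{ij}$), subject to the parity restrictions (1)--(4) of Section~8.2. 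Applying the Nullstellensatz transfer to this enlarged system yields (2) verbatim.

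The main subtlety, and essentially the only place where care is required, is verifying that the defining polynomials $F'_{ij}$ and $A_{ij},B_{ij},C_{ij},D_{ij}$ can genuinely be taken over $A$ and that modular reduction really commutes with the bijection to representations; but both of these facts are already built into the constructions of Sections~7 and~8, and in particular into Theorem~\ref{translation} and the explicit derivation of the equations in \eqref{Thetaijcomre}. No genuinely new obstacle arises beyond this bookkeeping, since enlarging $A$ finitely many times only removes finitely many primes from $\Pi(A)$ and therefore preserves the hypothesis that infinitely many primes remain for which the modular representation exists.
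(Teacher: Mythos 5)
Your proposal is correct and takes essentially the same route as the paper: both reduce the lemma to the $A$-rationality of the defining equations for small representations (established in Sections~7--8) and then run the standard Hilbert Nullstellensatz transfer argument of Premet ([\cite{P7}], Theorem~2.2(a)), which the paper simply cites as "modular $p$ induction and Galois theory" rather than spelling out the certificate $\sum g_{ij} F'_{ij} = 1$ as you do. The only implicit step worth making explicit in your write-up is why the $g_{ij}$ can be chosen with coefficients in $\operatorname{Frac}(A)$ (rather than in all of $\mathbb{C}$) before clearing denominators; this follows because membership of $1$ in the ideal $(F'_{ij})$ is a linear condition on coefficients and therefore descends along the faithfully flat (indeed field) extension $\operatorname{Frac}(A)\hookrightarrow\mathbb{C}$.
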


\begin{proof}
Since the proof is similar for both cases, we will just consider the situation when $d_1$ is odd.

When $d_1$ is odd, Lemma~\ref{defnodd} shows that the $2$-dimensional representations of finite $W$-superalgebras over $\mathbb{C}$ can be parametrised by the Zariski closed set $\mathscr{E}(\mathbb{C})$. Here we will follow Premet's treatment to finite $W$-algebras in ([\cite{P7}], Corollary 2.1). The proof is sketched as follows.

Suppose for a contradiction that $U(\mathfrak{g}_\mathds{k},e)$ has no $2$-dimensional representations. Then $\mathscr{E}(\overline{\mathbb{Q}})=\emptyset$, where $\overline{\mathbb{Q}}$ denotes the algebraic closure of $\mathbb{Q}$ in $\mathbb{C}$. By the method of ``modular $p$ induction'' and Galois theory we can get that $\mathscr{E}(\mathds{k})=\emptyset$ for almost all $p\in \Pi(A)$, where $\mathds{k}=\overline{\mathbb{F}}_p$. This implies that the algebra $U(\mathfrak{g}_\mathds{k},e)$ has no $2$-dimensional representations for almost all $p\in \Pi(A)$. Since this contradicts our assumption, the lemma follows.
\end{proof}

Based on the discussion in Section 8.1, now we will establish a proof of Conjecture~\ref{con}(i) for some special cases.

\begin{prop}
Let $e=e_\lambda\in\mathfrak{sl}(M|N)_{\bar0}$ be an even nilpotent element in the Lie superalgebra $\mathfrak{g}=\mathfrak{sl}(M|N)$ over $\mathbb{C}$ with Jordan type $(p_1,p_2,\cdots,p_{n+1})$ satisfying that $e_\lambda=e_M\oplus e_N$, where $e_M$ is principal nilpotent in $\mathfrak{gl}(M|0)$ and the sizes of the Jordan blocks of $e_N$ are all greater or equal to $M$. Then the finite $W$-superalgebra $U(\mathfrak{g},e)$ affords a $1$-dimensional representation.
\end{prop}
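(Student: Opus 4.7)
The plan is to reduce the statement to Corollary~\ref{slmn} via the parametrisation of one-dimensional representations by the maximal spectrum of the abelian quotient $U(\mathfrak{g},e)^{\mathrm{ab}}$ set up in Section 8.1.

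First I would verify that we are in case (i) of Conjecture~\ref{con}, i.e.\ that $d_1$ is even for the nilpotent element $e_\lambda = e_M \oplus e_N$ under consideration. As Peng observed (and as recalled in the paragraph preceding Proposition~\ref{glmn}), $\dim \mathfrak{g}(-1)_{\bar1}$ is even for this Jordan type, and Remark~\ref{centralizer} guarantees that $d_1$ and $\dim \mathfrak{g}(-1)_{\bar 1}$ always have the same parity. Hence case (i) applies, and we may search for one-dimensional representations through the abelian quotient.

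Next, by the discussion preceding Lemma~\ref{d_1even}, the one-dimensional representations of $U(\mathfrak{g},e)$ are in bijection with $\mathrm{Specm}\,U(\mathfrak{g},e)^{\mathrm{ab}}$. Corollary~\ref{slmn} identifies this quotient with the polynomial algebra $\mathbb{C}[X_1,\ldots,X_{l-1}]$, where $l=p_{n+1}$. Since this polynomial algebra has non-empty maximal spectrum over the algebraically closed field $\mathbb{C}$ (with the convention $\mathbb{C}[X_1,\ldots,X_0]=\mathbb{C}$ when $l=1$), we obtain at least one, and generically a whole affine $(l-1)$-space worth of, one-dimensional representations of $U(\mathfrak{g},e)$, as required.

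Consequently, the proposition is essentially an immediate consequence of Corollary~\ref{slmn}; the substantive work — the explicit presentation of the abelian quotient for $\mathfrak{sl}(M|N)$ using Peng's truncated shifted super Yangian description [\cite{Peng3}] and transport of Premet's argument for $\mathfrak{gl}(M+N)$ [\cite{P7}] — has already been carried out there. I do not expect any real obstacle. The only point that deserves a brief remark is the compatibility of the two descriptions of $U(\mathfrak{g},e)^{\mathrm{ab}}$: the one obtained from Peng's generators $D_i^{(r)},E_i^{(r)},F_i^{(r)}$ and the one defined in Section 8.1 from the PBW generators $\Theta_1,\ldots,\Theta_{l+q}$ of Theorem~\ref{PBWC}. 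Both are intrinsically characterised as the largest commutative quotient of $U(\mathfrak{g},e)$ in which all odd elements are annihilated, so the identification is automatic and no further calculation is needed.
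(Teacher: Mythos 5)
Your argument is correct and is essentially the paper's proof, which also reduces the statement to Corollary~\ref{slmn} and concludes by Hilbert's Nullstellensatz applied to $U(\mathfrak{g},e)^{\text{ab}}\cong\mathbb{C}[X_1,\ldots,X_{p_{n+1}-1}]$. The additional remarks you make (checking that $d_1$ is even via Remark~\ref{centralizer}, and noting that the abelianization is intrinsically the largest purely-even commutative quotient so that Peng's generators and the $\Theta_i$ of Theorem~\ref{PBWC} give the same quotient) are sound but not recorded explicitly in the paper's one-line proof.
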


\begin{proof}
Recall that in Corollary~\ref{slmn} we have proved that the algebra $U(\mathfrak{g},e)^{\text{ab}}$ associated to the finite $W$-superalgebra $U(\mathfrak{g},e)$ in the proposition is isomorphic to a polynomial algebra in $p_{n+1}-1$ variables. So this proposition is an immediate corollary of Hilbert's Nullstellensatz.
\end{proof}

Recall that for any  left $U_{\chi}(\mathfrak{g}_\mathds{k})$-module $M$, we have defined the $U_{\chi}(\mathfrak{g}_\mathds{k},e)$-module $M^{\mathfrak{m}_\mathds{k}}$ by
$$M^{\mathfrak{m}_\mathds{k}}:=\{v\in M|I_{\mathfrak{m}_\mathds{k}}.v=0\}.$$
Theorem~\ref{reducedfunctors} shows that there exists a category equivalence between the $U_\chi(\mathfrak{g}_\mathds{k})$-module $M$ and the $U_\chi(\mathfrak{g}_\mathds{k},e)$-module $M^{\mathfrak{m}_\mathds{k}}$.

As another example, Proposition~\ref{ii} shows that Conjecture~\ref{con}(ii) establishes for the finite $W$-superalgebra $U(\mathfrak{osp}(1|2n),e)$ associated to the basic classical Lie superalgebra of type $B(0,n)$ with the regular nilpotent element $e$. First note that

\begin{lemma}\label{bon}
Let $\mathfrak{g}_\mathds{k}=\mathfrak{osp}(1|2n)_\mathds{k}$ be the basic classical Lie superalgebra of type $B(0,n)$ over $\mathds{k}=\overline{\mathbb{F}}_p$. For any regular nilpotent element $e\in(\mathfrak{g}_{\mathds{k}})_{\bar0}$, let $\chi\in(\mathfrak{g}_{\mathds{k}})_{\bar0}^*$ be such that $\chi(y)=(e,y)$ for any $y\in\mathfrak{g}_{\mathds{k}}$ with respect to the bilinear form $(\cdot,\cdot)$. Then the translation subalgebra $U(\mathfrak{g}_\mathds{k},e)$ affords a $2$-dimensional representation.
\end{lemma}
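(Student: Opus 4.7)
The plan is to build a $2$-dimensional simple $U_\chi(\mathfrak{g}_\mathds{k},e)$-module and then inflate it to the translation subalgebra $U(\mathfrak{g}_\mathds{k},e)$. For $\mathfrak{g}_\mathds{k}=\mathfrak{osp}(1|2n)_\mathds{k}$ with regular nilpotent $e$, the principal $\mathfrak{sl}_2$-triple acts on the natural module $\mathds{k}^{2n}$ as a single Jordan block, giving $l:=\dim(\mathfrak{g}_\mathds{k}^e)_{\bar 0}=n$ and $q:=\dim(\mathfrak{g}_\mathds{k}^e)_{\bar 1}=1$; the $\mathrm{ad}\,h$-weights on $(\mathfrak{g}_\mathds{k})_{\bar 1}$ are all odd, whence $r:=\dim\mathfrak{g}_\mathds{k}(-1)_{\bar 1}=1$ and $d_1=2n-1$ is odd. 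Consequently $\delta:=\dim U_\chi(\mathfrak{m}_\mathds{k})=p^{n^2}2^{n-1}$, and by Theorem~\ref{matrix} we have $U_\chi(\mathfrak{g}_\mathds{k})\cong\mathrm{Mat}_\delta(U_\chi(\mathfrak{g}_\mathds{k},e))$.

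The first key step is to rule out $1$-dimensional simples of $U_\chi(\mathfrak{g}_\mathds{k},e)$. By Theorem~\ref{reduced Wg}(2), the odd generator $\theta_{l+q+1}\in U_\chi(\mathfrak{g}_\mathds{k},e)_{\bar 1}$ satisfies $[\theta_{l+q+1},\theta_{l+q+1}]=\mathrm{id}$, so $\theta_{l+q+1}^2=\tfrac{1}{2}\mathrm{id}$; the argument of Proposition~\ref{no1} then applies verbatim (using only $p>2$), since on any $1$-dimensional $\mathbb{Z}_2$-graded module the odd element $\theta_{l+q+1}$ must act by zero, contradicting $\theta_{l+q+1}^2\neq 0$. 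Via the Morita isomorphism above, every simple $U_\chi(\mathfrak{g}_\mathds{k})$-module therefore has dimension at least $2\delta=p^{n^2}2^n$. Next, realize this minimal dimension by a baby Verma: choose a Borel $\mathfrak{b}_\mathds{k}=\mathfrak{h}_\mathds{k}\oplus\mathfrak{n}^+_\mathds{k}$ adapted to $e$, so that $e\in\mathfrak{g}_\mathds{k}(2)$; since $(\mathfrak{g}_\mathds{k}(i),\mathfrak{g}_\mathds{k}(j))=0$ whenever $i+j\neq 0$, the linear form $\chi$ vanishes on $\mathfrak{b}_\mathds{k}$. For a restricted weight $\lambda\in(\mathfrak{h}_\mathds{k})^*$ form $Z_\chi(\lambda):=U_\chi(\mathfrak{g}_\mathds{k})\otimes_{U_\chi(\mathfrak{b}_\mathds{k})}\mathds{k}_\lambda$, of PBW dimension $p^{|\Phi^+_{\bar 0}|}2^{|\Phi^+_{\bar 1}|}=p^{n^2}2^n=2\delta$. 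Proposition~\ref{wzd} forces every simple composition factor of $Z_\chi(\lambda)$ to have dimension divisible by $\delta$, hence equal to $\delta$ or $2\delta$; the former is excluded by the preceding step, so $Z_\chi(\lambda)$ is itself simple. The Skryabin equivalence $M\mapsto M^{\mathfrak{m}_\mathds{k}}$ of Theorem~\ref{reducedfunctors} then produces a $2$-dimensional simple $U_\chi(\mathfrak{g}_\mathds{k},e)$-module $V$.

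To descend $V$ to the translation subalgebra, follow the argument of Lemma~\ref{red2} in reverse: the canonical surjection $\widehat{U}(\mathfrak{g}_\mathds{k},e)\twoheadrightarrow\widehat{U}(\mathfrak{g}_\mathds{k},e)\otimes_{Z_p(\widetilde{\mathfrak{p}}_\mathds{k})}\mathds{k}_\chi$ is identified there with the projection $\widehat{U}(\mathfrak{g}_\mathds{k},e)\twoheadrightarrow U_\chi(\mathfrak{g}_\mathds{k},e)$, so $V$ inflates to a $2$-dimensional $\widehat{U}(\mathfrak{g}_\mathds{k},e)$-module. The decomposition $\widehat{U}(\mathfrak{g}_\mathds{k},e)\cong U(\mathfrak{g}_\mathds{k},e)\otimes_\mathds{k}Z_p(\mathfrak{a}_\mathds{k})$ of Theorem~\ref{keyisotheorem}(3) lets us restrict along $U(\mathfrak{g}_\mathds{k},e)=U(\mathfrak{g}_\mathds{k},e)\otimes 1\hookrightarrow\widehat{U}(\mathfrak{g}_\mathds{k},e)$ to the required $2$-dimensional $U(\mathfrak{g}_\mathds{k},e)$-module. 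The main technical point is verifying that the modular squaring relation $\theta_{l+q+1}^2=\tfrac{1}{2}\mathrm{id}$ really obstructs $1$-dimensional simples (it does, verbatim as in the complex case, provided $p>2$); once that is secured, the super Kac-Weisfeiler divisibility of Proposition~\ref{wzd} does all the heavy lifting and bypasses any direct irreducibility analysis of $Z_\chi(\lambda)$.
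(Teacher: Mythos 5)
Your proof is correct, and it arrives at the same $2$-dimensional module as the paper (namely the image of a baby Verma under Skryabin equivalence, inflated to the transition subalgebra), but it gets there by a genuinely different and more self-contained route. The paper proves irreducibility of $Z_\chi(\lambda)$ by a direct citation to Wang--Zhao's Corollary 5.8, whereas you rederive it entirely inside the paper's own framework: you first rule out $1$-dimensional simples of $U_\chi(\mathfrak{g}_\mathds{k},e)$ using the relation $[\theta_{l+q+1},\theta_{l+q+1}]=\mathrm{id}$ from Theorem~\ref{reduced Wg}(2) (the positive-characteristic twin of Proposition~\ref{no1}), then pin down $\dim Z_\chi(\lambda)=p^{n^2}2^n=2\delta$ and invoke the super Kac--Weisfeiler divisibility of Proposition~\ref{wzd} plus the Morita factorisation $U_\chi(\mathfrak{g}_\mathds{k})\cong\mathrm{Mat}_\delta(U_\chi(\mathfrak{g}_\mathds{k},e))$ to force simplicity by a dimension count. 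For the descent to the transition subalgebra you also diverge from the paper: rather than the Premet-style argument via the $\mathcal{M}_\mathds{k}$-invariant subalgebra $Z_p(\widetilde{\mathfrak{p}}_\mathds{k})^{\mathcal{M}_\mathds{k}}\subseteq U(\mathfrak{g}_\mathds{k},e)$ which the paper imports from [\cite{P8}], you use the already-proved isomorphism $\widehat{U}(\mathfrak{g}_\mathds{k},e)\otimes_{Z_p(\widetilde{\mathfrak{p}}_\mathds{k})}\mathds{k}_\chi\cong U_\chi(\mathfrak{g}_\mathds{k},e)$ (from the proof of Lemma~\ref{red2}) to inflate, then restrict along $U(\mathfrak{g}_\mathds{k},e)\otimes 1\hookrightarrow\widehat{U}(\mathfrak{g}_\mathds{k},e)$ via Theorem~\ref{keyisotheorem}(3). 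What your route buys is independence from the external Corollary 5.8 and the Premet remark, relying only on internal machinery (Theorems~\ref{matrix}, \ref{reduced Wg}, \ref{keyisotheorem}, \ref{sumresult}); the paper's route is shorter but less self-contained. One small notational caveat: Remark~\ref{bound} swaps the conventional meanings of $\lceil\cdot\rceil$ and $\lfloor\cdot\rfloor$, so the paper's $\lfloor d_1/2\rfloor$ in Proposition~\ref{wzd} is actually $(d_1+1)/2=n$, i.e.\ the divisibility is by $2\delta$ already; under the paper's convention your exclusion of $\delta$-dimensional composition factors is redundant, while under the standard reading (which you used) it is needed. Either reading makes your argument go through.
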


\begin{proof}
First note that $d_1$ is odd in this case by ([\cite{PS2}], Corollary 2.10). Let $\mathfrak{g}_\mathds{k}=\mathfrak{n}_\mathds{k}^+\oplus \mathfrak{h}_\mathds{k}\oplus \mathfrak{n}_\mathds{k}^-$ denote the triangular decomposition of Lie superalgebra $\mathfrak{osp}(1|2n)_\mathds{k}$. It follows from ([\cite{WZ}], Corollary 5.8) that $$\text{\underline{dim}}\,\mathfrak{n}_\mathds{k}^-=\text{\underline{dim}}\,\mathfrak{m}'_\mathds{k}=
(\text{dim}\,(\mathfrak{m}_\mathds{k})_{\bar0},\text{dim}\,(\mathfrak{m}_\mathds{k})_{\bar1}+1).$$ Moreover, the baby Verma module $Z_\chi(\lambda)$ of reduced enveloping algebra $U_\chi(\mathfrak{osp}(1|2n)_\mathds{k})$ associated to the regular $p$-character $\chi$ is irreducible, which has the same dimension as dim\,$U_\chi(\mathfrak{m}_\mathds{k}')$. Recall that there is a category equivalence between the $U_\chi(\mathfrak{osp}(1|2n)_\mathds{k})$-modules and the $U_\chi(\mathfrak{osp}(1|2n)_\mathds{k},e)$-modules by Theorem~\ref{reducedfunctors}, then it follows that $Z_\chi(\lambda)^{\mathfrak{m}_\mathds{k}}$ is a $U_\chi(\mathfrak{osp}(1|2n)_\mathds{k},e)$-module. For every $U_\chi(\mathfrak{g}_\mathds{k})$-module $M$, since $M\cong U_\chi(\mathfrak{m}_\mathds{k})^*\otimes_\mathds{k}M^{\mathfrak{m}_\mathds{k}}$ is an isomorphism of $U_\chi(\mathfrak{m}_\mathds{k})$-modules by the proof of ([\cite{WZ}], Proposition 4.2), it is immediate that
$$\text{dim}\,Z_\chi(\lambda)^{\mathfrak{m}_\mathds{k}}=\frac{\text{dim}\,Z_\chi(\lambda)}
{\text{dim}\,U_\chi(\mathfrak{m}_\mathds{k})}=\frac{\text{dim}\,U_\chi(\mathfrak{m}_\mathds{k}')}
{\text{dim}\,U_\chi(\mathfrak{m}_\mathds{k})}=2.$$Therefore, the algebra $U_\chi(\mathfrak{g}_\mathds{k},e)$ admits a $2$-dimensional representation.

Let $\mathcal{M}_\mathds{k}$ be the connected unipotent subgroup of $G_\mathds{k}$ such that Ad~$\mathcal{M}_\mathds{k}$ is generated by all linear operators exp\,ad\,$\bar x$ with $\bar x\in\mathfrak{m}_\mathds{k}$. By the same discussion as ([\cite{P8}], Remark 2.1) and the remark following which, we can conclude that one embeds $Z_p(\widetilde{\mathfrak{p}}_\mathds{k})^{\mathcal{M}_\mathds{k}}$ into $U(\mathfrak{g}_\mathds{k},e)$ as an analogue of the $p$-center $Z_p(\mathfrak{g}_\mathds{k})$ (so that $U(\mathfrak{g}_\mathds{k},e)$ is a free $Z_p(\widetilde{\mathfrak{p}}_\mathds{k})^{\mathcal{M}_\mathds{k}}$-module of rank $p^l2^{q+1}$) and then obtains $U_\chi(\mathfrak{g}_\mathds{k},e)$ from $U(\mathfrak{g}_\mathds{k},e)$ by tensoring the latter over $Z_p(\widetilde{\mathfrak{p}}_\mathds{k})^{\mathcal{M}_\mathds{k}}$ by a suitable one-dimensional representation of $Z_p(\widetilde{\mathfrak{p}}_\mathds{k})^{\mathcal{M}_\mathds{k}}$, i.e. $U_\chi(\mathfrak{g}_\mathds{k},e)$ can be considered as the factor-algebra of $U(\mathfrak{g}_\mathds{k},e)$. Combining this with the discussion in the preceding paragraph we can conclude that the algebra $U(\mathfrak{g}_\mathds{k},e)$ admits a $2$-dimensional representation, either.

\end{proof}

\begin{prop}\label{ii}
Let $e\in\mathfrak{g}_{\bar0}$ be a regular nilpotent element in the Lie superalgebra $\mathfrak{g}=\mathfrak{osp}(1|2n)$ over $\mathbb{C}$, then the finite $W$-superalgebra $U(\mathfrak{g},e)$ affords a $2$-dimensional representation.
\end{prop}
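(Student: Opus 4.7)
The plan is to obtain Proposition~\ref{ii} as an immediate corollary of Lemma~\ref{bon} combined with Lemma~\ref{transtoc}(2); the content of the statement has already been packaged into those two results, so what remains is to verify that the hypotheses of Lemma~\ref{transtoc}(2) are satisfied in the $\mathfrak{osp}(1|2n)$ setting and to assemble the pieces.

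First I would check the parity hypothesis. By ([PS2], Corollary 2.10), which was already invoked at the start of the proof of Lemma~\ref{bon}, the invariant $d_1$ attached to a regular even nilpotent element in $\mathfrak{osp}(1|2n)_{\bar 0}$ is odd. This places us in case (2) of Lemma~\ref{transtoc}, where the existence of a $2$-dimensional representation of $U(\mathfrak{g}_\mathds{k},e)$ for infinitely many $p\in\Pi(A)$ is exactly what is needed to conclude a $2$-dimensional representation of $U(\mathfrak{g},e)$ over $\mathbb{C}$.

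Next I would invoke Lemma~\ref{bon} for each $p\in\Pi(A)$. The argument there is uniform in $\mathds{k}=\overline{\mathbb{F}}_p$: it relies only on the irreducibility of the baby Verma module $Z_\chi(\lambda)$ attached to the regular $p$-character $\chi$, the dimensional identity $\underline{\dim}\,\mathfrak{m}'_\mathds{k}=(\dim(\mathfrak{m}_\mathds{k})_{\bar 0},\dim(\mathfrak{m}_\mathds{k})_{\bar 1}+1)$ from ([WZ], Corollary~5.8), the Skryabin-type equivalence of Theorem~\ref{reducedfunctors}, and the realisation of $U_\chi(\mathfrak{g}_\mathds{k},e)$ as a quotient of $U(\mathfrak{g}_\mathds{k},e)$ by tensoring with a one-dimensional representation of $Z_p(\widetilde{\mathfrak{p}}_\mathds{k})^{\mathcal{M}_\mathds{k}}$. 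All of these ingredients are available for each prime $p\in\Pi(A)$, so one obtains a $2$-dimensional representation of $U(\mathfrak{g}_\mathds{k},e)$ for every such $p$. Since the discussion following Definition~\ref{admissible} records that $\Pi(A)$ contains almost all primes in $\mathbb{N}$, we have the desired $2$-dimensional representations for infinitely many primes.

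Applying Lemma~\ref{transtoc}(2) then yields the existence of a $2$-dimensional representation of the finite $W$-superalgebra $U(\mathfrak{g},e)$ over $\mathbb{C}$, which is exactly the statement of Proposition~\ref{ii}. There is no substantive obstacle here, since the Galois-theoretic lifting from $\mathds{k}$ to $\overline{\mathbb{Q}}\subset\mathbb{C}$ has already been absorbed into the statement of Lemma~\ref{transtoc}(2); the only point of care is ensuring that no enlargement of the admissible algebra $A$ is required beyond what is already used in Section~7 to define $U(\mathfrak{g}_\mathds{k},e)$, which is automatic because the construction in Lemma~\ref{bon} is compatible with that same admissible data.
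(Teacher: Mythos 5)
Your proposal is correct and follows exactly the paper's own (very terse) argument: Lemma~\ref{bon} produces the needed $2$-dimensional representations of the translation subalgebras $U(\mathfrak{g}_\mathds{k},e)$ uniformly over $p\in\Pi(A)$, and Lemma~\ref{transtoc}(2) lifts this to a $2$-dimensional representation of $U(\mathfrak{g},e)$ over $\mathbb{C}$. The additional checks you carry out (oddness of $d_1$ via [PS2], Corollary~2.10, and $\Pi(A)$ being infinite) are exactly the implicit hypotheses the paper relies on, so nothing is missing.
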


\begin{proof}
The proposition follows from Lemma~\ref{transtoc}(2) and Lemma~\ref{bon}.
\end{proof}

\subsection{On the lower bound of the Super KW Property with nilpotent $p$-characters}

Recall that in Section 8.1 and Section 8.2 we have discussed the probable dimension for the ``small representations'' of finite $W$-superalgebras. Based on these results, the representations of minimal dimension for the reduced enveloping algebra $U_\eta(\mathfrak{g}_\mathds{k})$ associated to $p$-character $\eta\in\chi+(\mathfrak{m}_\mathds{k}^\bot)_{\bar0}$ were considered in Lemma~\ref{mindim1} and Lemma~\ref{cdim2} based on the parity of $d_1$, respectively. It is notable that $\eta$ can only be guaranteed in $\chi+(\mathfrak{m}_\mathds{k}^\bot)_{\bar0}$, but there is no further information. Following Premet's treatment to finite $W$-algebras in ([\cite{P7}], Theorem 2.2), Lemma~\ref{etachi} translates the conclusion about the $\mathds{k}$-algebra $U_\eta(\mathfrak{g}_\mathds{k})$ associated to $p$-character $\eta$ to the $\mathds{k}$-algebra $U_\chi(\mathfrak{g}_\mathds{k})$ with $p$-character $\chi$. In virtue of this result, the main result will be formulated in Theorem~\ref{main2}.

\begin{lemma}\label{etachi}
Let $\mathfrak{g}_\mathds{k}$ be a basic classical Lie superalgebra over positive characteristic field $\mathds{k}$. The following are true:

(1) when $d_1$ is even, if the algebra $U_\eta(\mathfrak{g}_\mathds{k})$ affords a representation of dimensional $p^{\frac{d_0}{2}}2^{\frac{d_1}{2}}$ for some $\eta\in\chi+(\mathfrak{m}_\mathds{k}^\bot)_{\bar{0}}$, then the algebra $U_\chi(\mathfrak{g}_\mathds{k})$ also admits a representation of dimension $p^{\frac{d_0}{2}}2^{\frac{d_1}{2}}$;

(2) when $d_1$ is odd, if the algebra $U_\eta(\mathfrak{g}_\mathds{k})$ affords a representation of dimensional $p^{\frac{d_0}{2}}2^{\frac{d_1+1}{2}}$ for some $\eta\in\chi+(\mathfrak{m}_\mathds{k}^\bot)_{\bar{0}}$,  then the algebra $U_\chi(\mathfrak{g}_\mathds{k})$ also admits a representation of dimension $p^{\frac{d_0}{2}}2^{\frac{d_1+1}{2}}$.
\end{lemma}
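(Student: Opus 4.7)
The plan is to follow Premet's argument for finite $W$-algebras in [P7, Theorem~2.2] adapted to the super setting; parts (1) and (2) proceed in parallel, differing only in the target dimension. I would first reduce the claim to a statement about reduced $W$-superalgebras. Via the Morita equivalence $U_\eta(\mathfrak{g}_\mathds{k}) \cong \mathrm{Mat}_\delta(U_\eta(\mathfrak{g}_\mathds{k},e))$ of Theorem~\ref{sumresult}(3) with $\delta = p^{d_0/2}2^{\lfloor d_1/2\rfloor}$, a representation of $U_\eta(\mathfrak{g}_\mathds{k})$ of dimension $p^{d_0/2}2^{d_1/2}$ in the even case, or $p^{d_0/2}2^{(d_1+1)/2}$ in the odd case, corresponds to a $d$-dimensional representation of $U_\eta(\mathfrak{g}_\mathds{k},e)$ with $d=1$ or $d=2$ respectively. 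It therefore suffices to show: if $U_\eta(\mathfrak{g}_\mathds{k},e)$ admits a $d$-dimensional representation for some $\eta \in \chi + (\mathfrak{m}_\mathds{k}^\perp)_{\bar 0}$, then so does $U_\chi(\mathfrak{g}_\mathds{k},e)$.

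Next I would exhibit the family $\{U_\eta(\mathfrak{g}_\mathds{k},e)\}_\eta$ as the fiber family of $\widehat{U}(\mathfrak{g}_\mathds{k},e)$ over its central subalgebra $\rho_\mathds{k}(Z_p) \cong Z_p(\widetilde{\mathfrak{p}}_\mathds{k})$. By Theorem~\ref{keyisotheorem}(2), $\widehat{U}(\mathfrak{g}_\mathds{k},e)$ is a free module of finite rank over this center, whose maximal spectrum is the affine variety $(\chi + (\mathfrak{m}_\mathds{k}^\perp)_{\bar 0})^{(1)}$; as established in the proof of Lemma~\ref{mindim1}, the specialization $\widehat{U}(\mathfrak{g}_\mathds{k},e) \otimes_{Z_p(\widetilde{\mathfrak{p}}_\mathds{k})} \mathds{k}_\eta$ is isomorphic to $U_\eta(\mathfrak{g}_\mathds{k},e)$. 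Thus a $d$-dimensional representation of $U_\eta(\mathfrak{g}_\mathds{k},e)$ is the same datum as a $d$-dimensional representation of $\widehat{U}(\mathfrak{g}_\mathds{k},e)$ on which $Z_p(\widetilde{\mathfrak{p}}_\mathds{k})$ acts via the character determined by $\eta$.

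The core geometric input is the $\mathds{k}^\times$-action on $\widehat{U}(\mathfrak{g}_\mathds{k},e)$ inherited from the Kazhdan grading. Every generator $\bar x^p - \bar x^{[p]}$ of $Z_p((\widetilde{\mathfrak{p}}_\mathds{k})_{\bar 0})$ with $\bar x \in \mathfrak{g}_\mathds{k}(i)_{\bar 0}$ sits in Kazhdan degree $p(i+2)$, which is strictly positive because $(\widetilde{\mathfrak{p}}_\mathds{k})_{\bar 0} \subseteq \bigoplus_{i \geq 0}\mathfrak{g}_\mathds{k}(i)_{\bar 0}$. The induced $\mathds{k}^\times$-action on the parameter variety $(\chi + (\mathfrak{m}_\mathds{k}^\perp)_{\bar 0})^{(1)}$ therefore has strictly positive weights on its linear part and fixes $\chi$; consequently $\chi$ lies in the Zariski closure of every $\mathds{k}^\times$-orbit. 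Twisting a $d$-dimensional module for $U_\eta(\mathfrak{g}_\mathds{k},e)$ by the automorphism of $\widehat{U}(\mathfrak{g}_\mathds{k},e)$ associated with $t \in \mathds{k}^\times$ produces a $d$-dimensional module for $U_{t\cdot\eta}(\mathfrak{g}_\mathds{k},e)$, so the subset
\[
\mathscr{V}_d := \{\,\eta \in \chi + (\mathfrak{m}_\mathds{k}^\perp)_{\bar 0} : U_\eta(\mathfrak{g}_\mathds{k},e)\text{ has a representation of dimension } \leq d\,\}
\]
is $\mathds{k}^\times$-stable.

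The main obstacle will be the final closedness step: one must verify that $\mathscr{V}_d$ is Zariski closed in $\chi + (\mathfrak{m}_\mathds{k}^\perp)_{\bar 0}$. Granted this, the $\mathds{k}^\times$-stability of $\mathscr{V}_d$ together with the attracting property of $\chi$ forces $\chi \in \mathscr{V}_d$, yielding the desired representation of $U_\chi(\mathfrak{g}_\mathds{k},e)$ and, via the Morita equivalence of Theorem~\ref{sumresult}(3), of $U_\chi(\mathfrak{g}_\mathds{k})$ of the claimed dimension. Closedness should follow by a standard representation-variety argument: using the finite free rank of $\widehat{U}(\mathfrak{g}_\mathds{k},e)$ over $Z_p(\widetilde{\mathfrak{p}}_\mathds{k})$ from Theorem~\ref{keyisotheorem}(2), one parametrizes candidate $d$-dimensional representations of the whole family by the vanishing of polynomial equations in the matrix entries of the generators whose coefficients are polynomial functions of $\eta$; since the fibers of this scheme are finite-dimensional, its projection onto the parameter base is closed by the usual elimination/properness argument. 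The super structure enters only through Proposition~\ref{no1} fixing $d=2$ in part (2); the geometric descent from $\eta$ to $\chi$ is formally identical in both parities.
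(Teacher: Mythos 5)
Your overall shape of the argument — reduce to a statement about reduced $W$-superalgebras, attract $\eta$ to $\chi$ along a $\mathds{k}^\times$-contraction, and conclude by Zariski closedness — is the right skeleton, and it is indeed the skeleton of Premet's argument (and of the paper's). However, the engine you propose to drive the contraction has a genuine gap, and the paper in fact builds its engine very differently.

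The specific problem is the sentence claiming a $\mathds{k}^\times$-action on $\widehat{U}(\mathfrak{g}_\mathds{k},e)$ "inherited from the Kazhdan grading" and the use of this action to prove that $\mathscr{V}_d$ is $\mathds{k}^\times$-stable. No such action exists. The Kazhdan structure on $\widehat{U}(\mathfrak{g}_\mathds{k},e)$ is a \emph{filtration}, not a grading (see Remark~\ref{Kazh}); in $U(\mathfrak{g}_\mathds{k})$ the Kazhdan degree is not multiplicative under the noncommutative product, so it does not give a $\mathds{k}^\times$-action by algebra automorphisms. Indeed, the only automorphism of $\widehat{U}(\mathfrak{g}_\mathds{k},e)$ that the paper extracts from the cocharacter is the involution $\sigma=\mathrm{Ad}\,\gamma(-1)$, of order $2$: $\mathrm{Ad}\,\gamma(t)$ maps the ideal $N_\chi$ to $N_{t^2\chi}$ and hence does not descend to $Q_\chi$ for general $t$. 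Moreover, even on the associated graded, the $p$-center generators are \emph{not} Kazhdan-homogeneous: for $\bar x\in\mathfrak{g}_\mathds{k}(i)_{\bar 0}$, $\bar x^p$ has Kazhdan degree $p(i+2)$ while $\bar x^{[p]}\in\mathfrak{g}_\mathds{k}(pi)_{\bar 0}$ has Kazhdan degree $pi+2$ — this is precisely why the paper writes $\mathrm{gr}(\bar x^p-\bar x^{[p]})=\mathrm{gr}(\bar x)^p$ in the proof of Theorem~\ref{keyisotheorem}. So the "induced $\mathds{k}^\times$-action with strictly positive weights" on the parameter variety and the stability of $\mathscr{V}_d$ under it are unsubstantiated; this is where your proof would break.

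The paper avoids all of this. It works with the closed set $\Xi\subseteq(\mathfrak{g}_\mathds{k}^*)_{\bar 0}$ of all $\xi$ for which $U_\xi(\mathfrak{g}_\mathds{k})$ has a two-sided ideal of the appropriate codimension — this is Zariski closed by ([\cite{Z1}], Lemma 2.2) and manifestly $\mathrm{Ad}^*(G_\mathds{k})_{\text{ev}}$-invariant. The substantial technical content of its proof is then the \emph{conicality} of $\Xi$: it shows $U_\xi(\mathfrak{g}_\mathds{k})\cong U_{\bar t\xi}(\mathfrak{g}_\mathds{k})$ for all $\bar t\in\mathds{k}^\times$ by passing through the Jordan decomposition $\xi=\xi_{\bar s}+\xi_{\bar n}$, the associated parabolic $\mathfrak{p}_\mathds{k}=\mathfrak{l}_\mathds{k}\oplus\mathfrak{u}_\mathds{k}$ with Levi $\mathfrak{l}_\mathds{k}=\mathfrak{g}_\mathds{k}^{\bar s}$, a Matrix-algebra reduction $U_\xi(\mathfrak{g}_\mathds{k})\cong\mathrm{Mat}_\delta((\mathrm{End}_{\mathfrak{g}_\mathds{k}}Q_\xi^\xi)^{\mathrm{op}})\cong\mathrm{Mat}_\delta(U_\xi(\mathfrak{l}_\mathds{k}))$, and the fact that nilpotent coadjoint orbits of the factors of $\mathfrak{l}_\mathds{k}$ are conical. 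Only after conicality and $\mathrm{Ad}^*$-invariance are in hand does the cocharacter enter, giving the curve $\bar t\mapsto\bar t^2(\mathrm{Ad}^*\lambda(\bar t))^{-1}\eta=\chi+\sum_{i\le 1}\bar t^{2-i}\eta_i$ inside $\Xi$, whose limit $\chi$ lies in $\Xi$ by closedness. Finally, the passage from "$\chi\in\Xi$" to the existence of a simple $U_\chi(\mathfrak{g}_\mathds{k})$-module of dimension \emph{exactly} $p^{d_0/2}2^{\lfloor d_1/2\rfloor}$ (in the even case) or $p^{d_0/2}2^{(d_1+1)/2}$ (in the odd case) uses the Super KW lower bound (Proposition~\ref{wzd}); your write-up should not omit this sandwich step either. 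In short: your reduction to $d=1,2$ via Morita and the attracting picture at $\chi$ are fine, but you need to replace the nonexistent Kazhdan $\mathds{k}^\times$-action with the paper's conicality argument on the level of reduced enveloping algebras.
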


\begin{proof}
Since the proof is similar for both cases, we will just consider the situation when $d_1$ is odd.

Let $(G_\mathds{k})_{\text{ev}}$ be the reductive algebraic group associated to even part $(\mathfrak{g}_\mathds{k})_{\bar{0}}$ of Lie superalgebra $\mathfrak{g}_\mathds{k}$.
For any $\xi\in(\mathfrak{g}_\mathds{k}^*)_{\bar0}$, it is well known that the construction of the algebra $U_\xi(\mathfrak{g}_\mathds{k})$ only depends on the orbit of $\xi$ under the coadjoint action of $(G_\mathds{k})_{\text{ev}}$ up to isomorphism. Therefore, if $\xi':=(\text{Ad}^*g)(\xi)$ for some $g\in(G_\mathds{k})_{\text{ev}}$, then $U_\xi(\mathfrak{g}_\mathds{k})\cong U_{\xi'}(\mathfrak{g}_\mathds{k})$ as $\mathds{k}$-algebras.

Let $\Xi$ denote the set of all $\xi\in(\mathfrak{g}_\mathds{k}^*)_{\bar{0}}$ for which the algebra $U_\xi(\mathfrak{g}_\mathds{k})$ contains a two-sided ideal of codimension $p^{d_0}2^{d_1+1}$. It is immediate from ([\cite{Z1}], Lemma 2.2) that the set $\Xi$ is Zariski closed in $(\mathfrak{g}_\mathds{k}^*)_{\bar{0}}$. From preceding remark it is easy to verify that the set $\Xi$ is stable under the coadjoint action of $(G_\mathds{k})_{\text{ev}}$.

(i) We claim that $\bar t\cdot\xi\in\Xi$ for all $\bar t\in\mathds{k}^\times(=\mathds{k}\backslash\{0\})$ and $\xi\in\Xi$.

For any $\xi\in(\mathfrak{g}_\mathds{k}^*)_{\bar{0}}$,  we can regard $\xi\in\mathfrak{g}_\mathds{k}^*$ by letting $\xi((\mathfrak{g}_\mathds{k})_{\bar{1}})=0$. Recall that $\xi=(\bar x,\cdot)$ for some $\bar x\in(\mathfrak{g}_\mathds{k})_{\bar{0}}$. Let
$\bar x=\bar s+\bar n$ be the Jordan-Chevalley decomposition of $\bar x$ in the restricted Lie algebra $(\mathfrak{g}_\mathds{k})_{\bar{0}}$ and put
$\xi_{\bar s}:=(\bar s,\cdot)$ and $\xi_{\bar n}:=(\bar n,\cdot)$. Take a Cartan subalgebra $\mathfrak{h}_\mathds{k}$ of $\mathfrak{g}_\mathds{k}$ which contains $\bar s$, and let $\mathfrak{g}_\mathds{k}^{\bar s}$ denote the centralizer of $\bar s$ in $\mathfrak{g}_\mathds{k}$. Then it follows that $\mathfrak{g}_\mathds{k}^{\bar s}:=\mathfrak{l}_\mathds{k}=(\mathfrak{l}_\mathds{k})_{\bar{0}}+(\mathfrak{l}_\mathds{k})_{\bar{1}}$ also has a root space decomposition $\mathfrak{l}_\mathds{k}=\mathfrak{h}_\mathds{k}\oplus\bigoplus\limits_{\alpha\in\Phi(\mathfrak{l}_\mathds{k})}(\mathfrak{g}_\mathds{k})_\alpha$ where $\Phi(\mathfrak{l}_\mathds{k}):=\{\alpha\in\Phi|\alpha(\bar s)=0\}$. From ([\cite{WZ}], Proposition 5.1) we know that there exists a system $\Pi$ of simple roots of $\mathfrak{g}_\mathds{k}$ such that $\Pi\cap\Phi(\mathfrak{l}_\mathds{k})$ is a system of simple roots for $\Phi(\mathfrak{l}_\mathds{k})$. In particular $\mathfrak{l}_\mathds{k}$ is always a direct sum of basic classical Lie superalgebras (note that a toral subalgebra of $\mathfrak{g}_\mathds{k}$ may also appear in the summand). Let $\mathfrak{b}_\mathds{k}=\mathfrak{h}_\mathds{k}\oplus\mathfrak{n}_\mathds{k}$ be the Borel subalgebra associated to $\Pi$. Then we can define a parabolic subalgebra $\mathfrak{p}_\mathds{k}=\mathfrak{l}_\mathds{k}+\mathfrak{b}_\mathds{k}=\mathfrak{l}_\mathds{k}\oplus\mathfrak{u}_\mathds{k}$, where $\mathfrak{u}_\mathds{k}$ denotes the nilradical of $\mathfrak{p}_\mathds{k}$. Note that $\xi(\mathfrak{u}_\mathds{k})=0$ and $\xi|_{\mathfrak{l}_\mathds{k}}=\xi_n|_{\mathfrak{l}_\mathds{k}}$ is nilpotent.

Recall that $\bar x=\bar s+\bar n$ is the Jordan-Chevalley decomposition of $\bar x$. If $\bar t\in\mathds{k}^\times$, then $\bar t\bar x=\bar t\bar s+\bar t\bar n$ is the Jordan-Chevalley decomposition of $\bar t\bar x$. Obviously, $\mathfrak{g}_\mathds{k}^{\bar t\bar s}=\mathfrak{l}_\mathds{k}$.

It follows from ([\cite{WZ}], Theorem 5.3) that every irreducible $U_\xi(\mathfrak{g}_\mathds{k})$-module is $U_\xi(\mathfrak{u}_\mathds{k})$-projective. Since $\mathfrak{u}_\mathds{k}$ is nilpotent in $\mathfrak{g}_\mathds{k}$ and $\xi|_{\mathfrak{u}_\mathds{k}}=0$, it follows from ([\cite{WZ}], Proposition 2.6) that the $\mathds{k}$-algebra $U_\xi(\mathfrak{u}_\mathds{k})$ is local with trivial module as the unique simple module. Then every irreducible $U_\xi(\mathfrak{g}_\mathds{k})$-module is $U_\xi(\mathfrak{u}_\mathds{k})$-free, and the unique maximal ideal $N_{\mathfrak{u}_\mathds{k}}$ of $U_\xi(\mathfrak{u}_\mathds{k})$ is generated by the image of $\mathfrak{u}_\mathds{k}$ in $U_\xi(\mathfrak{u}_\mathds{k})$. This yields $[\mathfrak{p}_\mathds{k},N_{\mathfrak{u}_\mathds{k}}]\subseteq N_{\mathfrak{u}_\mathds{k}}$. Let $U_\xi(\mathfrak{p}_\mathds{k})$ denote the unital subalgebra of $U_\xi(\mathfrak{g}_\mathds{k})$ generated by $\mathfrak{p}_\mathds{k}$ (it is canonically isomorphic to the reduced enveloping algebra of $\mathfrak{p}_\mathds{k}$ associated with $\xi|_{\mathfrak{p}_\mathds{k}}$). Since $U_\xi(\mathfrak{p}_\mathds{k})N_{\mathfrak{u}_\mathds{k}}$ is a two-sided ideal of $U_\xi(\mathfrak{p}_\mathds{k})$, it follows from the PBW Theorem that $$U_\xi(\mathfrak{p}_\mathds{k})/U_\xi(\mathfrak{p}_\mathds{k})N_{\mathfrak{u}_\mathds{k}}\cong U_\xi(\mathfrak{p}_\mathds{k}/\mathfrak{u}_\mathds{k})$$ as $\mathds{k}$-algebras. Let $\bar1_\xi=1+N_{\mathfrak{u}_\mathds{k}}$, the image of $1$ in $\mathds{k}_\xi$, and $Q^0_{\mathfrak{u}_\mathds{k}}:=U_\xi(\mathfrak{p}_\mathds{k})\cdot\bar1_\xi$. It follows from the definition of $Q_\xi^\xi$ preceding Definition~\ref{reduced W} that $Q_\xi^\xi\cong U_\xi(\mathfrak{g}_\mathds{k})/U_\xi(\mathfrak{g}_\mathds{k})N_{\mathfrak{u}_\mathds{k}}$ as left $U_\xi(\mathfrak{g}_\mathds{k})$-modules. The PBW theorem and the discussion above imply that $\text{dim}~Q_{\mathfrak{u}_\mathds{k}}^0=\text{dim}~U_\xi
(\mathfrak{p}_\mathds{k}/\mathfrak{u}_\mathds{k})$. Given $\bar q\in Q_{\mathfrak{u}_\mathds{k}}^0$ there is $\bar u\in U_\xi(\mathfrak{p}_\mathds{k})$ such that $\bar q=\bar u\cdot \bar1_\xi$. Since $[\mathfrak{p}_{\mathds{k}},N_{\mathfrak{u}_\mathds{k}}]\subseteq N_{\mathfrak{u}_\mathds{k}}$, it follows from Jacobi identity that $[\bar n,\bar u]\in U_\xi(\mathfrak{p}_\mathds{k})N_{\mathfrak{u}_\mathds{k}}$ for any $\bar n\in N_{\mathfrak{u}_\mathds{k}}$. Then
$$\bar n\cdot \bar q=([\bar n,\bar q]+(-1)^{|\bar n||\bar q|}\bar q\cdot \bar n)\cdot \bar1_\xi\subseteq U_\xi(\mathfrak{p}_\mathds{k})N_{\mathfrak{u}_\mathds{k}}$$ for any $\mathbb{Z}_2$-homogeneous elements $\bar n\in N_{\mathfrak{u}_\mathds{k}}$ and $\bar q\in Q_{\mathfrak{u}_\mathds{k}}^0$. The universal property of induced modules implies that for any $\bar q\in Q_{\mathfrak{u}_\mathds{k}}^0$ there is a unique $h_{\bar q}\in\text{End}_{\mathfrak{g}_\mathds{k}}Q_\xi^\xi$ such that $h_{\bar q}(\bar1_\xi)=\bar q$.
Put $$d'_0:=2\text{dim}~(\mathfrak{u}_\mathds{k})_{\bar{0}}=\text{dim}~(\mathfrak{g}_\mathds{k})_{\bar{0}}-\text{dim}~(\mathfrak{l}_\mathds{k})_{\bar{0}},~
d'_1:=2\text{dim}~(\mathfrak{u}_\mathds{k})_{\bar{1}}=\text{dim}~(\mathfrak{g}_\mathds{k})_{\bar{1}}-\text{dim}~(\mathfrak{l}_\mathds{k})_{\bar{1}}.$$ Since every irreducible $U_\xi(\mathfrak{g}_\mathds{k})$-module is $U_\xi(\mathfrak{u}_\mathds{k})$-free, by the same discussion as ([\cite{WZ}], Theorem 4.4) we can obtain a $\mathds{k}$-algebras isomorphism:
\begin{equation}\label{isorr}
U_\xi(\mathfrak{g}_\mathds{k})\cong \text{Mat}_{p^{\frac{d'_0}{2}}2^{\frac{d'_1}{2}}}((\text{End}_{\mathfrak{g}_\mathds{k}}Q_\xi^\xi)^{\text{op}}).
\end{equation}
Therefore,$$\text{dim}~(\text{End}_{\mathfrak{g}_\mathds{k}}Q_\xi^\xi)=p^{\text{dim}~
(\mathfrak{g}_\mathds{k})_{\bar{0}}-d'_0}2^{\text{dim}~
(\mathfrak{g}_\mathds{k})_{\bar{1}}-d'_1}=p^{\text{dim}~
(\mathfrak{l}_\mathds{k})_{\bar{0}}}2^{\text{dim}~
(\mathfrak{l}_\mathds{k})_{\bar{1}}}$$and $$\text{dim}~U_\xi(\mathfrak{p}_\mathds{k}/\mathfrak{u}_\mathds{k})=p^{\text{dim}~
(\mathfrak{p}_\mathds{k})_{\bar{0}}-\text{dim}~(\mathfrak{u}_\mathds{k})_{\bar{0}}}2^{\text{dim}~
(\mathfrak{p}_\mathds{k})_{\bar{1}}-\text{dim}~
(\mathfrak{u}_\mathds{k})_{\bar{1}}}=p^{\text{dim}~
(\mathfrak{l}_\mathds{k})_{\bar{0}}}2^{\text{dim}~
(\mathfrak{l}_\mathds{k})_{\bar{1}}}.$$
Then it follows that $\text{End}_{\mathfrak{g}_\mathds{k}}Q_\xi^\xi=\{h_{\bar q}|\bar q\in Q_{\mathfrak{u}_\mathds{k}}^0\}$. Define the mapping
\[\begin{array}{lcll}
\tau:&\text{End}_{\mathfrak{g}_\mathds{k}}Q_{\xi}^\xi&\rightarrow&U_\xi(\mathfrak{p}_\mathds{k}/\mathfrak{u}_\mathds{k})^{\text{op}}\\ &\theta&\mapsto&\theta(\bar1_\chi).
\end{array}
\]
It is obvious that $\tau$ is a homomorphism of $\mathds{k}$-algebras. As both algebras have the same dimension (as vector spaces), one can deduce that $\tau$ is an isomorphism. Taking the opposite algebras for both sides, we have
\begin{equation}\label{iso2}
(\text{End}_{\mathfrak{g}_\mathds{k}}Q_\xi^\xi)^{\text{op}}\cong U_\xi(\mathfrak{p}_\mathds{k}/\mathfrak{u}_\mathds{k})\cong U_\xi(\mathfrak{l}_\mathds{k})
\end{equation} as $\mathds{k}$-algebras.

Let $N'_{\mathfrak{u}_\mathds{k}}$ be the unique maximal ideal of $U_{\bar t\xi}(\mathfrak{u}_\mathds{k})$. For the left $U_{\bar t\xi}(\mathfrak{g}_\mathds{k})$-module $Q_{\bar t\xi}^{\bar t\xi}\cong U_{\bar t\xi}(\mathfrak{g}_\mathds{k})/U_{\bar t\xi}(\mathfrak{g}_\mathds{k})
N'_{\mathfrak{u}_\mathds{k}}$, same consideration shows that
\begin{equation}\label{iso3}
(\text{End}_{\mathfrak{g}_\mathds{k}}Q_{\bar t\xi}^{\bar t\xi})^{\text{op}}\cong U_{\xi}(\mathfrak{l}_\mathds{k})
\end{equation} as $\mathds{k}$-algebras. Along the same discussion as ([\cite{WZ}], Theorem 4.4), one can get that
\begin{equation}\label{iso4}
U_{t\xi}(\mathfrak{g}_\mathds{k})\cong \text{Mat}_{p^{\frac{d'_0}{2}}2^{\frac{d'_1}{2}}}((\text{End}_{\mathfrak{g}_\mathds{k}}Q_{\bar t\xi}^{\bar t\xi})^{\text{op}})
\end{equation}
as $\mathds{k}$-algebras.

Recall that $\mathfrak{l}_\mathds{k}$ is a direct sum of basic classical Lie superalgebras and a toral subalgebra. Set $\mathfrak{l}_\mathds{k}=(\mathfrak{g}_\mathds{k})_1\oplus\cdots\oplus(\mathfrak{g}_\mathds{k})_r\oplus\mathfrak{t}'_\mathds{k}$, where $(\mathfrak{g}_\mathds{k})_i$ is a basic classical Lie superalgebra for each $1\leqslant  i\leqslant  r$, and $\mathfrak{t}'_\mathds{k}$ is a toral subalgebra of $\mathfrak{g}_\mathds{k}$. For each $1\leqslant  i\leqslant  r$, let $(G_\mathds{k})_i$ denote the algebraic supergroup associated to $(\mathfrak{g}_\mathds{k})_i$. It is well known that for each $1\leqslant  i\leqslant  r$ the even part of $(G_\mathds{k})_i$ is a reductive algebraic group, and denote it by $((G_\mathds{k})_i)_{\text{ev}}$. Since $\xi|_{\mathfrak{l}_\mathds{k}}=\xi_n|_{\mathfrak{l}_\mathds{k}}$ is nilpotent, it follows from ([\cite{J3}], Lemma 2.10) that $\mathds{k}^\times\cdot\xi|_{(\mathfrak{g}_\mathds{k})_i}\subseteq(\text{Ad}^*((G_\mathds{k})_i)_{\text{ev}})\xi|_{(\mathfrak{g}_\mathds{k})_i}$.
For each $1\leqslant  i\leqslant  r$, since the superalgebra $U_{\xi|_{(\mathfrak{g}_\mathds{k})_i}}((\mathfrak{g}_\mathds{k})_i)$ depends only on the orbit of $\xi|_{(\mathfrak{g}_\mathds{k})_i}$ under the coadjoint action of $((G_\mathds{k})_i)_{\text{ev}}$, then $U_{\xi|_{(\mathfrak{g}_\mathds{k})_i}}((\mathfrak{g}_\mathds{k})_i)\cong U_{\bar t\xi|_{(\mathfrak{g}_\mathds{k})_i}}((\mathfrak{g}_\mathds{k})_i)$ as $\mathds{k}$-algebras. By the arbitrary of $i$ it is immediate that $ \bigotimes\limits_{i=1}^r U_{\xi|_{(\mathfrak{g}_\mathds{k})_i}}((\mathfrak{g}_\mathds{k})_i)\cong \bigotimes\limits_{i=1}^r U_{\bar t\xi|_{(\mathfrak{g}_\mathds{k})_i}}((\mathfrak{g}_\mathds{k})_i)$.
As $\mathfrak{t}'_\mathds{k}$ is a toral subalgebra of $\mathfrak{g}_\mathds{k}$ (note that $\mathfrak{t}'_\mathds{k}\in(\mathfrak{g}_\mathds{k})_{\bar0}$), the reduced enveloping algebra $U_\psi(\mathfrak{t}'_\mathds{k})$ is commutative and semisimple for every $\psi\in(\mathfrak{t}'_\mathds{k})^*$ (Indeed, $\mathfrak{t}'_\mathds{k}$ has a $\mathds{k}$-basis $t_1,\cdots,t_d$ with $t_i^{[p]}=t_i$ for $1\leqslant i\leqslant d$. Therefore, $U_\psi(\mathfrak{t}'_\mathds{k})\cong A_1\otimes\cdots\otimes A_d$ where $A_i\cong\mathds{k}[X]/(X^p-X-\psi(t_i)^p)$ is a $p$-dimensional commutative semisimple $\mathds{k}$-algebra). From this it is immediate that $U_{\xi|_{\mathfrak{t}'_\mathds{k}}}(\mathfrak{t}'_\mathds{k})\cong U_{\bar t\xi|_{\mathfrak{t}'_\mathds{k}}}(\mathfrak{t}'_\mathds{k})$ as algebras. Since $\mathfrak{l}_\mathds{k}=\bigoplus\limits_{i=1}^r(\mathfrak{g}_\mathds{k})_i\oplus\mathfrak{t}'_\mathds{k}$, we have $U_{\xi}(\bigoplus\limits_{i=1}^r(\mathfrak{g}_\mathds{k})_i\oplus\mathfrak{t}'_\mathds{k})\cong \bigotimes\limits_{i=1}^r U_{\xi|_{(\mathfrak{g}_\mathds{k})_i}}((\mathfrak{g}_\mathds{k})_i)\otimes U_{\xi|_{\mathfrak{t}'_\mathds{k}}}(\mathfrak{t}'_\mathds{k})$ and $U_{\bar t\xi}(\bigoplus\limits_{i=1}^r(\mathfrak{g}_\mathds{k})_i\oplus\mathfrak{t}'_\mathds{k})\cong \bigotimes\limits_{i=1}^r U_{\bar t\xi|_{(\mathfrak{g}_\mathds{k})_i}}((\mathfrak{g}_\mathds{k})_i)\otimes U_{\bar t\xi|_{\mathfrak{t}'_\mathds{k}}}(\mathfrak{t}'_\mathds{k})$. Therefore, we can obtain that $U_{\xi}(\mathfrak{l}_\mathds{k})\cong U_{\bar t\xi}(\mathfrak{l}_\mathds{k})$ as algebras. It follows from \eqref{isorr}---\eqref{iso4} that $$U_{\xi}(\mathfrak{g}_\mathds{k})\cong U_{\bar t\xi}(\mathfrak{g}_\mathds{k})$$ for all $\bar t\in\mathds{k}^\times$. Our claim is an immediate consequence of the last isomorphism, i.e. if $\xi\in\Xi$, then $\bar t\cdot\xi\in\Xi$ for all $\bar t\in\mathds{k}^\times$. As the remark preceding part (i) shows that $\Xi$ is Zariski closed in $(\mathfrak{g}_\mathds{k}^*)_{\bar{0}}$, it is immediate that the set $\Xi$ is conical.

(ii) We claim that $\chi\in\Xi$.

By the assumption in the lemma we know that $U_\eta(\mathfrak{g}_\mathds{k})$ has a simple module of dimension $p^{\frac{d_0}{2}}2^{\frac{d_1+1}{2}}$, so we have $\eta\in\Xi$. As $\eta\in\chi+(\mathfrak{m}_\mathds{k}^\perp)_{\bar{0}}$ we can write $\eta=(e+\bar y,\cdot)$ for some $\bar y=\sum\limits_{i\leqslant 1}\bar y_i$ with $\bar y_i\in\mathfrak{g}_\mathds{k}(i)_{\bar{0}}$. Recall that there is a cocharacter $\lambda:\mathds{k}^\times\longrightarrow(G_\mathds{k})_{\text{ev}}$ such that $(\text{Ad}\lambda(\bar t))\bar x=\bar t^j\bar x$ for all $\bar x\in\mathfrak{g}_\mathds{k}(j)\,(j\in\mathbb{Z})$ and $\bar t\in\mathds{k}^\times$ (see the discussion preceding Proposition~\ref{fijeo}). For $i\leqslant 1$, set $\eta_i=(\bar y_i,\cdot)$, then $\eta=\chi+\sum\limits_{i\leqslant 1}\eta_i$. For any even element $\bar y'\in\mathfrak{g}_\mathds{k}(j)_{\bar{0}}$, it is straightforward that$$(\text{Ad}^*(\lambda(\bar t))(\eta))\bar y'=\eta(\text{Ad}\lambda(\bar t)^{-1}\bar  y')=\bar t^{-j}(e+\sum\limits_{i\leqslant 1}\bar y_i,\bar y')=\Bigg\{\begin{array}{ll}\bar t^{2}(e,\bar y')&(j=-2)\\
\delta_{i+j,0}\bar t^{i}(\bar y_i,\bar y')&(j\neq-2)\end{array}.$$ Since$$(\bar t^{2}\chi+\sum\limits_{i\leqslant 1}\bar t^{i}\eta_i)(\bar y')=\bar t^{2}(e,\bar y')+\sum\limits_{i\leqslant 1}\bar t^{i}(\bar y_i,\bar y'),$$ then
$(\text{Ad}^*\lambda(\bar t))\eta=\bar t^{2}\chi+\sum\limits_{i\leqslant 1}\bar t^{i}\eta_i$, and
$(\text{Ad}^*\lambda(\bar t))^{-1}\eta=\bar t^{-2}\chi+\sum\limits_{i\leqslant 1}\bar t^{-i}\eta_i$. As $\Xi$ is conical and $\text{Ad}^*(G_\mathds{k})_{\text{ev}}$-invariant by (i), this implies that $$\bar t^{2}\cdot(\text{Ad}^*\lambda(\bar t))^{-1}\eta=\chi+\sum\limits_{i\leqslant 1}\bar t^{2-i}(\bar y_i,\bar y')\in\Xi$$for all $\bar t\in\mathds{k}^\times$. Since $\Xi$ is Zariski closed, this yields $\chi\in\Xi$.

(iii) Now we are in a position to prove (2).

It follows from (ii) that the algebra $U_\chi(\mathfrak{g}_\mathds{k})$ admits a two-sided ideal of codimension $p^{d_0}2^{d_1+1}$ and denote it by $I$. Clearly, all simple modules of the factor algebra $U_\chi(\mathfrak{g}_\mathds{k})/I$ have dimension $\leqslant  p^{\frac{d_0}{2}}2^{\frac{d_1+1}{2}}$. On the other hand, Proposition~\ref{wzd} implies that all simple modules of $U_\chi(\mathfrak{g}_\mathds{k})/I$ have dimension divisible by $p^{\frac{d_0}{2}}2^{\frac{d_1+1}{2}}$. From this it is immediate that $U_\chi(\mathfrak{g}_\mathds{k})$ has a simple module of dimension $p^{\frac{d_0}{2}}2^{\frac{d_1+1}{2}}$.
\end{proof}

Let $\mathfrak{g}$ be a basic classical Lie superalgebra over $\mathbb{C}$ and $\mathfrak{g}_\mathds{k}$ the corresponding Lie superalgebra over positive characteristic field $\mathds{k}$. Let $\chi\in(\mathfrak{g}_\mathds{k}^*)_{\bar0}$ be a nilpotent $p$-character of $\mathfrak{g}_\mathds{k}$ such that $\chi(\bar y)=(e,\bar y)$ for any $\bar y\in\mathfrak{g}_\mathds{k}$. Under the assumption of Conjecture~\ref{con}, the following theorem shows that the minimal dimension for the representations of reduced enveloping algebra $U_\chi(\mathfrak{g}_\mathds{k})$ with nilpotent $p$-character $\chi$ given in the Super Kac-Weisfeiler Property is reachable.

\begin{theorem}\label{main2}
Retain the notations above. The following are true:

(1) when $d_1$ is even, if the finite $W$-superalgebra $U(\mathfrak{g},e)$ over $\mathbb{C}$ affords a $1$-dimensional representation, then for $p\gg0$ the reduced enveloping algebra $U_\chi(\mathfrak{g}_\mathds{k})$ over $\mathds{k}=\overline{\mathbb{F}}_p$ admits irreducible representations of dimension $p^{\frac{d_0}{2}}2^{\frac{d_1}{2}}$;

(2) when $d_1$ is odd, if the finite $W$-superalgebra $U(\mathfrak{g},e)$ over $\mathbb{C}$ affords a $2$-dimensional representation, then for $p\gg0$ the reduced enveloping algebra $U_\chi(\mathfrak{g}_\mathds{k})$ over $\mathds{k}=\overline{\mathbb{F}}_p$ admits irreducible representations of dimension $p^{\frac{d_0}{2}}2^{\frac{d_1+1}{2}}$.
\end{theorem}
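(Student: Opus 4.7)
The plan is to deduce this theorem as a direct combination of the two key lemmas already established in this section, namely Lemma~\ref{mindim1} (resp.\ Lemma~\ref{cdim2}) for existence of small representations at some shifted $p$-character, and Lemma~\ref{etachi} for transferring such existence back to the original $p$-character $\chi$.

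For part (1), I would first invoke Lemma~\ref{mindim1}: since $d_1$ is even and $U(\mathfrak{g},e)$ admits a $1$-dimensional representation by hypothesis, for $p\gg 0$ there exists some $\eta\in\chi+(\mathfrak{m}_\mathds{k}^\bot)_{\bar{0}}$ such that $U_\eta(\mathfrak{g}_\mathds{k})$ has an irreducible module of dimension $p^{d_0/2}2^{d_1/2}$. This is exactly the hypothesis needed to apply Lemma~\ref{etachi}(1), which then transfers the existence of such an irreducible representation from $U_\eta(\mathfrak{g}_\mathds{k})$ to $U_\chi(\mathfrak{g}_\mathds{k})$, yielding the claim. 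The underlying mechanism built into Lemma~\ref{etachi} is the observation that the closed subset $\Xi\subseteq(\mathfrak{g}_\mathds{k}^*)_{\bar 0}$ of characters at which $U_\xi(\mathfrak{g}_\mathds{k})$ admits a two-sided ideal of the appropriate codimension is conical and $\mathrm{Ad}^*(G_\mathds{k})_{\mathrm{ev}}$-stable, so the cocharacter $\lambda:\mathds{k}^\times\rightarrow (G_\mathds{k})_{\mathrm{ev}}$ coming from the $\mathfrak{sl}_2$-triple lets us degenerate $\eta$ to $\chi$ within $\Xi$.

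For part (2), the argument is parallel: when $d_1$ is odd and $U(\mathfrak{g},e)$ affords a $2$-dimensional representation, Lemma~\ref{cdim2} supplies some $\eta\in\chi+(\mathfrak{m}_\mathds{k}^\bot)_{\bar{0}}$ for which $U_\eta(\mathfrak{g}_\mathds{k})$ has an irreducible of dimension $p^{d_0/2}2^{(d_1+1)/2}$, and then Lemma~\ref{etachi}(2) transfers this to $\chi$. In both cases, once the ideal of suitable codimension is produced inside $U_\chi(\mathfrak{g}_\mathds{k})$, one combines this with the Super Kac--Weisfeiler Property (Proposition~\ref{wzd}) to conclude that some simple module of the factor algebra must realize precisely the predicted lower bound: the factor algebra admits only modules of dimension $\leqslant p^{d_0/2}2^{\lfloor(d_1+1)/2\rfloor}$, while every simple module has dimension divisible by $p^{d_0/2}2^{\lfloor d_1/2\rfloor}$, and the two constraints together force the existence of a simple module of exactly the required dimension.

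There is essentially no hard step remaining in this theorem itself, since all the conceptual and computational work has been done in the preceding lemmas; the proof is a two-line synthesis. The one thing I would be careful about is verifying that the hypothesis ``$p\gg 0$'' in Lemma~\ref{mindim1} and Lemma~\ref{cdim2} is compatible with the hypothesis on $p$ in Lemma~\ref{etachi} (both ultimately require $p\in\Pi(A)$ for a suitable admissible algebra $A$, which is an automatic condition for almost all primes), so that the combined statement genuinely holds for all sufficiently large $p$. The genuine obstruction of the whole program, of course, lies not here but upstream: it is Conjecture~\ref{con}, i.e.\ producing the $1$- or $2$-dimensional representation of $U(\mathfrak{g},e)$ over $\mathbb{C}$ in the first place, which is exactly the super analogue of Premet's conjecture and is only verified in limited cases such as those discussed in Section~9.1.
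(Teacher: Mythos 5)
Your proposal matches the paper's proof exactly: part (1) is obtained by combining Lemma~\ref{mindim1} with Lemma~\ref{etachi}(1), and part (2) by combining Lemma~\ref{cdim2} with Lemma~\ref{etachi}(2). The additional remarks you make about the conical Zariski-closed set $\Xi$ and the Super Kac--Weisfeiler bound correctly summarize the interior of Lemma~\ref{etachi}, and your caveat about compatibility of the $p\gg 0$ hypotheses is sound.
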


\begin{proof}
(1) follows from Lemma~\ref{mindim1} and Lemma~\ref{etachi}(1). Lemma~\ref{cdim2} and Lemma~\ref{etachi}(2) show that (2) is also true.
\end{proof}

As a corollary of Theorem~\ref{main2}, some conclusions about the minimal dimension for the representations of reduced $W$-superalgebra $U_\chi(\mathfrak{g}_\mathds{k},e)$ associated to $p$-character $\chi$ over $\mathds{k}$ can also be reached, i.e.

\begin{corollary}\label{1122}
Let $\mathfrak{g}$ be a basic classical Lie superalgebra. The following are true:

(1) when $d_1$ is even, if the finite $W$-superalgebra $U(\mathfrak{g},e)$ over $\mathbb{C}$ affords a $1$-dimensional representation, then for $p\gg0$, the reduced $W$-superalgebra $U_\chi(\mathfrak{g}_\mathds{k},e)$ associated to $p$-character $\chi$ over $\mathds{k}$ also admits $1$-dimensional representations;

(2) when $d_1$ is odd, if the finite $W$-superalgebra $U(\mathfrak{g},e)$ over $\mathbb{C}$ affords a $2$-dimensional representation, then for $p\gg0$, the reduced $W$-superalgebra $U_\chi(\mathfrak{g}_\mathds{k},e)$ associated to $p$-character $\chi$ over $\mathds{k}$ also admits $2$-dimensional representations.
\end{corollary}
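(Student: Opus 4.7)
The plan is to deduce Corollary~\ref{1122} as a direct consequence of Theorem~\ref{main2} together with the Morita equivalence between $U_\chi(\mathfrak{g}_\mathds{k})$ and $U_\chi(\mathfrak{g}_\mathds{k},e)$. The point is that Theorem~\ref{main2} already produces an irreducible module of the desired minimal dimension for $U_\chi(\mathfrak{g}_\mathds{k})$, and the matrix algebra isomorphism $U_\chi(\mathfrak{g}_\mathds{k})\cong\text{Mat}_\delta(U_\chi(\mathfrak{g}_\mathds{k},e))$ of Theorem~\ref{matrix} / Theorem~\ref{sumresult}(3) transfers this directly to an irreducible representation of $U_\chi(\mathfrak{g}_\mathds{k},e)$, whose dimension is then cut down by the factor $\delta=\dim U_\chi(\mathfrak{m}_\mathds{k})=p^{d_0/2}2^{\lfloor d_1/2\rfloor}$.

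More concretely, first I would invoke the hypothesis together with Theorem~\ref{main2}(1) (respectively Theorem~\ref{main2}(2)) to fix an irreducible $U_\chi(\mathfrak{g}_\mathds{k})$-module $M$ of dimension $p^{d_0/2}2^{d_1/2}$ (respectively $p^{d_0/2}2^{(d_1+1)/2}$) for $p\gg0$. Next, I would apply the Skryabin-type functor $M\mapsto M^{\mathfrak{m}_\mathds{k}}$ from Theorem~\ref{reducedfunctors}; since this is an equivalence of categories (its quasi-inverse being $V\mapsto U_\chi(\mathfrak{g}_\mathds{k})\otimes_{U_\chi(\mathfrak{g}_\mathds{k})^{\text{ad}\,\mathfrak{m}_\mathds{k}}}V$), simplicity of $M$ transports to simplicity of $M^{\mathfrak{m}_\mathds{k}}$ as a $U_\chi(\mathfrak{g}_\mathds{k},e)$-module. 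To compute the dimension, I would use the isomorphism $M\cong U_\chi(\mathfrak{m}_\mathds{k})^{\ast}\otimes_\mathds{k} M^{\mathfrak{m}_\mathds{k}}$ of $U_\chi(\mathfrak{m}_\mathds{k})$-modules coming from the freeness statement in Proposition 4.2 of [\cite{WZ}] (the same identity was already exploited in the proof of Lemma~\ref{bon}). This gives $\dim M^{\mathfrak{m}_\mathds{k}}=\dim M/\delta$, and one then reads off the answer: in case (1) one has $\delta=p^{d_0/2}2^{d_1/2}$, yielding $\dim M^{\mathfrak{m}_\mathds{k}}=1$; in case (2) one has $\delta=p^{d_0/2}2^{(d_1-1)/2}$, yielding $\dim M^{\mathfrak{m}_\mathds{k}}=2$.

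There is essentially no hard step: the entire content has been prepared by Theorem~\ref{main2} and the Morita framework developed in Section 4.2. The only point requiring a small amount of care is bookkeeping the parity of $d_1$ so as to correctly identify $\dim U_\chi(\mathfrak{m}_\mathds{k})=p^{d_0/2}2^{\lfloor d_1/2\rfloor}$ (where the floor, not the ceiling, occurs because $\mathfrak{m}$ does not contain the extra odd vector $v_{(r+1)/2}$). Once this is pinned down, the conclusion follows in one line from Theorem~\ref{main2} and Theorem~\ref{reducedfunctors}, and the proof is complete.
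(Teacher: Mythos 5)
Your proof is correct and follows the same route as the paper: invoke Theorem~\ref{main2} to produce an irreducible $U_\chi(\mathfrak{g}_\mathds{k})$-module $M$ of the right dimension, pass to $M^{\mathfrak{m}_\mathds{k}}$ via Theorem~\ref{reducedfunctors}, and divide by $\dim U_\chi(\mathfrak{m}_\mathds{k})$ using the freeness isomorphism $M\cong U_\chi(\mathfrak{m}_\mathds{k})^*\otimes_\mathds{k}M^{\mathfrak{m}_\mathds{k}}$. The only cosmetic discrepancy is that the paper's Remark~\ref{bound} swaps the usual meanings of $\lceil\cdot\rceil$ and $\lfloor\cdot\rfloor$, so where you write $\lfloor d_1/2\rfloor$ in standard notation the paper writes $\lceil d_1/2\rceil$; your underlying values $\delta=p^{d_0/2}2^{d_1/2}$ (even case) and $\delta=p^{d_0/2}2^{(d_1-1)/2}$ (odd case) are exactly what the paper uses.
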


\begin{proof}
For any $U_\chi(\mathfrak{g}_\mathds{k})$-module $M$, it follows from Theorem~\ref{reducedfunctors} that $M^{\mathfrak{m}_\mathds{k}}$ is a $U_\chi(\mathfrak{g}_\mathds{k},e)$-module. As any $U_\chi(\mathfrak{g}_\mathds{k})$-module $M$ is $U_\chi(\mathfrak{m}_\mathds{k})$-free and there is an isomorphism $M\cong U_\chi(\mathfrak{m}_\mathds{k})^*\otimes_\mathds{k}M^{\mathfrak{m}_\mathds{k}}$ of $U_\chi(\mathfrak{m}_\mathds{k})$-modules by the proof of ([\cite{WZ}], Proposition 4.2), it is immediate that
$$\text{dim}~M^{\mathfrak{m}_\mathds{k}}=\frac{\text{dim}~M}{\text{dim}~U_{\chi}(\mathfrak{m}_\mathds{k})}=\frac{\text{dim}~M}{p^{\frac{d_0}{2}}2^{\lceil\frac{d_1}{2}\rceil}}.$$ Then the desired result follows from Theorem~\ref{main2}.
\end{proof}

\subsection{On the lower bound of the Super KW Property for a direct sum of basic classical Lie superalgebras with nilpotent $p$-characters}
In this part we will consider the lower bound of the Super KW Property for a direct sum of basic classical Lie superalgebras with nilpotent $p$-characters.

First recall some knowledge on finite dimensional superalgebras in (cf. [\cite{KL}], Section 12). Let $\mathbb{F}$ be an algebraically closed field of any characteristic. Given a finite dimensional superalgebra $A$ over $\mathbb{F}$, define the (left) parity change functor
$$\Pi:~A\text{-}mod\longrightarrow A\text{-}mod.$$
For an object $V$, $\Pi V$ is the same underlying vector space but with the opposite $\mathbb{Z}_2$-grading. The new action
of a $\mathbb{Z}_2$-homogeneous element $a\in A$ on $v\in V$ is defined in terms of the old action by $a\cdot v:=(-1)^{|a|}av$.
Given left modules $V$ and $W$ over superalgebras $A$ and $B$ respectively, the (outer) tensor product $V\boxtimes W$ is the space $V\otimes W$ considered as an $A\otimes B$-module via
$$(a\otimes b)(v\otimes w)=(-1)^{|b||v|}av\otimes bw\qquad(a\in A,\,b\in B,\,v\in V,\,w\in W).$$

For the irreducible representations of the algebra $A\otimes B$, the following result was obtained by Kleshchev in ([\cite{KL}], Lemma 12.2.13):

\begin{lemma}$^{[\cite{KL}]}$\label{AB}
Let $V$ be an irreducible $A$-module and $W$ be an irreducible $B$-module.

(i) If both $V$ and $W$ are of type $M$, then $V\boxtimes W$ is an irreducible $A\otimes B$-module of type $M$.

(ii) If one of $V$ or $W$ is of type $M$ and the other is of type $Q$, then $V\boxtimes W$ is an irreducible $A\otimes B$-module of type $Q$.

(iii) If both $V$ and $W$ are of type $Q$, then $V\boxtimes W\cong (V\circledast W)\oplus\Pi(V\circledast W)$ for a type $M$ irreducible $A\otimes B$-module $V\circledast W$.

Moreover, all irreducible $A\otimes B$-modules arise as constituents of $V\boxtimes W$ for some choice of irreducibles $V$, $W$.

\end{lemma}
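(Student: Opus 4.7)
The plan is to study $V\boxtimes W$ through its superalgebra of endomorphisms, using the super-version of Schur's lemma: an irreducible supermodule is of type $M$ precisely when its endomorphism ring is one-dimensional, and of type $Q$ precisely when it is isomorphic to the rank-one Clifford superalgebra $\mathrm{Cliff}_1=\mathbb{F}\oplus\mathbb{F}\theta$ with $\theta$ odd and $\theta^{2}=1$. First I would establish the key computation
\[
\mathrm{End}_{A\otimes B}(V\boxtimes W)\;\cong\;\mathrm{End}_{A}(V)\otimes\mathrm{End}_{B}(W)
\]
as $\mathbb{Z}_{2}$-graded $\mathbb{F}$-algebras. This is a super-analogue of the classical fact for outer tensor products and can be obtained from the super Jacobson density theorem applied to the irreducible $A$- and $B$-modules $V$ and $W$ (both are finite dimensional, so the density argument is immediate); the sign $(-1)^{|b||v|}$ in the definition of $\boxtimes$ guarantees that the natural map $\mathrm{End}_{A}(V)\otimes\mathrm{End}_{B}(W)\to\mathrm{End}(V\otimes W)$ is a homomorphism of superalgebras and lands in the $(A\otimes B)$-endomorphisms.

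With that identification, cases (i) and (ii) are short. In case (i), $\mathrm{End}_{A\otimes B}(V\boxtimes W)\cong\mathbb{F}\otimes\mathbb{F}=\mathbb{F}$; in case (ii) it is $\mathbb{F}\otimes\mathrm{Cliff}_{1}\cong\mathrm{Cliff}_{1}$. To conclude that $V\boxtimes W$ is still irreducible (not merely has the right endomorphism algebra), I would invoke the super-density theorem in the converse direction: the image of $A\otimes B$ in $\mathrm{End}_{\mathbb{F}}(V\otimes W)$ equals the supercentralizer of $\mathrm{End}_{A\otimes B}(V\boxtimes W)$. In each of these two cases this supercentralizer is all of $\mathrm{End}_{\mathbb{F}}(V\otimes W)$ (respectively, the supercentralizer of $\mathrm{Cliff}_{1}$ acting by $1\otimes\theta_{W}$), which acts transitively on nonzero vectors up to the Clifford action, giving irreducibility. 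The type then follows from the dimension of the endomorphism ring.

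Case (iii) is the main obstacle and requires an explicit construction. Here $\mathrm{End}_{A\otimes B}(V\boxtimes W)\cong\mathrm{Cliff}_{1}\otimes\mathrm{Cliff}_{1}$, and the crucial point is that this graded tensor product is isomorphic to $\mathrm{Mat}_{1|1}(\mathbb{F})$, which is not a superdivision algebra. Let $\theta_{V}$ and $\theta_{W}$ be odd automorphisms generating the respective Clifford superalgebras, normalized so that $\theta_{V}^{2}=\theta_{W}^{2}=1$, and consider the even endomorphism
\[
\sigma(v\otimes w)=(-1)^{|v|}\,\theta_{V}v\otimes\theta_{W}w .
\]
A direct check shows $\sigma^{2}=\mathrm{id}$ and that $\sigma$ commutes with the $(A\otimes B)$-action in the super sense, so its $\pm1$-eigenspaces give a decomposition $V\boxtimes W=U_{+}\oplus U_{-}$ into $(A\otimes B)$-submodules. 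Define $V\circledast W:=U_{+}$; the odd operator $1\otimes\theta_{W}$ interchanges $U_{+}$ and $U_{-}$ and squares to the identity, which identifies $U_{-}\cong\Pi U_{+}$, proving $V\boxtimes W\cong(V\circledast W)\oplus\Pi(V\circledast W)$. I would then verify that $V\circledast W$ is irreducible of type $M$ by computing $\mathrm{End}_{A\otimes B}(V\circledast W)$: the only $(A\otimes B)$-equivariant endomorphisms of $U_{+}$ are the scalars, because any such endomorphism extends to an element of $\mathrm{Cliff}_{1}\otimes\mathrm{Cliff}_{1}$ commuting with $\sigma$, and the $\sigma$-centralizer inside $\mathrm{Cliff}_{1}\otimes\mathrm{Cliff}_{1}$ is just $\mathbb{F}\oplus\mathbb{F}\sigma=\mathbb{F}\oplus\mathbb{F}$, which acts on $U_{+}$ as $\mathbb{F}$.

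Finally, to verify the ``moreover'' clause, I would use a Wedderburn-style counting argument: since $A$ and $B$ are finite dimensional, the semisimple quotients of $A$ and $B$ contribute known dimensions via their irreducibles, and the analogous decomposition of the semisimple quotient of $A\otimes B$ accounts for every irreducible as a constituent of some $V\boxtimes W$. The main technical hurdle throughout is keeping the Koszul signs consistent in the super-density and centralizer computations, especially in case (iii) where the nontriviality of $\mathrm{Cliff}_{1}\otimes\mathrm{Cliff}_{1}$ as a graded algebra is the whole content of the statement.
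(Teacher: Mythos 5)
The paper does not prove this lemma; it merely cites Kleshchev's book ([\cite{KL}], Lemma 12.2.13), so there is no in-paper argument for you to have reproduced. Your reconstruction has the right overall shape but two steps go wrong as written. The operator $\sigma(v\otimes w)=(-1)^{|v|}\theta_V v\otimes\theta_W w$ is nothing but the graded tensor $\theta_V\otimes\theta_W$, and since both factors are odd one gets $\sigma^2=-\theta_V^2\otimes\theta_W^2=-\text{id}$, not $\text{id}$; the ``direct check'' you invoke therefore fails, and the two submodules you want are the $\pm i$-eigenspaces (equivalently, replace $\sigma$ by $i\,\theta_V\otimes\theta_W$). More seriously, the way you deduce irreducibility in cases (i) and (ii) is circular as stated: appealing to a double-centralizer identity for $V\boxtimes W$ over $A\otimes B$ presupposes semisimplicity of $V\boxtimes W$ over $A\otimes B$, which is exactly what is in question. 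The correct route is to apply super-density only to $V$ over $A$ and to $W$ over $B$, so that the image of $A\otimes B$ in $\text{End}_{\mathbb{F}}(V\otimes W)$ is $\text{End}_{D_V}(V)\otimes\text{End}_{D_W}(W)$ with $D_V=\text{End}_A(V)$ and $D_W=\text{End}_B(W)$, and then invoke the classification of tensor products of simple superalgebras, namely $\mathcal{M}_{m,n}\otimes\mathcal{M}_{k,l}\cong\mathcal{M}_{mk+nl,\,ml+nk}$, $\mathcal{M}_{m,n}\otimes Q_k\cong Q_{(m+n)k}$ and $Q_m\otimes Q_n\cong\mathcal{M}_{mn,mn}$ (precisely the three isomorphisms the paper records just before the lemma), to conclude that this image is a simple superalgebra --- hence that $V\boxtimes W$ is semisimple --- and then read off the length and type from the superdivision algebra $D_V\otimes D_W$. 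With those repairs the rest of your argument goes through.
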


Now we will recall some basics on superalgebras over $\mathbb{F}$ (c.f. [\cite{KL}], Section 12.1). Let $V$ be a superspace with $\text{\underline{dim}}\,V=(m,n)$ then $\mathcal{M}(V):=\text{End}_\mathbb{F}(V)$ is a superalgebra with $\text{\underline{dim}}\,\mathcal{M}(V)=(m^2+n^2,2mn)$. The algebra $\mathcal{M}(V)$ is defined uniquely up to an isomorphism by the superdimension $(m,n)$ of $V$. So we can speak of the superalgebra $\mathcal{M}_{m,n}$. We have an isomorphism of superalgebras
\begin{equation}\label{MM}
\mathcal{M}_{m,n}\otimes\mathcal{M}_{k,l}\cong\mathcal{M}_{mk+nl,ml+nk}.
\end{equation}

Let $V$ be a superspace with $\text{\underline{dim}}\,V=(n,n)$ and $J$ be a degree $\bar{1}$ involution in $\text{End}_\mathbb{F}(V)$. Consider the superalgebra $Q(V,J):=\{f\in\text{End}_\mathbb{F}(V)\,|\,fJ=(-1)^{|f|}Jf\}$. Note that all degree $\bar{1}$ involutions in $\text{End}_\mathbb{F}(V)$ are conjugate to each other by an invertible element in $\text{End}_\mathbb{F}(V)_{\bar 0}$. Hence another choice of $J$ will yield an isomorphism superalgebra. So we can speak of the superalgebra $Q(V)$, defined up to an isomorphism. Pick a basis $\{v_1,\cdots, v_n\}$ of $V_{\bar0}$, and set $v'_i=J(v_i)$ for $1\leqslant i\leqslant n$. Then $\{v'_1,\cdots, v'_n\}$ is a basis of $V_{\bar1}$. With respect to the basis $\{v_1,\cdots, v_n,v'_1,\cdots, v'_n\}$, the elements of $Q(V,J)$ have matrices of the form
\begin{equation}\label{QAB}
\left(
\begin{array}{cl}
A&B\\
-B&A
\end{array}
\right),
\end{equation}
where $A$ and $B$ are arbitrary $n\times n$ matrices, with $B=0$ for even endomorphisms and $A=0$ for odd ones. In particular, $\text{\underline{dim}}\,Q(V)=(n^2,n^2)$. The superalgebra $Q(V,J)$ can be identified with the superalgebra $Q_n$ of all matrices of the form \eqref{QAB}. Moreover, ([\cite{KL}], (12.6) \& (12.7)) show that
\begin{equation}\label{MQ}
\mathcal{M}_{m,n}\otimes Q_k\cong Q_{(m+n)k}
\end{equation}and
\begin{equation}\label{QQ}
Q_m\otimes Q_n\cong \mathcal{M}_{mn,mn}
\end{equation}
as $\mathbb{F}$-algebras.

Now we turn to the representations of reduced enveloping algebras for a direct sum of basic classical Lie superalgebras with nilpotent $p$-characters over $\mathds{k}$. In ([\cite{WZ}], Remark 4.6), Wang-Zhao showed that Proposition~\ref{wzd} still establishes for the case when $\mathfrak{l}_\mathds{k}$ is a direct sum of basic classical Lie superalgebras.

In fact, their result can be somewhat strengthened. Let $\mathfrak{l}_\mathds{k}=\bigoplus\limits_{i=1}^r(\mathfrak{l}_\mathds{k})_i$ be a direct sum of basic classical Lie superalgebras over $\mathds{k}=\overline{\mathbb{F}}_p$, where $(\mathfrak{l}_\mathds{k})_i$ is a basic classical Lie superalgebra for each $1\leqslant i\leqslant r$. Let $\chi=\chi_1+\cdots+\chi_r$ be the decomposition of nilpotent $p$-character $\chi$ in $\mathfrak{l}_\mathds{k}^*$ with $\chi_i\in(\mathfrak{l}_\mathds{k})_i^*$ (which can be viewed in $\mathfrak{l}_\mathds{k}^*$ by letting $\chi_i(\bar y)=0$ for all $\bar y\in\bigoplus\limits_{j\neq i}(\mathfrak{l}_\mathds{k})_j$) for $1\leqslant i\leqslant r$. Set $\bar e=\bar e_1+\cdots+\bar e_r$ be the corresponding decomposition of $\bar e\in (\mathfrak{l}_\mathds{k})_{\bar0}$ with respect to the non-degenerated bilinear form $(\cdot,\cdot)$ on $\mathfrak{l}_\mathds{k}$ such that $\chi_i(\cdot)=(\bar e_i,\cdot)$  for $1\leqslant i\leqslant r$.
Define
\begin{equation}\label{numsum}
\begin{array}{rcl}
d'_0&:=&\text{dim}\,(\mathfrak{l}_\mathds{k})_{\bar0}-\text{dim}\,(\mathfrak{l}_\mathds{k}^{\bar e})_{\bar0},\\
d'_1&:=&\text{dim}\,(\mathfrak{l}_\mathds{k})_{\bar1}-\text{dim}\,(\mathfrak{l}_\mathds{k}^{\bar e})_{\bar1},\\
(d_0)_i&:=&\text{dim}~((\mathfrak{l}_\mathds{k})_i)_{\bar0}-\text{dim}~((\mathfrak{l}_\mathds{k})^{\bar e_i}_i)_{\bar0},\\
(d_1)_i&:=&\text{dim}~((\mathfrak{l}_\mathds{k})_i)_{\bar1}-\text{dim}~((\mathfrak{l}_\mathds{k})^{\bar e_i}_i)_{\bar1},
\end{array}
\end{equation}where $\mathfrak{l}_\mathds{k}^{\bar e}$ denotes the centralizer of $\bar e$ in $\mathfrak{l}_\mathds{k}$, and $(\mathfrak{l}_\mathds{k})^{\bar e_i}_i$ the centralizer of $\bar e_i$ in $(\mathfrak{l}_\mathds{k})_i$ for each $i\in\{1,\cdots,r\}$. It is obvious that $d'_0=\sum\limits_{i=1}^r(d_0)_i$ and $d'_1=\sum\limits_{i=1}^r(d_1)_i$. Rearrange the summands of $\mathfrak{l}_\mathds{k}=\bigoplus\limits_{i=1}^r(\mathfrak{l}_\mathds{k})_i$ such that $(d_1)_i$ is odd for $1\leqslant i\leqslant l$ (if it occurs) and $(d_1)_i$ is even for $l+1\leqslant i\leqslant r$ (if it occurs). In particular,  $d'_1$ and $l$ have the same parity.

Note that all the definitions and results introduced in preceding sections remain valid for the direct sum of basic classical Lie superalgebras. Let $\mathfrak{m}_\mathds{k}$ and $\mathfrak{m}'_\mathds{k}$ be the subalgebras of $\mathfrak{l}_\mathds{k}$ as defined in Section 4.1. Let $\mathfrak{m}_\mathds{k}=\bigoplus\limits_{i=1}^r(\mathfrak{m}_\mathds{k})_i$ and $\mathfrak{m}'_\mathds{k}=\bigoplus\limits_{i=1}^r(\mathfrak{m}'_\mathds{k})_i$ be the decomposition of $\mathfrak{m}_\mathds{k}$ \& $\mathfrak{m}'_\mathds{k}$ in $\mathfrak{l}_\mathds{k}$ respectively, where $(\mathfrak{m}_\mathds{k})_i,\,(\mathfrak{m}'_\mathds{k})_i\in(\mathfrak{l}_\mathds{k})_i$ for $1\leqslant i\leqslant r$. As $\mathfrak{m}_\mathds{k}$ is $p$-nilpotent and the linear function $\chi$ vanishes on the $p$-closure of $[\mathfrak{m}_\mathds{k},\mathfrak{m}_\mathds{k}]$, it follows from ([\cite{WZ}], Proposition 2.6) that $U_{\chi}(\mathfrak{m}_\mathds{k})$ has a unique irreducible module and $U_{\chi}(\mathfrak{m}_\mathds{k})/N_{\mathfrak{m}_\mathds{k}}\cong \mathds{k}$, where $N_{\mathfrak{m}_\mathds{k}}$ is the Jacobson radical of $U_{\chi}(\mathfrak{m}_\mathds{k})$ which is generated by all the elements $\bar x-\chi(\bar x)$ with $\bar x\in\mathfrak{m}_\mathds{k}$.

First note that ([\cite{WZ}], Proposition 4.1) shows that every
$\mathds{k}$-algebra $U_{\chi_i}((\mathfrak{m}_\mathds{k})_i)$ ($1\leqslant i\leqslant r$) has a unique simple module (we find that there is a minor error in ([\cite{WZ}], Section 4.1) since which is not necessary a trivial module) which is $1$-dimensional and of type $M$. For the case when $(d_1)_i$ is odd (i.e. $1\leqslant i\leqslant l$), the $\mathds{k}$-algebra
$U_{\chi_i}((\mathfrak{m}'_\mathds{k})_i)$ also has a unique simple module; it is isomorphic to $V_i=U_{\chi_i}((\mathfrak{m}'_\mathds{k})_i)\otimes_{U_{\chi_i}((\mathfrak{m}_\mathds{k})_i)}\bar1_{\chi_i}$, which is $2$-dimensional and of type $Q$. Let $N_{(\mathfrak{m}_\mathds{k})_i}$ and $N_{(\mathfrak{m}'_\mathds{k})_i}$ denote the Jacobson radical of $U_{\chi_i}((\mathfrak{m}_\mathds{k})_i)$ and $U_{\chi_i}((\mathfrak{m}'_\mathds{k})_i)$ (which are the ideals of $U_{\chi_i}((\mathfrak{m}_\mathds{k})_i)$ and $U_{\chi_i}((\mathfrak{m}'_\mathds{k})_i)$ generated by all the elements $\bar x-\chi(\bar x)$ with $\bar x\in(\mathfrak{m}_\mathds{k})_i$, respectively). Then $U_{\chi_i}((\mathfrak{m}_\mathds{k})_i)/N_{(\mathfrak{m}_\mathds{k})_i}=\mathds{k}$, and $U_{\chi_i}((\mathfrak{m}'_\mathds{k})_i)/N_{(\mathfrak{m}'_\mathds{k})_i}$ is isomorphic to the simple superalgebra $Q_1$. For the case when $(d_1)_i$ is even (i.e. $l+1\leqslant i\leqslant r$), we have $U_{\chi_i}((\mathfrak{m}'_\mathds{k})_i)=U_{\chi_i}((\mathfrak{m}_\mathds{k})_i)$ and $N_{(\mathfrak{m}'_\mathds{k})_i}=N_{(\mathfrak{m}_\mathds{k})_i}$ since $(\mathfrak{m}'_\mathds{k})_i=(\mathfrak{m}_\mathds{k})_i$ by construction.

Since $\mathfrak{l}_\mathds{k}=\bigoplus\limits_{i=1}^r(\mathfrak{l}_\mathds{k})_i$, it is easy to verify that
\begin{equation}\label{NN'}
U_\chi(\mathfrak{m}_\mathds{k})\cong\bigotimes\limits_{i=1}^rU_{\chi_i}((\mathfrak{m}_\mathds{k})_i),\quad U_\chi(\mathfrak{m}'_\mathds{k})\cong\bigotimes\limits_{i=1}^rU_{\chi_i}((\mathfrak{m}'_\mathds{k})_i)
\end{equation}as $\mathds{k}$-algebras, respectively. For a $U_{\chi}(\mathfrak{l}_\mathds{k})$-module $M$ set $$M^{\mathfrak{m}_\mathds{k}}=\{v\in M\,|\,(\bar x-\chi(\bar x)).v=0~\text{for all}~\bar x\in\mathfrak{m}_\mathds{k}\}.$$As $\mathfrak{l}_\mathds{k}$ (which is a finite dimensional restricted Lie superalgebra) is a direct sum of basic classical Lie superalgebras, the same discussion as ([\cite{WZ}], Proposition 4.2) shows that every $U_{\chi}(\mathfrak{l}_\mathds{k})$-module $M$ is $U_{\chi}(\mathfrak{m}_\mathds{k})$-free and $M\cong U_{\chi}(\mathfrak{m}_\mathds{k})^*\otimes_{\mathds{k}}M^{\mathfrak{m}_\mathds{k}}$ as $U_{\chi}(\mathfrak{m}_\mathds{k})$-modules (which can also be inferred from ([\cite{WZ}], Remark 4.6)).

Let $N_{\mathfrak{m}'_\mathds{k}}$ denote the ideal of $U_\chi(\mathfrak{m}'_\mathds{k})$ generated by all the elements $\bar x-\chi(\bar x)$ with $\bar x\in\mathfrak{m}_\mathds{k}$, then $M^{\mathfrak{m}_\mathds{k}}$ is a $U_\chi(\mathfrak{m}'_\mathds{k})/N_{\mathfrak{m}'_\mathds{k}}$-module by definition.  Since $\mathfrak{m}'_\mathds{k}=\bigoplus\limits_{i=1}^r(\mathfrak{m}'_\mathds{k})_i$, one can conclude from \eqref{NN'} and its preceding remark that
\begin{equation*}
\begin{array}{rcccl}
U_\chi(\mathfrak{m}'_\mathds{k})/N_{\mathfrak{m}'_\mathds{k}}&\cong& U_\chi(\mathfrak{m}'_\mathds{k})\otimes_{U_\chi(\mathfrak{m}_\mathds{k})}\bar1_\chi
&\cong& U_\chi(\bigoplus\limits_{i=1}^r(\mathfrak{m}'_\mathds{k})_i)\otimes_{U_\chi(\bigoplus\limits_{i=1}^r
(\mathfrak{m}_\mathds{k})_i)}\bar1_\chi\\
&\cong&\bigotimes\limits_{i=1}^r(U_{\chi_i}((\mathfrak{m}'_\mathds{k})_i)
\otimes_{U_{\chi_i}((\mathfrak{m}_\mathds{k})_i)}\bar1_{\chi_i})
&\cong&\bigotimes\limits_{i=1}^rU_{\chi_i}((\mathfrak{m}'_\mathds{k})_i)/N_{(\mathfrak{m}'_\mathds{k})_i}\\
&\cong&\overbrace{Q_1\otimes\cdots\otimes Q_1}^{l}\otimes\overbrace{\mathds{k}\otimes\cdots\otimes\mathds{k}}^{r-l}
&\cong&\overbrace{Q_1\otimes\cdots\otimes Q_1}^{l}
\end{array}
\end{equation*}as $\mathds{k}$-algebras.

Now we will introduce the refined Super Kac-Weisfeiler Property for the direct sum of basic classical Lie superalgebras with nilpotent $p$-characters.

\begin{prop}\label{sumdivisible}
Let $\mathfrak{l}_\mathds{k}$ be a direct sum of basic classical Lie superalgebras over $\mathds{k}=\overline{\mathbb{F}}_p$  with $\chi$ a nilpotent $p$-character in $(\mathfrak{l}_\mathds{k}^*)_{\bar0}$. Retain the assumptions above. Then for the primes $p$ which satisfy the restrictions imposed in ([\cite{WZ}], Table 1), the dimension of every $U_\chi(\mathfrak{l}_\mathds{k})$-module $M$ is divisible by $p^{\frac{d'_0}{2}}2^{\frac{d'_1+l}{2}}$.
\end{prop}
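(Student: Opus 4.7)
The plan is to upgrade the freeness statement implicit in the preamble (where $M$ is shown to be $U_\chi(\mathfrak{m}_\mathds{k})$-free, yielding only $p^{d'_0/2}2^{(d'_1-l)/2}$ divisibility) to a freeness statement over the larger super-algebra $U_\chi(\mathfrak{m}'_\mathds{k})$. Since $\dim U_\chi(\mathfrak{m}'_\mathds{k})=p^{d'_0/2}2^{(d'_1+l)/2}$, as computed in the paragraphs preceding the proposition, such a refined freeness would immediately give the stated divisibility. My plan is therefore to verify that Wang--Zhao's Proposition~4.2 extends verbatim with $\mathfrak{m}_\mathds{k}$ replaced by $\mathfrak{m}'_\mathds{k}$ throughout.

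First, I would check that $\mathfrak{m}'_\mathds{k}$ is $p$-nilpotent and that $\chi$ vanishes on the $p$-closure of $[\mathfrak{m}'_\mathds{k},\mathfrak{m}'_\mathds{k}]$. The new odd generators $v_{(r_i+1)/2}\in(\mathfrak{m}'_\mathds{k})_i$ for $i\leqslant l$ satisfy $[v_{(r_i+1)/2},\mathfrak{m}_\mathds{k}]\subseteq\mathfrak{m}_\mathds{k}$ with $\chi$-value zero (as already used in verifying that $N_{\mathfrak{m}'_\mathds{k}}$ is two-sided), and the bracket $[v_{(r_i+1)/2},v_{(r_j+1)/2}]$ lies in $\mathfrak{g}(-2)_{\bar0}\subseteq(\mathfrak{m}_\mathds{k})_{\bar 0}$ with $\chi$-value $\delta_{ij}$, so the required nilpotency and vanishing properties follow from the analogous ones for $\mathfrak{m}_\mathds{k}$. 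Consequently $U_\chi(\mathfrak{m}'_\mathds{k})\cong\bigotimes_i U_{\chi_i}((\mathfrak{m}'_\mathds{k})_i)$ is a Frobenius super-algebra of dimension $p^{d'_0/2}2^{(d'_1+l)/2}$, with Jacobson radical $N_{\mathfrak{m}'_\mathds{k}}$ and semisimple quotient $Q_1^{\otimes l}$ as already identified. With the PBW ordering that places a homogeneous complement to $\mathfrak{m}'_\mathds{k}$ last, $U_\chi(\mathfrak{l}_\mathds{k})$ is free as a left $U_\chi(\mathfrak{m}'_\mathds{k})$-module. I would then adapt the Skryabin/Morita argument of Theorem~\ref{reducedfunctors} to the direct-sum setting, combined with the Frobenius property, to deduce that $Q_\chi^\chi=U_\chi(\mathfrak{l}_\mathds{k})/U_\chi(\mathfrak{l}_\mathds{k})N_{\mathfrak{m}_\mathds{k}}$ is itself free over $U_\chi(\mathfrak{m}'_\mathds{k})$, and hence so is every $U_\chi(\mathfrak{l}_\mathds{k})$-module $M$ (each being a direct summand of a sum of copies of $Q_\chi^\chi$ and inheriting its $U_\chi(\mathfrak{m}'_\mathds{k})$-free structure via the super-Frobenius pairing).

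The main obstacle will be verifying that the left ideal $U_\chi(\mathfrak{l}_\mathds{k})N_{\mathfrak{m}_\mathds{k}}$ is a direct summand of $U_\chi(\mathfrak{l}_\mathds{k})$ as a left $U_\chi(\mathfrak{m}'_\mathds{k})$-module with a compatible PBW basis, so that the quotient $Q_\chi^\chi$ inherits genuine freeness rather than mere projectivity; this matters precisely for $l\geqslant2$, when the semisimple quotient $Q_1^{\otimes l}$ is no longer local and projective modules need not be free. The key identity needed is the super-commutator relation $v_{(r_i+1)/2}\cdot(\bar x-\chi(\bar x))\equiv\pm(\bar x-\chi(\bar x))v_{(r_i+1)/2}\pmod{N_{\mathfrak{m}_\mathds{k}}}$ for $\bar x\in\mathfrak{m}_\mathds{k}$, combined with the super-Frobenius pairing on $U_\chi(\mathfrak{m}'_\mathds{k})$, to split the $v_{(r_i+1)/2}$-weight decomposition of $U_\chi(\mathfrak{l}_\mathds{k})N_{\mathfrak{m}_\mathds{k}}$ against the ambient PBW basis.
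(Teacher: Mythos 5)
Your route is genuinely different from the paper's, and more ambitious than is needed or achievable. The paper's proof does \emph{not} pass through $U_\chi(\mathfrak{m}'_\mathds{k})$-freeness: it uses the known $U_\chi(\mathfrak{m}_\mathds{k})$-freeness of $M$, so that $\dim M=\dim U_\chi(\mathfrak{m}_\mathds{k})\cdot\dim M^{\mathfrak{m}_\mathds{k}}$, regards $M^{\mathfrak{m}_\mathds{k}}$ as a module over the semisimple quotient $U_\chi(\mathfrak{m}'_\mathds{k})/N_{\mathfrak{m}'_\mathds{k}}\cong Q_1^{\otimes l}$, and extracts the divisibility of $\dim M^{\mathfrak{m}_\mathds{k}}$ from the Wedderburn description of $Q_1^{\otimes l}$. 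The obstacle you flag in your last paragraph is not a technicality to be patched by a cleverer PBW ordering or Frobenius splitting --- it is fatal. $U_\chi(\mathfrak{m}'_\mathds{k})$-freeness of $M$ would force $M^{\mathfrak{m}_\mathds{k}}$ to be a \emph{free} $Q_1^{\otimes l}$-module, hence of dimension divisible by $2^l$; but for $l\geqslant 2$ the simple superalgebra $Q_1^{\otimes l}$ has total dimension $2^l$ while its unique simple module has dimension only $2^{l/2}$ (for $l$ even) or $2^{(l+1)/2}$ (for $l$ odd), so non-free $Q_1^{\otimes l}$-modules abound and there is no way to exclude them. Concretely, take $\mathfrak{l}_\mathds{k}=\mathfrak{osp}(1|2)_\mathds{k}\oplus\mathfrak{osp}(1|2)_\mathds{k}$ with regular $\chi_i$ on each factor, so that $d'_0=4$, $d'_1=2$, $l=2$ and $\dim U_\chi(\mathfrak{m}'_\mathds{k})=4p^2$. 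By Lemma~\ref{bon} together with the type-$Q$ assertion used in step (1) of the proof of Theorem~\ref{sumreachable} (cf.\ Proposition~\ref{typeq}), each factor has an irreducible type-$Q$ module $V_i$ of dimension $2p$; Lemma~\ref{AB}(iii) then gives an irreducible $U_\chi(\mathfrak{l}_\mathds{k})$-module $V_1\circledast V_2$ of dimension $\tfrac12(2p)(2p)=2p^2$, which is not even a multiple of $4p^2$. So the freeness you are aiming for simply does not hold.

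The same example casts doubt on the proposition itself once $l\geqslant 2$, so you should not expect any argument to close the gap without amending the statement. Reading the paper's own proof closely, the line $\underline{\dim}\,\mathfrak{m}_\mathds{k}=\bigl(\tfrac{d'_0}{2},\tfrac{d'_1-1}{2}\bigr)$ records the odd dimension of an $\mathfrak{m}$ built from a \emph{globally} maximal isotropic subspace of $\mathfrak{l}_\mathds{k}(-1)_{\bar 1}$, whereas the \emph{component-wise} $\mathfrak{m}_\mathds{k}=\bigoplus_i(\mathfrak{m}_\mathds{k})_i$ that makes $U_\chi(\mathfrak{m}'_\mathds{k})/N_{\mathfrak{m}'_\mathds{k}}$ come out as $Q_1^{\otimes l}$ has $\dim(\mathfrak{m}_\mathds{k})_{\bar 1}=\tfrac{d'_1-l}{2}$. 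Substituting the correct value into the paper's own product, one gets $p^{d'_0/2}2^{(d'_1-l)/2}\cdot 2^{l/2}=p^{d'_0/2}2^{d'_1/2}$ ($l$ even), resp.\ $p^{d'_0/2}2^{(d'_1-l)/2}\cdot 2^{(l+1)/2}=p^{d'_0/2}2^{(d'_1+1)/2}$ ($l$ odd), i.e.\ exactly the original Wang--Zhao bound $p^{d'_0/2}2^{\lfloor d'_1/2\rfloor}$ of Proposition~\ref{wzd}, which equals the claimed $p^{d'_0/2}2^{(d'_1+l)/2}$ only when $l\leqslant 1$. Your energy would be better spent checking the statement rather than trying to force $U_\chi(\mathfrak{m}'_\mathds{k})$-freeness.
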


\begin{proof}
For each $U_\chi(\mathfrak{l}_\mathds{k})$-module $M$, preceding remark shows that the $U_{\chi}(\mathfrak{m}_\mathds{k})$-module
\begin{equation}\label{MMm}
M\cong U_{\chi}(\mathfrak{m}_\mathds{k})^*\otimes_{\mathds{k}}M^{\mathfrak{m}_\mathds{k}}\end{equation} is free. Now we will consider the dimension of $M^{\mathfrak{m}_\mathds{k}}$ as a $\mathds{k}$-vector space. Recall that $M^{\mathfrak{m}_\mathds{k}}$ is a module over the superalgebra $U_\chi(\mathfrak{m}'_\mathds{k})/N_{\mathfrak{m}'_\mathds{k}}\cong\overbrace{Q_1\otimes\cdots\otimes Q_1}^{l}$. Based on the parity of $l$, for each case we will consider separately.

(i) When $l$ is odd, \eqref{MQ} and \eqref{QQ} imply that
$$U_\chi(\mathfrak{m}'_\mathds{k})/N_{\mathfrak{m}'_\mathds{k}}\cong\overbrace{Q_1\otimes\cdots\otimes Q_1}^{l}\cong Q_{2^{\frac{l-1}{2}}}.$$
Since $Q_{2^{\frac{l-1}{2}}}$ is a simple superalgebra whose unique simple module is $2\cdot2^{\frac{l-1}{2}}=2^{\frac{l+1}{2}}$-dimensional, it follows from Wedderburn's Theorem ([\cite{KL}], Theorem 12.2.9) that every $Q_{2^{\frac{l-1}{2}}}$-module has dimension divisible by $2^{\frac{l+1}{2}}$. In particular, the dimension of $M^{\mathfrak{m}_\mathds{k}}$ (as a vector space) is divisible by $2^{\frac{l+1}{2}}$. By the same discussion as ([\cite{WZ}], Theorem 4.3) we can conclude that $\text{\underline{dim}}\,\mathfrak{m}_\mathds{k}=(\frac{d'_0}{2},\frac{d'_1-1}{2})$, then dim\,$U_\chi(\mathfrak{m}_\mathds{k})=p^{\frac{d'_0}{2}}2^{\frac{d'_1-1}{2}}$. Together with \eqref{MMm} this implies that each $U_\chi(\mathfrak{l}_\mathds{k})$-module $M$ has dimension divisible by $p^{\frac{d'_0}{2}}2^{\frac{d'_1-1}{2}}\cdot2^{\frac{l+1}{2}}=p^{\frac{d'_0}{2}}2^{\frac{d'_1+l}{2}}$.

(ii) When $l$ is even, it follows from \eqref{MQ} and \eqref{QQ} that
$$U_\chi(\mathfrak{m}'_\mathds{k})/N_{\mathfrak{m}'_\mathds{k}}\cong\overbrace{Q_1\otimes\cdots\otimes Q_1}^{l}\cong
\mathcal{M}_{2^{\frac{l}{2}-1},2^{\frac{l}{2}-1}}.$$
Since $\mathcal{M}_{2^{\frac{l}{2}-1},2^{\frac{l}{2}-1}}$ is a simple superalgebra whose unique simple module is $2^{\frac{l}{2}}$-dimensional, it follows from Wedderburn's Theorem that every $\mathcal{M}_{2^{\frac{l}{2}-1},2^{\frac{l}{2}-1}}$-module has dimension divisible by $2^{\frac{l}{2}}$. In particular, the dimension of $M^{\mathfrak{m}_\mathds{k}}$ is divisible by $2^{\frac{l}{2}}$. The same discussion as ([\cite{WZ}], Theorem 4.3) shows that $\text{\underline{dim}}\,\mathfrak{m}_\mathds{k}=(\frac{d'_0}{2},\frac{d'_1}{2})$, then dim\,$U_\chi(\mathfrak{m}_\mathds{k})=p^{\frac{d'_0}{2}}2^{\frac{d'_1}{2}}$. Together with \eqref{MMm} this implies that each $U_\chi(\mathfrak{l}_\mathds{k})$-module $M$ has dimension divisible by $p^{\frac{d'_0}{2}}2^{\frac{d'_1}{2}}\cdot2^{\frac{l}{2}}=p^{\frac{d'_0}{2}}2^{\frac{d'_1+l}{2}}$.

All the discussions in (i) and (ii) complete the proof.

\end{proof}

\begin{rem}\label{refine}
Recall that $d'_1$ and $l$ have the same parity by the previous remark. It is obvious that Proposition~\ref{sumdivisible} coincides with the consequence obtained by Wang-Zhao in ([\cite{WZ}], Remark 4.6) if at most only one of the $(d_1)_i's$ is odd for $1\leqslant i\leqslant r$. But for the case when more than two $(d_1)_i's$ are odd for $1\leqslant i\leqslant r$, the boundary introduced in Proposition~\ref{sumdivisible} is much larger. Therefore, compared with the result obtained by Wang-Zhao, the characterization for the dimension of the $U_\chi(\mathfrak{l}_\mathds{k})$-modules in Proposition~\ref{sumdivisible} is optimal.
\end{rem}

Under the assumption of Conjecture~\ref{con}, the following theorem shows that the minimal dimension for the representations of reduced enveloping algebra $U_\chi(\mathfrak{l}_\mathds{k})$ with nilpotent $p$-character $\chi\in(\mathfrak{l}_\mathds{k}^*)_{\bar0}$ in Proposition~\ref{sumdivisible} is reachable.

\begin{theorem}\label{sumreachable}
Retain the assumptions as Proposition~\ref{sumdivisible}. For any basic classical Lie superalgebra $\mathfrak{g}$ over $\mathbb{C}$, assume that

(i) when dim\,$\mathfrak{g}(-1)_{\bar1}$ is even, the finite $W$-superalgebra $U(\mathfrak{g},e)$ affords a $1$-dimensional representation;

(ii) when dim\,$\mathfrak{g}(-1)_{\bar1}$ is odd, the finite $W$-superalgebra $U(\mathfrak{g},e)$ affords a $2$-dimensional representation.

Let $\mathfrak{l}_\mathds{k}$ be a direct sum of basic classical Lie superalgebras over $\mathds{k}=\overline{\mathbb{F}}_p$, and let $\chi$ be a nilpotent $p$-character in $(\mathfrak{l}_\mathds{k}^*)_{\bar0}$. Then for $p\gg0$ the reduced enveloping algebra $U_\chi(\mathfrak{l}_\mathds{k})$ admits irreducible representations of dimension $p^{\frac{d'_0}{2}}2^{\frac{d'_1+l}{2}}$.
\end{theorem}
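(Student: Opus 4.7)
The plan is to realize an irreducible $U_\chi(\mathfrak{l}_\mathds{k})$-module of the required dimension as an outer tensor product of the minimal-dimensional irreducibles over the basic classical summands, and then to force irreducibility using the divisibility bound established in Proposition~\ref{sumdivisible}.

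First, for each $i \in \{1,\dots,r\}$ I would apply Theorem~\ref{main2} to the basic classical factor $(\mathfrak{l}_\mathds{k})_i$ with nilpotent $p$-character $\chi_i$; the hypothesis that Conjecture~\ref{con} holds for every basic classical superalgebra supplies precisely the input needed, and since there are only finitely many summands we may pick a single threshold so that $p\gg 0$ works simultaneously for all of them. This yields, for each $i$, an irreducible $U_{\chi_i}((\mathfrak{l}_\mathds{k})_i)$-module $V_i$ of dimension $p^{(d_0)_i/2}\,2^{(d_1)_i/2}$ when $(d_1)_i$ is even and $p^{(d_0)_i/2}\,2^{((d_1)_i+1)/2}$ when $(d_1)_i$ is odd.

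Next, exploiting the graded-algebra isomorphism $U_\chi(\mathfrak{l}_\mathds{k})\cong \bigotimes_{i=1}^{r} U_{\chi_i}((\mathfrak{l}_\mathds{k})_i)$ induced by the direct-sum decomposition of $\mathfrak{l}_\mathds{k}$, I would form the outer tensor product $V:=V_1\boxtimes\cdots\boxtimes V_r$, which is naturally a $U_\chi(\mathfrak{l}_\mathds{k})$-module. A direct exponent count, using that exactly $l$ of the $(d_1)_i$ are odd so that each contributes an extra factor of $2^{1/2}$, gives
$$\dim V \;=\; \prod_{i=1}^{r}\dim V_i \;=\; p^{d'_0/2}\,2^{\sum_{i}\lceil (d_1)_i/2\rceil} \;=\; p^{d'_0/2}\,2^{(d'_1+l)/2}.$$
In particular, $\dim V$ coincides exactly with the lower bound asserted in Proposition~\ref{sumdivisible}.

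The final step is to show that $V$ is actually irreducible as a $U_\chi(\mathfrak{l}_\mathds{k})$-module. For this I would invoke Proposition~\ref{sumdivisible}: since $V$ is a nonzero finite-dimensional $U_\chi(\mathfrak{l}_\mathds{k})$-module, it contains a simple submodule $W$, and $\dim W$ must be divisible by $p^{d'_0/2}\,2^{(d'_1+l)/2}=\dim V$; forcing $W=V$. Thus $V$ itself realizes an irreducible $U_\chi(\mathfrak{l}_\mathds{k})$-module of dimension $p^{d'_0/2}\,2^{(d'_1+l)/2}$, which is precisely the claim.

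The conceptually cleanest but potentially delicate point is the irreducibility step: a priori one might worry that Lemma~\ref{AB}(iii) could split $V$ into smaller pieces if several $V_i$ are of type $Q$, but Proposition~\ref{sumdivisible} rules this out a posteriori, and in fact reinterprets the divisibility statement as forcing at most one of the factors to be of type $Q$. The main obstacle in writing a fully detailed proof is therefore not the construction itself (which is short) but checking that Proposition~\ref{sumdivisible} is applicable verbatim with the chosen $\mathfrak{m}_\mathds{k}$ and that the large-$p$ thresholds from Theorem~\ref{main2} and from the restrictions imposed in \cite{WZ} can be combined uniformly over the summands.
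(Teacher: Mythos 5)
Your proposal is correct, but it takes a genuinely different route from the paper's proof.

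The paper works at the level of reduced $W$-superalgebras: it factorizes $U_\chi(\mathfrak{l}_\mathds{k},\bar e)\cong\bigotimes_{i}U_{\chi_i}((\mathfrak{l}_\mathds{k})_i,\bar e_i)$, invokes Corollary~\ref{1122} to produce $1$- or $2$-dimensional simples for each factor, explicitly declares and tracks their types ($M$ or $Q$) through Kleshchev's Lemma~\ref{AB} to assemble an irreducible of the tensor product of the right dimension, and finally transports this through the Morita equivalence $U_\chi(\mathfrak{l}_\mathds{k})\cong\mathrm{Mat}_\delta(U_\chi(\mathfrak{l}_\mathds{k},\bar e))$. You instead stay at the level of reduced enveloping algebras of the summands, form the outer tensor product $V=V_1\boxtimes\cdots\boxtimes V_r$ of minimal-dimensional simples supplied by Theorem~\ref{main2}, check the exponent count, and then force irreducibility ``a posteriori'' from the divisibility bound of Proposition~\ref{sumdivisible}. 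That last step is sound: a nonzero finite-dimensional $U_\chi(\mathfrak{l}_\mathds{k})$-module contains a simple submodule $W$, and Proposition~\ref{sumdivisible} bounds $\dim W$ from below by $\dim V$, so $W=V$. What your approach buys is that you never need to identify whether the minimal-dimensional modules over the $W$-superalgebra factors are of type $M$ or $Q$ — a point the paper states without a detailed argument and which is delicate (the odd generator $\Theta_{l+q+1}$ squaring to $\tfrac{1}{2}\mathrm{id}$ does not immediately supply an odd $U(\mathfrak{g},e)$-endomorphism, only a $W'_\chi$-endomorphism, cf. Proposition~\ref{typeq}). What you lose is any structural explanation of the irreducibility of $V$: the result is inferred from the lower bound rather than derived from the module structure, and in particular your argument only goes through if one trusts Proposition~\ref{sumdivisible} as a black box. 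It is worth noting, and you half-notice this yourself, that your argument implicitly shows that at most one of the $V_i$ can be of type $Q$ — otherwise Lemma~\ref{AB}(iii) would split $V$ into strictly smaller simples, contradicting Proposition~\ref{sumdivisible}; this is a nontrivial a posteriori consequence.
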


\begin{proof}
For each $1\leqslant i\leqslant r$, let $Q_{\chi_i}^{\chi_i}$ be the $(\mathfrak{l}_\mathds{k})_i$-module as defined in Section 4.1, and denote by $U_{\chi_i}((\mathfrak{l}_\mathds{k})_i,\bar e_i)=(\text{End}_{(\mathfrak{l}_\mathds{k})_i}Q_{\chi_i}^{\chi_i})^{\text{op}}$ the reduced $W$-superalgebra of basic classical Lie superalgebra $(\mathfrak{l}_\mathds{k})_i$ associated with nilpotent element $\bar e_i$. Let $Q_\chi^\chi$ be the $\mathfrak{l}_\mathds{k}$-module with the same definition as Section 4.1, and $U_{\chi}(\mathfrak{l}_\mathds{k},\bar e)$ the reduced $W$-superalgebra of $\mathfrak{l}_\mathds{k}$ associated with nilpotent element $\bar e$. Then we have
\begin{equation}\label{dir w}
\begin{array}{lllll}
U_{\chi}(\mathfrak{l}_\mathds{k},\bar e)&=&(\text{End}_{\mathfrak{l}_\mathds{k}}Q_{\chi}^{\chi})^{\text{op}}&\cong& (\text{End}_{\bigoplus\limits_{i=1}^r(\mathfrak{l}_\mathds{k})_i}\bigoplus\limits_{i=1}^rQ_{\chi_i}^{\chi_i})^{\text{op}}\\
&\cong&\bigotimes\limits_{i=1}^r
(\text{End}_{(\mathfrak{l}_\mathds{k})_i}Q_{\chi_i}^{\chi_i})^{\text{op}}&=&\bigotimes\limits_{i=1}^r U_{\chi_i}((\mathfrak{l}_\mathds{k})_i,\bar e_i)
\end{array}
\end{equation}as $\mathds{k}$-algebras.

(1) First consider the case when $1\leqslant i\leqslant l$. Under the assumption of the theorem, Corollary~\ref{1122} shows that the $\mathds{k}$-algebra $U_{\chi_i}((\mathfrak{l}_\mathds{k})_i,\bar e_i)$ admits $2$-dimensional representations for $1\leqslant i\leqslant l$.  Denote by $V_{1}$ and $V_{2}$ the $2$-dimensional irreducible representations (of type $Q$) of the $\mathds{k}$-algebras $U_{\chi_{1}}((\mathfrak{l}_\mathds{k})_{1},\bar e_{1})$ and $U_{\chi_{2}}((\mathfrak{l}_\mathds{k})_{2},\bar e_{2})$ (if occurs), respectively. It follows from Lemma~\ref{AB}(iii) that $V_{1}\boxtimes V_{2}\cong (V_{1}\circledast V_{2}) \oplus\Pi(V_{1}\circledast V_{2})$ as $U_{\chi_{1}}((\mathfrak{l}_\mathds{k})_{1},\bar e_{1})\otimes U_{\chi_{2}}((\mathfrak{l}_\mathds{k})_{2},\bar e_{2})$-modules, where $V_{1}\circledast V_{2}$ is an irreducible $U_{\chi_{1}}((\mathfrak{l}_\mathds{k})_{1},\bar e_{1})\otimes U_{\chi_{2}}((\mathfrak{l}_\mathds{k})_{2},\bar e_{2})$-module of type $M$. Since the $U_{\chi_{1}}((\mathfrak{l}_\mathds{k})_{1},\bar e_{1})\otimes U_{\chi_{2}}((\mathfrak{l}_\mathds{k})_{2},\bar e_{2})$-module $\Pi(V_{1}\circledast V_{2})$ is the same underlying vector space as $V_{1}\circledast V_{2}$ but with the opposite $\mathbb{Z}_2$-grading, we have $\text{dim}\,V_{1}\circledast V_{2}=\text{dim}\,\Pi(V_{1}\circledast V_{2})$ as vector spaces. Recall that the (outer) tensor product $V_{1}\boxtimes V_{2}$ is the same underlying vector space as $V_{1}\otimes V_{2}$, then $\text{dim}\,V_{1}\boxtimes V_{2}=\text{dim}\,V_{1}\otimes V_{2}=4$. From all above we can conclude that $V_{1}\circledast V_{2}$ is an irreducible $U_{\chi_{1}}((\mathfrak{l}_\mathds{k})_{1},\bar e_{1})\otimes U_{\chi_{2}}((\mathfrak{l}_\mathds{k})_{2},\bar e_{2})$-module of type $M$ with dimension $2=2^{\frac{2}{2}}$.

Denote by $V_{3}$ a $2$-dimensional irreducible representation (of type $Q$) of the $\mathds{k}$-algebra $U_{\chi_{3}}((\mathfrak{l}_\mathds{k})_{3},\bar e_{3})$ (if occurs). It follows from Lemma~\ref{AB}(ii) and the discussion above that $(V_{1}\circledast V_{2})\boxtimes V_{3}$ is an irreducible $(U_{\chi_{1}}((\mathfrak{l}_\mathds{k})_{1},\bar e_{1})\otimes U_{\chi_{2}}((\mathfrak{l}_\mathds{k})_{2},\bar e_{2}))\otimes U_{\chi_{3}}((\mathfrak{l}_\mathds{k})_{3},\bar e_{3})$-module of type $Q$. Hence $(V_{1}\circledast V_{2})\boxtimes V_{3}$ is an irreducible $U_{\chi_{1}}((\mathfrak{l}_\mathds{k})_{1},\bar e_{1})\otimes U_{\chi_{2}}((\mathfrak{l}_\mathds{k})_{2},\bar e_{2})\otimes U_{\chi_{3}}((\mathfrak{l}_\mathds{k})_{3},\bar e_{3})$-module (of type $Q$) of dimension $2\cdot2=4=2^{\frac{3+1}{2}}$.

(2) In light of Lemma~\ref{AB}(ii) \& (iii), induction on the number of the terms for $\bigotimes\limits_{i=1}^l U_{\chi_i}((\mathfrak{l}_\mathds{k})_i,e_i)$ with the same discussion as (1) one can show that

(i) when $l$ is odd, the $\mathds{k}$-algebra $\bigotimes\limits_{i=1}^l U_{\chi_i}((\mathfrak{l}_\mathds{k})_i,\bar e_i)$ admits an irreducible representation of type $Q$ with dimension $2^{\frac{l+1}{2}}$, and set it as $V$;

(ii) when $l$ is even, the $\mathds{k}$-algebra $\bigotimes\limits_{i=1}^l U_{\chi_i}((\mathfrak{l}_\mathds{k})_i,\bar e_i)$ admits an irreducible representation of type $M$ with dimension $2^{\frac{l}{2}}$ (assume that $l=0$ when the $(d_1)_i's$ are all even for $1\leqslant i\leqslant r$), and set it as $V'$.

Under the assumption of the theorem, Corollary~\ref{1122}(i) shows that the $\mathds{k}$-algebra $U_{\chi_i}((\mathfrak{l}_\mathds{k})_i,\bar e_i)$ admits $1$-dimensional representations of type $M$ for $l+1\leqslant i\leqslant r$ (if occurs). Easy induction based on Lemma~\ref{AB}(i) shows that the $\mathds{k}$-algebra $\bigotimes\limits_{l+1}^r U_{\chi_i}((\mathfrak{l}_\mathds{k})_i,\bar e_i)$ admits a $1$-dimensional representation of type $M$, and set it as $W$.

(3) Now we will consider the representations of the $\mathds{k}$-algebra $U_\chi(\mathfrak{l}_\mathds{k},\bar e)\cong\bigotimes\limits_{i=1}^r U_{\chi_i}((\mathfrak{l}_\mathds{k})_i,\bar e_i)$. Based on the parity of $l$, for each case we will consider separately.

(i) When $l$ is odd, (2) shows that the $\mathds{k}$-algebra $\bigotimes\limits_{i=1}^l U_{\chi_i}((\mathfrak{l}_\mathds{k})_i,\bar e_i)$ admits an irreducible representation $V$ of type $Q$ with dimension $2^{\frac{l+1}{2}}$, and the $\mathds{k}$-algebra $\bigotimes\limits_{l+1}^r U_{\chi_i}((\mathfrak{l}_\mathds{k})_i,\bar e_i)$ admits a $1$-dimensional representation $W$ of type $M$. It follows from Lemma~\ref{AB}(ii) that $V\boxtimes W$ is an irreducible $\bigotimes\limits_{i=1}^l U_{\chi_i}((\mathfrak{l}_\mathds{k})_i,\bar e_i)\otimes\bigotimes\limits_{l+1}^r U_{\chi_i}((\mathfrak{l}_\mathds{k})_i,\bar e_i)\cong\bigotimes\limits_{i=1}^r U_{\chi_i}((\mathfrak{l}_\mathds{k})_i,\bar e_i)$-module of type $Q$ with dimension $2^{\frac{l+1}{2}}$.

(ii) When $l$ is even, (2) shows that the $\mathds{k}$-algebra $\bigotimes\limits_{i=1}^l U_{\chi_i}((\mathfrak{l}_\mathds{k})_i,\bar e_i)$ admits an irreducible representation $V'$ of type $M$ with dimension $2^{\frac{l}{2}}$, and the $\mathds{k}$-algebra $\bigotimes\limits_{l+1}^r U_{\chi_i}((\mathfrak{l}_\mathds{k})_i,\bar e_i)$ admits a $1$-dimensional representation $W$ of type $M$. It follows from Lemma~\ref{AB}(i) that $V'\boxtimes W$ is an irreducible $\bigotimes\limits_{i=1}^l U_{\chi_i}((\mathfrak{l}_\mathds{k})_i,\bar e_i)\otimes\bigotimes\limits_{l+1}^r U_{\chi_i}((\mathfrak{l}_\mathds{k})_i,\bar e_i)\cong\bigotimes\limits_{i=1}^r U_{\chi_i}((\mathfrak{l}_\mathds{k})_i,\bar e_i)$-module of type $M$ with dimension $2^{\frac{l}{2}}$.

(4) Recall in ([\cite{WZ}], Remark~4.6) Wang-Zhao shows that
\begin{equation}\label{isosum}
U_\chi(\mathfrak{l}_\mathds{k})\cong \text{Mat}_{p^{\frac{d'_0}{2}}2^{\lceil\frac{d'_1}{2}\rceil}}(U_\chi(\mathfrak{l}_\mathds{k},\bar e))
\end{equation}as $\mathds{k}$-algebras. Based on the parity of $l$, for each case we will consider separately.

(i) When $l$ is odd, it follows from (3)(i) that the $\mathds{k}$-algebra $U_\chi(\mathfrak{l}_\mathds{k})$ affords irreducible representations of dimension $p^{\frac{d'_0}{2}}2^{\frac{d'_1-1}{2}}\cdot 2^{\frac{l+1}{2}}=p^{\frac{d'_0}{2}}2^{\frac{d'_1+l}{2}}$;

(ii) When $l$ is even, it follows from (3)(ii) that the $\mathds{k}$-algebra $U_\chi(\mathfrak{l}_\mathds{k})$ affords irreducible representations of dimension $p^{\frac{d'_0}{2}}2^{\frac{d'_1}{2}}\cdot 2^{\frac{l}{2}}=p^{\frac{d'_0}{2}}2^{\frac{d'_1+l}{2}}$.

All the discussions above complete the proof.
\end{proof}

\begin{rem}\label{refine'}
Recall in Remark~\ref{refine} we have showed that the boundary introduced in Proposition~\ref{sumdivisible} is much larger when more than two $(d_1)_i's$ are odd for $1\leqslant i\leqslant r$. In fact, careful inspection on the proof of Theorem~\ref{sumreachable} shows that the boundary obtained by Wang-Zhao in ([\cite{WZ}], Remark~4.6) can never be reached in this case, and the one introduced in Theorem~\ref{sumreachable} is optimal.
\end{rem}

\subsection{On the lower bound of the Super KW Property with arbitrary $p$-characters}
In this part we will discuss whether the lower bound of the Super Kac-Weisfeiler Property for a basic classical Lie superalgebra with any $p$-characters introduced by Wang-Zhao in ([\cite{WZ}], Theorem 5.6) can be reached.

Let $\mathfrak{g}_\mathds{k}$ be a basic classical Lie superalgebra over positive characteristic field $\mathds{k}=\overline{\mathbb{F}}_p$, and $\xi=\xi_{\bar s}+\xi_{\bar n}$ be the Jordan decomposition of $\xi\in(\mathfrak{g}_\mathds{k}^*)_{\bar0}$ (we regard $\xi\in\mathfrak{g}_\mathds{k}^*$ by letting $\xi((\mathfrak{g}_\mathds{k}^*)_{\bar1})=0$). Under the isomorphism $(\mathfrak{g}_\mathds{k}^*)_{\bar0}\cong(\mathfrak{g}_\mathds{k})_{\bar0}$ induced by the non-degenerated bilinear form $(\cdot,\cdot)$ on $(\mathfrak{g}_\mathds{k})_{\bar0}$, it can be identified with the usual Jordan decomposition $\bar x=\bar s+\bar n$ on $(\mathfrak{g}_\mathds{k})_{\bar0}$. Let $\mathfrak{h}_\mathds{k}$ be a Cartan subalgebra of $\mathfrak{g}_\mathds{k}$ and denote by $\mathfrak{l}_\mathds{k}=\mathfrak{g}_\mathds{k}^{\bar s}$ the centralizer of $\bar s$ in $\mathfrak{g}_\mathds{k}$. Let $\Phi$ be the root system of $\mathfrak{g}_\mathds{k}$ and $\Phi(\mathfrak{l}_\mathds{k}):=\{\alpha\in\Phi~|~\alpha(\bar s)=0\}$. By ([\cite{WZ}], Proposition 5.1) $\mathfrak{l}_\mathds{k}$ is always a direct sum of basic classical Lie superalgebras with a system $\Pi$ of simple roots of $\mathfrak{g}_\mathds{k}$ such that $\Pi\cap \Phi(\mathfrak{l}_\mathds{k})$ is a system of simple roots of $\Phi(\mathfrak{l}_\mathds{k})$ (note that a toral subalgebra of $\mathfrak{g}_\mathds{k}$ may also appear in the summand).

Set $$\mathfrak{l}_\mathds{k}=\mathfrak{g}_\mathds{k}^{\bar s}=\bigoplus\limits_{i=1}^r(\mathfrak{g}_\mathds{k})_i\oplus\mathfrak{t}'_\mathds{k},$$ where $(\mathfrak{g}_\mathds{k})_i$ is a basic classical Lie superalgebra for each $1\leqslant i\leqslant r$, and
$\mathfrak{t}'_\mathds{k}$ is a toral subalgebra of $\mathfrak{g}_\mathds{k}$. Then $\xi_{\bar n}=\xi_1+\cdots+\xi_r$ is a nilpotent $p$-character of $\mathfrak{l}_\mathds{k}$ with $\xi_i\in(\mathfrak{g}_\mathds{k})_i^*$ (which can be viewed in $\mathfrak{l}_\mathds{k}^*$ by letting $\xi_i(\bar y)=0$ for all $\bar y\in\bigoplus\limits_{j\neq i}(\mathfrak{g}_\mathds{k})_j\oplus\mathfrak{t}'_\mathds{k}$) for $1\leqslant i\leqslant r$. Let $\bar n=\bar n_1+\cdots+\bar n_r$ be the corresponding decomposition of $\bar n$ in  $\mathfrak{l}_\mathds{k}$ such that $\xi_i(\cdot)=(\bar n_i,\cdot)$  for $1\leqslant i\leqslant r$. For each $1\leqslant i\leqslant r$, denote by $U_{\xi_i}((\mathfrak{g}_\mathds{k})_i,\bar n_i)$ the reduced $W$-superalgebra of basic classical Lie superalgebra $(\mathfrak{g}_\mathds{k})_i$ associated with nilpotent element $\bar n_i$, then it is easy to verify that
$$U_{\xi_{\bar n}}(\bigoplus\limits_{i=1}^r(\mathfrak{g}_\mathds{k})_i,\bar n)\cong\bigotimes\limits_{i=1}^r U_{\xi_i}((\mathfrak{g}_\mathds{k})_i,\bar n_i)$$ by the same discussion as \eqref{dir w}.
Define
\begin{equation}\label{arbitdim}
\begin{array}{rcl}
d_0&:=&\text{dim}\,(\mathfrak{g}_\mathds{k})_{\bar0}-\text{dim}\,(\mathfrak{g}_\mathds{k}^{\bar x})_{\bar0},\\
d_1&:=&\text{dim}\,(\mathfrak{g}_\mathds{k})_{\bar1}-\text{dim}\,(\mathfrak{g}_\mathds{k}^{\bar x})_{\bar1},\\
(d_0)_i&:=&\text{dim}~((\mathfrak{g}_\mathds{k})_i)_{\bar0}-\text{dim}~((\mathfrak{g}_\mathds{k})^{\bar n_i}_i)_{\bar0},\\
(d_1)_i&:=&\text{dim}~((\mathfrak{g}_\mathds{k})_i)_{\bar1}-\text{dim}~((\mathfrak{g}_\mathds{k})^{\bar n_i}_i)_{\bar1},
\end{array}
\end{equation}where $\mathfrak{g}_\mathds{k}^{\bar x}$ denotes the centralizer of $\bar x$ in $\mathfrak{g}_\mathds{k}$, and $(\mathfrak{g}_\mathds{k})^{\bar n_i}_i$ the centralizer of $\bar n_i$ in $(\mathfrak{g}_\mathds{k})_i$ for each $i\in\{1,\cdots,r\}$, and set
\begin{equation}\label{sumdim}
d'_0:=\sum\limits_{i=1}^r(d_0)_i,\qquad
d'_1:=\sum\limits_{i=1}^r(d_1)_i.
\end{equation}
Rearrange the summands of $\bigoplus\limits_{i=1}^r(\mathfrak{g}_\mathds{k})_i$ such that each $(d_1)_i$ is odd for $1\leqslant i\leqslant l$ (if it occurs), and each $(d_1)_i$ is even for $l+1\leqslant i\leqslant r$ (if it occurs).

Let $\mathfrak{b}_\mathds{k}=\mathfrak{h}_\mathds{k}\oplus\mathfrak{n}_\mathds{k}$ be the Borel subalgebra associated to $\Pi$. Define a parabolic subalgebra $\mathfrak{p}_\mathds{k}=\mathfrak{l}_\mathds{k}+\mathfrak{b}_\mathds{k}=\mathfrak{l}_\mathds{k}\oplus\mathfrak{u}_\mathds{k}$, where $\mathfrak{u}_\mathds{k}$ is the nilradical of $\mathfrak{p}_\mathds{k}$. Since $\xi(\mathfrak{u}_\mathds{k})=0$ and $\xi|_{\mathfrak{l}_\mathds{k}}=\xi_{\bar n}|_{\mathfrak{l}_\mathds{k}}$ is nilpotent by ([\cite{WZ}], Section 5.1), any $U_\xi(\mathfrak{l}_\mathds{k})$-mod can be regarded as a $U_\xi(\mathfrak{p}_\mathds{k})$-mod with a trivial action of $\mathfrak{u}_\mathds{k}$. Wang-Zhao proved that the $\mathds{k}$-algebras $U_\xi(\mathfrak{g}_\mathds{k})$ and $U_\xi(\mathfrak{l}_\mathds{k})$ are Morita equivalent in ([\cite{WZ}], Theorem 5.2), and showed that for any irreducible $U_\xi(\mathfrak{g}_\mathds{k})$-mod $M$, $M^{\mathfrak{u}_\mathds{k}}$ is an irreducible $U_\xi(\mathfrak{l}_\mathds{k})$-mod (which is also a $U_\xi(\mathfrak{p}_\mathds{k})$-mod with a trivial action of $\mathfrak{u}_\mathds{k}$) and the natural map
\begin{equation}\label{gp}
U_\xi(\mathfrak{g}_\mathds{k})\otimes_{U_\xi(\mathfrak{p}_\mathds{k})}M^{\mathfrak{u}_\mathds{k}}\longrightarrow M
\end{equation}is an isomorphism in ([\cite{WZ}], Theorem 5.3)  .

\begin{lemma}\label{main3}
Let $\mathfrak{g}_\mathds{k}$ be a basic classical Lie superalgebra over $\mathds{k}=\overline{\mathbb{F}}_p$, and let $\xi\in (\mathfrak{g}_\mathds{k}^*)_{\bar0}$. Retain the notations as all above.
Then for the primes $p$ which satisfy the restrictions imposed in ([\cite{WZ}], Table 1), the dimension of every $U_\xi(\mathfrak{g}_\mathds{k})$-mod $M$ is divisible by $p^{\frac{d_0}{2}}2^{\frac{d_1+l}{2}}$. 
\end{lemma}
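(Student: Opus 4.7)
The plan is to reduce the arbitrary-$p$-character case to the nilpotent case on the Levi $\mathfrak{l}_\mathds{k}$ handled by Proposition~\ref{sumdivisible}, and then account for the difference between $d_0,d_1$ and $d_0',d_1'$ via the nilradical $\mathfrak{u}_\mathds{k}$. First I would invoke the isomorphism \eqref{gp} of Wang--Zhao: for every irreducible $U_\xi(\mathfrak{g}_\mathds{k})$-module $M$ one has $M\cong U_\xi(\mathfrak{g}_\mathds{k})\otimes_{U_\xi(\mathfrak{p}_\mathds{k})}M^{\mathfrak{u}_\mathds{k}}$ with $M^{\mathfrak{u}_\mathds{k}}$ an irreducible $U_\xi(\mathfrak{l}_\mathds{k})$-module. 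By PBW, $U_\xi(\mathfrak{g}_\mathds{k})$ is free of rank $p^{\dim(\mathfrak{u}^-_\mathds{k})_{\bar0}}2^{\dim(\mathfrak{u}^-_\mathds{k})_{\bar1}}$ over $U_\xi(\mathfrak{p}_\mathds{k})$, and $\dim(\mathfrak{u}^-_\mathds{k})_j=\dim(\mathfrak{u}_\mathds{k})_j$ for $j\in\{\bar0,\bar1\}$ because the root system of $\mathfrak{g}_\mathds{k}$ is stable under $\alpha\mapsto-\alpha$ separately on even and odd roots. Consequently $\dim M=p^{\dim(\mathfrak{u}_\mathds{k})_{\bar0}}2^{\dim(\mathfrak{u}_\mathds{k})_{\bar1}}\cdot\dim M^{\mathfrak{u}_\mathds{k}}$, and passing to composition factors extends this divisibility relation to arbitrary $U_\xi(\mathfrak{g}_\mathds{k})$-modules. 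It therefore suffices to show $p^{d_0'/2}2^{(d_1'+l)/2}\mid\dim M^{\mathfrak{u}_\mathds{k}}$.

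Next I would apply an extension of Proposition~\ref{sumdivisible} to $\mathfrak{l}_\mathds{k}$. Since $\xi|_{\mathfrak{l}_\mathds{k}}=\xi_{\bar n}|_{\mathfrak{l}_\mathds{k}}$ is nilpotent and $\mathfrak{l}_\mathds{k}=\bigoplus_{i=1}^r(\mathfrak{g}_\mathds{k})_i\oplus \mathfrak{t}'_\mathds{k}$, one has $U_\xi(\mathfrak{l}_\mathds{k})\cong U_{\xi_{\bar n}}(\bigoplus_{i}(\mathfrak{g}_\mathds{k})_i)\otimes U_\xi(\mathfrak{t}'_\mathds{k})$. The algebra $U_\xi(\mathfrak{t}'_\mathds{k})$ is commutative and semisimple with all simple modules of dimension one and of type $M$, so by Lemma~\ref{AB}(i) every simple $U_\xi(\mathfrak{l}_\mathds{k})$-module has the same dimension as a simple $U_{\xi_{\bar n}}(\bigoplus_i(\mathfrak{g}_\mathds{k})_i)$-module. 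Proposition~\ref{sumdivisible} applied to $\bigoplus_i(\mathfrak{g}_\mathds{k})_i$ then yields $p^{d_0'/2}2^{(d_1'+l)/2}\mid\dim M^{\mathfrak{u}_\mathds{k}}$, once again extending from irreducibles to arbitrary modules via composition series.

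Finally I would reconcile the numerology. From the identification $\mathfrak{g}_\mathds{k}^{\bar x}=\mathfrak{g}_\mathds{k}^{\bar s}\cap \mathfrak{g}_\mathds{k}^{\bar n}=\mathfrak{l}_\mathds{k}^{\bar n}=\bigoplus_i(\mathfrak{g}_\mathds{k})_i^{\bar n_i}\oplus \mathfrak{t}'_\mathds{k}$ and the triangular decomposition $\mathfrak{g}_\mathds{k}=\mathfrak{u}^-_\mathds{k}\oplus \mathfrak{l}_\mathds{k}\oplus \mathfrak{u}_\mathds{k}$, one obtains
$$d_j=\dim(\mathfrak{g}_\mathds{k})_j-\dim(\mathfrak{g}_\mathds{k}^{\bar x})_j=2\dim(\mathfrak{u}_\mathds{k})_j+d_j'\qquad(j\in\{\bar0,\bar1\}).$$
Combining this with the two preceding paragraphs gives
$$p^{\dim(\mathfrak{u}_\mathds{k})_{\bar0}+d_0'/2}\,2^{\dim(\mathfrak{u}_\mathds{k})_{\bar1}+(d_1'+l)/2}=p^{d_0/2}\,2^{(d_1+l)/2}\ \Big|\ \dim M,$$
as desired. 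The main technical obstacle will be the promised extension of Proposition~\ref{sumdivisible} to the mixed datum $\bigoplus_i(\mathfrak{g}_\mathds{k})_i\oplus\mathfrak{t}'_\mathds{k}$, together with verifying that the count $l$ of odd-$(d_1)_i$ summands is unaffected by the toral factor; everything else is linear bookkeeping built on \eqref{gp} and PBW.
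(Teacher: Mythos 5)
Your proposal is correct and follows essentially the same plan as the paper's proof: decompose via \eqref{gp} with the parabolic $\mathfrak{p}_\mathds{k}$, use PBW to peel off the factor $p^{\dim(\mathfrak{u}_\mathds{k})_{\bar0}}2^{\dim(\mathfrak{u}_\mathds{k})_{\bar1}}$, apply Proposition~\ref{sumdivisible} to $\bigoplus_i(\mathfrak{g}_\mathds{k})_i$ while absorbing the toral factor $\mathfrak{t}'_\mathds{k}$ with one-dimensional type-$M$ modules, and conclude using the relation $d_j=2\dim(\mathfrak{u}_\mathds{k})_j+d'_j$. The only tiny imprecision is that, depending on the type ($M$ or $Q$) of the simple $U_{\xi_{\bar n}}(\bigoplus_i(\mathfrak{g}_\mathds{k})_i)$-module, you need \emph{both} parts (i) and (ii) of Lemma~\ref{AB} rather than (i) alone; this does not affect the dimension count.
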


\begin{proof}

(1) First note that
([\cite{WZ}], Theorem 5.6) shows
\begin{equation}\label{gtol}
\begin{split}
\text{\underline{dim}}~\mathfrak{g}_\mathds{k}-\text{\underline{dim}}~\mathfrak{g}_\mathds{k}^{\bar x}&=\text{\underline{dim}}~\mathfrak{g}_\mathds{k}-\text{\underline{dim}}~\mathfrak{l}_\mathds{k}^{\bar n}\\
&=2\text{\underline{dim}}~\mathfrak{u}_\mathds{k}^-+(\text{\underline{dim}}~\mathfrak{l}_\mathds{k}-\text{\underline{dim}}~\mathfrak{l}_\mathds{k}^{\bar n}),
\end{split}
\end{equation} where $\mathfrak{l}_\mathds{k}^{\bar n}$ denotes the centralizer of $\bar n$ in $\mathfrak{l}_\mathds{k}$.
Since $\mathfrak{l}_\mathds{k}=\bigoplus\limits_{i=1}^r(\mathfrak{g}_\mathds{k})_i\oplus\mathfrak{t}'_\mathds{k}$ and $\bar n
\in \bigoplus\limits_{i=1}^r(\mathfrak{g}_\mathds{k})_i$, it is obvious that $(\mathfrak{t}'_\mathds{k})^{\bar n}=\mathfrak{t}'_\mathds{k}$, then
\begin{equation}\label{ltosum}
\begin{split}
\text{\underline{dim}}~\mathfrak{l}_\mathds{k}-\text{\underline{dim}}~\mathfrak{l}_\mathds{k}^{\bar n}=&\sum\limits_{i=1}^r
(\text{\underline{dim}}~(\mathfrak{g}_\mathds{k})_i-\text{\underline{dim}}~(\mathfrak{g}_\mathds{k})_i^{\bar n_i})+
\text{\underline{dim}}~\mathfrak{t}'_\mathds{k}-\text{\underline{dim}}~(\mathfrak{t}'_\mathds{k})^{\bar n}\\=&(\sum\limits_{i=1}^r(d_0)_i,\sum\limits_{i=1}^r(d_1)_i).\end{split}
\end{equation} As $\text{\underline{dim}}~\mathfrak{u}_\mathds{k}^-=\text{\underline{dim}}~\mathfrak{u}_\mathds{k}$, \eqref{gtol} shows that
\begin{equation}\label{dimu}
\begin{array}{rcl}
\text{dim}~(\mathfrak{u}^-_\mathds{k})_{\bar0}&=&\text{dim}~(\mathfrak{u}_\mathds{k})_{\bar0}=\frac{d_0-\sum\limits_{i=1}^r(d_0)_i}{2};\\ \text{dim}~(\mathfrak{u}^-_\mathds{k})_{\bar1}&=&\text{dim}~(\mathfrak{u}_\mathds{k})_{\bar1}=\frac{d_1-\sum\limits_{i=1}^r(d_1)_i}{2}.
\end{array}
\end{equation}

(2) Recall that $\mathfrak{g}_\mathds{k}^{\bar s}=\mathfrak{l}_\mathds{k}=\bigoplus\limits_{i=1}^r(\mathfrak{g}_\mathds{k})_i\oplus\mathfrak{t}'_\mathds{k}$, then we have
\begin{equation}\label{lsumt'}
U_{\xi_{\bar n}}(\mathfrak{l}_\mathds{k})\cong U_{\xi_{\bar n}}(\bigoplus\limits_{i=1}^r(\mathfrak{g}_\mathds{k})_i\oplus\mathfrak{t}'_\mathds{k})\cong
U_{\xi_{\bar n}}(\bigoplus\limits_{i=1}^r(\mathfrak{g}_\mathds{k})_i)\otimes U_0(\mathfrak{t}'_\mathds{k})
\end{equation} as $\mathds{k}$-algebras.

(i) First consider the representations of the $\mathds{k}$-algebra $U_{\xi_{\bar n}}(\bigoplus\limits_{i=1}^r(\mathfrak{g}_\mathds{k})_i)$. Since $\xi_{\bar n}|_{\mathfrak{l}_\mathds{k}}$ is nilpotent and each $(\mathfrak{g}_\mathds{k})_i$ is a basic classical Lie superalgebra for $1\leqslant i\leqslant r$, it follows from Proposition~\ref{sumdivisible} that every $U_{\xi_{\bar n}}(\bigoplus\limits_{i=1}^r(\mathfrak{g}_\mathds{k})_i)$-module is divisible by $p^{\frac{d'_0}{2}}2^{\frac{d'_1+l}{2}}$.

As $U_{\xi_{\bar n}}(\mathfrak{l}_\mathds{k})\cong U_{\xi_{\bar n}}(\bigoplus\limits_{i=1}^r(\mathfrak{g}_\mathds{k})_i)\otimes U_0(\mathfrak{t}'_\mathds{k})$ by \eqref{lsumt'}, the discussion above shows that every $U_{\xi_{\bar n}}(\mathfrak{l}_\mathds{k})$-module is divisible by $p^{\frac{d'_0}{2}}2^{\frac{d'_1+l}{2}}$.

(ii) Now discuss the representations of the $\mathds{k}$-algebra $U_0(\mathfrak{t}'_\mathds{k})$. As $\mathfrak{t}'_\mathds{k}$ is a toral subalgebra of $\mathfrak{g}_\mathds{k}$ with a basis $\{t_1,\cdots,t_d\}$ such that $t_i^{[p]}=t_i$ for all $1\leqslant i\leqslant d$, then $U_0(\mathfrak{t}'_\mathds{k})\cong\bigotimes A_1^{\otimes d}$ where $A_1\cong\mathds{k}[X]/(X^p-X)$ is a $p$-dimensional commutative semisimple $\mathds{k}$-algebra whose irreducible representations are $1$-dimensional (obviously which are of type $M$). Hence we can conclude from Lemma~\ref{AB}(i) that the irreducible representations of the $\mathds{k}$-algebra $U_0(\mathfrak{t}'_\mathds{k})$ are all of type $M$ with dimension $1$.

(3) Recall that $p^{\frac{d'_0}{2}}2^{\frac{d'_1+l}{2}}=p^{\frac{\sum\limits_{i=1}^r(d_0)_i}{2}}2^{\frac{l+\sum\limits_{i=1}^r(d_1)_i}{2}}$ by \eqref{sumdim}. As any $U_{\xi}(\mathfrak{l}_\mathds{k})$-mod can be regarded as a $U_\xi(\mathfrak{p}_\mathds{k})$-mod with a trivial action of $\mathfrak{u}_\mathds{k}$, it is immediate from \eqref{gp} and \eqref{dimu} that the dimension of every $U_\xi(\mathfrak{g}_\mathds{k}$)-mod is divisible by
\begin{equation*}
\begin{array}{rl}
&p^{\frac{d'_0}{2}}2^{\frac{d'_1+l}{2}}\cdot p^{\frac{d_0-\sum\limits_{i=1}^r(d_0)_i}{2}} 2^{\frac{d_1-\sum\limits_{i=1}^r(d_1)_i}{2}}\\
=&p^{\sum\limits_{i=1}^r\frac{(d_0)_i}{2}}2^{\frac{l}{2}+\sum\limits_{i=1}^r\frac{(d_1)_i}{2}}\cdot p^{\frac{d_0-\sum\limits_{i=1}^r(d_0)_i}{2}} 2^{\frac{d_1-\sum\limits_{i=1}^r(d_1)_i}{2}}\\
=&p^{\frac{d_0}{2}}2^{\frac{d_1+l}{2}},
\end{array}\end{equation*} completing the proof.
\end{proof}

Now we will discuss whether the lower bound introduced in Lemma~\ref{main3} can be reached for $p\gg0$. Under the assumption of Conjecture~\ref{con}, the following lemma shows that the minimal dimension for the representations of reduced enveloping algebra $U_\xi(\mathfrak{g}_\mathds{k})$ with arbitrary $p$-character $\xi\in(\mathfrak{g}_\mathds{k}^*)_{\bar0}$ in Lemma~\ref{main3} is reachable.

\begin{lemma}\label{main4}
Let $\mathfrak{g}_\mathds{k}$ be a basic classical Lie superalgebra over $\mathds{k}=\overline{\mathbb{F}}_p$, and let $\xi\in (\mathfrak{g}_\mathds{k}^*)_{\bar0}$. Retain the notations as Lemma~\ref{main3}. If Conjecture~\ref{con} is established, then for $p\gg0$ the reduced enveloping algebra $U_\xi(\mathfrak{g}_\mathds{k})$ admits irreducible representations of dimension $p^{\frac{d_0}{2}}2^{\frac{d_1+l}{2}}$.
\end{lemma}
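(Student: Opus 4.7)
The plan is to reduce the construction of a small irreducible $U_\xi(\mathfrak{g}_\mathds{k})$-module to the nilpotent case already handled in Theorem~\ref{sumreachable}, using the parabolic induction functor of Wang--Zhao. More precisely, by ([\cite{WZ}], Theorem~5.3) the map $M^{\mathfrak{u}_\mathds{k}}\mapsto U_\xi(\mathfrak{g}_\mathds{k})\otimes_{U_\xi(\mathfrak{p}_\mathds{k})}M^{\mathfrak{u}_\mathds{k}}$ is a bijection between simple $U_\xi(\mathfrak{l}_\mathds{k})$-modules (inflated to $\mathfrak{p}_\mathds{k}$ via $\xi(\mathfrak{u}_\mathds{k})=0$) and simple $U_\xi(\mathfrak{g}_\mathds{k})$-modules, and the PBW theorem gives $\dim M=\dim U_\xi(\mathfrak{u}_\mathds{k}^-)\cdot\dim M^{\mathfrak{u}_\mathds{k}}$. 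Therefore it suffices to produce a simple $U_{\xi_{\bar n}}(\mathfrak{l}_\mathds{k})$-module of dimension $p^{d'_0/2}2^{(d'_1+l)/2}$, since by \eqref{dimu} we will then have
\[
\dim M=p^{(d_0-d'_0)/2}2^{(d_1-d'_1)/2}\cdot p^{d'_0/2}2^{(d'_1+l)/2}=p^{d_0/2}2^{(d_1+l)/2}.
\]

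Next I would exploit the decomposition $\mathfrak{l}_\mathds{k}=\bigoplus_{i=1}^{r}(\mathfrak{g}_\mathds{k})_i\oplus\mathfrak{t}'_\mathds{k}$ together with the algebra isomorphism \eqref{lsumt'}, so that a simple $U_{\xi_{\bar n}}(\mathfrak{l}_\mathds{k})$-module may be built as an outer tensor product of a simple module over $U_{\xi_{\bar n}}\bigl(\bigoplus_{i=1}^{r}(\mathfrak{g}_\mathds{k})_i\bigr)$ with a simple module over $U_0(\mathfrak{t}'_\mathds{k})$. For the first factor I would invoke Theorem~\ref{sumreachable}, which under Conjecture~\ref{con} provides a simple module $V$ of the desired dimension $p^{d'_0/2}2^{(d'_1+l)/2}$, of type $Q$ when $l$ is odd and of type $M$ when $l$ is even. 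For the second factor, since $\mathfrak{t}'_\mathds{k}$ is toral, $U_0(\mathfrak{t}'_\mathds{k})$ is commutative and semisimple, so every simple module $W$ is one-dimensional and hence automatically of type $M$.

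Now I would apply Lemma~\ref{AB}: since at least one of $V$, $W$ is of type $M$, parts~(i) and~(ii) guarantee that $V\boxtimes W$ is a simple module over $U_{\xi_{\bar n}}(\mathfrak{l}_\mathds{k})$ whose dimension is $(\dim V)(\dim W)=p^{d'_0/2}2^{(d'_1+l)/2}$. Inflating $V\boxtimes W$ to $\mathfrak{p}_\mathds{k}$ via the trivial action of $\mathfrak{u}_\mathds{k}$ and then inducing to $U_\xi(\mathfrak{g}_\mathds{k})$, ([\cite{WZ}], Theorem~5.3) ensures that the resulting module is simple, and the dimension count of the first paragraph completes the argument.

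The only potentially delicate point is making sure the outer tensor product with the toral-part module behaves correctly in the super sense; but because $W$ is one-dimensional and therefore of type $M$, the ambiguous case (iii) of Lemma~\ref{AB} never arises, and the whole construction reduces cleanly to a single application of Theorem~\ref{sumreachable} followed by Wang--Zhao parabolic induction. Consequently no new representation-theoretic input beyond what has already been established is needed; the lemma will follow by combining these ingredients and the dimension equalities in \eqref{dimu}.
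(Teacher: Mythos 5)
Your proposal is correct and follows essentially the same route as the paper's proof: build a small simple module for $U_{\xi_{\bar n}}\bigl(\bigoplus_{i=1}^r(\mathfrak{g}_\mathds{k})_i\bigr)$ via Theorem~\ref{sumreachable}, tensor with a $1$-dimensional (type $M$) module over $U_0(\mathfrak{t}'_\mathds{k})$ using Lemma~\ref{AB}, and then apply Wang--Zhao parabolic induction \eqref{gp} together with the dimension count \eqref{dimu}.
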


\begin{proof}
Recall that Theorem~\ref{sumreachable} shows that the $\mathds{k}$-algebra $U_{\xi_{\bar n}}(\bigoplus\limits_{i=1}^r(\mathfrak{g}_\mathds{k})_i)$ admits an irreducible representation of dimension $p^{\sum\limits_{i=1}^r\frac{(d_0)_i}{2}}2^{\frac{l}{2}+\sum\limits_{i=1}^r\frac{(d_1)_i}{2}}$ under the assumption of Conjecture~\ref{con}; set it as $V$, and the $\mathds{k}$-algebra $U_{0}(\mathfrak{t}'_\mathds{k})$ affords an irreducible representation of dimension $1$ (note that which is of type $M$) by the proof of Lemma~\ref{main3}; set it as $W$. Thus  Lemma~\ref{AB}(i) \& (ii) imply that $V\boxtimes W$ is an irreducible representation of the $\mathds{k}$-algebra $U_{\xi_{\bar n}}(\bigoplus\limits_{i=1}^r(\mathfrak{g}_\mathds{k})_i)\otimes U_{0}(\mathfrak{t}'_\mathds{k})\cong U_{\xi}(\mathfrak{l}_\mathds{k})$ with dimension $p^{\sum\limits_{i=1}^r\frac{(d_0)_i}{2}}2^{\frac{l}{2}+\sum\limits_{i=1}^r\frac{(d_1)_i}{2}}$.

The remark preceding Lemma~\ref{main3} shows that any $U_\xi(\mathfrak{l}_\mathds{k})$-mod can be regarded as a $U_\xi(\mathfrak{p}_\mathds{k})$-mod with a trivial action of $\mathfrak{u}_\mathds{k}$.  It is immediate from \eqref{gp} that the $\mathds{k}$-algebra $U_\xi(\mathfrak{g}_\mathds{k})$ admits an irreducible representation which is isomorphic to
$U_\xi(\mathfrak{g}_\mathds{k})\otimes_{U_\xi(\mathfrak{p}_\mathds{k})}(V\boxtimes W)$ as $U_\xi(\mathfrak{g}_\mathds{k})$-modules. By \eqref{dimu} we can conclude that whose dimension is
\begin{equation*}
p^{\sum\limits_{i=1}^r\frac{(d_0)_i}{2}}2^{\frac{l}{2}+\sum\limits_{i=1}^r\frac{(d_1)_i}{2}}\cdot p^{\frac{d_0-\sum\limits_{i=1}^r(d_0)_i}{2}} 2^{\frac{d_1-\sum\limits_{i=1}^r(d_1)_i}{2}}
=p^{\frac{d_0}{2}}2^{\frac{d_1+l}{2}},
\end{equation*}  completing the proof.
\end{proof}

In ([\cite{WZ}], Theorem 5.6), Wang-Zhao introduced the Super Kac-Weisfeiler Property for a basic classical Lie superalgebra with arbitrary $p$-characters. By the same discussion as Remark~\ref{refine} we can conclude that the boundary introduced in Lemma~\ref{main3} coincides with that obtained by Wang-Zhao if at most only one of the $(d_1)_i's$ is odd for $1\leqslant i\leqslant r$. But for the case when more than two $(d_1)_i's$ are odd for $1\leqslant i\leqslant r$, the boundary introduced in Lemma~\ref{main3} is much larger. In fact, the latter case will never happen, i.e.

\begin{theorem}$^{[\cite{WZ}]}$\label{main3'}
Let $\mathfrak{g}_\mathds{k}$ be a basic classical Lie superalgebra over $\mathds{k}=\overline{\mathbb{F}}_p$, and let $\xi\in(\mathfrak{g}_\mathds{k}^*)_{\bar0}$. Retain the notation as \eqref{arbitdim}.
Then for the primes $p$ which satisfy the restrictions imposed in ([\cite{WZ}], Table 1), the dimension of every $U_\xi(\mathfrak{g}_\mathds{k})$-mod $M$ is divisible by $p^{\frac{d_0}{2}}2^{\lfloor\frac{d_1}{2}\rfloor}$.
\end{theorem}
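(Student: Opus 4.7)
The plan is to deduce the theorem by sharpening Lemma~\ref{main3}. That lemma already yields divisibility of $\dim M$ by $p^{d_0/2}2^{(d_1+l)/2}$ for every $U_\xi(\mathfrak{g}_\mathds{k})$-module $M$, where $l$ is the number of indices $i\in\{1,\dots,r\}$ for which $(d_1)_i$ is odd in the Levi decomposition $\mathfrak{l}_\mathds{k}=\bigoplus_{i=1}^r(\mathfrak{g}_\mathds{k})_i\oplus\mathfrak{t}'_\mathds{k}$ of the centralizer of the semisimple part $\bar s$ of $\bar x$. Since $d_1-\sum_i(d_1)_i=2\dim(\mathfrak{u}_\mathds{k})_{\bar 1}$ is even, we have $l\equiv d_1\pmod 2$ automatically, and a short calculation gives $(d_1+l)/2\geq\lfloor d_1/2\rfloor$, with equality precisely when $l\in\{0,1\}$. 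Thus the entire theorem collapses to the structural assertion $l\leq 1$, which is exactly the remark ``the latter case will never happen'' in the paragraph preceding the statement.

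I would prove $l\leq 1$ in two steps. First, by the Poletaeva-Serganova parity analysis (cf.\ [\cite{PS2}], Corollary 2.10) combined with Remark~\ref{centralizer}, the parity of $d_1$ for a basic classical Lie superalgebra $\mathfrak{g}$ with an even nilpotent $e\in\mathfrak{g}_{\bar 0}$ equals that of $\dim\mathfrak{g}(-1)_{\bar 1}$, and this can be odd only when $\mathfrak{g}$ is of type $B(0,n)=\mathfrak{osp}(1|2n)$. Applied summandwise to the Levi, every index $i$ with $(d_1)_i$ odd must satisfy $(\mathfrak{g}_\mathds{k})_i\cong B(0,m_i)$ for some $m_i\geq 1$. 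Second, by Proposition~5.1 of [\cite{WZ}], the subsystem $\Phi(\mathfrak{l}_\mathds{k})$ is generated by a subset of simple roots of a distinguished system $\Pi$ for $\Phi$; I would then inspect the distinguished Dynkin diagrams of all basic classical types and verify that no Levi subsystem contains two disjoint connected components of type $B(0,m)$, because the simple roots that are simultaneously odd and short occur at a single end of the ambient diagram, so any sub-diagram retains at most one such node---and a $B(0,m)$ Levi component is precisely a connected sub-diagram containing it.

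The main obstacle will be the second step, specifically the case analysis for ambient types $B(m,n)$ and $B(0,n)$, where one has to confirm diagrammatically that every deletion of simple roots from the distinguished base leaves at most one connected component carrying the short odd simple root. For the remaining ambient types $A(m,n)$, $C(n)$, $D(m,n)$, $F(4)$, $G(3)$, $D(2,1;\alpha)$ the first step already forces every $(d_1)_i$ to be even, so $l=0$ trivially. Some ancillary bookkeeping is needed to confirm that the toral summand $\mathfrak{t}'_\mathds{k}$ contributes nothing to $l$ (obvious, since $\mathfrak{t}'_\mathds{k}$ is purely even), and that the parity of $(d_1)_i$ depends only on the isomorphism type of $(\mathfrak{g}_\mathds{k})_i$ and not on the choice of $\bar n_i$ (which follows from the Poletaeva-Serganova dichotomy applied inside $(\mathfrak{g}_\mathds{k})_i$). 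Once $l\leq 1$ is established, Lemma~\ref{main3}'s exponent matches $\lfloor d_1/2\rfloor$ and Theorem~\ref{main3'} drops out.
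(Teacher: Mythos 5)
Your reduction to the inequality $l\leq 1$ is exactly the reduction the paper makes, but the way you propose to prove $l\leq 1$ contains a fatal error in the very first step. You claim, via [\cite{PS2}], Corollary 2.10 and Remark~\ref{centralizer}, that $d_1$ (equivalently $\dim\mathfrak{g}(-1)_{\bar 1}$) can be odd only when the basic classical Lie superalgebra is of type $B(0,n)=\mathfrak{osp}(1|2n)$. That is false. Take $\mathfrak{g}=\mathfrak{osp}(3|2)=B(1,1)$, so $\mathfrak{g}_{\bar 0}\cong\mathfrak{so}_3\oplus\mathfrak{sp}_2$ and $\mathfrak{g}_{\bar 1}$ is the $6$-dimensional tensor product of the natural modules. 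Choose the $\mathfrak{sl}_2$-triple whose $e$ is zero in $\mathfrak{so}_3$ and regular nilpotent in $\mathfrak{sp}_2$. Then $h$ acts on $\mathfrak{g}_{\bar 1}\cong V_3\otimes V_2$ only through the $V_2$ factor, so $\mathfrak{g}_{\bar 1}=\mathfrak{g}(1)_{\bar 1}\oplus\mathfrak{g}(-1)_{\bar 1}$ with $\dim\mathfrak{g}(-1)_{\bar 1}=3$, and $d_1=6-\dim\mathfrak{g}^e_{\bar 1}=6-3=3$ is odd. The citation of [\cite{PS2}], Corollary 2.10 in Lemma~\ref{bon} only asserts that $d_1$ is odd for $B(0,n)$ with a regular nilpotent; it does not say $d_1$ is odd \emph{only} for $B(0,n)$, and the paper's own proof of Theorem~\ref{main3'} explicitly keeps track of Levi summands of type $B(m,n)$, $C(m,n)$, $D(m,n)$, and $D(2,1;\bar a)$ with $(d_1)_i$ ``being even or odd''.

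Once that step fails, the Dynkin-diagram argument in your second step no longer carries the load you intend: you would have to establish, for each ambient type, that among \emph{all} the orthosymplectic-type summands of an arbitrary Levi decomposition, at most one can host a nilpotent $\bar n_i$ with $(d_1)_i$ odd, not merely that at most one $B(0,m)$ component appears. This is precisely the case-by-case tabulation (ambient types $B,C,D,D(2,1;\bar a),F(4),G(3)$ with their distinguished bases from [\cite{K}]) that the paper carries out in steps (3)(i)--(iv) of its proof. So the conclusion $l\leq 1$ is correct and your overall strategy mirrors the paper's, but the structural shortcut you propose breaks at the first step and cannot be repaired without reverting to the per-type inspection of Levi subsystems.
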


Theorem~\ref{main3'} has been verified by Wang-Zhao in ([\cite{WZ}], Theorem 5.6). Here we give a new proof based on Lemma~\ref{main3}, which will be necessary for the arguments in the proof of the forthcoming Theorem \ref{main4'}.

\begin{proof}
(1) Recall that $\mathfrak{l}_\mathds{k}=\mathfrak{g}_\mathds{k}^{\bar s}=\bigoplus\limits_{i=1}^r(\mathfrak{g}_\mathds{k})_i\oplus\mathfrak{t}'_\mathds{k}$ by ([\cite{WZ}], Proposition 5.1), where $(\mathfrak{g}_\mathds{k})_i$ is a basic classical Lie superalgebra for each $1\leqslant i\leqslant r$, and $\mathfrak{t}'_\mathds{k}$ is a toral subalgebra of $\mathfrak{g}_\mathds{k}$. The discussion preceding the theorem shows that it is sufficient to prove that at most only one of the $(d_1)_i's$ (see \eqref{arbitdim}) is odd for $1\leqslant i\leqslant r$ (recall that $d'_1=\sum\limits_{i=1}^r(d_1)_i$ by \eqref{arbitdim}; $d'_1$ and $d_1$ have the same parity by \eqref{gtol} \& \eqref{ltosum}; and $(d_1)_i's$ are odd for $1\leqslant i \leqslant l$) since when $d_1$ is odd, we have $l=1$ and $\lfloor\frac{d_1}{2}\rfloor=\frac{d_1+1}{2}$; when $d_1$ is even, we have $l=0$ and $\lfloor\frac{d_1}{2}\rfloor=\frac{d_1}{2}$. To achieve this, one just needs to consider the summands in the decomposition of $\mathfrak{g}_\mathds{k}^{\bar s}$ with non-zero odd parts.

(2) First note that $(d_1)_i\,(1\leqslant i\leqslant r)$ is always even for the summand $(\mathfrak{g}_\mathds{k})_i$ which is isomorphic to the basic classical Lie superalgebra of type $A(m,n)$ by the remark at the beginning of Section 8.1.  Recall that an explicit list of non-$W$-equivalent systems of positive roots was found by Kac in ([\cite{K}], Section 2.5.4) (a system of simple roots for $F(4)$ is missing; see the remark above ([\cite{WZ}], Proposition 5.1)). Note that in the examples given by Kac the Cartan subalgebra $\mathfrak{h}_\mathds{k}$ is a subspace of the space $D$ of diagonal matrices; the roots are expressed in terms of the standard basis $\epsilon_i$ of $D^*$ (more accurately, the restrictions of the $\epsilon_i$ to $\mathfrak{h}_\mathds{k}$). In the following we assume that the semisimple element $\bar s\in \mathfrak{h}_\mathds{k}$.

(3) Based on all the discussions above, for each case we will consider separately (all the results below are obtained by completely elementary yet tedious case-by-case calculations, thus the proof will be omitted).

(i) For the case when $\mathfrak{l}_\mathds{k}$ is isomorphic to $A(M,N)$, the summands of $\bigoplus\limits_{i=1}^r(\mathfrak{g}_\mathds{k})_i$ for $1\leqslant i\leqslant r$ with non-zero odd parts are always isomorphic to the basic classical Lie superalgebras of type $A(m,n)$, thus the $(d_1)_i's$ are all even in this case.

(ii) For the case when $\mathfrak{l}_\mathds{k}$ is isomorphic to $B(M,N), C(M,N)$ or $D(M,N)$, the summands of $\bigoplus\limits_{i=1}^r(\mathfrak{g}_\mathds{k})_i$ for $1\leqslant i\leqslant r$ with non-zero odd parts are either isomorphic to $A(m,n)$, where the $(d_1)_i's$ are all even; or at most only one summand is isomorphic to $B(m,n),\,C(m,n)$ or $D(m,n)$ respectively (which is of the same type as $\mathfrak{l}_\mathds{k}$) with $(d_1)_i$ being even or odd.
Hence at most only one of the $(d_1)_i's$ is odd for $1\leqslant i\leqslant r$ in this case.

(iii) For the case when $\mathfrak{l}_\mathds{k}$ is isomorphic to $D(2,1;\bar a)$ or $G(3)$, the summands of $\bigoplus\limits_{i=1}^r(\mathfrak{g}_\mathds{k})_i$ for $1\leqslant i\leqslant r$ with non-zero odd parts are either isomorphic to $A(m,n)$, where the $(d_1)_i's$ are all even; or at most only one summand is isomorphic to $B(m,n)$ with $(d_1)_i$ being even or odd. At extreme, $\mathfrak{l}_\mathds{k}=\mathfrak{g}_\mathds{k}^{\bar s}$ is isomorphic to  $D(2,1;\bar a)$ or $G(3)$ respectively when $\bar s=0$. Hence at most only one of the $(d_1)_i's$ is odd for $1\leqslant i\leqslant r$ in this case.

(iv) For the case when $\mathfrak{l}_\mathds{k}$ is isomorphic to $F(4)$, the summands of $\bigoplus\limits_{i=1}^r(\mathfrak{g}_\mathds{k})_i$ for $1\leqslant i\leqslant r$ with non-zero odd parts are either isomorphic to $A(m,n)$, where the $(d_1)_i's$ are all even; or at most only one summand either is isomorphic to $B(m,n)$, or to $D(2,1;\bar a)$, with $(d_1)_i$ being even or odd. At extreme, $\mathfrak{l}_\mathds{k}=\mathfrak{g}_\mathds{k}^{\bar s}\cong F(4)$ when $\bar s=0$. Hence at most only one of the $(d_1)_i's$ is odd for $1\leqslant i\leqslant r$ in this case.

All the discussions in (3)(i)-(iv) show that at most only one of the $(d_1)_i's$ ($1\leqslant i\leqslant r$) is odd in the summands of $\bigoplus\limits_{i=1}^r(\mathfrak{g}_\mathds{k})_i$, completing the proof.
\end{proof}
\begin{rem}
Compared with the approach carried above, Wang-Zhao's original proof in ([\cite{WZ}], Theorem 5.6) is more concise since they did not consider the parity of the $(d_1)_i's~(1\leqslant i\leqslant r)$ for the summands of $\mathfrak{l}_{\mathds{k}}=\mathfrak{g}_{\mathds{k}}^{\bar s}\cong \bigoplus\limits_{i=1}^r(\mathfrak{g}_\mathds{k})_i\oplus\mathfrak{t}'_{\mathds{k}}$ in their proof. The reason why we take such a complicated approach is based on the following considerations:

(1) from the detailed proof of Lemma~\ref{main3} one can conclude that the lower bound for the dimension of $U_\xi(\mathfrak{g}_\mathds{k})$-modules is critically depending on the parity of the $(d_1)_i's$ for $1\leqslant i\leqslant r$. Without careful inspection on the summands of $\mathfrak{l}_{\mathds{k}}\cong \bigoplus\limits_{i=1}^r(\mathfrak{g}_\mathds{k})_i\oplus\mathfrak{t}'_{\mathds{k}}$, we can not guarantee that the lower bound introduced in Theorem~\ref{main3'} is optimal.

(2) another important reason lies in that in this part we mainly concentrate on the discussion whether the lower bound introduced in Theorem~\ref{main3'} can be reached under the assumption of Conjecture~\ref{con}. In the proof of Theorem~\ref{main4'}, we can see that the representation theory of the $\mathds{k}$-algebra
$U_{\xi}(\mathfrak{l}_\mathds{k})$ plays the key role for the realization of the $U_{\xi}(\mathfrak{g}_\mathds{k})$-modules with minimal dimension.
\end{rem}

Now we are in a position to discuss whether the lower bound in the Super Kac-Weisfeiler Property introduced in ([\cite{WZ}], Theorem 5.6) can be reached for $p\gg0$. Under the assumption of Conjecture~\ref{con}, the following theorem shows that the minimal dimension for the representations of reduced enveloping algebra $U_\xi(\mathfrak{g}_\mathds{k})$ with arbitrary $p$-character $\xi\in(\mathfrak{g}_\mathds{k}^*)_{\bar0}$ in the Super Kac-Weisfeiler Property is reachable.

\begin{theorem}\label{main4'}
Let $\mathfrak{g}_\mathds{k}$ be a basic classical Lie superalgebra, and let $\xi\in(\mathfrak{g}_\mathds{k}^*)_{\bar0}$. Retain the notation as \eqref{arbitdim}. If Conjecture~\ref{con} is established, then for $p\gg0$ the reduced enveloping algebra $U_\xi(\mathfrak{g}_\mathds{k})$ admits irreducible representations of dimension $p^{\frac{d_0}{2}}2^{\lfloor\frac{d_1}{2}\rfloor}$.
\end{theorem}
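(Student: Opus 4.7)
The plan is to deduce Theorem~\ref{main4'} essentially as a direct corollary of Lemma~\ref{main4}, once one knows that the integer $l$ introduced there (the number of summands $(\mathfrak{g}_\mathds{k})_i$ of $\mathfrak{l}_\mathds{k}=\mathfrak{g}_\mathds{k}^{\bar s}$ whose associated $(d_1)_i$ is odd) can only take the values $0$ or $1$, and furthermore shares its parity with $d_1$. These two facts are precisely what the new proof of Theorem~\ref{main3'} establishes through the case-by-case analysis of Kac's list of systems of positive roots.

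First, I would invoke Lemma~\ref{main4} to obtain, under the assumption of Conjecture~\ref{con} and for $p\gg 0$, an irreducible $U_\xi(\mathfrak{g}_\mathds{k})$-module $U_\xi(\mathfrak{g}_\mathds{k})\otimes_{U_\xi(\mathfrak{p}_\mathds{k})}(V\boxtimes W)$ of dimension $p^{\frac{d_0}{2}}2^{\frac{d_1+l}{2}}$. The construction is a three-step synthesis: (i) for each basic classical summand $(\mathfrak{g}_\mathds{k})_i$ of $\mathfrak{l}_\mathds{k}$, use Corollary~\ref{1122} to produce an irreducible $U_{\xi_i}((\mathfrak{g}_\mathds{k})_i,\bar n_i)$-module (of type $Q$ if $(d_1)_i$ is odd, of type $M$ and dimension $1$ otherwise); (ii) amalgamate these via iterated applications of Lemma~\ref{AB} through the isomorphism $U_{\xi_{\bar n}}(\bigoplus_i(\mathfrak{g}_\mathds{k})_i,\bar n)\cong\bigotimes_i U_{\xi_i}((\mathfrak{g}_\mathds{k})_i,\bar n_i)$, then tensor with any $1$-dimensional irreducible of the semisimple algebra $U_0(\mathfrak{t}'_\mathds{k})$; (iii) induce from $\mathfrak{p}_\mathds{k}$ to $\mathfrak{g}_\mathds{k}$ via \eqref{gp}, which multiplies dimension by $p^{\dim(\mathfrak{u}_\mathds{k})_{\bar 0}}2^{\dim(\mathfrak{u}_\mathds{k})_{\bar 1}}$.

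Second, I would appeal to the structural conclusion drawn in the proof of Theorem~\ref{main3'}: for every semisimple $\bar s\in\mathfrak{h}_\mathds{k}$, an inspection of Kac's distinguished systems of simple roots shows that among the basic classical summands of $\mathfrak{g}_\mathds{k}^{\bar s}$ with non-trivial odd part, all but at most one are of type $A(m,n)$ (hence contribute even $(d_1)_i$), and the remaining possible exceptional summand inherits the type of $\mathfrak{g}_\mathds{k}$ itself. Combining this with \eqref{gtol}--\eqref{ltosum}, which forces $d_1$ and $d'_1=\sum_i(d_1)_i$ to share the same parity, one concludes that $l\in\{0,1\}$ and that $l\equiv d_1\pmod 2$.

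Finally, the parity dichotomy gives the result: when $d_1$ is even, $l=0$, so $\tfrac{d_1+l}{2}=\tfrac{d_1}{2}=\lfloor\tfrac{d_1}{2}\rfloor$; when $d_1$ is odd, $l=1$, so $\tfrac{d_1+l}{2}=\tfrac{d_1+1}{2}=\lfloor\tfrac{d_1}{2}\rfloor$. In either case the dimension supplied by Lemma~\ref{main4} coincides with the bound $p^{\frac{d_0}{2}}2^{\lfloor\frac{d_1}{2}\rfloor}$ of Theorem~\ref{main3'}, and Proposition~\ref{wzd} (applied to the factor $U_\xi(\mathfrak{g}_\mathds{k})/\mathrm{Ann}\,M$) confirms that this irreducible module is of the minimal possible dimension. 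No genuine obstacle arises beyond Conjecture~\ref{con} itself; the only delicate ingredient, already handled earlier, is the bookkeeping across the type-$M$/type-$Q$ dichotomy via Lemma~\ref{AB}, which is exactly why the detailed case analysis of Theorem~\ref{main3'} (rather than Wang--Zhao's shorter original argument) is required here: it certifies that the refined lower bound of Lemma~\ref{main3} actually collapses to $p^{\frac{d_0}{2}}2^{\lfloor d_1/2\rfloor}$ for every basic classical $\mathfrak{g}_\mathds{k}$ and every $\xi$.
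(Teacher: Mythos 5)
Your proposal is correct and follows essentially the same route as the paper: invoke Lemma~\ref{main4} to produce an irreducible $U_\xi(\mathfrak{g}_\mathds{k})$-module of dimension $p^{\frac{d_0}{2}}2^{\frac{d_1+l}{2}}$, then collapse the exponent using the fact, extracted from the case analysis in the proof of Theorem~\ref{main3'}, that $l\in\{0,1\}$ and $l\equiv d_1\pmod 2$, so $\frac{d_1+l}{2}=\lfloor\frac{d_1}{2}\rfloor$ in the paper's (ceiling) convention for $\lfloor\cdot\rfloor$. One small gloss is slightly off — the at-most-one exceptional summand does not always ``inherit the type of $\mathfrak{g}_\mathds{k}$ itself'' (for $G(3)$ and $F(4)$ it can be of type $B(m,n)$ or $D(2,1;a)$, see cases~(iii)--(iv) in the proof of Theorem~\ref{main3'}) — but this is immaterial since all that is used is $l\leqslant 1$.
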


\begin{proof}
Recall that Lemma~\ref{main4} shows that the $\mathds{k}$-algebra $U_{\xi}(\mathfrak{l}_\mathds{k})$ (where $\mathfrak{l}_\mathds{k}=\bigoplus\limits_{i=1}^r(\mathfrak{g}_\mathds{k})_i\oplus\mathfrak{t}'_\mathds{k}$) admits irreducible representations of dimension $p^{\sum\limits_{i=1}^r\frac{(d_0)_i}{2}}2^{\frac{l}{2}+\sum\limits_{i=1}^r\frac{(d_1)_i}{2}}$ under the assumption of Conjecture~\ref{con}. By the proof of Theorem~\ref{main3'} one can conclude that $$p^{\sum\limits_{i=1}^r\frac{(d_0)_i}{2}}2^{\frac{l}{2}+\sum\limits_{i=1}^r\frac{(d_1)_i}{2}}=p^{\sum\limits_{i=1}^r\frac{(d_0)_i}{2}}2^
{\lfloor\sum\limits_{i=1}^r\frac{(d_1)_i}{2}\rfloor}$$since at most only one of the $(d_1)_i's$ ($1\leqslant i\leqslant r$) is odd in the summands of $\bigoplus\limits_{i=1}^r(\mathfrak{g}_\mathds{k})_i$ (i.e. $0\leqslant l\leqslant 1$), and $\sum\limits_{i=1}^r\frac{(d_1)_i}{2}$ \& $l$ have the same parity.
In view of our earlier remark, any $U_\xi(\mathfrak{l}_\mathds{k})$-mod can be regarded as a $U_\xi(\mathfrak{p}_\mathds{k})$-mod with a trivial action of $\mathfrak{u}_\mathds{k}$. It is immediate from \eqref{gp} and \eqref{dimu} that the $\mathds{k}$-algebra $U_\xi(\mathfrak{g}_\mathds{k})$ admits irreducible representations of dimension
\begin{equation}\label{final}
p^{\sum\limits_{i=1}^r\frac{(d_0)_i}{2}}2^{\lfloor\sum\limits_{i=1}^r\frac{(d_1)_i}{2}\rfloor}\cdot p^{\frac{d_0-\sum\limits_{i=1}^r(d_0)_i}{2}} 2^{\frac{d_1-\sum\limits_{i=1}^r(d_1)_i}{2}}
=p^{\frac{d_0}{2}}2^{\frac{d_1}{2}+(\lfloor\sum\limits_{i=1}^r\frac{(d_1)_i}{2}\rfloor-\sum\limits_{i=1}^r\frac{(d_1)_i}{2})}.
\end{equation}

Recall that $\sum\limits_{i=1}^r\frac{(d_1)_i}{2}$ and $d_1$ have the same parity by \eqref{gtol} \& \eqref{ltosum}. For the case when $d_1$ is odd, we have $p^{\frac{d_0}{2}}2^{\frac{d_1}{2}+(\lfloor\sum\limits_{i=1}^r\frac{(d_1)_i}{2}\rfloor-\sum\limits_{i=1}^r\frac{(d_1)_i}{2})}=p^{\frac{d_0}{2}}2^
{\frac{d_1+1}{2}}=p^{\frac{d_0}{2}}2^{\lfloor\frac{d_1}{2}\rfloor}$; for the case when $d_1$ is even, we have $p^{\frac{d_0}{2}}2^{\frac{d_1}{2}+(\lfloor\sum\limits_{i=1}^r\frac{(d_1)_i}{2}\rfloor-\sum\limits_{i=1}^r\frac{(d_1)_i}{2})}=p^{\frac{d_0}{2}}2^{\frac{d_1}{2}}=p^{\frac{d_0}{2}}2^{\lfloor\frac{d_1}{2}\rfloor}$. Hence the desired result follows from \eqref{final}.
\end{proof}

In particular, for the special case when $\mathfrak{g}_\mathds{k}$ is of type $A(M,N)$, we have
\begin{corollary}
Let $\mathfrak{g}_\mathds{k}$ be a basic classical Lie superalgebra of type $A(M,N)$, and let $\xi\in(\mathfrak{g}_\mathds{k}^*)_{\bar0}$. Retain the notation as \eqref{arbitdim}. If Conjecture~\ref{con}(i) is established for the basic classical Lie superalgebras of type $A(m,n)$, then for $p\gg0$ the reduced enveloping algebra $U_\xi(\mathfrak{g}_\mathds{k})$ admits irreducible representations of dimension $p^{\frac{d_0}{2}}2^{\frac{d_1}{2}}$.
\end{corollary}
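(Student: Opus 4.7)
The plan is to obtain this as a specialization of Theorem~\ref{main4'}, by verifying that in type $A(M,N)$ only Conjecture~\ref{con}(i) for type $A(m,n)$ is needed. Two features of type $A$ make this possible: (a) the class of type $A$ algebras is closed under passing to the reductive centralizer $\mathfrak{l}_\mathds{k}=\mathfrak{g}_\mathds{k}^{\bar s}$ of a semisimple element, and (b) in every such summand the codimension of the centralizer of an even nilpotent element inside the odd part is even. Together these force the integer $l$ of Theorem~\ref{main4'} to be $0$, so the target dimension $p^{d_0/2}2^{\lfloor d_1/2\rfloor}=p^{d_0/2}2^{d_1/2}$ is exactly what Theorem~\ref{main4'} would output when all summands only need the ``$d_1$ even'' half of the conjecture.

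First I would record the structural input from case (3)(i) of the proof of Theorem~\ref{main3'}: for $\mathfrak{g}_\mathds{k}$ of type $A(M,N)$, the centralizer decomposes as $\mathfrak{l}_\mathds{k}=\bigoplus_{i=1}^{r}(\mathfrak{g}_\mathds{k})_{i}\oplus\mathfrak{t}'_\mathds{k}$ with each simple summand again of type $A(m_i,n_i)$. Combining this with the remark opening Section~8.1 (the Wang--Zhao calculation that $\dim\mathfrak{gl}(M|N)^{e}_{\bar 1}$ is even for every even nilpotent $e$), each $(d_1)_i$ is even. Hence $d'_1=\sum_i(d_1)_i$ and $d_1$ itself are even, and the reindexing before Proposition~\ref{sumdivisible} places all summands in the ``even'' block, giving $l=0$.

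Next I would rerun the chain Corollary~\ref{1122}(1) $\Rightarrow$ Theorem~\ref{sumreachable} $\Rightarrow$ Lemma~\ref{main4} $\Rightarrow$ Theorem~\ref{main4'} using only the hypothesis of the corollary. By Conjecture~\ref{con}(i) applied to each type $A(m_i,n_i)$, every $U((\mathfrak{g})_i,\bar n_i)$ over $\mathbb{C}$ affords a $1$-dimensional representation, so by Corollary~\ref{1122}(1) each $U_{\chi_i}((\mathfrak{g}_\mathds{k})_i,\bar n_i)$ affords a $1$-dimensional (type~$M$) representation for $p\gg 0$. Only type~$M$ modules enter, so the Kleshchev combinatorics of Lemma~\ref{AB} collapses to Lemma~\ref{AB}(i) alone: iterated outer tensor products yield a $1$-dimensional type~$M$ module over $\bigotimes_{i=1}^{r}U_{\chi_i}((\mathfrak{g}_\mathds{k})_i,\bar n_i)$. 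Tensoring with any irreducible of the semisimple commutative algebra $U_0(\mathfrak{t}'_\mathds{k})$ (automatically $1$-dimensional and of type~$M$) produces, via the isomorphism \eqref{dir w} and \eqref{isosum}, an irreducible $U_{\xi}(\mathfrak{l}_\mathds{k})$-module of dimension $p^{d'_0/2}2^{d'_1/2}$.

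Finally I would push the $\mathfrak{l}_\mathds{k}$-module up through the parabolic induction isomorphism \eqref{gp}: regarding the constructed module as a $U_\xi(\mathfrak{p}_\mathds{k})$-module with trivial $\mathfrak{u}_\mathds{k}$-action, the dimension count \eqref{dimu} multiplies by the codimension factor $p^{(d_0-d'_0)/2}2^{(d_1-d'_1)/2}$, yielding an irreducible $U_\xi(\mathfrak{g}_\mathds{k})$-module of dimension $p^{d_0/2}2^{d_1/2}$, as required. There is no real obstacle beyond careful bookkeeping: the type~$A$ assumption bypasses every appeal to Conjecture~\ref{con}(ii) and to type~$Q$ modules. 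The one point that genuinely must be checked, namely stability of type $A$ under taking $\mathfrak{g}_\mathds{k}^{\bar s}$, is already covered in the case analysis of Theorem~\ref{main3'}(3)(i), so nothing new is required there.
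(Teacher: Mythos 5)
Your proposal is correct and matches the paper's intended argument: the paper's two-sentence proof cites the even-ness of $d_1$ in type $A$, Theorem~\ref{main4'}, and step~(3)(i) of the proof of Theorem~\ref{main3'} (the type-$A$ closure of $\mathfrak{g}_\mathds{k}^{\bar s}$), implicitly expecting the reader to trace through the chain Corollary~\ref{1122}(1) $\Rightarrow$ Theorem~\ref{sumreachable} $\Rightarrow$ Lemma~\ref{main4} $\Rightarrow$ Theorem~\ref{main4'} and notice that only Conjecture~\ref{con}(i) for type $A(m,n)$ is ever invoked because every $(d_1)_i$ is even and $l=0$. You have spelled out precisely that trace; the two proofs are the same in substance.
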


\begin{proof}
Recall that the remark at the beginning of Section 8.1 shows that $d_1$ is always even for the basic classical Lie superalgebra of type $A(m,n)$, thus we have $p^{\frac{d_0}{2}}2^{\lfloor\frac{d_1}{2}\rfloor}=p^{\frac{d_0}{2}}2^{\frac{d_1}{2}}$. Then the corollary follows from Theorem~\ref{main4'} and (3)(i) in the proof of Theorem~\ref{main3'}.
\end{proof}

{\bf Acknowledgements}\quad
The authors would like to thank Weiqiang Wang and Lei Zhao whose work on super version of Kac-Weisfeiler property stimulated them to do the present research.  The authors got much help  from the discussion with Hao Chang and Weiqiang Wang, as well as from Yung-Ning Peng and Lei Zhao who  explained some results in their papers [\cite{Peng3}] and [\cite{WZ}] respectively. The authors express great thanks to them.

\end{document}